\newtheorem{theorem}{Theorem}[chapter]
\newtheorem{lemma}[theorem]{Lemma}
\newtheorem{corollary}[theorem]{Corollary}
\newtheorem*{TTheorem}{Theorem I (1991)}
\newtheorem*{TBrennan}{Theorem II}
\newtheorem*{MTheorem}{Main Theorem I}
\newtheorem*{MTheorem2}{Main Theorem II}
\newtheorem*{ARSTheorem}{Theorem III (2009)}
\newtheorem{proposition}[theorem]{Proposition}
\newcommand{\quotes}[1]{``#1''}
\theoremstyle{definition}
\newtheorem{definition}[theorem]{Definition}
\newtheorem{example}[theorem]{Example}
\theoremstyle{remark}
\newtheorem{remark}[theorem]{Remark}
\numberwithin{section}{chapter}
\numberwithin{equation}{chapter}
\begin{document}

\frontmatter

\title{Approximation in the mean by rational functions}


\author{John B. Conway}
\address{Department of Mathematics, The George Washington University, Washington, DC 20052}
\curraddr{}
\email{conway@gwu.edu}
\thanks{}

\author{Liming Yang}
\address{Department of Mathematics, Virginia Polytechnic Institute and State University, Blacksburg, VA 24061}
\curraddr{}
\email{yliming@vt.edu}
\thanks{}

\date{}


\maketitle

\tableofcontents


\chapter*{Abstract}
For $1\le t < \infty$, a compact subset $K\subset\mathbb C$, and a finite positive measure $\mu$ supported on $K$, $R^t(K, \mu)$ denotes the closure in $L^t(\mu)$ of rational functions with poles off $K$. Let $\text{abpe}(R^t(K, \mu))$ denote the set of analytic bounded point evaluations.  The objective of this paper is to describe the structure of $R^t(K, \mu)$. In the work of Thomson on describing the closure in $L^t(\mu)$ of analytic polynomials, $P^t(\mu)$, the existence of analytic bounded point evaluations plays critical roles, while $\text{abpe}(R^t(K, \mu))$ may be empty. We introduce the concept of non-removable boundary $\mathcal F\subset \mathbb C$, a Borel set, such that the removable set $\mathcal R = K\setminus \mathcal F$ contains $\text{abpe}(R^t(K, \mu))$. Recent remarkable developments in analytic capacity and Cauchy transform provide us the necessary tools to describe $\mathcal F$ and obtain structural results for $R^t(K, \mu)$. 

Assume that $R^t(K, \mu)$ does not have a direct $L^t$ summand. The non-removable boundary $\mathcal F$ splits into three sets, $\mathcal F_0$ and $\mathcal F_+ \cup \mathcal F_-$ such that

(1) Cauchy transforms $\mathcal C(g\mu)$ of annihilating measures $g\mu$ ($g\perp R^t(K, \mu)$) are zero on $\mathcal F_0$, and $\mu | _{\mathcal F_0}$
has linear density zero (hence $\mathcal F_0\supset\mathbb C \setminus K$); and 

(2) $\mathcal F_+ \cup \mathcal F_-$ is
contained in a countable union of rotated Lipschitz graphs, $\mu |_{\mathcal F_+ \cup \mathcal F_-}$ is
absolutely continuous with respect to one-dimensional Hausdorff measure, and Cauchy transforms $\mathcal C(g\mu)$ of annihilating measures $g\mu$ have zero one side nontangential limits $\mu |_{\mathcal F_+ \cup \mathcal F_-}-a.a.$ in full analytic capacitary density.

Let $\mathcal L^2_{\mathcal R}$ denote the planar Lebesgue measure restricted to $\mathcal R$. Let $H^\infty (\mathcal R)$ be the weak$^*$ closure in $L^\infty (\mathcal L^2_{\mathcal R})$ of the functions $f(z)$, for which $f(z)$ is bounded analytic on $\mathbb C \setminus E_f$ for some compact subset $E_f \subset \mathcal F$. We prove:  

(1) There is a unique map $\rho$ satisfying: $\rho(f)(z)\mathcal C(g\mu)(z) = \mathcal C(fg\mu)(z)$ (except on a set of zero analytic capacity) for $f\in R^t(K, \mu)$ and $g\perp R^t(K, \mu)$. Moreover, $\rho(f)$ is continuous in full analytic capacitary density on $\mathcal R$ and has nontangential limits $\mu |_{\mathcal F_+ \cup \mathcal F_-}-a.a.$ in full analytic capacitary density. 

(2) The map $\rho$ is an isometric isomorphism and a weak$^*$ homeomorphism from $R^t(K, \mu)\cap L^\infty(\mu )$ onto $H^\infty (\mathcal R)$.

Consequently, we show that a decomposition theorem (Main Theorem II) of $R^t(K, \mu)$ holds for an arbitrary compact subset $K$ and a finite positive measure $\mu$ supported on $K$, which extends the central results regarding $P^t(\mu)$.

\bigskip

2010 Mathematics Subject Classification Primary 47A15; Secondary 30C85, 31A15, 46E15, 47B38

\subjclass[2010]{Primary 47A15; Secondary 30C85, 31A15, 46E15, 47B38}

\keywords{Nontangential Limits, Shift Invariant Subspaces and Bounded Point Evaluations}

\mainmatter

\chapter{Introduction}
\bigskip

\section{Known results}
\bigskip

Let $\mu$ be a finite, positive Borel measure that is compactly 
supported in $\mathbb{C}$. We require that the support of $\mu$, $\mbox{spt}(\mu)$,  be contained in
some compact subset $K\subset \mathbb C$. For $1\leq t < \infty$, the analytic polynomials and functions in $\mbox{Rat}(K) := \{q:\mbox{$q$ is a rational function with poles off $K$}\}$ are members of $L^t(\mu)$. We let $P^t(\mu)$ denote the closure of the (analytic) 
polynomials in $L^t(\mu)$ and let $R^t(K, \mu)$ denote the closure of $\mbox{Rat}(K)$ in $L^t(\mu)$.
Since $K$ is bounded, the operator $f \rightarrow M_\mu f,~  M_\mu f (z) = z f (z)$, is
bounded on $L^t(\mu)$ and takes $P^t(\mu)$ (resp., $R^t(K, \mu)$) into itself. Let $S_\mu = M_\mu |_{P^t(\mu)}$ (resp., $S_\mu = M_\mu |_{R^t(K, \mu)}$).
It is well known that the spectrum of $S_\mu$, $\sigma(S_\mu)$, is contained in $K$ and $R^t(K, \mu) = R^t(\sigma(S_\mu), \mu)$ (see, for example, Proposition 1.1 in \cite{ce93}). Throughout this paper, we assume $K = \sigma(S_\mu)$. That assures, for example, that $\partial K \subset \text{spt}(\mu)$. The operator $S_\mu$ is pure if $P^t(\mu)$ (resp., $R^t(K, \mu)$) does not have a direct $L^t$ summand and is irreducible if $P^t(\mu)$ (resp., $R^t(K, \mu)$) contains no non-trivial characteristic functions.  

A point $z_0$ in $\mathbb{C}$ (resp., $z_0$ in $K$) is called a \textit{bounded point evaluation} for $P^t(\mu)$ (resp., $R^t(K, \mu)$)
if $f\mapsto f(z_0)$ defines a bounded linear functional for the analytic polynomials (resp., functions in $\mbox{Rat}(K)$)
with respect to the $L^t(\mu)$ norm. The collection of all such points is denoted $\mbox{bpe}(P^t(\mu))$ 
(resp., $\mbox{bpe}(R^t(K, \mu)$)).  If $z_0$ is in the interior of $\mbox{bpe}(P^t(\mu))$ (resp., $\mbox{bpe}(R^t(K, \mu)$)) 
and there exist positive constants $r$ and $M$ such that $|f(z)| \leq M\|f\|_{L^t(\mu)}$, whenever $|z - z_0|\leq r$ 
and $f$ is an analytic polynomial (resp., $f\in \mbox{Rat}(K)$), then we say that $z_0$ is an 
\textit{analytic bounded point evaluation} for $P^t(\mu)$ (resp., $R^t(K, \mu)$). The collection of all such 
points is denoted $\mbox{abpe}(P^t(\mu))$ (resp., $\mbox{abpe}(R^t(K, \mu)$)). Actually, it follows from Thomson's Theorem
\cite{thomson} (or see Theorem I, below) that $\mbox{abpe}(P^t(\mu))$ is the interior of $\mbox{bpe}(P^t(\mu))$. 
This also holds in the context of $R^t(K, \mu)$ as was shown by J. Conway and N. Elias in \cite{ce93}. Now, 
$\mbox{abpe}(P^t(\mu))$ is the largest open subset of $\mathbb{C}$ to which every function in $P^t(\mu)$ has an analytic 
continuation under these point evaluation functionals, and similarly in the context of $R^t(K, \mu)$. 
\smallskip

Our story begins with celebrated results of J. Thomson, in \cite{thomson}. 

\begin{TTheorem}
Let $\mu$ be a finite, positive Borel measure that is compactly supported in $\mathbb{C}$ and suppose that $1\leq t < \infty$.
Then there is a Borel partition $\{\Delta_i\}_{i=0}^\infty$ of $\mbox{spt}(\mu)$ such that 
\[
 \ P^t(\mu ) = L^t(\mu |_{\Delta_0})\oplus \bigoplus _{i = 1}^\infty P^t(\mu |_{\Delta_i})
 \]
and the following statements are true:

(a) If $i \ge 1$, then $S_{\mu |_{\Delta_i}}$ on $P^t(\mu |_{\Delta_i})$ is irreducible.

(b) If $i \ge 1$ and $U_i :=abpe( P^t(\mu |_{\Delta_i}))$, then $U_i$ is a simply connected region and $\Delta_i\subset \overline{U_i}$ ($\overline {E}$ denotes the closure of a subset $E$).

(c) If $S_\mu$ is pure (that is, $\Delta_0 = \emptyset$) and $\Omega = \text{abpe}(P^t(\mu))$, then the evaluation map $\rho: f\rightarrow f|_\Omega$ is an isometric isomorphism and a weak$^*$ homeomorphism from $P^t(\mu) \cap L^\infty(\mu)$ onto $H^\infty(\Omega)$. 
\end{TTheorem}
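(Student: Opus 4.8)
The decisive content of the theorem is an \emph{existence} statement: when $S_\mu$ is pure and $P^t(\mu)\neq L^t(\mu)$, the set $\Omega:=\operatorname{abpe}(P^t(\mu))$ is nonempty and, in fact, every point of $\operatorname{spt}(\mu)$ carrying analytic structure is a point of full area density for $\Omega$; everything else in (a)--(c) is organized around this. My plan is to begin with the routine reductions: cutting $\mu$ along the reducing subspaces of $S_\mu$ reduces us to the irreducible case, and the Conway--Elias identification of $\operatorname{abpe}$ with the interior of $\operatorname{bpe}$ means it suffices to exhibit bounded point evaluations on a set of positive area measure.

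Next I would pass to annihilating measures. Choose $g\in L^{t'}(\mu)$ (with $t'=\infty$ when $t=1$), $g\not\equiv 0$, with $\int w^n g\,d\mu=0$ for all $n\ge 0$, and form the Cauchy transform $\widehat{g\mu}(z):=\int (w-z)^{-1}g(w)\,d\mu(w)$, which exists for area-a.e.\ $z$. Since $w\mapsto [p(w)-p(z)]/(w-z)$ is a polynomial whenever $p$ is, annihilation gives the basic identity $p(z)\,\widehat{g\mu}(z)=\widehat{pg\mu}(z)$. A classical uniqueness theorem for Cauchy transforms shows $\widehat{g\mu}\not\equiv 0$; what I actually need is its quantitative refinement — a compact set $F$ of positive area and constants $\delta,M>0$ with $|\widehat{g\mu}|\ge\delta$ on $F$ and the relevant kernel bounded ($\int|w-z|^{-1}\,d|\mu|(w)\le M$, or the $L^{t'}(|g|^{t'}d\mu)$-norm of $|w-z|^{-1}\le M$, for $z\in F$). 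Inserting a polynomial into the identity and applying H\"older then yields $|p(z)|\le C\|p\|_{L^t(\mu)}$ for all $z\in F$, so $F\subset\operatorname{bpe}(P^t(\mu))$; as $F$ has positive area, a subharmonicity/density argument promotes this to an analytic bounded point evaluation, so $\Omega\neq\emptyset$.

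Producing the set $F$ is the crux, and the step I expect to be the main obstacle: this is Thomson's colouring scheme. One overlays $\mathbb C$ with a dyadic grid, tags each square as \emph{heavy} or \emph{light} according to the ratio between the Cauchy-kernel-weighted mass of $|g|\,d\mu$ it carries and its sidelength, and iterates, building a nested sequence of square configurations. The estimates that keep the iteration alive — semiadditivity of analytic capacity, the curvature/$L^2$-boundedness machinery for the Cauchy transform, and the $L^t$--$L^{t'}$ duality bounding how much mass can hide on the light squares — together force the "bad" set of each generation to have small analytic capacity, so the configurations do not collapse and their intersection is a positive-area set on which $\widehat{g\mu}$ stays bounded below. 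This is where the depth of the theorem resides; I would reproduce Thomson's construction, streamlined with the modern capacity estimates.

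Granting the existence statement, (a)--(c) follow by now-standard arguments. The idempotents in the commutant of $S_\mu$ (equivalently, the characteristic functions in $P^t(\mu)$) generate a maximal family of reducing subspaces, yielding $L^t(\mu|_{\Delta_0})\oplus\bigoplus_i P^t(\mu|_{\Delta_i})$ with each $S_{\mu|_{\Delta_i}}$ irreducible; this is (a), and $\Delta_0$ is exactly the part with no surviving analytic structure, so $S_\mu$ pure means $\Delta_0=\emptyset$. For (b): applying the existence statement to $\mu|_{\Delta_i}$, area-a.e.\ point of $\operatorname{spt}(\mu|_{\Delta_i})$ is a density point of $U_i:=\operatorname{abpe}(P^t(\mu|_{\Delta_i}))$, forcing $\Delta_i\subset\overline{U_i}$ (any mass off $\overline{U_i}$ would split off a summand, against irreducibility); and a hole or a disconnection of $U_i$ would produce a nontrivial characteristic function in $P^t(\mu|_{\Delta_i})$ (integrate a function over a separating cycle), so $U_i$ is a simply connected region. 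Finally for (c), with $\Omega=\operatorname{abpe}(P^t(\mu))$: the evaluation map $\rho\colon f\mapsto f|_\Omega$ is a unital algebra homomorphism of $P^t(\mu)\cap L^\infty(\mu)$ into $H^\infty(\Omega)$; it is isometric because $\operatorname{spt}(\mu)$ lies in $\overline\Omega$ with full density, so the boundary values of the analytic extension recapture $f$ $\mu$-a.e.; and it is onto with weak$^*$-continuous inverse because, by a bounded-pointwise polynomial approximation theorem on the simply connected $\Omega$ (Farrell--Rubel--Shields type), every $h\in H^\infty(\Omega)$ is a bounded pointwise limit of polynomials $p_n$, which satisfy $\|p_n\|_{L^t(\mu)}\le C\|p_n\|_{H^\infty(\Omega)}$ once $\operatorname{abpe}\neq\emptyset$, hence converge to $h$ weak$^*$ in $L^\infty(\mu)$; the homeomorphism property is then a routine normal-families argument.
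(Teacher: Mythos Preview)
The paper does not prove this statement at all: Theorem~I is quoted verbatim as a known result of Thomson (1991) in the ``Known results'' section, with a citation to \cite{thomson}, and the rest of the paper builds on it as background. So there is no ``paper's own proof'' to compare against; your proposal is really a sketch of Thomson's original argument, not of anything the authors do.

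That said, a brief comment on your sketch as a proof of Thomson's theorem. Your identification of the colouring scheme as the crux is correct, and the overall architecture (annihilating measure, Cauchy transform identity $p(z)\widehat{g\mu}(z)=\widehat{pg\mu}(z)$, colouring to produce a barrier on which $|\widehat{g\mu}|$ is bounded below, then promoting bpe to abpe) is faithful to Thomson's approach. One anachronism: you invoke ``semiadditivity of analytic capacity'' and ``curvature/$L^2$-boundedness machinery'' as ingredients in the colouring scheme, but those are Tolsa's results and postdate Thomson by a decade. Thomson's 1991 proof runs the colouring scheme with more classical capacity estimates; the Tolsa-based streamlining you allude to is the content of the later alternative proofs by Brennan \cite{B06} and Aleman--Richter--Sundberg \cite{ARS09,ARS10}, which the present paper also cites in its historical notes. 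Your argument for (c) is also a bit thin: ``boundary values of the analytic extension recapture $f$ $\mu$-a.e.'' glosses over exactly the nontangential-limit issues that are delicate here, and the Farrell--Rubel--Shields approximation you invoke for surjectivity needs $\Omega$ to be a Carath\'eodory domain (or at least each $U_i$ to be), which is part of what (b) establishes and should be made explicit.
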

\smallskip

The next result in our list is due to J. Brennan in \cite{b08} and J. Conway and N. Elias in \cite{ce93}.

\begin{TBrennan} Let $\mu$ be a finite, positive Borel measure that is compactly supported in a compact subset $K\subset \mathbb{C}$. Suppose that $1\le t < \infty$ and the diameters of components of $\mathbb C\setminus K$ are bounded below. 
Then there is a Borel partition $\{\Delta_i\}_{i=0}^\infty$ of $\mbox{spt}(\mu)$ and compact subsets $\{K_i\}_{i=1}^\infty$ such that $\Delta_i \subset K_i$ for $i \ge 1$,
 \[
 \ R^t(K, \mu ) = L^t(\mu |_{\Delta_0})\oplus \bigoplus _{i = 1}^\infty R^t(K_i, \mu |_{\Delta_i}),
 \]
and the following statements are true:

(a) If $i \ge 1$, then $S_{\mu |_{\Delta_i}}$ on $R^t(K_i, \mu |_{\Delta_i})$ is irreducible.

(b) If  $i \ge 1$ and $U_i :=\mbox{abpe}( R^t(K_i, \mu |_{\Delta_i}))$, then $K_i = \overline{U_i}$.

(c) If  $i \ge 1$, then the evaluation map $\rho_i: f\rightarrow f|_{U_i}$ is an isometric isomorphism and a weak$^*$ homeomorphism from $R^t(K_i, \mu |_{\Delta_i}) \cap L^\infty(\mu |_{\Delta_i})$ onto $H^\infty(U_i)$.
\end{TBrennan}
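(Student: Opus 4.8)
The plan is to reduce Theorem~II to Thomson's polynomial theorem (Theorem~I above) by the classical three--step scheme, with the hypothesis that the components of $\mathbb{C}\setminus K$ have diameters bounded below entering precisely where one must pass from rational to polynomial approximation. The three steps are: split off a maximal $L^t$ summand; decompose the pure remainder into a countable direct sum of irreducible pieces; and, for an irreducible piece, produce a nonempty $\mathrm{abpe}$ and identify $R^t\cap L^\infty$ with $H^\infty$ of it.

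\emph{Soft reduction.} Let $\mathcal{S}$ be the family of Borel sets $\Delta\subset\mathrm{spt}(\mu)$ with $\chi_\Delta\in R^t(K,\mu)$ and $\chi_\Delta R^t(K,\mu)=L^t(\mu|_\Delta)$. Using $1\in\mathrm{Rat}(K)$, the weak$^*$-closedness of $R^t(K,\mu)\cap L^\infty(\mu)$, $L^t$-closedness, and separability of $L^t(\mu)$, one checks that $\mathcal{S}$ is a $\sigma$-ideal with a $\mu$-maximal element $\Delta_0$; this yields the summand $L^t(\mu|_{\Delta_0})$. Put $N_0=\chi_{\mathrm{spt}(\mu)\setminus\Delta_0}R^t(K,\mu)$, which is pure. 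Its Boolean $\sigma$-algebra of reducing idempotents $\{\chi_\Delta:\Delta\subset\mathrm{spt}(\mu)\setminus\Delta_0,\ \chi_\Delta\in R^t(K,\mu)\}$ is countably generated; granting the analytic core below, it is atomic with atoms $\{\Delta_i\}_{i\ge1}$ exhausting $\mathrm{spt}(\mu)\setminus\Delta_0$, because an atomless part would, after splitting, give a pure summand that is not an $L^t$-space and not a finite-dimensional point-mass space, hence would carry an $\mathrm{abpe}$ by the core, while a standard filter argument shows that the mere existence of an $\mathrm{abpe}$ for $R^t(K_i,\mu|_{\Delta_i})$ forces its idempotent algebra to have an atom --- a contradiction. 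Setting $K_i=\sigma(S_{\mu|_{\Delta_i}})\subset K$ and discarding a $\mu$-null set we may take $\Delta_i\subset K_i$; by the localization principle $R^t(\,\cdot\,,\nu)=R^t(\sigma(S_\nu),\nu)$ noted in the introduction, $\chi_{\Delta_i}R^t(K,\mu)=R^t(K_i,\mu|_{\Delta_i})$, each summand is pure and irreducible, and the displayed decomposition together with (a) follows. It thus suffices to prove, for \emph{irreducible} $S_\mu$ on $R^t(K,\mu)$ under the diameter hypothesis, statements (b) and (c), and, as the analytic core, that a pure $R^t(K,\mu)$ which is neither an $L^t$-space nor a point-mass space has $\mathrm{abpe}(R^t(K,\mu))\ne\emptyset$.

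\emph{The analytic core (the main obstacle).} I would obtain this by running Thomson's proof of Theorem~I. Fix $g\in L^{t'}(\mu)$, $g\not\equiv 0$, annihilating $\mathrm{Rat}(K)$; its Cauchy transform $\mathcal{C}(g\mu)$ is analytic off $\mathrm{spt}(\mu)$ and vanishes at $\infty$. If no bounded point evaluation existed, one would build --- via Thomson's coloring scheme on dyadic squares together with Vitushkin-type analytic capacity estimates (in modern form, via the curvature of $g\mu$) --- a contradiction with $R^t(K,\mu)\ne L^t(\mu)$, exactly as in the polynomial case. The only structural difference from $P^t(\mu)$ is that $\mathrm{Rat}(K)$ contains $(z-a)^{-1}$ for $a$ in the bounded components of $\mathbb{C}\setminus K$; but since each such component has diameter $\ge\delta>0$, below scale $\delta$ the set $K$ acquires no new complementary components, so the squares of the scheme --- whose sizes tend to $0$ --- eventually see only the polynomial situation, and Vitushkin localization $\mathcal{V}_Q$ carries the relevant $R^t(K,\mu)$ data to data of $P^t$ of a localized measure on a compact set whose finitely many relevant complementary components have been filled in. Patching the localized conclusions and invoking Theorem~I yields a nonempty open set $U$ of bounded point evaluations, and by the Conway--Elias identification of $\mathrm{abpe}(R^t(K,\mu))$ with the interior of $\mathrm{bpe}(R^t(K,\mu))$ we get $U=\mathrm{abpe}(R^t(K,\mu))\ne\emptyset$. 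Verifying carefully that the diameter hypothesis genuinely reduces the rational coloring scheme to Thomson's polynomial one is where essentially all the difficulty lies.

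\emph{From a nonempty $\mathrm{abpe}$ to (b) and (c).} Assume $S_\mu$ irreducible, so $U:=\mathrm{abpe}(R^t(K,\mu))\ne\emptyset$ by the core. For (b): $\overline U\subset\sigma(S_\mu)=K$ is immediate, and if some $z_0\in K\setminus\overline U$ existed, a Runge-type functional-calculus argument would split off the part of the spectrum near $z_0$ as a nontrivial reducing idempotent in $R^t(K,\mu)$, contradicting irreducibility; hence $K=\overline U$. For (c): set $\rho(f)=f|_U$ for $f\in R^t(K,\mu)\cap L^\infty(\mu)$, which is well defined into $H^\infty(U)$ and multiplicative because point evaluations on $U$ are. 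A bounded pointwise approximation argument --- every such $f$ is a weak$^*$ limit of a net in $\mathrm{Rat}(K)$ bounded by $\|f\|_{L^\infty(\mu)}$ --- gives the isometry $\|\rho(f)\|_{H^\infty(U)}=\|f\|_{L^\infty(\mu)}$, the reverse inequality being the more delicate half and resting on the maximum principle on $U$ together with the boundary behaviour of $\mathrm{abpe}$. Surjectivity onto $H^\infty(U)$ follows by combining this with weak$^*$ density of $\mathrm{Rat}(\overline U)|_U=\mathrm{Rat}(K)|_U$ in $H^\infty(U)$; the diameter hypothesis reappears here, since $\mathbb{C}\setminus\overline U$ again has components of diameter bounded below, which makes the needed rational approximation theorem for the domain $U$ elementary. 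Finally, an isometric algebra isomorphism between the dual spaces $R^t(K,\mu)\cap L^\infty(\mu)$ and $H^\infty(U)$ that is weak$^*$ continuous is automatically a weak$^*$ homeomorphism by the Krein--Smulian theorem, completing (c).
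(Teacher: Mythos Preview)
The paper does not give its own proof of Theorem~II; it is stated in the introduction as a known result due to Brennan and Conway--Elias. To the extent the paper rederives it, this happens only at the very end, as the special case $\partial K=\partial_1 K$ of Corollary~7.3, which in turn rests on the paper's entire apparatus: the non-removable boundary $\mathcal F$ and removable set $\mathcal R$ (Chapter~3), the generalized Plemelj formula, Tolsa's analytic-capacity theory, the key estimates of Chapter~5 (Theorems~5.1 and~5.4 with Paramonov's modified Vitushkin scheme), and the general decomposition Theorem~7.2. Your plan is the classical Brennan/Conway--Elias route---use the diameter lower bound so that below a fixed scale no new complementary components appear, reduce the rational problem to Thomson's polynomial Theorem~I, and read off (b) and (c). That is a legitimate and more elementary strategy for this particular $K$, but it is not the paper's approach at all; what the paper's machinery buys is that the same argument handles arbitrary $K$ (Swiss cheeses, strings of beads, empty $\mathrm{abpe}$), which your reduction cannot.

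That said, your sketch has real gaps beyond the acknowledged hard ``analytic core''. For (b), the claim that a point $z_0\in K\setminus\overline U$ produces a reducing idempotent by a ``Runge-type functional-calculus argument'' does not work as stated: Riesz projections need a spectral set that is clopen in $\sigma(S_\mu)$, and nothing prevents $K\setminus\overline U$ from meeting $\overline U$. The correct mechanism (both in Brennan/Conway--Elias and in the paper's Proposition~3.6) goes through annihilators and Cauchy transforms: a ball disjoint from $\overline U$ on which all $\mathcal C(g\mu)$ vanish carries no $\mu$-mass, forcing $\mathrm{spt}(\mu)\subset\overline U$. For (c), your surjectivity step is essentially circular: asserting that $\mathrm{Rat}(K)|_U$ is weak$^*$ dense in $H^\infty(U)$ \emph{is} the content of (c), and the diameter hypothesis on $\mathbb C\setminus\overline U$ does not make this ``elementary''---one still needs a genuine Vitushkin-type approximation on $U$ (this is exactly where Conway--Elias, and in the paper Theorem~5.4 and Corollary~5.6, do the real work). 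Your isometry argument also leaves the harder inequality $\|f\|_{L^\infty(\mu)}\le\|\rho(f)\|_{H^\infty(U)}$ unproved; bounded weak$^*$ approximability by rationals gives only the other direction.
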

\smallskip

Let $\mathbb D$ be the open unit disk and $\mathbb T := \partial \mathbb D$. The following remarkable results are due to Aleman, Richter, and Sundberg in \cite{ARS09}.
\smallskip

\begin{ARSTheorem} \label{ARSTheorem}
Suppose that $\mu$ is supported in $\overline{\mathbb D}$, 
$abpe (P^t(\mu )) = \mathbb{D}$, $P^t(\mu )$ is irreducible, and that $\mu (\mathbb{T})> 0$.
\newline
(a) If $f \in P^t(\mu )$, then the nontangential limit $f^*(\zeta )$ of f at $\zeta$ exists a.e. $\mu |_{\mathbb{T}}$ 
and $f^* = f |_{\mathbb{T}}$ as elements of $L^t(\mu |_{\mathbb{T}}).$
\newline
(b) Every nontrivial, closed invariant subspace $\mathcal{M}$ for the shift $S_{\mu}$ on $P^t(\mu )$ has index 1; that is, the dimension
of $\mathcal{M}/z\mathcal{M}$ is one.
\end{ARSTheorem}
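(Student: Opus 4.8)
The three hypotheses — $\text{abpe}(P^t(\mu))=\mathbb{D}$, irreducibility of $P^t(\mu)$, and $\mu(\mathbb{T})>0$ — should together force $P^t(\mu)$ to behave near $\mathbb{T}$ like the Hardy space $H^t(\mathbb{D})$, and my plan is to make this quantitative through a weak-type nontangential maximal estimate, from which both conclusions follow by Hardy-space-style arguments. For part~(a): since the polynomials are dense in $P^t(\mu)$ and a polynomial trivially has nontangential boundary values on $\mathbb{T}$ (namely its own values), it will suffice to prove, for $f\in P^t(\mu)$ with its analytic continuation to $\mathbb{D}=\text{abpe}(P^t(\mu))$ (still written $f$), and with $(Nf)(\zeta):=\sup\{|f(z)|:z\in\Gamma(\zeta)\}$ the nontangential maximal function over a fixed cone $\Gamma(\zeta)$ at $\zeta\in\mathbb{T}$, the weak-type inequality
\[
\mu|_{\mathbb{T}}\big(\{\zeta\in\mathbb{T}:\ (Nf)(\zeta)>\lambda\}\big)\ \le\ \frac{C}{\lambda^{t}}\,\|f\|_{L^{t}(\mu)}^{t},\qquad \lambda>0,
\]
with $C$ depending only on $\mu$, $t$, and the aperture of $\Gamma$. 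Granting this, one applies it to $p_n-p_m$ for polynomials $p_n\to f$ in $L^t(\mu)$ and passes to a rapidly convergent subsequence; the $p_{n_k}$ then converge uniformly on $\Gamma(\zeta)$ for $\mu|_{\mathbb{T}}$-a.e.\ $\zeta$, so $f$ has a nontangential limit $f^{*}(\zeta)$ there, and since $p_{n_k}\to f$ in $L^t(\mu|_{\mathbb{T}})$ as well, $f^{*}$ coincides with the $L^t(\mu|_{\mathbb{T}})$-class of $f$.

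The maximal estimate is the step I expect to be the main obstacle. The idea is to write the bounded point evaluations as $f(z)=\int f\,g_{z}\,d\mu$ with $g_{z}\in L^{t'}(\mu)$ ($1/t+1/t'=1$) depending analytically on $z\in\mathbb{D}$, and to control the kernels $g_{z}$ as $z$ tends nontangentially to $\mathbb{T}$. The hypotheses $\mu(\mathbb{T})>0$ and irreducibility are precisely what should rule out pathological concentration of $\mu|_{\mathbb{T}}$; concretely, I would try to show $g_{z}$ is dominated by a multiple of a Poisson-type kernel on $\mathbb{D}$ paired with $\mu|_{\mathbb{T}}$, modulo a harmless contribution of $\mu|_{\mathbb{D}}$, which gives $(Nf)(\zeta)\lesssim\big(\mathcal{M}_{\mu|_{\mathbb{T}}}(|f|^{t})(\zeta)\big)^{1/t}$, where $\mathcal{M}_{\mu|_{\mathbb{T}}}$ is the Hardy--Littlewood maximal operator of the finite Borel measure $\mu|_{\mathbb{T}}$ on $\mathbb{T}$ — which is of weak type $(1,1)$ — and raising to the power $1/t$ gives the claim. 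Two points will require real work: (i) the ``harmonic-measure domination,'' where Theorem~I and the irreducibility enter to compare the evaluation functionals with harmonic measure of $\mathbb{D}$; and (ii) the part of $\mu|_{\mathbb{T}}$ singular with respect to arc length, where I would use that, by Theorem~I(c), the evaluation map identifies $P^t(\mu)\cap L^\infty(\mu)$ isometrically with $H^\infty(\mathbb{D})$, so that $P^t(\mu)$ is an $H^\infty(\mathbb{D})$-module, and multiply $f$ by $H^\infty(\mathbb{D})$ functions small near the (arc-length-null) carrier of the singular part to neutralize its contribution.

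Part~(b) will be deduced from (a). Every nonzero $f\in P^t(\mu)$ now has nontangential boundary values $f^{*}$, and (a) together with the $H^\infty(\mathbb{D})$-module structure of Theorem~I(c) supplies enough Smirnov-class control to run the standard extremal-function argument: given a nontrivial closed invariant subspace $\mathcal{M}$, choose an extremal generator $G\in\mathcal{M}$ realizing the distance from $\mathcal{M}$ to $z\mathcal{M}$, show that every $f\in\mathcal{M}$ factors as $f=G\cdot h$ with $h$ again in $P^t(\mu)$ — the containment in $P^t(\mu)$, rather than merely in the Smirnov class, being exactly where the maximal estimate of (a) is invoked once more — and conclude that $\mathcal{M}$ is the closed invariant subspace generated by $G$, whence $\dim(\mathcal{M}/z\mathcal{M})=1$.
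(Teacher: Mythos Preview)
This theorem is quoted from \cite{ARS09} rather than proved directly; the paper's own contribution is the generalizations Theorem~\ref{NonTL} and Theorem~\ref{IndexForIS}, whose proofs specialize to recover it. Your strategy has a genuine gap at precisely the step you flag as requiring ``real work'': the Poisson-type domination of the evaluation kernels $g_z$. No such pointwise bound on the reproducing kernels is known for general $\mu$ satisfying the hypotheses, and establishing it would in effect already \emph{be} the theorem --- the whole obstruction ARS overcame is that one cannot directly control how the evaluation functionals behave as $z\to\mathbb T$ nontangentially. Theorem~I(c) identifies the multiplier algebra $P^t(\mu)\cap L^\infty(\mu)$ with $H^\infty(\mathbb D)$, but this says nothing about the size or shape of the evaluation kernels; the $H^\infty$-module structure does not translate into kernel estimates, so neither your step (i) nor the $(Nf)(\zeta)\lesssim(\mathcal M_{\mu|_{\mathbb T}}(|f|^t)(\zeta))^{1/t}$ inequality has a visible proof along these lines.

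The route actually taken, both in \cite{ARS09} and in this paper's generalization, bypasses the evaluation kernels entirely and works instead with Cauchy transforms $\mathcal C(g\mu)$ of annihilating measures $g\perp P^t(\mu)$. From the identity $\rho(f)\,\mathcal C(g\mu)=\mathcal C(fg\mu)$ on $\mathbb D$, the generalized Plemelj formula (Theorem~\ref{GPTheorem1}) together with Tolsa's analytic-capacity machinery gives nontangential limits in full $\gamma$-density for both sides; since for some $g$ the transform $\mathcal C(g\mu)$ has nonzero one-sided limit $\mu|_{\mathbb T}$-a.e.\ (this is where irreducibility and $\mu(\mathbb T)>0$ enter, via the non-removable-boundary analysis), division recovers the nontangential limit of $\rho(f)$, and Lemma~\ref{lemmaBasic11} (resting on Lemma~\ref{lemmaARS}) upgrades $\gamma$-density convergence to honest nontangential convergence on cones. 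For (b), rather than an extremal-function factorization, the paper (Theorem~\ref{IndexForIS}) shows directly that for $f_1,f_2\in\mathcal M$ and $\phi\perp\mathcal M$ the analytic function $H(\lambda)=\int\frac{f_1(\lambda)f_2(z)-f_2(\lambda)f_1(z)}{z-\lambda}\,\phi(z)\,d\mu(z)$ has nontangential limit zero on a set of positive arc length on $\mathbb T$, and concludes $H\equiv 0$ by Privalov's theorem; your extremal-generator route is a legitimate alternative once (a) is established, but the factorization $f=G\cdot h$ with $h\in P^t(\mu)$ again leans on the maximal estimate, so the gap in (a) propagates.
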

\smallskip

In this paper, we extend all above results to the context of $R^t(K, \mu)$ for an arbitrary compact subset $K \subset \mathbb C$ and a finite positive measure $\mu$ supported on $K$.

\bigskip

\section{Concept of non-removable boundary}
\bigskip

We observe that the assumption of the compact subset $K$ in Theorem II is strong due to the following reasons:

(1) In Theorem II, we notice that $K = \overline {\mbox{abpe}(R^t(K,\mu))}$ if $S_\mu$ is pure ($\mu |_{\Delta_0} = 0$). 
Unfortunately, it may happen that $R^t(K,\mu) \ne L^t(\mu)$ and $\mbox{abpe}(R^t(K,\mu)) = \emptyset$. Examples of this phenomenon can be constructed, where $K$ is a Swiss cheese set (with empty interior, see \cite{b71} and \cite{f76}).    

(2) Even for a string of beads set $K$ with $\mbox{abpe}(R^t(K,\mu)) = int(K)$ (understanding that $K = \overline{int(K)}$), Theorem \ref{SOBTheorem} shows that $H^\infty (\mbox{abpe}(R^t(K,\mu)))$ is \quotes{bigger} than  $R^t(K,\mu)\cap L^\infty(\mu)$, while under the assumptions of Theorem II, the algebras $H^\infty (\mbox{abpe}(R^t(K,\mu)))$ and $R^t(K,\mu)\cap L^\infty(\mu)$ ($\mu |_{\Delta_0} = 0$) are isometrically isomorphic. Note: a string of beads set $K$ is the closed unit disk with a sequence of open disks $D_j$ removed such that the closures of the $D_j$’s are mutually disjoint, all their centers lie on the real line $\mathbb R$, and such that $K\cap \mathbb R$ is a (totally disconnected) Cantor set of positive linear Lebesgue measure.

Those examples suggest that the concept of \textit{analytic bounded point evaluation} may not be appropriate for exploring the space $R^t(K,\mu)$. We identify a Borel set  $\mathcal R$ containing $\mbox{abpe}(R^t(K,\mu))$ such that functions in $R^t(K,\mu)$ have some continuity properties related to analytic capacity. The set $\mathcal R$ is called the \textit{removable set} of $R^t(K,\mu)$. The complement $\mathcal F = \mathbb C \setminus \mathcal R$ is called the \textit{non-removable boundary}. Our examples and results evidence that $\mathcal F$ and $\mathcal R$ are applicable concepts for $R^t(K,\mu)$. 

Before stating our definitions and main results, we need to introduce some required notations. Let $E$ be a compact subset of $\mathbb{C}$. We
define the analytic capacity of $E$ by
\begin{eqnarray}\label{GammaDefinition}
\ \gamma(E) = \sup |f'(\infty)|,
\end{eqnarray}
where the supremum is taken over all those functions $f$ that are analytic in $\mathbb C_{\infty} \setminus E$ ($\mathbb C_{\infty} = \mathbb C \cup \{\infty \}$), such that
$|f(z)| \le 1$ for all $z \in \mathbb{C}_\infty \setminus E$; and
$f'(\infty) := \lim _{z \rightarrow \infty} z(f(z) - f(\infty)).$
The analytic capacity of a general subset $F$ of $\mathbb{C}$ is given by: 
 \[
 \ \gamma (F) = \sup \{\gamma (E) : E\subset F \text{ compact}\}.
 \]
Good sources for basic information about analytic
capacity are \cite{Du10}, Chapter VIII of \cite{gamelin}, \cite{Ga72}, Chapter V of \cite{C91}, and \cite{Tol14}.

Let $\nu$ be a finite complex-valued Borel measure that
is compactly supported in $\mathbb {C}$. 
For $\epsilon > 0,$ $\mathcal C_\epsilon(\nu)$ is defined by
\ \begin{eqnarray}\label{CTEDefinition}
\ \mathcal C_\epsilon(\nu)(z) = \int _{|w-z| > \epsilon}\dfrac{1}{w - z} d\nu (w).
\ \end{eqnarray} 
The (principal value) Cauchy transform
of $\nu$ is defined by
\ \begin{eqnarray}\label{CTDefinition}
\ \mathcal C(\nu)(z) = \lim_{\epsilon \rightarrow 0} \mathcal C_\epsilon(\nu)(z)
\ \end{eqnarray}
for all $z\in\mathbb{C}$ for which the limit exists. It follows from 
Corollary \ref{ZeroAC} that \eqref{CTDefinition} is defined for all $z$ except for a set of zero analytic 
capacity. Throughout this paper, the Cauchy transform of a measure always means the principal value of the transform.

Given $d \ge 0$ and $0 < \epsilon \le \infty$, for $A\subset \mathbb C$,
 \[
 \ \mathcal H^d_\epsilon (A) = \inf \left \{\sum_i \text{diam}(A_i)^d:~ A\subset \cup_i A_i,~ \text{diam}(A_i)\le \epsilon \right \}.
 \] 
The $d$-dimensional Hausdorff measure of $A$ is: 
 \[
 \ \mathcal H^d (A) = \sup_{\epsilon >0} \mathcal H^d_\epsilon (A) = \lim _{\epsilon \rightarrow 0} \mathcal H^d_\epsilon (A). 
 \]
For our purposes, we will use one-dimensional Hausdorff measure $\mathcal H^1$. 

We define the non-zero set $\mathcal N(f) = \{\lambda: ~ f(\lambda)\text{ is well defined,} ~ f(\lambda) \ne 0 \}$ and the zero set $\mathcal Z(f)  = \{\lambda: ~ f(\lambda)\text{ is well defined,}  ~ f(\lambda) = 0 \} $.
Write $\gamma-a.a.$, for a property that holds everywhere, except possibly on a set of analytic capacity zero. For $1\le t < \infty$, let $s = \frac{t}{t-1}$. For $g\in L^s(\mu)$, the statement $g\perp R^t(K, \mu)$ (or $g\in R^t(K, \mu)^\perp$) means that $\int r(z) g(z) d \mu(z) = 0 $ for all $r\in \mbox{Rat}(K)$ ($g\mu$ is an annihilating measure). For $\lambda$ in $\mathbb{C}$ and $\delta > 0$, denote $B(\lambda, \delta) = \{z\in\mathbb{C}: |z - \lambda | < \delta\}$. 

Theorem \ref{GPTheorem1} extends Theorem 3.6 in \cite{acy18} of Plemelj's formula for a finite compactly supported complex-valued measure $\nu$. That is, for a Lipschitz graph $\Gamma_0$, the nontangential limit $v^+(\nu, \Gamma_0, \lambda)$ of $\mathcal C(\nu)$ at $\lambda\in \Gamma_0$, in the sense of full $\gamma$ density, exists from top $\mathcal H^1 |_{\Gamma_0}-a.a.$. More accurately, 
 \begin{eqnarray}\label{UNLimit}
 \ \begin{aligned}
 \ & \lim_{\delta\rightarrow 0} \dfrac{\gamma(B(\lambda, \delta)\cap U_{\Gamma_0} \cap \{|\mathcal C(\nu)(z) - v^+(\nu, \Gamma_0, \lambda)| >\epsilon\})}{\delta} \\
 \ = & 0, ~ \mathcal H^1 |_{\Gamma_0}-a.a.
 \ \end{aligned}
 \end{eqnarray}
for all $\epsilon > 0$, where $U_{\Gamma_0} = \{z: ~ Im(z) > y_0, ~ (Re(z), y_0) \in \Gamma_0\}$.  
Similarly, 
 \begin{eqnarray}\label{LNLimit}
 \ \begin{aligned}
 \ &  \lim_{\delta\rightarrow 0} \dfrac{\gamma(B(\lambda, \delta)\cap L_{\Gamma_0} \cap \{|\mathcal C(\nu)(z) - v^-(\nu, \Gamma_0, \lambda)| >\epsilon\})}{\delta} \\
 \ = & 0, ~ \mathcal H^1 |_{\Gamma_0}-a.a.
 \ \end{aligned}
 \end{eqnarray}
for all $\epsilon > 0$, where $L_{\Gamma_0} = \{z: ~ Im(z) < y_0, ~ (Re(z), y_0) \in \Gamma_0\}$.

Lemma \ref{GammaExist} states that 
there is a sequence of Lipschitz functions $A_n: \mathbb R\rightarrow \mathbb R$ with $\|A_n'\|_\infty \le \frac{1}{4}$ and its (rotated) graph $\Gamma_n$ such that if 
 \begin{eqnarray}\label{RNDecom}
 \ \Gamma = \cup_n \Gamma_n\text{ and }\mu = h\mathcal H^1 |_{\Gamma} + \mu_s
 \end{eqnarray}
 is the Radon-Nikodym decomposition with respect to $\mathcal H^1 |_{\Gamma}$, where $h\in L^1(\mathcal H^1 |_{\Gamma})$ and $\mu_s\perp \mathcal H^1 |_{\Gamma}$, then 
 \[ 
 \ \lim_{\delta\rightarrow 0}\dfrac{\mu(B(\lambda, \delta))}{\delta} = 0, ~ \gamma |_{\mathbb C \setminus \mathcal N(h)}-a.a.
 \]
 This allows us to use $v^+(g\mu, \Gamma_n, \lambda)$ and $v^-(g\mu, \Gamma_n, \lambda)$ ($g\perp R^t(K, \mu)$) in our definition of $\mathcal F$ and $\mathcal R$.  

Essentially, in the proof of Theorem I for the existence of $\lambda \in \text{abpe} (P^t(\mu)$, Thomson constructed a barrier of squares (around $\lambda$) on which $|\mathcal C(g\mu)(z)|$ is \quotes{bounded below} for some $g\perp P^t(\mu)$. In other words, for $\lambda \in \mathbb C$ and $g\perp P^t(\mu)$, if $|\mathcal C(g\mu)(z)|$ is \quotes{small} on a sizable subset $E_{\delta} \subset B(\lambda, \delta)$ ($\gamma(E_{\delta})$ is comparable with $\delta$) for all $\delta > 0$, then $\lambda \notin \text{abpe} (P^t(\mu)$. The fact motivates us to define the non-removable boundary $\mathcal F$ for $R^t(K, \mu)$ as the collection of such points. More precisely, for a dense sequence $\{g_j\}_{j=1}^\infty\subset R^t(K, \mu)^\perp$, $\mathcal F$ consists of three sets:

(1) $\lambda \in \mathbb C\setminus \mathcal N(h)$ (as in \eqref{RNDecom}) and $\mathcal C(g_j\mu)(\lambda) = 0$ for all $j\ge 1$. The set $E_{\delta}$ can be constructed from Lemma \ref{CauchyTLemma};

(2) $\lambda \in \Gamma_n \cap \mathcal N(h)$ and $v^+(g_j\mu, \Gamma_n, \lambda) = 0$  for all $j\ge 1$ (though $\mathcal C(g_j\mu)(\lambda)$ may not be zero for some $g_j$). The set $E_{\delta}$ can be constructed from \eqref{UNLimit}; and 

(3) $\lambda \in \Gamma_n\cap \mathcal N(h)$ and $v^-(g_j\mu, \Gamma_n, \lambda) = 0$  for all $j\ge 1$ (though $\mathcal C(g_j\mu)(\lambda)$ may not be zero for some $g_j$). The set  $E_{\delta}$ can be constructed from \eqref{LNLimit}.
\smallskip

The detailed constructions of $E_{\delta}$ are discussed in section 3.3 (Theorem \ref{acTheorem}).
Now we formulate our definition of $\mathcal F$ as the following.    

\begin{definition}\label{NRBDef}
Let  $\{g_n\}_{n=1}^\infty \subset R^t(K,\mu) ^\perp$ be a dense subset. 
Let $\Gamma_n$, $h$, $v^+(\nu, \Gamma_n, \lambda)$, and $v^-(\nu, \Gamma_n, \lambda)$ be as in \eqref{RNDecom}, \eqref{UNLimit}, and \eqref{LNLimit}, respectively. 
Define,
 \begin{eqnarray}\label{FZeroEq}
 \ \mathcal F_0 = \bigcap_{j=1}^\infty \mathcal Z(\mathcal C(g_j\mu)),
 \end{eqnarray}
 \begin{eqnarray}\label{FPlusEq}
 \ \mathcal F_+ = \bigcup_{n \ge 1} \bigcap_{j=1}^\infty \mathcal Z(v^+(g_j\mu, \Gamma_n, . )) \cap \mathcal N(h),
 \end{eqnarray}
 \begin{eqnarray}\label{FMinusEq}
 \ \mathcal F_- = \bigcup_{n \ge 1} \bigcap_{j=1}^\infty \mathcal Z(v^-(g_j\mu, \Gamma_n, . )) \cap \mathcal N(h),
 \end{eqnarray}
and
 \[
 \ \mathcal F = \mathcal F_0 \cup \mathcal F_+ \cup \mathcal F_-.
 \]  
$\mathcal F$ is called the \textit{non-removable boundary} of $R^t(K,\mu)$. The set $\mathcal R = K \setminus \mathcal F$ is called the \textit{removable set} of $R^t(K, \mu)$. The set $\mathcal R_B = \mathcal R \setminus \mbox{abpe}(R^t(K,\mu))$ is called the \textit{removable boundary} of $R^t(K, \mu)$.
\end{definition}
\smallskip

It is proved in Theorem \ref{FCharacterization} and Corollary \ref{NRBUnique} that $\mathcal F$, $\mathcal R$, and $\mathcal R_B$ are  independent of choices of $\{\Gamma_n\}_{n=1}^\infty$ and $\{g_n\}_{n=1}^\infty$ up to a set of zero analytic capacity. Clearly, $\mu |_{\mathcal F_+ \cup \mathcal F_-}$ is absolutely continuous with respect to $\mathcal H^1|_{\Gamma}$. Theorem \ref{FRProperties} shows that 
 \[
 \ \mathcal F_0 \cap (\mathcal F_+ \cup \mathcal F_-) = \emptyset,~\gamma-a.a.\text{ and }\lim_{\delta\rightarrow 0}\frac{\mu(B(\lambda, \delta))}{\delta} = 0, ~ \gamma |_{\mathcal F_0}-a.a.
 \] 

Using the classical Hardy space $H^2(\mathbb T)$ as a simple example, we get $\mathcal F_0 = \mathbb C \setminus \overline {\mathbb D}$, $\mathcal F_+ \cup \mathcal F_- = \mathbb T$ since non-tangential limits of $\mathcal C(g_j\mu)$ ($=0$ on $\mathbb C \setminus \overline {\mathbb D}$) from outside of $\overline {\mathbb D}$ are zero, $\mathcal R = \mathbb D$, and $\mathcal R_B = \emptyset$. For special cases as in Theorem I and Theorem II, as a consequence of Proposition \ref{NFSetIsBig} (3) and Theorem \ref{ABPETheorem}, $\mathcal F = \mathbb C \setminus \mbox{abpe}(R^t(K,\mu))$, $\mathcal R = \mbox{abpe}(R^t(K,\mu))$, and $\mathcal R_B = \emptyset$. Proposition \ref{SOBRemovable} provides examples of string of beads sets $K$ such that the space $R^t(K,\mu)$ has rich non-trivial removable boundaries ( $K\cap\mathbb R \supset \mathcal R_B \ne \emptyset$ and $\mathcal R = \mathcal R_B \cup \text{int}(K)$). For a Swiss cheese set $K$ ($\text{int}(K) = \emptyset$) as in Example \ref{FCExample}, if $\partial _o K = \cup_n \partial U_n$ (outer boundary of $K$), where $\{U_n\}$ are connected components of $\mathbb C \setminus K$, then $\mathcal F = K^c \cup \partial _o K$ and $\mathcal R = \mathcal R_B = K \setminus \partial _o K$. In this case, $\mathcal R$ does not have interior and $\mbox{abpe}(R^t(K,\mu)) = \emptyset$.  

In Proposition \ref{NFSetIsBig}, we prove $K = \overline{\mathcal R}$ if $S_\mu$ is pure (though $\mbox{abpe}(R^t(K,\mu))$ may be empty). It is also shown that, in Theorem \ref{DensityCorollary} and Theorem \ref{ABPETheorem}, 
 \[
 \ \lim_{\delta\rightarrow 0 }\dfrac{\gamma(B(\lambda_0, \delta)\setminus \mathcal R)}{\delta} = 0,~\gamma|_{\mathcal R}-a.a.
 \]
 and  
 \[ 
 \ \mathcal R \cap \text{int}(K) \approx \mbox{abpe}(R^t(K,\mu)), ~ \gamma-a.a.
 \]
\bigskip

\section{Nontangential limits}
\bigskip

It is well known that for every function $f\in R^t(K,\mu)$, if $\rho(f)(\lambda )$ denotes the point evaluation functional at $\lambda \in \text{abpe}(R^t(K,\mu))$, then $\rho(f)$ is an analytic function on $\text{abpe}(R^t(K,\mu))$ and $\rho(f)(\lambda ) = f(\lambda ), ~ \mu |_{\text{abpe}(R^t(K,\mu))}-a.a.$. Clearly, $\frac{f(z)-\rho(f)(\lambda)}{z - \lambda}\in R^t(K,\mu)$. Hence,
 \[
 \ \int \dfrac{f(z)-\rho(f)(\lambda)}{z - \lambda} g(z) d \mu(z) = 0, ~\lambda \in \text{abpe}(R^t(K,\mu)), ~g\perp R^t(K,\mu). 
 \]
Consequently,
 \begin{eqnarray}\label{NTIntroEq1}
 \ \begin{aligned}
 \ &\rho(f)(\lambda)\mathcal C(g\mu)(\lambda) = \mathcal C(fg\mu)(\lambda),~ \gamma |_{\text{abpe}(R^t(K,\mu))}-a.a.,\\
 \ &g\perp R^t(K,\mu). 
 \ \end{aligned} 
\end{eqnarray}
We expand the domain of the identity \eqref{NTIntroEq1} to $\mathcal R$ and show that $\rho(f)$ has some continuity property on $\mathcal R$ (as in Definition \ref{GDefInto} below). The expansion is important because $\mathcal R \supset\mbox{abpe}(R^t(K,\mu))$ and $K = \overline{\mathcal R}$ (under the condition that $S_\mu$ is pure) as shown in Proposition \ref{NFSetIsBig} regardless of whether $\mbox{abpe}(R^t(K,\mu)) = \emptyset$ or not. It is also essential for us to describe $R^t(K,\mu)$ as in Main Theorem II. 
\smallskip

\begin{definition}\label{GDefInto}
A function $f$ is $\gamma$-continuous at $\lambda$ if $f(z)$ is well defined at $\lambda$ and on $B(\lambda, \delta_0)\setminus \mathbb Q$ with $\gamma(\mathbb Q) = 0$ for some $\delta_0 > 0$ satisfying
 \[
 \ \lim_{\delta\rightarrow 0 }\dfrac{\gamma (B(\lambda, \delta) \cap \{|f(z) - f(\lambda)| > \epsilon\})}{\delta} = 0
 \]
for all $\epsilon > 0$.
\end{definition}
\smallskip

We establish our first main result as the following.    

\begin{MTheorem}\label{MTheoremII}
If $S_\mu$ on $R^t(K, \mu )$ is pure, then the following properties hold:

(a) For $f\in R^t(K,\mu)$, there exists a unique $\rho(f)$ defined on $\mathcal R,~ \gamma-a.a.$ satisfying:
\[
\ \rho(f)(z) \mathcal C(g\mu) (z) = \mathcal C(fg\mu) (z),~ z\in\mathcal R,~\gamma-a.a. 
\]
for all $g\perp R^t(K,\mu)$.
Furthermore, there is a subset $\mathbb Q_f\subset \mathbb C$ with $\gamma(\mathbb Q_f) = 0$ such that $\rho(f)$ is $\gamma$-continuous at each $\lambda\in \mathcal R \setminus \mathbb Q_f$.    

(b) (Nontangential limits in full analytic capacitary density) For $f\in R^t(K,\mu)$, there exists  a subset $\mathbb Q_f$ with $\gamma(\mathbb Q_f) = 0$ such that for $\lambda \in \mathcal F _+\cap\Gamma_n \setminus \mathbb Q_f$, 
 \[
 \ \lim_{\delta\rightarrow 0 }\dfrac{\gamma (B(\lambda, \delta) \cap L_{\Gamma_n} \cap \{|\rho(f)(z) - f(\lambda)| > \epsilon\})}{\delta} = 0
 \]
and
 for $\lambda \in \mathcal F _- \cap\Gamma_n\setminus \mathbb Q_f$, 
 \[
 \ \lim_{\delta\rightarrow 0 }\dfrac{\gamma (B(\lambda, \delta) \cap U_{\Gamma_n} \cap \{|\rho(f)(z) - f(\lambda)| > \epsilon\})}{\delta} = 0
 \] 
for all $\epsilon > 0$, where $\Gamma_n$ is as in \eqref{RNDecom}. 
\end{MTheorem}
\smallskip 

As applications of Main Theorem I, in case that $\mbox{abpe}(R^t(K, \mu))$ is adjacent to one of these Lipschitz
graphs, one can consider usual nontangential limits and obtain Theorem \ref{NonTL}, which generalizes Theorem III (a) to the space $R^t(K, \mu)$. Theorem III (b) can also be extended to $R^t(K, \mu)$ as in Theorem \ref{IndexForIS}.

\bigskip

\section{Decomposition of $R^t(K,\mu)$ and the algebra $R^t(K,\mu)\cap L^\infty (\mu)$}
\bigskip

The main part of this paper is to examine the structure of $R^t(K, \mu)$. Our main theorem II below extends Theorem I and Theorem II to $R^t(K, \mu)$ for an arbitrary compact subset $K$ and a finite positive measure $\mu$ with $\text{spt}(\mu) \subset K$.
\smallskip 

For a Borel set $F$, define 
 \[
 \ H^\infty_F = \{f\in L^\infty(\mathbb C): ~ f \text{ is bounded analytic on } \mathbb C \setminus E_f,\text{ compact subset }E_f\subset F\}.
 \]
Let $\mathcal L^2_F$ denote the planar Lebesgue measure restricted to $F$.
\smallskip

\begin{definition}\label{hSpace}
The algebra $H^\infty (\mathcal R)$ is the weak$^*$ closure in $L^\infty(\mathcal L^2_{\mathcal R})$ of $H^\infty_{\mathcal F}$.
\end{definition}
\smallskip

We observe that functions in $H^\infty (\mathcal R)$ may not be analytic on open subsets as $\mathcal R$ may not have interior (e.g. a Swiss cheese set $K$, see Example \ref{algEqExample}). We now state our second main theorem.

\begin{MTheorem2}\label{DecompositionTheorem} 
There exists a Borel partition $\{\Delta_n\}_{n\ge 0}$ of $\text{spt}(\mu )$ and compact subsets $\{K_n\}_{i=1}^\infty$ such that $\Delta_n \subset K_n$ for $n \ge 1$,
 \[
 \ R^t(K,\mu) = L^t(\mu |_{\Delta_0})\oplus \bigoplus_{n=1}^\infty R^t(K_n, \mu |_{\Delta_n}),
 \]
and the following statements are true: 

(1) If $n \ge 1$, then $S_{\mu |_{\Delta_n}}$ on $R^t(K_n, \mu |_{\Delta_n})$ is irreducible. 

(2) If $n \ge 1$, $m \ge 1$, $n\ne m$, and $\mathcal F_n$ and $\mathcal R_n$ are the non-removable boundary and removable set for $R^t(K_n, \mu |_{\Delta_n})$, respectively, then $K_n = \overline{\mathcal R_n}$ and $K_m \subset \mathcal F_n, ~\gamma-a.a$. 

(3) If $S_\mu$ is pure ($\mu |_{\Delta_0} = 0$), then the map $\rho$ is an isometric isomorphism and a weak$^*$ homeomorphism from $R^t(K, \mu) \cap L^\infty (\mu)$ onto $H^\infty (\mathcal R)$. 
\end{MTheorem2}
\smallskip

Theorem \ref{SOBTheorem} (for string of beads sets) is an easy conclusion of Main Theorem II, which implies a certain possible splitting of the space
$R^t(K, \mu)$ does not occur under $\mathcal R_B \ne \emptyset$. In section 2.2, we use Theorem \ref{SOBTheorem} as an example to exhibit the ideas of proving our main theorems. 

Corollary \ref{DecompCorollary1} and \ref{DecompCorollary2} are applications of Main Theorem II, where the boundary of each connected component of $\text{abpe}(R^t(K,\mu))$ is a subset of $\mathcal F$. In section 7.2, we apply our Main Theorem II to the cases in which $\partial (\text{abpe}(R^t(K,\mu))) \cap \mathcal R \ne \emptyset, ~\gamma-a.a.$.   

Finally, we mention some irreducibility properties of $S_\mu$. It is shown in Theorem \ref{Lemma3} that if $S_\mu$ is irreducible, then $\underset{\delta \rightarrow 0}{\overline{\lim}}\frac{\gamma(B(\lambda, \delta)\setminus \mathcal F)}{\delta} > 0$ for $\lambda \in K$.  Theorem \ref{IrreducibilityTheorem} proves that $S_\mu$ is irreducible if and only if the removable set $\mathcal R$ is $\gamma$-connected (see Definition \ref{gammaCDefinition}).

\bigskip

\section{Historical notes}

\bigskip

D. Sarason \cite{s72} has characterized $P^\infty (\mu)$, the weak$^*$ closure in $L^\infty(\mu)$ of
the polynomials. In that case, there is a decomposition similar to Thomson's Theorem (Theorem I) and each analytic summand is the space of bounded analytic functions on the set $U_j$, which turns out to be a special kind of simply connected region (see Page 398 of \cite{c81}).

Polynomial and rational approximation in the mean has been studied extensively. Before Thomson published his remarkable results in \cite{thomson}, there were several papers such as J. Brennan \cite{B79}, S. Hruscev \cite{H79}, T. Trent \cite{Tr79}, and \cite{Tr90}, etc. As we have already pointed out, Theorem II is due to J. Conway and N. Elias \cite{ce93} and J. Brennan \cite{b08}. For a compactly supported complex Borel measure $\nu$ of $\mathbb C$, by estimating
analytic capacity of the set $\{\lambda: |\mathcal C(\nu )| > c \}$, J. Brennan \cite{B06}, A. Aleman, S. Richter, and C. Sundberg \cite{ARS09} and \cite{ARS10} provide interesting
alternative proofs of Thomson's theorem for the existence of analytic bounded point evaluations for mean polynomial approximation. Both their proofs rely on X. Tolsa's deep results on analytic
capacity. There are other related research papers recently. For example, J. Brennan and E. Militzer \cite{BM11}, L. Yang \cite{Y16} and \cite{Y18}, etc. 

The paper \cite{Y95b} suggested a connection between the index of nonzero invariant subspaces and the existence of boundary values of functions. Later, in \cite{cy98}, the authors conjectured Theorem III (b). Before A. Aleman, S. Richter, and C. Sundberg published their remarkable paper \cite{ARS09} which proved Theorem III,  
this conjecture had been supported by numerous partial results by various
authors, both prior to and subsequent to \cite{cy98}.
We mention here the work of J. Akeroyd \cite{A01}, \cite{A02}, \cite{A03}, A. Aleman
and S. Richter \cite{AR97}, T. Miller and R. Smith \cite{MS90}, T. Miller, W.
Smith, and L. Yang \cite{MSY99}, R. Olin and J. Thomson \cite{OT80}, J
Thomson and L. Yang \cite{TY95}, T. Trent \cite{Tr79}, Z. Wu and L. Yang \cite{WY98}, and L. Yang \cite{Y95a}.

Finally, we mention that the paper \cite{acy18} extends Theorem III to certain $R^t(K, \mu)$. Moreover, it provides a simpler proof of Theorem III. 
 
\bigskip

\chapter{Preliminaries}
\bigskip

\section{Analytic capacity and Cauchy transform}
\bigskip

For a finite complex-valued Borel measure $\nu$ with compact support in $\mathbb C$, $\mathcal C_\epsilon (\nu)$ is defined as in \eqref{CTEDefinition} and (principal value) Cauchy transform $\mathcal C(\nu)(z) = \lim_{\epsilon \rightarrow 0}\mathcal C_\epsilon (\nu)(z)$ whenever the limit exists as  in \eqref{CTDefinition}. It is well known that in the sense of distribution,
 \begin{eqnarray}\label{CTDistributionEq}
 \ \bar \partial \mathcal C(\nu) = - \pi \nu.
 \end{eqnarray} 
The maximal Cauchy transform is defined
 \[
 \ \mathcal C_*(\nu)(z) = \sup _{\epsilon > 0}| \mathcal C_\epsilon(\nu)(z) |
 \]
and the maximal function of $\nu$ is defined
 \[
 \ \mathcal M_\nu(z) = \sup _{\epsilon > 0}\dfrac{|\nu|(B(z,\epsilon))}{\epsilon}.
 \]

Analytic capacity $\gamma$ is defined as in \eqref{GammaDefinition}. A related capacity, $\gamma _+,$ is defined for subsets $E$ of $\mathbb{C}$ by:
\[
\ \gamma_+(E) = \sup \|\eta \|,
\]
where the supremum is taken over positive measures $\eta$ with compact support
contained in $E$ for which $\|\mathcal{C}(\eta) \|_{L^\infty (\mathbb{C})} \le 1.$ 
Since $\mathcal C\eta$ is analytic in $\mathbb{C}_\infty \setminus \mbox{spt}(\eta)$ and $(\mathcal{C}(\eta)'(\infty) = \|\eta \|$, 
we have:
$\gamma _+(E) \le \gamma (E)$
for all compact subsets $E$ of $\mathbb{C}$. 

Given three pairwise different points $x, y, z \in \mathbb C$, their Menger curvature is
\[
\ c(x, y, z) = \dfrac{1}{R(x, y, z)},
 \]
where $R(x, y, z)$ is the radius of the circumference passing through $x, y, z$ (with $R(x, y, z) = \infty,~ c(x, y, z) = 0$ if $x, y, z$ lie on a same line). If two among these points coincide, we let $c(x, y, z) = 0$. For a finite positive measure $\eta$, we set
 \[
 \ c^2_\eta(x) = \int\int c(x,y,z)^2 d\eta(y)d\eta(z)
 \]
and we define the curvature of $\eta$ as
 \[
 \ c^2(\eta ) = \int c^2_\eta (x)d\eta)(x) = \int\int\int c(x,y,z)^2 d\eta(x)d\eta(y)d\eta(z).
 \] 

For a finite complex-valued measure $\nu$, define
 \[
 \ \Theta_\nu^* (\lambda ) := \underset{\delta\rightarrow 0}{\overline\lim} \dfrac{|\nu |(B(\lambda , \delta ))}{\delta}\text{ and }\Theta_\nu (\lambda ) := \lim_{\delta\rightarrow 0} \dfrac{|\nu |(B(\lambda , \delta ))}{\delta}\text{ if the limit exists.}
 \]

A finite positive measure $\eta$ supported in $E$ is $c$-linear growth if $\eta(B(\lambda, \delta)) \le c\delta$ for $\lambda\in \mathbb C$, denoted $\eta\in \Sigma(E)$ if $c=1$. In addition, if 
$\Theta_\eta(\lambda) = 0$, we say $\eta\in \Sigma_0(E)$. The Cauchy transform $\mathcal C\eta$ is bounded from $L^2(\eta)$ to $L^2(\eta)$ if
 \[
 \ \|\mathcal C_\epsilon (f \eta )\|_{L^2(\eta)} \le C \|f\|_{L^2(\eta)} 
 \]
for some constant $C>0$ and all $\epsilon > 0$. The operator norm is denoted by $\|\mathcal C\eta\|_{L^2(\eta)\rightarrow L^2(\eta)}$.
X. Tolsa has established the following astounding results.
\smallskip

\begin{theorem} \label{TTolsa} (Tolsa's Theorem)

(1) $\gamma_+$ and $\gamma$ are actually equivalent. 
That is, there are absolute positive constants $a_T$ and $A_T$ such that 
\begin{eqnarray}\label{GammaEq}
\ \gamma (E) \le A_ T \gamma_+(E),
\end{eqnarray}
\begin{eqnarray}\label{GammaEq1}
\ a_ T \gamma(E) \le \sup \{\eta(E):~ \eta\in \Sigma(E),~ \sup_{\epsilon > 0}\|\mathcal C_\epsilon (\eta )\|_{L^\infty (\mathbb C)} \le 1\} \le  A_ T \gamma(E),
\end{eqnarray}
\begin{eqnarray}\label{GammaEq2}
\ a_ T \gamma(E) \le \sup \{\eta(E):~ \eta\in \Sigma(E),~ \|\mathcal C\eta\|_{L^2(\eta)\rightarrow L^2(\eta)} \le 1\} \le A_ T \gamma(E),
\end{eqnarray}
and
\begin{eqnarray}\label{GammaEq3}
\ a_ T \gamma(E) \le \sup \{\eta(E):~ \eta\in \Sigma(E),~ c^2(\eta) \le \eta(E) \} \le A_ T \gamma(E),
\end{eqnarray}
for all $E \subset \mathbb{C}.$ 

(2) Semiadditivity of analytic capacity:
\begin{eqnarray}\label{Semiadditive}
\ \gamma \left (\bigcup_{i = 1}^m E_i \right ) \le A_T \sum_{i=1}^m \gamma(E_i)
\end{eqnarray}
where $E_1,E_2,...,E_m \subset \mathbb{C}$ and $m$ could be $\infty$.

(3) There is an absolute positive constant $C_T$ such that, for any $a > 0$, we have:  
\begin{eqnarray}\label{WeakOneOne}
\ \gamma(\{\mathcal{C}_*(\nu)  \geq a\}) \le \dfrac{C_T}{a} \|\nu \|.
\end{eqnarray}  
\end{theorem}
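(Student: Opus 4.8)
This statement is Tolsa's theorem, and the honest plan is to cite it rather than reprove it: all four items are established in \cite{Tol14} --- the comparabilities and the curvature/$L^2$ characterizations \eqref{GammaEq}--\eqref{GammaEq3}, the semiadditivity \eqref{Semiadditive}, and the weak-type bound \eqref{WeakOneOne}. It is nonetheless worth recording the architecture, since the rest of the paper uses these statements as black boxes. The hinge of part~(1) is the Melnikov--Verdera curvature identity: for a positive measure $\eta$ of linear growth, $\|\mathcal C_\epsilon(\eta)\|_{L^2(\eta)}^2 = \tfrac{1}{6}\, c^2(\eta) + O(\eta(\mathbb C))$ (with the corresponding truncated version), which makes the $L^2$-boundedness characterization \eqref{GammaEq2} and the curvature characterization \eqref{GammaEq3} interchangeable. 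The elementary half --- $\gamma_+ \le \gamma$ and the lower bounds $a_T\gamma(E) \le \sup\{\,\cdot\,\}$ in \eqref{GammaEq1}--\eqref{GammaEq3} --- is immediate: a competitor $\eta \in \Sigma(E)$ with $\|\mathcal C(\eta)\|_{L^\infty(\mathbb C)} \le 1$ produces the function $\mathcal C(\eta)$, analytic off $\mbox{spt}(\eta) \subset E$ with $(\mathcal C\eta)'(\infty) = \|\eta\|$, whence $\eta(E) \le \gamma(E)$; and one passes among bounded Cauchy potential, $L^2$ boundedness, and finite curvature in this direction by Cotlar's inequality and the identity above.

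The deep content is the reverse inequality $\gamma(E) \le A_T\gamma_+(E)$, i.e.\ the upper bounds in \eqref{GammaEq1}--\eqref{GammaEq3}. Given a function $f$, analytic off $E$ with $|f| \le 1$ and $f'(\infty) \ge \tfrac{1}{2}\gamma(E)$, one writes $f = \mathcal C(\nu)$ in the distributional sense \eqref{CTDistributionEq} and runs an induction-on-scales / corona construction over a system of dyadic cubes adapted to a linear-growth majorant of $\nu$, separating the good cubes from the high-density bad cubes; on the good part one invokes the non-homogeneous $Tb$ theorem with a test function built from $f$ to obtain boundedness of the Cauchy transform on $L^2$ of a constructed positive measure, hence finite curvature by Melnikov--Verdera, and patching yields $\eta \in \Sigma(E)$ with $c^2(\eta) \le \eta(E)$ and $\eta(E) \ge a_T\gamma(E)$. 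This is precisely Tolsa's resolution of the Painlev\'e / Vitushkin problem. Once $\gamma \approx \gamma_+$ is available, the semiadditivity \eqref{Semiadditive}, including the countable case $m = \infty$, follows from the curvature characterization, using that $c^2(\cdot)$ is monotone under restriction of measures together with a stopping-time removal of the portion of each $E_i$ on which the truncated curvature grows large. Finally \eqref{WeakOneOne} is obtained by combining the semiadditivity of $\gamma$ with a Calder\'on--Zygmund decomposition of $\nu$ adapted to non-doubling measures: the good part is controlled in $L^2$ by the boundedness of the truncated Cauchy integrals, while the exceptional set carrying the bad part is covered by pieces whose $\gamma$-measures sum to $O(\|\nu\|/a)$ by semiadditivity.

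The main obstacle --- and the reason for citing rather than reproducing --- is the single inequality $\gamma \le A_T\gamma_+$: manufacturing a positive measure of controlled curvature from an arbitrary bounded analytic function requires the full non-homogeneous Calder\'on--Zygmund machinery and delicate multiscale bookkeeping of curvature, and it occupies the bulk of \cite{Tol14}. Everything downstream --- the equivalence of the four characterizations once the Melnikov--Verdera bridge and this inequality are in hand, the deduction of semiadditivity, and the passage from semiadditivity to \eqref{WeakOneOne} --- is comparatively routine. It is these four statements, used as black boxes, on which the remainder of the paper relies; in particular, Corollary~\ref{ZeroAC} (the $\gamma$-almost-everywhere existence of the principal-value Cauchy transform \eqref{CTDefinition}) is an immediate consequence of \eqref{WeakOneOne}.
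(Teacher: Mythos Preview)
Your proposal is correct and matches the paper's own treatment: the paper does not reprove Tolsa's theorem but simply cites \cite{Tol03}, \cite{Tol02}, and the book \cite{Tol14} (specifically Theorem~6.1, Corollary~6.3, Theorem~4.14, and Proposition~4.16) for parts (1)--(3). Your architectural sketch of Tolsa's argument is accurate and more informative than the paper's bare citations, but the approach is the same.
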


\begin{proof}
(1) and (2) are from \cite{Tol03} (or see Theorem 6.1 and Corollary 6.3 in \cite{Tol14}). \eqref{GammaEq1}, \eqref{GammaEq2}, and \eqref{GammaEq3} follow from Theorem 4.14 of \cite{Tol14}.
(3) follows from Proposition 2.1 of \cite{Tol02} (or see \cite{Tol14} Proposition 4.16).
\end{proof}
\smallskip

The following corollary is a simple application of \cite{To98} (or see Theorem 8.1 in \cite{Tol14} or Corollary 3.1 in \cite{acy18}).

\begin{corollary}\label{ZeroAC}
Suppose that $\nu$ is a finite, complex-valued Borel measure with compact support in $\mathbb{C}$. Then there exists 
$\mathbb Q \subset \mathbb{C}$ with $\gamma(\mathbb Q) = 0$ such that $\lim_{\epsilon \rightarrow 0}\mathcal{C} _{\epsilon}(\nu)(z)$ 
exists for $z\in\mathbb{C}\setminus \mathbb Q$.
\end{corollary}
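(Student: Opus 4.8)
Corollary \ref{ZeroAC}: for a finite complex-valued Borel measure $\nu$ with compact support, the principal-value limit $\mathcal C(\nu)(z) = \lim_{\epsilon\to 0}\mathcal C_\epsilon(\nu)(z)$ exists off a set of zero analytic capacity.

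The plan is to reduce everything to the corresponding statement for measures with finite curvature plus a small-capacity error, and then use the weak-$(1,1)$ bound \eqref{WeakOneOne} of Tolsa to control the oscillation set. First I would write $\nu = f|\nu|$ with $|f|=1$ by polar decomposition, so it suffices to treat the question of existence of the principal value for a measure of the form $f\eta$ with $\eta$ positive finite and compactly supported, $f\in L^1(\eta)$; by linearity and truncation of $f$ I may further assume $f\in L^\infty(\eta)$, even $f$ bounded by $1$. The main structural input is that, after discarding a set of arbitrarily small analytic capacity, $\eta$ can be arranged to have linear growth and finite curvature: concretely, for any $\delta>0$ there is a Borel set $G$ with $\gamma(\mathbb C\setminus G)<\delta$ such that $\eta|_G$ has $c$-linear growth and $c^2(\eta|_G)<\infty$ — this is exactly the content of Tolsa's theorem on the comparability of $\gamma$ with curvature (\eqref{GammaEq3} of Theorem \ref{TTolsa}), combined with the measure-theoretic fact that at $\eta$-a.e. point the upper density $\Theta^*_\eta$ is finite, so the "bad density" set carries all of $\eta$ only on a $\gamma$-null set. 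This is precisely the reduction effected in \cite{To98}/\cite{Tol14} Theorem 8.1.

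Next I would invoke the known a.e.-existence of principal values for the Cauchy transform on measures of finite curvature and linear growth: if $\eta$ has linear growth and $c^2(\eta)<\infty$, then $\mathcal C_\epsilon(g\eta)(z)$ converges as $\epsilon\to 0$ for $\eta$-a.e.\ $z$, for every $g\in L^2(\eta)$ — this follows from the $L^2(\eta)$-boundedness of the Cauchy operator (which \eqref{GammaEq2} and the curvature bound give) together with the Calderón–Zygmund theory of principal values for singular integrals with respect to a nice underlying measure. Applying this to $\eta|_G$ and $g = f$, I get a Borel set $N_\delta\subset G$ with $(\eta|_G)(N_\delta)=0$ such that the principal value of $\mathcal C(f\eta|_G)$ exists off $N_\delta$. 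The place where the truncation to $G$ might destroy convergence is handled by \eqref{WeakOneOne}: the "tail" measure $f\eta|_{\mathbb C\setminus G}$ has total mass at most $\|f\|_\infty\,\eta(\mathbb C\setminus G)$, and its maximal Cauchy transform $\mathcal C_*$ is small off a set of analytic capacity $\le C_T\|f\|_\infty\eta(\mathbb C\setminus G)/a$; but one has to be slightly more careful here, because $\eta(\mathbb C\setminus G)$ need not be small even when $\gamma(\mathbb C\setminus G)$ is. This is the main obstacle.

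To get around it I would not split $\eta$ by a single set but run an exhaustion: choose an increasing sequence of Borel sets $G_k$ on which $\eta|_{G_k}$ has linear growth and finite curvature, with $\gamma(\mathbb C\setminus G_k)<2^{-k}$, arranged (as in the proof of Theorem 8.1 of \cite{Tol14}) so that $\eta$ is concentrated on $\bigcup_k G_k$ up to a $\gamma$-null set. On each $G_k$ the principal value of $\mathcal C(f\eta|_{G_k})$ exists off an $\eta|_{G_k}$-null set $N_k$; one checks that $\gamma(N_k)=0$ as well, because an $\eta|_{G_k}$-null set sitting inside a set of linear-growth, finite-curvature measure is $\gamma$-null by \eqref{GammaEq3} (the measure of the null set is $0$, so the sup in \eqref{GammaEq3} restricted to it is $0$). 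The convergence of $\mathcal C_\epsilon(f\eta)(z)$ at a point $z$ then follows once $z$ lies in some $G_k$, is outside $N_k$, and outside the set where $\mathcal C_*$ of the difference $f(\eta - \eta|_{G_k})$ fails to be controlled; by \eqref{Semiadditive} and \eqref{WeakOneOne} the union over $k$ of all the exceptional sets, together with the $\gamma$-null set $\bigcap_k(\mathbb C\setminus G_k)$ off which $\eta$ lives, still has zero analytic capacity. Assembling these pieces and letting $\mathbb Q$ be that union gives the corollary. Since the authors cite \cite{To98} / Theorem 8.1 of \cite{Tol14} directly, in the write-up I expect most of this to be compressed into a one-line appeal to those references; the only genuine verification is the passage from "$\eta$-null" to "$\gamma$-null" for the exceptional sets, and the use of \eqref{WeakOneOne} to absorb the small-capacity truncation error.
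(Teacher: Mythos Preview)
The paper's proof is exactly what you predict in your last sentence: a one-line citation to \cite{To98} / Theorem~8.1 of \cite{Tol14} / Corollary~3.1 of \cite{acy18}. Your attempt to unpack those references has the right ingredients but a genuine gap. The claim that ``an $\eta|_{G_k}$-null set $N_k$ is $\gamma$-null by \eqref{GammaEq3}'' is wrong as stated: the supremum in \eqref{GammaEq3} is over \emph{all} admissible measures on $N_k$, not just $\eta|_{G_k}$, so $\eta|_{G_k}(N_k)=0$ does not force $\gamma(N_k)=0$. Relatedly, controlling $\mathcal C_*$ of the tail $f\eta|_{\mathbb C\setminus G_k}$ gives only boundedness of the truncated transforms, not convergence.

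The argument in \cite{Tol14} does not decompose the source $|\nu|$; it tests against the target. Assume the failure set $F$ has $\gamma(F)>0$ and take, via \eqref{GammaEq2}, a nonzero $\sigma\in\Sigma(E)$ supported in a compact $E\subset F$ with $\mathcal C_\sigma$ bounded on $L^2(\sigma)$; applying \eqref{GammaEq2} to each $\sigma|_B$ gives $\sigma(B)\le C\gamma(B)$ for every Borel $B$, so \eqref{WeakOneOne} transfers to a weak-$(1,1)$ bound for $\mathcal C_*$ with respect to $\sigma$. Write $\nu=h\sigma+\nu_s$ with $\nu_s\perp\sigma$. The $h\sigma$ part has principal values $\sigma$-a.e.\ by standard non-homogeneous Calder\'on--Zygmund theory. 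For $\nu_s$, choose a carrier $A$ with $\sigma(A)=0$, $|\nu_s|(A^c)=0$, and for each $\delta>0$ a compact $K\subset A$ with $|\nu_s|(A\setminus K)<\delta$; at $\sigma$-a.e.\ $z$ one has $z\notin A\supset K$, so $\mathcal C_\epsilon(\nu_s|_K)(z)$ stabilizes for small $\epsilon$, while the oscillation from $\nu_s|_{A\setminus K}$ is bounded by $2\mathcal C_*(\nu_s|_{A\setminus K})$, which the $\sigma$-weak-$(1,1)$ bound shows is small except on a set of $\sigma$-measure $O(\delta)$. Sending $\delta\to 0$ gives principal values $\sigma$-a.e., contradicting $\mathrm{spt}(\sigma)\subset F$.
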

\smallskip

Let $\mathbb R$ be the real line. Let $A:\mathbb R\rightarrow \mathbb R$ be a Lipschitz function. The Lipschitz graph $\Gamma$ of $A$ is defined by 
 $\Gamma = \left \{(x,A(x)):~x\in \mathbb R \right \}$.
We will use $C_1, C_2, C_3, ...$ and $c_1, c_2, c_3,...$ to stand for absolute constants that may change from one step to the next.
For a finite complex-valued measure $\nu$ with compact support on $\mathbb C$, define zero and non zero linear density set as the following:
 \begin{eqnarray}\label{ZNDensity}
 \ \mathcal{ZD}(\nu) = \{ \lambda:~ \Theta_\nu (\lambda ) = 0 \}, ~ \mathcal{ND}(\nu) =  \{ \lambda:~ \Theta^*_\nu (\lambda ) > 0 \}.
 \end{eqnarray}
Set $\mathcal{ND}(\nu, n) =  \{ \lambda:~ \frac{1}{n} \le \Theta^*_\nu (\lambda ) \le n \}$.
The following results are used throughout this paper and we list them here as a lemma for the convenience of the reader.

\begin{lemma}\label{lemmaBasic0}
(1) Let $\eta$ be a compactly supported finite positive measure with $\eta\in \Sigma(\text{spt}(\eta))$. Suppose that $\mathcal C\eta$ is bounded on $L^2(\eta)$. Then for a Borel set $E\subset \mathbb C$, there exists some function $h$ supported on $E$, with $0\le h \le 1$ such that
 \[
 \ \eta (E) \le 2 \int h d\eta, ~ \|\mathcal C _\epsilon (h\eta)\|_{ L^\infty (\mathbb C)} \le c
\]
for all $\epsilon > 0$, where the constant $c$ depends only on the $L^2(\eta)$ norm of $\mathcal C\eta$.

(2) If $E\subset \mathbb C$ is a bounded Borel set, then $\mathcal L^2(E)\le 4 \pi \gamma(E)^2$.

(3) Let $p>0$ be an integer. Let $\{E_n\}$ be a family of subsets of bounded set $E$ such that every $B(z,\gamma(E))$ meets at most $p$ of $\{E_n\}$. Then 
 \[
 \ \sum_n \gamma(E_n) \le 100 p \gamma(E).
 \]

(4) If $E$ is a subset of $\mathbb C$ with $\mathcal H^1 (E) <\infty$, then
 \begin{eqnarray}\label{RectifiableEq}
 \ E \subset \mathbb Q \cup \bigcup_{n=1}^\infty \Gamma_n
 \end{eqnarray}
where $\gamma(\mathbb Q) = 0$ and $\Gamma_n$ is a (rotated) Lipschitz graph with its Lipschitz function $A_n$ satisfying $\|A'_n\|\le \frac{1}{4}$.

(5) Let $\mu$ be a finite positive measure with compact support in $\mathbb C$. For $\lambda_2 \ge \lambda_1 > 0$, let $E$ and $F$ be two bounded Borel sets such that 
 \[
 \ E \subset \{z:~ \Theta^*_\mu (z) \ge \lambda_1 \} \text{ and } F \subset \{z:~ \lambda _1 \le \Theta^*_\mu (z) \le \lambda_2 \}.
 \]
Then there exist some absolute constants $c_1$, $c_2$, and $c_3$ such that $\mathcal H^1(E) \le c_1 \frac{\mu(E)}{\lambda_1}$ and $\mu |_F = g \mathcal H^1|_F$, where $g$ is some Borel function such that $c_2\lambda_1 \le g(z) \le c_3 \lambda_2,~ \mathcal H^1|_F-a.a.$. Consequently, the following statements are true:

(a) If $\mathbb Q_\mu = \{z:~ \Theta_\mu^*(z) = \infty\}$, then $\gamma(\mathbb Q_\mu) = \mathcal H^1(\mathbb Q_\mu) = 0$, 
 \[
 \ \mathcal{ND}(\mu) = \bigcup_n \mathcal{ND}(\mu, n) \cup \mathbb Q_\mu,
 \]
 $\mu |_{\mathcal{ND}(\mu)} = g \mathcal H^1|_{\mathcal{ND}(\mu)}$, and $c_2\frac{1}{n} \le g(z) \le c_3 n,~ \mathcal H^1|_{\mathcal{ND}(\mu, n)}-a.a.$.

(b) Let $\eta$ be a compactly supported finite positive measure on $\mathbb{C}$ with $c$-linear growth. Let $\nu$ be a finite, complex-valued Borel measure with compact support in $\mathbb{C}$. If $\eta\perp|\nu|$, then $\eta(\mathcal{ND}(\nu)) = 0$. 

(6) Let $E\subset \mathbb C$ be compact with $\mathcal H^1 (E) <\infty$, and let $f : \mathbb C \setminus E\rightarrow \mathbb C$ be analytic such that $\|f\|_\infty \le 1$ and $f(\infty )=0$. Then there is a measure $\nu = b \mathcal H^1 |_E$, where $b$ is a measurable function with $|b(z)| \le 1$ for all $z\in E$, such that $f(z) = \mathcal C(\nu )(z)$ for all $z \notin E$.

(7) Let $A:\mathbb R\rightarrow \mathbb R$ be a Lipschitz function with graph $\Gamma$. Then the Cauchy transform of the measure $\mathcal H^1 |_\Gamma$ is bounded on $L^2(\mathcal H^1 |_\Gamma)$, while the norm only depends on $\|A'\|_\infty$. Hence, by Theorem \ref{TTolsa} (1), there are constants $c_\Gamma, C_\Gamma > 0$ that depend only on $\|A'\|_\infty$ such that for $E\subset \Gamma$,
 \begin{eqnarray}\label{HACEq}
 \ c_\Gamma \mathcal H^1 |_\Gamma (E) \le \gamma (E) \le C_\Gamma \mathcal H^1 |_\Gamma (E). 
 \end{eqnarray}

(8) Let $\eta$ be a finite, positive Borel measure on $\mathbb{C}$ with $\eta\in \Sigma(\text{spt}(\eta))$. Then, there exists an absolute constant $C_M > 0$, for any finite complex-valued Borel measure $\nu$ with compact support in $\mathbb{C}$, 
 \[
 \ \eta\{\lambda:~ \mathcal M_\nu (\lambda) > a\} \le \dfrac{C_M}{a}\|\nu\|.
 \]
Consequently, $\gamma\{\lambda:~ \mathcal M_\nu (\lambda) > a\} \le \frac{C_M}{a}\|\nu\|$.

(9) Let $\eta$ be a finite positive measure such that $\eta\in \Sigma(\text{spt}(\eta))$ and $\|\mathcal C_\epsilon (\eta)\| \le 1$. If $\gamma(E) = 0$, then $\eta(E) = 0$.

(10) Let $\{\nu_j\}$ be a sequence of finite complex-valued measures with compact supports. Then for $\epsilon > 0$, there exists a Borel subset $F$ such that $\gamma (F^c) < \epsilon$ and $\mathcal C_*(\nu_j)(z), ~\mathcal M_{\nu_j}(z ) \le M_j < \infty$ for $z \in F$.
\end{lemma}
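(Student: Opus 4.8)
Since this lemma collects ten standard facts, the plan is to cite the literature for each and only sketch the short bridging arguments. For (1), I would invoke the construction behind Tolsa's characterization \eqref{GammaEq2}: given that $\mathcal C\eta$ is $L^2(\eta)$-bounded, a standard $T(1)$/good-$\lambda$ localization (Tolsa \cite{Tol14}, proof of Theorem 4.14) produces on any Borel $E$ a function $0\le h\le 1$ supported on $E$ with $\int h\,d\eta\ge \tfrac12\eta(E)$ and $\|\mathcal C_\epsilon(h\eta)\|_\infty\le c(\|\mathcal C\eta\|)$; alternatively one reduces to Lemma \ref{lemmaBasic0}(10)-type stopping-time arguments on $\eta$. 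Part (2) is immediate: by \eqref{GammaEq1}, $\gamma(E)\gtrsim \eta(E)$ for suitable $\eta$, and taking $\eta=\tfrac1{4\pi\gamma(E)}\mathcal L^2|_E$ (whose Cauchy transform has sup-norm $\le 1$ after the normalization, using the elementary bound $\|\mathcal C(\mathcal L^2|_B)\|_\infty\le 2\pi\,\mathrm{rad}(B)$ and a covering argument) gives $\mathcal L^2(E)\le 4\pi\gamma(E)^2$; this is exactly the classical estimate in \cite{Tol14}, Ch.~1. Part (3) is the standard Besicovitch-type covering/packing argument combined with semiadditivity \eqref{Semiadditive}, as in \cite{Tol14}; I would cover $E$ by balls of radius $\gamma(E)$ with bounded overlap, distribute the $E_n$ among finitely many subfamilies, and sum.

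For (4) I would simply quote the rectifiability decomposition: a set of finite $\mathcal H^1$ measure is contained, up to $\mathcal H^1$-null (hence $\gamma$-null, since $\gamma\le \mathcal H^1$) set, in a countable union of $C^1$ curves, which can be taken to be Lipschitz graphs with arbitrarily small slope after rotation and subdivision (Mattila, or \cite{Tol14} Ch.~1). Part (5) is the content of David--Léger / Tolsa density estimates: on $\{\Theta^*_\mu\ge\lambda_1\}$ one gets $\mathcal H^1\lesssim \mu/\lambda_1$ by a Vitali covering argument, and on the two-sided density set $\{\lambda_1\le\Theta^*_\mu\le\lambda_2\}$ one gets mutual absolute continuity of $\mu$ and $\mathcal H^1$ with comparable densities (this is in \cite{Tol14}, and the pieces (a), (b) follow by writing $\{\Theta^*_\mu>0\}=\bigcup_n\{1/n\le\Theta^*_\mu\le n\}\cup\{\Theta^*_\mu=\infty\}$ and noting $\gamma=\mathcal H^1=0$ on the last set by (2) and the growth bound; (b) follows because an $\mathcal H^1$-a.c.\ measure that is singular to $|\nu|$ cannot see the positive-density set of $\nu$). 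Parts (6), (7), (9) are classical: (6) is the Cauchy-transform representation of bounded analytic functions off sets of finite length (this uses $\mathcal H^1(E)<\infty$ and is in \cite{Tol14}, via the $E$-derivative / Plemelj argument), (7) is the Calderón--Coifman--McIntosh--Meyer theorem on $L^2$-boundedness of the Cauchy integral on Lipschitz graphs together with \eqref{GammaEq}--\eqref{GammaEq2}, and (9) is immediate from \eqref{GammaEq1}: a set of zero analytic capacity carries no nontrivial linear-growth measure, so in particular $\eta|_E=0$.

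Parts (8) and (10) are the maximal-function estimates. For (8), I would run the usual weak-(1,1) argument for $\mathcal M_\nu$ against a linear-growth measure: cover $\{\mathcal M_\nu>a\}$ by balls $B(\lambda,\epsilon_\lambda)$ with $|\nu|(B(\lambda,\epsilon_\lambda))>a\epsilon_\lambda$, apply Vitali's $5r$-covering lemma to extract a disjoint subfamily, and use $\eta(B(\lambda,5\epsilon_\lambda))\le 5\epsilon_\lambda$ together with $\sum\epsilon_{\lambda}\le a^{-1}\sum|\nu|(B(\lambda,\epsilon_\lambda))\le a^{-1}\|\nu\|$; the consequence for $\gamma$ then follows from \eqref{GammaEq1}. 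For (10), given $\{\nu_j\}$, use Corollary \ref{ZeroAC} to see each $\mathcal C_*(\nu_j)$ and $\mathcal M_{\nu_j}$ is finite off a $\gamma$-null set, then apply \eqref{WeakOneOne} and (8) to choose $M_j$ with $\gamma(\{\mathcal C_*(\nu_j)>M_j\}\cup\{\mathcal M_{\nu_j}>M_j\})<\epsilon\,2^{-j}A_T^{-1}$, and set $F$ to be the complement of the union; semiadditivity \eqref{Semiadditive} (countable form) gives $\gamma(F^c)<\epsilon$. The only genuinely nontrivial input throughout is Tolsa's Theorem \ref{TTolsa}, which is assumed; the main (mild) obstacle is part (1), where one must extract the localized function $h$ with a uniform sup-bound on $\mathcal C_\epsilon(h\eta)$ rather than just an $L^2$-bound — this is handled by the standard stopping-time/good-$\lambda$ decomposition underlying Tolsa's identity \eqref{GammaEq2}, so no new ideas are needed.
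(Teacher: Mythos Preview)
Your overall strategy---cite the standard references and sketch the bridging arguments---matches the paper's, and most parts are handled correctly. However, there is a genuine gap in your treatment of (4), and a smaller one in (9).

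For (4), you write that a set of finite $\mathcal H^1$ measure is contained in a countable union of $C^1$ curves ``up to $\mathcal H^1$-null (hence $\gamma$-null)'' set. This is false as stated: the Besicovitch decomposition of a set $E$ with $\mathcal H^1(E)<\infty$ gives a rectifiable part plus a \emph{purely unrectifiable} part, and the latter can have positive $\mathcal H^1$ measure. What makes the exceptional set $\mathbb Q$ have $\gamma(\mathbb Q)=0$ is not the trivial bound $\gamma\le\mathcal H^1$, but David's theorem \cite{D98} (the resolution of Vitushkin's conjecture): a purely unrectifiable set of finite length has zero analytic capacity. The paper's proof explicitly invokes this (``David's Theorem (\cite{D98} or Theorem 7.2 in \cite{Tol14})''), together with the approximation by Lipschitz graphs of small slope. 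Without David's theorem your argument for (4) does not go through.

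For (9), your appeal to \eqref{GammaEq1} is not quite sufficient: the hypothesis gives $\|\mathcal C_\epsilon(\eta)\|_\infty\le 1$ for the full measure $\eta$, but \eqref{GammaEq1} requires a measure supported on $E$ with that bound, and restricting $\eta$ to $E$ need not preserve the uniform Cauchy bound. The paper instead uses the curvature characterization \eqref{GammaEq3}: since $\|\mathcal C_\epsilon(\eta)\|_\infty\le 1$ and $\eta$ has linear growth, Proposition~3.3 in \cite{Tol14} gives $c^2(\eta)<\infty$; curvature \emph{does} decrease under restriction, so one rescales $\eta|_E$ and applies \eqref{GammaEq3} to get a contradiction. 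This is a small but necessary correction.

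The remaining parts are essentially as in the paper, though your citations differ in places (e.g.\ (2) and (3) are in Gamelin \cite{gamelin} rather than \cite{Tol14}, and predate semiadditivity).
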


\begin{proof}
(1) follows from Lemma 4.7 in \cite{Tol14}.
See Theorem 2.3 on page 200 for (2) and Theorem 2.7 on page 202 for (3) in \cite{gamelin}. 
Combining Theorem 1.26 in \cite{Tol14}, David’s Theorem (\cite{D98} or Theorem 7.2 in \cite{Tol14}), Proposition 4.13 in \cite{Tol14}, and the proof of Lemma 4.11 in \cite{Tol14}, we obtain (4).

(5) follows from Lemma 8.12  on page 307 in \cite{Tol14}. Notice that 
 \[
 \ \mathcal H^1(\mathbb Q_\mu) \le \mathcal H^1\{z:~ \Theta^*_\mu (z) \ge n\} \le \dfrac{c_1}{n}\|\mu\|.
 \]
So (5)(a) follows from (5). For (5) (b): By (5), $\mathcal H^1(\mathcal{ND}(\eta,m)\cap \mathcal{ND}(\nu, n)) = 0$ as $\eta\perp|\nu|$. Therefore, $\mathcal H^1(\mathcal{ND}(\eta)\cap \mathcal{ND}(\nu)) = 0$. On the other hand, $\mathcal H^1(\mathcal{ND}(\nu, n)) < \infty$, which implies $\eta |_{\mathcal{ND}(\nu)}$ is absolutely continuous with respect to $\mathcal H^1  |_{\mathcal{ND}(\nu)}$ since $\eta$ is linear growth. Hence, $\eta(\mathcal{ZD}(\eta) \cap \mathcal{ND}(\nu)) = 0$, which implies $\eta(\mathcal{ND}(\nu)) = 0$.    

For (6), see Proposition 6.5 in \cite{Tol14}.
For (7), see \cite{CMM82} or Theorem 3.11 in \cite{Tol14}.
For (8), see Theorem 2.6 in \cite{Tol14} and Theorem \ref{TTolsa} (1).

(9): Suppose $\eta(E) > 0$. By Proposition 3.3 in \cite{Tol14}, $c^2(\eta) < \infty$. If $\eta_0 = \sqrt{\frac{\eta(E)}{c^2(\eta)}}\eta|_E$, then  
 \[
	 \ c^2(\eta_0) \le \left ( \sqrt{\dfrac{\eta(E)}{c^2(\eta)}} \right )^3 c^2(\eta) = \eta_0(E).   
	 \]
	By Theorem \ref{TTolsa} (1) \eqref{GammaEq3}, we get $\eta_0 (E) \le A_T \gamma (E)$, which contradicts to $\eta(E) > 0$.
	
	The proof of (10) is an application of (8) and Theorem \ref{TTolsa} (2), (3). In fact, let $A_j = \{\mathcal C_*(\nu_j)(z) \le M_j\}$ and $B_j = \{\mathcal M_{\nu_j}(z ) \le M_j\}$. By Theorem \ref{TTolsa} (3) and (8), we can select $M_j>0$ so that $\gamma(A_j^c) < \frac{\epsilon}{2^{j+2}A_T}$ and $\gamma(B_j^c) < \frac{\epsilon}{2^{j+2}A_T}$. Set $F = \cap_{j=1}^\infty (A_j \cap B_j)$. Then applying Theorem \ref{TTolsa} (2), we get
\[
 \ \gamma (F^c) \le A_T \sum_{j=1}^\infty (\gamma(A_j^c) + \gamma(B_j^c)) < \epsilon.
 \] 
\end{proof}

\bigskip

\section{Examples: String of beads}
\bigskip

Our ideas of proving our main theorems stemmed from careful analysis of a special set such as a string of beads set. Therefore, in this section, we use the set as an example to illustrate our ideas. 

Let $\{\lambda_n\}_{n = 1}^\infty\subset \mathbb R$ such that $B(\lambda _n, r_n) \subset \mathbb D$ and $\overline{B(\lambda _i, r_i)} \cap \overline{B(\lambda _j, r_j)} = \emptyset$ for $i \ne j$. Denote 
 \[
 \ K = \overline{\mathbb D} \setminus \left (\bigcup_{n = 1}^\infty B(\lambda _n, r_n) \right )
 \]
 and $E = K\cap \mathbb R$. Assume $E$ has no interior with positive linear measure. In this case, $\sum _{n = 1}^\infty r_n < 2$. $K$ is called a string of beads set. 

Set $G = int(K)$. 
Define
$G_U = \{z\in G:~Im(z) > 0\}, ~ G_L = G\setminus G_U$, $K_U = \overline{G_U}$, and $K_L = \overline{G_L}$.
The set $K$ does not satisfy the hypothesis of Theorem II.

The string of beads set $K$ has only one Gleason part (see Page 145 in \cite{gamelin}) even though $K$ has two disjoint interiors $G_U$ and $G_L$. Hence, the algebra $R(K)$, uniform closure of $Rat(K)$, contains no non-trivial characteristic functions. For $R^\infty(K, \mu)$, the weak$^*$ closure of $Rat(K)$ in $L^\infty(\mu)$, the envelope $E(K,\mu)$ is the set of complex numbers $a$ for which there exists a finite measure $\mu_a = h \mu$ with $\mu (\{a\}) = 0$ such that $f(a) = \int f d\mu_a$ for $f\in R(K)$. Chaumat's Theorem (see \cite{Ch74} or page 288 in \cite{C91}) states that if $R^\infty(K, \mu)$ has no $L^\infty$ summand and $G \subset E(K,\mu)$, then the identity map of $R(K)$ extends an isomorphism of $R^\infty(K, \mu)$ to $R^\infty(E(K,\mu), \mathcal L^2|_{E(K,\mu)})$. This implies that $R^\infty(K, \mu)$ contains no non-trivial characteristic functions. However, for $1\le t <\infty$, the situations for $R^t(K, \mu)$ are very different. The following example constructs a measure $\mu$ such that $S_\mu$ on $R^2(K, \mu)$ is pure, $\text{abpe}(R^2(K, \mu)) = G$, and $R^2(K, \mu)$ splits.
\smallskip

\begin{example}\label{SOBExample1}
Let $\mathcal L^2_W$ be the weighted planar Lebesgue measure $W(z)d\mathcal L^2(z)$ on $G$, where
 \[
 \ W(z) = \begin{cases}  \text{dist}(z, \mathbb R)^4, & z \in G_U\cup G_L;\\0. & z \in (G_U\cup G_L)^c.  \end{cases}
 \]
Then,
 \begin{eqnarray}\label{SOBEq1}
 \ R^2(K, \mathcal L^2_W) = R^2(K_U, \mathcal L^2_W |_{G_U}) \oplus R^2(K_L, \mathcal L^2_W |_{G_L}).
 \end{eqnarray}  
\end{example} 

\begin{proof}
It is easy to verify that $S_{\mathcal L^2_W}$ is pure and $\text{abpe}(R^2(K, \mathcal L^2_W)) = G$.
Let $\lambda\in E$. Choose a subsequence $\{\lambda_{n_k}\}$ such that $\lambda_{n_k}\rightarrow \lambda$. For $g\perp R^2(K, \mathcal L^2_W)$, we have
 \[
 \ \begin{aligned}
 \ & \int \left | \dfrac{1}{z-\lambda_{n_k}} - \dfrac{1}{z-\lambda} \right | |g|d\mathcal L^2_W \\
 \ \le &|\lambda_{n_k} - \lambda| \left(\int _G \dfrac{W(z)}{|z-\lambda_{n_k}|^2 |z - \lambda|^2}d\mathcal L^2(z) \right )^{\frac{1}{2}} \|g\|_{L^2(\mathcal L^2_W)} \\
 \ \le &\sqrt{\pi}\|g\|_{L^2(\mathcal L^2_W)}|\lambda_{n_k} - \lambda| \\
 \ & \rightarrow 0.
 \ \end{aligned} 
\]
Thus, $\mathcal C(g\mathcal L^2_W)(\lambda) = 0$ for $\lambda\in E$. Let $\nu$ be $\dfrac{1}{2\pi i}dz$ restricted to upper circle of $|z| = 2$ and the interval $[-2,2]$. By Fubini's theorem, we see that 
 \[
 \ \int \mathcal C(\nu)(z) g(z) d\mathcal L^2_W(z) = - \int \mathcal C(g\mathcal L^2_W)(\lambda) d\nu (\lambda) = 0.
 \]
Hence, $\chi_{G_U}(z) \in R^2(K, \mathcal L^2_W)$, where $\chi_{A}$ is the characteristic function for the set $A$. \eqref{SOBEq1} is proved.   
\end{proof}
\smallskip

Now let us look at a simple example of $\mu$ such that $S_\mu$ is irreducible.

\begin{example}\label{SOBExample2}
Let $a = \frac{1}{2}i$ and $b = - \frac{1}{2}i$, where $i = \sqrt{-1}$. Let $\omega_a$ and $\omega_b$ be the harmonic measures of $G_U$ and $G_L$ evaluated at $a$ and $b$ $b$, respectively. Let $\omega = \omega_a + \omega_b$.  Then, for every $f\in R^2(K, \omega)$, the nontangential limit from top on $E$, $f_+(z)$,  and the nontangential limit from bottom on $E$, $f_-(z)$, exist $m_E-a.a.$, and $f_+(z) = f_-(z) = f(z),~ m_E-a.a.$, where $m_E$ is the Lebesgue measure on $E$. Consequently, $S_\omega$ is irreducible.     
\end{example}

\begin{proof}
It is clear that $\int r(z)(z-a)(z-b) d \omega(z) = 0$ for $r\in Rat(K)$. So $S_\omega$ is pure. Let $\{r_n\}\subset Rat(K)$ such that $\|r_n - f\|_{L^2(\omega)} \rightarrow 0$. Hence, $f\in H^2(G_U)$ and $f\in H^2(G_L)$. Thus, $f_+(z)$ exists, $f_+(z) = f(z),~ \omega_a |_E-a.a.$ and $f_-(z)$ exists, $f_-(z) = f(z),~ \omega_b |_E-a.a.$. Since $\partial G_U$ and $\partial G_L$ are rectifiable Jordan curves, the measures $\omega_a |_E$, $\omega_b |_E$, and $m_E$ are mutually absolutely continuous. Therefore, 
 \[
 \ f_+(z) = f_-(z) = f(z),~ m_E-a.a.
 \]     
\end{proof}
\smallskip

Above examples suggest that, in order to ensure that $S_\mu$ on $R^t(K, \mu)$ is irreducible, values of every function $f\in R^t(K, \mu)$ on $G_U$ and $G_L$ must be associated in some ways. Our next theorem, which points out the relationship, is a special case of Main Theorem II. We explain the proof to provide ideas of proving our main theorems. First we need to introduce some notations.         

For $\lambda\in \mathbb R$ and $0 < \alpha \le 1$, define the upper cone 
\[
\ U(\lambda, \alpha) = \{z:~ |Re(z - \lambda)| < \alpha Im(z)\}
\]
and the lower cone 
\[
\ L(\lambda, \alpha) = \{z:~ |Re(z - \lambda)| < - \alpha Im(z)\}.
\]
Set $U_n(\lambda,\alpha) = U(\lambda,\alpha)\cap B(\lambda,\frac{1}{n})$ and $L_n(\lambda,\alpha) = L(\lambda,\alpha)\cap B(\lambda,\frac{1}{n})$. 

It is easy to verify that for almost all $\lambda \in E$ with respect to $m_E$, there exists $n$ such that $U_n(\lambda,\alpha) \cup L_n(\lambda,\alpha) \subset G$. 

For a function $f$, we define nontangential limits of $f$ from top and bottom by
 \[
 \ f_+(\lambda ) = \underset{z\in U(\lambda, \alpha)\rightarrow \lambda}{\lim} f(z), ~ f_-(\lambda ) = \underset{z\in L(\lambda, \alpha)\rightarrow \lambda}{\lim} f(z).  
 \]
Let $F\subset E$ be a Borel subset. Define
 \[
 \ H^\infty_F(G) = \{f\in H^\infty(G): ~ f_+(z) = f_-(z),\text{ for } z\in F, ~m_E-a.a.\}.
 \]
\smallskip

\begin{theorem}\label{SOBTheorem}
Let $1 \le t < \infty$. If $S_\mu$ on $R^t(K,\mu)$ is pure and $\text{abpe}(R^t(K,\mu)) = G$, then there exists a Borel subset $F\subset E$ such that 
 \[
 \ \rho: f\in R^t(K, \mu) \cap L^\infty(\mu) \rightarrow f|_G \in H^\infty(G) 
 \]
is an isometric isomorphism and a weak$^*$ homeomorphism from $R^t(K, \mu) \cap L^\infty(\mu)$ onto $H^\infty_F(G)$. 
\end{theorem}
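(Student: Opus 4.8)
The plan is to reduce Theorem~\ref{SOBTheorem} to Main Theorem~II by carefully identifying the non-removable boundary $\mathcal F$, the removable set $\mathcal R$, and the algebra $H^\infty(\mathcal R)$ in the special geometry of the string of beads set $K$. The first step is to pin down $\mathcal F$ and $\mathcal R$. Since $\text{abpe}(R^t(K,\mu)) = G = \text{int}(K)$ and $G = G_U \cup G_L$, we expect $\mathcal R \cap \text{int}(K) \approx G$, $\gamma$-a.a. The interesting part of $\mathcal F$ lives on $E = K \cap \mathbb R$: because $E$ is a Cantor set lying on the real line (a single Lipschitz graph, with $A \equiv 0$, $\|A'\|_\infty = 0 \le \tfrac14$), every point of $E$ where $\mu$ has positive linear density is a candidate for $\mathcal F_+$, $\mathcal F_-$, or neither (if it also lies in $\mathcal R_B$). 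Define $F \subset E$ to be (up to $\gamma$-null, equivalently $m_E$-null, sets) the set of $\lambda \in E$ at which $\mu$ has positive upper linear density and \emph{both} one-sided nontangential limits $v^+(g_j\mu,\mathbb R,\lambda)$ and $v^-(g_j\mu,\mathbb R,\lambda)$ vanish for all $j$ --- equivalently, $\lambda \in \mathcal F_+ \cap \mathcal F_-$. Then $\mathcal F = (\mathbb C \setminus K) \cup (\mathcal F \cap E)$ with $\mathcal F \cap E \supset F$, while $\mathcal R = G \cup (E \setminus \mathcal F)$; the removable boundary $\mathcal R_B$ is the part of $E$ where exactly one one-sided limit vanishes or $\mu$ has zero density, $\gamma$-a.a.

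The second step is to show $H^\infty(\mathcal R)$ in the sense of Definition~\ref{hSpace} coincides isometrically with $H^\infty_F(G)$ as defined just before the theorem. Here one uses that $\mathcal L^2_{\mathcal R} = \mathcal L^2|_G$ since $E$ has zero area, so $L^\infty(\mathcal L^2_{\mathcal R}) = L^\infty(\mathcal L^2|_G)$; a function $f \in H^\infty_{\mathcal F}$ is bounded analytic off a compact $E_f \subset \mathcal F$, and by Lemma~\ref{lemmaBasic0}(6) (Cauchy transform representation on sets of finite $\mathcal H^1$ measure) together with the one-sided nontangential limit results \eqref{UNLimit}--\eqref{LNLimit}, such an $f$ restricted to $G$ is analytic on $G$ and has matching one-sided nontangential limits at $m_E$-a.a.\ point of $F$ (because the jump of $\mathcal C(\nu)$ across $\mathbb R$ is $-2\pi i$ times the density of $\nu$, and on the portion of $E_f$ corresponding to $F$ the relevant limits from both sides agree since $f$ is genuinely analytic there across $\mathbb R$ --- more precisely $E_f$ avoids $F$, $\gamma$-a.a.). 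Taking weak$^*$ closures, one gets $H^\infty(\mathcal R) \subset H^\infty_F(G)$; the reverse inclusion follows by approximating a given $h \in H^\infty_F(G)$ by functions analytic off compact subsets of $\mathcal F$, using that $h_+ = h_-$ on $F$ lets one "fill in" the graph $F$ analytically in the capacitary sense, exactly the mechanism behind the definition of $\mathcal F_+ \cap \mathcal F_-$.

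The third step simply invokes Main Theorem~II(3): since $S_\mu$ on $R^t(K,\mu)$ is pure, $\rho$ is an isometric isomorphism and weak$^*$ homeomorphism from $R^t(K,\mu) \cap L^\infty(\mu)$ onto $H^\infty(\mathcal R)$; composing with the identification $H^\infty(\mathcal R) = H^\infty_F(G)$ from Step~2, and noting that on $G = \text{abpe}(R^t(K,\mu))$ the abstract map $\rho$ is just $f \mapsto f|_G$ by \eqref{NTIntroEq1} and the discussion preceding Definition~\ref{GDefInto}, yields the theorem. One should also remark that $F$ as constructed is independent (up to $m_E$-null sets) of the choices of $\{g_j\}$ and the graph, which is guaranteed by Corollary~\ref{NRBUnique}.

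I expect the main obstacle to be Step~2, specifically the reverse inclusion $H^\infty_F(G) \subseteq H^\infty(\mathcal R)$: one must show that every bounded analytic function on $G$ whose one-sided boundary values agree $m_E$-a.a.\ on $F$ can be weak$^*$ approximated by functions that are genuinely bounded analytic off a compact subset of $\mathcal F = (\mathbb C\setminus K) \cup (\mathcal F\cap E)$. This is where the full strength of Tolsa's theory enters --- one needs that matching nontangential limits across the graph $\mathbb R$ on a set of positive measure forces $\gamma$-approximate analytic continuation, so that the "defect set" where continuation genuinely fails can be absorbed into a compact subset of $\mathcal F$ of the appropriate type. The other subtlety worth flagging is bookkeeping: making sure the various $\gamma$-null and $m_E$-null exceptional sets (from \eqref{UNLimit}, \eqref{LNLimit}, Lemma~\ref{GammaExist}, and the definition of $\mathcal F$) are consistently discarded, and that the equivalence $\gamma|_E \approx \mathcal H^1|_E \approx m_E$ from \eqref{HACEq} is used to translate between capacitary density statements and ordinary a.e.\ statements on $E$.
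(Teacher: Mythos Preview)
Your overall strategy---identify $\mathcal F$ and $\mathcal R$, show $H^\infty(\mathcal R)=H^\infty_F(G)$, then invoke Main Theorem~II(3)---is exactly the route the paper takes (it derives Theorem~\ref{SOBTheorem} from Corollary~\ref{GSOBCorollary2}, which packages precisely these steps). However, your Step~1 contains a fatal misidentification of $F$ that propagates through everything else.

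You define $F=\mathcal F_+\cap\mathcal F_-$, the set of $\lambda\in E$ where \emph{both} $v^+(g_j\mu,\mathbb R,\lambda)$ and $v^-(g_j\mu,\mathbb R,\lambda)$ vanish for all $j$. But by the explicit formulas $v^\pm=\mathcal C(g_j\mu)(\lambda)\pm\tfrac12 g_jhL^{-1}(\lambda)$, if both vanish then $\mathcal C(g_j\mu)(\lambda)=0$ for all $j$, forcing $\lambda\in\mathcal F_0$; yet $\mathcal F_\pm\subset\mathcal{ND}(\mu)$ while $\mathcal F_0\subset\mathcal{ZD}(\mu)$ by Theorem~\ref{FRProperties}. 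Hence $\mathcal F_+\cap\mathcal F_-=\emptyset$, $\gamma$-a.a., and your $F$ is empty, giving $H^\infty_F(G)=H^\infty(G)$---which is exactly what the discussion preceding Theorem~\ref{SOBTheorem} says is \emph{wrong} in general. You have also inverted the description of $\mathcal R_B$: points where exactly one one-sided limit vanishes are precisely $\mathcal F_+\cup\mathcal F_-$, i.e.\ non-removable.

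The correct choice is $F=\mathcal R_B=\mathcal R\cap E$: the set of $\lambda\in E\cap\mathcal N(h)$ for which there exist $j_1,j_2$ with $v^+(g_{j_1}\mu,\mathbb R,\lambda)\ne 0$ and $v^-(g_{j_2}\mu,\mathbb R,\lambda)\ne 0$ (this is $\mathcal R_1$ restricted to $E$ in Definition~\ref{FRDefinition1}). On this set Main Theorem~I and Lemma~\ref{lemmaBasic10} give $f_+(\lambda)=f_-(\lambda)=f(\lambda)$ for every $f\in R^t(K,\mu)$, which is why $\rho$ lands in $H^\infty_F(G)$. The paper spells this out in Section~2.2 (the sets $F_U,F_L$ with $F=F_U\cap F_L$) and then records $\mathcal F=(\partial G\setminus F)\cup K^c$, $\mathcal R=G\cup F$, $\mathcal R_B=F$. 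With this corrected $F$, your Step~2 becomes the content of Corollary~\ref{GSOBCorollary} together with Lemma~\ref{NTEquivalent}, and Step~3 goes through as you wrote.
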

\smallskip

Certainly, the set $F$ is unique up to a zero $m_E$ measure set. 
The set $F$ is the removable boundary $\mathcal R_B$ for $R^t(K,\mu)$ defined as in Definition \ref{NRBDef}.

Our plan to describe the ideas is the following: 

Step I: Determine the points $\lambda\in E$ for which $f\in R^t(K,\mu)$ has non-tangential limits from top (or bottom). Construct the set $F$ and prove $\rho (f) \in H^\infty_F(G)$ for $f \in R^t(K, \mu) \cap L^\infty(\mu)$.  

Step II: Describe the key analytic capacity estimation that allows us to apply Vitushkin's scheme for $f\in H^\infty_F(G)$ and prove the map $\rho$ is onto.
 
Now let us start with Step I. Our approach relies on a generalized Plemelj's formula that is proved by Theorem 3.6 in \cite{acy18} as the following.
\smallskip

\begin{lemma}\label{GPTheorem}
Let $\nu$ be a finite, complex-valued Borel measure with 
compact support in $\mathbb{C}$. Suppose that $\nu = hm_E + \sigma$ is the Radon-Nikodym 
decomposition with respect to $m_E$, where $h\in L^1(m_E)$ and $\sigma\perp m_E$. Then there exists a subset $\mathbb Q\subset \mathbb C$ with $\gamma(\mathbb Q) = 0$, such that the following hold:

(a) $\mathcal C(\nu ) (\lambda) = \lim_{\epsilon\rightarrow 0} \mathcal C_{\epsilon}(\nu)(\lambda)$ exists for $\lambda\in \mathbb C\setminus \mathbb Q$,

(b) for $\lambda \in E \setminus \mathbb Q$ and $\epsilon > 0$, $v^+(\nu, \lambda):= \mathcal{C}(\nu )(\lambda ) +\pi i h(\lambda)$,
\begin{eqnarray}\label{GPTheoremEq1}
 \ \lim_{\delta \rightarrow 0} \dfrac{\gamma(U(\lambda, \alpha) \cap B(\lambda, \delta)\cap \{ |\mathcal{C}(\nu )(z ) - v^+(\nu, \lambda)| > \epsilon \})}{\delta } = 0, 
 \end{eqnarray}
and

(c) for $\lambda \in E \setminus \mathbb Q$ and $\epsilon > 0$, $v^-(\nu, \lambda):= \mathcal{C}(\nu )(\lambda ) -\pi i h(\lambda)$,
\begin{eqnarray}\label{GPTheoremEq2}
 \ \lim_{\delta \rightarrow 0} \dfrac{\gamma(L(\lambda, \alpha) \cap B(\lambda, \delta)\cap \{ |\mathcal{C}(\nu )(z ) - v^-(\nu, \lambda)| > \epsilon \})}{\delta } = 0. 
 \end{eqnarray}
\end{lemma}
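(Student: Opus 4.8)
The statement is Theorem~3.6 of \cite{acy18}, and the plan is to reproduce that argument. Part (a) is immediate from Corollary~\ref{ZeroAC}: fix a set $\mathbb Q$ with $\gamma(\mathbb Q)=0$ outside which the principal value exists, and enlarge $\mathbb Q$ (keeping $\gamma(\mathbb Q)=0$) as needed below. Since $E\subset\mathbb R$, Lemma~\ref{lemmaBasic0}(7) gives $\gamma(A)\approx\mathcal H^1(A)=m_E(A)$ for $A\subset E$, so ``$\gamma$-a.a. on $E$'' and ``$m_E$-a.a. on $E$'' coincide; and the reflection $z\mapsto\overline z$ turns (c) into (b), so I will prove only (b). Writing $\nu=hm_E+\sigma$ and using (a) to get $\mathcal C(\nu)(\lambda)=\mathcal C(hm_E)(\lambda)+\mathcal C(\sigma)(\lambda)$ off $\mathbb Q$, it suffices to establish, for $\gamma$-a.a. $\lambda\in E$: (i) $\mathcal C(\sigma)$ is $\gamma$-continuous at $\lambda$ in the sense of Definition~\ref{GDefInto} (which gives \eqref{GPTheoremEq1} for $\sigma$ with the value $\mathcal C(\sigma)(\lambda)$, the $\pi i h$ term being absent because $\sigma\perp m_E$); and (ii) the Sokhotski--Plemelj jump $\mathcal C(hm_E)(z)\to\mathcal C(hm_E)(\lambda)+\pi i h(\lambda)$ as $z\to\lambda$ in $U(\lambda,\alpha)$, in the sense of \eqref{GPTheoremEq1}.

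For (i), I would first record that $\Theta^*_\sigma(\lambda)=0$ for $\gamma$-a.a. $\lambda\in E$: since $\sigma\perp m_E$ we have $m_E\perp|\sigma|$, and a constant multiple of $m_E$ lies in $\Sigma(E)$, so Lemma~\ref{lemmaBasic0}(5)(b) gives $m_E(\mathcal{ND}(\sigma))=0$, i.e. $\mathcal H^1(E\cap\mathcal{ND}(\sigma))=\gamma(E\cap\mathcal{ND}(\sigma))=0$. It then remains to prove: if $\rho$ is a finite complex measure with $\Theta^*_\rho(\lambda)=0$, $\mathcal M_\rho(\lambda)<\infty$, and $\mathcal C(\rho)(\lambda)$ existing (the latter two holding $\gamma$-a.a. by Lemma~\ref{lemmaBasic0}(10) and Corollary~\ref{ZeroAC}), then $\mathcal C(\rho)$ is $\gamma$-continuous at $\lambda$. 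For this, split $\rho=\rho|_{B(\lambda,R)}+\rho|_{B(\lambda,R)^c}$ with $R=\delta/\tau$ for a fixed small $\tau=\tau(\lambda,\epsilon)>0$. The far part is analytic on $B(\lambda,\delta)$, and from $\int_{|w-\lambda|\ge R}|w-\lambda|^{-2}\,d|\rho|(w)\le C\,\mathcal M_\rho(\lambda)/R$ one gets that it oscillates by at most $\epsilon/2$ on all of $B(\lambda,\delta)$ once $\tau$ is small. The near part has mass $|\rho|(B(\lambda,R))=o(R)=o(\delta)$ as $\delta\to0$ because $\Theta^*_\rho(\lambda)=0$ and $\tau$ is fixed, so by the weak-type bound \eqref{WeakOneOne} its maximal Cauchy transform exceeds $\epsilon/4$ only on a set of capacity $o(\delta)$, while its principal value at $\lambda$ equals $\mathcal C(\rho)(\lambda)-\mathcal C_R(\rho)(\lambda)\to0$. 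Intersecting with $B(\lambda,\delta)$ and dividing by $\delta$ yields the $\gamma$-continuity.

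For (ii), fix $\lambda$ that is simultaneously a Lebesgue point of $h$ for $m_E$, a Lebesgue density point of $E$ in $\mathbb R$, a point where $\mathcal C(hm_E)(\lambda)$ exists, and where $\mathcal M_{hm_E}(\lambda)<\infty$ (all $m_E$-a.a.). With $R=\delta/\tau$ as above, write $hm_E|_{B(\lambda,R)}=h(\lambda)\,\mathcal H^1|_{(\lambda-R,\lambda+R)}+w\,\mathcal H^1|_{(\lambda-R,\lambda+R)}$, where $w=(h-h(\lambda))\chi_E-h(\lambda)\chi_{\mathbb R\setminus E}$; the error term $w\,\mathcal H^1|_{(\lambda-R,\lambda+R)}$ has total variation $o(R)=o(\delta)$ by the two Lebesgue conditions, so it and the far part $hm_E|_{B(\lambda,R)^c}$ are handled exactly as in the previous paragraph (weak-type bound, respectively analyticity plus $\mathcal C_R(hm_E)(\lambda)\to\mathcal C(hm_E)(\lambda)$). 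One is reduced to $h(\lambda)\,\mathcal C\big(\mathcal H^1|_{(\lambda-R,\lambda+R)}\big)(z)=h(\lambda)\,\psi\!\big(\tfrac{z-\lambda}{R}\big)$ with $\psi(u)=\int_{-1}^1(s-u)^{-1}\,ds$; since $\psi$ extends continuously to the closed upper half-plane minus $\{\pm1\}$ with $\psi(u)\to i\pi$ as $u\to0$ in the upper half-plane, this term stays within $\epsilon/4$ of $i\pi h(\lambda)$ on all of $U(\lambda,\alpha)\cap B(\lambda,\delta)$ once $\tau$ is small. Adding up, $\mathcal C(hm_E)(z)$ lies within $\epsilon$ of $\mathcal C(hm_E)(\lambda)+\pi i h(\lambda)$ on $U(\lambda,\alpha)\cap B(\lambda,\delta)$ outside a set of capacity $o(\delta)$, which is \eqref{GPTheoremEq1}; \eqref{GPTheoremEq2} follows by reflection, with $+\pi i h$ replaced by $-\pi i h$.

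The main obstacle is exactly the uniformity demanded by the ``full $\gamma$-density'' formulation: every near/far split must be run at a radius $R$ that is a fixed multiple $\delta/\tau$ of $\delta$ — with $\tau$ permitted to depend on $\lambda$ and $\epsilon$ but not on $\delta$ — so that simultaneously the analytic far part genuinely stays close to its value at $\lambda$ throughout $B(\lambda,\delta)$ and the weak-type exceptional set produced by the small-mass near part stays $o(\delta)$. Once this bookkeeping is organized, the only input beyond the classical $(1,1)$ bound — hence the only reason the conclusion improves the classical a.e.\ Plemelj formula to full analytic capacitary density — is Tolsa's weak-type estimate \eqref{WeakOneOne}.
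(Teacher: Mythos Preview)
Your proposal is correct. The paper does not actually prove Lemma~\ref{GPTheorem}: it simply cites it as Theorem~3.6 of \cite{acy18}, so in that sense your reference matches the paper exactly. Your sketch of the argument---splitting $\nu=hm_E+\sigma$, handling $\sigma$ via $\Theta^*_\sigma=0$ on $E$ and $\gamma$-continuity (Lemma~\ref{CauchyTLemma}), and handling $hm_E$ by a near/far decomposition at scale $R=\delta/\tau$ with the explicit logarithm giving the $\pi i$ jump and the weak-type bound \eqref{WeakOneOne} controlling the error---is sound; the bookkeeping point you flag (that $\tau$ may depend on $\lambda,\epsilon$ but not on $\delta$) is exactly what makes the full-density statement work.

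It is worth noting that the paper proves a strict generalization, Theorem~\ref{GPTheorem1}, by a genuinely different route. There, for the absolutely continuous part $h\,d\mathcal H^1|_\Gamma$, the authors do not build the jump by hand: they first establish the on-graph statement (part (d)) via Lebesgue points of $h$ and of $\mathcal C\mathcal H^1|_\Gamma$, then invoke the classical Plemelj formula for Lipschitz graphs (Lemma~\ref{PFLipschitz}) as a black box to define exhausting sets $A_n$ on which the nontangential limit is already close, and finally upgrade ``a.e.'' to ``full $\gamma$-density'' by a geometric covering argument over cones rooted at Lebesgue-density points of $A_n$. Your direct approach is more elementary and self-contained (it re-derives the jump and avoids Lemma~\ref{PFLipschitz}), and it makes transparent that the only ingredient beyond classical Plemelj is Tolsa's weak~$(1,1)$ estimate; the paper's approach, by contrast, packages the analytic input into the known $L^2$-boundedness on Lipschitz graphs and separates cleanly the ``existence of limits'' step from the ``density upgrade'' step, which is what lets it extend to arbitrary Lipschitz graphs.
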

\smallskip

For our purposes with $R^t(K, \mu)$ in general, we will develop a generalized version of above Plemelj's formula for an arbitrary measure (see Theorem \ref{GPTheorem1}).

Let $\{g_j\} \subset R^t(K,\mu)^\perp$ be a dense set.
Let $\mu  = hm_E  + \mu_s$ be the Radon-Nikodym decomposition with respect to $m_E$ on $\mathbb R$, where $h \in L^1(m_E)$ and $\mu_s \perp  m_E$. For $f\in R^t(K, \mu)$, we see that $f(z)$ is analytic on $\text{abpe}(R^t(K,\mu)) = G$ and
 \[
 \ f(z) \mathcal C(g_j\mu) (z) = \mathcal C(fg_j\mu) (z), ~ z\in G_U, ~ \gamma-a.a.
 \]  
Using Lemma \ref{GPTheorem}, we may assume, for $\lambda\in E$, \eqref{GPTheoremEq1} holds.
If there exists $j_1$ such that $v^+ (g_{j_1}\mu, \lambda) \ne 0$, then one can prove
 \[
 \ \lim_{\delta\rightarrow 0} \dfrac{\gamma(U(\lambda, \alpha) \cap B(\lambda, \delta)\cap \{|f (z) - f(\lambda)) | > \epsilon\})}{\delta} = 0
 \]
which implies $f_+(\lambda) = f(\lambda)$ by Lemma \ref{lemmaARS}, where
\[
 \ f(\lambda) = \dfrac{v^+ (fg_{j_1}\mu, \lambda)}{v^+ (g_{j_1}\mu, \lambda)}.
 \]
The details are contained in the proof of Theorem \ref{MTheorem2}. 

Let $F_U$ consist of all $\lambda\in E$ for which there exists $j_1$ such that the principal value $\mathcal C(g_{j_1}\mu) (\lambda)$ exists and $v^+ (g_{j_1}\mu, \lambda) \ne 0$. Let $F_L$ consist of all $\lambda\in E$ for which there exists $j_2$ such that the principal value $\mathcal C(g_{j_2}\mu) (\lambda)$ exists and $v^- (g_{j_2}\mu, \lambda) \ne 0$. Set $F = F_U\cap F_L$. Then 
 \[
 \ f_+(\lambda) = f_-(\lambda)  = f(\lambda),~ \lambda\in F,~m_E-a.a.
 \]

Clearly, for $\lambda \in E \setminus F$, either $v^+ (g_{j}\mu, \lambda) = 0$ for all $j \ge 1$ or $v^- (g_{j}\mu, \lambda) = 0$ for all $j \ge 1$. The ideas here help us to achieve the definition of $\mathcal F$ and $\mathcal R$ as in Definition \ref{FRDefinition1}. For a string of beads set $K$ above, 
 \[
 \ \mathcal F = (\partial G \setminus F) \cup K^c,~ \mathcal R = G\cup F,\text{ and }\mathcal R_B = F.
\]

Now let us discuss the key Step II. That is, for $f\in H^\infty_F(G)$, we want to find $\hat f \in R^t(K, \mu) \cap L^\infty(\mu)$ such that $f = \hat f|_G$. It suffices to assume that $f$ is analytic off $E$. From Lemma \ref{lemmaBasic0} (6), one can find a bounded complex-valued function $w(z)$ on $E$ such that $f(z) = \mathcal C(wm_E)(z)$ for $z\in G$. Since, by Lemma \ref{GPTheorem},
$f_+(z) = \mathcal C(wm_E)(z) + \pi i w(z)$
and
$f_-(z) = \mathcal C(wm_E)(z) - \pi i w(z)$
$m_E-a.a.$ which implies that $w(z) = 0$ on $z\in F~ m_E-a.a.$. Let $S$ be a square with length $l$ and $S\cap E \ne \emptyset$. Let $\varphi$ be a smooth function supported in $S$ satisfying $0 \le \varphi \le 1$ and $\|\frac{\partial \varphi}{\partial \bar z} \|_\infty \le \dfrac{100}{l}$. $\mathcal C(\varphi w m_E)(\infty) = 0$ and
 \begin{eqnarray}\label{SBEst}
 \ \begin{aligned}
 \ |\mathcal C(\varphi wm_E) '(\infty)| = & \left |\int \varphi w(z) d m_E \right | \\
 \ \le &m(S\cap (E\setminus F)| \\
 \ \le &4 \gamma(S\cap (E\setminus F)).
 \ \end{aligned}
 \end{eqnarray}
Theorem \ref{MLemma1} indeed shows the above inequality holds in general.
In order to apply Vitushkin's scheme, we need to construct a function $f_0$ that is analytic off $S$ satisfying:
 
(1) $\|f_0\|_{L^\infty(\mathbb C)} \le C_3$;

(2) $f_0(\infty ) = 0$;

(3) $f_0'(\infty) = \gamma(S\cap (E\setminus F))$; and 

(4)  $f_0\in R^t(K, \mu) \cap L^\infty(\mu)$.

If there exists $f_0$ satisfying (1)-(4), then, by \eqref{SBEst},  $\mathcal C(\varphi w m_E) - \frac{\mathcal C(\varphi w m_E)'(\infty)}{f_0'(\infty)}f_0$ is bounded  and has double zero at $\infty$. Then the standard Vitushkin scheme can be used here to construct $F\in R^t(K, \mu) \cap L^\infty(\mu)$ with $F|_G = f$. Therefore, $\rho$ is onto. Applying Theorem \ref{MLemma1} and a modified Vitushkin scheme of P. Paramonov, we prove in Theorem \ref{MLemma2} that the result actually holds in general.  

The existence of such a function $f_0$ is a simple outcome of Theorem \ref{acTheorem} and Lemma \ref{hFunction}. The proof depends on the property that $\mathcal C(g_j\mu)$ is close to zero near the set $F$ (though $\mathcal C(g_j\mu)$ may not be zero on $F$ for some $g_j$) as in Lemma \ref{lemmaBasic4}.    

Before closing this section, we provide the following proposition that combines Example \ref{SOBExample1} and \ref{SOBExample2}.

\begin{proposition}\label{SOBRemovable}
If $F\subset E$ is a compact subset with $m_E(F) > 0$, then there exists a finite positive measure $\mu$ supported on a string of beads set $K$ such that $S_\mu$ on $R^2(K, \mu)$ is irreducible, $\text{abpe}(R^2(K, \mu)) = G$, and $F$ is its removable boundary. 
\end{proposition}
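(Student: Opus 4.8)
\textbf{Proof proposal for Proposition \ref{SOBRemovable}.}

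The plan is to combine the two constructions of Example \ref{SOBExample1} and Example \ref{SOBExample2} by building a single measure whose ``bulk'' on $G_U \cup G_L$ is the weighted area measure $\mathcal L^2_W$ (which forces $S_\mu$ to be pure and $\text{abpe}(R^2(K,\mu)) = G$) but which additionally carries harmonic-measure-type pieces on the prescribed compact set $F \subset E$ so that nontangential limits from the top and the bottom are forced to agree exactly on $F$ and nowhere else. Concretely, I would first choose the string of beads set $K$ so that $E = K \cap \mathbb R \supset F$ — this is possible because any compact subset of $\mathbb R$ of finite measure and empty interior sits inside a Cantor-type set of positive measure, which one realizes as $K \cap \mathbb R$ by a suitable choice of centers $\lambda_n$ and radii $r_n$ with $\sum r_n < 2$. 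Then set
\[
 \ \mu = \mathcal L^2_W + \omega_F,
\]
where $\mathcal L^2_W$ is the weighted planar measure on $G_U \cup G_L$ as in Example \ref{SOBExample1} and $\omega_F$ is a measure built from harmonic measures of $G_U$ and $G_L$ restricted to $F$ together with the coupling factor $(z-a)(z-b)$ so that $\omega_F \perp R^2(K,\cdot)$ as in Example \ref{SOBExample2}; one must also check $m_F \ll \omega_F|_F$ so that the forced boundary identification is genuinely on all of $F$.

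The key steps, in order, are: (i) verify $S_\mu$ on $R^2(K,\mu)$ is pure — this follows because $(z-a)(z-b) \perp R^2(K,\mu)$ (the weighted area part contributes zero by the Cauchy-transform computation in Example \ref{SOBExample1}, applied to the annihilator $\mathcal C(\nu)$, and the harmonic-measure part annihilates $\text{Rat}(K)$ by construction), so no nonzero $L^2(\mu)$ summand can occur; (ii) verify $\text{abpe}(R^2(K,\mu)) = G$ — the inclusion $G \subset \text{abpe}$ is immediate since $\mu \ge \mathcal L^2_W$ and $\text{abpe}(R^2(K,\mathcal L^2_W)) = G$, while $\text{abpe} \subset \overline G = K$ needs the observation that $\mu|_E$ has linear density zero off the beads' real slice so no real point is an analytic bpe; (iii) show that for every $f \in R^2(K,\mu)$ the nontangential limits $f_+$ and $f_-$ exist $m_E$-a.e. on $E$ (apply Lemma \ref{GPTheorem} with $\nu = g_j\mu$ for a dense sequence $\{g_j\}$ in the annihilator, exactly as in the Step I discussion following Theorem \ref{SOBTheorem}); and (iv) identify the removable boundary: on $F$ the measure $\mu$ has a nonzero absolutely continuous part $h m_E$ coming from $\omega_F$, so $v^+(g_j\mu,\lambda) - v^-(g_j\mu,\lambda) = 2\pi i h(\lambda) \ne 0$ cannot both vanish — wait, more carefully, one uses that there is an annihilator $g_{j_0}$ (namely $g_{j_0}\mu = (z-a)(z-b)\omega_b|_F$ type) whose Plemelj jump is supported on $F$, giving $v^+(g_{j_0}\mu,\lambda) \ne 0$ and $v^-(g_{j_0}\mu,\lambda) \ne 0$ on $F$, so $F \subset F_U \cap F_L$ and hence $f_+ = f_- = f$ $m_E$-a.a.\ on $F$; conversely on $E \setminus F$ one arranges (as in Example \ref{SOBExample2}, where $\omega_a, \omega_b$ were mutually singular off a small set, but here we place no mass) that $\mathcal C(g_j\mu)$ has vanishing one-sided limit, forcing $E \setminus F \subset \mathcal F_+ \cup \mathcal F_-$, so the removable boundary is exactly $F$.

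Finally, \emph{irreducibility} of $S_\mu$ follows from Theorem \ref{IrreducibilityTheorem} once one checks that $\mathcal R = G \cup F$ is $\gamma$-connected: the two interior halves $G_U$ and $G_L$ are each connected open sets whose boundaries meet $F$ in a set of positive length, and $F$ itself — though totally disconnected as a subset of $\mathbb R$ — acts as a $\gamma$-bridge between $G_U$ and $G_L$ precisely because $\gamma(F \cap B(\lambda,\delta)) \gtrsim \delta$ at $m_F$-a.e.\ $\lambda$ (positive lower density from $m_F(F) > 0$ and Lemma \ref{lemmaBasic0}(7)). Alternatively, and perhaps more cleanly, one argues directly as in Example \ref{SOBExample2}: any nontrivial characteristic function $\chi$ in $R^2(K,\mu)$ would have $\chi_+ = \chi_- $ $m_E$-a.a.\ on $F$, so $\chi$ restricted to $G$ is a $\{0,1\}$-valued analytic function on $G$ which is constant on each of $G_U$, $G_L$; if the constants differ, the common boundary identification on the positive-measure set $F$ is violated, so $\chi$ is constant, contradicting nontriviality. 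The step I expect to be the main obstacle is step (iv), the precise matching of the removable boundary with $F$ and not more: one has to be careful that the weighted area part $\mathcal L^2_W$ does not itself create additional boundary identifications on $E \setminus F$ (it does not, because its Cauchy transform extends continuously to $E$ with zero one-sided jump there, the weight $\text{dist}(z,\mathbb R)^4$ being chosen to kill the density), and that the harmonic-measure part's absolutely continuous density on $F$ is genuinely bounded away from zero $m_F$-a.e., which uses the rectifiability of $\partial G_U, \partial G_L$ and the F.\ and M.\ Riesz-type mutual absolute continuity invoked in Example \ref{SOBExample2}.
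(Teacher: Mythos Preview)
Your overall plan—combine the weighted area measure of Example~\ref{SOBExample1} with a harmonic-measure coupling as in Example~\ref{SOBExample2}—is exactly the right intuition, and your direct irreducibility argument via $f_+=f_-$ on $F$ is fine. But the construction of $\omega_F$ as ``harmonic measures of $G_U$ and $G_L$ restricted to $F$'' does not work, and this is the crux of the matter.

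If you restrict $\omega_a,\omega_b$ to $F$, the function $(z-a)(z-b)$ is \emph{not} an annihilator: the identity $\int r(z)(z-a)(z-b)\,d\omega_a=0$ uses the full harmonic measure of $G_U$, not a restriction. So step (i) fails as written. If instead you use the full harmonic measures of $G_U,G_L$, you recover the annihilator, but then $\mu$ couples $G_U$ and $G_L$ along all of $E$, not just $F$, and the removable boundary becomes all of $E$. Either way the measure does not isolate $F$.

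The paper resolves this with a geometric idea you are missing: choose rectifiable Jordan curves $\Gamma_U\subset K_U$ and $\Gamma_L\subset K_L$ that meet $\partial G$ \emph{exactly} on $F$, let $\Omega_U,\Omega_L$ be the regions they bound, and take $\omega_a,\omega_b$ to be the harmonic measures of these \emph{smaller} regions. Then $(z-a)(z-b)$ is a genuine annihilator (full harmonic measure of $\Omega_U,\Omega_L$), the coupling across $\mathbb R$ occurs only on $F$, and—crucially for maximality—any compact $F_0\subset E\setminus F$ has positive distance to $\Gamma_U\cup\Gamma_L$, so $\tfrac{1}{z-\lambda}\in R^2(K,\mu)$ for $\lambda\in F_0$. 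This last point is what makes your step (iv) go through: the paper shows that if $F_0\subset E\setminus F$ were removable, the admissible function for $\gamma(F_0)$ would lie in $R^2(K,\mu)$ (via Fubini and the integrability just noted) yet have $f_+\neq f_-$ on $F_0$, contradicting Theorem~\ref{SOBTheorem}. Your proposed mechanism—showing $\mathcal C(g_j\mu)$ has vanishing one-sided limit on $E\setminus F$ for every annihilator $g_j$—has no clear reason to hold with either version of your $\omega_F$.
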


\begin{proof}
Let $\Gamma_U$ be a closed rectifiable Jordan curve in $K_U$ such that $\Gamma_U\cap \partial G_U = F$. Let $\Gamma_L$ be a closed  rectifiable Jordan curve in $K_L$ such that $\Gamma_L\cap \partial G_L = F$. Let $\Omega_U$ and $\Omega_L$ be the regions surrounded by $\Gamma_U$ and $\Gamma_L$, respectively. Let $\omega_a$ and $\omega_b$ be the harmonic measures of $\Omega_U$ and $\Omega_L$, respectively, where $a\in \Omega_U$ and $b\in \Omega_L$. Let $\mathcal L^2_W$ be the weighted planar Lebesgue measure as in Example \ref{SOBExample1}. Let $\mu = \omega_a + \omega_b + \mathcal L^2_W$. 

Similar to the proof of Example \ref{SOBExample2}, we see that $f_+(z) = f_-(z) = f(z)$ on $F$ $m_E-a.a.$. Hence, $S_\mu$ is irreducible. Obviously, $\text{abpe}(R^2(K, \mu)) = G$.

Suppose that $F$ is not a maximal removable boundary. Then there exists a compact subset $F_0\subset E\setminus F$ that is a part of removable boundary and $m_E(F_0) > 0$.  Let $f$ be the admissible function of $F_0$ for analytic capacity.  Using Lemma \ref{lemmaBasic0} (6), we have $f = \mathcal C(wm_E)$, where $w$ is bounded and supported in $F_0$. As the same proof of Example \ref{SOBExample1}, we see that $\dfrac{1}{z-\lambda} \in R^2(K, \mu)$ for $\lambda \in F_0$. Moreover,
 \[
 \ \begin{aligned}
 \ & \int \dfrac{1}{|z-\lambda|} |g_j|d\mu \\
 \ \le & \int _{\Gamma_U}\dfrac{1}{|z-\lambda|} |g_j|d\omega_a + \int _{\Gamma_L}\dfrac{1}{|z-\lambda|} |g_j|d\omega_b + \int _G\dfrac{W(z)}{|z-\lambda|} |g_j|d\mathcal L^2 \\
 \ \le & \dfrac{\|g_j\|}{\text{dist}(\lambda, \Gamma_U)} + \dfrac{\|g_j\|}{\text{dist}(\lambda, \Gamma_L)} + \|g_j\| \\
 \ \end{aligned}
 \]
for $\lambda\in F_0$. Using Fubini's theorem, we have
 \[
 \ \int f(z) g_j(z) d\mu(z) = - \int \mathcal C( g_j\mu)(\lambda) w(\lambda) d m_E(\lambda) = 0. 
 \]
Hence, $f\in R^2(K,\mu)$. This contradicts 
 to Theorem \ref{SOBTheorem} 
because, by Lemma \ref{GPTheorem}, $f_+$ and $f_-$ are not identically equal with respect to $m_E$ on $F_0$.
\end{proof}

\bigskip

\section{Generalized Plemelj's formula}
\bigskip

\begin{definition}
Let $f(\lambda)$ be a function on $B(\lambda _0, \delta)\setminus \mathbb Q$ with $\gamma(\mathbb Q) = 0$. The function $f$ has a $\gamma$-limit $a$ at $\lambda _0$ if
\[  
 \  \lim_{\delta \rightarrow 0} \dfrac{\gamma(B(\lambda _0, \delta) \cap \{|f(\lambda) - a| > \epsilon\})} {\delta}= 0
\]
for all $\epsilon > 0$. If in addition, $f(\lambda_0)$ is well defined and $a = f(\lambda_0)$, then $f$ is $\gamma$-continuous at $\lambda_0$. 
\end{definition}
\smallskip

The following lemma is from Lemma 3.2 in \cite{acy18}.

\begin{lemma}\label{CauchyTLemma} 
Let $\nu$ be a finite, complex-valued Borel measure that is compactly
supported in $\mathbb{C}$ and assume that for some $\lambda _0$ in $\mathbb C$ we have:
\begin{itemize}
\item[(a)] $\Theta_{\nu} (\lambda _0 ) = 0$ and 
\item[(b)] $\mathcal{C} (\nu)(\lambda _0) = \lim_{\epsilon \rightarrow 0}\mathcal{C} _{\epsilon}(\nu)(\lambda _0)$ exists.
\end{itemize}

Then:

(1) $\mathcal{C}(\nu)(\lambda) = \lim_{\epsilon \rightarrow 0}\mathcal{C} _{\epsilon}(\nu)(\lambda )$ 
exists for $\lambda\in \mathbb Q^c$ with $\gamma(\mathbb Q) = 0$ and

(2) Cauchy transform $\mathcal{C}(\nu)(\lambda)$ is $\gamma$-continuous at $\lambda_0$.
\end{lemma}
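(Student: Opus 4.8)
The plan is to prove (1) first, then deduce (2) by a localization-and-subtraction argument that reduces everything to the single-point hypotheses (a) and (b) at $\lambda_0$. For part (1), the statement is essentially Corollary \ref{ZeroAC}: the principal-value Cauchy transform of a finite compactly supported measure exists off a set of zero analytic capacity. So I would simply invoke Corollary \ref{ZeroAC} to obtain $\mathbb Q$ with $\gamma(\mathbb Q)=0$ such that $\mathcal C(\nu)(\lambda)=\lim_{\epsilon\to 0}\mathcal C_\epsilon(\nu)(\lambda)$ exists for all $\lambda\in\mathbb Q^c$, and enlarge $\mathbb Q$ (still keeping $\gamma(\mathbb Q)=0$) if needed so that $\lambda_0\notin\mathbb Q$ — which is legitimate since hypothesis (b) says the limit exists at $\lambda_0$. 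That disposes of (1).

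For part (2), fix $\epsilon>0$; I must show $\lim_{\delta\to 0}\gamma\bigl(B(\lambda_0,\delta)\cap\{|\mathcal C(\nu)(\lambda)-\mathcal C(\nu)(\lambda_0)|>\epsilon\}\bigr)/\delta=0$. The key decomposition is to split $\nu$ near $\lambda_0$: write $\nu=\nu|_{B(\lambda_0,2\delta)}+\nu|_{B(\lambda_0,2\delta)^c}$ for a scale $\delta$ to be sent to $0$. The ``far'' part $\nu|_{B(\lambda_0,2\delta)^c}$ has a Cauchy transform that is analytic in $B(\lambda_0,2\delta)$, hence continuous there, so on $B(\lambda_0,\delta)$ it differs from its value at $\lambda_0$ by at most an amount that $\to 0$ as $\delta\to 0$ (uniformly on $B(\lambda_0,\delta)$, by the Cauchy estimate $\|(\mathcal C\nu|_{B(\lambda_0,2\delta)^c})'\|_{B(\lambda_0,\delta)}\lesssim \|\nu\|/\delta^2$ times the radius $\delta$ — wait, that is $O(\|\nu\|/\delta)$, not small; so instead I use that $\mathcal C_\epsilon(\nu)(\lambda_0)\to\mathcal C(\nu)(\lambda_0)$ together with the density hypothesis to control the difference directly). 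More cleanly: by hypothesis (a), $|\nu|(B(\lambda_0,2\delta))=o(\delta)$, so the ``near'' part is a measure of mass $o(\delta)$ supported in $B(\lambda_0,2\delta)$. Its maximal Cauchy transform $\mathcal C_*(\nu|_{B(\lambda_0,2\delta)})$ exceeds $\epsilon/3$ only on a set of analytic capacity $\le C_T|\nu|(B(\lambda_0,2\delta))/(\epsilon/3)=o(\delta)$ by Tolsa's weak-$(1,1)$ estimate \eqref{WeakOneOne}, which in particular controls the principal value where it exists. Meanwhile on $B(\lambda_0,\delta)$ the far part $\mathcal C(\nu|_{B(\lambda_0,2\delta)^c})$ is analytic, and I compare its value at $\lambda$ with its value at $\lambda_0$; since $\mathcal C(\nu)(\lambda_0)$ exists and $\mathcal C(\nu|_{B(\lambda_0,2\delta)})(\lambda_0)=o(1)$ (again by (a) and the existence of the principal value there, using that $\mathcal C_\epsilon$ for $\epsilon\sim\delta$ is $o(1)$ by (a) and the tail converges by (b)), the far part at $\lambda_0$ is close to $\mathcal C(\nu)(\lambda_0)$. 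Thus outside an exceptional set of analytic capacity $o(\delta)$ contained in $B(\lambda_0,\delta)$, we have $|\mathcal C(\nu)(\lambda)-\mathcal C(\nu)(\lambda_0)|\le \epsilon$ once $\delta$ is small enough, which is exactly the $\gamma$-continuity statement. Finally, $f(\lambda_0)=\mathcal C(\nu)(\lambda_0)$ is well defined by (b), so $\gamma$-continuity (not merely a $\gamma$-limit) holds.

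The main obstacle is the bookkeeping in the ``far'' part: I must verify that $\mathcal C(\nu|_{B(\lambda_0,2\delta)^c})(\lambda)$ — a genuinely analytic function on $B(\lambda_0,2\delta)$ — is within $\epsilon/3$ of $\mathcal C(\nu)(\lambda_0)$ for \emph{all} $\lambda\in B(\lambda_0,\delta)$ once $\delta$ is small, and this requires combining (i) analyticity/continuity of the far part at $\lambda_0$ with a quantitative modulus, (ii) the fact that the far part at $\lambda_0$ agrees with $\mathcal C(\nu)(\lambda_0)$ up to $\mathcal C(\nu|_{B(\lambda_0,2\delta)})(\lambda_0)$, and (iii) the estimate $|\mathcal C_\eta(\nu|_{B(\lambda_0,2\delta)})(\lambda_0)|\le |\nu|(B(\lambda_0,2\delta))/\eta=o(\delta)/\eta$ for $\eta$ chosen comparably to $\delta$, using hypothesis (a) to make the near-field contribution at $\lambda_0$ negligible. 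The continuity modulus of the far part near $\lambda_0$ is where I would be most careful: a crude Cauchy estimate only gives a bound of order $\|\nu\|/\delta$ on the derivative over $B(\lambda_0,\delta)$, which is not good enough, so I instead bound $|\mathcal C(\nu|_{B(\lambda_0,2\delta)^c})(\lambda)-\mathcal C(\nu|_{B(\lambda_0,2\delta)^c})(\lambda_0)|$ by $\int_{|w-\lambda_0|\ge 2\delta}\frac{|\lambda-\lambda_0|}{|w-\lambda||w-\lambda_0|}d|\nu|(w)\le |\lambda-\lambda_0|\int_{|w-\lambda_0|\ge 2\delta}\frac{2}{|w-\lambda_0|^2}d|\nu|(w)$, and split this last integral into annuli $2^k\delta\le |w-\lambda_0|<2^{k+1}\delta$; the contribution of the innermost annuli is controlled by (a) (giving $o(1)$ as $\delta\to 0$ after summing, since $|\nu|(B(\lambda_0,r))=o(r)$), and the contribution of annuli of size $\gtrsim 1$ is $O(|\lambda-\lambda_0|)=O(\delta)\to 0$. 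This is the one genuinely technical estimate; everything else is assembling Corollary \ref{ZeroAC}, Tolsa's weak-$(1,1)$ inequality \eqref{WeakOneOne}, and semiadditivity \eqref{Semiadditive} to bound the total exceptional set by $o(\delta)$.
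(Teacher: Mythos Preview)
The paper does not prove this lemma at all: it simply states it with the attribution ``The following lemma is from Lemma 3.2 in \cite{acy18}.'' So there is no proof in the paper to compare against.

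Your argument is correct and is essentially the standard proof. Part (1) is exactly Corollary~\ref{ZeroAC}. For part (2) your near/far decomposition works: the far piece $\mathcal C(\nu|_{B(\lambda_0,2\delta)^c})$ is analytic on $B(\lambda_0,2\delta)$, its value at $\lambda_0$ equals $\mathcal C_{2\delta}(\nu)(\lambda_0)\to\mathcal C(\nu)(\lambda_0)$ by hypothesis (b), and the oscillation bound
\[
\bigl|\mathcal C(\nu|_{B(\lambda_0,2\delta)^c})(\lambda)-\mathcal C(\nu|_{B(\lambda_0,2\delta)^c})(\lambda_0)\bigr|
\le 2\delta\int_{|w-\lambda_0|\ge 2\delta}\frac{d|\nu|(w)}{|w-\lambda_0|^2}
\le C\sum_{k\ge 0}\frac{1}{2^k}\cdot\frac{|\nu|(B(\lambda_0,2^{k+2}\delta))}{2^{k+2}\delta}
\]
tends to $0$ by dominated convergence, since each ratio $|\nu|(B(\lambda_0,r))/r$ is bounded (by $\Theta_\nu(\lambda_0)=0$ near $0$ and by $\|\nu\|/r$ for large $r$) and tends to $0$ as $\delta\to 0$ for each fixed $k$. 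The near piece is handled by \eqref{WeakOneOne} exactly as you say: $\gamma(\{\mathcal C_*(\nu|_{B(\lambda_0,2\delta)})>\epsilon/3\})\le 3C_T|\nu|(B(\lambda_0,2\delta))/\epsilon=o(\delta)$. Your initial worry about the crude derivative bound $\|\nu\|/\delta$ was justified, and the dyadic fix you propose is the right one; the dominated-convergence justification above is the clean way to close it.
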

\smallskip

In order to state our generalized Plemelj's formula, we need to introduce some further notations.
 
Let $A:\mathbb R\rightarrow \mathbb R$ be a Lipschitz function with graph $\Gamma$.
We define the open upper cone (with vertical axis)  
	 \[
	 \ UC (\lambda, \alpha ) = \left \{z \in \mathbb C :~ |Re(z) -  Re(\lambda )| < \alpha (Im(z) -  Im(\lambda ))\right \}
 \]
	and the open lower cone (with vertical axis)
	 \[
	 \ LC (\lambda, \alpha ) = \left \{z \in \mathbb C :~ |Re(z) -  Re(\lambda )| < - \alpha (Im(z) -  Im(\lambda ))\right \}.
  \]
	Set $UC (\lambda, \alpha, \delta ) = UC (\lambda, \alpha)\cap B(\lambda ,  \delta)$	and
	$LC (\lambda, \alpha, \delta ) = LC (\lambda, \alpha)\cap B(\lambda ,  \delta)$.
		
Observe that if $\alpha < \frac{1}{\|A'\|_\infty},$ then, for almost all $\lambda\in \Gamma,$ there exists $\delta > 0$ such that 
	\[
	 \ UC (\lambda, \alpha, \delta) \subset U_\Gamma:=\left\{z\in \mathbb C:~ Im(z) > A(Re(z))\right\}
	 \]
	 and 
	 \[
	 \ LC (\lambda, \alpha, \delta)\subset L_\Gamma:=\left\{z\in \mathbb C:~ Im(z) < A(Re(z))\right\}.
	 \]
	We consider the usual complex-valued measure
	 \[
	 \ \dfrac{1}{2\pi i} dz_{\Gamma} = \dfrac{1 + iA'(Re(z))}{2\pi i (1 + A'(Re(z))^2)^{\frac{1}{2}}} d\mathcal H^1 |_{\Gamma} = L(z)d\mathcal H^1 |_{\Gamma}.
	 \]
Notice that
$|L(z)| = \frac{1}{2\pi}.$ 
Plemelj's formula for Lipschitz graph as in Theorem 8.8 of \cite{Tol14} is the following.
\smallskip
	
	\begin{lemma}\label{PFLipschitz}
	Let $A : \mathbb R \rightarrow \mathbb R$ be a Lipschitz function with its graph $\Gamma$. Let $0 < \alpha < \frac{1}{\|A'\|_\infty}$. Then, for all $f\in L^1(\mathcal H^1 |_{\Gamma})$, the non-tangential limits of $\mathcal C(f dz |_{\Gamma})(\lambda )$ exists for $\mathcal H^1 |_{\Gamma}-a.a.$ and the following identities hold $\mathcal H^1 |_{\Gamma}-a.a.$ for $\lambda\in \Gamma$:
	\[
	 \ \dfrac{1}{2\pi i} \lim_{z\in UC (\lambda, \alpha ) \rightarrow \lambda}\mathcal C(f dz|_{\Gamma})(z) = \dfrac{1}{2\pi i}\mathcal C(f dz|_{\Gamma})(\lambda) + \dfrac{1}{2} f(\lambda),
	\]
	and
	\[
	 \ \dfrac{1}{2\pi i} \lim_{z\in LC (\lambda, \alpha ) \rightarrow \lambda}\mathcal C(f dz|_{\Gamma})(z) = \dfrac{1}{2\pi i}\mathcal C(f dz|_{\Gamma})(\lambda) - \dfrac{1}{2} f(\lambda).
	\]
\end{lemma}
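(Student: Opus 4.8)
The statement is the classical Plemelj jump formula for the Cauchy integral over a Lipschitz graph, and the plan is to assemble it from three ingredients: the $L^2$-boundedness of $f\mapsto \mathcal C(f\,dz|_\Gamma)$ (Lemma \ref{lemmaBasic0}(7)), the weak-type $(1,1)$ estimates for the maximal Cauchy transform and for the maximal function (Theorem \ref{TTolsa}(3) and Lemma \ref{lemmaBasic0}(8)), and an explicit computation in the flat model. First I would record that, by Corollary \ref{ZeroAC} together with Lemma \ref{lemmaBasic0}(7), the principal value $\mathcal C(f\,dz|_\Gamma)(\lambda)$ exists for $\mathcal H^1|_\Gamma$-a.a.\ $\lambda$; this takes care of the right-hand sides of the two identities. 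It then remains to show that the non-tangential limits from $U_\Gamma$ and $L_\Gamma$ exist $\mathcal H^1|_\Gamma$-a.a.\ and that they exceed, respectively undercut, the principal value by exactly $\pm\frac12 f(\lambda)$.

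The model case is the real line. If $\Gamma=\mathbb R$ and $\psi$ is smooth with compact support, then for $z=u+iv$ with $v\neq 0$ one has $\int_{\mathbb R}\frac{\psi(x)}{x-z}\,dx=\int_{\mathbb R}\psi(x)\frac{x-u}{(x-u)^2+v^2}\,dx+iv\int_{\mathbb R}\frac{\psi(x)}{(x-u)^2+v^2}\,dx$; letting $v\to 0^{+}$ inside a vertical cone, the first integral tends to $\text{p.v.}\int\frac{\psi(x)}{x-u}\,dx$ and the second, after the substitution $x=u+vt$, tends to $\pi\psi(u)$, so the non-tangential limit from above equals $\text{p.v.}\int\frac{\psi(x)}{x-u}\,dx+\pi i\psi(u)$; from below one gets the same principal value minus $\pi i\psi(u)$. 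Dividing by $2\pi i$ gives exactly the asserted $\pm\frac12\psi(u)$.

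To pass from the line to a general Lipschitz graph I would use Rademacher's theorem: $A$ is differentiable $\mathcal H^1|_\Gamma$-a.a., and at such a point $\lambda$ — after a rotation making $A'=0$ there and restricting to $z\in UC(\lambda,\alpha,\delta)$, which is legitimate by the cone observation preceding the lemma — the graph lies in an arbitrarily thin cone about the horizontal tangent line $\ell$ through $\lambda$. Fix a smooth cutoff $\phi$ supported in a small ball $B(\lambda,r)$. On the complement of $B(\lambda,r)$ the kernel $\frac{1}{w-z}$ is smooth near $\lambda$, so $\mathcal C((1-\phi)f\,dz|_\Gamma)$ extends continuously across $\Gamma$ there and contributes no jump, only the principal value; for the localized part write $\phi f=\phi\,f(\lambda)+\phi\,(f-f(\lambda))$, where the second summand has small integral at a Lebesgue point $\lambda$ of $f$ and hence contributes a term controlled by $|f-f(\lambda)|$-averages, while $f(\lambda)\int_\Gamma\frac{\phi(w)}{w-z}\,dw$ is compared with the corresponding integral over $\ell$, the difference being estimated by the near-flatness of $\Gamma$ and the smoothness of $\phi$. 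This reduces the computation to the model case and proves the two identities for, say, Lipschitz $f$ with compact support.

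Finally, the general $f\in L^1(\mathcal H^1|_\Gamma)$ follows by density. Given $\epsilon>0$ choose a nice $g$ with $\|f-g\|_{L^1(\mathcal H^1|_\Gamma)}<\epsilon$. Using a Cotlar-type inequality one dominates the non-tangential maximal function $N(h)(\lambda):=\sup_{z\in UC(\lambda,\alpha)}|\mathcal C(h\,dz|_\Gamma)(z)|$ pointwise by $C\bigl(\mathcal C_*(h\,dz|_\Gamma)+\mathcal M_{h\,dz|_\Gamma}\bigr)$; by Theorem \ref{TTolsa}(3) and Lemma \ref{lemmaBasic0}(8) both operators on the right are of weak type $(1,1)$ with respect to $\mathcal H^1|_\Gamma$, hence so is $N$. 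Then the standard argument — the upper oscillation of $\mathcal C(f\,dz|_\Gamma)$ at $\lambda$ is at most $N(f-g)(\lambda)$ plus the vanishing oscillation of $\mathcal C(g\,dz|_\Gamma)$, so the set where this oscillation exceeds $\sqrt\epsilon$ has $\mathcal H^1|_\Gamma$-measure at most $C\sqrt\epsilon$ — shows the non-tangential limits exist a.e., and a parallel estimate for the principal value (Lemma \ref{lemmaBasic0}(8) again) identifies them. I expect the Cotlar-type domination of the full non-tangential maximal function by the truncated maximal transform plus a Hardy--Littlewood-type maximal function — that is, obtaining genuine non-tangential, not merely vertical, convergence with uniform control over the approach cone — to be the main technical obstacle; the remaining steps are either classical or already contained in Theorem \ref{TTolsa} and Lemma \ref{lemmaBasic0}.
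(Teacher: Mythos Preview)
The paper does not prove this lemma at all: it simply records it as ``Plemelj's formula for Lipschitz graph as in Theorem 8.8 of \cite{Tol14},'' so there is no in-paper argument to compare your proposal against. Your sketch is the standard route to that theorem --- $L^2$-boundedness of the Cauchy integral on Lipschitz graphs (Coifman--McIntosh--Meyer, recorded here as Lemma~\ref{lemmaBasic0}(7)), a Cotlar-type pointwise domination of the non-tangential maximal function by $\mathcal C_*+\mathcal M$, the resulting weak-$(1,1)$ bound, the explicit jump computation in the flat model, and density --- and is essentially what one finds in Tolsa's book.

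Your outline is correct; the only point worth flagging is the one you already identified. The Cotlar-type inequality that controls the \emph{non-tangential} maximal function (not merely the vertical one) uniformly over the cone $UC(\lambda,\alpha)$ with $\alpha<1/\|A'\|_\infty$ requires a genuine geometric argument on the Lipschitz graph, not just the abstract Calder\'on--Zygmund machinery behind Theorem~\ref{TTolsa}(3) and Lemma~\ref{lemmaBasic0}(8). One typically compares $\mathcal C(h\,dz|_\Gamma)(z)$ for $z\in UC(\lambda,\alpha,\delta)$ with $\mathcal C_\delta(h\,dz|_\Gamma)(\lambda)$ via the kernel estimate $\left|\frac{1}{w-z}-\frac{1}{w-\lambda}\right|\le\frac{C|z-\lambda|}{|w-\lambda|^2}$ on $|w-\lambda|>2\delta$, together with the linear-growth bound $\mathcal H^1|_\Gamma(B(\lambda,r))\le C_\Gamma r$; the near part $|w-\lambda|\le 2\delta$ is handled by $\mathcal M_{h\,dz|_\Gamma}(\lambda)$. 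This is routine once stated, but it is the step that makes the convergence genuinely non-tangential rather than just radial, and it should be written out if you are giving a self-contained proof rather than citing \cite{Tol14}.
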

\smallskip

We are now ready to improve Lemma \ref{GPTheorem} as the following (replacing $UC (\lambda, \alpha,\delta )$ by $U_\Gamma \cap B(\lambda, \delta )$ and $LC (\lambda, \alpha,\delta )$ by $L_\Gamma \cap B(\lambda, \delta )$).
\smallskip

\begin{theorem}\label{GPTheorem1}
(Plemelj's Formula for an arbitrary measure) Let $\alpha < \frac{1}{\|A'\|_\infty}$. Let $\nu$ be a finite, complex-valued Borel measure with 
compact support in $\mathbb{C}$. Suppose that $\nu = h\mathcal H^1 |_{\Gamma} + \sigma$ is the Radon-Nikodym 
decomposition with respect to $\mathcal H^1 |_{\Gamma}$, where $h\in L^1(\mathcal H^1 |_{\Gamma})$ and $\sigma\perp \mathcal H^1 |_{\Gamma}$. Then there exists a subset $\mathbb Q\subset \mathbb C$ with $\gamma(\mathbb Q) = 0$, such that the following hold:

(a) $\mathcal C(\nu ) (\lambda) = \lim_{\epsilon\rightarrow 0} \mathcal C_{\epsilon}(\nu)(\lambda)$ exists for $\lambda\in \mathbb C\setminus \mathbb Q$,

(b) for $\lambda_0 \in \Gamma \setminus \mathbb Q$, $v^+(\nu, \Gamma, \lambda_0) := \mathcal C(\nu ) (\lambda_0) + \frac{1}{2}h(\lambda_0) L(\lambda_0)^{-1},$
\[
 \ \lim_{\delta \rightarrow 0} \dfrac{\gamma(U_\Gamma \cap B(\lambda, \delta ) \cap \{|\mathcal{C}(\nu )(\lambda ) - v^+(\nu, \Gamma, \lambda_0)| > \epsilon\})} {\delta}= 0  
 \]
for all $\epsilon > 0$;

(c) for $\lambda_0 \in \Gamma \setminus \mathbb Q$, $v^-(\nu, \Gamma, \lambda_0) := \mathcal C(\nu ) (\lambda_0) - \frac{1}{2}h(\lambda_0) L(\lambda_0)^{-1},$
\[
 \ \lim_{\delta \rightarrow 0} \dfrac{\gamma(L_\Gamma \cap B(\lambda, \delta ) \cap \{|\mathcal{C}(\nu )(\lambda ) - v^-(\nu, \Gamma, \lambda_0)| > \epsilon\})} {\delta}= 0  
 \]
for all $\epsilon > 0$;
and

(d) for $\lambda_0 \in \Gamma \setminus \mathbb Q$, $v^0(\nu, \Gamma, \lambda_0) := \mathcal C(\nu ) (\lambda_0),$
\[
 \ \lim_{\delta \rightarrow 0} \dfrac{\gamma(\Gamma \cap B(\lambda, \delta ) \cap \{|\mathcal{C}(\nu )(\lambda ) - v^0(\nu, \Gamma, \lambda_0)| > \epsilon\})} {\delta}= 0  
 \]
for all $\epsilon > 0$.
\end{theorem}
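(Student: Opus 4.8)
The plan is to derive Theorem~\ref{GPTheorem1} by reducing it to the two ingredients already available: Lemma~\ref{CauchyTLemma} (the $\gamma$-continuity statement for Cauchy transforms of measures with vanishing linear density at a point, applied to the singular part of the splitting off $\mathcal H^1|_\Gamma$) and Lemma~\ref{PFLipschitz} (the full non-tangential Plemelj jump formula for the absolutely continuous part $h\,dz|_\Gamma$). First I would write $\nu = h\mathcal H^1|_\Gamma + \sigma = h L^{-1}\,dz|_\Gamma + \sigma$, where $L$ is the unimodular-up-to-$\tfrac1{2\pi}$ density introduced above, so that the first summand is of the form $f\,dz|_\Gamma$ with $f = 2\pi i\, L^{-1} h \in L^1(\mathcal H^1|_\Gamma)$, and then handle the two pieces separately and add. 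Part~(a) is immediate from Corollary~\ref{ZeroAC}. For (d), the $\Gamma$-tangential statement, one combines: on $\Gamma$, $\mathcal H^1|_\Gamma$-a.e.\ point is a Lebesgue point of $h$ and has $\Theta_\sigma=0$ (by Lemma~\ref{lemmaBasic0}(5)(b), since $\sigma\perp\mathcal H^1|_\Gamma$ forces $|\sigma|$ to give no mass to the set where it has positive density on $\Gamma$, hence $\Theta_{|\sigma|}=0$ $\mathcal H^1|_\Gamma$-a.e.), so Lemma~\ref{CauchyTLemma}(2) gives $\gamma$-continuity of $\mathcal C(\sigma)$ there, while the principal value of $\mathcal C(h\,dz|_\Gamma)$ restricted to $\Gamma$ is continuous in the appropriate sense by the interior part of Lemma~\ref{PFLipschitz}; the principal values from the two sides average to the principal value on $\Gamma$, which is exactly $v^0$.

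The main work is parts (b) and (c), and these are symmetric, so I would do (b) and remark that (c) follows by reflecting. Fix a point $\lambda_0\in\Gamma$ outside the relevant exceptional set. The contribution of $\sigma$: since $\Theta_\sigma(\lambda_0)=0$ and $\mathcal C(\sigma)(\lambda_0)$ exists, Lemma~\ref{CauchyTLemma}(2) gives that $\mathcal C(\sigma)$ is $\gamma$-continuous at $\lambda_0$, hence in particular
\[
\lim_{\delta\to 0}\frac{\gamma\bigl(B(\lambda_0,\delta)\cap\{|\mathcal C(\sigma)(z)-\mathcal C(\sigma)(\lambda_0)|>\epsilon\}\bigr)}{\delta}=0,
\]
and restricting to $U_\Gamma\cap B(\lambda_0,\delta)$ only decreases the numerator. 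The contribution of $h\,dz|_\Gamma$: by Lemma~\ref{PFLipschitz} the \emph{non-tangential} limit along $UC(\lambda_0,\alpha)$ equals $\mathcal C(h\,dz|_\Gamma)(\lambda_0)+\pi i\cdot (2\pi i)^{-1}f(\lambda_0)=\mathcal C(h\,dz|_\Gamma)(\lambda_0)+\tfrac12 h(\lambda_0)L(\lambda_0)^{-1}$, $\mathcal H^1|_\Gamma$-a.e. So the remaining point is to upgrade convergence along the fixed cone $UC(\lambda_0,\alpha)$ to convergence on the whole half-side $U_\Gamma\cap B(\lambda_0,\delta)$ in the full-$\gamma$-density sense, i.e.\ to show
\[
\lim_{\delta\to0}\frac{\gamma\bigl((U_\Gamma\setminus UC(\lambda_0,\alpha))\cap B(\lambda_0,\delta)\cap\{|\mathcal C(h\,dz|_\Gamma)(z)-v^+|>\epsilon\}\bigr)}{\delta}=0.
\]
The region $U_\Gamma\setminus UC(\lambda_0,\alpha)$ near $\lambda_0$ is a thin lens hugging $\Gamma$ from above; its intersection with $B(\lambda_0,\delta)$ has $\gamma\lesssim\delta$, so a direct density bound is not enough and one must use the Cauchy transform's behaviour there. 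The trick (this is the analogue of the square-barrier argument alluded to in the text, and of Theorem~3.6 of \cite{acy18}) is that at $\mathcal H^1|_\Gamma$-a.e.\ $\lambda_0$ one has control of the truncated maximal Cauchy transform of $h\,dz|_\Gamma$ and of the linear density of $h\,dz|_\Gamma$; for such $\lambda_0$, points $z$ in the thin lens lie within a controlled multiple of $\mathrm{dist}(z,\Gamma)$ of $\Gamma$, and one estimates $\mathcal C(h\,dz|_\Gamma)(z)$ by comparing with its value at the nearest point of the cone, the difference being $O(1)$ times a local average of $|h|$ that tends to $0$. Packing the lens by boundary-adapted Whitney squares of side comparable to their distance from $\Gamma$ and applying the weak-$(1,1)$ estimate \eqref{WeakOneOne} together with the semiadditivity \eqref{Semiadditive} and the growth bound, one sums the analytic capacities of the bad pieces and gets $o(\delta)$.

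The step I expect to be the main obstacle is exactly this last one: passing from non-tangential (fixed-cone) convergence to convergence on the full one-sided neighbourhood $U_\Gamma\cap B(\lambda_0,\delta)$ in the sense of full $\gamma$-density. It requires choosing the exceptional set $\mathbb Q$ carefully — it must simultaneously exclude non-Lebesgue points of $h$, points of positive $\Theta_{|\sigma|}$ on $\Gamma$, points where the principal value fails to exist, and points where the maximal Cauchy transform of $h\,dz|_\Gamma$ is not finite — and then a Whitney/Vitushkin-type decomposition of the thin lens adapted to $\Gamma$, with analytic-capacity bookkeeping via Theorem~\ref{TTolsa}. Everything else (the role of $\sigma$, the identification of the constants $v^\pm$, deducing (c) and (d)) is routine once (b)'s lens estimate is in hand. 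I would therefore isolate the lens estimate as a self-contained sub-lemma, prove it first, and then assemble (a)--(d) in a few lines.
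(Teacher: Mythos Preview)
Your overall architecture---split $\nu=h\mathcal H^1|_\Gamma+\sigma$, handle $\sigma$ by Lemma~\ref{CauchyTLemma} via $\Theta_\sigma=0$ $\mathcal H^1|_\Gamma$-a.e., and reduce to $\nu=h\mathcal H^1|_\Gamma$---matches the paper exactly, and you are right that the real issue in (b) is upgrading cone convergence to full $U_\Gamma$-density convergence. But your proposed ``lens estimate'' is where the gap lies. Comparing $\mathcal C(h\,dz|_\Gamma)(z)$ for $z$ in the lens with its value at a nearby cone point, and claiming the difference is $O(1)$ times a local average of $|h|$ that tends to $0$, does not work directly: when $z$ is in the lens, $\mathrm{dist}(z,\Gamma)\ll|z-\lambda_0|$, so the near-part of the Cauchy integral is governed by the values of $h$ near the foot of $z$ on $\Gamma$, not near $\lambda_0$, and there is no reason for that to be small. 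A Whitney decomposition plus the weak-$(1,1)$ bound \eqref{WeakOneOne} will not rescue this without an additional idea, because you would be summing uncontrolled contributions.

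The paper's device is different and avoids estimating $\mathcal C(\nu)$ in the lens altogether. It first proves (d) (tangential $\gamma$-continuity along $\Gamma$) by writing $\mathcal C(h\mathcal H^1|_\Gamma)=\mathcal C((h-h(\lambda_0))\mathcal H^1|_\Gamma)+h(\lambda_0)\mathcal C(\mathcal H^1|_\Gamma)$, using a Lebesgue-point argument for the second term and Lemma~\ref{CauchyTLemma} (since then $\Theta=0$ at $\lambda_0$) for the first. Then, for (b), fix $a>0$ and let $A_n$ be the set of $\lambda\in\Gamma$ where the \emph{cone} Plemelj limit holds uniformly on $UC(\lambda,\alpha,1/n)$ to accuracy $a/2$; by Lemma~\ref{PFLipschitz}, $\mathcal H^1|_\Gamma(\mathcal N(h)\setminus\bigcup_nA_n)=0$. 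At a Lebesgue-density point $\lambda_0$ of $A_n$ where (d) holds and $h L^{-1}$ has a Lebesgue point, the set $E=\Gamma\cap\overline{B(\lambda_0,\delta)}\setminus O$ (with $O$ a small open neighbourhood of the bad set) has nearly full $\mathcal H^1$-measure, and the union $B=\bigcup_{\lambda\in E}\overline{UC(\lambda,\alpha,1/n)}$ covers $U_\Gamma\cap B(\lambda_0,\delta)$ except for connected gaps $G_k$ whose traces on $\Gamma$ are disjoint arcs in $O$; hence $\sum_k\mathrm{diam}(G_k)\lesssim\mathcal H^1|_\Gamma(O)<\epsilon\delta$, and semiadditivity gives $\gamma(U_\Gamma\cap B(\lambda_0,\delta)\setminus B)\lesssim\epsilon\delta$. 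On $B\setminus\Gamma$ one has $|\mathcal C(\nu)(z)-v^+(\nu,\Gamma,\lambda)|<a/2$ for the relevant base point $\lambda$, and (d) plus the Lebesgue-point condition give $|v^+(\nu,\Gamma,\lambda)-v^+(\nu,\Gamma,\lambda_0)|<a/2$, so $|\mathcal C(\nu)(z)-v^+(\nu,\Gamma,\lambda_0)|<a$ on $B\setminus\Gamma$. In short: rather than analysing the lens from the single vertex $\lambda_0$, cover $U_\Gamma$ by cones from \emph{many} nearby vertices and bound the leftover by a diameter sum. Replace your lens sub-lemma with this covering argument.
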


\begin{proof} We restrict $\Gamma$ to the portion of the graph inside $B(0,r)$ (write $\Gamma \subset B(0,r)$), where $\text{spt}(\nu) \subset B(0,r)$. 
Since $\sigma\perp \mathcal H^1|_{\Gamma}$, using Lemma \ref{lemmaBasic0} (5) (b) and Lemma \ref{CauchyTLemma}, we conclude that (b), (c), and (d) holds for the measure $\sigma$. So we may assume that $d\nu = hd\mathcal H^1 |_{\Gamma}$.

First we prove (d): From Lemma \ref{lemmaBasic0} (7), we see  that Cauchy transform of $\mathcal H^1 |_{\Gamma}$ is bounded on $L^2(\mathcal H^1 |_{\Gamma})$. So we get $\mathcal C\mathcal H^1 |_{\Gamma}\in L^2(\mathcal H^1 |_{\Gamma})$ as $\Gamma \subset B(0,r)$. Also $h\in L^1(\mathcal H^1 |_{\Gamma})$. Let $\lambda _0$ be a Lebesgue point of $h$ and $\mathcal C\mathcal H^1 |_{\Gamma}$. That is,
\begin{eqnarray}\label{LPoint1}
 \ \lim_{\delta\rightarrow 0} \dfrac{1}{\delta}\int_{B(\lambda_0,\delta)} |h(z) - h(\lambda_0)|d\mathcal H^1 |_{\Gamma}(z) = 0 
\end{eqnarray}
and 
\begin{eqnarray}\label{LPoint2}
 \ \lim_{\delta\rightarrow 0} \dfrac{1}{\delta}\int_{B(\lambda_0,\delta)} |\mathcal C\mathcal H^1 |_{\Gamma} (z) - \mathcal C\mathcal H^1 |_{\Gamma} (\lambda_0)|d\mathcal H^1 |_{\Gamma}(z) = 0 
\end{eqnarray}
as $\mathcal H^1 |_{\Gamma}(B(\lambda_0,\delta)) \approx \delta$ by Lemma \ref{lemmaBasic0} (7). 
 
For $\epsilon > 0$, from Theorem \ref{TTolsa} (2), we get (assuming $h(\lambda_0)\ne 0$):
 \[
 \ \begin{aligned}
 \ & \gamma\left( \{|\mathcal C\nu (\lambda) - \mathcal C\nu (\lambda_0) | > \epsilon \} \cap B(\lambda_0, \delta) \cap \Gamma\right )\\
 \ \le & A_T  \gamma\left (\{|\mathcal C((h-h(\lambda_0))\mathcal H^1 |_\Gamma )(\lambda) - \mathcal C((h-h(\lambda_0))\mathcal H^1 |_\Gamma )(\lambda_0) |  > \dfrac{\epsilon}{2} \} \cap B(\lambda_0, \delta) \cap \Gamma \right )\\  
 \ &+ A_T\gamma \left ( \{|\mathcal C\mathcal H^1 |_{\Gamma} (\lambda) - \mathcal C\mathcal H^1 |_{\Gamma} (\lambda_0) | > \dfrac{\epsilon}{2|h(\lambda_0)|} \}\cap B(\lambda_0, \delta) \cap \Gamma \right).
 \ \end{aligned}
 \]
From \eqref{HACEq}, we see that
 \[
 \ \begin{aligned}
 \ & \gamma \left ( \{|\mathcal C\mathcal H^1 |_{\Gamma} (\lambda) - \mathcal C\mathcal H^1 |_{\Gamma} (\lambda_0) | > \dfrac{\epsilon}{2|h(\lambda_0|} \}\cap B(\lambda_0, \delta) \cap \Gamma \right)\\
 \ \le & C_\Gamma \mathcal H^1 |_{\Gamma} \left (\{|\mathcal C\mathcal H^1 |_{\Gamma} (\lambda) - \mathcal C\mathcal H^1 |_{\Gamma} (\lambda_0) | > \dfrac{\epsilon}{2|h(\lambda_0)|} \} \cap B(\lambda_0, \delta) \cap \Gamma \right ) \\
\ \le & \dfrac{2 C_\Gamma |h(\lambda_0)|}{\epsilon} \int _{B(\lambda_0, \delta)}|\mathcal C\mathcal H^1 |_{\Gamma} (z) - \mathcal C\mathcal H^1 |_{\Gamma} (\lambda_0) | d\mathcal H^1 |_{\Gamma}(z).
 \ \end{aligned}
 \]
It follows from  \eqref{LPoint2} that 
 \[
 \ \lim_{\delta\rightarrow 0} \dfrac{\gamma \left ( \{|\mathcal C\mathcal H^1 |_{\Gamma} (\lambda) - \mathcal C\mathcal H^1 |_{\Gamma} (\lambda_0) | > \dfrac{\epsilon}{2|h(\lambda_0)|} \}\cap B(\lambda_0, \delta) \cap \Gamma \right)}{\delta} = 0. 
 \]
Therefore, we may assume that $h(\lambda_0) = 0$. In this case, $\nu$ satisfies the assumptions of Lemma \ref{CauchyTLemma} as \eqref{LPoint1} holds and we prove (d) because $\mathcal C(\nu)(z)$ is $\gamma$-continuous at $\lambda_0$ by Lemma \ref{CauchyTLemma}.

Now let us prove (b). For $a>0$, let $A_n$ be the set of $\lambda\in \mathcal N(h) (=\{h(z) \ne 0\})$ such that 
 \[
 \ |\mathcal C (\nu)(z) - \mathcal C (\nu)(\lambda) - \dfrac{1}{2}h(\lambda)L(\lambda)^{-1}| < \dfrac{a}{2}
 \]
for $z\in UC(\lambda, \alpha, \frac{1}{n})$. Then from Lemma \ref{PFLipschitz}, we obtain
 \begin{eqnarray}\label{ANEq}
 \ \mathcal H^1 |_{\Gamma} \left (\mathcal N(h) \setminus \bigcup_{n=1}^\infty A_n \right ) = 0.
 \end{eqnarray}
Let $\lambda_0$ be a Lebesgue point of $A_n$ and $h(\lambda)L(\lambda)^{-1}$, that is,
 \[
 \ \lim_{\delta\rightarrow 0} \dfrac{\mathcal H^1 |_{\Gamma}(A_n^c \cap B(\lambda_0, \delta))}{\delta} = 0
 \]
and
 \[
 \ \lim_{\delta\rightarrow 0 } \dfrac{\mathcal H^1 |_{\Gamma} \left ( \{\lambda : |h(\lambda)L(\lambda)^{-1} - h(\lambda_0)L(\lambda_0)^{-1}| \ge \frac{a}{2} \} \cap B (\lambda _0, \delta) \right)}{\delta} = 0.
 \]
Assume that (d) holds for $\lambda_0$, that is,
 \[
 \ \lim_{\delta\rightarrow 0 } \dfrac{\mathcal H^1 |_{\Gamma} \left ( \{\lambda : |\mathcal C(\nu)(\lambda) - \mathcal{C}(\nu)(\lambda _0)| \ge \frac{a}{4} \} \cap B (\lambda _0, \delta) \right)}{\delta} = 0.
 \]
Therefore,
 \[
 \ \lim_{\delta\rightarrow 0 } \dfrac{\mathcal H^1 |_{\Gamma} \left ( (A_n^c \cup \{\lambda : |v^+(\nu, \Gamma, \lambda) - v^+(\nu, \Gamma, \lambda_0)| \ge \frac{a}{2} \}) \cap B (\lambda _0, \delta) \right)}{\delta} = 0.
 \]
For $\epsilon > 0$, there exists $\delta_0 > 0$ ($\delta_0 < \frac{1}{2n}$) and an open set $O\subset B (\lambda _0, \delta)$ for $\delta < \delta_0$ such that 
 \[
 \ (A_n^c \cup \{\lambda : |v^+(\nu, \Gamma, \lambda) - v^+(\nu, \Gamma, \lambda_0)| \ge \frac{a}{2} \}) \cap B (\lambda _0, \delta) \subset O
 \]
and $\mathcal H^1 |_{\Gamma} (O) < \epsilon\delta$. Let $E = \Gamma \cap (\overline{B(\lambda_0,\delta)}\setminus O)$. Denote
 \[
 \ B = \bigcup_{\lambda\in E} \overline{UC(\lambda, \alpha,\frac{1}{n})}.
 \]
It is easy to check that $B$ is closed. By construction, we see that
 \[
 \ |\mathcal C (\nu)(z) - v^+(\nu, \Gamma, \lambda_0)| < a
 \]
for $z\in B\setminus \Gamma$. Write 
 \[
 \ U_\Gamma \cap B(\lambda_0, \delta) \setminus B = \cup _{k=1}^\infty G_k
 \]
where $G_k$ is a connected component. It is clear that $diam(G_k) \approx \mathcal H^1 |_{\Gamma} (\overline{G_k}\cap\Gamma)$. Let $I_k$ be the interior of $\overline{G_k}\cap\Gamma$ on $\Gamma$. Then $\mathcal H^1 |_{\Gamma} (\overline{G_k}\cap\Gamma) = \mathcal H^1 |_{\Gamma} (I_k)$ and $\cup_k I_k \subset O\cap \Gamma$. Hence, from Theorem \ref{TTolsa} (2), we get 
 \begin{eqnarray}\label{GEDensity}
 \ \begin{aligned}
 \ \gamma ( U_\Gamma \cap B(\lambda_0, \delta) \setminus B )\le &A_T\sum_{k=1}^\infty \gamma (G_k)  \\ 
\ \le & A_T\sum_{k=1}^\infty diam (G_k)  \\
 \ \le &A_TC_6\epsilon\delta.
 \ \end{aligned}
 \end{eqnarray}
This implies
 \begin{eqnarray}\label{ANEq1}
 \ \lim_{\delta\rightarrow 0 } \dfrac{\gamma \left ( \{\lambda : |\mathcal C (\nu)(\lambda) - v^+(\nu, \Gamma, \lambda_0)| > a \} \cap B (\lambda _0, \delta)\cap U_\Gamma \right)}{\delta} = 0.
 \end{eqnarray}
So from \eqref{ANEq}, we have proved that there exists $\mathbb Q_a$ with $\mathcal H^1 (\mathbb Q_a) = 0$ such that for each $\lambda_0\in \mathcal N(h) \setminus \mathbb Q_a$, \eqref{ANEq1} holds. Set $\mathbb Q_0 = \cup_{m=1}^\infty \mathbb Q_{\frac{1}{m}}$. Then \eqref{ANEq1} holds for $\lambda_0\in \mathcal N(h) \setminus \mathbb Q_0$ and all $a > 0$. This proves (b).

The proof (c) is the same as (b).  
\end{proof}
\smallskip

Before closing this section, we point out if $\Gamma$ is a rotation of a Lipschitz graph (with rotation angle $\beta$ and Lipschitz function $A$) at $\lambda_0$, then
 \begin{eqnarray}\label{VPlusBeta}
 \ v^+(\nu, \Gamma, \lambda) := \mathcal C(\nu ) (\lambda) + \frac{1}{2} e^{-i\beta}h(\lambda) L((\lambda - \lambda_0)e^{-i\beta} + \lambda_0)^{-1}.
 \end{eqnarray}
Similarly,
 \begin{eqnarray}\label{VMinusBeta}
 \ v^-(\nu, \Gamma, \lambda) := \mathcal C(\nu ) (\lambda) - \frac{1}{2} e^{-i\beta}h(\lambda) L((\lambda - \lambda_0)e^{-i\beta} + \lambda_0)^{-1}.
 \end{eqnarray}

\bigskip

\chapter{Removable and non-removable boundaries}
\bigskip

In this chapter, we assume that $1\le t <\infty$, $\mu$ is a finite positive Borel measure supported on a compact subset $K\subset \mathbb C$, $S_\mu$ on $R^t(K,\mu)$ is pure, and $K=\sigma(S_\mu)$. Let $\{g_n\}_{n=1}^\infty \subset R^t(K,\mu) ^\perp$ be a dense subset. Notice that for $t=1$, $\{g_n\}_{n=1}^\infty$ is a weak$^*$ dense set in $R^1(K,\mu)^\perp\subset L^\infty(\mu)$. Set $\nu_j = g_j\mu$.

We introduce the concept of non-removable boundary $\mathcal F\subset \mathbb C$, a Borel set, and the removable set $\mathcal R = K\setminus \mathcal F$ for $R^t(K,\mu)$. The set $\mathcal F$ consists of three sets, $\mathcal F_0$ and $\mathcal F_+ \cup \mathcal F_-$ such that $\mathcal C(g_n\mu)$ are zero on $\mathcal F_0$ and 
$\mathcal F_+ \cup \mathcal F_-$ is contained in a countable union of rotated Lipschitz graphs $\Gamma_n$, $\mu |_{\mathcal F_+ \cup \mathcal F_-}$ is
absolutely continuous with respect to $\mathcal H^1|_{\cup_n \Gamma_n}$, and $\mathcal C(g_n\mu)$ has zero one side nontangential limits $\mu |_{\mathcal F_+ \cup \mathcal F_-}-a.a.$ in full analytic capacitary density. We prove that $\mathcal F$, $\mathcal R$, $\mathcal F_0$, $\mathcal F_+$, and $\mathcal F_-$ are  independent of choices of $\{\Gamma_n\}$ and $\{g_n\}$ up to a set of zero analytic capacity. We discuss important properties related to the sets $\mathcal F$, $\mathcal R$, and $\text{abpe}(R^t(K,\mu))$ in sections 3,4,\&5 (required for proving our main theorem, Theorem \ref{DecompTheorem}). 
\bigskip

\section{Definition of non-removable boundary}
\bigskip   

Let 
 \[ 
 \ \mathcal N(f) = \{\lambda: ~ f(\lambda)\text{ is well defined,} ~ f(\lambda) \ne 0 \}
 \]
 and 
 \[
 \ \mathcal Z(f)  = \{\lambda: ~ f(\lambda)\text{ is well defined,}  ~ f(\lambda) = 0 \}
 \]
 be the non-zero set and zero set of a function $f$, respectively. Set
 \[
 \ \mathcal N(\nu_j, 1 \le j < \infty) = \bigcup_{j = 1}^\infty \mathcal N(\mathcal C(\nu_j)),
 \]
\[
 \ \mathcal Z(\nu_j,1 \le j < \infty) = \bigcap_{j = 1}^\infty \mathcal Z(\mathcal C(\nu_j)),
 \]
 \[
 \ \mathcal Z(\nu_j,1 \le j  < \infty, \Gamma, +) = \bigcap_{j = 1}^\infty \mathcal Z(v^+(\nu_j, \Gamma,.)), 
 \]
 \[
 \ \mathcal Z(\nu_j,1 \le j < \infty, \Gamma, -) = \bigcap_{j = 1}^\infty \mathcal Z(v^-(\nu_j, \Gamma,.)),
 \]
 \[
 \ \mathcal N(\nu_j,1 \le j < \infty, \Gamma, +) = \bigcup_{j = 1}^\infty \mathcal N(v^+(\nu_j, \Gamma,.)),
 \]
 and 
 \[
 \ \mathcal N(\nu_j,1 \le j < \infty, \Gamma, -) = \bigcup_{j = 1}^\infty \mathcal N(v^-(\nu_j, \Gamma,.)),
 \]
where $\Gamma$ is a (rotated) Lipschitz graph.
Denote ($\mathcal{ZD}(\nu)$ and $\mathcal{ND}(\nu)$ as in \eqref{ZNDensity})
 \[
 \ \mathcal{ZD}(\nu_j,1\le j < \infty) = \bigcap_{j=1}^\infty\mathcal{ZD}(\nu_j) 
 \]
and
 \[
 \ \mathcal{ND}(\nu_j,1\le j < \infty) = \bigcup_{j=1}^\infty\mathcal{ND}(\nu_j). 
 \]

\begin{lemma}\label{GammaExist}
There is a sequence of Lipschitz functions $A_n: \mathbb R\rightarrow \mathbb R$ with $\|A_n'\|_\infty \le \frac{1}{4}$ and its (rotated) graph $\Gamma_n$ such that if $\Gamma = \cup_n \Gamma_n$ and $\mu = h\mathcal H^1 |_{\Gamma} + \mu_s$ is the Radon-Nikodym decomposition with respect to $\mathcal H^1 |_{\Gamma}$, where $h\in L^1(\mathcal H^1 |_{\Gamma})$ and $\mu_s\perp \mathcal H^1 |_{\Gamma}$, then
\begin{eqnarray}\label{FCEq}
\ \mathcal {ND}(\mu ) \approx \mathcal N (h), ~\gamma-a.a., 
\end{eqnarray}
\begin{eqnarray}\label{FCEq01}
 \ \mathcal {ND}(\nu_j, 1\le j <\infty ) \approx \mathcal {ND}(\mu),~\gamma-a.a.,
 \end{eqnarray}
and
\begin{eqnarray}\label{FCEq02}
 \ \mathcal {ZD}(\nu_j, 1\le j <\infty )  \approx \mathcal {ZD}(\mu),~\gamma-a.a..
 \end{eqnarray}
\end{lemma}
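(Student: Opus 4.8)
The plan is to produce the sequence $\{\Gamma_n\}$ by decomposing $\mu$ along its non-zero linear density set, applying the rectifiability structure from Lemma~\ref{lemmaBasic0}~(4)--(5), and then chasing mutual absolute continuity through the Cauchy transforms $\mathcal C(\nu_j)$. First I would look at $\mathcal{ND}(\mu)$. By Lemma~\ref{lemmaBasic0}~(5)(a), $\mathcal{ND}(\mu) = \bigcup_n \mathcal{ND}(\mu,n) \cup \mathbb Q_\mu$ with $\gamma(\mathbb Q_\mu) = \mathcal H^1(\mathbb Q_\mu) = 0$, and on each $\mathcal{ND}(\mu,n)$ one has $\mathcal H^1(\mathcal{ND}(\mu,n)) < \infty$ together with a comparability $\mu|_{\mathcal{ND}(\mu,n)} = g\,\mathcal H^1|_{\mathcal{ND}(\mu,n)}$ with $g$ bounded above and below by constants depending on $n$. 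Hence $\mathcal H^1|_{\mathcal{ND}(\mu)}$ is $\sigma$-finite, and Lemma~\ref{lemmaBasic0}~(4) gives rotated Lipschitz graphs $\{\Gamma_n\}$ (with $\|A_n'\|_\infty \le \tfrac14$) and a set $\mathbb Q$ of zero analytic capacity with $\mathcal{ND}(\mu) \subset \mathbb Q \cup \bigcup_n \Gamma_n$. Setting $\Gamma = \bigcup_n \Gamma_n$ and taking the Radon--Nikodym decomposition $\mu = h\,\mathcal H^1|_\Gamma + \mu_s$, the point is that $\mu_s$ is carried by $\mathcal{ZD}(\mu)$ up to zero analytic capacity: since $\mu_s \perp \mathcal H^1|_\Gamma$ and $\mathcal H^1|_\Gamma$ is linear growth (Lemma~\ref{lemmaBasic0}~(7)), any linear-growth measure absolutely continuous with respect to $\mu_s$ sees $\mathcal{ND}(\mu_s) = \emptyset$ off a null set, which forces $\mathcal N(h)$ and $\mathcal{ND}(\mu)$ to coincide $\gamma$-a.a. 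This is \eqref{FCEq}; the two inclusions are: $\mathcal N(h) \subset \mathcal{ND}(\mu)$ because a positive density of $h\,\mathcal H^1|_\Gamma$ forces positive upper density of $\mu$; and $\mathcal{ND}(\mu) \setminus \mathcal N(h)$ has zero analytic capacity because on it $\mu$ agrees up to density with $\mu_s$, which by Lemma~\ref{lemmaBasic0}~(5)(b) (applied with $\eta$ a suitable linear-growth test measure and $\nu = \mu_s$) meets every linear-growth measure in a $\gamma$-null set — here one invokes Tolsa's theorem to convert ``$\gamma$ comparable to $\delta$ fails on a full-measure set'' into ``$\gamma = 0$.''

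For \eqref{FCEq01}, I would argue both inclusions. If $\lambda \in \mathcal{ND}(\mu)$ $\gamma$-a.a., then $\lambda \in \mathcal N(h)$, so $\lambda \in \Gamma_n$ for some $n$ and $h(\lambda) \ne 0$; by Plemelj (Theorem~\ref{GPTheorem1}) applied to $\nu_j = g_j\mu = g_j h\,\mathcal H^1|_\Gamma + g_j\mu_s$, the two one-sided $\gamma$-limits $v^+(\nu_j,\Gamma_n,\lambda)$ and $v^-(\nu_j,\Gamma_n,\lambda)$ differ by $e^{-i\beta} g_j(\lambda) h(\lambda) L(\cdot)^{-1}$, which is nonzero for $\mu$-a.a.\ (equivalently $\mathcal H^1|_\Gamma$-a.a.) such $\lambda$ and some $j$, since $\{g_j\}$ is (weak$^*$) dense in $R^t(K,\mu)^\perp$ and cannot all vanish on a positive-$\mu$-measure piece (that would make $\chi$ of that piece annihilate the annihilator, contradicting purity/separation). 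Consequently at least one of $\mathcal C(\nu_j)(\lambda) = \tfrac12(v^+ + v^-)$ is nonzero for some $j$, i.e.\ $\lambda \in \mathcal N(\nu_j, 1\le j<\infty)$. Wait — more carefully, $\mathcal C(\nu_j)(\lambda)$ could vanish even when $v^\pm$ are nonzero; but then for a different index the symmetric combination is nonzero, or one notes directly that $\mathcal{ND}(\mu)$ is where $g_j\mu$ has positive upper density for the $j$ with $g_j \ne 0$ there, and $\Theta^*_{\nu_j}(\lambda) > 0$ together with existence of the principal value forces, via the weak-type estimate Lemma~\ref{lemmaBasic0}~(8), that $\lambda$ cannot lie in $\mathcal{ZD}(\nu_j)$ on a $\gamma$-positive set — giving $\mathcal{ND}(\mu) \subset \mathcal{ND}(\nu_j, 1\le j<\infty)$ $\gamma$-a.a. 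The reverse inclusion $\mathcal{ND}(\nu_j) \subset \mathcal{ND}(\mu)$ is immediate from $|\nu_j| = |g_j|\mu \le $ (something times) $\mu$ after truncating $g_j$, so positive upper density of $|\nu_j|$ forces positive upper density of $\mu$; summing over $j$ and using semiadditivity (Theorem~\ref{TTolsa}~(2)) handles the countable union up to a $\gamma$-null set.

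Finally \eqref{FCEq02} follows from \eqref{FCEq01} by complementation together with Lemma~\ref{lemmaBasic0}~(5)(a): both $\mathcal{ZD}(\mu)$ and $\mathcal{ZD}(\nu_j,1\le j<\infty)$ are, off $\gamma$-null sets, exactly the complements (within the ambient compact set) of $\mathcal{ND}(\mu)$ and $\mathcal{ND}(\nu_j,1\le j<\infty)$ respectively — one must be slightly careful that $\Theta^*_\mu(\lambda) = \infty$ only on a $\gamma$-null, $\mathcal H^1$-null set, which is again Lemma~\ref{lemmaBasic0}~(5)(a) — so $\mathcal{ZD}(\mu) \approx \mathcal{ZD}(\nu_j,1\le j<\infty)$ $\gamma$-a.a. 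I expect the main obstacle to be the direction $\mathcal{ND}(\mu) \subset \mathcal{ND}(\nu_j,1\le j<\infty)$ $\gamma$-a.a.: one needs to know that the family $\{g_j\}$ does not collectively vanish on any $\mu$-positive subset of $\mathcal N(h)$, which requires the separation property of $R^t(K,\mu)^\perp$ against $L^t(\mu)$ (no direct $L^t$ summand), and one must combine this with the $\gamma$-density version of Plemelj from Theorem~\ref{GPTheorem1} rather than a crude pointwise statement, since the whole point is to upgrade ``$\mathcal H^1$-a.a.'' conclusions to ``$\gamma$-a.a.'' conclusions — exactly where Tolsa's comparability $\gamma \approx \gamma_+$ and the curvature characterization \eqref{GammaEq3} do the heavy lifting.
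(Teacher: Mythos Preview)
Your construction of the $\Gamma_n$ and the argument for \eqref{FCEq} are essentially the paper's, and your reverse inclusion $\mathcal{ND}(\nu_j)\subset\mathcal{ND}(\mu)$ is fine. The problem is the forward inclusion in \eqref{FCEq01}: you have confused $\mathcal N(\nu_j,1\le j<\infty)=\bigcup_j\mathcal N(\mathcal C(\nu_j))$ (nonzero Cauchy transform) with $\mathcal{ND}(\nu_j,1\le j<\infty)=\bigcup_j\mathcal{ND}(\nu_j)$ (nonzero upper linear density). The Plemelj detour you start with is aimed at the former object, which is not what the lemma asserts; and your attempted recovery (``$\Theta^*_{\nu_j}(\lambda)>0$ together with the weak-type estimate forces $\lambda\notin\mathcal{ZD}(\nu_j)$'') is circular, since $\Theta^*_{\nu_j}(\lambda)>0$ already \emph{is} the statement $\lambda\in\mathcal{ND}(\nu_j)$.

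The paper's route avoids all of this. One simply applies Lemma~\ref{lemmaBasic0}~(5) to each measure $\nu_j=g_j\mu$ separately: since $\nu_j = (g_jh)\mathcal H^1|_\Gamma + g_j\mu_s$ and $g_j\mu_s\perp\mathcal H^1|_\Gamma$, the same argument that gave $\mathcal{ND}(\mu)\approx\mathcal N(h)$ gives $\mathcal{ND}(\nu_j)\approx\mathcal N(g_jh)$, $\gamma$-a.a. Taking the union over $j$,
\[
\mathcal{ND}(\nu_j,1\le j<\infty)\ \approx\ \bigcup_j\mathcal N(g_jh)\ \approx\ \mathcal N(h)\ \approx\ \mathcal{ND}(\mu),\quad\gamma\text{-a.a.},
\]
where the middle equality uses purity exactly as you say (the $g_j$ cannot all vanish on a set of positive $\mu$-measure, hence $\bigcup_j\{g_j\ne0\}$ has full $\mu$-measure, so full $\mathcal H^1|_{\mathcal N(h)}$-measure, hence full $\gamma$-measure on $\Gamma$ by Lemma~\ref{lemmaBasic0}~(7)). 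Plemelj is nowhere needed here; \eqref{FCEq02} then follows by complementation as you indicate.
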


\begin{proof} From Lemma \ref{lemmaBasic0} (5) (a), we have
 \[
 \ \mathcal{ND}(\mu) = \bigcup_n \mathcal{ND}(\mu, n) \cup \mathbb Q_\mu.
 \] 
Thus, \eqref{FCEq} follows from Lemma \ref{lemmaBasic0} (5) (a). The existence of $\{\Gamma_n\}$ follows from Lemma \ref{lemmaBasic0} (4).
 
It is clear, from Lemma \ref{lemmaBasic0} (5), that $\mathcal {ND}(\nu_j) \approx \mathcal N(g_jh),~\gamma-a.a.$ for $j\ge 1$. Hence,
 \[
 \ \mathcal {ND}(\nu_j, 1\le j <\infty ) \approx \bigcup_{j=1}^\infty \mathcal N(g_jh) \approx \mathcal N(h) \approx \mathcal {ND}(\mu),~\gamma-a.a.,
 \]
as $S_\mu$ is pure.
Therefore, \eqref{FCEq01} and \eqref{FCEq02} follow.
\end{proof}  
\smallskip

With Lemma \ref{GammaExist}, we can define the non-removable boundary $\mathcal F$ and the removable set $\mathcal R$ as the following.

\begin{definition}\label{FRDefinition1} Let $\Gamma_n$ and $h$ be as in Lemma \ref{GammaExist}. Define:
\[
\ \mathcal F_0 := \mathcal Z(\nu_j,1 \le j <\infty ),
\]
\[
 \ \mathcal F_+ :=  \bigcup_{n = 1}^\infty \mathcal Z(\nu_j,1 \le j < \infty, \Gamma_n, +) \cap \mathcal{ND}(\mu),
 \]
 \[
 \ \mathcal F_- :=  \bigcup_{n = 1}^\infty \mathcal Z(\nu_j,1 \le j < \infty, \Gamma_n, -) \cap \mathcal{ND}(\mu),
 \]
 \[
 \ \mathcal R_0 :=  \mathcal {ZD} (\mu) \cap \mathcal N(\nu_j,1 \le j <\infty ),
 \]
 \[
 \ \mathcal R_1 :=  \bigcup_{n = 1}^\infty \left ( \mathcal N(\nu_j,1 \le j < \infty, \Gamma_n, +) \cap \mathcal N(\nu_j,1 \le j < \infty, \Gamma_n, -) \right ) \cap \mathcal {ND} (\mu),
 \]
 \[
 \ \mathcal F = \mathcal F_0\cup \mathcal F_+ \cup \mathcal F_-,
 \]
and
 \[
 \ \mathcal R :=  \mathcal R_0 \cup \mathcal R_1.
 \]
$\mathcal F$ is called the non-removable boundary and $\mathcal R$ is called the removable set for $R^t(K, \mu)$. The set $\mathcal R_B = \mathcal R \setminus \text{abpe} (R^t(K,\mu))$ is called the removable boundary. 
\end{definition}
\smallskip

Theorem \ref{FCharacterization} and Corollary \ref{NRBUnique} ensure that $\mathcal F$ and $\mathcal R$ are independent of choices of $\{\Gamma_n\}$ and $\{g_n\}$ up to a set of zero analytic capacity. Therefore, they are well defined.

\begin{theorem}\label{FRProperties}
The following properties hold:
 \begin{eqnarray}\label{FCEq7}
 \ \mathcal F_0 \subset \mathcal {ZD} (\mu), ~\mathcal F_0\cap (\mathcal F_+\cup \mathcal F_-) = \emptyset,~\gamma-a.a.,
 \end{eqnarray}
and 
\begin{eqnarray}\label{FCEq8}
 \mathcal R_0\cap \mathcal R_1 =\emptyset,~ \mathcal R \cap \mathcal F =\emptyset,~ \mathcal R \cup \mathcal F =\mathbb C, ~\gamma-a.a..  
 \end{eqnarray}
\end{theorem}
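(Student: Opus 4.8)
The plan is to prove each of the assertions in \eqref{FCEq7} and \eqref{FCEq8} by carefully tracking the definitions in Definition \ref{FRDefinition1} together with the density/decomposition results in Lemma \ref{GammaExist} and the Plemelj-type statements of Theorem \ref{GPTheorem1}. The first inclusion $\mathcal F_0 \subset \mathcal{ZD}(\mu)$ is the easy anchor: if $\lambda \in \mathcal F_0$ then $\mathcal C(\nu_j)(\lambda) = 0$ for all $j$, and in particular the principal value of at least one $\mathcal C(\nu_j)$ exists at $\lambda$; since $\{g_j\}$ is dense in $R^t(K,\mu)^\perp$ and $S_\mu$ is pure, this forces $\Theta_\mu^*(\lambda) = 0$ outside a set of zero analytic capacity. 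Indeed, if $\Theta_\mu^*(\lambda) > 0$ on a set of positive analytic capacity, then by Lemma \ref{lemmaBasic0} (5)(a) that set carries a piece of $\mathcal H^1$ on which $\mu \approx g\mathcal H^1$ with $g$ bounded below, and on such a set $\mathcal N(h) \approx \mathcal{ND}(\mu)$ (Lemma \ref{GammaExist}, \eqref{FCEq}) while $\mathcal{ND}(\nu_j, 1\le j<\infty) \approx \mathcal{ND}(\mu)$ by \eqref{FCEq01}; but $\lambda \in \mathcal F_0$ means $\lambda \notin \mathcal N(\mathcal C(\nu_j))$ for any $j$, contradicting that $\mathcal{ND}(\nu_j)$ (where the Cauchy transform is comparable to the density and hence nonzero $\gamma$-a.a.) meets this set. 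This gives $\mathcal F_0 \subset \mathcal{ZD}(\mu)$, $\gamma$-a.a.

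For the disjointness $\mathcal F_0 \cap (\mathcal F_+ \cup \mathcal F_-) = \emptyset$, $\gamma$-a.a.: a point of $\mathcal F_+ \cup \mathcal F_-$ lies in $\mathcal{ND}(\mu)$ by definition, whereas $\mathcal F_0 \subset \mathcal{ZD}(\mu)$, $\gamma$-a.a., from the previous step; since $\mathcal{ZD}(\mu)$ and $\mathcal{ND}(\mu)$ are disjoint by their very definitions in \eqref{ZNDensity}, the conclusion is immediate modulo a $\gamma$-null set. The first identity in \eqref{FCEq8}, $\mathcal R_0 \cap \mathcal R_1 = \emptyset$, is likewise definitional: $\mathcal R_0 \subset \mathcal{ZD}(\mu)$ and $\mathcal R_1 \subset \mathcal{ND}(\mu)$, which are disjoint (here no exceptional null set is even needed). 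For $\mathcal R \cap \mathcal F = \emptyset$, $\gamma$-a.a., I would argue componentwise: on $\mathcal{ZD}(\mu)$ the relevant sets are $\mathcal F_0$ versus $\mathcal R_0$, and $\mathcal R_0 = \mathcal{ZD}(\mu) \cap \bigcup_j \mathcal N(\mathcal C(\nu_j))$ is exactly the complement (within $\mathcal{ZD}(\mu)$, up to where principal values exist) of $\mathcal F_0 = \bigcap_j \mathcal Z(\mathcal C(\nu_j))$ — and by Corollary \ref{ZeroAC} principal values exist $\gamma$-a.a.; on $\mathcal{ND}(\mu)$ one compares $\mathcal F_+ \cup \mathcal F_-$ with $\mathcal R_1$, where for each graph $\Gamma_n$ the sets $\mathcal Z(v^\pm(\nu_j,\Gamma_n,\cdot))$ and $\mathcal N(v^\pm(\nu_j,\Gamma_n,\cdot))$ partition the points of $\Gamma_n$ where the one-sided limits are defined, which by Theorem \ref{GPTheorem1} is $\mathcal H^1|_{\Gamma_n}$-a.a., hence $\gamma$-a.a. on $\Gamma_n$ by \eqref{HACEq}. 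Finally $\mathcal R \cup \mathcal F = \mathbb C$, $\gamma$-a.a., follows by assembling: $\mathbb C = \mathcal{ZD}(\mu) \cup \mathcal{ND}(\mu)$ up to the $\gamma$-null (indeed $\mathcal H^1$-null) set $\mathbb Q_\mu = \{\Theta_\mu^* = \infty\}$ from Lemma \ref{lemmaBasic0} (5)(a); on $\mathcal{ZD}(\mu)$ we have $\mathcal F_0 \cup \mathcal R_0 = \mathcal{ZD}(\mu)$, $\gamma$-a.a. (again using that principal values exist $\gamma$-a.a.); and on $\mathcal{ND}(\mu) \approx \mathcal N(h)$, every point sits on some $\Gamma_n$ with $h \ne 0$, so by Theorem \ref{GPTheorem1} the limits $v^\pm(\nu_j,\Gamma_n,\cdot)$ exist $\gamma$-a.a. there, and each such point falls into $\mathcal F_+ \cup \mathcal F_-$ (if for some sign all $v^\pm(\nu_j) = 0$) or into $\mathcal R_1$ (if both one-sided families contain a nonvanishing member).

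The main obstacle I expect is the careful bookkeeping of the various $\gamma$-null exceptional sets and, in particular, justifying the step $\mathcal{ND}(\nu_j, 1\le j < \infty) \approx \mathcal{ND}(\mu) \approx \mathcal N(h)$ in the presence of the one-sided limit structure: one must know that on $\mathcal{ND}(\mu)$, where $\mu$ restricts to a piece absolutely continuous with respect to $\mathcal H^1$ on the graphs $\Gamma_n$, the full set of points is, up to $\gamma$-null, exhausted by $\bigcup_n (\Gamma_n \cap \mathcal N(h))$, and that on this set Theorem \ref{GPTheorem1} (b),(c) genuinely apply with the one-sided limits $v^\pm$ well-defined $\gamma$-a.a. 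The interplay between $\mathcal H^1$-a.a. statements on each individual $\Gamma_n$ and the desired $\gamma$-a.a. statement on the union is handled by \eqref{HACEq} applied graph-by-graph together with the countable (semi)additivity \eqref{Semiadditive}, but one has to be attentive that the dense sequence $\{g_j\}$ is fixed once and for all so that only countably many exceptional $\gamma$-null sets are thrown away in total. Once these are organized, each of the four displayed claims reduces to the elementary set-theoretic identities above.
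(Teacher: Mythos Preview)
Your treatment of the ``easy'' parts---$\mathcal F_0 \cap (\mathcal F_+\cup\mathcal F_-)=\emptyset$, $\mathcal R_0\cap\mathcal R_1=\emptyset$, $\mathcal R\cap\mathcal F=\emptyset$, and $\mathcal R\cup\mathcal F=\mathbb C$---is essentially what the paper does; it explicitly says these follow once $\mathcal F_0\subset\mathcal{ZD}(\mu)$ is established. The gap is in your argument for $\mathcal F_0\subset\mathcal{ZD}(\mu)$ itself. You write that on $\mathcal{ND}(\nu_j)$ ``the Cauchy transform is comparable to the density and hence nonzero $\gamma$-a.a.'', and use this to derive a contradiction from $\mathcal C(\nu_j)(\lambda)=0$. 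That implication is false: there is no general relation between $\Theta^*_{\nu_j}(\lambda)>0$ and $\mathcal C(\nu_j)(\lambda)\ne 0$. Already for arclength on a segment the principal value vanishes at the midpoint by symmetry, and more to the point, on a Lipschitz graph the Plemelj formula (Theorem~\ref{GPTheorem1}) gives $v^\pm=\mathcal C(\nu_j)\pm\frac12 g_jhL^{-1}$, so $\mathcal C(\nu_j)(\lambda)=0$ with $g_jh(\lambda)\ne 0$ is perfectly consistent. Equation~\eqref{FCEq01} only tells you $\mathcal{ND}(\nu_j,1\le j<\infty)\approx\mathcal{ND}(\mu)$ as \emph{density} sets; it says nothing about $\mathcal N(\mathcal C(\nu_j))$.

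The paper's proof of $\mathcal F_0\subset\mathcal{ZD}(\mu)$ is genuinely the substance of the theorem and uses a different mechanism. Assuming $\gamma(\mathcal F_0\cap\mathcal{ND}(\mu))>0$, one restricts to a compact $E_1$ on a single graph $\Gamma_1$ with $g_1h\ne 0$ there (pureness of $S_\mu$ is used to find such $g_1$). Via Lemma~\ref{lemmaBasic6} one builds a linear-growth positive measure $\eta$ on $E_1$ and a bounded function $f=\mathcal C(w\eta)$; because $\mathcal C(g_j\mu)\equiv 0$ on $E_1$, the duality identity \eqref{lemmaBasic6Eq1} forces $f\in R^t(K,\mu)$, and \eqref{lemmaBasic6Eq2} gives $f\,\mathcal C(g_1\mu)=\mathcal C(fg_1\mu)$ off $E_1$. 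Lemma~\ref{lemmaBasic7} then shows $\mathcal C(fg_1\mu)=0$ on $E_1$ as well, so both $g_1\mu$ and $fg_1\mu$ have vanishing principal value on $E_1$; feeding these into Theorem~\ref{GPTheorem1} from above and below $\Gamma_1$ and comparing via the identity for $f$, one extracts that the nontangential limits of $f$ from both sides coincide $\mathcal H^1|_{E_1}$-a.a. Classical Plemelj (Lemma~\ref{PFLipschitz}) applied to $f=\mathcal C(w\eta)$ then forces $w\,d\eta/d\mathcal H^1|_{\Gamma_1}=0$ on $E_1$, contradicting $\eta(E_1)>0$. This construction of an auxiliary element of $R^t(K,\mu)$ is the missing idea in your proposal.
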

\smallskip

We need a couple of lemmas to complete the proof. The lemmas will also be used in later sections.

\begin{lemma} \label{lemmaBasic6}
Suppose that $E$ is a compact subset with $\gamma(E) > 0$. Then there exists a finite positive measure $\eta$ satisfying:

(1) $\text{spt}(\eta)\subset E $ and $\mathcal C_*(g_n\mu ), \mathcal M_{g_n\mu}  \in L^\infty(\eta)$;

(2) $\eta\in \Sigma(E)$, Cauchy transform of $\eta$, $\mathcal C\eta$, is a bounded operator on $L^2(\eta)$ with norm $1$ and 
$c_4 \gamma(E) \le \|\eta\| \le C_4 \gamma(E)$;

(3) if $w\in L^\infty (\eta)$ and $\|\mathcal C_{\epsilon}(w\eta)\|_{L^\infty (\mathbb C)}\le 1$ for all $\epsilon > 0$,  then there exists a subsequence $f_k(z) = \mathcal C_{\epsilon_k}(w\eta)(z)$ such that $f_k$ converges to $f\in L^\infty(\mu)$ in weak$^*$ topology, $f_k(\lambda) $ converges to $f(\lambda) = \mathcal C(w\eta)(\lambda)$ uniformly on any compact subset of $E^c$ as $\epsilon_k\rightarrow 0$, 
 \begin{eqnarray}\label{lemmaBasic6Eq1}
 \ \int f(z) g_n(z)d\mu (z) = - \int \mathcal C(g_n\mu) (z) w(z) d\eta (z),
 \end{eqnarray}
and for $\lambda\in E^c$,
 \begin{eqnarray}\label{lemmaBasic6Eq2}
 \ \int \dfrac{f(z) - f(\lambda)}{z - \lambda} g_n(z)d\mu (z) = - \int \mathcal C(g_n\mu) (z) w(z) \dfrac{d\eta (z)}{z - \lambda}.
 \end{eqnarray} 
\end{lemma}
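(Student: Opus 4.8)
The plan is to construct $\eta$ using Tolsa's theorem together with Lemma \ref{lemmaBasic0}, then verify each of the three conclusions by a weak$^*$ compactness argument combined with Fubini's theorem. First I would fix a compact set $E$ with $\gamma(E) > 0$. By Theorem \ref{TTolsa} (1), specifically \eqref{GammaEq2}, there is a measure $\eta_0 \in \Sigma(E)$ with $\|\mathcal C\eta_0\|_{L^2(\eta_0)\to L^2(\eta_0)} \le 1$ and $\eta_0(E) \ge a_T \gamma(E)$; rescaling and using $\eta_0 \in \Sigma(E)$ we also get $\|\eta_0\| \le C \gamma(E)$, which gives the two-sided bound $c_4\gamma(E) \le \|\eta\| \le C_4\gamma(E)$ once we normalize the operator norm to be exactly $1$ (replace $\eta_0$ by $\eta_0/\|\mathcal C\eta_0\|$ if the norm is smaller; this only improves the lower mass bound up to an absolute constant). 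For conclusion (1), apply Lemma \ref{lemmaBasic0} (8) to each measure $g_n\mu$ (a finite complex measure) and Theorem \ref{TTolsa} (3) to control $\mathcal C_*(g_n\mu)$: for each $n$ the sets $\{\mathcal M_{g_n\mu} > a\}$ and $\{\mathcal C_*(g_n\mu) > a\}$ have analytic capacity going to $0$ as $a\to\infty$, so by Lemma \ref{lemmaBasic0} (9) (linear growth measures charge no set of zero analytic capacity) the functions $\mathcal C_*(g_n\mu)$ and $\mathcal M_{g_n\mu}$ are finite $\eta$-a.e.; then pass to a compact subset $E' \subset E$ with $\gamma(E') \ge \gamma(E)/2$ on which they are all bounded (diagonalizing over $n$ via Lemma \ref{lemmaBasic0} (10)), and rename $E'$ as $E$. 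This costs only an absolute constant in the mass bound.

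Next, for conclusion (3), suppose $w \in L^\infty(\eta)$ with $\|\mathcal C_\epsilon(w\eta)\|_{L^\infty(\mathbb C)} \le 1$ for all $\epsilon > 0$. Since $\mu$ has finite total mass, the family $\{\mathcal C_\epsilon(w\eta)\}_\epsilon$ is bounded in $L^\infty(\mu)$, hence has a weak$^*$ convergent subsequence $f_k = \mathcal C_{\epsilon_k}(w\eta) \to f$ in $L^\infty(\mu)$. On any compact $L \subset E^c$, the kernels $(w-z)^{-1}$ are uniformly bounded and $\mathcal C_\epsilon(w\eta)(z) \to \mathcal C(w\eta)(z)$ uniformly for $z \in L$ (the truncation only removes a shrinking neighborhood that eventually lies outside $L$'s distance to $E$), so $f = \mathcal C(w\eta)$ on $E^c$. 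For \eqref{lemmaBasic6Eq1}: write $\int f g_n \, d\mu = \lim_k \int \mathcal C_{\epsilon_k}(w\eta)(z) g_n(z)\, d\mu(z)$ and apply Fubini to the double integral $\int\int_{|w-z|>\epsilon_k} (w-z)^{-1} \, d(w\eta)(w)\, d(g_n\mu)(z)$, which is justified because $\mathcal M_{g_n\mu}$ is bounded on $\mathrm{spt}(\eta)$ and $\eta$ is finite; the inner integral becomes $-\mathcal C_{\epsilon_k}(g_n\mu)(w)$, and letting $\epsilon_k \to 0$ with dominated convergence (using $\mathcal C_*(g_n\mu) \in L^\infty(\eta)$ as the dominating bound) yields $-\int \mathcal C(g_n\mu)(z) w(z)\, d\eta(z)$. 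The identity \eqref{lemmaBasic6Eq2} follows the same way: for $\lambda \in E^c$, $\frac{f(z)-f(\lambda)}{z-\lambda} \in L^\infty(\mu)$ and is the weak$^*$ limit of $\frac{f_k(z)-f_k(\lambda)}{z-\lambda}$; expanding $\frac{1}{(w-z)(w-\lambda)} = \frac{1}{z-\lambda}\left(\frac{1}{w-z} - \frac{1}{w-\lambda}\right)$ and reapplying Fubini reduces it to \eqref{lemmaBasic6Eq1} with $g_n$ replaced by $\frac{g_n(z)}{z-\lambda}$, or one simply checks both sides are Cauchy transforms (in $\lambda$) of the same measure on $E^c$.

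The main obstacle I expect is the rigorous justification of the Fubini interchange and the passage to the limit $\epsilon_k \to 0$ inside $\int \mathcal C_{\epsilon_k}(g_n\mu)(w) w(w)\, d\eta(w)$: one must be careful that the truncated kernels are genuinely integrable against the product measure $|w\eta| \times |g_n\mu|$ uniformly in $\epsilon_k$, which is exactly where the finiteness of $\eta$ together with the boundedness of $\mathcal M_{g_n\mu}$ on $\mathrm{spt}(\eta)$ (conclusion (1)) enters, and where the dominating function for the final limit is provided by $\mathcal C_*(g_n\mu) \in L^\infty(\eta)$. A secondary subtlety is ensuring the weak$^*$ limit $f$ agrees $\mu$-a.e. with a function that genuinely equals $\mathcal C(w\eta)$ where the latter is defined; this is handled by noting $f = \mathcal C(w\eta)$ pointwise on $E^c$ and that $\mu$ restricted to $E$ does not interfere with the Fubini computations, which only ever evaluate $\mathcal C(g_n\mu)$ on $\mathrm{spt}(\eta) \subset E$ where it is finite by conclusion (1).
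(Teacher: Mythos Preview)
Your proposal is correct and follows essentially the same approach as the paper: first trim $E$ via Lemma~\ref{lemmaBasic0}(10) so that $\mathcal C_*(g_n\mu)$ and $\mathcal M_{g_n\mu}$ are uniformly bounded there, then build $\eta$ from Tolsa's theorem \eqref{GammaEq2}, and finally obtain \eqref{lemmaBasic6Eq1} by the Fubini identity $\int \mathcal C_\epsilon(w\eta)\,g_n\,d\mu = -\int \mathcal C_\epsilon(g_n\mu)\,w\,d\eta$ together with dominated convergence using $\mathcal C_*(g_n\mu)\in L^\infty(\eta)$. For \eqref{lemmaBasic6Eq2} the paper replaces $w\eta$ by $\tfrac{w\eta}{s-\lambda}$ (your partial-fraction reduction is the Fubini-symmetric version of the same move) and spends its effort precisely on the point you flag as the main obstacle: the uniform $L^\infty(\mu)$ bound on $\tfrac{\mathcal C_\epsilon(w\eta)(z)-f(\lambda)}{z-\lambda}$, handled by splitting into $z\in B(\lambda,d/2)$ and $z\notin B(\lambda,d/2)$ with $d=\mathrm{dist}(\lambda,E)$.
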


\begin{proof}
From Lemma \ref{lemmaBasic0} (10), we find $E_1\subset E$ such that $\gamma(E\setminus E_1) < \frac{\gamma(E)}{2A_T}$ and $\mathcal C_*(g_n\mu )(z), \mathcal M_{g_n\mu}(z) \le M_n < \infty$ for $z\in E_1$. Using Theorem \ref{TTolsa} (2), we get
 \[
 \ \gamma(E_1) \ge \dfrac{1}{A_T}\gamma(E) - \gamma(E\setminus E_1) \ge \dfrac{1}{2A_T}\gamma(E).
 \]
Using Theorem \ref{TTolsa} (1) \eqref{GammaEq2}, there exists a positive measure $\eta$ with $\text{spt}(\eta)\subset E_1 \subset E$ such that $\eta\in \Sigma(E)$, $\mathcal C\eta$ is bounded on $L^2(\eta)$ with norm $1$, and $\|\eta\| \ge c_4 \gamma (E)$.  (1) and (2) are proved.

Clearly,
 \begin{eqnarray}\label{lemmaBasicEq3}
 \  \int \mathcal C_\epsilon(w\eta)(z) g_nd\mu  = - \int \mathcal C_\epsilon(g_n\mu)(z) wd\eta
 \end{eqnarray}
for $n = 1,2,..$. We can choose a sequence $f_k(\lambda) = \mathcal C_{\epsilon_k}(w\eta)(\lambda)$ that converges to $f$ in $L^\infty(\mu)$ weak$^*$ topology and $f_k(\lambda)$ uniformly tends to $f(\lambda)$ on any compact subset of $E^c$. On the other hand, $|\mathcal C_{\epsilon_k}(g_n\mu)(z) | \le M _n,~ \eta |_E-a.a.$ and $\lim_{k\rightarrow \infty} \mathcal C_{\epsilon_k}(g_n\mu)(z)  = \mathcal C(g_n\mu)(z) ,~ \eta -a.a.$. We apply Lebesgue dominated convergence theorem to the right hand side of \eqref{lemmaBasicEq3} and get equation \eqref{lemmaBasic6Eq1} for $n=1,2,..$. For equation \eqref{lemmaBasic6Eq2}, let $\lambda\notin E$ and $d = \text{dist}(\lambda, E)$,
for $z\in B(\lambda, \frac{d}{2})$ and $\epsilon < \frac{d}{2}$, we have
 \[
 \ \left |\dfrac{\mathcal C_\epsilon (w\eta)(z) - f(\lambda)}{z - \lambda} \right |\le \left |\mathcal C_\epsilon  \left (\dfrac{w(s)\eta (s)}{s - \lambda} \right ) (z) \right | \le 
\dfrac{2}{d^2} \|w\|_\infty \|\eta\|. 
 \]
For $z\notin B(\lambda, \frac{d}{2})$ and $\epsilon < \frac{d}{2}$,
 \[
 \ \left |\dfrac{\mathcal C_\epsilon (w\eta)(z) - f(\lambda)}{z - \lambda} \right | \le \dfrac{4}{d}.
 \]
Thus, we can replace the above proof for the measure $\dfrac{w(s)\eta (s)}{s - \lambda}$. In fact, we can choose a subsequence  $\{\mathcal C_{\epsilon_{k_j}} (w\eta)\}$ such that $e_{k_j}(z) = \dfrac{\mathcal C_{\epsilon_{k_j}} (w\eta)(z) - f(\lambda)}{z - \lambda}$ converges to $e(z)$ in weak$^*$ topology. Clearly, $(z-\lambda)e_{k_j}(z)  + f(\lambda) = \mathcal C_{\epsilon_{k_j}} (w\eta)(z)$ converges to $(z-\lambda)e(z)  + f(\lambda) = f(z)$ in weak$^*$ topology.  
On the other hand, the equation \ref{lemmaBasicEq3} becomes
 \begin{eqnarray}\label{lemmaBasicEq31}
 \  \int \mathcal C_{\epsilon_{k_j}}(\dfrac{w(s)\eta(s)}{s-\lambda})(z) g_nd\mu  = - \int \mathcal C_{\epsilon_{k_j}}(g_n\mu)(z) \dfrac{w(z)d\eta(z)}{z-\lambda}
 \end{eqnarray}
and for $\epsilon_{k_j} < \frac{d}{2}$, we have
 \begin{eqnarray}\label{lemmaBasicEq32}
 \ \begin{aligned}
\ & \left | \mathcal C_{\epsilon_{k_j}}(\dfrac{w(s)\eta(s)}{s-\lambda})(z) - e_{k_j}(z) \right |\\
\ \le &\begin{cases}0, & z\in B(\lambda, \frac{d}{2}), \\ \dfrac{2}{d^2}\|w\|_\infty \eta(B(z, \epsilon_{k_j})), & z\notin B(\lambda, \frac{d}{2})\end{cases},
\ \end{aligned}
 \end{eqnarray}
which goes to zero as $\epsilon_{k_j} \rightarrow 0$. Combining \eqref{lemmaBasicEq31}, \eqref{lemmaBasicEq32}, and Lebesgue dominated convergence theorem, we prove the equation \eqref{lemmaBasic6Eq2}. (3) is proved.    
\end{proof}
\smallskip

\begin{lemma}\label{lemmaBasic7} Suppose $f\in R^t(K,\mu)$ and $\{r_n\}\subset Rat(K)$ satisfying: 
 \[
 \ \|r_n - f\|_{L^t(\mu)}\rightarrow 0,~ r_n \rightarrow  f, ~\mu-a.a..
 \]
 Then:

(1) For $g\perp R^t(K, \mu)$ and $\epsilon > 0$, there exists a subset $A_\epsilon$ with $\gamma(A_\epsilon) < \epsilon$ and a subsequence $\{r_{n_k}\}$ such that $\{r_{n_k}\mathcal C(g\mu)\}$ uniformly converges to $\mathcal C(fg\mu)$ on $K \setminus A_\epsilon$.

(2) For $g\perp R^t(K, \mu )$, there exists a subset $\mathbb Q$ with $\gamma (\mathbb Q) = 0$  and a subsequence $\{r_{n_m}\}$ such that $r_{n_m}(\lambda ) \mathcal C(g\mu)(\lambda )$ converges to $\mathcal C(fg\mu)(\lambda )$ for $\lambda \in K\setminus \mathbb Q$.

(3) For $\epsilon > 0$, there exists a subset $A_\epsilon$ with $\gamma(A_\epsilon) < \epsilon$ and a subsequence $\{r_{n_k}\}$ such that $\{r_{n_k}\mathcal C(\nu_j)\}$ uniformly converges to $\mathcal C(f\nu_j)$ on $K \setminus A_\epsilon$ for all $j\ge 1$.
\end{lemma}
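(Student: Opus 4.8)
The plan is to reduce all three parts to one algebraic identity combined with Tolsa's weak-type $(1,1)$ estimate \eqref{WeakOneOne} and semiadditivity \eqref{Semiadditive}. The identity I would establish first is that, for each $n$ and each $g\perp R^t(K,\mu)$,
\[
 \ r_n(\lambda)\,\mathcal C(g\mu)(\lambda)=\mathcal C(r_ng\mu)(\lambda),\qquad \gamma\text{-a.a. }\lambda\in K.
\]
Writing $r_n=p/q$ with $q$ nowhere zero on $K$, for $\lambda\in K$ the numerator $p(z)q(\lambda)-p(\lambda)q(z)$ vanishes at $z=\lambda$, so $z\mapsto\frac{r_n(z)-r_n(\lambda)}{z-\lambda}$ is again a rational function with poles off $K$ and is therefore annihilated by $g$. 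Since $r_n$ is bounded on $K$, $r_ng\mu$ is a finite measure and $\mathcal C_\epsilon(r_ng\mu)(\lambda)-r_n(\lambda)\mathcal C_\epsilon(g\mu)(\lambda)=\int_{|z-\lambda|>\epsilon}\frac{r_n(z)-r_n(\lambda)}{z-\lambda}g(z)\,d\mu(z)$; the integrand here is bounded, so as $\epsilon\to0$ this tends to $\int\frac{r_n(z)-r_n(\lambda)}{z-\lambda}g\,d\mu=0$ whenever $\lambda$ is not an atom of $\mu$ (the atoms form a $\gamma$-null set). As both principal values exist $\gamma$-a.a.\ by Corollary \ref{ZeroAC}, the identity follows; the same computation yields $r_n(\lambda)\mathcal C(\nu_j)(\lambda)=\mathcal C(r_n\nu_j)(\lambda)$, $\gamma$-a.a.

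For (1), Hölder's inequality gives $\|r_ng\mu-fg\mu\|=\int|r_n-f|\,|g|\,d\mu\le\|r_n-f\|_{L^t(\mu)}\|g\|_{L^s(\mu)}\to0$. After fixing thresholds $a_k\to0$ and passing to a subsequence $\{r_{n_k}\}$ along which $\sum_k\frac{C_T}{a_k}\|(r_{n_k}-f)g\mu\|<\epsilon/A_T$, let $A_\epsilon$ be the union of the sets $\{\mathcal C_*((r_{n_k}-f)g\mu)\ge a_k\}$ together with the $\gamma$-null set off which all the occurring principal values exist and the identity of the previous paragraph holds for every $k$; then $\gamma(A_\epsilon)<\epsilon$ by \eqref{WeakOneOne} and \eqref{Semiadditive}. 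Off $A_\epsilon$,
\[
 \ |r_{n_k}(\lambda)\mathcal C(g\mu)(\lambda)-\mathcal C(fg\mu)(\lambda)|=|\mathcal C((r_{n_k}-f)g\mu)(\lambda)|\le\mathcal C_*((r_{n_k}-f)g\mu)(\lambda)<a_k\to0,
\]
which is the uniform convergence asserted in (1).

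Part (2) follows from (1) by a diagonal argument: applying (1) with $\epsilon=1/m$ to successively refined subsequences produces nested subsequences and sets $A_{1/m}$ with $\gamma(A_{1/m})<1/m$, and the diagonal subsequence then converges at every point of $K\setminus\bigcap_m A_{1/m}$, where $\gamma(\bigcap_m A_{1/m})=0$. Part (3) is part (1) carried out simultaneously over $j$: since $\|(r_n-f)g_j\mu\|\to0$ for each fixed $j$, choose the subsequence $\{r_{n_k}\}$ inductively so that $\|(r_{n_k}-f)g_j\mu\|$ is small enough for all $1\le j\le k$, and put $A_\epsilon=\bigcup_k\bigcup_{j=1}^k\{\mathcal C_*((r_{n_k}-f)g_j\mu)\ge 1/k\}$, enlarged by the appropriate $\gamma$-null set; semiadditivity gives $\gamma(A_\epsilon)<\epsilon$, while for each fixed $j$ the tail $k\ge j$ converges uniformly to $\mathcal C(f\nu_j)$ on $K\setminus A_\epsilon$. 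The only delicate point is this bookkeeping — keeping the accumulated analytic capacity below $\epsilon$ while forcing the error thresholds to $0$, which in (3) is what necessitates the two-index inductive choice of the subsequence — but beyond the first-paragraph identity and the weak-$(1,1)$ bound no genuinely new difficulty appears.
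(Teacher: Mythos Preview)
Your proof is correct and follows essentially the same route as the paper: both hinge on the identity $r_n(\lambda)\mathcal C(g\mu)(\lambda)=\mathcal C(r_ng\mu)(\lambda)$ for $\gamma$-almost all $\lambda$, combined with Tolsa's weak-type $(1,1)$ bound \eqref{WeakOneOne} and semiadditivity \eqref{Semiadditive}. The only cosmetic differences are organizational: the paper obtains (2) directly from the sets $B_k=\bigcup_{m\ge k}A_{n_mm}$ already built in (1) (taking $\mathbb Q=\bigcap_k B_k$), rather than re-applying (1) with a diagonal, and handles (3) by iteratively applying (1) to nested subsequences for successive $g_j$ rather than your simultaneous two-index choice; neither variation changes the substance.
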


\begin{proof} From Corollary \ref{ZeroAC}, we let $\mathbb Q_1\subset K$ with $\gamma(\mathbb Q_1) = 0$ such that the principal values of $\mathcal{C}(g\mu)(z )$ and $\mathcal{C}(fg\mu)(z )$ exist for $z\in K\setminus \mathbb Q_1$. Hence,
\[
 \ r_n(z)\mathcal{C}(g\mu)(z ) = \mathcal{C}(r_ng\mu)(z )
 \]
for $z\in K\setminus \mathbb Q_1$. Define:
 \[
 \ A_{nm} = \left\{z\in K\setminus \mathbb Q_1:~ |r_n(z)\mathcal{C}(g\mu)(z ) - \mathcal{C}(fg\mu)(z )|\ge \dfrac{1}{m} \right\}
 \]
Since
 \[
 \begin{aligned}
 \ |r_n(z)\mathcal{C}(g\mu)(z ) - \mathcal{C}(fg\mu)(z )| & = \lim_{\epsilon \rightarrow 0} | \mathcal{C} _{\epsilon}(r_ng\mu)(z ) - \mathcal{C} _{\epsilon}(fg\mu)(z )|\\
& \le \mathcal{C} _{*}((r_n - f)g\mu)(z ). 
 \end{aligned}
 \]
Applying Theorem \ref{TTolsa} (3), we get
 \[
 \ \gamma (A_{nm}) \le \gamma \left\{\mathcal{C} _{*}((r_n - f)g\mu)(z ) \ge \dfrac{1}{m} \right\} \le C_Tm \|g\|\|r_n-f\|_{L^t(\mu)}.  
 \]
Choose $n_m$ so that $\|r_{n_m}-f\|_{L^t(\mu)} \le \frac{1}{m\|g\|2^m}$ and we have
$\gamma (A_{n_mm}) \le \frac{C_T}{2^m}$.

(1): Set $B_k = \cup_{m=k}^\infty A_{n_mm}$.
Applying Theorem \ref{TTolsa} (2), there exists $k_0$ so that $\frac{A_TC_T}{2^{k_0 - 1}} < \epsilon$ and 
 \[
 \ \gamma (B_{k_0}\cup \mathbb Q_1) \le A_T\sum_{m=k}^\infty \gamma (A_{n_mm})\le A_TC_T \sum_{m=k_0}^\infty \dfrac{1}{2^m} < \epsilon.
 \]
Then on $(B_{k_0}\cup \mathbb Q_1)^c$, $r_{n_m}(\lambda ) \mathcal C(g\mu)(\lambda )$ converges to $\mathcal C(fg\mu)(\lambda )$ uniformly. 

(2): 
Set $\mathbb Q_2 = \cap_{k=1}^\infty B_k$. Clearly, $\gamma (\mathbb Q_2) = 0$. If $\mathbb Q=\mathbb Q_1\cup \mathbb Q_2$, then on $K\setminus \mathbb Q$, $r_{n_m}(\lambda ) \mathcal C(g\mu)(\lambda )$ converges to $\mathcal C(fg\mu)(\lambda )$. 

(3) Using (1), we find $A^1_\epsilon$ and a subsequence $\{r_{n,1}\}$ of $\{r_{n}\}$ such that $\gamma(A^1_\epsilon) < \frac{\epsilon}{2A_T}$ and $\{r_{n,1}\mathcal C(\nu_1)\}$ uniformly converges to $\mathcal C(f\nu_1)$ on $K \setminus A^1_\epsilon$. Then we find $A^2_\epsilon$ and a subsequence $\{r_{n,2}\}$ of $\{r_{n,1}\}$ such that $\gamma(A^2_\epsilon) < \frac{\epsilon}{2^2A_T}$ and $\{r_{n,2}\mathcal C(\nu_2)\}$ uniformly converges to $\mathcal C(f\nu_2)$ on $K \setminus A^2_\epsilon$. Therefore, we have a subsequence $\{r_{n,n}\}$ such that $\{r_{n,n}\mathcal C(\nu_j)\}$ uniformly converges to $\mathcal C(f\nu_j)$ on $K \setminus A_\epsilon$ for all $j \ge 1$, where $A_\epsilon = \cup_n A^n_\epsilon$ and $\gamma(A_\epsilon) < \epsilon$ by Theorem \ref{TTolsa} (2).     
\end{proof}
\smallskip

\begin{proof} (Theorem \ref{FRProperties}):
Using Theorem \ref{GPTheorem1}, one can easily verify that $\mathcal R_1\cap (\mathcal F_+\cup \mathcal F_-)= \emptyset, ~ \gamma-a.a.$ (or see the proof of Theorem \ref{FCharacterization} under $\lambda \in \mathcal {ND}(\mu)$).
Therefore, we only need to show 
 \[
 \ \mathcal F_0 \subset  \mathcal{ZD}(\mu), ~ \gamma-a.a.
 \]
and others are trivial.
Suppose that 
 \begin{eqnarray}\label{FRPropertiesAssump}
 \ \gamma(E:= \mathcal Z(\nu_j, 1\le j < \infty) \cap \mathcal{ND}(\mu)) > 0.
 \end{eqnarray}
Without loss of generality, we assume that $\mu (E\cap \Gamma_1) > 0$ and $E\cap \Gamma_1 \subset \mathcal N(h)$, thus, $\mathcal H^1(E\cap \Gamma_1) > 0$. Since $S_\mu$ is pure, we get $\mathcal H^1(E\cap \Gamma_1\cap (\cup_n \mathcal N(g_n h))) > 0$. Without loss of generality, we assume that 
 \[
 \ \mathcal H^1(E\cap \Gamma_1\cap \mathcal N(g_1 h )) > 0.
 \]

Applying Lemma \ref{lemmaBasic6}, we can find a compact set $E_1\subset E\cap \Gamma_1\cap \mathcal N(g_1 h )$ and a measure $\eta$ with $\text{spt}(\eta) \subset E_1$ satisfying (1)-(3) of Lemma \ref{lemmaBasic6}. Clearly, $\eta$ is absolutely continuous with respect to $\mathcal H^1|_{\Gamma_1}$ since $\eta$ is linear growth.      
From Lemma \ref{lemmaBasic0} (1), we can find a function $w$ supported on $E_1$ with $0\le w \le 1$ such that
 \[
 \ \eta(E_1) \le 2 \int w(z) d\eta(z), ~ \|\mathcal C_\epsilon(w\eta)\|_{L^\infty(\mathbb C)} \le C_5. 
 \]
From (3) of Lemma \ref{lemmaBasic6}, we have a function $f\in L^\infty(\mu)$ so that 
\[
 \ \int f(z) g_n(z)d\mu (z) = - \int \mathcal C(g_n\mu) (z) w(z) d\eta (z) = 0
 \]
and
 \[
 \ \int \dfrac{f(z) - f(\lambda)}{z - \lambda} g_n(z)d\mu (z) = - \int \mathcal C(g_n\mu) (z) w(z) \dfrac{d\eta (z)}{z - \lambda} = 0
 \]
for $\lambda\in E_1^c$, where $f(\lambda) = \mathcal C(w\eta)(\lambda)$. Hence,
\begin{eqnarray}\label{FCForEq6}
 \ f(\lambda)\mathcal C (g_n\mu)(\lambda) = \mathcal C (fg_n\mu)(\lambda),~ \lambda\in E_1^c,~ \gamma-a.a.
 \end{eqnarray}

In the remaining proof, we apply Theorem \ref{GPTheorem1} to $\mathcal C (g_1\mu)$ and $\mathcal C (fg_1\mu)$ for $\lambda\in E_1$ and use \eqref{FCForEq6} to get a contradiction. 

Clearly,
 \[
 \ \mathcal C (g_1\mu)(\lambda) = 0, ~\lambda\in E_1,~ \mathcal H^1 |_{\Gamma_1}-a.a.
 \]
and from Lemma \ref{lemmaBasic7}, we have
\[
 \ \mathcal C (fg_1\mu)(\lambda) = \lim_{m\rightarrow \infty}r_{n_m}(\lambda)\mathcal C (g_1\mu)(\lambda) = 0, ~\lambda\in E_1,~ \mathcal H^1 |_{\Gamma_1}-a.a.,
 \]  
where we use the fact that $\gamma  |_{\Gamma_1}$ is equivalent to $\mathcal H^1 |_{\Gamma_1}$ (see Lemma \ref{lemmaBasic0} (7)). Without loss of generality, we assume that the rotation angle for $\Gamma_1$ is zero. Hence, 
$v^+ (g_1\mu, \Gamma_1, \lambda) = \dfrac{1}{2}(g_1hL^{-1})(\lambda)$, 
$v^+ (fg_1\mu, \Gamma_1, \lambda) = \dfrac{1}{2}(fg_1hL^{-1})(\lambda)$,
$v^- (g_1\mu, \Gamma_1, \lambda) = -\dfrac{1}{2}(g_1hL^{-1})(\lambda)$,
 and 
$v^- (fg_1\mu, \Gamma_1, \lambda) = -\dfrac{1}{2}(fg_1hL^{-1})(\lambda)$
 for $\lambda \in E_1$ $\mathcal H^1 |_{\Gamma_1}-a.a.$.

Set $G_\epsilon = \{|\mathcal C (g_1\mu) - v^+ (g_1\mu, \Gamma_1, \lambda)| >\epsilon\}$ and $F_\epsilon = \{|\mathcal C (fg_1\mu) - v^+ (fg_1\mu, \Gamma_1, \lambda)| >\epsilon\}$. From Theorem \ref{GPTheorem1}, for $\lambda \in E_1$ $\mathcal H^1 |_{\Gamma_1}-a.a.$, 
 \[
 \ \lim_{\delta\rightarrow 0}\dfrac{\gamma(UC(\lambda, \alpha)\cap B(\lambda, \delta)\cap(G_\epsilon\cup F_\epsilon))}{\delta} = 0, 
 \]
and then
 \[
 \ \underset{\delta\rightarrow 0}{\overline{\lim}}\dfrac{\gamma(UC(\lambda, \alpha)\cap B(\lambda, \delta)\cap G_\epsilon ^c \cap F_\epsilon ^c)}{\delta} > 0. 
 \]
Therefore, there exists $\lambda_k\in UC(\lambda, \alpha)\rightarrow \lambda$ such that $f(\lambda_k)\mathcal C (g_1\mu)(\lambda_k) = \mathcal C (fg_1\mu)(\lambda_k)$ 
(from \eqref{FCForEq6}),
$\lim_{k\rightarrow \infty} \mathcal C (g_1\mu)(\lambda_k) = \frac{1}{2}(g_1hL^{-1})(\lambda)$,
and $\lim_{k\rightarrow \infty} \mathcal C (fg_1\mu)(\lambda_k) = \dfrac{1}{2}(fg_1hL^{-1})(\lambda)$.
This implies $\lim_{k\rightarrow \infty} f(\lambda_k) = \dfrac{1}{2}f(\lambda)$. Similarly, there exists a sequence $\theta_k\in LC(\lambda, \alpha)\rightarrow \lambda$ such that $\lim_{k\rightarrow \infty} f(\theta_k) = \dfrac{1}{2}f(\lambda)$. Therefore,
 \[
 \ \lim_{k\rightarrow \infty} f(\lambda_k) = \lim_{k\rightarrow \infty} f(\theta_k),~ \lambda \in E_1,~ \mathcal H^1 |_{\Gamma_1}-a.a.. 
 \]
Now we apply the classical Plemelj's formula to $f$, Lemma \ref{PFLipschitz}, we get
$v^+(w\eta, \Gamma_1, \lambda) = v^-(w\eta, \Gamma_1, \lambda)$
which implies that
 \[
 \ (w\dfrac{d\eta}{d\mathcal H^1|_{\Gamma_1}})(\lambda) = 0, ~ \lambda\in E_1, ~ \mathcal H^1|_{\Gamma_1}-a.a.. 
 \] 
This is a contradiction to \eqref{FRPropertiesAssump}.  The proof is finished.
\end{proof}
\smallskip

\begin{example}\label{FCExample} 
Let $K = \overline{\mathbb D} \setminus \cup_{n=1}^\infty B(\lambda_n,\delta_n)$, where $B(\lambda_n,\delta_n) \subset \mathbb D$, $\overline{B(\lambda_n,\delta_n)} \cap \overline{B(\lambda_m,\delta_m)} = \emptyset$ for $n \ne m$, and $\sum \delta_n < \infty$. If $\mu$ is the sum of the arclength measures of the unit circle and all small circles, then $S_\mu$ on $R^t(K, \mu)$ is pure, 
 \[
 \ \mathcal F_0 = K^c, ~ \mathcal F_+ = \emptyset,~ \mathcal F_- = \mathbb T \cup \bigcup_{n=1}^\infty \partial B(\lambda_n,\delta_n),
 \]
and 
 \[
 \ \mathcal R_0 = K\setminus \mathcal F = K\setminus \mathcal F_-,~\mathcal R_1 = \emptyset
 \]
(if $K$ has no interior, then $K$ is a Swiss cheese set).  
\end{example}

\begin{proof}
Let $dz$ be the usual complex measure on $\mathbb T$ (counter clockwise) and on each $\partial B(\lambda_n,\delta_n)$ (clockwise). Then $\int r(z) d z = 0$ for $r\in Rat(K)$. If $g = \frac{dz}{d\mu}$, then $g\perp R^t(K, \mu)$. $S_\mu$ is pure as $|g| > 0, ~\mu-a.a.$.

By Lemma \ref{lemmaBasic0} (5) (b), there exists $\mathbb Q \subset K$ with $\gamma(\mathbb Q) = 0$, if 
 \[
 \ D :=  K \setminus \left ( \mathbb T \cup \bigcup_{n=1}^\infty \partial B(\lambda_n,\delta_n) \right ) \setminus \mathbb Q,
 \]
then $\Theta_\mu(\lambda) = 0$ for $\lambda\in D$. Therefore, for $\lambda\in D$ and $\epsilon  < \frac{1 - |\lambda|}{2}$,
 \[
 \ \begin{aligned}
 \ \left | \dfrac{1}{2\pi i} \mathcal C_\epsilon (dz)(\lambda) - 1\right | = & \left | \dfrac{1}{2\pi i} \sum_{n = 1}^\infty \int_{B(\lambda_n, \delta_n)\cap \partial B(\lambda, \epsilon)} \dfrac{1}{z-\lambda}dz\right | \\ 
\ \le &\dfrac {\mu (B(\lambda, 2\epsilon))}{\epsilon}.
 \ \end{aligned}
 \]
Hence, the principle value $\mathcal C(dz)(\lambda) = 2\pi i,~\gamma|_D-a.a.$. This implies, by Definition \ref{FRDefinition1},  that $D \subset \mathcal R_0$. It is clear that $\mathbb T \cup \cup_{n=1}^\infty \partial B(\lambda_n,\delta_n) \subset \mathcal F_-$ since $\mathcal C (g\mu) (z) = 0$ for $g\perp R^t(K,\mu)$ and $z\in K^c$ (applying Definition \ref{FRDefinition1} for $v^-(g\mu, \Gamma_n, \lambda) = 0$). Therefore,
 \[
 \ \mathcal F_- \approx \mathbb T \cup \bigcup_{n=1}^\infty \partial B(\lambda_n,\delta_n),~ \mathcal R_0 \approx K\setminus \mathcal F_-,~ \gamma-a.a..
 \] 
\end{proof}
\bigskip

\section{Uniqueness of $\mathcal F$ and $\mathcal R$}
\bigskip

Define
 \begin{eqnarray}\label{ENDefinition}
\ \begin{aligned}\label{EDefinition}
 \ & \mathcal E(\nu_j, 1\le j \le N) \\
\  = &\left \{\lambda : ~\lim_{\epsilon \rightarrow 0} \mathcal C_\epsilon(\nu_j)(\lambda )\text{ exists, } \max_{1\le j\le N} |\mathcal C (\nu _j)(\lambda ) | \le \frac{1}{N} \right \}.
 \ \end{aligned}
 \end{eqnarray}
\smallskip

The theorem below, which will be used to prove Theorem \ref{ABPETheorem},  is a useful characterization of $\mathcal F$ and $\mathcal R$.

\begin{theorem}\label{FCharacterization}
There is a subset $\mathbb Q \subset \mathbb C$ with $\gamma(\mathbb Q) = 0$ such that if $\lambda \in\mathbb C \setminus \mathbb Q$, then $\lambda \in \mathcal F$ if and only if   
 \begin{eqnarray}\label{FCEq4}
 \ \underset{\delta\rightarrow 0}{\overline{\lim}}\dfrac{\gamma(B(\lambda, \delta)\cap\mathcal E(\nu_j,1 \le j \le N))}{\delta} > 0 
 \end{eqnarray}
for all $N \ge 1$. Consequently, $\mathcal F$ and $\mathcal R$ do not depend on choices of $\{\Gamma_n\}$ (as in Lemma \ref{GammaExist}) up to a set of zero analytic capacity. 
\end{theorem}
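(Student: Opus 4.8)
The plan is to deduce the biconditional directly from the generalized Plemelj formula (Theorem \ref{GPTheorem1}), the $\gamma$-continuity of Cauchy transforms at points of zero linear density (Lemma \ref{CauchyTLemma}), and the structural facts already established in Lemma \ref{GammaExist} and Theorem \ref{FRProperties}; the independence of $\{\Gamma_n\}$ will then be automatic, since the right-hand condition \eqref{FCEq4} never refers to the graphs. First I would fix one exceptional set $\mathbb Q$ of zero analytic capacity — a countable union, hence $\gamma$-null by Theorem \ref{TTolsa}(2) — containing: the sets where some $\mathcal C(\nu_j)$ lacks a principal value (Corollary \ref{ZeroAC}); the exceptional sets of Theorem \ref{GPTheorem1}(a)--(d) for all pairs $(\Gamma_n,\nu_j)$; the set $\{\Theta_\mu=0\}\cap\bigcup_j\mathcal{ND}(\nu_j)$, which is $\gamma$-null because $\bigcup_j\mathcal{ND}(\nu_j)\subset\mathcal{ND}(\mu)$ $\gamma$-a.a.\ by \eqref{FCEq01} while $\{\Theta_\mu=0\}=\mathcal{ND}(\mu)^c$; and the $\gamma$-null sets $\mathcal F_0\setminus\mathcal{ZD}(\mu)$ from \eqref{FCEq7} and $\mathcal{ND}(\mu)\setminus\bigcup_n\Gamma_n$ (using $\mathcal{ND}(\mu)\approx\mathcal N(h)\subset\bigcup_n\Gamma_n$ from \eqref{FCEq}). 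Off $\mathbb Q$: every $\mathcal C(\nu_j)$ has a principal value everywhere; at every $\lambda$ with $\Theta_\mu(\lambda)=0$ one has $\Theta_{\nu_j}(\lambda)=0$, so by Lemma \ref{CauchyTLemma} each $\mathcal C(\nu_j)$ is $\gamma$-continuous at $\lambda$ with value $\mathcal C(\nu_j)(\lambda)$; and at every $\lambda\in\mathcal{ND}(\mu)$ there is an $n$ with $\lambda\in\Gamma_n$ and the limits $v^+(\nu_j,\Gamma_n,\lambda)$, $v^-(\nu_j,\Gamma_n,\lambda)$, $v^0(\nu_j,\Gamma_n,\lambda)=\mathcal C(\nu_j)(\lambda)$ of $\mathcal C(\nu_j)$ along $U_{\Gamma_n}$, $L_{\Gamma_n}$, $\Gamma_n$ hold in full $\gamma$-density.

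Now assume $\lambda\in\mathcal F\setminus\mathbb Q$; I want \eqref{FCEq4} for every $N$. If $\lambda\in\mathcal F_0$, then $\mathcal C(\nu_j)(\lambda)=0$ for all $j$ and $\Theta_\mu(\lambda)=0$, so each $\mathcal C(\nu_j)$ is $\gamma$-continuous at $\lambda$ with value $0$; fixing $N$ and using semiadditivity, $\gamma(B(\lambda,\delta)\setminus\mathcal E(\nu_j,1\le j\le N))\le A_T\sum_{j\le N}\gamma(B(\lambda,\delta)\cap\{|\mathcal C(\nu_j)|>1/N\})=o(\delta)$, so since $\gamma(B(\lambda,\delta))=\delta$ one more application of semiadditivity gives $\gamma(B(\lambda,\delta)\cap\mathcal E(\nu_j,1\le j\le N))\ge \delta/A_T-o(\delta)$, which is $\ge \delta/(2A_T)$ for small $\delta$. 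If instead $\lambda\in\mathcal F_+$ (the case $\mathcal F_-$ being symmetric via $L_{\Gamma_n}$), then $\lambda\in\Gamma_n\cap\mathcal{ND}(\mu)$ for some $n$ with $v^+(\nu_j,\Gamma_n,\lambda)=0$ for all $j$, so by Theorem \ref{GPTheorem1}(b), $\gamma(U_{\Gamma_n}\cap B(\lambda,\delta)\cap\{|\mathcal C(\nu_j)|>1/N\})=o(\delta)$; since $\|A_n'\|_\infty\le\frac{1}{4}$, $U_{\Gamma_n}\cap B(\lambda,\delta)$ contains a disk of radius $c\delta$ for some $c>0$, and the same two-set semiadditivity estimate gives $\gamma(U_{\Gamma_n}\cap B(\lambda,\delta)\cap\mathcal E(\nu_j,1\le j\le N))\ge c'\delta$ for small $\delta$. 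Either way \eqref{FCEq4} holds.

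Conversely, assume \eqref{FCEq4} for all $N$, with $\lambda\notin\mathbb Q$. If $\Theta_\mu(\lambda)=0$: for each $j$, $\mathcal C(\nu_j)$ is $\gamma$-continuous at $\lambda$, so were $\mathcal C(\nu_j)(\lambda)\ne 0$ then, choosing $N\ge j$ with $1/N<|\mathcal C(\nu_j)(\lambda)|/2$, the set $\mathcal E(\nu_j,1\le j\le N)\subset\{|\mathcal C(\nu_j)|\le 1/N\}\subset\{|\mathcal C(\nu_j)-\mathcal C(\nu_j)(\lambda)|>|\mathcal C(\nu_j)(\lambda)|/2\}$ would have $B(\lambda,\delta)$-density $0$, contradicting \eqref{FCEq4}; hence $\mathcal C(\nu_j)(\lambda)=0$ for all $j$, i.e.\ $\lambda\in\mathcal F_0$. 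If $\Theta^*_\mu(\lambda)>0$, fix $n$ with $\lambda\in\Gamma_n$. For each $N$, splitting $B(\lambda,\delta)$ into the disjoint pieces $B(\lambda,\delta)\cap U_{\Gamma_n}$, $B(\lambda,\delta)\cap L_{\Gamma_n}$, $B(\lambda,\delta)\cap\Gamma_n$ and combining \eqref{FCEq4} with semiadditivity, at least one of the three pieces intersected with $\mathcal E(\nu_j,1\le j\le N)$ has positive upper $\delta$-density; pigeonholing over $N$, one fixed type — upper cone, lower cone, or graph — recurs for infinitely many $N$. In the graph case, Theorem \ref{GPTheorem1}(d) and $\mathcal E(\nu_j,1\le j\le N)\subset\{|\mathcal C(\nu_j)|\le 1/N\}$ force $\mathcal C(\nu_j)(\lambda)=0$ for all $j$, so $\lambda\in\mathcal F_0$; in the upper (resp.\ lower) cone case, Theorem \ref{GPTheorem1}(b) (resp.\ (c)) forces $v^+(\nu_j,\Gamma_n,\lambda)=0$ (resp.\ $v^-(\nu_j,\Gamma_n,\lambda)=0$) for all $j$, so $\lambda\in\mathcal F_+$ (resp.\ $\mathcal F_-$). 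In every case $\lambda\in\mathcal F$. Finally, since membership in $\mathcal F$ off $\mathbb Q$ is characterized by \eqref{FCEq4}, which involves only $\{\nu_j\}$, any two choices of $\{\Gamma_n\}$ give non-removable boundaries agreeing up to a set of zero analytic capacity, hence so do the corresponding removable sets $\mathcal R=K\setminus\mathcal F$.

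I expect the main obstacle to be the bookkeeping needed to make all the "full $\gamma$-density" statements hold off a single $\gamma$-null set simultaneously — especially the fact that $\{\Theta_\mu=0\}\cap\bigcup_j\mathcal{ND}(\nu_j)$ is $\gamma$-null (this is what lets Lemma \ref{CauchyTLemma} be applied to every $\nu_j$ at $\gamma$-a.a.\ $\lambda$ with $\Theta_\mu(\lambda)=0$) and that $\mathcal{ND}(\mu)$ sits inside $\bigcup_n\Gamma_n$ modulo $\gamma$-null — together with the pigeonhole step in the converse, where one must extract a conclusion valid for all $j$ out of density information available only along a subsequence of $\delta$'s and only for the $N$'s in an a priori unknown infinite subset.
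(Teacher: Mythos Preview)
Your proposal is correct and follows essentially the same approach as the paper: split according to $\mathcal{ZD}(\mu)$ versus $\mathcal{ND}(\mu)$, invoke Lemma~\ref{CauchyTLemma} in the former case and Theorem~\ref{GPTheorem1} (parts (b), (c), (d)) in the latter, using semiadditivity throughout. The only organizational difference is that the paper proves the converse by contrapositive---showing that $\lambda\in\mathcal R_0$ or $\lambda\in\mathcal R_1$ forces \eqref{FCEq4} to fail for some $N$, by exhibiting $j_0,j_1,j_2$ with nonzero $v^0,v^+,v^-$ and bounding all three pieces simultaneously---whereas you argue directly with a pigeonhole over the three region types; both routes amount to the same thing.
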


\begin{proof}
We first prove that there exists $\mathbb Q_1$ with $\gamma(\mathbb Q_1) = 0$ such that if $\lambda_0\in \mathcal {ZD}(\mu) \setminus \mathbb Q_1$, then $\lambda_0\in \mathcal F$ if and only if $\lambda_0$ satisfies \eqref{FCEq4}.

From Definition \ref{FRDefinition1} and Theorem \ref{FRProperties} \eqref{FCEq7}, we get $\mathcal {ZD}(\mu) \approx \mathcal R_0 \cup \mathcal F_0, ~\gamma-a.a.$. There exists $\mathbb Q_1$ with $\gamma(\mathbb Q_1) = 0$ such that for $\lambda_0\in \mathcal {ZD}(\mu) \setminus \mathbb Q_1$, $\Theta_{ \nu _j}(\lambda _0) = 0$,
$\mathcal C(\nu _j) (\lambda_0) = \lim_{\epsilon\rightarrow 0} \mathcal C_\epsilon (\nu _j) (\lambda_0)$ exists, and $\mathcal C(\nu _j) (z)$ is $\gamma$-continuous at $\lambda_0$ (Lemma \ref{CauchyTLemma})  for all $j\ge 1$.

If $\lambda_0\in \mathcal R_0$, then there exists $j_0$ such that $\mathcal C(\nu _{j_0}) (\lambda_0) \ne 0$. Let $\epsilon_0 = \frac 12 |\mathcal C(\nu _{j_0}) (\lambda_0)|$, we obtain that, for $N > N_0 := \max(j_0,\frac{1}{\epsilon_0} + 1)$,
 \[
 \ \mathcal E(\nu_j, 1\le j \le N)) \subset \{|\mathcal C(\nu _{j_0}) (z) - \mathcal C(\nu _{j_0}) (\lambda_0)| > \epsilon_0 \}.
 \]
Therefore, by Lemma \ref{CauchyTLemma}, for $N > N_0$,  
 \[
 \ \lim_{\delta\rightarrow 0} \dfrac{\gamma (B(\lambda_0 , \delta ) \cap \mathcal E(\nu_j, 1\le j \le N))}{\delta} = 0.
 \]
Thus, $\lambda_0$ does not satisfy \eqref{FCEq4}. 

Now for $\lambda_0\in \mathcal F_0$, $\mathcal C(\nu _j) (\lambda_0) = 0$ for all $j \ge 1$.
Using Lemma \ref{CauchyTLemma} and Theorem \ref{TTolsa} (2), we get
 \[
 \ \begin{aligned}
 \ &\lim_{\delta\rightarrow 0} \dfrac{\gamma (B(\lambda _0, \delta) \setminus \mathcal E(\nu_j, 1\le j \le N))}{\delta } \\
 \ \le & A_T \sum_{j=1}^N \lim_{\delta\rightarrow 0} \dfrac{\gamma (B(\lambda _0, \delta) \cap \{|\mathcal C(\nu _j)(z) - \mathcal C(\nu _j) (\lambda_0)| \ge \frac{1}{N}\})}{\delta } \\
 \ = & 0. 
 \ \end{aligned}
 \]
Hence, $\lambda_0$ satisfies \eqref{FCEq4}.

We now prove that there exists $\mathbb Q_2$ with $\gamma(\mathbb Q_2) = 0$ such that if $\lambda_0\in \mathcal {ND}(\mu) \setminus \mathbb Q_2$, then $\lambda_0\in \mathcal F$ if and only if $\lambda_0$ satisfies \eqref{FCEq4}.

From Definition \ref{FRDefinition1}, we get $\mathcal {ND}(\mu) \approx \mathcal R_1 \cup (\mathcal F_+ \cup \mathcal F_-), ~\gamma-a.a.$. There exists $\mathbb Q_2$ with $\gamma(\mathbb Q_2) = 0$ such that for $\lambda_0\in \mathcal {ND}(\mu) \cap \Gamma_n\setminus \mathbb Q_2$, 
$v^0(\nu _j, \Gamma_n, \lambda_0) = \mathcal C(\nu _j) (\lambda_0) = \lim_{\epsilon\rightarrow 0} \mathcal C_\epsilon (\nu _j) (\lambda_0)$, $v^+(\nu _j, \Gamma_n, \lambda_0)$, and $v^-(\nu _j, \Gamma_n, \lambda_0)$ exist for all $j \ge 1$ and Theorem \ref{GPTheorem1} (b), (c), and (d) hold. Fix $n = 1$.

If $\lambda_0\in \mathcal R_1$, then $\lambda_0\notin \mathcal F_0$ and there exist integers $j_0$, $j_1$, and $j_2$ such that $v^0(\nu _{j_0}, \Gamma_1, \lambda_0) \ne 0$, $v^+(\nu _{j_1}, \Gamma_1, \lambda_0) \ne 0$, and $v^-(\nu _{j_2}, \Gamma_1, \lambda_0) \ne 0$. Set
 \[
 \ \epsilon_0 = \dfrac 12 \min (|v^0(\nu _{j_0}, \Gamma_1, \lambda_0)|, |v^+(\nu _{j_1}, \Gamma_1, \lambda_0)|, |v^-(\nu _{j_2}, \Gamma_1, \lambda_0)|), 
 \]
then for $N > N_0 := \max (j_0, j_1, j_2, \frac{1}{\epsilon_0} + 1)$,
\[
\ \Gamma_1\cap \mathcal E (\nu _j, 1 \le j \le N ) \subset D := \Gamma_1 \cap \{|\mathcal{C}(\nu _{j_0} (z ) - v^0(\nu _{j_0}, \Gamma_1, \lambda_0)| \ge \epsilon_0 \},
 \] 
\[
\ U_{\Gamma_1}\cap \mathcal E (\nu _j, 1 \le j \le N ) \subset
 E := U_{\Gamma_1}\cap \{|\mathcal{C}(\nu _{j_1})(z ) - v^+(\nu _{j_1}, \Gamma_1, \lambda_0)| \ge \epsilon_0 \},
 \]
and
 \[
\ L_{\Gamma_1}\cap \mathcal E (\nu _j, 1 \le j \le N ) \subset
 F :=  L_{\Gamma_1}\cap \{|\mathcal{C}(\nu _{j_2})(z ) - v^-(\nu _{j_2}, \Gamma_1, \lambda_0)| \ge \epsilon_0 \}.
 \]
Therefore, using Theorem \ref{TTolsa} (2) and Theorem \ref{GPTheorem1}, we get for $N > N_0$,
 \[
 \begin{aligned}
 \ &\lim_{\delta\rightarrow 0}\dfrac{\gamma(B(\lambda_0, \delta) \cap \mathcal E (\nu_j, 1 \le j \le N ))}{\delta} \\
 \ \le & A_T\left ( \lim_{\delta\rightarrow 0}\dfrac{\gamma(B(\lambda_0, \delta) \cap D)}{\delta} + \lim_{\delta\rightarrow 0}\dfrac{\gamma(B(\lambda_0, \delta) \cap E)}{\delta} + \lim_{\delta\rightarrow 0}\dfrac{\gamma(B(\lambda_0, \delta) \cap F)}{\delta}\right ) \\
 \ = &0.
 \end{aligned}
 \]
Hence, $\lambda_0$ does not satisfy \eqref{FCEq4}.   
 
For $\lambda_0\in (\mathcal F_+ \cup \mathcal F_-) \cap \Gamma_1$, we may assume that $\lambda_0\in \mathcal Z(\nu_j,1 \le j < \infty, \Gamma_1, +)\subset \Gamma_1$. Using Theorem \ref{TTolsa} (2) and Theorem \ref{GPTheorem1}, we get ($v^+(\nu _j, \Gamma_1, \lambda_0) = 0$)
 \[
 \ \begin{aligned}
 \ &\lim_{\delta\rightarrow 0}\dfrac{\gamma(B(\lambda_0, \delta) \cap U_{\Gamma_1}\setminus \mathcal E (\nu_j, 1 \le j \le N ))}{\delta} \\
 \ \le & A_T \sum_{j=1}^N\lim_{\delta\rightarrow 0}\dfrac{\gamma(B(\lambda_0, \delta) \cap U_{\Gamma_1}\cap \{|\mathcal{C}(\nu_j)(z ) - v^+(\nu _j, \Gamma_1, \lambda_0)| \ge \frac{1}{N} \})}{\delta} \\
 \ \ = & 0.
 \ \end{aligned}
 \]
This implies
 \[
 \ \begin{aligned}
 \ & \underset{\delta\rightarrow 0}{\overline \lim}\dfrac{\gamma(B(\lambda_0, \delta) \cap \mathcal E (\nu_j, 1 \le j \le N ))}{\delta} \\
\ \ge &\underset{\delta\rightarrow 0}{\overline \lim}\dfrac{\gamma(B(\lambda_0, \delta) \cap U_{\Gamma_1}\cap \mathcal E (\nu_j, 1 \le j \le N ))}{\delta} \\
 \ > &0.
\ \end{aligned} 
\] 
Hence, $\lambda_0$ satisfies \eqref{FCEq4}.

Finally, \eqref{FCEq4} does not depend on choices of $\{\Gamma_n\}$, therefore, $\mathcal F$ and $\mathcal R$ are independent of choices of $\{\Gamma_n\}$ up to a set of zero analytic capacity.  
\end{proof}
\smallskip

We now show the properties of $g_n\mu$ on $\mathcal F$ are preserved for all annihilating measures $g\mu$.  

\begin{theorem}\label{FCForR}
Let $\Gamma_n$, $\Gamma$, and $d\mu = hd\mathcal H^1 |_{\Gamma} + d\mu_s$ be as in Lemma \ref{GammaExist}. Then
for $g\perp R^t(K, \mu)$,
\begin{eqnarray}\label{FCForREq1}
\ \mathcal C(g\mu)(\lambda) = 0,~\text{for}~ \lambda\in\mathcal F_0, ~ \gamma-a.a.,  
\end{eqnarray}
\begin{eqnarray}\label{FCForREq2}
\ v^+(g\mu, \Gamma_n, \lambda) = 0,~\text{for}~ \lambda\in\mathcal F_+\cap \Gamma_n, ~ \gamma-a.a.,  
\end{eqnarray}
and
\begin{eqnarray}\label{FCForREq3}
\ v^-(g\mu, \Gamma_n, \lambda) = 0,~\text{for}~ \lambda\in\mathcal F_-\cap \Gamma_n, ~ \gamma-a.a.  
\end{eqnarray}
for $n \ge 1$.
\end{theorem}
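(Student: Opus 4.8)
The plan is to deduce each of the three identities for an \emph{arbitrary} annihilating measure $g\mu$ from the corresponding facts for the dense family $\nu_j=g_j\mu$, by approximating $g\mu$ by members of that family and controlling the error through the weak-type bounds of Theorem~\ref{TTolsa} and Lemma~\ref{lemmaBasic0}. Since $g\perp R^t(K,\mu)$, for $1<t<\infty$ there is a sequence $g_{n_k}$ from $\{g_n\}$ with $\|g-g_{n_k}\|_{L^s(\mu)}\rightarrow 0$; writing $\sigma_k:=(g-g_{n_k})\mu$, H\"older's inequality and finiteness of $\mu$ give $\|\sigma_k\|\le\mu(K)^{1/t}\|g-g_{n_k}\|_{L^s(\mu)}\rightarrow 0$. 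For \eqref{FCForREq1}, fix $a>0$. On $\mathcal F_0$ every $\mathcal C(\nu_j)$ is well defined and equal to $0$, and the principal values of $\mathcal C(g\mu)$ and $\mathcal C(\sigma_k)$ exist $\gamma-a.a.$ by Corollary~\ref{ZeroAC}, so by linearity of the truncated transforms $\mathcal C(g\mu)(\lambda)=\mathcal C(\sigma_k)(\lambda)+\mathcal C(\nu_{n_k})(\lambda)=\mathcal C(\sigma_k)(\lambda)$ for $\lambda\in\mathcal F_0$, $\gamma-a.a.$. Since $|\mathcal C(\sigma_k)(\lambda)|\le\mathcal C_*(\sigma_k)(\lambda)$, Theorem~\ref{TTolsa}(3) yields
\[
 \ \gamma\{\lambda\in\mathcal F_0:~|\mathcal C(g\mu)(\lambda)|>a\}\le\gamma\{\mathcal C_*(\sigma_k)>a\}\le\dfrac{C_T}{a}\|\sigma_k\|\rightarrow 0
 \]
as $k\rightarrow\infty$, so the left side vanishes; letting $a\downarrow 0$ along a countable sequence and using semiadditivity of $\gamma$ (Theorem~\ref{TTolsa}(2)) proves \eqref{FCForREq1}.

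For \eqref{FCForREq2} (the argument for \eqref{FCForREq3} is identical with $v^-$, $L_{\Gamma_n}$ in place of $v^+$, $U_{\Gamma_n}$), fix $n$. One first notes that $v^+(\nu_j,\Gamma_n,\lambda)=0$ for all $j$, at $\gamma-a.a.$ $\lambda\in\mathcal F_+\cap\Gamma_n$: this uses $\gamma\approx\mathcal H^1$ on a rotated Lipschitz graph (Lemma~\ref{lemmaBasic0}(7)) together with the fact that at $\mathcal H^1-a.a.$ point of an overlap $\Gamma_n\cap\Gamma_m$ the two graphs share the same tangent, so that $v^+(\cdot,\Gamma_n,\lambda)=v^+(\cdot,\Gamma_m,\lambda)$ there. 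As $v^+(\cdot,\Gamma_n,\lambda)=\mathcal C(\cdot)(\lambda)+\tfrac12 h_{(\cdot)}(\lambda)L(\lambda)^{-1}$ is linear in the measure and $g\mu=\sigma_k+\nu_{n_k}$, it follows that $v^+(g\mu,\Gamma_n,\lambda)=v^+(\sigma_k,\Gamma_n,\lambda)$ for $\gamma-a.a.$ $\lambda\in\mathcal F_+\cap\Gamma_n$, for each $k$. Writing $h_k$ for the Radon-Nikodym density of $\sigma_k$ with respect to $\mathcal H^1|_{\Gamma_n}$ and using $|L(\lambda)^{-1}|=2\pi$, the definition of $v^+$ (Theorem~\ref{GPTheorem1}(b), \eqref{VPlusBeta}) gives $|v^+(\sigma_k,\Gamma_n,\lambda)|\le\mathcal C_*(\sigma_k)(\lambda)+\pi|h_k(\lambda)|$. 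Since $\gamma\{\mathcal C_*(\sigma_k)>\pi a\}\le\frac{C_T}{\pi a}\|\sigma_k\|$ (Theorem~\ref{TTolsa}(3)) and $\int_{\Gamma_n}|h_k|\,d\mathcal H^1\le\|\sigma_k\|$, which with $\gamma\le C_\Gamma\mathcal H^1$ on $\Gamma_n$ gives $\gamma\{\lambda\in\Gamma_n:~|h_k(\lambda)|>a\}\le\frac{C_\Gamma}{a}\|\sigma_k\|$, semiadditivity yields for an absolute constant $C$
\[
 \ \gamma\{\lambda\in\mathcal F_+\cap\Gamma_n:~|v^+(g\mu,\Gamma_n,\lambda)|>2\pi a\}\le\dfrac{C}{a}\|\sigma_k\|\rightarrow 0
 \]
as $k\rightarrow\infty$, so it vanishes; letting $a\downarrow 0$ proves \eqref{FCForREq2}.

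Two points will require care. The first is the book-keeping at overlaps of the graphs $\Gamma_n$: one must verify that $v^+(\nu,\Gamma_n,\lambda)$ depends, up to $\mathcal H^1$- (equivalently $\gamma$-) null subsets of the graphs, only on the tangent direction at $\lambda$; this rests on the classical fact that a Lipschitz function has vanishing derivative at $\mathcal H^1-a.a.$ point of its zero set, so that $U_{\Gamma_n}$ and $U_{\Gamma_m}$ have the same $\gamma$-density at $\mathcal H^1-a.a.$ point of $\Gamma_n\cap\Gamma_m$. The second, and the genuinely delicate one, is the case $t=1$, where $\{g_n\}$ is only weak$^*$ dense in $R^1(K,\mu)^\perp$, so $\|\sigma_k\|$ need not tend to $0$ and the quantitative scheme above does not apply verbatim; I expect to treat it by a duality argument — given, say, a hypothetical compact $E\subset\{\mathcal C(g\mu)\neq 0\}\cap\mathcal F_0$ with $\gamma(E)>0$, Lemma~\ref{lemmaBasic6} together with Lemma~\ref{lemmaBasic0}(1) produces $f=\mathcal C(w\eta)$ with $\text{spt}(\eta)\subset E$ and $\int f g_n\,d\mu=-\int\mathcal C(g_n\mu)w\,d\eta=0$ for all $n$ (using that $\gamma$-null sets are $\eta$-null, Lemma~\ref{lemmaBasic0}(9)); since $f\in L^\infty(\mu)\subset L^1(\mu)$ and $\{g_n\}$ is weak$^*$ dense, $f$ is orthogonal to all of $R^1(K,\mu)^\perp$, hence $f\in R^1(K,\mu)$, from which a contradiction is to be extracted, with the analogous argument on the Lipschitz graphs for \eqref{FCForREq2} and \eqref{FCForREq3}.
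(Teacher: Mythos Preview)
Your approach for $1<t<\infty$ is correct and genuinely different from the paper's. You exploit that $\{g_n\}$ is then \emph{norm}-dense in $R^t(K,\mu)^\perp\subset L^s(\mu)$, so $\|\sigma_k\|\to 0$, and finish with the weak-type bounds for $\mathcal C_*$ and for $h_k$ on $\Gamma_n$; this is a clean, elementary route and your bookkeeping for $\mathcal F_0$ and for the two terms in $v^+$ is sound. The paper, by contrast, gives a single duality argument covering all $1\le t<\infty$: assuming a compact $F_1$ of positive $\gamma$ on which (say) $\operatorname{Re}\mathcal C(g\mu)>0$, it uses Lemma~\ref{lemmaBasic6} and Lemma~\ref{lemmaBasic0}(1) to build $f=\mathcal C(w\eta)\in R^t(K,\mu)\cap L^\infty(\mu)$ with $\operatorname{spt}\eta\subset F_1$, and then, after arranging $\mathcal C_*(g\mu)\le M$ on $F_1$ (Lemma~\ref{lemmaBasic0}(10)), passes to the limit in $\int\mathcal C_{\epsilon_k}(w\eta)\,g\,d\mu=-\int\mathcal C_{\epsilon_k}(g\mu)\,w\,d\eta$ by dominated convergence to obtain $\int\operatorname{Re}\mathcal C(g\mu)\,w\,d\eta=0$, a contradiction. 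For \eqref{FCForREq2} the paper additionally writes $\eta=h_\eta\mathcal H^1|_{F_1}$ with $h_\eta$ bounded and $h\ge c_0$ on $F_1$, rewrites $\int fg_{n_k}\,d\mu$ as an integral against $\mu$ of a bounded density, and passes to the limit using weak$^*$ convergence $g_{n_k}\to g$ in $L^s(\mu)$ to get $\int v^+(g\mu,\Gamma_n,\cdot)\,w\,d\eta=0$. Your direct method buys simplicity for $t>1$; the paper's buys uniformity in $t$ and avoids any quantitative control of $h_k$.

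Where your proposal is genuinely incomplete is the case $t=1$. You correctly diagnose that the norm-approximation scheme fails and point to the duality construction, but you stop at ``from which a contradiction is to be extracted.'' The missing step is exactly the one above: you need $\mathcal C_*(g\mu)$ bounded on $\operatorname{spt}\eta$ (arrange this via Lemma~\ref{lemmaBasic0}(10) before choosing $\eta$) so that dominated convergence gives $\int fg\,d\mu=-\int\mathcal C(g\mu)\,w\,d\eta$, hence $\operatorname{Re}\mathcal C(g\mu)\,w=0$ $\eta$-a.e., contradicting $\operatorname{Re}\mathcal C(g\mu)>0$ on $F_1$ and $\int w\,d\eta>0$. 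For \eqref{FCForREq2} at $t=1$ there is a further wrinkle your sketch does not address: the $v^+$ term contributes an extra piece $\tfrac12 g hL^{-1}$ that must also be passed to the limit, and this is where the paper uses $\eta\ll\mathcal H^1|_{\Gamma_n}$, the boundedness of $h_\eta h^{-1}$ on $F_1$, and weak$^*$ convergence $g_{n_k}\to g$ in $L^\infty(\mu)$ against an $L^1(\mu)$ test function. Your overlap remark about $\Gamma_n\cap\Gamma_m$ is well taken; note that tangent agreement only gives $v^+(\cdot,\Gamma_n,\lambda)\in\{v^+(\cdot,\Gamma_m,\lambda),v^-(\cdot,\Gamma_m,\lambda)\}$, so the conclusion on such overlap points may land in \eqref{FCForREq3} rather than \eqref{FCForREq2}, which is harmless for the theorem as stated.
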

\smallskip

\begin{proof} (Theorem \ref{FCForR} Equation \eqref{FCForREq1}):
Assume that there exists a compact subset $F_1\subset \mathcal F_0$ such that $\gamma (F_1) > 0$ and  
 \begin{eqnarray}\label{FCForRAssump}
 \ Re(\mathcal C(g\mu)) > 0,~ \lambda\in F_1.
 \end{eqnarray}
  Using Lemma \ref{lemmaBasic0} (10), we assume that $\mathcal C_*(g\mu) (\lambda) < M_1$ for $\lambda\in F_1$. We now apply Lemma \ref{lemmaBasic6} and conclude that there exists a finite positive measure $\eta$ with $\text{spt}(\eta) \subset F_1$ satisfying (1)-(3) of Lemma \ref{lemmaBasic6}. From Lemma \ref{lemmaBasic0} (1), we can find a function $w$ supported on $F_1$ with $0\le w \le 1$ such that
 \[
 \ \eta(F_1) \le 2 \int w(z) d\eta(z), ~ \|\mathcal C_\epsilon(w\eta)\|_{L^\infty(\mathbb C)} \le C_6, 
 \]
where $\epsilon > 0$. From (3) of Lemma \ref{lemmaBasic6}, we have a function $f\in L^\infty(\mu)$ 
 with $f(\lambda) = \mathcal C (w\eta)(\lambda)$ for $\lambda\in F_1^c$
 so that 
\begin{eqnarray}\label{acTheoremForGeq1}
 \ \int f(z) g_n(z)d\mu (z) = - \int \mathcal C(g_n\mu) (z) w(z) d\eta (z) = 0,
 \end{eqnarray}
which implies $f\in R^t(K,\mu)$. Hence,
\[
 \ \int f(z) g(z)d\mu (z) = 0.
 \]
Let $\epsilon_k$ be as in (3) of Lemma \ref{lemmaBasic6}, we get $\mathcal C_{\epsilon_k}(w\eta)$ coverages to $f$ in $L^\infty (\mu)$ weak$^*$ topology and 
 \[
 \  \int \mathcal C_{\epsilon_k}(w\eta)(z) gd\mu  = - \int \mathcal C_{\epsilon_k}(g\mu)(z) w(z)d\eta(z).
 \]
Using Lebesgue dominated convergence theorem for the right hand side of above equation, we see that
 \[
 \   \int Re(\mathcal C(g\mu)(z)) w(z)d\eta(z) = - Re\left(\int f(z)g(z)d\mu(z)\right) = 0.
 \] 
Hence, $Re(\mathcal C(g\mu)(z)) w(z) = 0, ~ \eta-a.a.$. This is a contradiction to \eqref{FCForRAssump}.
\end{proof}
\smallskip

\begin{proof} (Theorem \ref{FCForR} Equation \eqref{FCForREq2}):
Assume that the rotation angle of $\Gamma_1$ is zero and there exists a compact subset $F_1\subset \mathcal F_+\cap \Gamma_1$ such that $\gamma (F_1) > 0$ and 
 \begin{eqnarray}\label{FCForRAssump2}
 \ Re(v^+(g\mu, \Gamma_1, \lambda )) > 0, ~\lambda\in F_1.
 \end{eqnarray}
 Using Lemma \ref{lemmaBasic0} (10), we assume that $\mathcal C_*(g\mu) (\lambda) < M_1$ for $\lambda\in F_1$.

Let $\eta$ be a finite positive measure with $\text{spt}(\eta) \subset F_1$ satisfying (1)-(3) of Lemma \ref{lemmaBasic6}.
Since $F_1 \subset \mathcal {ND} (\mu)$ (by Definition \ref{FRDefinition1}), we see that $\eta$ is absolutely continuous with respect to $\mathcal H^1 |_{F_1}$ as $\eta$ is linear growth. Let $\eta = h_\eta \mathcal H^1 |_{F_1}$, we may assume $h_\eta \le M_0$ for some constant $M_0$.

Clearly $h > 0$ on $F_1$. We may choose a smaller set (still use $F_1$) so that $h\ge c_0$ on $F_1$ for some constant  $c_0$. Hence, $\eta = h_\eta h^{-1}\mu |_{F_1}$ and $|h_\eta h^{-1}|\le \frac{M_0}{c_0}$.

With above $\eta$ and $F_1$, we choose $g_{n_k}$ coverages to $g$ in $L^{s}(\mu)$ weak$^*$ topology.
Since $v^+(g_{n_k}\mu, \Gamma_1, \lambda ) = \mathcal C(g_{n_k}\mu)(\lambda ) + \frac{1}{2}(g_{n_k}hL^{-1})(\lambda ) = 0 $, using the similar arguments as in the proof of \eqref{FCForREq1}, we see that the equation \eqref{acTheoremForGeq1} becomes:
\[
 \ \begin{aligned}
 \ \int f(z) g_{n_k}(z)d\mu (z) = & - \int \mathcal C(g_{n_k}\mu) (z) w(z) d\eta (z)\\
 \  = & \frac{1}{2}\int g_{n_k}(z) (hL^{-1})(z)w(z) h_\eta h^{-1}\chi_{F_1} d\mu (z).
\ \end{aligned} 
\]
By taking $k\rightarrow \infty$, we get
\[
\ \begin{aligned}
\ & \frac{1}{2}\int g(z) (hL^{-1})(z)w(z) d\eta (z) \\
\  = &\int f(z) g(z)d\mu (z) \\
\  = &\lim_{k \rightarrow\infty}\int \mathcal C_{\epsilon_k}(w\eta)(z) gd\mu  \\
\  = &- \lim_{k \rightarrow\infty}\int \mathcal C_{\epsilon_k}(g\mu)(z) w(z)d\eta(z)\\
\  = & - \int \mathcal C(g\mu) (z) w(z) d\eta (z)
\ \end{aligned}
\]
where $\mathcal C_{\epsilon_k}(w\eta)$ coverages to $f$ in weak$^*$ topology and Lebesgue dominated convergence theorem is applied for $\mathcal C_{\epsilon_k}(g\mu)$.
Therefore, 
\[
 \ \int Re(v^+(g\mu, \Gamma_1, z)) w(z) d\eta (z) = Re \left (\int v^+(g\mu, \Gamma_1, z) w(z) d\eta (z) \right ) = 0.
 \]
Thus, $Re(v^+(g\mu, \Gamma_1, z)) w(z) = 0,~\eta-a.a.$. This arrives a contradiction to \eqref{FCForRAssump2}.
\end{proof}
\smallskip

The proof of Theorem \ref{FCForR} Equation \eqref{FCForREq3} is the same as \eqref{FCForREq2}.
\smallskip

As a simple application of Theorem \ref{FCForR} \eqref{FCForREq1} and the fact that $\mathcal L^2(\mathcal F_+\cup \mathcal F_-) = 0$, we have the corollary below.

\begin{corollary} \label{acZero} 
For $g\perp R^t(K, \mu)$,
$\mathcal C(g\mu)(\lambda) = 0, ~ \mathcal L^2|_{\mathcal F}-a.a.$.
\end{corollary}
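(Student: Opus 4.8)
The plan is to combine the two structural facts just established: by Theorem \ref{FCForR} \eqref{FCForREq1} the Cauchy transform $\mathcal C(g\mu)$ vanishes $\gamma$-almost everywhere on $\mathcal F_0$, and the set $\mathcal F \setminus \mathcal F_0 \subset \mathcal F_+\cup \mathcal F_-$ is negligible for planar Lebesgue measure. For the second point, recall that $\mathcal F_+\cup \mathcal F_-$ is, by Definition \ref{FRDefinition1}, contained in $\mathcal{ND}(\mu)$, which by Lemma \ref{lemmaBasic0} (5)(a) decomposes up to a set of zero analytic capacity (hence of zero $\mathcal H^1$) into a countable union of pieces of rectifiable sets; more directly, $\mathcal F_+\cup \mathcal F_-\subset \Gamma = \cup_n\Gamma_n$ with each $\Gamma_n$ a (rotated) Lipschitz graph, so $\mathcal H^1(\Gamma\cap B(0,r))<\infty$ on every ball and therefore $\mathcal L^2(\mathcal F_+\cup\mathcal F_-)=0$ since a set of $\sigma$-finite one-dimensional Hausdorff measure has zero area.

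First I would fix $g\perp R^t(K,\mu)$. By Theorem \ref{FRProperties} \eqref{FCEq8} we have $\mathcal R\cup\mathcal F = \mathbb C$ up to a set of zero analytic capacity, and by Lemma \ref{lemmaBasic0} (2) any set of zero analytic capacity has zero planar Lebesgue measure; hence it suffices to show $\mathcal C(g\mu)(\lambda)=0$ for $\mathcal L^2$-a.a. $\lambda\in\mathcal R$ and for $\mathcal L^2$-a.a. $\lambda\in\mathcal F$. On $\mathcal F$: write $\mathcal F = \mathcal F_0\cup(\mathcal F_+\cup\mathcal F_-)$; the latter piece has zero $\mathcal L^2$-measure by the previous paragraph, so I only need $\mathcal C(g\mu)=0$ $\mathcal L^2$-a.a. on $\mathcal F_0$, and this follows from Theorem \ref{FCForR} \eqref{FCForREq1} together with Lemma \ref{lemmaBasic0} (2) (turning the $\gamma$-a.e. statement into an $\mathcal L^2$-a.e. statement). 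Actually, since the statement of the corollary only asserts $\mathcal C(g\mu)=0$ on $\mathcal F$, not on all of $\mathbb C$, this already completes the argument: $\mathcal L^2|_{\mathcal F}$-a.a. $\lambda$ lies in $\mathcal F_0$ (because $\mathcal L^2(\mathcal F_+\cup\mathcal F_-)=0$), and on $\mathcal F_0$ the vanishing holds $\gamma$-a.a., hence $\mathcal L^2$-a.a.

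There is essentially no obstacle here — the corollary is a routine packaging of Theorem \ref{FCForR} \eqref{FCForREq1} with the elementary facts that $\mathcal L^2\ll$ is dominated by $\gamma^2$ (Lemma \ref{lemmaBasic0} (2), so $\gamma(E)=0\Rightarrow\mathcal L^2(E)=0$) and that a countable union of Lipschitz graphs has zero area. The only point requiring a word of care is making sure the exceptional set of zero analytic capacity appearing in Theorem \ref{FCForR} is indeed $\mathcal L^2$-null, which is exactly the content of Lemma \ref{lemmaBasic0} (2). So the write-up is two or three lines: apply Theorem \ref{FCForR} \eqref{FCForREq1}, invoke $\mathcal L^2(\mathcal F_+\cup\mathcal F_-)=0$ since $\mathcal F_+\cup\mathcal F_-\subset\cup_n\Gamma_n$ has locally finite $\mathcal H^1$, and conclude.
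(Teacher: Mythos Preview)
Your proposal is correct and matches the paper's own argument essentially line for line: the paper introduces the corollary as ``a simple application of Theorem \ref{FCForR} \eqref{FCForREq1} and the fact that $\mathcal L^2(\mathcal F_+\cup \mathcal F_-) = 0$,'' which is exactly what you do. Your detour through $\mathcal R$ is unnecessary (as you yourself note), but the core reasoning is identical.
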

\smallskip

The following corollary, which proves that $\mathcal F$ and $\mathcal R$ are independent of choices of $\{g_n\}$ up to a set of zero analytic capacity, is a direct outcome of Theorem \ref{FCForR}. 
\smallskip

\begin{corollary}\label{NRBUnique} 
Let $\{f_n\}_{n=1}^\infty \subset R^t(K,\mu)^\perp$ be a dense subset. 
Let $\mathcal F'_0$, $\mathcal F'_+$, $\mathcal F'_-$, and $\mathcal F'$ be defined as in Definition \ref{FRDefinition1} with $\nu_j = f_j\mu$. 
Then
 \[
 \ \mathcal F_0 \approx \mathcal F'_0, ~ \mathcal F_+ \approx \mathcal F'_+, ~ \mathcal F_- \approx \mathcal F'_-, ~ \gamma-a.a..
 \]
Hence, $\mathcal F$ and $\mathcal R$ are well defined up to a zero analytic capacity set.
\end{corollary}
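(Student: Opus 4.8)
The plan is to obtain the corollary as an immediate consequence of Theorem \ref{FCForR}, which asserts that the vanishing conditions defining $\mathcal F_0$, $\mathcal F_+$, $\mathcal F_-$ are satisfied not merely by the chosen dense measures $g_n\mu$ but by \emph{every} annihilating measure $g\mu$. Since $\mathcal F'_0$, $\mathcal F'_+$, $\mathcal F'_-$ are built from the dense sequence $\{f_n\}\subset R^t(K,\mu)^\perp$ by exactly the recipe of Definition \ref{FRDefinition1}, and each $f_n\mu$ is an annihilating measure, applying Theorem \ref{FCForR} with $g=f_n$ for every $n\ge 1$ will give $\mathcal F_0\subset\mathcal F'_0$, $\mathcal F_+\subset\mathcal F'_+$, $\mathcal F_-\subset\mathcal F'_-$ up to sets of zero analytic capacity; interchanging the roles of $\{g_n\}$ and $\{f_n\}$ then supplies the reverse inclusions, hence the stated equivalences.

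Before carrying this out one must arrange that $\mathcal F'_0$, $\mathcal F'_+$, $\mathcal F'_-$ are computed with the \emph{same} auxiliary data as $\mathcal F_0$, $\mathcal F_+$, $\mathcal F_-$, namely the same countable family $\{\Gamma_n\}$ of rotated Lipschitz graphs of slope $\le\frac14$ and the same Radon--Nikodym decomposition $\mu=h\,\mathcal H^1|_\Gamma+\mu_s$, so that the value functions $v^\pm(\cdot,\Gamma_n,\cdot)$ make sense simultaneously for both sequences. This is legitimate: any countable family of such graphs covering $\mathcal{ND}(\mu)$ up to a $\gamma$-null set is admissible in the sense of Lemma \ref{GammaExist} for \emph{every} dense sequence (the arguments there establishing \eqref{FCEq01} and \eqref{FCEq02} used only purity of $S_\mu$), so one may take $\{\Gamma_n\}$ to be the union of a family working for $\{g_n\}$ and one working for $\{f_n\}$; alternatively one simply invokes Theorem \ref{FCharacterization}, which already gives independence of $\mathcal F$ and $\mathcal R$ from the choice of $\{\Gamma_n\}$. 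Note that $\mathcal{ND}(\mu)$ depends on $\mu$ alone.

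With the data fixed, the inclusion $\mathcal F_0\subset\mathcal F'_0$ goes as follows. By Corollary \ref{ZeroAC} there is a $\gamma$-null set off which the principal value $\mathcal C(f_n\mu)(\lambda)$ exists for all $n$, and by Theorem \ref{FCForR} \eqref{FCForREq1} (with $g=f_n$) there is a further $\gamma$-null set off which $\mathcal C(f_n\mu)(\lambda)=0$ for every $\lambda\in\mathcal F_0$. The union of these countably many exceptional sets is $\gamma$-null by the semiadditivity of analytic capacity (Theorem \ref{TTolsa}(2)), and off it $\mathcal F_0\subset\bigcap_n\mathcal Z(\mathcal C(f_n\mu))=\mathcal F'_0$. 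For $\mathcal F_+$ the argument is the same: every $\lambda\in\mathcal F_+$ lies on some $\Gamma_m$ and in $\mathcal{ND}(\mu)$, the quantities $v^+(f_n\mu,\Gamma_m,\cdot)$ are defined $\gamma$-a.a.\ by Theorem \ref{GPTheorem1}(b), and Theorem \ref{FCForR} \eqref{FCForREq2} gives $v^+(f_n\mu,\Gamma_m,\lambda)=0$ for $\lambda\in\mathcal F_+\cap\Gamma_m$, $\gamma$-a.a.; summing the exceptional sets over $n$ and $m$ yields $\mathcal F_+\subset\mathcal F'_+$, $\gamma$-a.a. The case of $\mathcal F_-$ is identical, using \eqref{FCForREq3}. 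Combining, $\mathcal F=\mathcal F_0\cup\mathcal F_+\cup\mathcal F_-\approx\mathcal F'_0\cup\mathcal F'_+\cup\mathcal F'_-=\mathcal F'$, $\gamma$-a.a., hence $\mathcal R=K\setminus\mathcal F\approx K\setminus\mathcal F'=\mathcal R'$, and since $\text{abpe}(R^t(K,\mu))$ is intrinsic, $\mathcal R_B\approx\mathcal R'_B$, $\gamma$-a.a.

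The only genuine point of care is the bookkeeping in the last paragraph: reconciling the two a priori different Lipschitz-graph systems (since $v^\pm$ are attached to a fixed $\{\Gamma_n\}$ and $h$) and checking that the accumulated exceptional sets stay $\gamma$-null. Once Theorem \ref{FCForR}, Theorem \ref{FCharacterization}, and the purity step of Lemma \ref{GammaExist} are in hand, no further analytic input is required.
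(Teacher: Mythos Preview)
Your proposal is correct and follows exactly the approach intended by the paper, which simply states that the corollary is ``a direct outcome of Theorem \ref{FCForR}'' without further proof. Your write-up supplies the natural details: applying Theorem \ref{FCForR} with $g=f_n$ to get $\mathcal F_0\subset\mathcal F'_0$, $\mathcal F_\pm\subset\mathcal F'_\pm$ (up to $\gamma$-null sets), then invoking symmetry for the reverse inclusions, and handling the bookkeeping on the common choice of $\{\Gamma_n\}$ via Lemma \ref{GammaExist} and Theorem \ref{FCharacterization}.
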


\bigskip

\section{Analytic capacity for subsets of non-removable boundary}
\bigskip

The aim of this section is to prove the following theorem. 
\smallskip

\begin{theorem}\label{acTheorem}
Let $S$ be an open square and let $E\subset S\cap \mathcal F$ be a compact subset with $\gamma (E) > 0$. Then there exists an absolute constant $c_5 > 0$ and a sequence of compact subsets $\{E_N\}_{N=1}^\infty$ with $E_N \subset S \cap \mathcal E(\nu_j, 1\le j \le N)$ and $\sup_{x\in E_N}\text{dist}(x, E) < \epsilon$ for a given $\epsilon > 0$ ($c_5$ is independent of $\epsilon$) such that $\gamma (E_N) \ge c_5 \gamma (E)$.  
\end{theorem}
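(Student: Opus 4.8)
The plan is to handle the three pieces of $\mathcal F$ separately, since $E$ may meet $\mathcal F_0$, $\mathcal F_+\cap\Gamma_n$, and $\mathcal F_-\cap\Gamma_n$ for various $n$. By Tolsa's semiadditivity \eqref{Semiadditive}, if $\gamma(E)>0$ then one of the countably many sets $E\cap\mathcal F_0$, $E\cap\mathcal F_+\cap\Gamma_n$, $E\cap\mathcal F_-\cap\Gamma_n$ has analytic capacity comparable to $\gamma(E)$ (up to the absolute constant $A_T$ and a fixed index $n_0$, after discarding a tail whose total capacity is small). So it suffices to prove the conclusion when $E$ is contained in a single one of these sets, at the cost of replacing $c_5$ by $c_5/A_T$ or similar; the ``$\operatorname{dist}(x,E)<\epsilon$'' requirement will be automatic because the sets $E_N$ we build will lie inside a prescribed small neighborhood of $E$.

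\emph{Case $E\subset\mathcal F_0$.} Here $\mathcal C(\nu_j)(\lambda)=0$ for all $j$ and $\lambda\in E$, and by Theorem \ref{FRProperties}, $\Theta_\mu(\lambda)=0$ for $\gamma$-a.a.\ $\lambda\in E$. I would first use Lemma \ref{lemmaBasic0} (10) to pass to a compact subset $E'\subset E$ with $\gamma(E')\ge c\gamma(E)$ on which $\mathcal C_*(\nu_j)$ and $\mathcal M_{\nu_j}$ are bounded (with $j$-dependent bounds) and on which $\Theta_\mu\equiv 0$ uniformly in the sense of Egorov. Fix $N$. By Lemma \ref{CauchyTLemma} (2), each $\mathcal C(\nu_j)$ is $\gamma$-continuous at each point of $E'$, so for every $\lambda\in E'$ there is $\delta(\lambda)>0$ with $\gamma(B(\lambda,\delta)\cap\{|\mathcal C(\nu_j)|>\tfrac1N\})<\theta\,\delta$ for $j\le N$, for $\delta<\delta(\lambda)$, where $\theta$ is a small absolute constant to be chosen. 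After restricting to a subset of $E'$ of comparable capacity where $\delta(\lambda)\ge\delta_0$ for a fixed $\delta_0$ (and $\delta_0<\epsilon$), I would cover $E'$ efficiently by balls of radius $\sim\gamma(E')$ — using the Besicovitch covering lemma and Lemma \ref{lemmaBasic0} (3) to control overlaps — and inside a suitable finite subfamily take $E_N$ to be $E'$ with the ``bad'' sets $\bigcup_{j\le N}\{|\mathcal C(\nu_j)|>\tfrac1N\}$ removed. The capacity of what is removed is $\le A_T\sum$ over the covering balls of $\theta\delta\lesssim \theta A_T p\,\gamma(E')$; choosing $\theta$ small (absolute) gives $\gamma(E_N)\ge\tfrac12\gamma(E')\ge c_5\gamma(E)$, and $E_N\subset\mathcal E(\nu_j,1\le j\le N)$ since $\mathcal C(\nu_j)=0$ at points of $E'$ so ``$|\mathcal C(\nu_j)|\le\tfrac1N$'' is what survives. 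Here the subtlety that $\gamma$ of the removed set is controlled by an \emph{absolute} multiple of $\gamma(E')$ — independent of $N$ — is exactly what Lemma \ref{CauchyTLemma} plus the covering bound (3) delivers.

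\emph{Cases $E\subset\mathcal F_\pm\cap\Gamma_n$.} The structure is parallel but one works with the one-sided nontangential limits. Say $E\subset\mathcal F_+\cap\Gamma_{n}$. By Theorem \ref{FCForR} \eqref{FCForREq2} (applied to each $g_j$), $v^+(\nu_j,\Gamma_n,\lambda)=0$ for $\gamma$-a.a.\ $\lambda\in E$; and by Theorem \ref{GPTheorem1}(b), for each such $\lambda$ and each $j\le N$,
\[
\lim_{\delta\to 0}\frac{\gamma\big(U_{\Gamma_n}\cap B(\lambda,\delta)\cap\{|\mathcal C(\nu_j)|>\tfrac1N\}\big)}{\delta}=0 .
\]
So I would run the same Egorov-plus-covering argument inside the half-neighborhood $U_{\Gamma_n}$ of $E$: restrict to a compact $E'\subset E$ of comparable capacity where this density estimate is uniform for $\delta<\delta_0<\epsilon$, cover $E'$ by $\sim\gamma(E')$-balls with bounded overlap (Lemma \ref{lemmaBasic0}(3)), and set $E_N=\bigcup(\overline{B}\cap U_{\Gamma_n})\setminus\bigcup_{j\le N}\{|\mathcal C(\nu_j)|>\tfrac1N\}$ — but then I must also make sure $\gamma(E_N)\ge c_5\gamma(E)$ \emph{from below}, which requires knowing that $\gamma(U_{\Gamma_n}\cap B(\lambda,\delta))\gtrsim\delta$, i.e.\ the one-sided region near a point of the Lipschitz graph carries full capacitary density; this is standard for Lipschitz graphs and follows from \eqref{HACEq} together with an elementary planar estimate. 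The case $\mathcal F_-$ is identical with $L_{\Gamma_n}$ in place of $U_{\Gamma_n}$.

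\emph{Main obstacle.} The delicate point, and where I expect the real work to be, is the uniformity of the capacity bound in $N$: the ``bad'' set $\bigcup_{j\le N}\{|\mathcal C(\nu_j)|>\tfrac1N\}$ grows with $N$, yet its $\gamma$-measure inside the small neighborhood of $E$ must stay a \emph{fixed} absolute fraction of $\gamma(E)$, not merely $o_N(1)$ or $N$-dependent. This is forced by requiring each individual density $\gamma(B(\lambda,\delta)\cap\{|\mathcal C(\nu_j)|>\tfrac1N\})/\delta\le\theta$ with $\theta$ absolute at a uniform scale $\delta_0$ (via Egorov over the finitely many $j\le N$, which is where $N$-dependence of $\delta_0$ and of the subset $E'$ is absorbed — these may depend on $N$, but $c_5$ does not), then summing $A_T\sum_j\theta\delta_i$ over a covering whose overlap constant $p$ is absolute. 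Getting the bookkeeping so that $\delta_0=\delta_0(N)<\epsilon$ is allowed while $c_5$ comes out clean is the crux; everything else is the Besicovitch/Tolsa covering machinery already assembled in Lemma \ref{lemmaBasic0}.
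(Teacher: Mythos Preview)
Your reduction step has a genuine gap. Semiadditivity gives
\[
\gamma(E)\le A_T\Big(\gamma(E\cap\mathcal F_0)+\sum_n\gamma(E\cap\mathcal F_+\cap\Gamma_n)+\sum_n\gamma(E\cap\mathcal F_-\cap\Gamma_n)\Big),
\]
but from this you cannot conclude that a \emph{single} term is $\ge c\,\gamma(E)$ for an absolute $c$: after discarding a tail you are left with $2n_0+1$ terms, and $n_0$ depends on $E$, so your ``$c_5/A_T$ or similar'' is actually $c_5/(A_T(2n_0+1))$, which is not absolute. This is exactly the obstruction that forces the paper to work with a \emph{measure} rather than with capacity. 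The paper takes $\eta\in\Sigma(E)$ supported on $E$ with $\mathcal C\eta$ bounded on $L^2(\eta)$ and $\|\eta\|\ge c_6\gamma(E)$ (Theorem~\ref{TTolsa}\,\eqref{GammaEq2}); additivity of $\eta$ then gives clean absolute-constant reductions: first one of $\mathcal F_0,\mathcal F_+,\mathcal F_-$ carries $\ge\tfrac13\|\eta\|$, then one of $36$ angle sectors of the rotation angles $\beta_n$ carries $\ge\tfrac{1}{36}$ of that, and finally one truncates to finitely many $\Gamma_n$ in that sector. Crucially, the remaining $\Gamma_1,\dots,\Gamma_M$ are then handled \emph{simultaneously} in Lemma~\ref{lemmaBasic4}, not one at a time.

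Two further comments. Your $\mathcal F_0$ case is far more elaborate than needed: by definition $\mathcal C(\nu_j)\equiv 0$ on $\mathcal F_0$, so $\mathcal F_0\subset\mathcal E(\nu_j,1\le j\le N)$ for every $N$, and one may simply take $E_N$ to be any compact subset of $E\cap\mathcal F_0$ with $\eta(E_N)\ge\tfrac12\eta(E\cap\mathcal F_0)$; the capacity lower bound then comes from \eqref{GammaEq2} applied to $\eta|_{E_N}$. For $\mathcal F_+$, your construction $E_N=\bigcup(\overline B\cap U_{\Gamma_n})\setminus\{\text{bad}\}$ runs into the problem that lower-bounding $\gamma$ of such a set by $\gamma(E')$ is awkward without a measure to track. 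The paper instead \emph{translates}: one finds (Egorov with respect to $\eta$) a subset $H_0$ with $\eta(H_0)\ge\tfrac34\eta(H)$ and a height $\epsilon_2<\epsilon$ such that $H_0+i\epsilon_2\subset\mathcal E(\nu_j,1\le j\le N)$; since $\eta|_{H_0}$ (translated) is still in $\Sigma$ with bounded Cauchy transform, \eqref{GammaEq2} gives $\gamma(H_0+i\epsilon_2)\ge c\,\eta(H_0)\ge c_5\gamma(E)$ directly. The angle-sector grouping is what guarantees that a single vertical shift $+\,i\epsilon_2$ pushes every point of $H_0$ into its upper cone, even though the points lie on several different $\Gamma_l$.
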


We start with an outline of processing our proof of Theorem \ref{acTheorem}. Let $E$ be as in Theorem \ref{acTheorem}. 
From Theorem \ref{TTolsa} (1) equation \eqref{GammaEq2}, we find a finite, positive Borel measure $\eta$ on $\mathbb{C}$ with $spt(\eta) \subset E$ and  $\eta\in \Sigma(E)$ such that the Cauchy transform $\mathcal C\eta$ is bounded in $L^2(\eta )$ with norm $1$ and $c_6\gamma (E) \le \|\eta\|$. By Definition \ref{FRDefinition1},
 \[
 \ \eta (\text{spt}(\eta)) \le \eta (\text{spt}(\eta) \cap \mathcal F_0) + \eta (\text{spt}(\eta) \cap \mathcal F_+) + \eta (\text{spt}(\eta) \cap \mathcal F_-).
 \]
Hence, one of the following three cases holds
 \begin{eqnarray}\label{singularPart}
\ \eta (\text{spt}(\eta) \cap \mathcal F_0) \ge \dfrac{1}{3}\eta (spt(\eta)) \ge \dfrac{c_6}{3}\gamma (E),
 \end{eqnarray}
 \begin{eqnarray}\label{absolutePart}
 \ \eta (\text{spt}(\eta) \cap \mathcal F_+) \ge \dfrac{1}{3}\eta (spt(\eta)) \ge \dfrac{c_6}{3}\gamma (E),
 \end{eqnarray}
or
 \begin{eqnarray}\label{absolutePart2}
 \ \eta (\text{spt}(\eta) \cap \mathcal F_-) \ge \dfrac{1}{3}\eta (spt(\eta)) \ge \dfrac{c_6}{3}\gamma (E).
 \end{eqnarray}
\smallskip

\begin{proof} (Theorem \ref{acTheorem} if \eqref{singularPart} holds): 
Let $E_N \subset \text{spt}(\eta) \cap \mathcal F_0$ be a compact subset with 
 \[
 \ \eta (E_N) \ge \dfrac{1}{2}\eta (\text{spt}(\eta) \cap \mathcal F_0) \ge \dfrac{c_6}{6}\gamma (E).
 \]  
Certainly,  $ E_N \subset S \cap \mathcal E(\nu_j, 1\le j \le N)$ and $\sup_{x\in E_N}\text{dist}(x, E) = 0$.
$\eta |_{E_N}\in \Sigma(E_N)$ and the Cauchy transform $\mathcal C\eta |_{E_N}$ is bounded in $L^2(\eta |_{E_N})$ with norm $\le 1$. Now $\gamma (E_N) \ge c_5 \gamma (E)$ follows from Theorem \ref{TTolsa} (1) \eqref{GammaEq2} (applied to $\eta |_{E_N}$).
\end{proof}
\smallskip  

It remains to prove the case in 
which \eqref{absolutePart} is true (same for \eqref{absolutePart2}).

Now set $F = \text{spt}(\eta) \cap \mathcal F_+$. Then
$F \subset \cup_{n=1}^\infty \Gamma _n$
where $\{\Gamma _n\}$ are as in Lemma \ref{GammaExist}. Let $\beta_n$ be the angle of the rotation for $\Gamma _n$. Then
\[
\ \bigcup_{n=1}^\infty \Gamma _n = \bigcup_{k=1}^{36}\bigcup_{(k-1)\frac{\pi}{18} \le \beta_n < k\frac{\pi}{18}} \Gamma_n.
\] 
Hence,
 \[
\ \eta(F) \le \sum_{k=1}^{36} \eta \left (\bigcup_{(k-1)\frac{\pi}{18} \le \beta_n < k\frac{\pi}{18}} \Gamma_n \cap F \right).
\]
We can find a $k$, without loss of generality, assuming $k = 0$, such that 
 \[
 \ G:= \bigcup_{0 \le \beta_n < \frac{\pi}{18}} \Gamma_n \cap F
 \]
 satisfying 
 \[
 \ \eta(G) \ge \dfrac{1}{36} \eta(F) \ge \dfrac{c_6}{108}\gamma(E).
 \]
Further more,
\[
 \ \eta(G) = \lim _{m\rightarrow\infty} \eta\left(\bigcup_{0 \le \beta_n < \frac{\pi}{18}, n\le m} \Gamma_n \cap F \right).
 \]
Therefore, we may assume the number of $\Gamma_n$ is  $M$. Without loss of generality, we assume $\Gamma_1,\Gamma_2,...,\Gamma_M$, $H:=\cup_{n =1}^M \Gamma_n\cap F$, satisfying the following for $1\le n\le M$:

(A) Corresponding Lipschitz function $A_n$ of $\Gamma_n$ satisfying $\|A_n'\|_\infty \le \frac{1}{4}$;

(B) The rotation angles $\beta_n$ of $\Gamma_n$ are between $0$ and $\frac{\pi}{18}$; and

(C) $\eta(H) \ge \dfrac{c_6}{144}\gamma(E)$.

We need a couple of lemmas to finish the proof.
\smallskip  
 
\begin{lemma}\label{lemmaBasic3}
Let $A$ be a Lipschitz function with $\|A'\|_\infty \le \frac{1}{4}$. Let $\Gamma$ be its graph. Suppose that $B\subset\Gamma$ is a compact subset with $\mathcal H^1 |_{\Gamma}(B) > 0$. Then there exists a constant $\delta_0 > 0$ and $\{B(\lambda _n, \delta)\}_{n=1}^N$ for $\delta < \delta_0$ such that $B\subset \cup_{n=1}^N B(\lambda _n, \delta)$ and
 \[
 \ N\le \dfrac{\sqrt{5}\mathcal H^1 |_{\Gamma}(B)}{\delta}.
 \]   
\end{lemma}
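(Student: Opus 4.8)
The plan is to reduce the statement to a one‑dimensional covering problem by projecting $\Gamma$ onto the real axis. Let $\pi\colon\Gamma\to\mathbb R$ be the projection $\pi(x,A(x))=x$; since $\pi$ is $1$‑Lipschitz, $\tilde B:=\pi(B)$ is a compact subset of $\mathbb R$ with $\mathcal H^1(\tilde B)\le \mathcal H^1|_\Gamma(B)=:m$. Two elementary transfer facts will be used. First, a disk of radius $\delta$ centred at a point of $\Gamma$ covers a definite arc of $\Gamma$: using $\|A'\|_\infty\le\frac14$, if $|x-x_0|\le\delta/2$ then
\[
|(x,A(x))-(x_0,A(x_0))|^2=(x-x_0)^2+(A(x)-A(x_0))^2\le\Big(1+\tfrac1{16}\Big)\tfrac{\delta^2}{4}=\tfrac{17}{64}\,\delta^2<\delta^2,
\]
so $B\big((x_0,A(x_0)),\delta\big)$ contains the portion of $\Gamma$ lying over $[x_0-\delta/2,\,x_0+\delta/2]$. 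Second, covering $B$ is the same as covering the part of $\Gamma$ over $\tilde B$, since $B\subseteq\{(x,A(x)):x\in\tilde B\}$.

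The naive attempt — enclose $\tilde B$ in an open set $U$ with $|U|$ close to $m$ and cover each component interval of $U$ by length‑$\delta$ intervals — does not work: $\tilde B$ may be Cantor‑like, and then at some scales a cover of $\tilde B$ by length‑$\delta$ intervals genuinely needs many more than $\sim m/\delta$ of them, because the number of components of $U$ and the rounding losses $\lceil\cdot\rceil$ are uncontrolled. The device that repairs this, and which I regard as the only real point of the proof, is to replace $\tilde B$ by its closed $(\delta/2)$‑neighbourhood $W:=\{x\in\mathbb R:\text{dist}(x,\tilde B)\le\delta/2\}$. This $W$ has two crucial features. (i) Every connected component of $W$ is a closed interval meeting $\tilde B$ — a segment joining a point of $W$ to a nearest point of $\tilde B$ stays in $W$ — and therefore contains $[x^\ast-\delta/2,x^\ast+\delta/2]$ for some $x^\ast\in\tilde B$, so it has length $\ge\delta$; this simultaneously tames the rounding and the number of components. (ii) $W$ decreases to $\tilde B$ as $\delta\to0$, so by continuity of Lebesgue measure from above $|W|\to\mathcal H^1(\tilde B)\le m<\frac{\sqrt5}{2}m$, and hence there is $\delta_0>0$ with $|W|<\frac{\sqrt5}{2}m$ for all $0<\delta<\delta_0$.

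With $W$ chosen this way the count is routine. Fix $\delta<\delta_0$ and write $W=\bigsqcup_{j=1}^{P}[c_j,d_j]$ (finitely many components, as each has length $\ge\delta$ and $W$ is bounded, so $P\le|W|/\delta$). Cover $[c_j,d_j]$ by the $n_j:=\lceil(d_j-c_j)/\delta\rceil$ consecutive intervals of length $\delta$ starting at $c_j$; lift their midpoints to $\Gamma$ to obtain disks $B(\lambda_{j,k},\delta)$ whose union, by the first transfer fact, covers the part of $\Gamma$ over $[c_j,d_j]$, hence over all of $W\supseteq\tilde B$, and therefore covers $B$. Finally, using $d_j-c_j\ge\delta$,
\[
N=\sum_{j=1}^{P}n_j\le\sum_{j=1}^{P}\Big(\frac{d_j-c_j}{\delta}+1\Big)\le\sum_{j=1}^{P}\frac{2(d_j-c_j)}{\delta}=\frac{2|W|}{\delta}<\frac{\sqrt5\,m}{\delta}=\frac{\sqrt5\,\mathcal H^1|_\Gamma(B)}{\delta}.
\]
The constant $\sqrt5$ is exactly what the cushion $\mathcal H^1(\tilde B)\le m<\frac{\sqrt5}{2}m$ buys against the factor $2$ coming from $\lceil\cdot\rceil$ versus $d_j-c_j\ge\delta$; the centres $\lambda_{j,k}$ may be taken on $\Gamma$ (indeed within $\delta$ of $B$), which is all that is needed for the application in Theorem~\ref{acTheorem}.
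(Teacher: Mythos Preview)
Your proof is correct and takes a genuinely different route from the paper's. The paper works intrinsically on $\Gamma$: it invokes the definition of $\mathcal H^1$ to find a near-optimal finite cover $\{B(\lambda_i,\delta_i)\}_{i=1}^{M_0}$ of $B$ with $\sum_i \delta_i\le \mathcal H^1|_\Gamma(B)$, sets $\delta_0=\min_i\delta_i$, bounds the arc length in each $B(\lambda_i,\delta_i)$ by $2\delta_i\sqrt{1+\|A'\|_\infty^2}\le\sqrt5\,\delta_i$, and then subdivides each arc into $M_i\le \sqrt5\,\delta_i/\delta$ balls of radius $\delta$; the rounding $\lceil\cdot\rceil$ is absorbed by the requirement $\delta<\delta_0\le\delta_i$. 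You instead project to $\mathbb R$, replace $\tilde B$ by its closed $(\delta/2)$-neighbourhood $W$ to force every component to have length $\ge\delta$ (so that $\lceil L/\delta\rceil\le 2L/\delta$), and use continuity of Lebesgue measure $|W|\to\mathcal H^1(\tilde B)\le m<\tfrac{\sqrt5}{2}m$ to manufacture $\delta_0$. Your approach avoids any appeal to the definition of Hausdorff measure and makes the role of compactness and the choice of $\delta_0$ completely explicit; the paper's approach is shorter once one is comfortable extracting an efficient finite cover from the definition of $\mathcal H^1$, and it explains the constant $\sqrt5$ as the arc-length stretching factor $2\sqrt{1+\|A'\|_\infty^2}$ rather than as slack against a doubling loss. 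Either argument yields centres on $\Gamma$, which is what the application in Lemma~\ref{lemmaBasic4} uses.
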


\begin{proof}
There exists a family of sets $A_i$ with $diam(A_i) \le 1$, $B\subset\cup_i A_i$, and $\sum_i diam(A_i) \le \frac{3}{2} \mathcal H^1 |_{\Gamma}(B)$. Replacing $A_i$ by slightly bigger ball $B(\lambda_i, \delta_i)$, we get $B\subset\cup_i B(\lambda_i, \delta_i)$, and $\sum_i 2\delta_i \le 2 \mathcal H^1 |_{\Gamma}(B)$. We may assume that the number of balls is $M_0 < \infty$ since $B$ is compact. Let $\delta_0 = \min_{1\le i\le M_0}\delta_i$. Because 
\[
 \ \mathcal H^1 |_{\Gamma}(B(\lambda_i, \delta_i))\le 2\delta_i \sqrt{1+ \|A'\|_\infty^2} \le \sqrt{5}\delta_i,
 \] 
we can replace $B(\lambda_i, \delta_i)$ by $M_i$ equal radius $\delta$ balls, where $ M_i \le \frac{\sqrt{5}\delta_i}{2\delta} + 1 \le \frac{\sqrt{5}\delta_i}{\delta}$. Hence, there exist equal radius balls $\{B(\lambda _n, \delta)\}_{n=1}^N$ whose union covers $B$ and
 \[
 \ N = \sum_{i=1}^{M_0} M_i \le \dfrac{\sqrt{5}\mathcal H^1 |_{\Gamma}(B)}{\delta}.
 \]
The lemma is proved. 
\end{proof}
\smallskip

Recall $UC(\lambda, \alpha, \delta)$ and $LC(\lambda, \alpha, \delta)$ are vertical. We use $UC(\lambda, \alpha, \delta, \beta)$ and $LC(\lambda, \alpha, \delta, \beta)$ to define the rotated regions (at $\lambda$) of $UC(\lambda, \alpha, \delta)$ and $LC(\lambda, \alpha, \delta)$ by the angle $\beta$, respectively.
\smallskip

\begin{lemma}\label{lemmaBasic4}
Let $M$, $\{\Gamma_l\}_{l=1}^M$, $\{A_l\}_{l=1}^M$, $H$, $\beta_n$, and $\eta$ be as in (A)-(C). Let $\sqrt{3} \le \alpha \le 4$ ($\le \frac{1}{\|A_n'\|_\infty}$). 
Then for $\epsilon_1 > 0 $, there exists $\epsilon_2 < \epsilon_1$ and $H_0 \subset H$ ($H_0$ depends on $\epsilon_2$) with $\eta (H_0) \ge \frac{3}{4} \eta (H)$ such that ($i = \sqrt{-1}$) 
 \begin{eqnarray}\label{lemmaBasic4Result}
 \ H_0 + \epsilon_2 i \subset \mathcal E(\nu_j, 1\le j \le N).
 \end{eqnarray}
\end{lemma}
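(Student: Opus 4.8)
The plan is to obtain $H_0$ from $H$ by a single vertical translation: I will show that for a suitable small $\epsilon_2<\epsilon_1$ the translate $\lambda+\epsilon_2 i$ lies in $\mathcal E(\nu_j,1\le j\le N)$ for $\eta$-most $\lambda\in H$. The mechanism is that every $\lambda\in H$ lies on some $\Gamma_n$ ($n\le M$) with $v^+(\nu_j,\Gamma_n,\lambda)=0$ for all $j$, so by the Plemelj-type estimate of Theorem \ref{GPTheorem1}(b) the Cauchy transforms $\mathcal C(\nu_j)$ have full $\gamma$-density $0$ in the upper region $U_{\Gamma_n}$ near $\lambda$; the work is to convert this two-dimensional capacitary smallness into a statement about the single vertical ray above $\lambda$, uniformly in $\lambda$ over a large subset.

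First I would do the reductions. Fix $N$ and a number $\delta<\epsilon_1$ to be shrunk later, and fix a measurable choice $\lambda\mapsto n(\lambda)\in\{1,\dots,M\}$ assigning to $\lambda\in H$ an index with $\lambda\in\Gamma_{n(\lambda)}$ and $v^+(\nu_j,\Gamma_{n(\lambda)},\lambda)=0$ for all $j\ge1$ (possible by the definition of $\mathcal F_+$; e.g. take the least such $n$). Let $\mathbb Q_0$ be the set of zero analytic capacity (semiadditivity, \eqref{Semiadditive}) off which all principal values $\mathcal C(\nu_j)(z)$ exist, and set $\widetilde G_j=\mathbb Q_0\cup\{z\notin\mathbb Q_0:|\mathcal C(\nu_j)(z)|>1/N\}$. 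Using the aperture $\alpha$ from the hypothesis: $\alpha\le4\le 1/\|A_n'\|_\infty$ (with a harmless shrinking of $\alpha$ in the borderline case) gives $UC(\lambda,\alpha,\delta,\beta_{n(\lambda)})\subset U_{\Gamma_{n(\lambda)}}$ for $\mathcal H^1|_{\Gamma_{n(\lambda)}}$-a.a.\ $\lambda$ and all small $\delta$, while $\arctan\alpha\ge\pi/3>\pi/18>\beta_n$ guarantees that the upward vertical segment $\{\lambda+ti:0<t<\delta\}$ lies inside $UC(\lambda,\alpha,\delta,\beta_{n(\lambda)})$.

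Next comes the uniformization. Since $\mathcal C\eta$ is bounded on $L^2(\eta)$ and $\eta\in\Sigma(E)$, sets of zero analytic capacity are $\eta$-null (Lemma \ref{lemmaBasic0}(9)); hence by Theorem \ref{GPTheorem1}(b) the densities $\delta^{-1}\gamma\big(UC(\lambda,\alpha,\delta,\beta_{n(\lambda)})\cap\widetilde G_j\big)$ tend to $0$ as $\delta\to0$ for $\eta$-a.a.\ $\lambda\in H$ and every $j\le N$. Let $c_0>0$ be the universal constant (from \eqref{HACEq} applied to the zero function) with $\mathcal H^1(A)\le c_0^{-1}\gamma(A)$ for subsets $A$ of a line, and choose $\kappa=\kappa(N)<c_0/(8N)$. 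By Egorov's theorem there is $H_1\subset H$ with $\eta(H\setminus H_1)$ small enough that $\tfrac78\eta(H_1)\ge\tfrac34\eta(H)$, and a uniform $\delta_1>0$, such that for every $\lambda\in H_1$, every $\delta<\delta_1$ and every $j\le N$ we have both $UC(\lambda,\alpha,\delta,\beta_{n(\lambda)})\subset U_{\Gamma_{n(\lambda)}}$ and $\gamma\big(UC(\lambda,\alpha,\delta,\beta_{n(\lambda)})\cap\widetilde G_j\big)<\kappa\delta$. (A routine point here is the measurability of $\lambda\mapsto\gamma(\text{set depending on }\lambda)$; as usual one reduces the limit to a sequence $\delta_m\to0$.) From now on fix $\delta<\min(\delta_1,\epsilon_1)$. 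For $\lambda\in H_1$, the vertical segment $\{\lambda+ti:0<t<\delta\}$ lies in that cone, and the portion of it inside $\widetilde G_j$ is a subset of a line, so by \eqref{HACEq} its length is at most $c_0^{-1}\gamma\big(UC(\lambda,\alpha,\delta,\beta_{n(\lambda)})\cap\widetilde G_j\big)<(\kappa/c_0)\delta$. Hence $T_\lambda:=\{t\in(0,\delta):\lambda+ti\notin\widetilde G_j\text{ for all }j\le N\}$ has $\mathcal L^1(T_\lambda)>(1-N\kappa/c_0)\delta>\tfrac78\delta$, and for $t\in T_\lambda$ every $\mathcal C(\nu_j)(\lambda+ti)$ exists with $|\mathcal C(\nu_j)(\lambda+ti)|\le1/N$, i.e.\ $\lambda+ti\in\mathcal E(\nu_j,1\le j\le N)$. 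Fubini for $\eta\times\mathcal L^1|_{(0,\delta)}$ now gives
\[
\int_0^\delta\eta\big(\{\lambda\in H_1:t\in T_\lambda\}\big)\,dt=\int_{H_1}\mathcal L^1(T_\lambda)\,d\eta(\lambda)>\tfrac78\,\delta\,\eta(H_1),
\]
so there is $\epsilon_2\in(0,\delta)\subset(0,\epsilon_1)$ with $\eta(\{\lambda\in H_1:\epsilon_2\in T_\lambda\})>\tfrac78\eta(H_1)\ge\tfrac34\eta(H)$; set $H_0:=\{\lambda\in H_1:\epsilon_2\in T_\lambda\}\subset H$, so that $H_0+\epsilon_2 i\subset\mathcal E(\nu_j,1\le j\le N)$.

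I expect the genuine content to be the passage from the two-dimensional capacitary density statement to a one-dimensional estimate along a single vertical ray: this is exactly where one must combine the comparison $\mathcal H^1\lesssim\gamma$ for subsets of a line with the geometric fact that the upward vertical direction lies inside the admissible cone, and then use Fubini/pigeonhole to produce one $\epsilon_2$ serving an $\eta$-large set of $\lambda$ simultaneously. The Egorov and measurability issues are routine but should be addressed; the choice of $\kappa=\kappa(N)$ forces $\delta_1$, $H_1$, $\epsilon_2$ and $H_0$ to depend on $N$, which is consistent with the statement.
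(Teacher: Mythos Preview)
Your proposal is correct, and it reaches the conclusion by a genuinely different mechanism than the paper. The paper does \emph{not} slice along the vertical ray above each $\lambda$. Instead it fixes a small threshold depending on $\mathcal H^1(H\cap\Gamma_l)$, uses Egorov-type uniformization to pass to $F_n\subset H$ on which the capacitary density in the (unrotated, aperture $1$) cone is uniformly small, and then invokes the covering Lemma~\ref{lemmaBasic3} to cover $F_n$ by $N(\delta)\lesssim \sum_l\mathcal H^1(H\cap\Gamma_l)/\delta$ balls of radius $\delta$. For any $\epsilon_2<\sqrt{2}\delta_0$, the translate $F_n+\epsilon_2 i$ sits in the union of cones $UC(\lambda_k,1,2\epsilon_2)$; the ``bad'' set $H_1=(F_n+\epsilon_2 i)\cap\mathcal E_N^c-\epsilon_2 i$ is then controlled via $\eta(H_1)\le A_T\gamma(H_1)\le A_T^2\sum_k\gamma(UC(\lambda_k,1,2\epsilon_2)\cap\mathcal E_N^c)\le\tfrac18\eta(H)$, using Theorem~\ref{TTolsa} (semiadditivity and \eqref{GammaEq2}). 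So the paper's engine is the covering count times the cone-capacity bound, pushed back to $\eta$ through $\eta\lesssim\gamma$.

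Your route replaces all of this with the elementary fact that on any line $\mathcal H^1\le 4\gamma$, applied to the single vertical segment above $\lambda$ (which the angle condition $\beta_n<\pi/18<\arctan\alpha$ places inside the rotated cone), followed by Fubini in $\eta\times\mathcal L^1$ to extract a common $\epsilon_2$. This avoids Lemma~\ref{lemmaBasic3}, the semiadditivity estimate, and the $\eta\lesssim\gamma$ step entirely; in exchange, the paper's argument gives slightly more: every sufficiently small $\epsilon_2$ works (not just one obtained by pigeonhole), which is harmless here since the lemma only asks for existence.
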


\begin{proof}
By construction ($H\subset \mathcal F_+$),  for $\lambda \in H\cap \Gamma_l$ ($1 \le l \le M$) and $1 \le j \le N$,
\[
 \ \lim_{\delta\rightarrow 0}\dfrac{\gamma(UC(\lambda, \alpha, \delta, \beta_l)\cap \{|\mathcal{C}(\nu _j)(z )| > \epsilon \})}{\delta}=0 
 \]
for $\epsilon > 0$.
Clearly, $UC(\lambda, 1, \delta) \subset UC(\lambda, \alpha, \delta, \beta_l)$ as  $0\le \beta_l < \frac{\pi}{18}$.
Using Theorem \ref{TTolsa} (2), we get
 \begin{eqnarray}\label{lemmaBasic4Eq0}
 \ \begin{aligned}
 \ & \lim_{\delta\rightarrow 0}\dfrac{\gamma(UC(\lambda, 1, \delta)\cap\mathcal E (\nu_j, 1\le j \le N)^c) }{\delta} \\
 \ \le &A_T\sum_{j=1}^N \lim_{\delta\rightarrow 0} \dfrac{\gamma(UC(\lambda, \alpha, \delta, \beta_l)\cap\{|\mathcal{C}(\nu _j)(z )| > \frac{1}{N}\})}{\delta} \\
 \ = & 0
\ \end{aligned} 
\end{eqnarray}
for $\lambda \in H\cap \Gamma_l$ and $1\le l \le M$.

Let $B_n$ be a subset consisting of $\lambda\in H$ satisfying
 \begin{eqnarray}\label{lemmaBasic4Eq1}
 \ \gamma(UC(\lambda, 1, \delta)\cap\mathcal E (\nu_j, 1\le j \le N)^c) \le \dfrac{\eta (H) \delta}{16\sqrt{10}A_T^2\sum_{l=1}^M\mathcal H^1(H\cap \Gamma_l)}
 \end{eqnarray}
for $\delta \le \frac{1}{n}$, where $A_T$ is as in Theorem \ref{TTolsa}. 

The sets $B_n \subset B_{n+1}$ and
 by \eqref{lemmaBasic4Eq0}, we see that   
 \[
 \ \eta \left (H\setminus \bigcup_{n=1}^\infty B_n \right ) = 0.
 \]
Choose $n$ large enough such that there is a compact subset $F_n\subset B_n$ with $\eta (F_n) \ge \frac{7}{8}\eta(H)$. 
From Lemma \ref{lemmaBasic3}, there exists $\delta_l > 0$ so that $H\cap\Gamma_l$ can be covered by $N(l,\delta)$ equal radius $\delta$ balls with
 \[
 \ N(l,\delta) \le \dfrac{\sqrt{5} \mathcal H^1 (H\cap\Gamma_l)}{\delta}
 \]
for $\delta < \delta_l$. Let $\delta_0 = \frac{1}{2}\min (\min_{1\le l \le M} \delta_l, \frac{1}{n})$ and $N(\delta) = \sum_{l=1}^M N(l,\delta)$. Then $H$ can be covered by $\{B(\lambda_k, \delta)\}_{k=1}^{N(\delta)}$ for $\delta \le \delta_0$. Hence,
$F_n \subset \bigcup_{k=1}^{N(\delta )} B(\lambda_k, \delta)$. We assume that $\lambda_k\in F_n$, otherwise, let $u_k \in B(\lambda_k, \delta) \cap F_n$ and we replace $B(\lambda_k, \delta)$ by $B(u_k, 2\delta)$.  
Let $\epsilon_2 < \min(\epsilon_1, \sqrt{2}\delta_0)$. Set
$UL_n = \left (F_n + \epsilon_2 i\right )$.
Therefore,
 \[
 \ UL_n \subset \bigcup_{k=1}^{N(\frac{\epsilon_2}{\sqrt{2}})} UC(\lambda_{k}, 1, 2\epsilon_2).
 \]
This implies 
 \[
 \ UL_n \cap \mathcal E (\nu_j, 1\le j \le N)^c \subset \bigcup_{k=1}^{N(\frac{\epsilon_2}{\sqrt{2}})} UC(\lambda_{k}, 1, 2\epsilon_2) \cap \mathcal E (\nu_j, 1\le j \le N)^c.
 \]
Let 
 \[
 \ H_0 = UL_n \cap \mathcal E (\nu_j, 1\le j \le N)  - \epsilon_2 i.
 \]
If $H_1 = UL_n \cap \mathcal E (\nu_j, 1\le j \le N)^c  - \epsilon_2 i$, then
 \[
\ \begin{aligned}
 \  \eta ( H_0 ) = &  \eta \left ( (UL_n  - \epsilon_2 i) \setminus H_1 \right ) \\
 \ \ge & \eta ( F_n) - \eta \left ( H_1 \right ).
 \ \end{aligned}
\]
$\eta |_{H_1}\in \Sigma(H_1)$ and the Cauchy transform $\mathcal C\eta |_{H_1}$ is bounded in $L^2(\eta |_{H_1})$ with norm $\le 1$.
From Theorem \ref{TTolsa} (1) \eqref{GammaEq2} and (2), we get
\[
 \ \begin{aligned}
 \  \eta \left (  H_1 \right ) \le & A_T \gamma \left (  H_1 \right )  \\
 \ = & A_T\gamma \left (  UL_n \cap \mathcal E (\nu_j, 1\le j \le N)^c \right )  \\
 \ \le & A_T^2\sum_{k=1}^{N(\frac{\epsilon_2}{\sqrt{2}})} \gamma(UC(\lambda_{k}, 1, 2\epsilon_2) \cap \mathcal E (\nu_j, 1\le j \le N)^c) \\
 \ \le & \dfrac{1}{8} \eta (H),
 \ \end{aligned}
\]
where \eqref{lemmaBasic4Eq1} is used in last step.
Combining above two inequalities, we get $\eta (H_0) \ge \frac{3}{4} \eta (H)$ and \eqref{lemmaBasic4Result} holds. This completes the proof of the lemma.
\end{proof}
\smallskip

Now the remaining proof of Theorem \ref{acTheorem} is a direct conclusion of Lemma \ref{lemmaBasic4}. 
\smallskip

\begin{proof} (Remaining of Theorem \ref{acTheorem}) Let $E_N = H_0 + i\epsilon _2$ for some $\epsilon_2 < \epsilon_1$, where $H_0$ and $\epsilon_2$ are as in Lemma \ref{lemmaBasic4}. From Lemma \ref{lemmaBasic4}, we choose $\epsilon_1 < \epsilon$ small enough so that $E_N \subset S\cap\mathcal E(\nu_j, 1\le j \le N)$, $\eta(E_N) \ge \frac{3}{4}\eta(H) \ge \frac{c_6}{192}\gamma(E)$, and $\sup_{x\in E_N}\text{dist}(x, E) < \epsilon$. $\eta |_{E_N}\in \Sigma(E_N)$ and the Cauchy transform $\mathcal C\eta |_{E_N}$ is bounded in $L^2(\eta |_{E_N})$ with norm $\le 1$. Applying Theorem \ref{TTolsa} (1) \eqref{GammaEq2} to $\eta |_{E_N}$, we prove  $\gamma (E_N) \ge c_5 \gamma (E)$. 
\end{proof}
\smallskip

Applying Theorem \ref{acTheorem}, we prove the following lemma which is critical for our approximation scheme in later sections.

\begin{lemma} \label{hFunction} 
Let $S$ be a square and let $E$ be a compact subset of $S\cap\mathcal F$ with $\gamma(E) > 0$. Then 
there exists $f\in R^t(K,\mu)\cap L^\infty(\mu)$ and a finite positive measure $\eta$ with $\text{spt}(\eta)\subset E$ and $\|\mathcal C_\epsilon (\eta) \| \le C_6$ such that $f(z) = \mathcal C(\eta)(z)$ for $z\in E^c$, 
 \[
 \ \|f\|_{L^\infty(\mu)} \le C_6,~ f(\infty) = 0,~ f'(\infty) = \gamma(E),
 \]
and
 \[
 \ \mathcal C(\eta)(z) \mathcal C(g_j\mu) (z) = \mathcal C(fg_j\mu) (z), ~\gamma|_{E^c}-a.a.
 \]
for $j \ge 1$. 
\end{lemma}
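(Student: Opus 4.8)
The plan is to realise $f$ and $\eta$ as weak$^*$ limits of approximating objects manufactured from Theorem \ref{acTheorem} and Lemma \ref{lemmaBasic6}, exploiting that $\mathcal C(\nu_j)$ is tiny on the sets $\mathcal E(\nu_j,1\le j\le N)$ to push annihilation through the limit, and then to read off the Cauchy transform identity from the difference-quotient relation \eqref{lemmaBasic6Eq2}. \textbf{Step 1 (approximate data).} For each $N$ I would apply Theorem \ref{acTheorem} with $\epsilon=\epsilon_N\downarrow 0$ to get a compact $E_N\subset S\cap\mathcal E(\nu_j,1\le j\le N)$ with $\gamma(E_N)\ge c_5\gamma(E)$ and $\sup_{x\in E_N}\text{dist}(x,E)<\epsilon_N$; to $E_N$ apply Lemma \ref{lemmaBasic6} to produce a positive $\eta_N$ with $\text{spt}(\eta_N)\subset E_N$, $\eta_N\in\Sigma(E_N)$, $\|\mathcal C\eta_N\|_{L^2(\eta_N)\to L^2(\eta_N)}\le1$, $\mathcal C_*(g_n\mu),\mathcal M_{g_n\mu}\in L^\infty(\eta_N)$, and $c_4\gamma(E_N)\le\|\eta_N\|\le C_4\gamma(E_N)$; then Lemma \ref{lemmaBasic0}(1) gives $0\le w_N\le1$ on $E_N$ with $\eta_N(E_N)\le2\int w_N\,d\eta_N$ and $\|\mathcal C_\epsilon(w_N\eta_N)\|_{L^\infty(\mathbb C)}\le c$. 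Put $\sigma_N=w_N\eta_N$, $\tau_{N,z}=\frac{1}{w-z}\sigma_N$, and $f_N=\mathcal C(\sigma_N)$. By Lemma \ref{lemmaBasic6}(3), $f_N\in L^\infty(\mu)$ with $\|f_N\|_{L^\infty(\mu)}\le c$, $f_N(\infty)=0$, $f_N=\mathcal C(\sigma_N)$ on $E_N^c$, the relations \eqref{lemmaBasic6Eq1}--\eqref{lemmaBasic6Eq2} hold for $(\eta_N,w_N,f_N)$, and $(\zeta-z)\mathcal C(\tau_{N,z})(\zeta)+f_N(z)=f_N(\zeta)$ as elements of $L^\infty(\mu)$ for $z\in E_N^c$. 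Note $\frac{c_4c_5}{2}\gamma(E)\le\|\sigma_N\|\le C_4\gamma(E_N)$, bounded uniformly in $N$.

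\textbf{Step 2 (limits; $f\in R^t$; $f=\mathcal C(\eta)$ off $E$).} The $\sigma_N$ are uniformly bounded positive measures supported in neighbourhoods of the compact set $E$ shrinking to $E$, and $\{f_N\}$ is bounded in $L^\infty(\mu)$; pass to a common subsequence $N_k$ along which $\sigma_{N_k}\to\eta$ weak$^*$ as measures on $\mathbb C$ and $f_{N_k}\to f$ weak$^*$ in $L^\infty(\mu)$. Then $\text{spt}(\eta)\subset E$, $\|\eta\|=\lim_k\|\sigma_{N_k}\|>0$, and, testing the truncated kernels by continuous functions and discarding the countably many radii $\epsilon$ with $\eta(\partial B(z,\epsilon))>0$, $\mathcal C_*(\eta)\le c$, so $\|\mathcal C_\epsilon(\eta)\|\le C_6$. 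Since $E_N\subset\mathcal E(\nu_j,1\le j\le N)$ forces $|\mathcal C(g_n\mu)|\le1/N$ on $\text{spt}(\sigma_N)$ when $n\le N$, \eqref{lemmaBasic6Eq1} gives $\bigl|\int f_{N_k}g_n\,d\mu\bigr|\le\|\sigma_{N_k}\|/N_k\to0$, hence $\int fg_n\,d\mu=0$ for every $n$; density of $\{g_n\}$ in $R^t(K,\mu)^\perp$ then yields $f\in R^t(K,\mu)\cap L^\infty(\mu)$ with $\|f\|_{L^\infty(\mu)}\le C_6$. For $z\in E^c$ the kernel $w\mapsto(w-z)^{-1}$ is continuous and bounded on a fixed neighbourhood of $E$, so $f_{N_k}(z)=\mathcal C(\sigma_{N_k})(z)\to\mathcal C(\eta)(z)$; comparing with the weak$^*$ limit shows $f=\mathcal C(\eta)$ on $E^c$, hence $f(\infty)=0$, and rescaling $\eta$ by $\gamma(E)/\|\eta\|$ (harmless up to enlarging $C_6$) arranges $f'(\infty)=\gamma(E)$.

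\textbf{Step 3 (the Cauchy transform identity).} Fix $z\in E^c$ with $\mu(\{z\})=0$ and $d=\text{dist}(z,E)>0$. From \eqref{lemmaBasic6Eq2} for $(\eta_N,w_N,f_N)$, $\int\mathcal C(\tau_{N,z})g_n\,d\mu=-\int\frac{\mathcal C(g_n\mu)(\zeta)}{\zeta-z}\,d\sigma_N(\zeta)$, whose right side has modulus $\le\|\sigma_N\|/(dN)\to0$ for $n\le N$. The functions $\mathcal C(\tau_{N_k,z})$ are uniformly bounded in $L^\infty(\mu)$; passing (along a further subsequence) to a weak$^*$ limit $G_z\in L^\infty(\mu)$ and using the identity $(\zeta-z)\mathcal C(\tau_{N_k,z})(\zeta)=f_{N_k}(\zeta)-f_{N_k}(z)$ of Step 1 together with $f_{N_k}\to f$ weakly in $L^1(\mu)$ and $f_{N_k}(z)\to\mathcal C(\eta)(z)$, one gets $(\zeta-z)G_z(\zeta)=f(\zeta)-\mathcal C(\eta)(z)$ $\mu$-a.a., i.e.\ $G_z=\frac{f-\mathcal C(\eta)(z)}{\cdot-z}$, and $\int G_z g_n\,d\mu=0$ for all $n$. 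Thus $\int\frac{f(\zeta)-\mathcal C(\eta)(z)}{\zeta-z}g_n(\zeta)\,d\mu(\zeta)=0$ with an $L^1(\mu)$ integrand, and dominated convergence splits this as $\mathcal C(fg_n\mu)(z)-\mathcal C(\eta)(z)\mathcal C(g_n\mu)(z)=0$ at every $z\in E^c$ for which the two principal values exist — a set of full $\gamma|_{E^c}$ measure by Corollary \ref{ZeroAC}. This is the asserted identity, and with Step 2 it completes the proof.

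\textbf{Expected main difficulty.} The genuinely hard ingredient — the analytic-capacity estimate that slides a positive measure onto the small-Cauchy-transform sets $\mathcal E(\nu_j,1\le j\le N)$ — is precisely Theorem \ref{acTheorem}, which is already available, so beyond that the obstacle is organizational. I expect the care to go into handling two simultaneous subsequential weak$^*$ limits (of the measures $\sigma_N$ and of the $L^\infty(\mu)$-functions $f_N$, $\mathcal C(\tau_{N,z})$) and making sure every uniform bound survives: that the pointwise bound $\|\mathcal C_\epsilon(\sigma_N)\|_{L^\infty}\le c$ descends to $\|\mathcal C_\epsilon(\eta)\|_{L^\infty}\le C_6$ (the circle-measure caveat above), that $\|\eta\|$ stays comparable to $\gamma(E)$, and — the one slightly delicate point — that it is the operator identity $(\zeta-z)\mathcal C(\tau_{N_k,z})(\zeta)=f_{N_k}(\zeta)-f_{N_k}(z)$ from Lemma \ref{lemmaBasic6}(3), rather than any pointwise control of $f$ on $E$ itself, that forces $\mathcal C(\tau_{N_k,z})$ to converge weakly in $L^1(\mu)$ to $\frac{f-\mathcal C(\eta)(z)}{\cdot-z}$ and hence delivers the identity in the required $\gamma|_{E^c}$-almost-everywhere form.
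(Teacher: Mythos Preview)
Your proposal is correct and follows essentially the same route as the paper: apply Theorem \ref{acTheorem} to get $E_N$, Lemma \ref{lemmaBasic6} and Lemma \ref{lemmaBasic0}(1) to get $\eta_N$, $w_N$, $f_N$, pass to simultaneous weak$^*$ limits $(\eta,f)$, use the $1/N$ bound on $\mathcal C(g_j\mu)|_{E_N}$ to force $f\in R^t(K,\mu)$, and rescale. Your Step 3 spells out explicitly what the paper compresses into the single sentence ``For $\lambda\notin E$, using Lemma \ref{lemmaBasic6}(3), we conclude that $\frac{f_0(z)-f_0(\lambda)}{z-\lambda}\in R^t(K,\mu)\cap L^\infty(\mu)$'': the paper leaves the extraction of the identity $\mathcal C(\eta)\mathcal C(g_j\mu)=\mathcal C(fg_j\mu)$ on $E^c$ implicit, while you carry the difference-quotient weak$^*$ limit through and then split via the separate existence of the two principal values (Corollary \ref{ZeroAC}), which is exactly the right justification.
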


\begin{proof}
From Theorem \ref{acTheorem}, there exists $E_N \subset S\cap \mathcal E (g_j\mu, 1\le j \le N)$ such that $\gamma (E_N) \ge c_5 \gamma(E)$ and $\sup_{x\in E_N}\text{dist}(x, E) \rightarrow 0$. From Lemma \ref{lemmaBasic6}, there exists a positive measure $\eta_N$ supported in $E_N$ satisfying (1) - (3) of Lemma \ref{lemmaBasic6} and $c_4\gamma(E_N) \le \|\eta_N\| \le C_4\gamma(E_N)$.
Using Lemma \ref{lemmaBasic0} (1), we can find a function $w_N(z)$ satisfying $0\le w_N(z) \le 1$, 
 \[
 \ \|\eta_N\| \le 2 \|w_N\eta_N\|,~\text{and }\|\mathcal C_\epsilon (w_N\eta_N)\|_{L^\infty (\mathbb C)} \le C_7 
 \]
for all $\epsilon > 0$. 
From Lemma \ref{lemmaBasic6} (3), there exists $f_N\in L^\infty(\mu )$ with $f_N(\infty) = 0$, $f'_N(\infty) = \|w_N\eta_N\|$, $\|f_N\|_{L^\infty(\mu )} \le C_7$, and $f_N(z) = \mathcal C(w_N\eta_N)(z)$ for $z\in E_N^c$ such that    
 \[
 \ \left | \int f_N g_jd\mu \right |  = \left | \int \mathcal C(g_j\mu)(z) w_Nd\eta_N \right | \le \dfrac{\|\eta_N\|}{N} \le \dfrac{C_4\gamma(E)}{N}  
 \]
for $j=1,2,...,N$.
Now by passing to a subsequence, we may assume that $w_N\eta_N$ converges to $\eta_0$ in $C(\overline{S})^*$ weak$^*$ topology and $f_N$ converges to $f_0$ in $L^\infty(\mu)$ weak$^*$ topology. Clearly, $\text{spt}(\eta_0)\subset E$ since $\sup_{x\in E_N}\text{dist}(x, E)\rightarrow 0$ and $f_0(\lambda ) = \mathcal C(\eta_0)(\lambda )$ for $\lambda \in E^c$ with $f_0(\infty) = 0$, $f'_0(\infty) = \|\eta_0\|\ge \frac{1}{2}c_4\gamma (E)$, $\|f_0\|_{L^\infty(\mu)} \le C_7$, and 
 \[
 \ \int f_0 g_jd\mu = 0, ~ j=1,2,...,
 \]
which implies $f_0\in R^t(K, \mu)\cap L^\infty(\mu )$. For $\lambda \notin E$, using Lemma \ref{lemmaBasic6} (3), we conclude that $\dfrac{f_0(z) - f_0(\lambda)}{z-\lambda}\in R^t(K, \mu)\cap L^\infty(\mu )$. Set 
 \[
 \ f = \dfrac{f_0}{f'_0(\infty)} \gamma (E) \text{ and } \eta  = \dfrac{\eta_0}{f'_0(\infty)} \gamma (E). 
 \]
It is easy to verify that $f$ and $\eta$ satisfy the properties of the lemma.   
\end{proof} 
\bigskip

\section{Bounded point evaluations and non-removable boundary}
\bigskip

Let $\partial_o K$ denote the union of boundaries of $\mathbb C\setminus K$ connected components (outer boundary of $K$). Define
 \begin{eqnarray}\label{BOne}
 \ \partial_1 K = \left \{\lambda \in K:~ \underset{\delta\rightarrow 0}{\overline\lim} \dfrac{\gamma(B(\lambda,\delta)\setminus K)}{\delta} > 0 \right \}.
 \end{eqnarray}
Then
$\partial_o K\subset \partial_1 K \subset \partial K$.
\smallskip 

\begin{proposition} \label{NFSetIsBig} 
The following statements are true.

(1) $\text{spt}(\mu) \subset \overline{\mathcal R}$;

(2) $\overline{\text{int}(K) \setminus \mathcal F} = \overline{\text{int}(K)}$;

(3) $\partial_1 K \subset \mathcal F, ~\gamma-a.a.$.
\end{proposition}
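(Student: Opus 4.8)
\textbf{Proof proposal for Proposition \ref{NFSetIsBig}.}

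The plan is to prove the three statements in the order (1), (2), (3), since later parts lean on the earlier ones. For (1), I would argue by contradiction: suppose there is a point $\lambda_0 \in \operatorname{spt}(\mu)$ that is not in $\overline{\mathcal R}$. Then there is a ball $B(\lambda_0,\delta)$ that meets $K$ but lies in $\mathcal F$ up to a set of zero analytic capacity, and with $\mu(B(\lambda_0,\delta))>0$. Pick a compact $E\subset B(\lambda_0,\delta)\cap\mathcal F$ with $\gamma(E)>0$; one can arrange this since $\mathcal L^2(B(\lambda_0,\delta)\cap K)>0$ forces $\gamma(\,\cdot\,)>0$ there via Lemma \ref{lemmaBasic0}(2), or more carefully one uses that $\mu|_{B(\lambda_0,\delta)}$ is supported on $\mathcal F$ up to $\gamma$-null sets. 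Apply Lemma \ref{hFunction} (with a square $S$ around $\lambda_0$) to get $f\in R^t(K,\mu)\cap L^\infty(\mu)$ with $f=\mathcal C(\eta)$ off $E$, $f'(\infty)=\gamma(E)>0$, where $\operatorname{spt}(\eta)\subset E$. The contradiction will come from the fact that $f$, being in $R^t(K,\mu)$ and being the Cauchy transform of a measure supported on a small ball, forces a nontrivial idempotent or summand of $R^t(K,\mu)$ adjacent to $\lambda_0$; using $S_\mu$ pure and the Vitushkin-type localization together with Corollary \ref{acZero} (which says $\mathcal C(g\mu)=0$ $\mathcal L^2|_{\mathcal F}$-a.e. for all annihilators $g$) one derives that $f$ is constant, contradicting $f'(\infty)>0$. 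The cleanest route is probably: since $\mathcal C(g\mu)=0$ a.e.-$\mathcal L^2$ on $\mathcal F\supset$ the relevant neighborhood, $g\mu$ restricted there has Cauchy transform vanishing on an open-in-$\mathcal F$ sense set, and one plays this against $f'(\infty)\neq 0$.

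For (2), note $\operatorname{int}(K)\setminus\mathcal F\subset\operatorname{int}(K)$ trivially, so it suffices to show every point of $\operatorname{int}(K)$ is a limit of points of $\operatorname{int}(K)\setminus\mathcal F$. Suppose not: then some open disk $D\subset\operatorname{int}(K)$ satisfies $D\subset\mathcal F$ up to $\gamma$-null sets. By Corollary \ref{acZero}, $\mathcal C(g\mu)=0$ $\mathcal L^2|_D$-a.a. for every $g\perp R^t(K,\mu)$. But $\bar\partial\mathcal C(g\mu)=-\pi g\mu$ in the distributional sense \eqref{CTDistributionEq}, so $\mathcal C(g\mu)=0$ on $D$ a.e.-$\mathcal L^2$ forces $g\mu|_D=0$. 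Since this holds for every annihilating $g$ and $S_\mu$ is pure (so annihilators separate $L^s(\mu)$ from $R^t(K,\mu)$ and in fact no $L^t$ summand exists), this forces $\mu|_D=0$; but $D\subset\operatorname{int}(K)=\operatorname{int}(\sigma(S_\mu))$ and $\mu(D)=0$ would make $\chi_D$... actually the point is $D\subset K=\sigma(S_\mu)$ with $\mu(D)=0$ is possible in general, so instead I would use that $D\subset\mathcal F$ up to $\gamma$-null, hence $D\cap\mathcal R$ is $\gamma$-null; then for $\lambda\in D$ one shows $\lambda\notin\operatorname{abpe}(R^t(K,\mu))$ is impossible or, better, directly that $\mathcal C(g\mu)$ analytic and vanishing on $D$ kills all annihilators near $D$, giving $R^t(K,\mu)\supset L^t(\mu|_D)$ after a Vitushkin argument — contradicting purity unless $\mu|_D=0$, and then iterate/cover to conclude the whole component of $\operatorname{int}(K)$ is a spectral gap, contradicting $K=\sigma(S_\mu)$.

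For (3), I would show $\partial_1 K\setminus\mathcal F$ has zero analytic capacity. Take any compact $E\subset\partial_1 K\cap\mathcal R$; I must show $\gamma(E)=0$. If $\gamma(E)>0$, by definition of $\partial_1 K$ and a standard selection one finds a compact $E'\subset E$ with $\gamma(E')>0$ on which $\overline{\lim}_{\delta}\gamma(B(\lambda,\delta)\setminus K)/\delta>0$ uniformly. Since $E'\subset\mathcal R$, Theorem \ref{FCharacterization} says that for $\lambda\in E'$ (off a $\gamma$-null set) there is $N$ with $\overline{\lim}_\delta \gamma(B(\lambda,\delta)\cap\mathcal E(\nu_j,1\le j\le N))/\delta=0$, i.e.\ on small balls $|\mathcal C(\nu_j)|$ is bounded below for some $j\le N$ on all of $B(\lambda,\delta)$ except a $\gamma$-thin set. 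But on $B(\lambda,\delta)\setminus K$ every $\mathcal C(\nu_j)=\mathcal C(g_j\mu)$ is analytic and $\mathcal C(g_j\mu)\to 0$... hmm, $\mathcal C(g_j\mu)$ need not vanish off $K$ in general. The better argument: off $K$, $\mathcal C(g_j\mu)$ is analytic, and analyticity on the sizable set $B(\lambda,\delta)\setminus K$ (with $\gamma(B(\lambda,\delta)\setminus K)\gtrsim\delta$) combined with the $\gamma$-continuity behavior forces, for $\lambda\in\partial_1 K$, that $\mathcal C(g_j\mu)(\lambda)$ equals its boundary value from outside $K$, which by the maximum principle / the estimate \eqref{WeakOneOne} applied in $B(\lambda,\delta)\setminus K$ is forced to be small — so in fact $\lambda$ satisfies \eqref{FCEq4} for all $N$, i.e.\ $\lambda\in\mathcal F$. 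This contradicts $E'\subset\mathcal R$.

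\textbf{Main obstacle.} The crux throughout is step (3), and within it the passage ``$\lambda\in\partial_1 K$ and analyticity of $\mathcal C(g_j\mu)$ off $K$ imply $\lambda$ satisfies \eqref{FCEq4}''. Making this rigorous requires controlling $\mathcal C(g_j\mu)$ on the portion $B(\lambda,\delta)\setminus K$ of positive relative analytic capacity and relating its values there to the $\mathcal E(\nu_j,1\le j\le N)$ sets; this is exactly the kind of delicate quantitative Cauchy-transform estimate that Theorem \ref{acTheorem} and Theorem \ref{FCharacterization} are built to handle, so I expect the proof to invoke Theorem \ref{FCharacterization}'s characterization together with the observation that $B(\lambda,\delta)\setminus K\subset\mathcal F_0$ (Cauchy transforms of annihilators are analytic, hence have well-defined principal values, off $K$; and $\mathbb C\setminus K\subset\mathcal F_0$ by the density-zero property noted after Definition \ref{NRBDef}) — so that a sizable chunk of each small ball around $\lambda\in\partial_1 K$ already lies in $\mathcal F_0\subset\mathcal F$, and one then shows this propagates to put $\lambda$ itself in $\overline{\mathcal F}$ in the $\gamma$-density sense of \eqref{FCEq4}.
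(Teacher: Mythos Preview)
Your proposal takes unnecessary detours in (1) and (2) and contains a genuine error in (3).

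\textbf{Part (1).} The approach via Lemma~\ref{hFunction} is a red herring. The ``cleanest route'' you mention in passing is exactly the paper's proof: if $B(\lambda_0,\delta)\subset\mathcal F$, Corollary~\ref{acZero} gives $\mathcal C(g_j\mu)=0$ $\mathcal L^2$-a.e.\ on the ball, so by \eqref{CTDistributionEq} $g_j\mu|_{B(\lambda_0,\delta)}=0$ for all $j$, and purity of $S_\mu$ forces $\mu(B(\lambda_0,\delta))=0$. That is the whole argument.

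\textbf{Part (2).} You get $\mu(D)=0$ correctly but then stall. The missing step is short: once $\mu(D)=0$, each $\mathcal C(g_j\mu)$ is genuinely analytic on $D$. If they all vanished identically on $D$, then for $\lambda\in D$ the function $(w-\lambda)^{-1}$ would be annihilated by every $g_j$, putting $\lambda$ in the resolvent of $S_\mu$ and contradicting $D\subset K=\sigma(S_\mu)$. So some $\mathcal C(g_{j_1}\mu)$ is analytic and not identically zero on $D$; its zero set is $\gamma$-null, and since $\Theta_\mu\equiv 0$ on $D$ (as $\mu(D)=0$) we get $D\subset\mathcal R_0\subset\mathcal R$ $\gamma$-a.a., contradicting $D\subset\mathcal F$. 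No Vitushkin argument or $L^t$-summand reasoning is needed.

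\textbf{Part (3).} Your sentence ``$\mathcal C(g_j\mu)$ need not vanish off $K$ in general'' is false, and this is precisely the point you are missing. For $\lambda\notin K$ the function $w\mapsto (w-\lambda)^{-1}$ lies in $\mathrm{Rat}(K)$, so $\mathcal C(g_j\mu)(\lambda)=\int (w-\lambda)^{-1}g_j\,d\mu=0$ for every $j$. Hence
\[
B(\lambda,\delta)\setminus K\;\subset\;\mathcal E(g_j\mu,\,1\le j\le N)
\]
for every $N$. For $\lambda\in\partial_1 K$ this gives
\[
\overline{\lim_{\delta\to 0}}\,\frac{\gamma\bigl(B(\lambda,\delta)\cap\mathcal E(g_j\mu,1\le j\le N)\bigr)}{\delta}\;\ge\;\overline{\lim_{\delta\to 0}}\,\frac{\gamma\bigl(B(\lambda,\delta)\setminus K\bigr)}{\delta}\;>\;0
\]
for all $N$, and Theorem~\ref{FCharacterization} then places $\lambda$ in $\mathcal F$. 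What you flagged as the ``main obstacle'' is in fact a one-line observation; there is no delicate quantitative estimate here at all.
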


\begin{proof}
(1) This is a direct consequence of Corollary \ref{acZero}. In fact, if $B(\lambda_0,\delta) \subset \mathcal F$, from Corollary \ref{acZero}, we see that $\mathcal C(g_j\mu)(z) = 0$ with respect to $\mathcal L^2|_{B(\lambda_0,\delta)}-a.a.$ This implies that $\mu(B(\lambda_0,\delta)) = 0$ by \eqref{CTDistributionEq}.

(2) If $B(\lambda_0, \delta) \subset \text{int}(K) \setminus  \overline{\text{int}(K) \setminus \mathcal F}$, then $B(\lambda_0,\delta) \subset \mathcal F$. Same as in  (1), we derive that $\mu(B(\lambda_0,\delta)) = 0$. Since $B(\lambda_0, \delta) \subset  \sigma(S_\mu) (=K)$, there exists $j_1$ such that the analytic function $\mathcal C(g_{j_1}\mu)$ on $B(\lambda_0, \delta)$ is not identically zero, which implies that $B(\lambda_0, \delta)\subset \mathcal R, ~\gamma-a.a.$. This is a contradiction. 

(3) follows from Theorem \ref{FCharacterization} and the fact that for $\lambda \in \partial_1 K$, 
 \[
 \ B(\lambda, \delta) \setminus K \subset B(\lambda, \delta) \cap\mathcal E(g_j\mu, 1\le j \le N).
 \]
\end{proof}
\smallskip

\begin{lemma} \label{GLemma}
Let $O$ be an open subset of $\mathbb C$, $g\perp R^t(K, \mu)$, and $a\ne 0$. If for $0 < \epsilon < \frac{|a|}{2}$ and $\lambda_0\in \mathbb C$,
 \begin{eqnarray}\label{GLemmaAssump}
 \ \lim_{\delta\rightarrow 0 }\dfrac{\gamma(B(\lambda_0, \delta)\cap O \cap \{|\mathcal C(g\mu)(z) - a| > \epsilon\})}{\delta} = 0,
 \end{eqnarray} 
then
 \[
 \ \lim_{\delta\rightarrow 0 }\dfrac{\gamma(B(\lambda_0, \delta)\cap O \cap \mathcal F)}{\delta} = 0.
 \] 
\end{lemma}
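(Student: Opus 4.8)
\textit{Overview of the plan.} The statement is essentially a corollary of Theorem~\ref{acTheorem}: that theorem lets us replace any compact piece $E\subset\mathcal F$ of positive analytic capacity by a nearby compact set $E_N\subset\mathcal E(\nu_j,1\le j\le N)$ of comparable capacity, and on $\mathcal E(\nu_j,1\le j\le N)$ the Cauchy transform $\mathcal C(g\mu)$ is forced to be small, hence (because $a\ne 0$) to fall into the ``bad'' set occurring in \eqref{GLemmaAssump}. So the plan is to estimate $\gamma\bigl(B(\lambda_0,\delta)\cap O\cap\mathcal F\bigr)$ from above by $\gamma\bigl(B(\lambda_0,C\delta)\cap O\cap\{|\mathcal C(g\mu)-a|>\epsilon\}\bigr)$ and invoke \eqref{GLemmaAssump}.

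\textit{Setup.} Write $G_\epsilon=\{z:\ \mathcal C(g\mu)(z)\ \text{exists and}\ |\mathcal C(g\mu)(z)-a|>\epsilon\}$, so that \eqref{GLemmaAssump} reads $\gamma\bigl(B(\lambda_0,\delta)\cap O\cap G_\epsilon\bigr)=o(\delta)$. The elementary point, using $\epsilon<|a|/2$, is that if $\mathcal C(g\mu)(z)$ exists and $|\mathcal C(g\mu)(z)|\le|a|/4$, then $|\mathcal C(g\mu)(z)-a|\ge 3|a|/4>\epsilon$, i.e.\ $z\in G_\epsilon$. By Corollary~\ref{NRBUnique} I may assume $g=g_1$: replacing the dense sequence $\{g_n\}$ by $\{g\}\cup\{g_n\}$ changes $\mathcal F$ only on a set of zero analytic capacity, which is harmless for the density-zero conclusion sought (alternatively, one approximates $g$ in $L^s(\mu)$ by some $g_{j_0}$ and controls $\mathcal C((g-g_{j_0})\mu)$ via Tolsa's weak-type estimate \eqref{WeakOneOne}). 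Then on $\mathcal E(\nu_j,1\le j\le N)$ with $N>4/|a|$ one has $|\mathcal C(g\mu)|\le 1/N<|a|/4$, whence $\mathcal E(\nu_j,1\le j\le N)\subset G_\epsilon$.

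\textit{Main estimate.} Fix $\delta>0$ and let $E\subset B(\lambda_0,\delta)\cap O\cap\mathcal F$ be an arbitrary compact subset with $\gamma(E)>0$; put $d_E=\text{dist}(E,\mathbb C\setminus O)>0$. Choose an open square $S$ centered at $\lambda_0$ with $B(\lambda_0,\delta)\subset S\subset B(\lambda_0,3\delta)$, so that $E\subset S\cap\mathcal F$. Apply Theorem~\ref{acTheorem} to $S$, $E$ and the tolerance $\epsilon_0=d_E/2$: for each $N$ there is a compact $E_N\subset S\cap\mathcal E(\nu_j,1\le j\le N)$ with $\sup_{x\in E_N}\text{dist}(x,E)<\epsilon_0$ and $\gamma(E_N)\ge c_5\gamma(E)$. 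Since $\epsilon_0<d_E$ we get $E_N\subset O$, and $E_N\subset S\subset B(\lambda_0,3\delta)$; taking $N>4/|a|$ gives $E_N\subset G_\epsilon$ by the remark above. Hence $E_N\subset B(\lambda_0,3\delta)\cap O\cap G_\epsilon$, so by monotonicity of $\gamma$, $c_5\gamma(E)\le\gamma(E_N)\le\gamma\bigl(B(\lambda_0,3\delta)\cap O\cap G_\epsilon\bigr)$. The right-hand side is independent of $E$, so taking the supremum over all compact $E\subset B(\lambda_0,\delta)\cap O\cap\mathcal F$ yields
\[
 \gamma\bigl(B(\lambda_0,\delta)\cap O\cap\mathcal F\bigr)\ \le\ \frac{1}{c_5}\,\gamma\bigl(B(\lambda_0,3\delta)\cap O\cap G_\epsilon\bigr).
\]
Dividing by $\delta$ and letting $\delta\to 0$, the right-hand side tends to $0$ by \eqref{GLemmaAssump}, which is the assertion.

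\textit{Where the difficulty lies.} All the analytic-capacity content is already contained in Theorem~\ref{acTheorem}; what remains is bookkeeping. The two points requiring care are: (i) the $\epsilon_0$-neighbourhood furnished by Theorem~\ref{acTheorem} must be kept inside $O$ and inside a fixed multiple of $B(\lambda_0,\delta)$, which is arranged by letting $\epsilon_0$ depend on $E$ — legitimate precisely because the constant $c_5$ is independent of $\epsilon_0$; and (ii) the reduction $g=g_1$, which rests on the sequence-independence of $\mathcal F$ (Corollary~\ref{NRBUnique}) and is the clean way to cover the case $t=1$, where $\{g_n\}$ is only weak$^*$-dense in $R^1(K,\mu)^\perp$. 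I do not expect a genuine obstacle beyond (i).
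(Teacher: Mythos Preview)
Your argument is correct and follows essentially the same approach as the paper: both proofs invoke Theorem~\ref{acTheorem} to replace a compact piece of $\mathcal F$ by a nearby set $E_N\subset\mathcal E(\nu_j,1\le j\le N)$ of comparable capacity, reduce to $g=g_1$ via Corollary~\ref{NRBUnique}, and then use the inclusion $\mathcal E(\nu_j,1\le j\le N)\subset\{|\mathcal C(g\mu)-a|>\epsilon\}$ for $N$ large. The only cosmetic difference is that the paper argues by contradiction along a sequence $\delta_n\to 0$ and keeps $E_N^n$ inside $B(\lambda_0,\delta_n)\cap O$ by choosing the tolerance to be $\mathrm{dist}\bigl(F_n,(B(\lambda_0,\delta_n)\cap O)^c\bigr)$, whereas you produce a direct inequality with a factor of $3$ in the ball radius; neither variation affects the substance.
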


\begin{proof}
Suppose that there exists $\epsilon_0 > 0$  and $\delta_n \rightarrow 0$ such that 
	\[
 \ \gamma(B(\lambda_0, \delta_n)\cap O\cap \mathcal F)\ge 2\epsilon_0 \delta_n.
 \]
Let $F_n \subset B(\lambda_0, \delta_n)\cap O\cap \mathcal F$ be a compact subset such that $\gamma(F_n)\ge \epsilon_0 \delta_n$.
From Corollary \ref{NRBUnique}, we may assume $g_1 = g$. 
Let $d_n$ be the smallest distance between compact sets $(B(\lambda_0, \delta_n)\cap O)^c$ and $F_n$. From
Theorem \ref{acTheorem}, we find a compact subset $E_N^n \subset \mathcal E(g_j \mu, 1\le j \le N)$ such that $\gamma(E_N^n) \ge c_5\gamma(F_n)$ and 
 $\sup_{x\in E_N^n}\text{dist} (x, F_n) < d_n$.
Hence, $E_N^n \subset B(\lambda_0, \delta_n)\cap O \cap \mathcal E(g_j \mu, 1\le j \le N)$. 
If $\frac{1}{N} < \frac{|a|}{2}$, then 
 \[
 \ \mathcal E(g_j \mu, 1\le j \le N) \subset \{|\mathcal C(g\mu)(z) - a| > \epsilon\}.
 \]
Hence,
 \[
 \ \dfrac{\gamma(B(\lambda_0, \delta_n)\cap O\cap \{|\mathcal C(g\mu)(z) - a| > \epsilon\})}{\delta_n} \ge \dfrac{\gamma(B(\lambda_0, \delta_n)\cap O\cap E_N^n)}{\delta_n} \ge \epsilon _0 c_5,
 \]
which contradicts to the assumption \eqref{GLemmaAssump}. The lemma is proved.  
\end{proof}
\smallskip

The Theorem below is an important property that is needed in later sections.

\begin{theorem}\label{DensityCorollary}
For almost all $\lambda_0 \in \mathcal R$ with respect to $\gamma$, 
\begin{eqnarray}\label{DensityCorollaryEq1}
 \ \lim_{\delta\rightarrow 0 }\dfrac{\gamma(B(\lambda_0, \delta)\cap \mathcal F)}{\delta} = \lim_{\delta\rightarrow 0 }\dfrac{\gamma(B(\lambda_0, \delta)\setminus \mathcal R)}{\delta} = 0.
 \end{eqnarray} 
\end{theorem}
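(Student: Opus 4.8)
The plan is to establish \eqref{DensityCorollaryEq1} for $\gamma$-almost every $\lambda_0 \in \mathcal R$ by combining the dichotomy $\mathcal R \approx \mathcal R_0 \cup \mathcal R_1$ from Definition \ref{FRDefinition1} and Theorem \ref{FRProperties} with the $\gamma$-continuity of Cauchy transforms (Lemma \ref{CauchyTLemma}) on $\mathcal R_0$ and the one-sided nontangential control on $\mathcal R_1$ coming from Theorem \ref{GPTheorem1}; the key mechanism that converts ``smallness of $\mathcal C(g_j\mu)$'' into ``smallness of $\gamma(B(\lambda_0,\delta)\cap\mathcal F)$'' is Lemma \ref{GLemma}. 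Since $\mathcal R \cap \mathcal F = \emptyset$, $\gamma$-a.a., and $\mathcal R \cup \mathcal F = \mathbb C$, $\gamma$-a.a., the second equality in \eqref{DensityCorollaryEq1} follows once the first is proved, so I will focus on showing $\lim_{\delta \to 0}\gamma(B(\lambda_0,\delta)\cap\mathcal F)/\delta = 0$ for $\gamma$-a.a.\ $\lambda_0 \in \mathcal R$.

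First I would handle $\lambda_0 \in \mathcal R_0 = \mathcal{ZD}(\mu)\cap \mathcal N(\nu_j, 1\le j <\infty)$. For such a point there is some $j_0$ with $\mathcal C(\nu_{j_0})(\lambda_0) =: a \ne 0$, and by Lemma \ref{CauchyTLemma} (applicable since $\Theta_{\nu_{j_0}}(\lambda_0)=0$ on $\mathcal{ZD}(\mu)$ off a $\gamma$-null set) the Cauchy transform $\mathcal C(\nu_{j_0})$ is $\gamma$-continuous at $\lambda_0$; that is, \eqref{GLemmaAssump} holds with $O = \mathbb C$, $g = g_{j_0}$, and this $a$. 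Applying Lemma \ref{GLemma} directly gives $\lim_{\delta\to 0}\gamma(B(\lambda_0,\delta)\cap\mathcal F)/\delta = 0$. Next, for $\lambda_0 \in \mathcal R_1 \cap \Gamma_n$, Definition \ref{FRDefinition1} furnishes indices $j_1, j_2$ with $v^+(\nu_{j_1},\Gamma_n,\lambda_0) \ne 0$ and $v^-(\nu_{j_2},\Gamma_n,\lambda_0) \ne 0$ (and, off a $\gamma$-null set, also $v^0(\nu_{j_0},\Gamma_n,\lambda_0) \ne 0$). Theorem \ref{GPTheorem1}(b)--(d) says precisely that \eqref{GLemmaAssump} holds with $O = U_{\Gamma_n}$, $a = v^+(\nu_{j_1},\Gamma_n,\lambda_0)$, and similarly with $O = L_{\Gamma_n}$ and $O = \Gamma_n$; thus Lemma \ref{GLemma} yields that the density of $B(\lambda_0,\delta)\cap\mathcal F$ relative to each of $U_{\Gamma_n}$, $L_{\Gamma_n}$, and $\Gamma_n$ tends to zero. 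Since, for $\mathcal H^1|_{\Gamma_n}$-a.a.\ $\lambda_0 \in \Gamma_n$, a full-density ball $B(\lambda_0,\delta)$ decomposes (up to a set controlled by $U_{\Gamma_n}\cup L_{\Gamma_n}\cup\Gamma_n$) into these three regions — and using semiadditivity of $\gamma$ (Theorem \ref{TTolsa}(2)) to add the three negligible pieces — we get $\lim_{\delta\to 0}\gamma(B(\lambda_0,\delta)\cap\mathcal F)/\delta = 0$ here as well.

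To finish, I would assemble these cases: off a set $\mathbb Q$ with $\gamma(\mathbb Q)=0$, every $\lambda_0 \in \mathcal R$ lies in $\mathcal R_0$ or in some $\mathcal R_1 \cap \Gamma_n$, and at each such point a Lebesgue/density point argument (choosing $\lambda_0$ to be a point where the relevant principal values exist, the relevant nontangential limits exist, and, in the $\mathcal R_1$ case, a density point of $\Gamma_n$ so that $U_{\Gamma_n}\cup L_{\Gamma_n}\cup\Gamma_n$ exhausts $B(\lambda_0,\delta)$ up to lower-order terms) gives the vanishing density. Then $\gamma(B(\lambda_0,\delta)\setminus\mathcal R)/\delta = \gamma(B(\lambda_0,\delta)\cap\mathcal F)/\delta + o(1) \to 0$ using $\mathcal R\cup\mathcal F = \mathbb C$, $\gamma$-a.a. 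The main obstacle I anticipate is the $\mathcal R_1$ bookkeeping near $\Gamma_n$: one must make precise, for a.a.\ $\lambda_0 \in \Gamma_n$, that $B(\lambda_0,\delta)$ is covered (modulo $\gamma$-negligible error) by the top region, bottom region, and graph, so that three applications of Lemma \ref{GLemma} plus semiadditivity actually control $\gamma(B(\lambda_0,\delta)\cap\mathcal F)$ — this is a routine but careful geometric measure-theoretic step given the Lipschitz bound $\|A_n'\|_\infty \le \tfrac14$, and it is where the choice of the cone aperture relative to the rotation angles $\beta_n$ matters.
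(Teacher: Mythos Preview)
Your plan for $\mathcal R_0$ and for the two open half-spaces $U_{\Gamma_n}$, $L_{\Gamma_n}$ is exactly what the paper does. The gap is in the graph piece: you invoke Lemma \ref{GLemma} with $O = \Gamma_n$, but that lemma requires $O$ to be an \emph{open} subset of $\mathbb C$, and $\Gamma_n$ is a one-dimensional Lipschitz graph. This is not a technicality that can be waived. The proof of Lemma \ref{GLemma} works by taking a compact $F_n \subset B(\lambda_0,\delta_n)\cap O\cap\mathcal F$ and applying Theorem \ref{acTheorem} to produce $E_N^n \subset \mathcal E_N$ with $\sup_{x\in E_N^n}\mathrm{dist}(x,F_n)$ small; openness of $O$ is what forces $E_N^n \subset O$, so that \eqref{GLemmaAssump} can be contradicted. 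When $F_n \subset \Gamma_n \cap \mathcal F_+$ (say), the set $E_N^n$ built in Theorem \ref{acTheorem} is obtained by a vertical shift $H_0 + i\epsilon_2$ (Lemma \ref{lemmaBasic4}), so it lands in $U_{\Gamma_n}$, not on $\Gamma_n$. Thus the hypothesis you need---$\gamma$-density of $\{|\mathcal C(g_{j_0}\mu)-v^0|>\epsilon\}$ vanishing \emph{along} $\Gamma_n$---never comes into play, and Lemma \ref{GLemma} simply does not fire.

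The paper handles $B(\lambda_0,\delta)\cap\Gamma_n\cap\mathcal F$ by a different mechanism. It argues by contradiction: if this set had positive upper $\gamma$-density, then (since $\lambda_0$ is a Lebesgue point of $\Gamma_n\cap\mathcal N(h)$) so would $B(\lambda_0,\delta)\cap\Gamma_n\cap\mathcal N(h)\cap\mathcal F \subset \mathcal F_+\cup\mathcal F_-$. Lemma \ref{lemmaBasic4} then pushes this density off the graph into $U_{\Gamma_n}$ or $L_{\Gamma_n}$ as a positive density of $\mathcal E_N$; but $\mathcal E_N \subset \{|\mathcal C(g_{j_1}\mu)-v^+(\nu_{j_1},\Gamma_n,\lambda_0)|>\epsilon\}$ for $N$ large, contradicting Theorem \ref{GPTheorem1}(b) (or (c)). So the missing idea in your sketch is precisely this ``escape to the half-space'' step via Lemma \ref{lemmaBasic4}; your obstacle paragraph worries about the wrong thing (the covering $B(\lambda_0,\delta)=U_{\Gamma_n}\cup L_{\Gamma_n}\cup\Gamma_n$ is exact and needs no care).
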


\begin{proof}
For almost all $\lambda_0\in \mathcal R_0$ with respect to $\gamma$, there exists $j_0$ such that $\mathcal C(g_{j_0}\mu)(\lambda_0) \ne 0$ exists and $\lambda_0 \in \mathcal {ZD}(g_{j_0}\mu)$. \eqref{DensityCorollaryEq1} follows from Lemma \ref{CauchyTLemma} and Lemma \ref{GLemma}.

For almost all $\lambda_0\in \mathcal R_1\cap \Gamma_1$ ($\Gamma_n$ as in Lemma \ref{GammaExist}) with respect to $\gamma$, there are integers $j_1$ and $j_2$ such that $v^+(g_{j_1}\mu, \Gamma_1, \lambda_0) \ne 0$ and $v^-(g_{j_2}\mu, \Gamma_1, \lambda_0) \ne 0$. Applying Theorem \ref{GPTheorem1} and Lemma \ref{GLemma} for $O= U_{\Gamma_1}$, we have 
 \begin{eqnarray}\label{DensityCorollaryEq2}
 \ \lim_{\delta\rightarrow 0 }\dfrac{\gamma(B(\lambda_0, \delta)\cap U_{\Gamma_1} \cap \mathcal F)}{\delta} = 0.
\end{eqnarray}
 Similarly, 
 \begin{eqnarray}\label{DensityCorollaryEq3}
 \ \lim_{\delta\rightarrow 0 }\dfrac{\gamma(B(\lambda_0, \delta)\cap L_{\Gamma_1} \cap \mathcal F)}{\delta} = 0.
 \end{eqnarray}
We claim that 
 \begin{eqnarray}\label{DensityCorollaryEq4} 
 \ \lim_{\delta\rightarrow 0 }\dfrac{\gamma(B(\lambda_0, \delta)\cap \Gamma_1 \cap \mathcal F)}{\delta} = 0.
 \end{eqnarray}
In fact, if it is not, we can assume that $\lambda_0$ is a Lebesgue point for $\mathcal H^1 |_{\Gamma_1\cap \mathcal N(h)}$. Hence, we must have 
 \[
 \ \underset{\delta\rightarrow 0 }{\overline \lim}\dfrac{\gamma(B(\lambda_0, \delta)\cap \Gamma_1 \cap \mathcal N(h)\cap \mathcal F)}{\delta} > 0.
 \]
By Definition \ref{FRDefinition1}, $B(\lambda_0, \delta)\cap \Gamma_1 \cap \mathcal N(h)\cap \mathcal F \subset \mathcal F_+ \cup \mathcal F_-$. Using Lemma \ref{lemmaBasic4} for $\Gamma_1$, we conclude that either
 \[ 
 \ \underset{\delta\rightarrow 0 }{\overline \lim}\dfrac{\gamma(B(\lambda_0, \delta)\cap U_{\Gamma_1} \cap \mathcal E(g_j \mu, 1\le j \le N))}{\delta}> 0
 \]
or
 \[ 
 \ \underset{\delta\rightarrow 0 }{\overline \lim}\dfrac{\gamma(B(\lambda_0, \delta)\cap L_{\Gamma_1} \cap \mathcal E(g_j \mu, 1\le j \le N))}{\delta}> 0.
 \]
However, for $0 < \epsilon < \frac 12 \min (|v^+(g_{j_1}\mu, \Gamma_1, \lambda_0)|, |v^-(g_{j_2}\mu, \Gamma_1, \lambda_0)|)$ and 
\newline
$N > \max(j_1,j_2, \frac{2}{\epsilon}+1)$ large enough,
 \[
 \ \mathcal E(g_j \mu, 1\le j \le N) \subset \{|\mathcal C(g_{j_1}\mu)(z) - v^+(g_{j_1}\mu, \Gamma_1, \lambda_0)| > \epsilon\}
 \]
and
 \[
 \ \mathcal E(g_j \mu, 1\le j \le N) \subset \{|\mathcal C(g_{j_2}\mu)(z) - v^-(g_{j_2}\mu, \Gamma_1, \lambda_0)| > \epsilon\}.
 \]
 This contradicts to Theorem \ref{GPTheorem1} (b) or (c). Applying Theorem \ref{TTolsa} (2) and combining \eqref{DensityCorollaryEq2}, \eqref{DensityCorollaryEq3}, and \eqref{DensityCorollaryEq4}, we prove \eqref{DensityCorollaryEq1}.   
\end{proof}
\smallskip

\begin{lemma}\label{BPELemma}
If $\lambda_0 \in \text{bpe}(R^t(K,\mu))$, then there exists a function $k\in L^{s}(\mu)$ with $k(\lambda_0) = 0$ so that $r(\lambda_0) = \int r(z) \bar k(z) d \mu(z)$ for all $r\in Rat(K)$.
\end{lemma}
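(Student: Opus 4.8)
The plan is to first produce \emph{some} representing function by Hahn--Banach duality, and then correct it so that it vanishes at $\lambda_0$. The correction is automatic when $\mu(\{\lambda_0\}) = 0$; when $\mu(\{\lambda_0\}) > 0$ it will use the purity of $S_\mu$.

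First I would unwind the definition: $\lambda_0 \in \mathrm{bpe}(R^t(K,\mu))$ says exactly that the linear functional $r \mapsto r(\lambda_0)$ is bounded on the subspace $\mathrm{Rat}(K)$ of $L^t(\mu)$. By the Hahn--Banach theorem it extends to a bounded linear functional on all of $L^t(\mu)$, and since $\mu$ is finite we have $(L^t(\mu))^* = L^s(\mu)$ (with $s=\infty$ when $t=1$). Hence there is $k_1 \in L^s(\mu)$ with $r(\lambda_0) = \int r\,\overline{k_1}\,d\mu$ for all $r \in \mathrm{Rat}(K)$.

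Now I split into two cases. If $\mu(\{\lambda_0\}) = 0$, then $k_1\chi_{\mathbb C\setminus\{\lambda_0\}} = k_1$ in $L^s(\mu)$, so $k := k_1\chi_{\mathbb C\setminus\{\lambda_0\}}$ already works and has $k(\lambda_0) = 0$ by construction. Suppose instead $c := \mu(\{\lambda_0\}) > 0$; then the value $k_1(\lambda_0)$ is genuinely determined because $\{\lambda_0\}$ is not $\mu$-null. The key observation is that purity of $S_\mu$ forces $\chi_{\{\lambda_0\}} \notin R^t(K,\mu)$: otherwise, writing any $f \in R^t(K,\mu)$ as $f = f(\lambda_0)\chi_{\{\lambda_0\}} + f\chi_{\mathbb C\setminus\{\lambda_0\}}$ (both summands then lying in $R^t(K,\mu)$) exhibits $R^t(K,\mu) = L^t(\mu|_{\{\lambda_0\}}) \oplus \bigl(R^t(K,\mu)\cap L^t(\mu|_{\mathbb C\setminus\{\lambda_0\}})\bigr)$, a nontrivial $L^t$ summand. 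Since $\chi_{\{\lambda_0\}} \notin R^t(K,\mu)$, a Hahn--Banach separation argument provides $h \in L^s(\mu)$ with $\int r\,\overline{h}\,d\mu = 0$ for all $r \in \mathrm{Rat}(K)$ (i.e.\ $\overline{h} \perp R^t(K,\mu)$) and $\int \chi_{\{\lambda_0\}}\,\overline{h}\,d\mu = \overline{h(\lambda_0)}\,c \ne 0$, so $h(\lambda_0) \ne 0$. I then set
\[
k \;=\; k_1 \;-\; \frac{k_1(\lambda_0)}{h(\lambda_0)}\,h .
\]
Then $k \in L^s(\mu)$ and $k(\lambda_0) = k_1(\lambda_0) - \tfrac{k_1(\lambda_0)}{h(\lambda_0)}h(\lambda_0) = 0$, while for every $r \in \mathrm{Rat}(K)$,
\[
\int r\,\overline{k}\,d\mu \;=\; \int r\,\overline{k_1}\,d\mu \;-\; \overline{\Bigl(\tfrac{k_1(\lambda_0)}{h(\lambda_0)}\Bigr)}\int r\,\overline{h}\,d\mu \;=\; r(\lambda_0),
\]
which is the assertion.

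\textbf{Main obstacle.} Everything except one step is routine functional analysis (Hahn--Banach and the identification $(L^t(\mu))^*=L^s(\mu)$). The one point that carries real content is the implication ``$S_\mu$ pure $\Rightarrow \chi_{\{\lambda_0\}}\notin R^t(K,\mu)$'', which is what manufactures the annihilating function $h$ with $h(\lambda_0)\ne 0$ needed to cancel the contribution of the atom at $\lambda_0$; without purity (e.g.\ for $\mu=\delta_{\lambda_0}$) the conclusion genuinely fails, so it is essential that this lemma lives in the chapter where $S_\mu$ is assumed pure.
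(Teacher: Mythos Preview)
Your argument is correct. Both your proof and the paper's hinge on the same purity consequence, namely that $\chi_{\{\lambda_0\}}\notin R^t(K,\mu)$, but they exploit it differently. The paper uses it to show that $\lambda_0$ remains a bounded point evaluation for the \emph{restricted} measure $\mu|_{\{\lambda_0\}^c}$ (arguing that otherwise one could approximate $\chi_{\{\lambda_0\}}$ by rational functions), and then applies Hahn--Banach on $L^t(\mu|_{\{\lambda_0\}^c})$; the resulting $k$ automatically vanishes at $\lambda_0$ after extending by zero. You instead apply Hahn--Banach on the full $L^t(\mu)$ first and then correct $k_1$ by subtracting a multiple of an annihilator $h$ with $h(\lambda_0)\ne 0$, whose existence is guaranteed by separating $\chi_{\{\lambda_0\}}$ from $R^t(K,\mu)$. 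The paper's route is marginally shorter; yours is more explicit about the mechanism and makes transparent exactly why the atom at $\lambda_0$ causes no trouble.
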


\begin{proof}
Suppose that $\lambda_0\in \text{bpe}(R^t(K,\mu))$. If we assume that $\mu(\{\lambda_0\}) > 0$, then $\lambda_0\in bpe(R^t(K,\mu |_{\{\lambda_0\}^c}))$. In fact, if $\lambda_0\notin bpe(R^t(K,\mu |_{\{\lambda_0\}^c}))$, then there exist rational functions $r_n \in Rat(K)$ with $r_n(\lambda_0) = 1$ and $\|r_n\| _{L^t(\mu |_{\{\lambda_0\}^c})} \rightarrow 0$. This implies that
$\|r_n - \chi_{\{\lambda_0\}}\| _{L^t(\mu)} \rightarrow 0$.    
Hence, $\chi_{\{\lambda_0\}}\in R^t(K,\mu)$. This contradicts to the hypothesis that $S_\mu$ is pure.
\end{proof}
\smallskip

The following result is an interesting property for $\text{bpe}(R^t(K,\mu))$.

\begin{corollary}\label{BPETheorem} If $\lambda_0 \in \text{bpe}(R^t(K,\mu))$, then 
 \begin{eqnarray}\label{BPETheoremEq1}
\ \lim_{\delta\rightarrow 0} \dfrac{\gamma(B(\lambda_0,\delta)\cap \mathcal F)}{\delta} = 0.
 \end{eqnarray}
\end{corollary}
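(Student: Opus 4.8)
The plan is to combine Lemma \ref{BPELemma} with the $\gamma$-continuity machinery already established, in particular Lemma \ref{GLemma} and Lemma \ref{CauchyTLemma}. First I would use Lemma \ref{BPELemma} to produce $k \in L^s(\mu)$ with $k(\lambda_0)=0$ such that $r(\lambda_0) = \int r(z)\overline{k(z)}\,d\mu(z)$ for all $r \in \mathrm{Rat}(K)$; equivalently, setting $g = \overline{k}$, the measure $g\mu$ is \emph{not} annihilating but instead reproduces point evaluation at $\lambda_0$ up to the constant $k(\lambda_0)=0$. Actually the cleaner route: $\left(\tfrac{1}{z-\lambda_0}\right)$ is not in $\mathrm{Rat}(K)$ when $\lambda_0 \in K$, so instead I would apply the reproducing functional to $r(z) = \tfrac{1 - (z-\lambda_0)q(z)}{1}$-type test functions, or more directly observe that for any $r \in \mathrm{Rat}(K)$, the function $\tfrac{r(z)-r(\lambda_0)}{z-\lambda_0}$ lies in $\mathrm{Rat}(K)$, hence applying the bounded functional gives $\int \tfrac{r(z)-r(\lambda_0)}{z-\lambda_0}\,\overline{k(z)}\,d\mu(z) = \left(\tfrac{r-r(\lambda_0)}{z-\lambda_0}\right)\big|_{\lambda_0}$, which is bounded by $C\|r\|_{L^t(\mu)}$. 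Rearranging yields that $\mathcal C(\overline{k}\mu)(\lambda_0)$ exists and equals a nonzero explicit constant (one computes $\mathcal C(\overline k \mu)(\lambda_0) = -\pi c$ where $c$ is the norming constant of the bpe functional, which is nonzero since $\lambda_0$ is a genuine bounded point evaluation).

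The key point is then: there exists $g \perp R^t(K,\mu)$ with $\mathcal C(g\mu)(\lambda_0)$ existing and equal to some $a \neq 0$ — wait, $\overline k \mu$ is not annihilating. So I would instead argue as follows. Write $\delta_{\lambda_0}$ as a functional: since $\lambda_0$ is a bpe, there is $k\in L^s(\mu)$ representing it, and then $\overline k\mu - $ (something) is annihilating only after subtracting the evaluation. The honest statement is that $\overline k \mu$ and $\tfrac{1}{\pi}\delta_{\lambda_0}$ differ by an annihilating measure in the sense that $\mathcal C(\overline k\mu)(z) - \tfrac{1}{\pi(z-\lambda_0)}$ extends analytically across $\lambda_0$ up to adding $\mathcal C(g\mu)$ for $g\perp R^t(K,\mu)$; but for the density statement what matters is only that $\mathcal C(\overline k\mu)(\lambda_0)$ exists (principal value) as a finite number. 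Granting this, I would then want a \emph{nonzero} principal value near $\lambda_0$: if $\mathcal C(\overline k\mu)(\lambda_0) = a_0$ happens to be $0$ one must instead exploit that $\Theta^*_{\overline k\mu}(\lambda_0)$ controls things and that the evaluation forces $\mu(\{\lambda_0\})$ behaviour. The robust route avoiding case analysis: apply Lemma \ref{CauchyTLemma} to $\nu = \overline{k}\mu$ at $\lambda_0$ once we know $\Theta_{\overline k\mu}(\lambda_0)=0$ and the principal value exists, obtaining $\gamma$-continuity of $\mathcal C(\overline k\mu)$ at $\lambda_0$; and then run the argument of Lemma \ref{GLemma}: on $\mathcal E(\nu_j, 1\le j\le N) = \mathcal E(g_j\mu,1\le j\le N)$ the Cauchy transforms $\mathcal C(g_j\mu)$ are small, and via the identity that on $\mathcal E$ one can reconstruct that $\mathcal C(\overline k\mu)$ cannot be close to its (nonzero) value $a_0$, forcing $\gamma(B(\lambda_0,\delta)\cap\mathcal F)/\delta \to 0$ by Theorem \ref{FCharacterization}.

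Concretely, the steps in order: (1) from $\lambda_0\in\mathrm{bpe}(R^t(K,\mu))$ and Lemma \ref{BPELemma}, get $k$ and deduce $\Theta_{\overline k\mu}(\lambda_0)=0$ (a bounded point evaluation at $\lambda_0$ forces the mass of $\mu$ near $\lambda_0$ to have zero linear density off an atom, and the atom case is killed by purity exactly as in the proof of Lemma \ref{BPELemma}); (2) show the principal value $\mathcal C(\overline k\mu)(\lambda_0)$ exists and is a nonzero constant $a_0$ (computed from the norming constant of the functional via $\int\frac{r(z)-r(\lambda_0)}{z-\lambda_0}\overline{k}\,d\mu$); (3) apply Lemma \ref{CauchyTLemma} to get $\gamma$-continuity of $\mathcal C(\overline k\mu)$ at $\lambda_0$, so $\lim_{\delta\to0}\gamma(B(\lambda_0,\delta)\cap\{|\mathcal C(\overline k\mu)(z)-a_0|>\epsilon\})/\delta = 0$ for $0<\epsilon<|a_0|/2$; (4) run the contradiction argument of Lemma \ref{GLemma}: if $\gamma(B(\lambda_0,\delta_n)\cap\mathcal F)\ge 2\epsilon_0\delta_n$ along $\delta_n\to 0$, use Theorem \ref{acTheorem} to push a large-capacity piece into $\mathcal E(g_j\mu,1\le j\le N)$ for $N$ large, while noting that on this set $\mathcal C(\overline k\mu)$ is forced (by the reproducing identity and the smallness of the $\mathcal C(g_j\mu)$) to be far from $a_0$, contradicting step (3). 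The main obstacle I anticipate is step (2) and the coupling in step (4): unlike in Lemma \ref{GLemma}, $\overline k\mu$ is not annihilating, so one must carefully show that on $\mathcal E(g_j\mu,1\le j\le N)$ the transform $\mathcal C(\overline k\mu)$ is still controlled — this requires using Lemma \ref{lemmaBasic7}-style uniform convergence of $r_n\mathcal C(\overline k\mu)$ together with the fact that $r_n(\lambda_0)\to$ (the evaluation), so that on the constructed small-distance set $E_N$ the value of $\mathcal C(\overline k\mu)$ stays near $a_0$, not near $0$. Equivalently, one may bypass this by applying Theorem \ref{FCharacterization} directly: it suffices to show $\overline{\lim}_{\delta\to0}\gamma(B(\lambda_0,\delta)\cap\mathcal E(\nu_j,1\le j\le N))/\delta = 0$ for $N$ large, which follows because $\mathcal E(\nu_j,1\le j\le N)\subset\{|\mathcal C(\overline k\mu)(z)-a_0|>|a_0|/2\}$ for such $N$ (since on $\mathcal E$ all the $\mathcal C(g_j\mu)$ are $\le 1/N$, and a bpe would be destroyed if a positive-density set of points had $\mathcal C(\overline k\mu)$ near $a_0$ while the annihilators vanish there) together with step (3).
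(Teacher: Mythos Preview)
Your proposal circles around the right tools but misses the one-line trick that makes everything work, and as a result steps (1), (2), and (4) all have genuine gaps.

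The paper does not work with $\overline k\mu$ at all. Instead, it sets
\[
g(z) = (z-\lambda_0)\,\overline{k(z)}.
\]
This single multiplication fixes every difficulty you encountered. First, $g\perp R^t(K,\mu)$: for $r\in\mathrm{Rat}(K)$ one has $\int r g\,d\mu = \int (z-\lambda_0)r(z)\overline{k(z)}\,d\mu = \big((z-\lambda_0)r\big)(\lambda_0)=0$. So $g$ is an honest annihilator, and Lemma \ref{GLemma} applies directly without the ``coupling'' problem you flag in step (4). Second, the factor $|z-\lambda_0|$ automatically gives $\Theta_{g\mu}(\lambda_0)=0$, since $\int_{B(\lambda_0,\delta)}|g|\,d\mu\le \delta\int_{B(\lambda_0,\delta)}|k|\,d\mu=o(\delta)$; your step (1), by contrast, asserts $\Theta_{\overline k\mu}(\lambda_0)=0$ with no justification, and there is none in general. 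Third, the principal value is explicit and nonzero: $\mathcal C_\epsilon(g\mu)(\lambda_0)=\int_{|z-\lambda_0|>\epsilon}\overline{k}\,d\mu\to\int\overline{k}\,d\mu=1$, whereas your step (2) never produces a computable nonzero value for $\mathcal C(\overline k\mu)(\lambda_0)$.

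With $g$ in hand, Lemma \ref{CauchyTLemma} gives $\gamma$-continuity of $\mathcal C(g\mu)$ at $\lambda_0$ with value $1$, and Lemma \ref{GLemma} (with $O=\mathbb C$, $a=1$) immediately yields $\gamma(B(\lambda_0,\delta)\cap\mathcal F)/\delta\to 0$. Your attempted inclusion $\mathcal E(g_j\mu,1\le j\le N)\subset\{|\mathcal C(\overline k\mu)-a_0|>|a_0|/2\}$ is exactly what fails without $\overline k$ being an annihilator; once $g$ \emph{is} an annihilator, Corollary \ref{NRBUnique} lets you take $g$ among the $g_j$, and then this inclusion is trivial.
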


\begin{proof}
Let $k\in L^{s}(\mu)$ be as in Lemma \ref{BPELemma}.
 If $g(z) = (z-\lambda_0) \bar k(z)$, then $g\perp R^t(K,\mu)$, $\nu = g\mu$ satisfies the assumptions of Lemma \ref{CauchyTLemma}, and $\mathcal C(g\mu)(\lambda_0) = 1$. Thus, by Lemma \ref{CauchyTLemma}, we get
\[
\ \lim_{\delta\rightarrow 0} \dfrac{\gamma(B(\lambda_0,\delta)\cap \{|\mathcal C(g\mu) - 1| > \epsilon\}))}{\delta} = 0
 \] 
where $\epsilon< \frac{1}{2}$. Now \eqref{BPETheoremEq1} follows from Lemma \ref{GLemma}. 
\end{proof}
\smallskip

The following Lemma is from Lemma B in \cite{ARS09}.
\smallskip

\begin{lemma} \label{lemmaARS}
There are absolute constants $\epsilon _1 > 0$ and $C_9 < \infty$ with the
following property. If $\delta > 0$ and $E \subset  B(0, \delta)$ with 
$\gamma(E) < \delta\epsilon_1$, then
\[
\ |p(\lambda)| \le \dfrac{C_9}{\pi \delta^2} \int _{B(0, \delta)\setminus E} |p|\, d\mathcal L^2
\]
for all $\lambda$ in $B(0, \frac{\delta}{2})$ and all analytic polynomials $p$.
\end{lemma}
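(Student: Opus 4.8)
The plan is to dilate by $z\mapsto z/\delta$ (which sends $E$ to $E/\delta$, $p$ to $p(\delta\,\cdot)$ and transforms both sides accordingly) so as to reduce to $\delta=1$; thus it suffices to produce absolute constants $\epsilon_1>0$, $C_9<\infty$ with $|p(\lambda)|\le\frac{C_9}{\pi}\int_{B(0,1)\setminus E}|p|\,d\mathcal L^2$ whenever $\lambda\in B(0,1/2)$ and $\gamma(E)<\epsilon_1$. Two elementary ingredients will be used repeatedly for such an $E\subset B(0,1)$: (i) the area bound $\mathcal L^2(E)\le 4\pi\gamma(E)^2$ (Lemma \ref{lemmaBasic0}(2)); and (ii) the rearrangement estimate $\int_E\frac{d\mathcal L^2(w)}{|w-z|}\le 2\sqrt{\pi\,\mathcal L^2(E)}\le 4\pi\gamma(E)$ for every $z\in\mathbb C$, which gives $\|\mathcal C(u\chi_E\mathcal L^2)\|_{L^\infty(\mathbb C)}\le 4\pi\gamma(E)\|u\|_\infty$ for any bounded $u$. (Notably, only these classical facts are needed, not Tolsa's theorem.)

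Fix $\lambda\in B(0,1/2)$ and a cutoff $\varphi\in C_c^\infty(B(0,7/8))$ with $\varphi\equiv 1$ on $B(0,3/4)$ and $\|\bar\partial\varphi\|_\infty\le C$. Since $\bar\partial p=0$, the Cauchy--Green formula applied to $\varphi p$ gives $p(\lambda)=-\frac1\pi\int\frac{p(z)\,\bar\partial\varphi(z)}{z-\lambda}\,d\mathcal L^2(z)=\int p\,w_\lambda\,d\mathcal L^2$, where $w_\lambda:=-\bar\partial\varphi/(\pi(\,\cdot-\lambda))$ is supported in the annulus $A_1:=\{3/4\le|z|\le 7/8\}$ and $\|w_\lambda\|_\infty\le C$ (because $|z-\lambda|\ge 1/4$ on $A_1$). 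Writing $p(\lambda)=\int_{A_1\setminus E}p\,w_\lambda\,d\mathcal L^2+\int p\,d\nu$ with $\nu:=w_\lambda\chi_{A_1\cap E}\mathcal L^2$, the first term is at most $C\int_{B(0,1)\setminus E}|p|$, so the problem is reduced to the ``bad'' term $\int p\,d\nu$.

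To treat $\int p\,d\nu$ I would \emph{sweep} $\nu$ off $E$: choose $\psi\in C_c^\infty$ equal to $1$ on a neighbourhood of $A_1$ and supported in an annulus $A_2$ whose closure lies in $B(0,1)$ at distance $\ge c>0$ from both $\partial B(0,1)$ and $A_1$, with $\|\bar\partial\psi\|_\infty\le C$. Since $p\psi\in C_c^\infty$, the Cauchy--Green formula gives $p(z)=-\frac1\pi\int\frac{p(\xi)\,\bar\partial\psi(\xi)}{\xi-z}\,d\mathcal L^2(\xi)$ for $z\in\operatorname{supp}\nu$; integrating against $\nu$ and applying Fubini (legitimate because $|\xi-z|\ge c$ there) turns this into $\int p\,d\nu=\frac1\pi\int_{\operatorname{supp}\bar\partial\psi}p(\xi)\,\bar\partial\psi(\xi)\,\mathcal C(\nu)(\xi)\,d\mathcal L^2(\xi)$. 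By (ii), $\|\mathcal C(\nu)\|_{L^\infty(\mathbb C)}\le C\gamma(E)$, so $|\int p\,d\nu|\le C\gamma(E)\int_{\operatorname{supp}\bar\partial\psi}|p|$; since $\operatorname{supp}\bar\partial\psi$ sits well inside $B(0,1)$, the sub-mean-value inequality bounds $|p|$ there by $C\int_{B(0,1)}|p|$, and then (i) gives $\int_{(\operatorname{supp}\bar\partial\psi)\cap E}|p|\le C\gamma(E)^2\int_{B(0,1)}|p|$. Altogether $|p(\lambda)|\le C\,I_0+C\gamma(E)^3\,I$, where $I_0:=\int_{B(0,1)\setminus E}|p|$ and $I:=\int_{B(0,1)}|p|=I_0+\int_E|p|$.

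It remains to absorb $\int_E|p|$. For $z\in E$ with $d(z):=\operatorname{dist}(z,\partial B(0,1))\ge\gamma(E)/\epsilon_1$ one has $\gamma(E\cap B(z,d(z)))\le\gamma(E)<d(z)\epsilon_1$, so Lemma \ref{lemmaARS} applied on $B(z,d(z))$ (an induction on scale) gives $|p(z)|\le\frac{C}{d(z)^2}I_0$, and then (i) yields $\int_{\{z\in E:\,d(z)\ge\gamma(E)/\epsilon_1\}}|p|\le C\epsilon_1^2 I_0$; the remaining part of $E$, contained in a shell of width $\sim\gamma(E)/\epsilon_1$ about $\partial B(0,1)$ and of area $\le 4\pi\gamma(E)^2$, is handled by the same dyadic-scale estimates organised via a Whitney decomposition of $B(0,1)$ by distance to the boundary. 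This gives $\int_E|p|\le\frac12 I_0$ once $\epsilon_1$ is small, whence $|p(\lambda)|\le C_9 I_0$ with $C_9$ absolute. I expect this last step to be the main obstacle: the naive estimate of $\int_E|p|$ over the boundary shell would invoke $\sup_{\overline{B(0,1)}}|p|$, which is not controlled by $\int_{B(0,1)\setminus E}|p|$, so one must consistently replace every pointwise bound on $|p|$ by a local area average — and it is exactly this that forces the scale-recursion/Whitney bookkeeping and pins down the numerical value of $\epsilon_1$.
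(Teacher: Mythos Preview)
The paper does not supply a proof of this lemma; it simply records that the result is Lemma~B of \cite{ARS09}. So there is no ``paper's own proof'' to compare against, and your attempt stands on its own.

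Your Cauchy--Green reproduction and sweeping steps are fine and do yield
\[
|p(\lambda)|\le C\,I_0+C\,\gamma(E)^3 I,\qquad I_0=\int_{B(0,1)\setminus E}|p|\,d\mathcal L^2,\quad I=I_0+\int_E|p|\,d\mathcal L^2,
\]
for $\lambda\in B(0,1/2)$. The genuine gap is the absorption of $\int_E|p|$. You write ``Lemma~\ref{lemmaARS} applied on $B(z,d(z))$ (an induction on scale)'', but this is the very statement you are proving; no inductive parameter has been identified, and the lemma is a single uniform statement over all $\delta>0$, so invoking it at a smaller radius is circular, not inductive. The subsequent ``Whitney decomposition'' remark for the boundary shell is a placeholder, not an argument, and you yourself flag this step as the main obstacle. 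As it stands the proof is incomplete.

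Two remarks on why the absorption is genuinely delicate and why your outline does not close. First, for $z\in E$ near $\partial B(0,1)$ one has $d(z)\to 0$, so even if the lemma were available at scale $d(z)$ the resulting pointwise bound $|p(z)|\le C\,d(z)^{-2}I_0$ blows up, and the area of $E$ in that shell, $\le 4\pi\gamma(E)^2$, is not small enough by itself to compensate without a further mechanism. Second, your bound $|p(\lambda)|\le CI_0+C\gamma(E)^3I$ is only established for $\lambda\in B(0,1/2)$; to control $\int_{E}|p|$ you would need it for $\lambda$ throughout $B(0,1)$, and pushing the cutoffs outward makes the constants blow up in exactly the way that defeats the absorption. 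If you want to salvage this route you must set up an honest iteration (e.g.\ prove an a~priori inequality with an explicit constant depending on an auxiliary parameter and then optimise), or else consult the direct argument in \cite{ARS09}, which avoids this bootstrap altogether.
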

\smallskip

The theorem below provides  an important relation between $\text{abpe}(R^t(K,\mu))$ and $\mathcal R$. 

\begin{theorem}\label{ABPETheorem} The following property holds:
 \begin{eqnarray}\label{ABPETheoremEq1}
 \ \text{abpe}(R^t(K,\mu)) \approx \text{int}(K) \cap \mathcal R,~ \gamma-a.a..
 \end{eqnarray}
More precisely, the following statements are true:

(1) If $\lambda_0 \in \text{int}(K)$ and there exists $N\ge 1$ such that 
 \begin{eqnarray}\label{ABPETheoremEq2}
 \ \lim_{\delta \rightarrow 0} \dfrac{ \gamma (\mathcal E(g_j\mu, 1 \le j \le N) \cap B(\lambda_0, \delta))}{\delta} = 0,
 \end{eqnarray}
then $\lambda_0\in \text{abpe}(R^t(K,\mu))$.

(2)
\[
 \  \text{abpe}(R^t(K,\mu)) \subset \text{int}(K) \cap \mathcal R,~ \gamma-a.a..
 \]
\end{theorem}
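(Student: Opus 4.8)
The plan is to prove the two numbered statements, since \eqref{ABPETheoremEq1} follows from them together with Theorem \ref{FCharacterization}: statement (1) says that a point of $\text{int}(K)$ at which \eqref{FCEq4} fails for some $N$ lies in $\text{abpe}(R^t(K,\mu))$, and by Theorem \ref{FCharacterization} such points are exactly the points of $\text{int}(K)\cap\mathcal R$ up to a set of zero analytic capacity; statement (2) is the reverse inclusion. So the real content is (1) and (2).

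For (1), fix $\lambda_0\in\text{int}(K)$ and $N$ with \eqref{ABPETheoremEq2}, and pick $\delta_0$ small so that $B(\lambda_0,2\delta_0)\subset\text{int}(K)$. The idea is to exploit that off the ``bad'' set $\mathcal E(g_j\mu,1\le j\le N)$ one of $|\mathcal C(g_j\mu)(z)|$, $1\le j\le N$, exceeds $\tfrac1N$, so the Cauchy transform machinery is available on a set of nearly full capacitary density. Concretely, I would cover $B(\lambda_0,\delta)$ by the set $E_\delta:=\mathcal E(g_j\mu,1\le j\le N)\cap B(\lambda_0,\delta)$, which by \eqref{ABPETheoremEq2} has $\gamma(E_\delta)=o(\delta)$, and its complement inside $B(\lambda_0,\delta)$. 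On $B(\lambda_0,\delta)\setminus E_\delta$ we have control of $\mathcal C(g_j\mu)$ from below for some $j$; combined with $r(z)\mathcal C(g_j\mu)(z)=\mathcal C(rg_j\mu)(z)$ and the weak-$(1,1)$ bound \eqref{WeakOneOne} of Theorem \ref{TTolsa}(3), this should yield, for $r\in\text{Rat}(K)$ and $\lambda\in B(\lambda_0,\delta/2)$, an estimate of the shape $|r(\lambda)|\lesssim \frac{1}{\delta^2}\int_{B(\lambda_0,\delta)\setminus E_\delta}|r|\,d\mathcal L^2$ by running the argument of Lemma \ref{lemmaARS} with $E=E_\delta$ (note $\gamma(E_\delta)<\delta\epsilon_1$ for $\delta$ small), after first replacing $r$ locally by a bounded analytic function obtained from the annihilator identity. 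Then Lemma \ref{lemmaBasic0}(2) gives $\mathcal L^2(E_\delta)\le 4\pi\gamma(E_\delta)^2=o(\delta^2)$, so $\int_{B(\lambda_0,\delta)\setminus E_\delta}|r|\,d\mathcal L^2$ is comparable to $\int_{B(\lambda_0,\delta)}|r|\,d\mathcal L^2\lesssim \|r\|_{L^t(\mu)}$ via a further sub-mean-value / area-integral bound using that $\lambda_0$ is interior to $K$ and $\mu$ charges a neighborhood (here one uses $\partial K\subset\text{spt}(\mu)$ and $K=\sigma(S_\mu)$, plus that $\mathcal C(g_{j_1}\mu)$ is a nonzero analytic function near $\lambda_0$, as in the proof of Proposition \ref{NFSetIsBig}(2)). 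This produces a uniform bound $|r(\lambda)|\le M\|r\|_{L^t(\mu)}$ on $B(\lambda_0,\delta/2)$, i.e. $\lambda_0\in\text{abpe}(R^t(K,\mu))$.

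For (2), let $\lambda_0\in\text{abpe}(R^t(K,\mu))$. Then $\lambda_0\in\text{bpe}(R^t(K,\mu))$, so by Corollary \ref{BPETheorem} we get $\lim_{\delta\to0}\gamma(B(\lambda_0,\delta)\cap\mathcal F)/\delta=0$; in particular $\lambda_0\notin\mathcal F$ up to zero analytic capacity, hence $\lambda_0\in\mathcal R$, $\gamma$-a.a.. Since $\text{abpe}$ is open and contained in $\text{int}(K)$ (analytic continuation of point evaluations, as recalled in the introduction), this gives $\text{abpe}(R^t(K,\mu))\subset\text{int}(K)\cap\mathcal R$, $\gamma$-a.a., which is (2). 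Combining (1), (2) with Theorem \ref{FCharacterization} (which characterizes $\mathcal R\cap\text{int}(K)$, $\gamma$-a.a., by the failure of \eqref{FCEq4} for some $N$) yields \eqref{ABPETheoremEq1}.

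The main obstacle I expect is step (1): transferring the capacitary smallness of $E_\delta$ into an honest pointwise bound on rational functions. The delicate points are (i) producing, from $r\in\text{Rat}(K)$ and an annihilator $g_{j}$ with $|\mathcal C(g_j\mu)|$ bounded below off $E_\delta$, a genuinely bounded analytic surrogate to which Lemma \ref{lemmaARS} applies, handling the fact that the lower bound on $|\mathcal C(g_j\mu)|$ only holds off $E_\delta$ and that different $j$'s may be needed at different points of $B(\lambda_0,\delta)\setminus E_\delta$; and (ii) the final passage from the area integral of $|r|$ over $B(\lambda_0,\delta)$ to $\|r\|_{L^t(\mu)}$, which requires knowing $\mu$ is not too thin near $\lambda_0$ — this is where the hypothesis $\lambda_0\in\text{int}(K)=\text{int}(\sigma(S_\mu))$ is essential and where one invokes that $\mathcal C(g_{j_1}\mu)$ is a nontrivial analytic function on $B(\lambda_0,\delta)$.
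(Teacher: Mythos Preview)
Your plan for (1) has the right skeleton (Lemma~\ref{lemmaARS} with $E_\delta=\mathcal E(g_j\mu,1\le j\le N)\cap B(\lambda_0,\delta)$), but both of the obstacles you flag are illusory and your proposed workarounds lead you away from the argument. First, there is no need to replace $r$ by a ``bounded analytic surrogate'': $r\in\mathrm{Rat}(K)$ is already analytic on $B(\lambda_0,\delta)\subset\mathrm{int}(K)$, so Lemma~\ref{lemmaARS} applies to $r$ itself. Second, the passage to $\|r\|_{L^t(\mu)}$ does \emph{not} go through any thickness property of $\mu$ near $\lambda_0$. The paper's trick is: on $B(\lambda_0,\delta)\setminus E_\delta$ one has $\max_{1\le j\le N}|\mathcal C(g_j\mu)(z)|>1/N$, hence
\[
|r(z)|\le N\max_j |r(z)\,\mathcal C(g_j\mu)(z)|=N\max_j|\mathcal C(rg_j\mu)(z)|\le N\sum_{j=1}^N|\mathcal C(rg_j\mu)(z)|,
\]
and then, by Fubini and $\int_{B(\lambda_0,\delta)}|z-w|^{-1}\,d\mathcal L^2(z)\le C\delta$,
\[
\int_{B(\lambda_0,\delta)}|\mathcal C(rg_j\mu)|\,d\mathcal L^2\le C\delta\,\|rg_j\|_{L^1(\mu)}\le C\delta\,\|g_j\|_{L^s(\mu)}\|r\|_{L^t(\mu)}.
\]
Combining with Lemma~\ref{lemmaARS} gives $|r(\lambda)|\le C(N,\delta)\|r\|_{L^t(\mu)}$ on $B(\lambda_0,\delta/2)$. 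No appeal to $\mu$-density, no patching over different $j$'s, no weak-$(1,1)$ bound is needed.

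Your argument for (2) has a genuine gap. From Corollary~\ref{BPETheorem} you get, for each $\lambda_0\in\mathrm{abpe}(R^t(K,\mu))$, that $\lim_{\delta\to0}\gamma(B(\lambda_0,\delta)\cap\mathcal F)/\delta=0$. You then assert ``in particular $\lambda_0\notin\mathcal F$ up to zero analytic capacity.'' This does not follow: a pointwise zero-density statement at every point of a set does not by itself force the set to have zero analytic capacity (there is no such density theorem for $\gamma$ available here). What must be shown is $\gamma(\mathrm{abpe}(R^t(K,\mu))\cap\mathcal F)=0$, and the paper proves this by a completely different route: assuming a compact $E\subset G\cap\mathcal F$ with $\gamma(E)>0$ (where $G$ is a component of $\mathrm{abpe}$), Lemma~\ref{hFunction} supplies a function $f\in R^t(K,\mu)\cap L^\infty(\mu)$ analytic on $E^c$ with $f'(\infty)=\gamma(E)\ne 0$ and $f(z)\mathcal C(g_j\mu)(z)=\mathcal C(fg_j\mu)(z)$ off $E$. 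The point-evaluation structure on $G$ forces $f$ to agree on $G\setminus E$ with its analytic extension $f_0$ to $G$; since $f$ is already analytic on $\mathbb C_\infty\setminus E\supset\mathbb C_\infty\setminus G$, this makes $f_0$ entire with $f_0(\infty)=0$, contradicting $f'(\infty)\ne 0$. This step uses the machinery of \S3.3 (Theorem~\ref{acTheorem} and Lemma~\ref{hFunction}) in an essential way; it cannot be replaced by the density statement of Corollary~\ref{BPETheorem}.
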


\begin{proof}
(1): If $\lambda_0 \in \text{int}(K)$  satisfies \eqref{ABPETheoremEq2}, then we  choose $\delta > 0$ small enough such that $B(\lambda_0, \delta) \subset int(K)$ and 
$\gamma (E:= (\mathcal E(g_j\mu, 1 \le j \le N) \cap B(\lambda_0, \delta)))\le \epsilon_1 \delta$, where $\epsilon_1$ is from Lemma \ref{lemmaARS}. Hence, using Lemma \ref{lemmaARS}, we conclude
 \[
 \ \begin{aligned}
 \ |r(\lambda )| \le & \dfrac{C_9}{\pi \delta^2} \int _{B(\lambda_0, \delta) \setminus E} |r(z)| d\mathcal L^2(z) \\
 \ \le & \dfrac{NC_9}{\pi \delta^2} \int _{B(\lambda_0, \delta)} |r(z)| \max_{1\le j \le N} |\mathcal C(g_j\mu)(z)| d\mathcal L^2(z) \\
\ \le & \dfrac{NC_9}{\pi \delta^2} \int _{B(\lambda_0, \delta)} \max_{1\le j \le N} |\mathcal C(rg_j\mu)(z)| d\mathcal L^2(z) \\
\ \le & \dfrac{NC_9}{\pi \delta^2} \sum_{j = 1}^N\int _{B(\lambda_0, \delta)}  |\mathcal C(rg_j\mu)(z)| d\mathcal L^2(z) \\
\ \le & \dfrac{NC_9}{\pi \delta^2} \sum_{j = 1}^N\int \int _{B(\lambda_0, \delta)} \left |\dfrac{1}{z-w} \right | d\mathcal L^2(z) |r(w)||g_j (w)| d\mu (w) \\
\ \le & \dfrac{NC_{10}}{\delta} \sum_{j = 1}^N \|g_j\|_{L^{s}(\mu)} \|r\|_{L^{t}(\mu)}
 \ \end{aligned}
 \]
for all $\lambda$ in $B(\lambda_0, \frac{\delta}{2})$ and all $r\in Rat(K)$. This implies that $\lambda_0\in abpe(R^t(K,\mu))$.

(2): Let $E \subset G\cap \mathcal F$ be a compact subset with $\gamma(E) > 0$, where $G$ is a connected component of $abpe(R^t(K,\mu))$. By Lemma \ref{hFunction}, there exists $f\in R^t(K,\mu)\cap L^\infty(\mu)$ that is bounded analytic on $E^c$ such that  
 \[
 \ \|f\|_{L^\infty(\mu)} \le C_6,~ f(\infty) = 0,~ f'(\infty) = \gamma(E),
 \]
and $f(z) \mathcal C(g_j\mu) (z) = \mathcal C(fg_j\mu) (z), ~\gamma|_{E^c}-a.a.$ for $j \ge 1$. Let $r_n\in Rat(K)$ such that $\|r_n-f\|_{L^t(\mu)} \rightarrow 0$. Hence, $r_n$ uniformly tends to an analytic function $f_0$ on compact subsets of $G$ and $\frac{f-f_0(\lambda)}{z-\lambda} \in R^t(K,\mu)$ for $\lambda\in G$. Therefore, 
 \[
 \ f(z) \mathcal C (g_j\mu)(z) = \mathcal C (fg_j\mu)(z) = f_0(z) \mathcal C (g_j\mu)(z),~z\in G\setminus E,~ \gamma-a.a.
 \]
 For $B(\lambda, \delta) \subset G\setminus E$, from Corollary \ref{BPETheorem}, we see that $\gamma(B(\lambda, \delta)\cap \mathcal R) > 0$. Hence $f(z) = f_0(z)$ for $z\in G\setminus E$ since $\mathcal R \subset \mathcal N(g_j\mu, 1\le j < \infty)$.  Thus, the function $f_0(z)$ can be analytically extended to $\mathbb C$ and $f_0(\infty) = 0$. So $f_0$ is a zero function. This contradicts to $f'(\infty) \ne 0$. 

\eqref{ABPETheoremEq1} now follows from Theorem \ref{FCharacterization}.
\end{proof}
\smallskip

\begin{remark}\label{ABPERemark}
For Theorem \ref{ABPETheorem} (1), we do not need to assume that $K = \sigma(S_\mu)$. That is, if $K \subset K_0$, $R^t(K,\mu) = R^t(K_0,\mu)$, and $\lambda_0\in \text{int}(K_0)$ satisfies \eqref{ABPETheoremEq2}, then $\lambda_0\in \text{abpe}(R^t(K_0,\mu))$.
\end{remark}

\bigskip

\section{Further properties for $\mathcal F$}
\bigskip

In this section, we prove the following property of the non-removable boundary $\mathcal F$ that will be used to prove Theorem \ref{IrreducibilityTheorem}.

\begin{theorem}\label{Lemma3} If $S_\mu$ on $R^t(K,\mu)$ is irreducible, then for $\lambda \in K$,
 \[
 \ \underset{\delta \rightarrow 0}{\overline{\lim}}\dfrac{\gamma(B(\lambda, \delta)\setminus \mathcal F)}{\delta} = \underset{\delta \rightarrow 0}{\overline{\lim}}\dfrac{\gamma(B(\lambda, \delta)\cap \mathcal R)}{\delta} > 0.
 \] 
\end{theorem}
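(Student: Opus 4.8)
The plan is to argue by contradiction using the characterization of $\mathcal{F}$ in terms of the sets $\mathcal{E}(\nu_j, 1\le j\le N)$ from Theorem \ref{FCharacterization}, combined with the analytic-capacity-density results of Theorem \ref{DensityCorollary}. Suppose there were a point $\lambda_0 \in K$ with
\[
\underset{\delta\rightarrow 0}{\overline{\lim}}\dfrac{\gamma(B(\lambda_0,\delta)\cap\mathcal{R})}{\delta} = 0.
\]
Since $\mathcal{R}\cup\mathcal{F} = \mathbb{C}$ up to zero analytic capacity, this says $B(\lambda_0,\delta)\cap\mathcal{F}$ carries essentially all the analytic capacity of $B(\lambda_0,\delta)$ for small $\delta$, i.e. $\lambda_0$ sits "deep inside" $\mathcal{F}$ in the capacitary-density sense. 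I would then aim to produce a non-trivial characteristic function in $R^t(K,\mu)$, contradicting irreducibility.

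The mechanism for producing such a characteristic function is Lemma \ref{hFunction} together with a limiting/exhaustion argument. First I would fix a small square $S$ containing $\lambda_0$ on which $\gamma(S\cap\mathcal{F})$ is comparable to $\gamma(S)$ (possible by the density hypothesis), and take a compact $E\subset S\cap\mathcal{F}$ with $\gamma(E)>0$. Lemma \ref{hFunction} yields $f\in R^t(K,\mu)\cap L^\infty(\mu)$ with $f = \mathcal{C}(\eta)$ off $E$, $\|f\|_{L^\infty(\mu)}\le C_6$, $f(\infty)=0$, $f'(\infty)=\gamma(E)$, and the key intertwining $\mathcal{C}(\eta)\mathcal{C}(g_j\mu) = \mathcal{C}(fg_j\mu)$ off $E$. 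The plan is to iterate this over a sequence of squares covering a neighborhood of $\lambda_0$ inside $\mathcal{F}$, using the subadditivity \eqref{Semiadditive} and Lemma \ref{lemmaBasic0}(3) (bounded overlap) to glue the resulting functions, and extract a weak$^*$ limit $\chi$ in $R^t(K,\mu)\cap L^\infty(\mu)$. The point is that since $f$ vanishes (in the $\mathcal{C}(\eta)$ sense) only on a set contained in $\mathcal{F}$ while being analytically determined off it, one can arrange that the limit function $\chi$ is the Cauchy transform of a measure supported where the density of $\mathcal{R}$ vanishes near $\lambda_0$; by the argument in the proof of Theorem \ref{ABPETheorem}(2) — that a bounded function in $R^t(K,\mu)$ which agrees with an entire function off a $\gamma$-null-density-of-$\mathcal{R}$ set must be constant — one deduces $\chi$ is a non-trivial idempotent, hence a characteristic function, contradicting irreducibility.

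An alternative and cleaner route, which I would pursue in parallel, is to avoid gluing and instead work locally: if $\lambda_0$ has a neighborhood $B(\lambda_0,\delta)$ with $\gamma(B(\lambda_0,\delta)\setminus\mathcal{F})$ negligible for all small $\delta$, apply Lemma \ref{hFunction} directly to $E = \overline{B(\lambda_0,\delta/2)}\cap\mathcal{F}$ (so $\gamma(E)$ is comparable to $\delta$) to get $f\in R^t(K,\mu)\cap L^\infty(\mu)$, bounded analytic off $E$, with $f'(\infty) = \gamma(E)\ne 0$. Choose $r_n\in\mathrm{Rat}(K)$ with $r_n\to f$ in $L^t(\mu)$; these converge locally uniformly on each component of $\mathrm{abpe}(R^t(K,\mu))$ and, more to the point, on the large open set $B(\lambda_0,\delta)\setminus E$ where $\gamma(B(\lambda_0,\delta)\setminus\mathcal{F})$ is small one shows $f$ extends analytically. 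Then $f$, being bounded on $\mathbb{C}$ minus a set of small capacity and analytic there, extends to an entire function vanishing at $\infty$, hence $f\equiv 0$, contradicting $f'(\infty)\ne 0$. This uses Theorem \ref{DensityCorollary} to control the capacity of $\mathcal{R}$ versus $\mathcal{F}$ and Lemma \ref{lemmaBasic7} for the convergence of $r_n\mathcal{C}(g_j\mu)$.

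The main obstacle I anticipate is making rigorous the step "$f$ extends to an entire function": one needs to argue that a bounded function in $R^t(K,\mu)\cap L^\infty(\mu)$ whose only singularities lie in a set that has vanishing $\gamma$-density at $\lambda_0$ is genuinely analytic across $\lambda_0$, and iterate this over a neighborhood. This requires pairing the density estimate \eqref{DensityCorollaryEq1} with a removability/localization statement for Cauchy transforms — roughly, that a compact set with zero analytic capacity density everywhere relative to the ambient capacity is removable for the relevant class — together with careful use of irreducibility to rule out the remaining characteristic-function possibility. Getting the quantitative interplay between $\gamma(B(\lambda_0,\delta)\cap\mathcal{F})$, $\gamma(E)$ from Lemma \ref{hFunction}, and the constant $c_5$ of Theorem \ref{acTheorem} to close the contradiction for \emph{all} small $\delta$ (not just a subsequence) is where the real bookkeeping lies.
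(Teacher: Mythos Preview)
Your overall strategy---argue by contradiction and manufacture a non-trivial characteristic function in $R^t(K,\mu)$---matches the paper's, and you correctly single out Lemma \ref{hFunction} as a key ingredient. But there is a genuine gap: the mechanism you describe for actually producing the characteristic function does not work, and the missing ideas are substantial.

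The paper's proof proceeds as follows. From the failure of the density condition at $\lambda_0$ one invokes \emph{Thomson's coloring scheme} (Lemma \ref{LightRoute}) to conclude that there is an infinite sequence of heavy barriers $\{\gamma_n\}$ around a small square containing $\lambda_0$; here ``heavy'' means each square on the barrier has $\gamma(S\cap\mathcal F)$ comparable to its side length. One then sets $f_n=\chi_{G_n}$ with $G_n$ the region bounded by $\gamma_n$ and runs the \emph{standard Vitushkin approximation scheme}: localize $f_n$ by a partition of unity $\{\varphi_{ij}\}$, and for each square touching $\gamma_n$ use Lemma \ref{hFunction} (applied inside the heavy square, in two disjoint sub-squares) to build correction functions $g_{ij},h_{ij}\in R^t(K,\mu)\cap L^\infty(\mu)$ matching the first two Laurent coefficients of $T_{\varphi_{ij}}f_n$. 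The resulting $f_l$ lies in $R^t(K,\mu)\cap L^\infty(\mu)$ with uniform bounds, and a weak$^*$ limit $\hat f$ satisfies $\mathcal C(\hat f g\mu)=\chi_G\mathcal C(g\mu)$ off $\partial G$; Lemma \ref{characterizationF} then forces $\hat f=\chi_\Delta$, and one checks $\chi_\Delta$ is non-trivial.

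Your first approach (``iterate Lemma \ref{hFunction} over a sequence of squares and glue'') does not supply a candidate region $G$ nor a target function to approximate; without the barrier construction there is no reason the glued object should converge to a characteristic function rather than some other bounded analytic datum. Your second approach fails outright: the function $f=\mathcal C(\eta)$ from Lemma \ref{hFunction} is analytic off a compact $E\subset\mathcal F$ with $\gamma(E)$ \emph{comparable to $\delta$}, not small, so there is no removability argument available to force $f$ entire---indeed $f'(\infty)=\gamma(E)\ne 0$ precisely because $E$ is not removable. The density hypothesis concerns $\mathcal R$, not the singular support of $f$, and those are complementary. The real work lies in the Thomson/Vitushkin machinery you have not invoked.
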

\smallskip

We first prove the following lemma.

\begin{lemma}\label{characterizationF}
Suppose that $S_\mu$ on $R^t(K,\mu)$ is pure and $G$ is a bounded open connected subset. If $f\in R^t(K, \mu)\cap L^\infty(\mu)$ satisfies
 \[
 \ \mathcal C(fg\mu)(z) = \chi _G(z) \mathcal C(g\mu)(z), ~ \gamma |_{(\partial G)^c}-a.a.
 \]
for $g\perp R^t(K, \mu)$,
then there is a Borel set $\Delta$ such that $G \subset \Delta \subset \overline{G}$ and $f= \chi_{\Delta}$.
\end{lemma}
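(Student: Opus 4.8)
The plan is to show that $f$ is a characteristic function by proving that $f(f-1) = 0$ as an element of $R^t(K,\mu)$, and then to identify its support. First I would note that, since $R^t(K,\mu)\cap L^\infty(\mu)$ is an algebra, both $f$ and $f^2$ lie in $R^t(K,\mu)\cap L^\infty(\mu)$, so $f^2 - f \in R^t(K,\mu)\cap L^\infty(\mu)$ as well. The key computation is to apply the hypothesis twice: for $g\perp R^t(K,\mu)$ we have $\mathcal C(fg\mu) = \chi_G \mathcal C(g\mu)$ off $\partial G$, and since $fg\mu$ is also (a scalar multiple of) an annihilating measure — more precisely, for $r\in\mathrm{Rat}(K)$, $\int r f g\, d\mu = 0$ because $rf\in R^t(K,\mu)$ — we may substitute $fg$ in place of $g$ to get $\mathcal C(f^2 g\mu) = \chi_G\,\mathcal C(fg\mu) = \chi_G^2\,\mathcal C(g\mu) = \chi_G\,\mathcal C(g\mu) = \mathcal C(fg\mu)$, $\gamma|_{(\partial G)^c}$-a.a. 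Hence $\mathcal C((f^2-f)g\mu)(z) = 0$ off $\partial G$, and since $\mathcal L^2(\partial G)$ need not be zero I would instead argue that $\gamma(\partial G\cap\{\,\cdot\,\})$ contributes nothing: actually $\mathcal C((f^2-f)g\mu) = 0$ $\gamma$-a.a. on $(\partial G)^c$ already forces $(f^2-f)g\mu = 0$ as a measure (a finite complex measure whose Cauchy transform vanishes $\gamma$-a.a. outside a set of zero area must be the zero measure, using $\bar\partial\mathcal C(\nu) = -\pi\nu$ distributionally together with $\mathcal L^2(\partial G)$ being handled by the fact that $(f^2-f)g\mu \ll \mu$ and $\mu(\partial G)$ enters only through its Cauchy transform). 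Running this over a dense sequence $\{g_n\}$ of annihilators gives $(f^2-f)\mu \perp$ nothing useful directly; rather, since $f^2-f\in R^t(K,\mu)$ and $\int (f^2-f)\,\overline{(f^2-f)}\, g_n\, d\mu$-type pairings vanish, I would conclude $f^2 = f$ $\mu$-a.a. by testing against a suitable annihilator or, more cleanly, by observing $f^2-f\in R^t(K,\mu)$ annihilates all of $R^t(K,\mu)^\perp$ against itself — so $f^2-f = 0$ since $S_\mu$ is pure.

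The cleanest route for the algebraic step, which I would actually write, is: $f^2-f\in R^t(K,\mu)$, and for every $g\perp R^t(K,\mu)$ we have shown $\mathcal C((f^2-f)g\mu) = 0$ $\gamma$-a.a.; but by Corollary \ref{acZero} and the identity $\bar\partial\mathcal C(\nu) = -\pi\nu$, a measure of the form $h\mu$ with $h\in R^t(K,\mu)$ whose Cauchy transform against every annihilator vanishes must itself be annihilating — and an element of $R^t(K,\mu)$ that is also orthogonal to $R^t(K,\mu)$ is $0$. Thus $f = \chi_\Delta$ for some Borel set $\Delta$. It remains to locate $\Delta$. For the inclusion $G\subset\Delta$ up to null sets: on each connected component of $\mathrm{abpe}$-type regions inside $G$ one can evaluate, but more robustly, pick any $\lambda\in G$ with $B(\lambda,\delta)\subset G$; choosing $g$ with $\mathcal C(g\mu)$ not identically zero near $\lambda$ (possible since $G\subset K = \sigma(S_\mu)$, so $\mathcal R$ is dense there by Proposition \ref{NFSetIsBig}), the relation $\mathcal C(fg\mu) = \mathcal C(g\mu)$ on $G$ combined with $f = \chi_\Delta$ forces $\chi_\Delta = 1$ $\gamma$-a.a. on $G\cap\mathcal R$, hence $\mathcal L^2$-a.a. on $G$; so $\mathcal L^2(G\setminus\Delta) = 0$, and since $f\in R^t(K,\mu)$ changing $\Delta$ on an $\mathcal L^2$-null subset of $\mathrm{int}(K)$ does not change it in $L^t(\mu)$ unless $\mu$ charges it — but purity and $\chi$-arguments let us just absorb this, i.e. we may take $G\subset\Delta$. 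For $\Delta\subset\overline G$: if $\lambda_0\notin\overline G$ with $B(\lambda_0,\delta)\cap\overline G = \emptyset$, then $\mathcal C(fg\mu) = 0$ on $B(\lambda_0,\delta)$ for all $g\perp R^t(K,\mu)$, so by the same measure-theoretic argument $\chi_\Delta g\mu$ has Cauchy transform $0$ near $\lambda_0$, giving $\mu(\Delta\cap B(\lambda_0,\delta)) = 0$; since this holds off $\overline G$, we get $\mathrm{spt}(\mu|_\Delta)\subset\overline G$, and replacing $\Delta$ by $\Delta\cap\overline G$ (which only removes a $\mu$-null set) gives $\Delta\subset\overline G$.

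The main obstacle I anticipate is the passage from ``$\mathcal C(\nu) = 0$ $\gamma$-almost everywhere off a boundary set $\partial G$'' to ``$\nu = 0$'', because $\partial G$ can have positive $\mathcal L^2$ measure and positive $\mu$ measure, so one cannot simply invoke $\bar\partial\mathcal C(\nu) = -\pi\nu$ naively. The resolution is that $\nu = (f^2-f)g\mu \ll \mu$ and the only way its Cauchy transform, which is defined and zero $\gamma$-a.a. (hence $\mathcal L^2$-a.a.) on all of $\mathbb C$ after we note $\mathcal C(\nu) = 0$ on $(\partial G)^c$ and $\gamma(\partial G\cap\{\mathcal C(\nu)\ne 0\})$ is controlled — here I would use that $\nu$ being a finite measure with $\mathcal C(\nu)$ vanishing $\mathcal L^2$-a.a. forces $\nu = 0$ distributionally. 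I would double-check that the set $\partial G$ truly contributes nothing by arguing directly that $f^2-f$ is an element of $R^t(K,\mu)$ orthogonal to $R^t(K,\mu)^\perp$, which is immediate from $f^2-f\in R^t(K,\mu)$, bypassing the Cauchy-transform subtlety entirely: indeed $\langle f^2-f, g\rangle$ need not vanish, so instead I use that $\mathcal C((f^2-f)g\mu) = 0$ $\mathcal L^2$-a.a. $\Rightarrow (f^2-f)g\mu = 0$ for each $g$ in a dense subset of $R^t(K,\mu)^\perp$, hence $f^2 - f = 0$ $\mu$-a.a. on the complement of $\bigcap_n\mathcal Z(g_n)$, which is all of $\mathrm{spt}(\mu)$ by purity. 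That is the crux, and once it is in place the rest is bookkeeping with the support of $\mu|_\Delta$.
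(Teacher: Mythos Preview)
Your iteration step --- substituting $fg$ for $g$ to obtain $\mathcal C(f^2 g\mu) = \mathcal C(fg\mu)$ $\gamma$-a.a.\ on $(\partial G)^c$ --- is exactly what the paper does, and you correctly identify the real obstacle: $\partial G$ can have positive area, so one cannot pass directly to $(f^2-f)g\mu = 0$ via $\bar\partial\,\mathcal C(\nu) = -\pi\nu$. But you never close this gap. Each of your proposed workarounds either assumes what is to be proved (``$\mathcal C(\nu)$ vanishing $\mathcal L^2$-a.a.'' is precisely the missing step) or is vacuous (that $f^2 - f \in R^t(K,\mu)$ is orthogonal to $R^t(K,\mu)^\perp$ is automatic and says nothing about whether $f^2 - f$ vanishes; incidentally $\langle f^2 - f, g\rangle$ \emph{does} vanish, contrary to what you wrote). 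The argument is circular at exactly the point you flagged as the crux.

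The paper's resolution uses the decomposition $\mathbb C = \mathcal F \cup \mathcal R_0 \cup \mathcal R_1$ developed in Chapter 3. On $\partial G \cap \mathcal F$, both Cauchy transforms vanish $\mathcal L^2$-a.a.\ by Corollary \ref{acZero}; and $\mathcal L^2(\mathcal R_1) = 0$ since $\mathcal R_1$ sits in a countable union of Lipschitz graphs. The substantive case is $\partial G \cap \mathcal R_0$: there $\Theta_\mu(\lambda_0) = 0$, so Lemma \ref{CauchyTLemma} makes both $\mathcal C(fg\mu)$ and $\mathcal C(f^2 g\mu)$ $\gamma$-continuous at $\lambda_0$. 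Since $G$ is open and connected and $\lambda_0 \in \partial G$, one can run a path $P_\delta \subset G$ from a point of $G \cap B(\lambda_0,\delta/2)$ out past $\partial B(\lambda_0,\delta)$; such a path has analytic capacity comparable to $\delta$, while the $\gamma$-continuity ``bad'' sets have density zero at $\lambda_0$, so one finds $\lambda_n \in P_\delta \subset (\partial G)^c$ along which the known equality on $(\partial G)^c$ passes to the limit, yielding $\mathcal C(f^2 g\mu)(\lambda_0) = \mathcal C(fg\mu)(\lambda_0)$. Assembling the three pieces gives equality $\mathcal L^2$-a.a.\ on all of $\mathbb C$, hence $(f^2-f)g\mu = 0$ for every annihilator $g$, and purity then forces $f^2 = f$.
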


\begin{proof}
For $g\perp R^t(K,\mu)$, since $fg\perp R^t(K,\mu)$, we get
\[
\ \begin{aligned}
 \ \mathcal C(f^2g\mu)(z) = &\chi _G(z) \mathcal C(fg\mu)(z) \\
 \ = &\chi _G(z) \mathcal C(g\mu)(z) \\
 \ = &\mathcal C(fg\mu)(z) , ~ \gamma |_{(\partial G)^c}-a.a..
 \ \end{aligned} 
\]
 If $\gamma (\partial G\cap \mathcal R_0) > 0$, then except a set of zero analytic capacity, for $\lambda_0\in \partial G \cap \mathcal R_0$, $\mathcal C(fg\mu)$ and $\mathcal C(f^2g\mu)$ are $\gamma$-continuous at $\lambda_0$ (see Lemma \ref{CauchyTLemma}). Let $P_\delta$ be a path starting at a point in $G\cap B(\lambda_0, \frac{\delta}{2})$ and ending at a point in $G\setminus B(\lambda_0, \delta)$  such that  $P_\delta\subset G$. Hence, if 
\[
 \ A_1 = \{|\mathcal C(f^2g\mu)(z)-\mathcal C(f^2g\mu)(\lambda_0)|>\epsilon\}
 \]
 and 
 \[
 \ A_2= \{|\mathcal C(fg\mu)(z)-\mathcal C(fg\mu)(\lambda_0)|>\epsilon\},
 \]
 then
 \[
 \ \lim_{\delta\rightarrow 0}\dfrac{\gamma(B(\lambda_0,\delta)\cap A_1)}{\delta} = \lim_{\delta\rightarrow 0}\dfrac{\gamma(B(\lambda_0,\delta)\cap A_2)}{\delta} = 0.
 \]
Thus, using Theorem \ref{TTolsa} (2), we get 
 \[
 \ \begin{aligned}
 \ &\underset{\delta\rightarrow 0|}{\overline{\lim}}\dfrac{\gamma(B(\lambda_0,\delta)\cap A_1^c\cap A_2^c\cap P_\delta)}{\delta} \\
 \ \ge & \dfrac{1}{A_T}\underset{\delta\rightarrow 0|}{\overline{\lim}}\dfrac{\gamma(B(\lambda_0,\delta)\cap P_\delta)}{\delta} - \lim_{\delta\rightarrow 0|}\dfrac{\gamma(B(\lambda_0,\delta)\cap A_1)}{\delta} - \lim_{\delta\rightarrow 0|}\dfrac{\gamma(B(\lambda_0,\delta)\cap A_2)}{\delta}\\
 \ > & 0. 
 \ \end{aligned}
 \]
There exists a sequence of $\lambda_n\in P_{\delta_n}$ such that $\lambda_n\rightarrow \lambda_0$, $\mathcal C(f^2g\mu)(\lambda_n) = \mathcal C(fg\mu)(\lambda_n)$, $\mathcal C(f^2g\mu)(\lambda_n)\rightarrow \mathcal C(f^2g\mu)(\lambda_0)$, and 
 $\mathcal C(fg\mu)(\lambda_n)\rightarrow \mathcal C(fg\mu)(\lambda_0)$. 
On the other hand, $\mathcal L^2(\mathcal R_1) = 0$ and by Corollary \ref{acZero},
\[
 \ \mathcal C(f^2g\mu)(z) = \mathcal C(fg\mu)(z) = 0, ~ \mathcal L^2|_{\mathcal F}-a.a..
 \]
 Therefore, using Lemma \ref{lemmaBasic0} (2) (a $\gamma$ zero set is an $\mathcal L^2$ zero set), we get 
 \[
 \ \mathcal C(f^2g\mu)(z) = \mathcal C(fg\mu)(z), ~ \mathcal L^2-a.a..
 \]
Hence, $f^2 = f$ because $S_\mu$ is pure. The lemma is proved.
\end{proof} 
\smallskip

\begin{remark}\label{characterizationFRemark}
Clearly, Lemma \ref{characterizationF} holds if $G$ is a finite union of disjoint bounded open connected subsets.
\end{remark}
\smallskip

We use Thomson's coloring scheme that is described at the beginning of section 2 of \cite{y19}. 

We fix a Borel set $E$. For a square $S$, whose edges are parallel to x-axis and y-axis, let $c_S$ denote the center and $d_S$ denote the side length. For $a>0,$ $aS$ is a square with the same center of $S$ ($c_{aS} = c_S$) and the side length $d_{aS} = ad_S.$ For a given $\epsilon > 0,$ a closed square $S$ is defined to be light $\epsilon$ if
 \begin{eqnarray}\label{2-2}
 \ \gamma (S \setminus E)) > \epsilon  d_S.
 \end{eqnarray}
A square is called heavy $\epsilon$ if it is not light $\epsilon$.
Let $R = \{ z: -1/2 < Re(z),Im(z) < 1/2 \}$ and $Q = \overline{\mathbb D}\setminus R.$ 

We now sketch our version of
Thomson's coloring scheme for $Q$ with a given $\epsilon$ and a positive integer $m.$ We refer the reader to \cite{thomson} section 2 for details.
 
For each integer $k > 3$ let $\{S_{kj}\}$ be an enumeration of the closed squares contained in $\mathbb C$ with edges of length $2^{-k}$
parallel to the coordinate axes, and corners at the points whose coordinates
are both integral multiples of $2^{-k}$ (except the starting square $S_{m1}$, see (3) below). 
In fact, Thomson's coloring scheme is just needed to be modified slightly as the following:

(1) Use our definition of a light $\epsilon$ square \eqref{2-2}.

(2) A path to $\infty$ means a path to any point that is outside of $Q$ (replacing the polynomially convex hull of $\Phi$ by $Q$).

(3) The starting yellow square $S_{m1}$ in the $m$-th generation is $R.$ Notice that the length of $S_{m1}$ in $m$-th generation is $1$ (not $2^{-m}$).

We will borrow notations that are used in Thomson's coloring scheme such as $\{\gamma_n\}_{n\ge m}$ and $\{\Gamma_n\}_{n\ge m},$ etc.  For the benefit of the reader, let us briefly describe the constructions of $\gamma_n$ and $\Gamma_n$. Let $\Gamma_m$ be the boundary of $R$. Color $R$ yellow. This completes  the scheme for the $m$-th generation of squares. We proceed the scheme recursively. First, color green every light $\epsilon$ square in the $n$-th generation that lies outside $\Gamma_{n-1}$ and has a side lying on $\Gamma_{n-1}$. Second, color green every light $\epsilon$ 
square in the $n$-th generation that can be joined to a green square from the first step by a path of light $\epsilon$ squares in the $n$-th generation that lie outside $\Gamma_{n-1}$. This completes the $n$-th generation of green squares. If there is a green square reaching outside of $Q$, we say that the process ends with the $n$-th generation of squares. Otherwise, let $\gamma_n$ denote the boundary of the polynomially convex hull of the union of $\Gamma_{n-1}$ and the green squares in the $n$-th generation. If a square in the $n$-th generation lies outside $\gamma_n$ but has a side lying on $\gamma_n$,
color the square red. Observe that each such red square must be heavy.
A square $S$ in the $n$-th generation is colored yellow if $S$ is outside $\gamma_n$, $S$ has no side lying on $\gamma_n$, and the distance from $S$ to some red square in the $n$-th generation is less than or equal to $n^22^{-n}$. Let $\Gamma_n$ denote the polynomially convex hull of the union of colored squares of the $n$-th generation.

Termination of the scheme means that the process ends for some $n$. That is, $\{\gamma_n\}_{n\ge m}$ and $\{\Gamma_n\}_{n\ge m}$ are finite. If $\{\gamma_n\}_{n\ge m}$ and $\{\Gamma_n\}_{n\ge m}$ are infinite, we say that there is a sequence of heavy $\epsilon$ barriers around the square $R$. The following is Lemma 2 of \cite{y19}.

\begin{lemma} \label{LightRoute}
For $\epsilon > 0$, there exists $\epsilon_1 > 0$ (depending on $\epsilon$) such that the following property holds: If 
$\gamma (\mathbb D \setminus E) < \epsilon_1$,
then there is a sequence of heavy $\epsilon$ barriers around the square $R$, that is, $\{\gamma_n\}_{n\ge m}$ and $\{\Gamma_n\}_{n\ge m}$ are infinite.
\end{lemma}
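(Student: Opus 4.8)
The statement is quoted verbatim from Lemma 2 of \cite{y19}, so the plan is to reproduce the argument from that source, adapted to the present notation. The strategy is a proof by contraposition combined with the semiadditivity and growth properties of analytic capacity (Theorem \ref{TTolsa}). Suppose the coloring scheme for $R$ (started at the $m$-th generation, with the conventions (1)--(3) above) does \emph{not} produce an infinite sequence of heavy $\epsilon$ barriers; that is, the process terminates at some generation $n_0$, meaning a green square in the $n_0$-th generation reaches outside $Q$. Tracing back through the construction, termination gives a connected chain of light $\epsilon$ squares, all of side length $2^{-n_0}$, joining a square adjacent to $\Gamma_{n_0-1}$ (hence, iterating down the generations, a square adjacent to $\Gamma_m = \partial R$) to a square meeting $Q^c$. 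Concatenating such chains across all generations $m \le n \le n_0$ yields a connected "light route" $\mathcal{L}$ of light $\epsilon$ squares running from near the center square $R$ out past $\overline{\mathbb{D}}$.

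The next step is the capacitary estimate. By definition of a light $\epsilon$ square \eqref{2-2}, each square $S$ in $\mathcal{L}$ satisfies $\gamma(S\setminus E) > \epsilon d_S$. The route $\mathcal{L}$ is a connected union of squares joining the neighborhood of $0$ to $\partial\mathbb{D}$, so its "projection" (or rather the union $\bigcup_{S\in\mathcal L} (S\setminus E)$, possibly after passing to a sub-chain with bounded overlap using Lemma \ref{lemmaBasic0} (3)) contains enough of $\mathbb{D}\setminus E$ that a Cauchy-transform/connectivity argument forces $\gamma(\mathbb{D}\setminus E)$ to be bounded below by an absolute multiple of $\epsilon$. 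Concretely: from the chain of light squares one extracts, generation by generation, a subfamily with controlled overlap whose side lengths sum to at least a fixed fraction of the "distance traversed" (comparable to $1$, the radius scale), and then one applies semiadditivity \eqref{Semiadditive} in reverse — i.e. the fact that a set containing many well-separated pieces each of capacity $\gtrsim \epsilon 2^{-n}$ arranged along a path of total length $\gtrsim 1$ must itself have capacity $\gtrsim \epsilon$. (This last implication is where one uses that analytic capacity of a connected set is comparable to its diameter, together with the lower growth controlled by the light-square condition.) Setting $\epsilon_1$ to be this absolute fraction of $\epsilon$, we conclude: if $\gamma(\mathbb D\setminus E) < \epsilon_1$ then the scheme cannot terminate, i.e. $\{\gamma_n\}_{n\ge m}$ and $\{\Gamma_n\}_{n\ge m}$ are infinite, which is the assertion.

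The main obstacle is the capacitary lower bound in the second step: turning a connected chain of squares, each individually of small capacity outside $E$, into a genuine lower bound of order $\epsilon$ for $\gamma(\mathbb D\setminus E)$. The subtlety is that analytic capacity is only semiadditive (and not additive), so one cannot simply sum the contributions $\gamma(S\setminus E)$; instead one must argue that the \emph{connectedness} of the light route compensates, e.g. by constructing an explicit admissible function (a suitable Cauchy potential supported on $\bigcup_{S\in\mathcal L}(S\setminus E)$) with derivative at infinity of order $\epsilon$, or by invoking the comparability of $\gamma$ with $\gamma_+$ (Theorem \ref{TTolsa}(1)) and building a measure of linear growth and mass $\gtrsim \epsilon$ supported there. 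This is exactly the technical heart of Thomson's original barrier construction, and the proof in \cite{y19} should be cited and followed for the precise bookkeeping of overlaps and the choice of the constant $\epsilon_1$.
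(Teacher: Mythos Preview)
The paper does not prove this lemma at all; it simply states it as ``The following is Lemma 2 of \cite{y19}'' and moves on. Your proposal therefore goes beyond what the paper provides: you sketch the contrapositive (termination of the scheme yields a connected light route from $R$ to $\partial\mathbb D$) and correctly isolate the crux, namely converting the chain of light squares into a lower bound $\gamma(\mathbb D\setminus E)\gtrsim\epsilon$. Your identification of the obstacle---that semiadditivity \eqref{Semiadditive} runs the wrong way, so one must exploit the connectedness and geometric structure of the route rather than sum capacities---is accurate, and deferring the precise bookkeeping to \cite{y19} is exactly what the paper itself does.

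One caution on your sketch: the phrase ``applies semiadditivity in reverse'' is not quite right and could mislead. The actual mechanism in Thomson-type arguments (and in \cite{y19}) is not a reversal of semiadditivity but rather a direct construction: one uses the buffer widths $n^2 2^{-n}$ built into the coloring scheme to guarantee that the green chains in successive generations have substantial diameter, and then assembles from the light squares along the route an explicit measure (or admissible function) with linear growth and controlled Cauchy transform whose total mass is $\gtrsim\epsilon$, invoking Theorem \ref{TTolsa}(1). Your parenthetical about ``constructing an explicit admissible function \dots\ or \dots\ building a measure of linear growth'' is the correct picture; the ``semiadditivity in reverse'' framing should be dropped.
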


Let $\varphi$ be a smooth function with compact support. The localization operator
$T_\varphi$ is defined by
 \[
 \ (T_\varphi f)(\lambda) = \dfrac{1}{\pi}\int \dfrac{f(z) - f(\lambda)}{z - \lambda} \bar\partial \varphi (z) d\mathcal L^2(z),
 \]
where $f\in L^\infty (\mathbb C)$. One can easily prove the following
norm estimation for $T_\varphi:$
 \[
 \ \| T_\varphi f\| \le  4\|f\| diameter(supp \varphi)\|\bar\partial \varphi\|.
 \]

Let $g$ be an analytic function outside the disc $B(a, \delta)$ satisfying the condition
$g(\infty ) = 0.$ We consider the Laurent expansion of $g$ centered at $a$,
 \[
 \ g(z) = \sum_{n=1}^\infty \dfrac{c_n(g,a)}{(z-a)^n}.
 \]
We define 
 \[
 \ \alpha (g) = c_1(g, a), ~\beta (g,a) = c_2(g, a).
 \]
$\alpha (g)$ does not depend on the choice of $a$, while $\beta (g,a)$ depends on $a$. However, if $\alpha (g) = 0$, then $\beta (g,a)$ does not depend on $a$, in this case, we denote $\beta (g) = \beta (g,a)$.
\smallskip

\begin{proof} (Theorem \ref{Lemma3}) Fix $\epsilon < \frac{1}{36A_T}$. If there exists $\lambda \in K$ such that 
 \[
 \ \lim_{\delta \rightarrow 0}\dfrac{\gamma(B(\lambda, \delta)\setminus \mathcal F)}{\delta} = 0,
 \]
then there exists $\delta_0 > 0$ such that
 \begin{eqnarray}\label{lemma3Eq1}
 \ \gamma(B(\lambda, \delta)\setminus \mathcal F) < \epsilon_1 \delta,~ \delta \le \delta_0 
 \end{eqnarray}
where $\epsilon_1$ is as in Lemma \ref{LightRoute},
and $\mu (K \setminus \overline{B(\lambda, \delta_0)}) > 0$. Without loss of generality, we assume that $\lambda = 0$ and $\delta_0 = 1$. By Lemma \ref{LightRoute}, $\{\gamma_n\}_{n\ge m}$ and $\{\Gamma_n\}_{n\ge m}$ are infinite. 

Let $f_n = \chi_{G_n}$, where $G_n$ is the region bounded by $\gamma_n$. Clearly, we have $R \subset G_n$ and $f_n \in H^\infty (\mathbb C \setminus \gamma_n)$. We apply standard Vitushkin's scheme for $f_n$. Let $\{S_{ij}, \varphi_{ij}\}$ be the partition of unity with square size $l = \frac{1}{2^n}$ satisfying $\text{spt}(\varphi_{ij}) \subset 2S_{ij}$, $0 \le \varphi_{ij} \le 1$, and $\|\bar \partial \varphi_{ij} \| \le C_{10}/l$. Moreover, $\{S_{ij}\}$ contains all squares of $n$th generation of the scheme. Then
 \begin{eqnarray}\label{SVEq}
 \ f_n = \sum_{2S_{ij} \cap \gamma_n \ne \emptyset} (f_{ij}:= T_{\varphi_{ij}}f_n) = \sum_{2S_{ij} \cap \gamma_n \ne \emptyset} (f_{ij} - g_{ij} - h_{ij}) + f_l. 
 \end{eqnarray}
where 
 \[
 \ f_l = \sum_{2S_{ij} \cap \gamma_n \ne \emptyset} (g_{ij} + h_{ij}).
 \]
To apply standard Vitushkin's scheme, we need to construct functions $g_{ij}, h_{ij} \in R^t(K,\mu)\cap L^\infty(\mu)$ that are bounded analytic off $3S_{ij}$ and the following properties hold:
 \begin{eqnarray}\label{gProperty}
 \ \|g_{ij}\|_{L^\infty(\mu)} \le C_{11}, ~ g_{ij}(\infty) = 0, ~ \alpha(g_{ij}) = \alpha(f_{ij}),
\end{eqnarray}
\begin{eqnarray}\label{hProperty}
 \ \begin{aligned}
 \ & \|h_{ij}\|_{L^\infty(\mu)} \le C_{12}, ~ h_{ij}(\infty) = \alpha(h_{ij}) = 0, \\
 \ & \beta(h_{ij}, c_{ij}) = \beta(f_{ij} - g_{ij}, c_{ij}),
 \ \end{aligned}
\end{eqnarray}
and
 \begin{eqnarray}\label{ghProperty}
 \ \begin{aligned}
 \ &\mathcal C (g_{ij}g\mu)(z) = g_{ij}(z)\mathcal C (g\mu)(z),~\gamma |_{(3S_{ij})^c}-a.a., \\
 \ &\mathcal C (h_{ij}g\mu)(z) = h_{ij}(z)\mathcal C (g\mu)(z),~\gamma |_{(3S_{ij})^c}-a.a. 
 \ \end{aligned}
\end{eqnarray}
for $g\perp R^t(K,\mu)$. 

Assuming $g_{ij}, h_{ij}$ have been constructed and satisfy \eqref{gProperty}, \eqref{hProperty}, and \eqref{ghProperty}, we conclude that 
 \[
 \ |f_{ij}(z) - g_{ij}(z) - h_{ij}(z) | \le C_{13} \min \left (1, \dfrac{l^3}{|z - c_{ij}|^3} \right ).
 \]
Therefore, using standard Vitushkin scheme, we see that there exists $\hat f\in R^t (K, \mu) \cap L^\infty (\mu)$ and a subsequence $\{l_k\}$ such that $f_{l_k}$ converges to $\hat f$ in weak$^*$ topology in $L^\infty (\mu )$. Clearly, $\hat f(z) = \chi_G$ for $z\in \mathbb C \setminus \partial G$, where $G= \cup G_n$ is a simply connected region. For $\lambda\notin \partial G$, $\frac{f_{l_k}(z) - f_{l_k}(\lambda)}{z - \lambda}$ uniformly converges to $\frac{\hat f(z) - \hat f(\lambda)}{z - \lambda}$ on any compact subsets of $(\partial G)^c$. Therefore, $\frac{\hat f(z) - \hat f(\lambda)}{z - \lambda}\in R^t (K, \mu) \cap L^\infty (\mu)$ and   
\[
 \ \mathcal C (\hat fg\mu)(z) = \chi_G\mathcal C (g\mu)(z),~ \gamma |_{(\partial G)^c}-a.a.
\]
for $g\perp R^t(K,\mu)$. Thus, by Lemma \ref{characterizationF}, there exists $\Delta$ with $G \subset \Delta \subset \overline G$ such that $\hat f = \chi_{\Delta}$. If $\mu (\Delta) =0$, then  $G \subset \text{abpe} (R^t(K, \mu))$ as $\lambda\in K$ and $\partial K \subset \text{spt}(\mu)$. Thus, $G\subset \mathcal R, ~\gamma-a.a.$ by Theorem \ref{ABPETheorem}, which contradicts to \eqref{lemma3Eq1}. Therefore, $\hat f$ is a non-trivial characteristic function, which contradicts to the assumption that $S_\mu$ is irreducible.  

It remains to construct $g_{ij}, h_{ij}$ satisfying \eqref{gProperty}, \eqref{hProperty}, and \eqref{ghProperty}. For $l = \frac{1}{2^n}$ and $2S_{ij}\cap \gamma_n \ne \emptyset$,  there exists a heavy square $S$ with $S\subset 3S_{ij}$ such that
 \[
 \ \gamma(S \setminus \mathcal F) < \epsilon l .
 \]
We divide $S$ into 9 equal small squares. Let $S_d$ (with center $c_d$), $S_u$ (with center $c_u$) be the left bottom and upper squares, respectively. Then, using Theorem \ref{TTolsa} (2), we conclude that
 \[
 \ \gamma (S_d\cap \mathcal F) \ge \dfrac{1}{A_T} \gamma(S_d) - \gamma (S_d\setminus \mathcal F) \ge \dfrac{1-12A_T\epsilon}{12A_T}l \ge \dfrac{1}{18A_T}l. 
 \]
Similarly, $\gamma (S_u\cap \mathcal F) \ge \frac{1}{18A_T}l$.
Using Lemma \ref{hFunction}, we find a function $h_d\in R^t(K,\mu)\cap L^\infty(\mu)$ bounded analytic off $S_d$ and a finite positive measure $\eta_d$ supported in $S_d\cap \mathcal F$ with $\|\eta_d\|= \gamma (S_d\cap \mathcal F)$ satisfying the properties of Lemma \ref{hFunction}. Similarly, there is a function $h_u\in R^t(K,\mu)\cap L^\infty(\mu)$ bounded analytic off $S_u$ and a finite positive measure $\eta_u$ supported in $S_u\cap \mathcal F$ with $\|\eta_u\|= \gamma (S_u\cap \mathcal F)$ satisfying the properties of Lemma \ref{hFunction}. Let
 \[
 \ h_0 = \dfrac{\|\eta_u\|}{l}h_d - \dfrac{\|\eta_d\|}{l}h_u. 
 \]
Then $\alpha (h_0) = 0$ and
 \[
 \ \begin{aligned}
 \  |\beta(h_0, c_{ij}) | = & \dfrac{1}{l} \left | \int\int (w-z)d\eta_u(w)d\eta_d(z)\right | \\
 \ \ge & \dfrac{1}{l} \int \int Im(w-z)d\eta_u(w)d\eta_d(z) \\
 \ \ge & \dfrac{l^2}{972A_T^2}. 
 \end{aligned}
 \]
Set 
 \[
 \ g_{ij} = \dfrac{h_d}{\alpha(h_d)} \alpha(f_{ij}) \text{ and } h_{ij} = \dfrac{h_0}{\beta(h_0, c_{ij})} \beta(f_{ij} - g_{ij}, c_{ij}). 
 \]
It is easy to verify that $g_{ij}$ and $h_{ij}$ satisfy \eqref{gProperty}, \eqref{hProperty}, and \eqref{ghProperty}.       
\end{proof}
\smallskip

\begin{remark}
In section 5.2, we will introduce the modified Vitushkin scheme of P. Paramonov that estimates $\beta(h_0)$ for a group of squares. The function $h_0$ above is a special case of a key lemma (Lemma 2.7) in \cite{p95}. 
\end{remark}

\bigskip

\chapter{Nontangential limits and indices of invariant subspaces}
\bigskip

In this chapter, we assume that $1\le t <\infty$, $\mu$ is a finite positive Borel measure supported on a compact subset $K\subset \mathbb C$, $S_\mu$ on $R^t(K,\mu)$ is pure, and $K=\sigma(S_\mu)$. Let $\{g_n\}_{n=1}^\infty \subset R^t(K,\mu) ^\perp$ be a dense subset. Let $\Gamma$, $\{\Gamma_n\}$, and $h$ be as in Lemma \ref{GammaExist}.

We show that for $f\in R^t(K,\mu)$, there exists a unique $\rho(f)$ defined $\gamma-a.a.$ satisfying:
$\rho(f)(z) \mathcal C(g\mu) (z) = \mathcal C(fg\mu) (z),~ \gamma-a.a.$
for all $g\perp R^t(K,\mu)$. 
Theorem \ref{MTheorem2} proves $\rho(f)$ is $\gamma$-continuous at almost every $\lambda\in \mathcal R$ with respect to $\gamma$.    
It is also shown, in Theorem \ref{MTheorem3}, that $\rho(f)$ has nontangential limits either on $\mathcal F_+$ or $\mathcal F_-$ in full analytic capacitary density. In section 3, as applications, in case that $\mbox{abpe}(R^t(K, \mu))$ is adjacent to one of these Lipschitz graphs $\{\Gamma_n\}$, one can generalizes Theorem III to the space $R^t(K, \mu)$ (Theorem \ref{NonTL} and Theorem \ref{IndexForIS}).

\bigskip

\section{Definition of $\rho(f)$}
\bigskip   

It is well known that every function $f\in R^t(K, \mu)$ can be extended to an analytic function $\rho(f)$ on $\text{abpe}(R^t(K, \mu))$, that is, $\rho(f)(\lambda ) = (f, k_\lambda)$, where $k_\lambda$ is the reproducing kernel for $R^t(K, \mu)$. Furthermore, $\rho(f)(\lambda ) = f(\lambda ), ~ \mu |_{abpe(R^t(K, \mu))}-a.a.$. Clearly, $\frac{f(z) - \rho(f)(\lambda)}{z - \lambda}\in R^t(K, \mu)$. Hence,
 \begin{eqnarray}\label{BasicEq}
 \ \mathcal C(fg\mu)(\lambda) = \rho (f)(\lambda) \mathcal C(g\mu)(\lambda),~ \lambda \in \text{abpe}(R^t(K, \mu)), ~\gamma -a.a. 
 \end{eqnarray}
for every $g \perp R^t(K, \mu)$. As we know that $\text{abpe}(R^t(K, \mu))$ may be empty, it is meaningful to extend the identity \eqref{BasicEq} beyond the set $\text{abpe}(R^t(K, \mu))$. 

Now we define the function $\rho (f)$ for $f\in R^t(K, \mu)$ as the following. Let $\{r_n\}\subset Rat(K)$ satisfy: 
 \begin{eqnarray}\label{BasicEq2}
 \ \|r_n - f\|_{L^t(\mu )}\rightarrow 0, ~ r_n\rightarrow f,~ \mu-a.a.
 \end{eqnarray}
as $n\rightarrow \infty$. From Lemma \ref{lemmaBasic7}, we can choose a subsequence $\{r_{n_m}\}$ such that
 \begin{eqnarray}\label{BasicEq21}
 \ r_{n_m} (z) \mathcal C(g_j\mu)(z) \rightarrow \mathcal C(fg_j\mu)(z), ~ \gamma-a.a.
 \end{eqnarray}
as $m\rightarrow \infty$ and for all $j\ge 1$. Define $\rho  (f)$ as the limit of $r_{n_m}$:
 \[
 \ \text{Ran}(f) = \left \{ \lambda\in K:~ \lim_{m\rightarrow \infty} r_{n_m}(\lambda) \text{ exists, }\rho  (f) (\lambda) := \lim_{m\rightarrow \infty} r_{n_m}(\lambda) \right\}.
 \]
Then $\text{bpe}(R^t(K. \mu)) \subset \text{Ran}(f)$. From \eqref{BasicEq21},
 \[
 \ \mathcal N(g_j\mu, 1 \le j <\infty) :=  \bigcup_{j=1}^\infty \mathcal N(\mathcal C(g_j\mu)) \subset \text{Ran}(f),~ \gamma-a.a.. 
 \]
Using Theorem \ref{FCForR}, we get, for each $g\perp R^t(K,\mu)$, 
$\mathcal N(\mathcal C(g\mu)) \subset \text{Ran}(f),~ \gamma-a.a.$.
Therefore, from Lemma \ref{lemmaBasic7}, \eqref{BasicEq} can be extended as the following: 
 \begin{eqnarray}\label{BasicEq22}
 \ ~ \rho  (f)(z) \mathcal C(g\mu)(z) = \mathcal C(fg\mu)(z),~ \gamma - a.a..
 \end{eqnarray}
\smallskip

\begin{lemma}\label{lemmaBasic8}
Let $f\in R^t(K,\mu)$ and let $\{r_n\}\subset Rat(K)$ be as in \eqref{BasicEq2}. Then
 \begin{eqnarray}\label{BasicEq23}
 \ \begin{aligned}
 \ &\text{bpe}(R^t(K. \mu)) \subset \text{Ran}(f), \\
 \ & \mathcal R \subset \mathcal N(g_j\mu, 1 \le j <\infty) \subset \text{Ran}(f), ~\gamma-a.a.,
\ \end{aligned} 
 \end{eqnarray}
the definition of $\rho  (f)$ on $\text{bpe}(R^t(K. \mu))$ is unique, and $\rho  (f)$ on $\mathcal N(g_j\mu, 1 \le j <\infty) \setminus \text{bpe}(R^t(K. \mu))$ is independent of choices of $\{r_n\}$ up to a set of zero analytic capacity.
\end{lemma}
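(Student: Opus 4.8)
\textbf{Proof proposal for Lemma \ref{lemmaBasic8}.}

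The plan is to establish the three assertions of \eqref{BasicEq23} and the uniqueness/independence claims by combining the point–evaluation structure with the Cauchy–transform identity and the capacitary genericity results proved earlier. First I would handle the easy inclusion $\text{bpe}(R^t(K,\mu)) \subset \text{Ran}(f)$: if $\lambda_0 \in \text{bpe}(R^t(K,\mu))$, then $r \mapsto r(\lambda_0)$ is an $L^t(\mu)$–bounded functional on $\text{Rat}(K)$, so $\|r_n - f\|_{L^t(\mu)} \to 0$ forces $r_n(\lambda_0) \to (f, k_{\lambda_0})$ for the reproducing kernel $k_{\lambda_0}$; hence $\lim_m r_{n_m}(\lambda_0)$ exists and equals the same value regardless of which representing sequence $\{r_n\}$ (satisfying \eqref{BasicEq2}) and which subsequence $\{r_{n_m}\}$ we chose. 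This simultaneously proves $\lambda_0 \in \text{Ran}(f)$ and that $\rho(f)$ is well defined on $\text{bpe}(R^t(K,\mu))$.

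Next I would address the capacitary inclusion $\mathcal R \subset \mathcal N(g_j\mu, 1\le j < \infty) \subset \text{Ran}(f)$, $\gamma$-a.a. The second containment is already in hand from the discussion preceding the lemma: by Lemma \ref{lemmaBasic7}, for each $j$ we have $r_{n_m}(z)\mathcal C(g_j\mu)(z) \to \mathcal C(fg_j\mu)(z)$ off a set of zero analytic capacity along the chosen subsequence (and after a diagonal argument this can be arranged simultaneously for all $j$); at any point $z \in \mathcal N(\mathcal C(g_j\mu))$ where $\mathcal C(g_j\mu)(z) \ne 0$ one may divide, so $r_{n_m}(z) \to \mathcal C(fg_j\mu)(z)/\mathcal C(g_j\mu)(z)$, putting $z \in \text{Ran}(f)$ with $\rho(f)(z)$ given by that ratio. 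For the first containment, I would invoke Lemma \ref{GammaExist}, specifically \eqref{FCEq01}, together with Definition \ref{FRDefinition1}: on $\mathcal R_0$ one has by definition a $j$ with $\mathcal C(g_j\mu)(\lambda) \ne 0$, and on $\mathcal R_1$ one has, on each $\Gamma_n$, nonzero one–sided limits $v^\pm$; since $v^\pm(g_j\mu,\Gamma_n,\lambda) = \mathcal C(g_j\mu)(\lambda) \pm \tfrac12 e^{-i\beta_n} h(\lambda) L(\cdot)^{-1}$ and $h(\lambda) \ne 0$ on $\mathcal{ND}(\mu)$, at least one of $v^+, v^-$ being nonzero forces $\mathcal C(g_j\mu)(\lambda) \ne 0$ unless both one–sided limits are nonzero with the central value zero — in which case I would instead note directly that $\lambda \in \mathcal N(v^\pm(g_j\mu,\Gamma_n,\cdot))$, and the genericity estimate \eqref{ANEq1} of Theorem \ref{GPTheorem1} lets one approach $\lambda$ through points where $\mathcal C(g_j\mu)$ is close to a nonzero value, so $\lambda$ lies in the $\gamma$-closure-in-density of $\mathcal N(\mathcal C(g_j\mu))$; combined with Lemma \ref{lemmaBasic7}(3), this still yields $\lambda \in \text{Ran}(f)$, $\gamma$-a.a. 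Thus $\mathcal R \subset \text{Ran}(f)$, $\gamma$-a.a.

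Finally, for independence of $\rho(f)$ on $\mathcal N(g_j\mu, 1\le j < \infty) \setminus \text{bpe}(R^t(K,\mu))$ up to a $\gamma$-null set, I would argue as follows: given two sequences $\{r_n\}, \{\tilde r_n\}$ as in \eqref{BasicEq2} and corresponding subsequences with the convergence \eqref{BasicEq21}, the interlaced sequence also satisfies \eqref{BasicEq2}, so by Lemma \ref{lemmaBasic7} there is a $\gamma$-null set off which $r_{n_m}(z)\mathcal C(g_j\mu)(z) \to \mathcal C(fg_j\mu)(z)$ and $\tilde r_{n_m}(z)\mathcal C(g_j\mu)(z) \to \mathcal C(fg_j\mu)(z)$ for all $j$; dividing by $\mathcal C(g_j\mu)(z) \ne 0$ on $\mathcal N(\mathcal C(g_j\mu))$ shows both limits coincide there. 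The main obstacle I anticipate is the borderline case in the $\mathcal R_1$ analysis where the principal value $\mathcal C(g_j\mu)(\lambda)$ vanishes for every $j$ at a point of $\mathcal{ND}(\mu)$ while a one–sided limit is nonzero; handling this requires care in converting the full-$\gamma$-density statements of Theorem \ref{GPTheorem1} and Theorem \ref{acTheorem} into pointwise membership in $\text{Ran}(f)$ — essentially one must show that the set of such anomalous points is itself $\gamma$-null, which I expect follows from \eqref{FCEq} together with Lemma \ref{lemmaBasic4}, but the bookkeeping of exceptional sets is the delicate part. Everything else is a direct assembly of Lemma \ref{lemmaBasic7}, Theorem \ref{GPTheorem1}, Definition \ref{FRDefinition1}, and the reproducing-kernel characterization of $\text{bpe}$.
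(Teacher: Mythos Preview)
Your argument for $\text{bpe}(R^t(K,\mu)) \subset \text{Ran}(f)$ and for the independence of $\rho(f)$ from the choice of $\{r_n\}$ is correct and matches the paper. Your handling of $\mathcal N(g_j\mu, 1\le j < \infty) \subset \text{Ran}(f)$ via Lemma~\ref{lemmaBasic7} is also fine.

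Where you take a long detour is the inclusion $\mathcal R \subset \mathcal N(g_j\mu, 1 \le j < \infty)$. The paper dispatches this in one line from Definition~\ref{FRDefinition1} and Theorem~\ref{FRProperties}: since $\mathcal R \cap \mathcal F = \emptyset$ and $\mathcal F_0 \subset \mathcal F$, one has $\mathcal R \subset \mathcal F_0^c \approx \mathcal N(g_j\mu, 1 \le j < \infty)$, $\gamma$-a.a. Your ``borderline case''---points $\lambda \in \mathcal R_1 \subset \mathcal{ND}(\mu)$ at which $\mathcal C(g_j\mu)(\lambda) = 0$ for every $j$---is exactly the set $\mathcal F_0 \cap \mathcal{ND}(\mu)$, and \eqref{FCEq7} of Theorem~\ref{FRProperties} already proves this set is $\gamma$-null. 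You suspect as much, but the references you give (\eqref{FCEq} and Lemma~\ref{lemmaBasic4}) do not yield it; the exclusion $\mathcal F_0 \cap \mathcal{ND}(\mu) \approx \emptyset$ is precisely the nontrivial content of Theorem~\ref{FRProperties}, proved there via Lemma~\ref{lemmaBasic6}, Lemma~\ref{lemmaBasic7}, and the generalized Plemelj formula. Moreover, your fallback density argument (approaching $\lambda$ through nearby points where $\mathcal C(g_j\mu)$ is close to a nonzero value) does not by itself show that $r_{n_m}(\lambda)$ converges \emph{at} $\lambda$, so it does not place $\lambda$ in $\text{Ran}(f)$; you really do need the pointwise statement that some $\mathcal C(g_j\mu)(\lambda) \ne 0$, which is exactly what Theorem~\ref{FRProperties} supplies.
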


\begin{proof} 
\eqref{BasicEq23} follows from Definition \ref{FRDefinition1} and Theorem \ref{FRProperties}:
 \[
 \ \mathcal R \subset \mathcal F_0 ^c \approx \mathcal N(g_j\mu, 1 \le j <\infty),~\gamma-a.a.. 
 \]
 The definition of $\rho (f)$ is independent of choices of $\{r_n\}$ on $\text{bpe}(R^t(K. \mu))$ since 
 \[
 \ \rho(f)(\lambda) = (f, k_\lambda),~ \lambda \in \text{bpe}(R^t(K. \mu)).
 \]
Let $\{R_n\}\subset Rat(K)$ be another sequence satisfying \eqref{BasicEq2} and let $\{R_{m_k}\}$ satisfy \eqref{BasicEq21}. Set
$\rho_R(f) (z) = \lim_{k\rightarrow \infty}R_{m_k}(z)$ 
whenever the limit exists, in particular, for $z\in \mathcal N(g_j\mu, 1 \le j <\infty)$. Hence,
 \[
 \ \rho  (f)(z) \mathcal C(g_j\mu)(z) = \rho_R  (f)(z) \mathcal C(g_j\mu)(z) = \mathcal C(fg_j\mu)(z),~ \gamma - a.a., 
 \]
which implies $\rho  (f)(z) = \rho_R  (f)(z),~ \gamma - a.a.$ for $z\in \mathcal N(g_j\mu, 1 \le j <\infty)$.   
\end{proof}
\smallskip

The function $\rho(f)$ is also independent of $\{g_j\}$ up to a set of zero analytic capacity by Corollary \ref{NRBUnique}.
From above analysis, it is reasonable to define $\rho (f)(z) = 0$ for $z\in \mathcal F_0$. So except a set of zero analytic capacity $\mathbb Q$, the function $\rho(f)$ is well defined on $\mathbb C\setminus \mathbb Q$ and
\begin{eqnarray}\label{BasicEq3}
 \ \rho  (f)(z) = f (z), ~ \mu | _{\mathcal N(g_j\mu, 1 \le j <\infty) \setminus \mathbb Q} - a.a..
 \end{eqnarray}
 The following equality will be used later.
 \begin{eqnarray}\label{FEquality}
 \ \mathcal C (fg\mu)(z) = f(z)\mathcal C (g\mu)(z), ~\mathcal H^1 |_{\mathcal N(h)}-a.a.
 \end{eqnarray}
for all $g\perp R^t(K,\mu)$.
\smallskip

\begin{proposition}\label{Rhoprop} (1) For $f_1,f_2\in R^t(K,\mu)\cap L^\infty(\mu)$, 
 \[
 \ \rho(f_1f_2)(z) = \rho(f_1)(z)\rho(f_2)(z),~\gamma-a.a..
 \]

(2) For $f\in R^t(K,\mu)\cap L^\infty(\mu)$, $\|\rho(f)\|_{L^\infty(\mathcal L^2_{\mathcal R})} \le  \|f\|_{L^\infty(\mu)}$.
\end{proposition}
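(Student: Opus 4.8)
The plan is to reduce both statements to the defining identity \eqref{BasicEq22} for $\rho$, using the fact that $\mathcal R \subset \mathcal N(g_j\mu, 1\le j<\infty)$ up to a set of zero analytic capacity (Lemma \ref{lemmaBasic8}), that the Cauchy transform of an annihilating measure does not vanish $\gamma$-a.a.\ on $\mathcal R$, and that a $\gamma$-zero set is an $\mathcal L^2$-zero set (Lemma \ref{lemmaBasic0}(2)) together with the distributional relation \eqref{CTDistributionEq}. For part (1), fix $f_1, f_2 \in R^t(K,\mu)\cap L^\infty(\mu)$ and $g\perp R^t(K,\mu)$. Since $f_2 g\mu$ is again an annihilating measure (because $f_2 \in R^t(K,\mu)\cap L^\infty(\mu)$ implies $f_2 r \in R^t(K,\mu)$ for $r\in \mbox{Rat}(K)$, hence $\int r f_2 g\, d\mu = 0$), I can apply \eqref{BasicEq22} twice: first
\[
\ \rho(f_1 f_2)(z)\, \mathcal C(g\mu)(z) = \mathcal C(f_1 f_2 g\mu)(z),~ \gamma\text{-a.a.},
\]
and then, applying \eqref{BasicEq22} to $f_1$ with the annihilating measure $f_2 g\mu$ in place of $g\mu$,
\[
\ \mathcal C(f_1 f_2 g\mu)(z) = \rho(f_1)(z)\, \mathcal C(f_2 g\mu)(z) = \rho(f_1)(z)\,\rho(f_2)(z)\, \mathcal C(g\mu)(z),~ \gamma\text{-a.a.}.
\]
Comparing the two, $(\rho(f_1 f_2) - \rho(f_1)\rho(f_2))\,\mathcal C(g\mu) = 0$, $\gamma$-a.a. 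Now I let $g$ run over the dense sequence $\{g_j\}$; on $\mathcal R \subset \mathcal N(g_j\mu, 1\le j<\infty)$ (mod $\gamma$-null), for $\gamma$-a.e.\ $z$ there is some $j$ with $\mathcal C(g_j\mu)(z)\ne 0$, forcing $\rho(f_1 f_2)(z) = \rho(f_1)(z)\rho(f_2)(z)$ there; off $\mathcal R$, i.e.\ on $\mathcal F$ mod $\gamma$-null, one uses the convention $\rho(f)|_{\mathcal F_0}=0$ together with $\mathcal L^2(\mathcal F_+\cup\mathcal F_-)=0$ (so $\mathcal F \approx \mathcal F_0$, $\gamma$-a.a., by \eqref{FCEq7} and Lemma \ref{lemmaBasic0}(5)) to see both sides vanish. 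This gives the identity $\gamma$-a.a.\ on $\mathbb C$.

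For part (2), set $M = \|f\|_{L^\infty(\mu)}$ and use multiplicativity iteratively: for every positive integer $n$, part (1) gives $\rho(f^n) = \rho(f)^n$, $\gamma$-a.a., hence also $\mathcal L^2$-a.a.\ on $\mathcal R$ by Lemma \ref{lemmaBasic0}(2). On the other hand, from \eqref{BasicEq3} we have $\rho(f^n)(z) = f^n(z)$ for $\mu|_{\mathcal N(g_j\mu,1\le j<\infty)}$-a.a.\ $z$, so $|\rho(f^n)| \le M^n$ holds $\mu|_{\mathcal R}$-a.a. The issue is that I want the bound $\mathcal L^2|_{\mathcal R}$-a.a., not merely $\mu|_{\mathcal R}$-a.a. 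To bridge this I would argue: $\rho(f)$ is $\gamma$-continuous at $\gamma$-a.e.\ point of $\mathcal R$ (Main Theorem I(a), i.e.\ Theorem \ref{MTheorem2}), and at such a point the $\gamma$-limit coincides with the value $f$ takes $\mu$-a.a.\ nearby; more directly, \eqref{BasicEq22} with $g=g_j$ expresses $\rho(f)$ as the quotient $\mathcal C(fg_j\mu)/\mathcal C(g_j\mu)$ wherever the denominator is nonzero, and one can estimate $\rho(f)$ pointwise $\gamma$-a.a.\ on $\mathcal R$ by approximating with $r_n\to f$ in $L^t(\mu)$, $r_n\to f$ $\mu$-a.a., using Lemma \ref{lemmaBasic7}: $r_{n_m}\mathcal C(g_j\mu)\to \mathcal C(fg_j\mu)$ uniformly off a small $\gamma$-set. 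Then $\rho(f) = \lim_m r_{n_m}$, $\gamma$-a.a.\ on $\mathcal N(g_j\mu)$, and since $\|r_{n_m}\|$ need not be bounded by $M$ a priori, the cleanest route is the power trick: $|\rho(f)(z)|^n = |\rho(f^n)(z)| = |f^n(z)| \le M^n$ must be read on a set where both $\rho(f^n)=\rho(f)^n$ $\gamma$-a.a.\ and $\rho(f^n) = f^n$ $\mathcal L^2|_{\mathcal R}$-a.a. — and the latter identification is exactly what needs care.

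The main obstacle, then, is establishing $\rho(f^n)(z) = f(z)^n$ for $\mathcal L^2|_{\mathcal R}$-a.a.\ $z$ (equivalently, controlling $\rho(f)$ by $\mu$ in an $\mathcal L^2$-sense on $\mathcal R$). The way I would resolve it: on $\mathcal R\cap \text{int}(K) \approx \text{abpe}(R^t(K,\mu))$ (Theorem \ref{ABPETheorem}), $\rho(f)$ is the genuine analytic point-evaluation, so $\|\rho(f)\|_{L^\infty} \le M$ there by the maximum principle combined with the point-evaluation bound $|\rho(f)(\lambda)| \le \|f\|_{L^t(\mu)}$ refined on compacta; and $\mathcal L^2(\mathcal R \setminus \text{int}(K)) = \mathcal L^2(\mathcal R_B) = 0$ because $\mathcal R_B \subset \mathcal R_0 \cup \mathcal R_1$ has $\mathcal L^2(\mathcal R_1)=0$ (it lies in $\bigcup_n\Gamma_n$) while $\mathcal R_0 \subset \mathcal{ZD}(\mu)$ and $\mathcal R_0\setminus\text{int}(K)$, being contained in $\partial K \cap \mathcal{ZD}(\mu)$... one checks $\mathcal L^2(\partial K)$ need not vanish, so instead I note $\mathcal L^2_{\mathcal R}$ as used in $H^\infty(\mathcal R)$ only sees the interior up to an $\mathcal L^2$-null set iff $\mathcal L^2(\mathcal R_B)=0$, which holds when $\text{int}(K)$ is dense in $K$ but not in general — so the correct and safe formulation is that $\|\rho(f)\|_{L^\infty(\mathcal L^2_{\mathcal R})}$ is computed over $\mathcal R$ where $\rho(f)$ is $\gamma$-continuous, hence its essential sup w.r.t.\ $\mathcal L^2$ equals its essential sup w.r.t.\ $\gamma$, and the latter is $\le M$ by the power trick since $|\rho(f)|^n = |\rho(f^n)|$ and $\rho(f^n) = f^n$ holds $\gamma$-a.a.\ on $\mathcal N(g_j\mu, 1\le j<\infty)\supset \mathcal R$ via \eqref{BasicEq3}. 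Taking $n$-th roots and letting $n\to\infty$ yields $|\rho(f)| \le M$, $\gamma$-a.a.\ on $\mathcal R$, hence $\mathcal L^2|_{\mathcal R}$-a.a.; off $\mathcal R$ the function is defined to be $0$ on $\mathcal F_0$ and $\mathcal L^2(\mathcal F_+\cup\mathcal F_-)=0$, completing the bound.
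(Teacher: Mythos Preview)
Your argument for part (1) is correct and matches the paper's: apply \eqref{BasicEq22} once to $f_1f_2$ with annihilator $g\mu$, once to $f_1$ with annihilator $f_2g\mu$, and cancel $\mathcal C(g\mu)$ on $\mathcal N(g_j\mu,1\le j<\infty)$. One slip: your claim that ``$\mathcal F\approx\mathcal F_0$, $\gamma$-a.a.'' is false --- $\mathcal F_+\cup\mathcal F_-$ sits in Lipschitz graphs and can have positive analytic capacity. But this does no damage: by \eqref{FCEq7}, $\mathcal F_+\cup\mathcal F_-\subset\mathcal F_0^c\approx\mathcal N(g_j\mu,1\le j<\infty)$, so the same cancellation argument already covers those points.

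Part (2) has a genuine gap. You correctly isolate the obstacle --- one needs $|\rho(f^n)|\le M^n$ in an $\mathcal L^2|_{\mathcal R}$ (or $\gamma$) sense, not merely $\mu$-a.a. --- but your resolution fails. The statement ``$\rho(f^n)=f^n$ holds $\gamma$-a.a.\ on $\mathcal N(g_j\mu,1\le j<\infty)$ via \eqref{BasicEq3}'' misreads \eqref{BasicEq3}: that identity is only $\mu|_{\mathcal N(g_j\mu)\setminus\mathbb Q}$-a.a., and $f$ is an $L^t(\mu)$ class, not defined $\gamma$-a.a.\ at all. So the chain $|\rho(f)|^n=|\rho(f^n)|=|f^n|\le M^n$ cannot be read off $\gamma$-a.a.\ or $\mathcal L^2|_{\mathcal R}$-a.a. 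Your fallback via $\text{abpe}$ is also correctly abandoned: for a Swiss cheese, $\mathcal R=\mathcal R_B$ has positive $\mathcal L^2$ while $\text{abpe}=\emptyset$.

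The paper avoids any pointwise comparison of $\rho(f)$ with $f$. It uses \eqref{BasicEq22} in the form $\rho(f)^n\mathcal C(g\mu)=\mathcal C(f^ng\mu)$, $\gamma$-a.a., and then bounds the \emph{integral}
\[
\int_{\mathcal R}|\rho(f)|^n\,|\mathcal C(g\mu)|\,d\mathcal L^2
=\int_{\mathcal R}|\mathcal C(f^ng\mu)|\,d\mathcal L^2
\le \int_{\mathcal R}\!\!\int\frac{|f(w)|^n|g(w)|}{|w-z|}\,d\mu(w)\,d\mathcal L^2(z)
\le C\,\|g\|_{L^1(\mu)}\,M^n,
\]
using only Fubini and $\int_{\mathcal R}|w-z|^{-1}d\mathcal L^2(z)\le C\sqrt{\mathcal L^2(\mathcal R)}$. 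Taking $n$-th roots and letting $n\to\infty$ gives $|\rho(f)|\le M$, $\mathcal L^2$-a.a.\ on $\mathcal R\cap\mathcal N(\mathcal C(g\mu))$; running $g$ over the dense $\{g_j\}$ covers $\mathcal R$. The point is that the $M^n$ comes from $\|f\|_{L^\infty(\mu)}$ \emph{inside the integral against $\mu$}, where it is legitimate.
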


\begin{proof}
(1) For $f_1,f_2\in R^t(K,\mu)\cap L^\infty(\mu)$ and $g\perp R^t(K,\mu)$, we see that $f_2g\perp R^t(K,\mu)$. From \eqref{BasicEq22}, we get
 \[
 \ \begin{aligned}
 \ \rho  (f_1f_2)(z) \mathcal C(g\mu)(z) = &\rho  (f_1)(z)\mathcal C(f_2g\mu)(z) \\
 \ = &\rho  (f_1)(z)\rho  (f_2)(z)\mathcal C(g\mu)(z),~ \gamma - a.a.. 
 \ \end{aligned}
 \]
Hence, (1) follows.

(2) For $f\in R^t(K,\mu)\cap L^\infty(\mu)$ and $g\perp R^t(K,\mu)$, 
 \[
 \ \begin{aligned}
 \ \int_{\mathcal R} |\rho(f)(z)|^n |\mathcal C(g\mu)(z) | d \mathcal L^2(z) = & \int_{\mathcal R} |\mathcal C(f^ng\mu)(z) | d \mathcal L^2(z) \\
 \ \le & \int_{\mathcal R} \int \dfrac{|f(w)|^n |g(w)|}{|w-z|}d\mu(w) d \mathcal L^2(z) \\
\ \le &C_{14}\sqrt{\mathcal L^2(\mathcal R)}\|g\|_{L^1(\mu)}\|f\|^n_{L^\infty(\mu)},
 \ \end{aligned}
 \]
which implies 
 \[
 \ \left (\int_{\mathcal R} |\rho(f)(z)|^n |\mathcal C(g\mu)(z)| d \mathcal L^2(z) \right )^{\frac{1}{n}}\le \left (C_{14}\sqrt{\mathcal L^2(\mathcal R)}\|g\|_{L^1(\mu)} \right )^{\frac{1}{n}}\|f\|_{L^\infty(\mu)}. 
\]
Taking $n\rightarrow \infty$, we get $\|\rho(f)\|_{L^\infty(\mathcal L^2|_{\mathcal N(\mathcal C(g\mu))})} \le  \|f\|_{L^\infty(\mu)}$. Now (2) follows from the fact that $\mathcal R \subset \mathcal N(g_j\mu, 1 \le j <\infty),~\gamma-a.a.$.  
\end{proof}
\bigskip

\section{$\gamma$-continuity and nontangential limits}
\bigskip

We need several lemmas to prove that $\rho (f)$ is $\gamma$-continuous on $\mathcal R,~\gamma-a.a.$ and has nontangential limits in full analytic capacitary density on $\mathcal F_+\cup \mathcal F_-,~\gamma-a.a.$.
\smallskip

\begin{lemma}\label{lemmaBasic9}
Let $f\in R^t(K,\mu)$ and $g\perp R^t(K,\mu)$. Assume that for some $\lambda _0\in K$, 
\begin{eqnarray}\label{lemmaBasic9Eq1}
 \ \lim_{\delta\rightarrow 0} \dfrac{\int_{B(\lambda _0, \delta)} |g|d\mu }{\delta } = 0,
 \end{eqnarray}
 \begin{eqnarray}\label{lemmaBasic9Eq2}
 \ \lim_{\delta\rightarrow 0} \dfrac{\int_{B(\lambda _0, \delta)} |fg|d\mu }{\delta } = 0,
 \end{eqnarray}
and  
 \begin{eqnarray}\label{lemmaBasic9Eq3}
 \ \mathcal{C} (g\mu)(\lambda _0) = \lim_{\epsilon \rightarrow 0}\mathcal{C} _{\epsilon}(g\mu)(\lambda _0), ~ \mathcal{C} (fg\mu)(\lambda _0) = \lim_{\epsilon \rightarrow 0}\mathcal{C} _{\epsilon}(fg\mu)(\lambda _0)
\end{eqnarray}
exist. If $\mathcal{C} (g\mu)(\lambda _0) \ne 0$, then $a = \frac{\mathcal{C} (fg\mu)(\lambda _0)}{\mathcal{C} (g\mu)(\lambda _0)}$ is the $\gamma$-limit of $\rho (f)$ at $\lambda_0$.
 \end{lemma}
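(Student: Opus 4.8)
\textbf{Proof proposal for Lemma \ref{lemmaBasic9}.}

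The plan is to reduce to the $\gamma$-continuity statement for a Cauchy transform (Lemma \ref{CauchyTLemma}) applied to both $g\mu$ and $fg\mu$, and then combine the two $\gamma$-limit statements using the product identity \eqref{BasicEq22} on the large set $\mathcal N(\mathcal C(g\mu))$. First I would observe that hypotheses \eqref{lemmaBasic9Eq1} and \eqref{lemmaBasic9Eq3} say precisely that the measure $\nu_1 = g\mu$ satisfies the two assumptions (a), (b) of Lemma \ref{CauchyTLemma} at $\lambda_0$ (note $\Theta_{g\mu}(\lambda_0)=0$ is exactly \eqref{lemmaBasic9Eq1}), so $\mathcal C(g\mu)$ is $\gamma$-continuous at $\lambda_0$; similarly, \eqref{lemmaBasic9Eq2} and \eqref{lemmaBasic9Eq3} give that $\nu_2 = fg\mu$ satisfies the hypotheses of Lemma \ref{CauchyTLemma}, so $\mathcal C(fg\mu)$ is $\gamma$-continuous at $\lambda_0$. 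In particular both principal values exist $\gamma$-a.a.\ near $\lambda_0$ and the exceptional sets have zero analytic capacity.

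Next, since $\mathcal C(g\mu)(\lambda_0)\ne 0$, set $\epsilon_0 = \tfrac12|\mathcal C(g\mu)(\lambda_0)|$ and $a = \mathcal C(fg\mu)(\lambda_0)/\mathcal C(g\mu)(\lambda_0)$. On the set
\[
 \ W_\delta := B(\lambda_0,\delta)\cap\{|\mathcal C(g\mu)(z)-\mathcal C(g\mu)(\lambda_0)|\le\epsilon_0\}\cap\{|\mathcal C(fg\mu)(z)-\mathcal C(fg\mu)(\lambda_0)|\le\epsilon_0\}\cap \mathcal N(\mathcal C(g\mu))
 \]
the identity $\rho(f)(z)\mathcal C(g\mu)(z)=\mathcal C(fg\mu)(z)$ from \eqref{BasicEq22} holds $\gamma$-a.a., and an elementary estimate (write $\rho(f)(z)-a = \frac{\mathcal C(fg\mu)(z)-\mathcal C(fg\mu)(\lambda_0)}{\mathcal C(g\mu)(z)} - a\frac{\mathcal C(g\mu)(z)-\mathcal C(g\mu)(\lambda_0)}{\mathcal C(g\mu)(z)}$ and bound $|\mathcal C(g\mu)(z)|\ge\epsilon_0$ on $W_\delta$) shows that for every $\eta>0$ there is $\delta$ small enough that $|\rho(f)(z)-a|<\eta$ on $W_\delta$. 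Consequently, for any $\eta>0$,
\[
 \ B(\lambda_0,\delta)\cap\{|\rho(f)(z)-a|>\eta\} \subset \big(B(\lambda_0,\delta)\setminus W_\delta\big)\cup \mathbb Q
 \]
for a set $\mathbb Q$ of zero analytic capacity (the exceptional set where \eqref{BasicEq22} or the principal values fail, plus $\mathcal F_0\approx \mathbb C\setminus\mathcal N(\mathcal C(g\mu))$ intersected with the relevant region — here one uses that $\mathcal C(g\mu)(z)\ne0$ forces membership in $\mathcal N(\mathcal C(g\mu))$, so actually the complement of $\mathcal N(\mathcal C(g\mu))$ contributes nothing to the ratio). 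Then $B(\lambda_0,\delta)\setminus W_\delta$ is contained in the union of the two ``bad'' sets $B(\lambda_0,\delta)\cap\{|\mathcal C(g\mu)(z)-\mathcal C(g\mu)(\lambda_0)|>\epsilon_0\}$ and $B(\lambda_0,\delta)\cap\{|\mathcal C(fg\mu)(z)-\mathcal C(fg\mu)(\lambda_0)|>\epsilon_0\}$, and by the semiadditivity of analytic capacity (Theorem \ref{TTolsa}(2)) together with the $\gamma$-continuity from Lemma \ref{CauchyTLemma},
\[
 \ \lim_{\delta\rightarrow 0}\dfrac{\gamma(B(\lambda_0,\delta)\cap\{|\rho(f)(z)-a|>\eta\})}{\delta} = 0,
 \]
which is the definition of $a$ being the $\gamma$-limit of $\rho(f)$ at $\lambda_0$.

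The main obstacle I anticipate is bookkeeping the exceptional null sets: one must make sure that the identity \eqref{BasicEq22} is available on $W_\delta$ (it holds only $\gamma$-a.a., and $\rho(f)$ itself is only defined up to a set of zero analytic capacity), and that removing these zero-$\gamma$ sets does not affect the density computation — this is exactly where Theorem \ref{TTolsa}(2) and the fact that a set of zero analytic capacity can be absorbed into $\mathbb Q$ are used. A secondary point requiring care is that $\mathcal N(\mathcal C(g\mu))$ must cover a full-density portion of each small ball where $\mathcal C(g\mu)$ is near $\mathcal C(g\mu)(\lambda_0)\ne0$; but this is automatic, since $|\mathcal C(g\mu)(z)|\ge\epsilon_0>0$ on that portion forces $z\in\mathcal N(\mathcal C(g\mu))$ by definition. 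No genuinely new estimate is needed beyond Lemma \ref{CauchyTLemma} and \eqref{BasicEq22}; the work is entirely in the routine ratio estimate and the inclusion-of-sets argument sketched above.
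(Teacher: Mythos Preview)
Your approach is exactly the paper's: apply Lemma \ref{CauchyTLemma} to both $g\mu$ and $fg\mu$, use the identity \eqref{BasicEq22}, do a ratio estimate, and finish with semiadditivity. However, there is a concrete slip in your execution. With $W_\delta$ defined using the \emph{fixed} threshold $\epsilon_0=\tfrac12|\mathcal C(g\mu)(\lambda_0)|$ for both differences, your decomposition only yields
\[
|\rho(f)(z)-a|\le \dfrac{\epsilon_0}{\epsilon_0}+|a|\dfrac{\epsilon_0}{\epsilon_0}=1+|a|
\]
on $W_\delta$, independently of $\delta$; so the claimed inclusion $\{|\rho(f)-a|>\eta\}\subset (B\setminus W_\delta)\cup\mathbb Q$ is false for small $\eta$. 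The fix is exactly what the paper does: keep $\epsilon_0$ only to secure the lower bound $|\mathcal C(g\mu)(z)|\ge\epsilon_0$ on $A_0:=\{|\mathcal C(g\mu)-\mathcal C(g\mu)(\lambda_0)|\le\epsilon_0\}$, and then, for given $\eta$, show that on $A_0$ the set $\{|\rho(f)-a|>\eta\}$ is contained in $\{|\mathcal C(g\mu)-\mathcal C(g\mu)(\lambda_0)|>c_1\eta\}\cup\{|\mathcal C(fg\mu)-\mathcal C(fg\mu)(\lambda_0)|>c_2\eta\}$ with $c_1,c_2$ depending on $\epsilon_0$ and $|a|$. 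Both of these, together with $A_0^c$, have $\gamma$-density zero at $\lambda_0$ by Lemma \ref{CauchyTLemma}, and Theorem \ref{TTolsa}(2) combines them. With that adjustment your argument is complete and matches the paper's proof.
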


\begin{proof}
There exists $\mathbb Q\subset \mathbb C$ with $\gamma(\mathbb Q) = 0$ such that \eqref{BasicEq22} holds for $\lambda \in \mathbb Q^ c$. For $\lambda \in \mathbb Q^ c$ and $\mathcal{C} (g\mu)(\lambda) \ne 0$, we get
 \[
 \begin{aligned}
 \ & |\rho(f)(\lambda ) - a| \\
 \ = & \left |\dfrac{\mathcal{C} (fg\mu)(\lambda)}{\mathcal{C} (g\mu)(\lambda)} - \dfrac{\mathcal{C} (fg\mu)(\lambda_0)}{\mathcal{C} (g\mu)(\lambda_0)} \right| \\
 \ \le & \dfrac{|\mathcal{C} (fg\mu)(\lambda) - \mathcal{C} (fg\mu)(\lambda_0)|}{|\mathcal{C} (g\mu)(\lambda)|} + \dfrac{|\mathcal{C} (fg\mu)(\lambda_0)|  |\mathcal{C} (g\mu)(\lambda) - \mathcal{C} (g\mu)(\lambda_0)|}{|\mathcal{C} (g\mu)(\lambda)\mathcal{C} (g\mu)(\lambda_0)|}.
 \end{aligned}
 \]
If $\epsilon_0 < \frac{|\mathcal{C} (g\mu)(\lambda_0)|}{2}$, then $|\mathcal{C} (g\mu)(\lambda)| > \frac{|\mathcal{C} (g\mu)(\lambda_0)|}{2}$ for 
 \[
 \ \lambda \in A_0 := \{\lambda:~ |\mathcal{C} (g\mu)(\lambda) - \mathcal{C} (g\mu)(\lambda_0)| \le \epsilon_0\}.
 \]
 Hence,
\[
 \begin{aligned}
 \ & \{\lambda:~ |\rho (f)(\lambda) - a| >\epsilon\} \cap A_0 \\
 \ \subset & A_1:= \left \{\lambda:~ |\mathcal{C} (fg\mu)(\lambda_0)||\mathcal{C} (g\mu)(\lambda) - \mathcal{C} (g\mu)(\lambda_0)| > \dfrac{| \mathcal{C} (g\mu)(\lambda_0)|^2\epsilon}{4} \right \} \\
\ \bigcup & A_2:= \left  \{\lambda:~ |\mathcal{C} (fg\mu)(\lambda) - \mathcal{C} (fg\mu)(\lambda_0)| > \dfrac{| \mathcal{C} (g\mu)(\lambda_0)|\epsilon}{4}\right \}.
 \end{aligned}
 \]
From Lemma \ref{CauchyTLemma}, $\mathcal{C} (fg\mu)$ and $\mathcal{C} (g\mu)$ are $\gamma$-continuous at $\lambda_0$. Thus, using Theorem \ref{TTolsa} (2), we conclude that
\[
 \begin{aligned}
 \ & \lim_{\delta\rightarrow 0} \dfrac{\gamma(B(\lambda_0,\delta) \cap \{\lambda:~ |\rho(f)(\lambda) - a| >\epsilon\} \cap A_0)}{\delta} \\
 \ \le & A_T \lim_{\delta\rightarrow 0} \dfrac{\gamma(B(\lambda_0,\delta) \cap A_1)}{\delta} + A_T \lim_{\delta\rightarrow 0} \dfrac{\gamma(B(\lambda_0,\delta) \cap  A_2)}{\delta} \\
 \ = & 0.
 \end{aligned}
 \]
Using Theorem \ref{TTolsa} (2) again, since $\mathcal{C} (g\mu)$ is $\gamma$-continuous at $\lambda_0$, we get
 \[
 \begin{aligned}
 \ & \lim_{\delta\rightarrow 0} \dfrac{\gamma(B(\lambda_0,\delta) \cap \{\lambda:~ |\rho(f)(\lambda) - a| >\epsilon\})}{\delta} \\
 \ \le & A_T\lim_{\delta\rightarrow 0} \dfrac{\gamma(B(\lambda_0,\delta) \cap \{\lambda:~ |\rho(f)(\lambda) - \rho(f)(\lambda_0)| >\epsilon\} \cap A_0)}{\delta} \\
\ & + A_T \lim_{\delta\rightarrow 0} \dfrac{\gamma(B(\lambda_0,\delta) \cap A_0^c)}{\delta} \\
 \ = & 0.
 \end{aligned}
 \]    
The lemma is proved.
 \end{proof}
\smallskip

\begin{corollary}\label{BPEContinuity}
If $f \in R^t(K,\mu)$, then $\rho (f)$ is $\gamma$-continuous at $\lambda_0 \in \text{bpe}(R^t(K,\mu))$.
\end{corollary}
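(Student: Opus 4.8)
The plan is to deduce Corollary~\ref{BPEContinuity} directly from Lemma~\ref{lemmaBasic9} by producing, for a given $\lambda_0\in\text{bpe}(R^t(K,\mu))$, an annihilating measure $g\mu$ whose Cauchy transform behaves nicely at $\lambda_0$. The natural choice is the one already used in the proof of Corollary~\ref{BPETheorem}: take $k\in L^s(\mu)$ with $k(\lambda_0)=0$ and $r(\lambda_0)=\int r\,\bar k\,d\mu$ for all $r\in\mbox{Rat}(K)$ (this is Lemma~\ref{BPELemma}), and set $g(z)=(z-\lambda_0)\bar k(z)$. Then $g\perp R^t(K,\mu)$, and a short computation (the Cauchy kernel evaluated against the reproducing property) gives $\mathcal C(g\mu)(\lambda_0)=-1\neq 0$ as a principal value. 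So the only hypotheses of Lemma~\ref{lemmaBasic9} that require checking are the three conditions \eqref{lemmaBasic9Eq1}, \eqref{lemmaBasic9Eq2}, \eqref{lemmaBasic9Eq3} at $\lambda_0$ for this $g$ together with a fixed $f\in R^t(K,\mu)$.

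First I would verify \eqref{lemmaBasic9Eq3}: the principal values $\mathcal C(g\mu)(\lambda_0)$ and $\mathcal C(fg\mu)(\lambda_0)$ exist because $g\mu$ has a density vanishing at $\lambda_0$ in a strong enough sense — indeed $|g|\le |z-\lambda_0|\,|k|$ shows $\Theta_{g\mu}(\lambda_0)=0$ provided $\int_{B(\lambda_0,\delta)}|k|\,d\mu=o(1)$, which one gets from absolute continuity of $|k|\mu$ once one knows $\mu(\{\lambda_0\})=0$ (and if $\mu(\{\lambda_0\})>0$ the reduction to $\mu|_{\{\lambda_0\}^c}$ as in Lemma~\ref{BPELemma} handles it). With $\Theta_{g\mu}(\lambda_0)=0$, Lemma~\ref{CauchyTLemma} already tells us $\mathcal C(g\mu)$ and $\mathcal C(fg\mu)$ have principal values and are $\gamma$-continuous at $\lambda_0$; in fact this makes the whole argument possible without invoking Lemma~\ref{lemmaBasic9} in full, but it is cleaner to route through the lemma since $fg\mu$ is being treated as well. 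Conditions \eqref{lemmaBasic9Eq1} and \eqref{lemmaBasic9Eq2} then follow from $\int_{B(\lambda_0,\delta)}|g|\,d\mu\le\delta\int_{B(\lambda_0,\delta)}|k|\,d\mu = o(\delta)$ and, for \eqref{lemmaBasic9Eq2}, $f\in L^\infty(\mu)$ is not available in general — here I would instead use that $|fg|\le |z-\lambda_0|\,|fk|$ with $fk\in L^1(\mu)$ (since $f\in L^t$ and $k\in L^s$), so again $\int_{B(\lambda_0,\delta)}|fg|\,d\mu\le\delta\int_{B(\lambda_0,\delta)}|fk|\,d\mu=o(\delta)$ by absolute continuity of $|fk|\mu$.

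Once these hypotheses are in place, Lemma~\ref{lemmaBasic9} gives that $a=\mathcal C(fg\mu)(\lambda_0)/\mathcal C(g\mu)(\lambda_0)$ is the $\gamma$-limit of $\rho(f)$ at $\lambda_0$. To upgrade ``$\gamma$-limit'' to ``$\gamma$-continuous,'' i.e. $a=\rho(f)(\lambda_0)$, I would note that $\lambda_0\in\text{bpe}(R^t(K,\mu))\subset\text{Ran}(f)$ (by Lemma~\ref{lemmaBasic8}, \eqref{BasicEq23}) so $\rho(f)(\lambda_0)$ is defined as $\lim_m r_{n_m}(\lambda_0)$, and $r_{n_m}(\lambda_0)=(r_{n_m},k_{\lambda_0})\to(f,k_{\lambda_0})$ while also $r_{n_m}(\lambda_0)\mathcal C(g\mu)(\lambda_0)=\mathcal C(r_{n_m}g\mu)(\lambda_0)\to\mathcal C(fg\mu)(\lambda_0)$ along the same subsequence (using the uniform-on-$K\setminus A_\epsilon$ convergence of Lemma~\ref{lemmaBasic7} together with $\mathcal C(g\mu)$ being finite at $\lambda_0$); dividing by $\mathcal C(g\mu)(\lambda_0)\neq0$ identifies $\rho(f)(\lambda_0)=a$. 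The main obstacle I anticipate is purely bookkeeping: making sure the density-vanishing conditions \eqref{lemmaBasic9Eq1}--\eqref{lemmaBasic9Eq3} hold with the correct integrability ($k\in L^s$, $fk\in L^1$) rather than assuming $f$ or $k$ bounded, and handling the atomic case $\mu(\{\lambda_0\})>0$ via the reduction already indicated in Lemma~\ref{BPELemma}; none of this is deep, but it is where a careless argument would break.
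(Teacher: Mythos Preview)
Your approach is essentially identical to the paper's: invoke Lemma~\ref{BPELemma} to produce $k$, set $g(z)=(z-\lambda_0)\bar k(z)$, verify the hypotheses of Lemma~\ref{lemmaBasic9}, and conclude. The paper's proof is terser (it simply asserts the hypotheses hold and that $\mathcal C(g\mu)(\lambda_0)=1$---note your sign should be $+1$, not $-1$, since $\mathcal C(g\mu)(\lambda_0)=\int\bar k\,d\mu=1$ by taking $r\equiv 1$ in the reproducing identity), while you have usefully spelled out why \eqref{lemmaBasic9Eq1}--\eqref{lemmaBasic9Eq3} hold and why the $\gamma$-limit $a$ agrees with $\rho(f)(\lambda_0)$.
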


\begin{proof}
Using  Lemma \ref{BPELemma}, we can find a function $k\in L^{s}(\mu)$ with $k(\lambda_0) = 0$ so that $r(\lambda_0) = \int r(z) \bar k(z) d \mu(z)$ for all $r\in Rat(K)$. If $g(z) = (z-\lambda_0) \bar k(z)$, then $g\perp R^t(K,\mu)$. The function $g$ and $f\in R^t(K,\mu)$ satisfy the assumptions \eqref{lemmaBasic9Eq1}, \eqref{lemmaBasic9Eq2}, and \eqref{lemmaBasic9Eq3} of Lemma \ref{lemmaBasic9}. Moreover, $\mathcal C(g\mu)(\lambda_0) = 1$. The corollary follows from Lemma \ref{lemmaBasic9}.  
\end{proof}
\smallskip

\begin{lemma}\label{lemmaBasic10}
Let $f\in R^t(K,\mu)$ and $g\perp R^t(K,\mu)$. Let $\{\Gamma_n\}$ be as in Lemma \ref{GammaExist} and $\Gamma \in \{\Gamma_n\}$. Let $\alpha < \frac{1}{\|A'\|_\infty}$, where $A$ is the Lipschitz function for $\Gamma$. Suppose that $\mu = h\mathcal H^1 |_{\Gamma} + \sigma$ is the Radon-Nikodym 
decomposition with respect to $\mathcal H^1 |_{\Gamma}$, where $h\in L^1(\mathcal H^1 |_{\Gamma})$ and $\sigma\perp \mathcal H^1 |_{\Gamma}$.
Then there exists a subset $\mathbb Q\subset \mathbb C$ with $\gamma(\mathbb Q) = 0$, such that the following statements are true.

(a) $\mathcal C(g\mu ) (\lambda) = \lim_{\epsilon\rightarrow 0} \mathcal C_{\epsilon}(g\mu )(\lambda)$ and $\mathcal C(fg\mu ) (\lambda) = \lim_{\epsilon\rightarrow 0} \mathcal C_{\epsilon}(fg\mu )(\lambda)$ exist for $\lambda\in \mathbb C\setminus \mathbb Q$;

(b) for $\lambda_0 \in \Gamma \cap \mathcal N(h) \setminus \mathbb Q$, if $v^+(g\mu, \Gamma, \lambda_0) \ne 0$, then 
\[
 \ \lim_{\delta\rightarrow 0}\dfrac{\gamma(U_\Gamma \cap B(\lambda_0, \delta )\cap\{\lambda:~ |\rho (f)(\lambda) - f(\lambda_0)| > \epsilon\})}{\delta} = 0;  
 \]

(c) for $\lambda_0 \in \Gamma \cap \mathcal N(h)\setminus \mathbb Q$, if $v^-(g\mu, \Gamma, \lambda_0) \ne 0$, then 
\[
 \ \lim_{\delta\rightarrow 0}\dfrac{\gamma(L_\Gamma \cap B(\lambda_0, \delta )\cap\{\lambda:~ |\rho (f)(\lambda) - f(\lambda_0)| > \epsilon\})}{\delta} = 0;  
 \]

and

(d) for $\lambda_0 \in \Gamma \cap \mathcal N(h) \setminus \mathbb Q$, if $v^0(g\mu, \Gamma, \lambda_0) \ne 0$, then 
\[
 \ \lim_{\delta\rightarrow 0}\dfrac{\gamma(\Gamma \cap B(\lambda_0, \delta )\cap\{\lambda:~ |\rho (f)(\lambda) - f(\lambda_0)| > \epsilon\})}{\delta} = 0.  
 \]
\end{lemma}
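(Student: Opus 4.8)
\textbf{Plan of proof for Lemma \ref{lemmaBasic10}.}
The plan is to reduce each of (b), (c), (d) to an application of the generalized Plemelj formula (Theorem \ref{GPTheorem1}) together with the identity \eqref{BasicEq22} relating $\rho(f)$ to Cauchy transforms. First I fix a sequence $\{r_n\}\subset \mathrm{Rat}(K)$ as in \eqref{BasicEq2} and a subsequence $\{r_{n_m}\}$ as in \eqref{BasicEq21}, so that $\rho(f)(z)\mathcal C(g\mu)(z)=\mathcal C(fg\mu)(z)$ holds $\gamma$-a.a., and by Lemma \ref{lemmaBasic7} I may assume $r_{n_m}\mathcal C(g\mu)\to\mathcal C(fg\mu)$ and $r_{n_m}\mathcal C(g_j\mu)\to\mathcal C(fg_j\mu)$ uniformly off a set of small analytic capacity. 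Applying Corollary \ref{ZeroAC} to the measures $g\mu$, $fg\mu$ and $(h-h(\lambda_0))\mathcal H^1|_\Gamma$, and Theorem \ref{GPTheorem1} to both $g\mu$ and $fg\mu$ with the graph $\Gamma$, I obtain an exceptional set $\mathbb Q$ with $\gamma(\mathbb Q)=0$ outside of which (a) holds and the nontangential $\gamma$-density limits $v^{\pm}(g\mu,\Gamma,\lambda_0)$, $v^{\pm}(fg\mu,\Gamma,\lambda_0)$, $v^0(\cdot)$ all exist and are attained in full analytic capacitary density.

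Next, for (b), fix $\lambda_0\in\Gamma\cap\mathcal N(h)\setminus\mathbb Q$ with $v^+(g\mu,\Gamma,\lambda_0)\ne 0$. Using \eqref{FEquality} (that is, $\mathcal C(fg\mu)=f\,\mathcal C(g\mu)$ $\mathcal H^1|_{\mathcal N(h)}$-a.a.) and the explicit formula $v^+(\nu,\Gamma,\lambda)=\mathcal C(\nu)(\lambda)+\frac12 h(\lambda)L(\lambda)^{-1}$ for $\nu=g\mu$ and $\nu=fg\mu$, I compute (at $\mathcal H^1|_\Gamma$-almost every such $\lambda_0$, after passing to Lebesgue points of $h$, $hL^{-1}$, $fhL^{-1}$ and of the relevant $A_n$-type sets) that $v^+(fg\mu,\Gamma,\lambda_0)=f(\lambda_0)\,v^+(g\mu,\Gamma,\lambda_0)$; this is the computation already carried out in the proof of Theorem \ref{FRProperties}. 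Then, exactly as in Lemma \ref{lemmaBasic9} but restricting all balls $B(\lambda_0,\delta)$ to the half-region $U_\Gamma$, I split
\[
 \{\lambda:|\rho(f)(\lambda)-f(\lambda_0)|>\epsilon\}\cap U_\Gamma\subset A_0^c\cup A_1\cup A_2,
\]
where $A_0=\{|\mathcal C(g\mu)(\lambda)-v^+(g\mu,\Gamma,\lambda_0)|\le\epsilon_0\}$ with $\epsilon_0<\tfrac12|v^+(g\mu,\Gamma,\lambda_0)|$, and $A_1,A_2$ are level sets of $|\mathcal C(g\mu)(\lambda)-v^+(g\mu,\Gamma,\lambda_0)|$ and $|\mathcal C(fg\mu)(\lambda)-v^+(fg\mu,\Gamma,\lambda_0)|$ respectively. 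On $A_0\cap U_\Gamma$ one has $|\mathcal C(g\mu)(\lambda)|>\tfrac12|v^+(g\mu,\Gamma,\lambda_0)|$ and $\rho(f)(\lambda)=\mathcal C(fg\mu)(\lambda)/\mathcal C(g\mu)(\lambda)$, so $|\rho(f)(\lambda)-f(\lambda_0)|$ is controlled by the two level-set quantities just as in Lemma \ref{lemmaBasic9}. The $U_\Gamma$-restricted $\gamma$-densities of $A_1$, $A_2$, $A_0^c$ all tend to $0$ by Theorem \ref{GPTheorem1}(b), and semiadditivity of $\gamma$ (Theorem \ref{TTolsa}(2)) finishes (b). Statement (c) is identical with $U_\Gamma$ replaced by $L_\Gamma$ and $v^+$ by $v^-$, and (d) is identical with $\Gamma$ in place of $U_\Gamma$ and $v^0$ in place of $v^+$, using parts (c) and (d) of Theorem \ref{GPTheorem1}.

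The main obstacle I anticipate is the relation $v^+(fg\mu,\Gamma,\lambda_0)=f(\lambda_0)v^+(g\mu,\Gamma,\lambda_0)$: a priori $f$ is only in $R^t(K,\mu)$, so $fg\mu$ is a genuinely different measure and one cannot differentiate the Cauchy transform term by term. The resolution is to work $\mathcal H^1|_{\Gamma\cap\mathcal N(h)}$-a.a., where by \eqref{FEquality} the boundary value of $\mathcal C(fg\mu)$ along $\Gamma$ is governed by the honest pointwise product $f\,\mathcal C(g\mu)$, and where the jump term $\tfrac12 h L^{-1}$ carries the factor $f(\lambda_0)$ after restricting to a common Lebesgue point of $h$, $hL^{-1}$, $fhL^{-1}$ and $\mathcal C(g\mu)|_\Gamma$. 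Care is also needed because $\rho(f)$ was only defined $\gamma$-a.a. via a subsequence $\{r_{n_m}\}$, but Lemma \ref{lemmaBasic8} guarantees it is independent of that choice up to a $\gamma$-null set, so the exceptional set can be absorbed into $\mathbb Q$. Everything else is a routine repetition of the Lemma \ref{lemmaBasic9} argument localized to a half-plane region determined by $\Gamma$.
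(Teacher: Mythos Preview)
Your proposal is correct and follows essentially the same approach as the paper: reduce to the Lemma \ref{lemmaBasic9} argument with $\mathcal C(g\mu)(\lambda_0)$, $\mathcal C(fg\mu)(\lambda_0)$ replaced by $v^+(g\mu,\Gamma,\lambda_0)$, $v^+(fg\mu,\Gamma,\lambda_0)$, using Theorem \ref{GPTheorem1} in place of Lemma \ref{CauchyTLemma}, and use \eqref{FEquality} to obtain $v^+(fg\mu,\Gamma,\lambda_0)=f(\lambda_0)\,v^+(g\mu,\Gamma,\lambda_0)$ at $\mathcal H^1|_{\Gamma\cap\mathcal N(h)}$-a.e.\ point. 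The paper's proof is terser but identical in substance; your identification of the key obstacle and its resolution via \eqref{FEquality} is exactly right.
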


\begin{proof}
The proofs of (b), (c), and (d) are similar to that of Lemma \ref{lemmaBasic9}. In fact, for (b), by \eqref{BasicEq22}, we may assume that 
$\mathcal C(fg\mu ) (\lambda_0) = \rho(f)(\lambda_0) \mathcal C(g\mu ) (\lambda_0).$
In this case, since  $\rho (f) (z) = f(z), ~h\mathcal H^1 |_{\Gamma}-a.a.$ 
(see \eqref{FEquality}),  
we assume that $\rho (f)(\lambda_0) = f(\lambda_0)$. Clearly, we have
 \[
 \ f(\lambda_0) = \dfrac{v^+(fg\mu, \Gamma, \lambda_0)}{v^+(g\mu, \Gamma, \lambda_0)}.
 \]
Hence, if we replace $\mathcal C(g\mu ) (\lambda_0)$ by $v^+(g\mu, \Gamma, \lambda_0)$ and $\mathcal C(fg\mu ) (\lambda_0)$ by $v^+(fg\mu, \Gamma, \lambda_0)$ in the proof of Lemma \ref{lemmaBasic9}, respectively, then (b) follows from
 Theorem \ref{GPTheorem1} (instead of Lemma \ref{CauchyTLemma} for $\gamma$-continuity of $\mathcal C(g\mu)$ and $\mathcal C(fg\mu)$ at $\lambda_0$ in Lemma \ref{lemmaBasic9}) and 
 the same proof of Lemma \ref{lemmaBasic9}.  
\end{proof}
\smallskip

\begin{theorem}\label{MTheorem2}
If $f\in R^t(K,\mu)$, then there exists a subset $\mathbb Q\subset \mathbb C$ with $\gamma(\mathbb Q) = 0$ such that $\rho (f)$ is $\gamma$-continuous at each $\lambda_0\in \mathcal R \setminus \mathbb Q$.
\end{theorem}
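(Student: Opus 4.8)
The plan is to split $\mathcal R$ into its two defining pieces $\mathcal R_0$ and $\mathcal R_1$ (up to a $\gamma$-null set, by Definition \ref{FRDefinition1}) and to verify $\gamma$-continuity of $\rho(f)$ on each separately, in both cases by exhibiting a suitable annihilating measure $g\mu$ whose Cauchy transform (or appropriate nontangential limit) is nonzero at the point in question and then invoking Lemma \ref{lemmaBasic9} or Lemma \ref{lemmaBasic10}. The two lemmas were engineered precisely for this: they convert ``$\mathcal C(g\mu)(\lambda_0)\ne 0$ together with density/principal-value hypotheses'' into the $\gamma$-limit statement $\lim_{\delta\to 0}\gamma(B(\lambda_0,\delta)\cap\{|\rho(f)-a|>\epsilon\})/\delta = 0$ with $a = \mathcal C(fg\mu)(\lambda_0)/\mathcal C(g\mu)(\lambda_0)$. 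The remaining work is to check that the density hypotheses \eqref{lemmaBasic9Eq1}, \eqref{lemmaBasic9Eq2}, \eqref{lemmaBasic9Eq3} hold for $\gamma$-almost every point, and to identify $a$ with $f(\lambda_0)$ when $\lambda_0$ is a point where $f$ itself is well defined, so that the $\gamma$-limit becomes actual $\gamma$-\emph{continuity}.

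\emph{Step 1 (the set $\mathcal R_0$).} By Definition \ref{FRDefinition1}, $\mathcal R_0 = \mathcal{ZD}(\mu)\cap\mathcal N(\nu_j,1\le j<\infty)$. For $\gamma$-a.a. $\lambda_0\in\mathcal R_0$ there is $j_0$ with $\mathcal C(g_{j_0}\mu)(\lambda_0)\ne 0$; since $\lambda_0\in\mathcal{ZD}(\mu)$ we have $\Theta_\mu(\lambda_0)=0$, which gives \eqref{lemmaBasic9Eq1} and \eqref{lemmaBasic9Eq2} (using $f\in L^t(\mu)$ plus Hölder, or more carefully a maximal-function argument via Lemma \ref{lemmaBasic0}(8) to discard a $\gamma$-null exceptional set), and \eqref{lemmaBasic9Eq3} holds by Corollary \ref{ZeroAC} after excising another $\gamma$-null set. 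Lemma \ref{lemmaBasic9} then yields that $\rho(f)$ has $\gamma$-limit $a = \mathcal C(fg_{j_0}\mu)(\lambda_0)/\mathcal C(g_{j_0}\mu)(\lambda_0)$ at $\lambda_0$. To upgrade to $\gamma$-continuity I would use \eqref{BasicEq3}: on $\mathcal N(g_j\mu,1\le j<\infty)\setminus\mathbb Q$ one has $\rho(f)=f$ $\mu$-a.a., and combined with Lemma \ref{lemmaBasic7}(2) (so that $\rho(f)(\lambda_0)\mathcal C(g_{j_0}\mu)(\lambda_0)=\mathcal C(fg_{j_0}\mu)(\lambda_0)$ holds pointwise off a $\gamma$-null set) this forces $\rho(f)(\lambda_0)=a$.

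\emph{Step 2 (the set $\mathcal R_1$).} Here $\mathcal R_1$ is the union over $n$ of $\mathcal N(\nu_j,1\le j<\infty,\Gamma_n,+)\cap\mathcal N(\nu_j,1\le j<\infty,\Gamma_n,-)\cap\mathcal{ND}(\mu)$; fix $\Gamma=\Gamma_n$ and restrict to the portion of $\mathcal R_1$ lying on $\Gamma$ and in $\mathcal N(h)$ (legitimate $\gamma$-a.a. by \eqref{FCEq}). For $\gamma$-a.a. such $\lambda_0$ there are $j_1,j_2$ with $v^+(g_{j_1}\mu,\Gamma,\lambda_0)\ne 0$ and $v^-(g_{j_2}\mu,\Gamma,\lambda_0)\ne 0$. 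Apply Lemma \ref{lemmaBasic10}(b) with $g=g_{j_1}$ to get the $\gamma$-limit of $\rho(f)$ over $U_\Gamma\cap B(\lambda_0,\delta)$, Lemma \ref{lemmaBasic10}(c) with $g=g_{j_2}$ over $L_\Gamma\cap B(\lambda_0,\delta)$, and Lemma \ref{lemmaBasic10}(d) (using whichever of $v^0(g_{j_1}\mu,\Gamma,\cdot)$, $v^0(g_{j_2}\mu,\Gamma,\cdot)$ is nonzero at $\lambda_0$, noting $v^0=\mathcal C(g\mu)$, which is nonzero at $\gamma$-a.a. point of $\mathcal N(\mathcal C(g_{j_1}\mu))$ after discarding a $\gamma$-null set, or arguing via a Lebesgue-density/Menger-curvature reduction on $\Gamma$) over $\Gamma\cap B(\lambda_0,\delta)$, with the common limit value $f(\lambda_0)$ — here \eqref{FEquality} gives $\rho(f)=f$ on $\Gamma\cap\mathcal N(h)$ so the three limit values agree. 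Since $B(\lambda_0,\delta)$ is covered by $U_\Gamma\cup L_\Gamma\cup(\Gamma\cap B(\lambda_0,\delta))$ (up to a $\gamma$-null slice) and $\gamma$ is semiadditive (Theorem \ref{TTolsa}(2)), summing the three density-zero estimates gives the full $\gamma$-continuity of $\rho(f)$ at $\lambda_0$.

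\emph{Main obstacle.} The delicate point is handling $\mathcal R_1$ at points lying \emph{on} the graph $\Gamma$: one must split the ball $B(\lambda_0,\delta)$ into the top region, the bottom region, and the thin set $\Gamma\cap B(\lambda_0,\delta)$, control each separately, and make sure the three candidate limit values coincide. Controlling the on-graph piece requires part (d) of Lemma \ref{lemmaBasic10}, hence a point where $v^0(g\mu,\Gamma,\cdot)=\mathcal C(g\mu)$ is nonzero; one must argue (via Lemma \ref{lemmaBasic0}(7) identifying $\gamma|_\Gamma$ with $\mathcal H^1|_\Gamma$, and a Lebesgue-point argument for $h$ on $\Gamma$) that for $\gamma$-a.a. $\lambda_0\in\mathcal R_1\cap\Gamma$ at least one of $v^0(g_{j_1}\mu,\Gamma,\lambda_0)$, $v^0(g_{j_2}\mu,\Gamma,\lambda_0)$ is nonzero, since otherwise $v^\pm$ would differ only by the jump term $\pm\frac12 h L^{-1} g_{j}$ and the two requirements $v^+\ne 0$, $v^-\ne 0$ combined with $\mathcal C(g\mu)=0$ would still be consistent — so this needs a genuine argument, not merely bookkeeping. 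The other routine-but-necessary care is verifying the density hypotheses \eqref{lemmaBasic9Eq1}–\eqref{lemmaBasic9Eq2} for $|fg|\mu$: since $f\in L^t(\mu)$ only, one passes through the dominated-convergence/maximal-function machinery of Lemma \ref{lemmaBasic0}(8) and Theorem \ref{TTolsa}(3) to throw away a $\gamma$-null exceptional set on which these fail.
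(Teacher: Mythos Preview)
Your proposal is correct and follows essentially the same route as the paper: split $\mathcal R$ into $\mathcal R_0$ and $\mathcal R_1$, handle $\mathcal R_0$ via Lemma \ref{lemmaBasic9}, and handle $\mathcal R_1$ via Lemma \ref{lemmaBasic10} (b), (c), (d) combined with semiadditivity of $\gamma$.

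One simplification the paper makes that you should adopt: the ``main obstacle'' you flag --- securing some $g$ with $v^0(g\mu,\Gamma,\lambda_0)=\mathcal C(g\mu)(\lambda_0)\ne 0$ for the on-graph piece --- dissolves once you recall that $\mathcal R_1\cap\mathcal F_0=\emptyset$ $\gamma$-a.a. (Theorem \ref{FRProperties}), i.e. $\mathcal R_1\subset\bigcup_j\mathcal N(\mathcal C(g_j\mu))$ $\gamma$-a.a. So for $\gamma$-a.a. $\lambda_0\in\mathcal R_1$ there is automatically a \emph{third} index $j_3$ with $\mathcal C(g_{j_3}\mu)(\lambda_0)\ne 0$; you do not need to argue that one of $j_1,j_2$ already works (and indeed neither need work, since $v^+(g_{j_1}\mu,\Gamma,\lambda_0)\ne 0$ does not force $\mathcal C(g_{j_1}\mu)(\lambda_0)\ne 0$). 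Similarly, for the density hypotheses \eqref{lemmaBasic9Eq1}--\eqref{lemmaBasic9Eq2} on $\mathcal R_0$, the paper simply invokes Lemma \ref{lemmaBasic0}(5) to get $\mathcal{ZD}(\mu)\subset\mathcal{ZD}(g_j\mu)\cap\mathcal{ZD}(fg_j\mu)$ $\gamma$-a.a., which is cleaner than the maximal-function route you sketch.
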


\begin{proof}
Since 
 \[
 \ \mathcal {ZD}(\mu) \subset \mathcal {ZD}(g_j\mu) \cap \mathcal {ZD}(fg_j\mu) , ~\gamma-a.a.
 \]
(see Lemma \ref{lemmaBasic0} (5)) and 
 \[
 \ \mathcal R_0 = \mathcal {ZD}(\mu) \cap \mathcal N(g_j\mu, 1 \le j< \infty)
 \]
(see Definition \ref{FRDefinition1}), using Lemma \ref{lemmaBasic9}, we conclude that for almost all $\lambda_0 \in \mathcal R_0$ with respect to $\gamma$, $\rho (f)$ is $\gamma$-continuous at $\lambda_0$. On the other hand,
 \[
 \ \begin{aligned}
 \ \mathcal R_1 = & \bigcup_{n = 1}^\infty \left (\mathcal N(g_j\mu,1 \le j < \infty, \Gamma_n, +) \cap \mathcal N(g_j\mu,1 \le j < \infty, \Gamma_n, -) \right ) \cap \mathcal {ND} (\mu)\\
\ \subset & \bigcup_{j=1}^\infty \mathcal N(\mathcal C(g_j\mu)), ~\gamma-a.a.
\end{aligned}
 \]
(see Definition \ref{FRDefinition1}). Therefore, for $\lambda_0 \in \mathcal R_1$, there exist integers $n_0$, $j_1$, $j_2$, and $j_3$ such that $\lambda_0 \in \Gamma_{n_0}$, $v^+(g_{j_1}\mu, \Gamma_{n_0}, \lambda_0) \ne 0$, $v^-(g_{j_2}\mu, \Gamma_{n_0}, \lambda_0) \ne 0$, and $v^0(g_{j_3}\mu, \Gamma_{n_0}, \lambda_0) \ne 0$. Using Lemma \ref{lemmaBasic10}, we see that (b), (c), and (d) hold for $\lambda_0$. Hence, Theorem \ref{TTolsa} (2) implies 
\[
\ \begin{aligned}
 \ &\lim_{\delta\rightarrow 0}\dfrac{\gamma(B(\lambda_0, \delta )\cap\{\lambda:~ |\rho (f)(\lambda) - f(\lambda_0)| > \epsilon\})}{\delta} \\
 \ \le & A_T\lim_{\delta\rightarrow 0}\dfrac{\gamma(B(\lambda_0, \delta )\cap U_{\Gamma_{n_0}}\cap\{\lambda:~ |\rho (f)(\lambda) - f(\lambda_0)| > \epsilon\})}{\delta} \\
 \ &+ A_T\lim_{\delta\rightarrow 0}\dfrac{\gamma(B(\lambda_0, \delta )\cap \Gamma_{n_0}\cap\{\lambda:~ |\rho (f)(\lambda) - f(\lambda_0)| > \epsilon\})}{\delta} \\
 \ & + A_T\lim_{\delta\rightarrow 0}\dfrac{\gamma(B(\lambda_0, \delta )\cap L_{\Gamma_{n_0}}\cap\{\lambda:~ |\rho (f)(\lambda) - f(\lambda_0)| > \epsilon\})}{\delta} \\
 \ = & 0.  
 \ \end{aligned}
 \]
Therefore,
$\rho (f)$ is $\gamma$-continuous at $\lambda_0$. The theorem follows from $\mathcal R \approx \mathcal R_0 \cup \mathcal R_1, ~\gamma-a.a.$
 (see Definition \ref{FRDefinition1}).
\end{proof}
\smallskip

The following theorem is a generalized version of nontangential limits (in full analytic capacitary density) of functions in $R^t(K,\mu)$. 
\smallskip

\begin{theorem}\label{MTheorem3}
If $f\in R^t(K,\mu)$, then the following properties hold:

(a) There exists a $\mathbb Q_1 \subset \mathbb C$ with $\gamma(\mathbb Q_1) = 0$ such that for all $\lambda_0\in \mathcal F_+\cap \Gamma_n\setminus \mathbb Q_1$,  
\begin{eqnarray}\label{MTheorem3Eq1}
 \ \lim_{\delta\rightarrow 0}\dfrac{\gamma(L_{\Gamma_n} \cap B(\lambda_0, \delta )\cap\{\lambda:~ |\rho (f)(\lambda) - f(\lambda_0)| > \epsilon\})}{\delta} = 0.  
 \end{eqnarray}

(b) There exists a $\mathbb Q_2 \subset \mathbb C$ with $\gamma(\mathbb Q_2) = 0$ such that for all $\lambda_0\in \mathcal F_-\cap \Gamma_n\setminus \mathbb Q_2$,  
\begin{eqnarray}\label{MTheorem3Eq2}
 \ \lim_{\delta\rightarrow 0}\dfrac{\gamma(U_{\Gamma_n} \cap B(\lambda_0, \delta )\cap\{\lambda:~ |\rho (f)(\lambda) - f(\lambda_0)| > \epsilon\})}{\delta} = 0.  
 \end{eqnarray}
\end{theorem}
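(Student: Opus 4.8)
The plan is to deduce both statements from Lemma \ref{lemmaBasic10}, whose one-sided conclusions already carry the analytic weight (ultimately Theorem \ref{GPTheorem1}); the remaining work is to exhibit, for $\gamma$-almost every $\lambda_0\in\mathcal F_\pm\cap\Gamma_n$, a single annihilating measure $g_j\mu$ for which the relevant \emph{one-sided} value is nonzero, and then to assemble the countably many exceptional sets into one $\gamma$-null set. I will carry out (a) in detail; part (b) is the mirror image, with the roles of $+$ and $-$, of $U_{\Gamma_n}$ and $L_{\Gamma_n}$, and of parts (b) and (c) of Lemma \ref{lemmaBasic10} interchanged.

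Fix $f\in R^t(K,\mu)$. First I would record that, since $S_\mu$ is pure, the argument in the proof of Lemma \ref{GammaExist} gives $\mathcal{ND}(\mu)\approx\mathcal N(h)\approx\bigcup_{j\ge1}\mathcal N(g_jh)$, $\gamma$-a.a.; because $\mathcal F_+\subset\mathcal{ND}(\mu)$ by Definition \ref{FRDefinition1}, this yields
\[
\mathcal F_+\cap\Gamma_n\ \approx\ \bigcup_{j\ge1}\bigl(\mathcal F_+\cap\Gamma_n\cap\mathcal N(g_jh)\bigr),\qquad\gamma\text{-a.a.}
\]
Next, fix $n,j$ and take $\lambda_0$ in the $(n,j)$-piece. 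By the definition of $\mathcal F_+$ we have $v^+(g_j\mu,\Gamma_n,\lambda_0)=0$; since the Radon--Nikodym density of $g_j\mu$ with respect to $\mathcal H^1|_{\Gamma_n}$ is $g_jh$, formulas \eqref{VPlusBeta}--\eqref{VMinusBeta} show that $v^+(g_j\mu,\Gamma_n,\lambda_0)-v^-(g_j\mu,\Gamma_n,\lambda_0)$ equals a fixed nonzero scalar times $(g_jh)(\lambda_0)$, hence is nonzero, so that $v^-(g_j\mu,\Gamma_n,\lambda_0)\ne0$.

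Now I would apply Lemma \ref{lemmaBasic10} with $g=g_j$ and $\Gamma=\Gamma_n$: it supplies a set $\mathbb Q_{n,j}$ with $\gamma(\mathbb Q_{n,j})=0$ such that for every $\lambda_0\in\Gamma_n\cap\mathcal N(h)\setminus\mathbb Q_{n,j}$ with $v^-(g_j\mu,\Gamma_n,\lambda_0)\ne0$, part (c) of that lemma is exactly \eqref{MTheorem3Eq1}, the limit value being $f(\lambda_0)$ (using $\rho(f)=f$ $\mathcal H^1|_{\mathcal N(h)}$-a.a., \eqref{FEquality}, which I fold into $\mathbb Q_{n,j}$). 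Then I set $\mathbb Q_1$ to be the union of all the $\mathbb Q_{n,j}$ together with the $\gamma$-null discrepancy sets appearing in the displayed approximation above; by countable semiadditivity of analytic capacity (Theorem \ref{TTolsa}(2)) $\gamma(\mathbb Q_1)=0$, and every $\lambda_0\in\mathcal F_+\cap\Gamma_n\setminus\mathbb Q_1$ lies in some $\mathcal F_+\cap\Gamma_n\cap\mathcal N(g_jh)$ off $\mathbb Q_{n,j}$, so \eqref{MTheorem3Eq1} holds there. Part (b) is identical with $\mathcal F_-$: there $v^-(g_j\mu,\Gamma_n,\lambda_0)=0$, hence $v^+(g_j\mu,\Gamma_n,\lambda_0)\ne0$ whenever $(g_jh)(\lambda_0)\ne0$, and Lemma \ref{lemmaBasic10}(b) delivers \eqref{MTheorem3Eq2} on $U_{\Gamma_n}$.

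The single delicate ingredient is the reduction $\mathcal N(h)\approx\bigcup_j\mathcal N(g_jh)$ — that the family of annihilating measures jointly detects the entire absolutely continuous part of $\mu$ along $\Gamma$ — which is precisely where purity of $S_\mu$ enters and is already established in Lemma \ref{GammaExist}. Granting that, together with the hard analytic input packaged in Lemma \ref{lemmaBasic10}, what is left here is routine bookkeeping with $\gamma$-null exceptional sets via Theorem \ref{TTolsa}.
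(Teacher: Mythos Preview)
Your proof is correct and follows essentially the same approach as the paper's: reduce to Lemma \ref{lemmaBasic10}(c) (resp.\ (b)) by exhibiting, for $\gamma$-almost every $\lambda_0\in\mathcal F_+\cap\Gamma_n$, some $g_j$ with $v^-(g_j\mu,\Gamma_n,\lambda_0)\ne 0$. The only difference is how that $g_j$ is found: the paper uses $\mathcal F_+\cap\mathcal F_0=\emptyset$ (Theorem \ref{FRProperties}) to pick $j_1$ with $v^0=\mathcal C(g_{j_1}\mu)(\lambda_0)\ne 0$, then $v^+=0$ forces $v^-=2v^0\ne 0$; you instead use $\mathcal N(h)\approx\bigcup_j\mathcal N(g_jh)$ (from the proof of Lemma \ref{GammaExist}) to pick $j$ with $(g_jh)(\lambda_0)\ne 0$, so that $v^+-v^-\ne 0$ gives $v^-\ne 0$. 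Both routes are short and equivalent. One cosmetic remark: the step ``by the definition of $\mathcal F_+$ we have $v^+(g_j\mu,\Gamma_n,\lambda_0)=0$'' is, strictly speaking, the content of Theorem \ref{FCForR} \eqref{FCForREq2} (which upgrades the defining vanishing to all $\Gamma_n$ containing $\lambda_0$, $\gamma$-a.a.); the paper's own proof makes the same tacit identification.
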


\begin{proof} By Theorem \ref{FRProperties},
\[
\ \mathcal F_+ \cup \mathcal F_- \subset \mathcal F_0^c \approx \bigcup_{j=1}^\infty \mathcal N(\mathcal C(g_j\mu)), ~\gamma-a.a..
 \]
Therefore, for $\lambda_0 \in (\mathcal F_+ \cup \mathcal F_-)\cap \Gamma_n$, there exists $j_1$ such that $v^0(g_{j_1}\mu, \Gamma_n, \lambda_0) = \mathcal C (g_{j_1}\mu) (\lambda_0 ) \ne 0$. If $\lambda_0 \in \mathcal F_+$, then $v^+(g_{j_1}\mu, \Gamma_n, \lambda_0) = 0$ and $v^-(g_{j_1}\mu, \Gamma_n, \lambda_0) \ne 0$. 
(a) follows from Lemma \ref{lemmaBasic10} (c). Similarly, (b) follows from Lemma \ref{lemmaBasic10} (b).
\end{proof}
\smallskip

Certainly, one can replace $L_{\Gamma_n}$ in \eqref{MTheorem3Eq1} by the lower cone $LC(\lambda_0, \Gamma_n, \alpha)$ and $U_{\Gamma_n}$ in \eqref{MTheorem3Eq2} by the upper cone $UC(\lambda_0, \Gamma_n, \alpha)$, respectively.
\smallskip

\begin{example}\label{BVExample} 
Let $K$ be as in Example \ref{FCExample}. Let $\mu$ be a finite positive measure supported in $K$ such that $S_\mu$ is pure and $\sigma(S_\mu) = K$ (If $K$ has no interior, then $K$ is Swiss cheese). 
For $\lambda_0\in \mathbb T$, let $NR(\lambda_0)$ be the convex hull of $\{\lambda_0\}$ and $B(0, \frac{1}{2})$. If $f \in R^t(K, \mu)$, then for almost all $\lambda_0\in \mathbb T$, there exists a set $E_\delta (\lambda_0)$ such that 
$\lim_{\delta\rightarrow 0} \frac{\gamma(E_\delta (\lambda_0))}{\delta} = 0$
and
 \begin{eqnarray}\label{MTheorem3Ex}
 \ \underset{\lambda\in NR(\lambda_0)\cap B(\lambda_0, \delta)\setminus E_\delta(\lambda_0)}{\lim_{\delta\rightarrow 0}} \rho(f)(\lambda) = f(\lambda_0), ~ \mu|_{\mathbb T}-a.a..  
 \end{eqnarray}

\end{example}

\begin{proof}
It is well known that $\mu|_{\mathbb T} << \mathcal H^1 |_{\mathbb T}$ since $S_\mu$ is pure. 
\eqref{MTheorem3Ex} follows from Theorem \ref{MTheorem3}.  
\end{proof}
\bigskip

\section{Nontangential limits and indices of invariant subspaces}
\bigskip

In this section, we discuss some applications of Theorem \ref{MTheorem2} and Theorem \ref{MTheorem3}.
For an open connected set $\Omega$, in order to discuss the usual non-tangential limits, we need to require the region $\Omega$ to contain some types of non-tangential regions. 

We define the set $\mathcal {NT} _+(\Omega, \Gamma_n)$ to be the collection of $\lambda\in \partial \Omega \cap \mathcal F_- \cap \Gamma_n$ such that there exists $UC(\lambda, \alpha_\lambda, \delta_\lambda, \beta_\lambda) \subset \Omega \cap UC(\lambda, \alpha, \delta, \beta_n)$ for some $\alpha < \frac{1}{\|A_n'\|_\infty}$ and $\delta>0$.
Similarly, we define the set $\mathcal {NT} _-(\Omega, \Gamma_n)$ to be the collection of $\lambda\in \partial \Omega \cap \mathcal F_+ \cap \Gamma_n$ such that there exists $LC(\lambda, \alpha_\lambda, \delta_\lambda, \beta_\lambda) \subset \Omega \cap LC(\lambda, \alpha, \delta, \beta_n)$. 
\smallskip
 
\begin{lemma}\label{lemmaBasic11}
Let $f(z)$ be analytic on $UC(\lambda, \alpha, \delta, \beta)$. If for $\epsilon > 0$, 
\[
 \ \lim_{\delta\rightarrow 0}\dfrac{\gamma(UC(\lambda, \alpha, \delta, \beta)\cap \{|f(z) - a| > \epsilon\})}{\delta} = 0,
 \]
then for $0 < \theta < 1$,
 \[
 \ \underset{z\in UC(\lambda, \theta \alpha, \delta, \beta)}{\lim_{\delta\rightarrow 0}}f(z) = a.
 \]
\end{lemma}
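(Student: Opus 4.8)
\textbf{Proof proposal for Lemma \ref{lemmaBasic11}.}

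The plan is to show that the ordinary (cone) limit of $f$ at $\lambda$ along the narrower cone $UC(\lambda,\theta\alpha,\delta,\beta)$ exists and equals $a$, by exploiting the analyticity of $f$ together with the hypothesis that $f$ is close to $a$ on a full-$\gamma$-density subset of the wider cone. Without loss of generality I would normalize so that $\lambda = 0$, $\beta = 0$ (rotating), and $a = 0$; thus $f$ is analytic on the vertical cone $UC(0,\alpha,\delta)$ and for each $\epsilon > 0$ the set $E_\epsilon(\delta) := UC(0,\alpha,\delta)\cap\{|f| > \epsilon\}$ satisfies $\gamma(E_\epsilon(\delta))/\delta \to 0$ as $\delta\to 0$.

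The key geometric observation is this: fix $\theta < 1$ and a point $z \in UC(0,\theta\alpha,\delta)$ with $|z|$ small, say $|z| = r$. Then the disk $B(z, cr)$ with $c = c(\theta,\alpha)$ chosen small enough (depending only on the aperture gap $\alpha - \theta\alpha$) is entirely contained in the wider cone $UC(0,\alpha,2r) \subset UC(0,\alpha,\delta)$ for $r$ small. Now I apply the subaveraging / capacity estimate for bounded analytic functions: since $f$ is analytic on $B(z,cr)$ and $|f| \le \epsilon$ there except on a set $E$ of analytic capacity $\gamma(E) \le \gamma(E_\epsilon(2r))$, Lemma \ref{lemmaARS} (rescaled to the disk $B(z,cr)$, after first subtracting off a suitable bounded analytic correction supplied by Lemma \ref{lemmaBasic0} (6) to handle the exceptional set — one writes $f = f_1 + \mathcal C(\nu)$ where $f_1$ is analytic and small on the slightly larger disk and $\mathcal C(\nu)$ has small sup norm controlled by $\gamma(E)$) yields a pointwise bound
\[
 \ |f(z)| \le C\epsilon + \frac{C}{(cr)^2}\int_{B(z,cr)\setminus E}|f|\,d\mathcal L^2 + C\,\frac{\gamma(E)}{cr}\,\|f\|_\infty.
\]
Actually it is cleaner to argue directly: $f$ is bounded on the wider cone once we are close enough to the vertex (this boundedness follows because off $E_\epsilon$ we have $|f|\le\epsilon$, and $E_\epsilon$ has vanishing relative capacity so by Lemma \ref{lemmaARS} applied on a fixed disk $f$ is bounded there), so $\|f\|_\infty \le M$ on $UC(0,\alpha,\delta_0)$ for some $\delta_0$. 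Then for $z\in UC(0,\theta\alpha,\delta)$ with $|z|=r$, Lemma \ref{lemmaARS} on $B(z,cr)$ gives $|f(z)| \le \frac{C_9}{\pi(cr)^2}\int_{B(z,cr)\setminus E_\epsilon(2r)}|f|\,d\mathcal L^2 \le C\epsilon + C M\,\frac{\mathcal L^2(B(z,cr)\cap E_\epsilon(2r))}{r^2}$, and by Lemma \ref{lemmaBasic0} (2) the last term is $\le CM\,\gamma(E_\epsilon(2r))^2/r^2 \to 0$ as $r\to 0$. Since $\epsilon$ is arbitrary, $|f(z)|\to 0$ as $z\to 0$ within $UC(0,\theta\alpha,\delta)$, which is the assertion.

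The main obstacle I anticipate is the verification that $f$ is genuinely \emph{bounded} near the vertex on the wider cone — the hypothesis only controls the relative capacity of the super-level sets, not an a priori sup bound. The fix is to run Lemma \ref{lemmaARS} first on a single reference disk $B(z_0, cr_0)\subset UC(0,\alpha,\delta)$ to get $|f(z_0)|\le C_9\|\cdot\|/\cdots$ bounded in terms of $\int_{B\setminus E_\epsilon}|f| + (\text{capacity term})\|f\|_\infty$ and then use a chaining/absorption argument along a Harnack-type chain of such disks filling the cone, so that the bound propagates; alternatively one invokes Lemma \ref{lemmaBasic0} (6) to represent $f$ minus a Cauchy-transform correction as analytic and small, making boundedness automatic. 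Once boundedness is in hand the rest is the routine capacity estimate above. I would also remark, as the lemma's subsequent use requires, that exactly the same argument handles $LC$ in place of $UC$ and an arbitrary rotation angle $\beta$, since none of the steps use the vertical orientation beyond fixing coordinates.
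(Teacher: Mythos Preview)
Your core approach --- place a disk $B(w,c\delta)$ inside the wider cone around each point $w$ of the narrower cone at scale $\delta$, then invoke Lemma \ref{lemmaARS} --- is exactly the paper's. But you manufacture a difficulty that is not there. Once Lemma \ref{lemmaARS} gives
\[
|f(w)-a|\;\le\;\frac{C_9}{\pi(c\delta)^2}\int_{B(w,c\delta)\setminus E_\epsilon}|f-a|\,d\mathcal L^2,
\]
the integrand on the right is $\le\epsilon$ \emph{by definition} of $E_\epsilon=\{|f-a|>\epsilon\}$, so the right side is $\le C_9\epsilon$ outright. No a priori sup-norm bound on $f$ is needed, no extra term $CM\cdot \mathcal L^2(E_\epsilon)/r^2$, no appeal to Lemma \ref{lemmaBasic0} (6), and no chaining argument. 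The paper's proof is precisely this two-line computation; your paragraph about ``the main obstacle'' (boundedness near the vertex) can be deleted entirely.
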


\begin{proof}
There exists a constant $0 < \theta_0 < 1$ only depending on $\theta$ such that $B(w, \theta_0\delta)\subset UC(\lambda, \alpha, \delta, \beta)$ for $w\in UC(\lambda, \theta \alpha, \delta, \beta)$ and $|w - \lambda | = \frac{\delta}{2}$. Let $\delta_0 > 0$ such that for $\delta < \delta_0$, we have
 \[
 \ \gamma(UC(\lambda, \alpha, \delta, \beta)\cap \{|f(z) - a| > \epsilon\}) < \epsilon_1 \theta_0\delta
 \]
where $\epsilon_1$ is as in Lemma \ref{lemmaARS}. Applying Lemma \ref{lemmaARS}, we conclude that
 \[
 \ |f(w) - a| \le \dfrac{C_9}{\pi \delta ^2}\int _{B(w,\theta_0\delta)\setminus \{|f(z) - a| > \epsilon\}}|f(z) - a| d\mathcal L^2(z) \le C_{15} \epsilon 
 \]
for $w\in UC(\lambda, \theta \alpha, \delta, \beta)$ and $|w - \lambda | = \frac{\delta}{2}$.
The lemma follows.
\end{proof}
\smallskip

\begin{corollary} \label{BPENT}
Suppose that $UC(\lambda, \alpha, \delta, \beta) \subset \text{abpe}(R^t(K, \mu ))$. If for $f \in R^t(K, \mu )$, $\rho (f)$ is $\gamma$-continuous at $\lambda$, then
 \[
 \ \underset{z\in UC(\lambda, \theta \alpha, \delta, \beta)}{\lim_{\delta\rightarrow 0}}\rho (f)(z) = f(\lambda )
 \]
 for $0 < \theta < 1$.
\end{corollary}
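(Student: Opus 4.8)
The statement is a near-immediate combination of Theorem~\ref{MTheorem2} (the $\gamma$-continuity of $\rho(f)$ on $\mathcal R$, up to a $\gamma$-null set) with Lemma~\ref{lemmaBasic11}, the localized conversion of ``$\gamma$-limit in full capacitary density'' into a genuine non-tangential limit. The plan is as follows. First I would record that, since $UC(\lambda,\alpha,\delta,\beta)\subset\text{abpe}(R^t(K,\mu))\subset\text{int}(K)\cap\mathcal R$ (the inclusion being $\gamma$-a.a., but here it is literal because $\text{abpe}$ is open and contained in $\mathcal R$ up to a $\gamma$-null set, and a nonempty open set has positive analytic capacity in every subball), the point $\lambda$ lies in $\partial(\text{abpe})$, hence $\lambda\in\overline{\mathcal R}$; more to the point, $\rho(f)$ agrees with the genuine analytic function $f$ on $UC(\lambda,\alpha,\delta,\beta)$ because on $\text{abpe}(R^t(K,\mu))$ the function $\rho(f)$ is exactly the analytic extension $f$ via the reproducing kernel (as noted at the start of Section~3.1, $\rho(f)(\lambda')=(f,k_{\lambda'})$ and $\rho(f)=f$, $\mu|_{\text{abpe}}$-a.a.). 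So on the cone $\rho(f)$ is analytic.

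Next, the hypothesis that $\rho(f)$ is $\gamma$-continuous at $\lambda$ means precisely that $f(\lambda)$ is well defined and
\[
\lim_{\delta\rightarrow 0}\dfrac{\gamma(B(\lambda,\delta)\cap\{|\rho(f)(z)-f(\lambda)|>\epsilon\})}{\delta}=0
\]
for every $\epsilon>0$. Intersecting with the cone only shrinks the set, so the same density-zero statement holds for $UC(\lambda,\alpha,\delta,\beta)\cap\{|\rho(f)(z)-f(\lambda)|>\epsilon\}$ in place of $B(\lambda,\delta)\cap\{\cdots\}$. Now apply Lemma~\ref{lemmaBasic11} with the analytic function $f=\rho(f)$ on the cone $UC(\lambda,\alpha,\delta,\beta)$ and the value $a=f(\lambda)$: the lemma's conclusion is exactly
\[
\underset{z\in UC(\lambda,\theta\alpha,\delta,\beta)}{\lim_{\delta\rightarrow 0}}\rho(f)(z)=f(\lambda)
\]
for every $0<\theta<1$, which is the assertion of the corollary.

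The only points needing a word of care, rather than genuine obstacles, are: (i) verifying that $\rho(f)$ is honestly analytic on the cone so Lemma~\ref{lemmaBasic11} applies --- this uses $UC(\lambda,\alpha,\delta,\beta)\subset\text{abpe}(R^t(K,\mu))$ and the standard fact that point evaluations $\lambda'\mapsto(f,k_{\lambda'})$ are analytic on $\text{abpe}$, together with the identity $\rho(f)=(f,k_{\cdot})$ there, which holds by the uniqueness clause of Lemma~\ref{lemmaBasic8} ($\text{bpe}(R^t(K,\mu))\subset\text{Ran}(f)$ and $\rho(f)$ is unique there); and (ii) checking that ``$\gamma$-continuity at $\lambda$'' supplies exactly the density hypothesis of Lemma~\ref{lemmaBasic11} after restriction to the cone, which is monotonicity of $\gamma$ under inclusion. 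I expect no real obstacle: the substance was already done in Theorem~\ref{MTheorem2} (continuity) and Lemma~\ref{lemmaBasic11} (the Vitushkin/$L^2$-averaging estimate of Lemma~\ref{lemmaARS} that upgrades capacitary-density limits to non-tangential limits for analytic functions), and this corollary is their formal concatenation.
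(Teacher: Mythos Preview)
Your proposal is correct and matches the paper's approach: the paper gives no explicit proof of this corollary, treating it as an immediate consequence of Lemma~\ref{lemmaBasic11}, and your argument is precisely that concatenation---restrict the $\gamma$-continuity density statement to the cone by monotonicity of $\gamma$, observe that $\rho(f)$ is genuinely analytic on the cone because the cone lies in $\text{abpe}(R^t(K,\mu))$, and invoke Lemma~\ref{lemmaBasic11}. Your reference to Theorem~\ref{MTheorem2} is contextual rather than logically needed, since $\gamma$-continuity of $\rho(f)$ at $\lambda$ is already assumed in the hypothesis.
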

\smallskip

\begin{corollary}
Let $0 < \theta < 1$.
If $\lambda\in \text{bpe}(R^t(K, \mu ))$ and $UC(\lambda, \alpha, \delta, \beta) \subset \text{int}(K)$, then there exists $\delta_0 > 0$ such that $UC(\lambda, \theta\alpha, \delta_0, \beta) \subset \text{abpe}(R^t(K, \mu ))$. Hence, for $f \in R^t(K, \mu )$, we have
 \[
 \ \underset{z\in UC(\lambda, \theta \alpha, \delta, \beta)}{\lim_{\delta\rightarrow 0}}\rho (f)(z) = f(\lambda ).
 \]
\end{corollary}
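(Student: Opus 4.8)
The plan is to deduce the statement from the preceding Corollary~\ref{BPENT} by showing that the hypotheses there are met. The only thing to establish is the first assertion: if $\lambda \in \text{bpe}(R^t(K,\mu))$ and $UC(\lambda, \alpha, \delta, \beta) \subset \text{int}(K)$, then $UC(\lambda, \theta\alpha, \delta_0, \beta) \subset \text{abpe}(R^t(K,\mu))$ for some $\delta_0 > 0$. Once this is in hand, Corollary~\ref{BPEContinuity} (which guarantees $\rho(f)$ is $\gamma$-continuous at every point of $\text{bpe}(R^t(K,\mu))$) together with Corollary~\ref{BPENT} (applied with $\theta\alpha$ in place of $\alpha$, shrinking the cone angle once more by a factor $\theta' < 1$, or simply noting one may absorb the extra factor) yields the nontangential limit $\lim \rho(f)(z) = f(\lambda)$.

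First I would exploit Corollary~\ref{BPETheorem}: since $\lambda \in \text{bpe}(R^t(K,\mu))$, we have
\[
 \ \lim_{\delta\rightarrow 0}\dfrac{\gamma(B(\lambda,\delta)\cap \mathcal F)}{\delta} = 0.
\]
Hence, for each fixed $N \ge 1$, because $\mathcal E(g_j\mu, 1\le j\le N) \subset \mathcal F$ up to a set of zero analytic capacity is \emph{not} quite what is needed; rather, the correct route is through Theorem~\ref{ABPETheorem}(1) and Remark~\ref{ABPERemark}: it suffices to find, for points $w$ in a slightly narrower truncated cone, a small ball $B(w,\delta) \subset \text{int}(K)$ on which
\[
 \ \lim_{\delta \rightarrow 0} \dfrac{\gamma(\mathcal E(g_j\mu, 1\le j \le N)\cap B(w,\delta))}{\delta} = 0
\]
for some $N$. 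But $\mathcal E(g_j\mu, 1\le j\le N) \subset \mathcal F$ (by Theorem~\ref{FCharacterization}, a point not in $\mathcal F$ has a neighborhood scale at which $\mathcal E$ has density $0$; more precisely, for $\lambda_0 \notin \mathcal F$ off a $\gamma$-null set there is $N$ with the density $0$ property). The key quantitative input is that the density of $\mathcal F$ relative to $\gamma$ vanishes uniformly enough near $\lambda$ inside the cone. I would obtain this by combining Corollary~\ref{BPETheorem} with a covering/comparison argument: for $w$ in the truncated cone $UC(\lambda,\theta\alpha,\delta_0,\beta)$ with $|w-\lambda|$ comparable to $\delta_0$, the ball $B(w, c\,\delta_0)$ (with $c$ depending only on $\theta$) lies inside $UC(\lambda,\alpha,2\delta_0,\beta) \subset \text{int}(K)$, and $\gamma(B(w,c\delta_0)\cap\mathcal F) \le \gamma(B(\lambda, C\delta_0)\cap\mathcal F)$, which by Corollary~\ref{BPETheorem} is $o(\delta_0)$. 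Dividing by $c\delta_0$ gives the needed density-zero statement, so by Theorem~\ref{ABPETheorem}(1) (and Remark~\ref{ABPERemark}, since we only know $UC(\lambda,\alpha,\delta,\beta)\subset\text{int}(K)$, not that $K=\sigma(S_\mu)$ locally) every such $w$ lies in $\text{abpe}(R^t(K,\mu))$.

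The main obstacle I anticipate is the uniformity over the cone: Corollary~\ref{BPETheorem} gives a density-zero statement at the single point $\lambda$, but Theorem~\ref{ABPETheorem}(1) must be invoked at each interior point $w$ near $\lambda$, and one must check that the scale at which the density is small can be taken proportional to $\text{dist}(w,\lambda)$ with a constant depending only on $\theta$. This is exactly the kind of "cone geometry plus monotonicity of $\gamma$ under set inclusion" bookkeeping that makes the argument go through, and it requires only the semiadditivity and monotonicity of analytic capacity (Theorem~\ref{TTolsa}(2)) plus the trivial inclusion $B(w, c\,\text{dist}(w,\lambda)) \subset B(\lambda, C\,\text{dist}(w,\lambda))$. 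Once $UC(\lambda,\theta\alpha,\delta_0,\beta) \subset \text{abpe}(R^t(K,\mu))$ is established, Corollary~\ref{BPENT} applies verbatim (after one more harmless shrinking of the aperture) and the displayed limit follows; note that $\rho(f)$ restricted to $\text{abpe}(R^t(K,\mu))$ agrees with the analytic extension of $f$, so $f(\lambda)$ there is unambiguous.
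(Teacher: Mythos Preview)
Your plan has a genuine gap at the step where you invoke Theorem~\ref{ABPETheorem}(1). That theorem requires, at each point $w$, smallness of $\gamma(B(w,\delta)\cap \mathcal E(g_j\mu,1\le j\le N))/\delta$ for some $N$; what your covering argument delivers is smallness of $\gamma(B(w,c\delta_0)\cap \mathcal F)/(c\delta_0)$. These are not interchangeable: the inclusion $\mathcal E_N \subset \mathcal F$ is false (points of $\mathcal R_0$ can lie in $\mathcal E_N$ for small $N$, and points of $\mathcal F_+\cup\mathcal F_-$ typically do not lie in $\mathcal E_N$). You yourself flag this ``$\mathcal E_N\subset\mathcal F$ is not quite what is needed,'' but the parenthetical appeal to Theorem~\ref{FCharacterization} does not close the gap: that theorem only says that for each $\lambda_0\in\mathcal R$ there exists some $N=N(\lambda_0)$ with the density-zero property, with no uniformity in $N$ over the cone, and it does not convert $\mathcal F$-density into $\mathcal E_N$-density. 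The later Lemma~\ref{uniform} would help, but it is not yet available and even then the dependence of $N$ on the ball would need care.

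The paper sidesteps this entirely by going back to the \emph{specific} annihilator that underlies Corollary~\ref{BPETheorem}. From Lemma~\ref{BPELemma} one gets $g=(z-\lambda)\bar k\perp R^t(K,\mu)$ with $\mathcal C(g\mu)(\lambda)=1$ and $\Theta_{g\mu}(\lambda)=0$, so Lemma~\ref{CauchyTLemma} makes $\mathcal C(g\mu)$ $\gamma$-continuous at $\lambda$. Then for $\delta<\delta_0$ the set $\{|\mathcal C(g\mu)-1|>\tfrac12\}$ has capacity $<\epsilon_1\theta_0\delta$ inside the cone, and on its complement $|\mathcal C(g\mu)|\ge\tfrac12$. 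Applying Lemma~\ref{lemmaARS} to $r\in\mathrm{Rat}(K)$ on $B(w,\theta_0\delta)$ and using $r\,\mathcal C(g\mu)=\mathcal C(rg\mu)$ gives $|r(w)|\le C\delta^{-1}\|r\|$ uniformly for $w$ in the narrower cone. This is exactly the computation in the proof of Theorem~\ref{ABPETheorem}(1), but run with a single concrete $g$ rather than the family $\{g_j\}$, which is why no $\mathcal F$ versus $\mathcal E_N$ comparison is needed. Your route could be repaired by abandoning Corollary~\ref{BPETheorem} as a black box and instead reusing the $g$ constructed in its proof.
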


\begin{proof}
Using Lemma \ref{BPELemma}, we can find $k \in L^s(\mu)$ such that $k(\lambda) = 0$, $r(\lambda) = (r, k)$ for $r\in \text{Rat}(K)$, $g = (z - \lambda)\overline k \perp R^t(K,\mu)$, $\mathcal C(g\mu)(\lambda) = 1$, and $\mathcal C(g\mu)$ satisfies the assumptions of Lemma \ref{CauchyTLemma}. So by
  Lemma \ref{CauchyTLemma}, 	
$\mathcal C(g\mu)$ is $\gamma$-continuous at $\lambda$. Let $\theta_0$ be as in the proof of Lemma \ref{lemmaBasic11} for $w\in UC(\lambda, \theta \alpha, \delta, \beta)$ and $|w - \lambda | = \frac{\delta}{2}$. Let $\delta_0 > 0$ such that for $\delta < \delta_0$, we have
 \[
 \ \gamma(UC(\lambda, \alpha, \delta, \beta)\cap \{|\mathcal C(g\mu)(z) - 1| > \frac{1}{2}\}) < \epsilon_1 \theta_0\delta
 \]
where $\epsilon_1$ is as in Lemma \ref{lemmaARS}. Applying Lemma \ref{lemmaARS}, we have the following calculation
 \[
 \ \begin{aligned}
 \ |r(w)| \le & \dfrac{C_9}{\pi \delta ^2}\int _{B(w,\theta_0\delta)\setminus \{|\mathcal C(g\mu)(z) - 1| > \frac12\}} |r(z)| d\mathcal L^2(z) \\
\ \le & \dfrac{2C_9}{\pi \delta ^2}\int _{B(w,\theta_0\delta)\cap \{|\mathcal C(g\mu)(z)| \ge \frac12\}} |r(z)| |\mathcal C(g\mu)(z)| d\mathcal L^2(z) \\
\ \le & \dfrac{C_{16}}{\delta}\|r\|.
 \ \end{aligned} 
 \]
Hence, $UC(\lambda, \theta\alpha, \delta, \beta) \subset \text{abpe}(R^t(K, \mu ))$ for $\delta < \delta_0$. Now the result follows from Corollary \ref{BPENT}.    
\end{proof}
\smallskip

The following theorem, which extends Aleman-Richter-Sundberg's Theorem (Theorem III (a)), directly follows from Theorem \ref{MTheorem3} and Lemma \ref{lemmaBasic11}. In fact, for their case, $\text{abpe}(P^t(\mu)) = \Omega = \mathbb D$, 
$\bigcup_n \mathcal {NT} _+(\mathbb D, \Gamma_n) = \{\lambda\in \mathbb T:~ h(\lambda) > 0\}$, and $\mathcal {NT} _-(\mathbb D, \Gamma_n) = \emptyset$ for $n \ge 1$.
\smallskip

\begin{theorem}\label{NonTL}
Let $\Omega$ be a connected component of $\text{abpe}(R^t(K,\mu))$. Set $\mu _ a = h\mathcal H^1 |_\Gamma$. If $f \in R^t(K, \mu )$, then the nontangential limit $f^*(\lambda )$ of f at $\lambda\in \mathcal {NT} _+(\Omega, \Gamma_n)$ exists $\mu_a-a.a.$ and $f^* = f |_{\mathcal {NT} _+(\Omega, \Gamma_n)}$ as elements of $L^t(\mu_a |_{\mathcal {NT} _+(\Omega, \Gamma_n)}).$ The same result holds for $\mathcal {NT} _-(\Omega, \Gamma_n)$.
\end{theorem}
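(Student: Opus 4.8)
The plan is to deduce Theorem \ref{NonTL} from Theorem \ref{MTheorem3} by translating the full-analytic-capacitary-density statement into an ordinary nontangential limit using Lemma \ref{lemmaBasic11}, and then to upgrade the ``$\gamma$-a.a.'' exceptional set into a ``$\mu_a$-a.a.'' one. First I would fix a connected component $\Omega$ of $\text{abpe}(R^t(K,\mu))$ and a graph $\Gamma_n$, and consider $\lambda_0 \in \mathcal{NT}_+(\Omega,\Gamma_n)$; by definition this means $\lambda_0 \in \partial\Omega \cap \mathcal F_- \cap \Gamma_n$ and there is a rotated truncated cone $UC(\lambda_0,\alpha_{\lambda_0},\delta_{\lambda_0},\beta_{\lambda_0}) \subset \Omega \cap UC(\lambda_0,\alpha,\delta,\beta_n)$. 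Applying Theorem \ref{MTheorem3}(b) to $f$, there is a set $\mathbb Q_2$ with $\gamma(\mathbb Q_2)=0$ so that for $\lambda_0 \in \mathcal F_- \cap \Gamma_n \setminus \mathbb Q_2$,
\[
\lim_{\delta\rightarrow 0}\dfrac{\gamma(U_{\Gamma_n}\cap B(\lambda_0,\delta)\cap\{|\rho(f)(\lambda)-f(\lambda_0)|>\epsilon\})}{\delta}=0.
\]
Since $UC(\lambda_0,\alpha,\delta,\beta_n) \subset U_{\Gamma_n}$ for small $\delta$ (this is the observation following the definition of $UC$, using $\alpha < 1/\|A_n'\|_\infty$), the same density-zero conclusion holds with $U_{\Gamma_n}$ replaced by $UC(\lambda_0,\alpha,\delta,\beta_n)$, hence also on the smaller cone contained in $\Omega$ where $\rho(f)$ is analytic. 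Then Lemma \ref{lemmaBasic11} (with any $0<\theta<1$ and a slightly shrunk aperture) gives the genuine nontangential limit $\lim_{z\in UC(\lambda_0,\theta\alpha_{\lambda_0},\delta,\beta_{\lambda_0})\rightarrow\lambda_0}\rho(f)(z) = f(\lambda_0)$, i.e. $f^*(\lambda_0)$ exists and equals $f(\lambda_0)$ for all $\lambda_0 \in \mathcal{NT}_+(\Omega,\Gamma_n)\setminus\mathbb Q_2$.

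The remaining point, and the one I expect to be the main obstacle, is passing from ``off a set of zero analytic capacity'' to ``$\mu_a$-a.a.'' This is where the structure of $\mathcal F_-$ must be used: $\mathcal F_- \subset \mathcal{ND}(\mu)$ by Definition \ref{FRDefinition1}, and on $\mathcal{ND}(\mu)$ the measure $\mu$ has the form $h\,\mathcal H^1|_\Gamma$ with $h>0$, by Lemma \ref{GammaExist} equation \eqref{FCEq} and Lemma \ref{lemmaBasic0}(5). Thus $\mu_a = h\mathcal H^1|_\Gamma$ restricted to $\mathcal{NT}_+(\Omega,\Gamma_n)$ is absolutely continuous with respect to $\mathcal H^1|_{\Gamma_n}$, which by Lemma \ref{lemmaBasic0}(7) (the equivalence $c_\Gamma \mathcal H^1|_\Gamma \le \gamma \le C_\Gamma\mathcal H^1|_\Gamma$ on subsets of a Lipschitz graph with $\|A'\|_\infty\le\frac14$) is mutually absolutely continuous with $\gamma$ restricted to $\Gamma_n$. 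Consequently $\gamma(\mathbb Q_2)=0$ forces $\mathcal H^1|_{\Gamma_n}(\mathbb Q_2\cap\Gamma_n)=0$ and hence $\mu_a(\mathbb Q_2)=0$, so the exceptional set is $\mu_a$-null as claimed.

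Finally I would record that $f^* = f|_{\mathcal{NT}_+(\Omega,\Gamma_n)}$ as elements of $L^t(\mu_a|_{\mathcal{NT}_+(\Omega,\Gamma_n)})$: on $\mathcal{NT}_+(\Omega,\Gamma_n)$ we have $\lambda_0 \in \mathcal N(h)$, and by \eqref{BasicEq3} (or \eqref{FEquality}) $\rho(f) = f$ holds $\mu|_{\mathcal N(g_j\mu,1\le j<\infty)\setminus\mathbb Q}$-a.a., while $\mathcal{NT}_+(\Omega,\Gamma_n)\subset\mathcal F_-\subset\mathcal F_0^c\approx\mathcal N(g_j\mu,1\le j<\infty)$ up to $\gamma$-null, hence up to $\mu_a$-null by the same absolute continuity argument; the value $f(\lambda_0)$ appearing in Theorem \ref{MTheorem3} is exactly this $\mu_a$-a.a.\ representative, so $f^*(\lambda_0)=f(\lambda_0)$ identifies the boundary function with the $L^t(\mu_a)$-class of $f$. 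Integrability of $f^*$ follows since $f\in L^t(\mu)$ and $f^*=f$ $\mu_a$-a.a. The case of $\mathcal{NT}_-(\Omega,\Gamma_n)$ is identical, using Theorem \ref{MTheorem3}(a) and $L_{\Gamma_n}$, $LC$ in place of the upper objects. No new idea is needed there, so I would simply remark that the proof is symmetric.
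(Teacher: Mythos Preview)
Your proposal is correct and follows exactly the approach the paper takes: the paper states that Theorem \ref{NonTL} ``directly follows from Theorem \ref{MTheorem3} and Lemma \ref{lemmaBasic11}'' without further elaboration, and you have supplied precisely the details that this one-line proof omits, including the passage from $\gamma$-null to $\mu_a$-null via \eqref{HACEq} and the identification $f^*=f$ via \eqref{FEquality}.
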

\smallskip

Notice that the nontangential limits of an analytic function does not depend on $\alpha$ up to a zero $\mathcal H^1$ measure set.

In Example \ref{BVExample}, if $\Omega$ is a connected component of $\text{abpe}(R^t(K, \mu))$,
\newline
$E = \bigcup_n \mathcal {NT} _+(\Omega, \Gamma_n)$, and $\mu (E\cap \mathbb T) > 0$, then \eqref{MTheorem3Ex} becomes
 \[
 \ \underset{\lambda\in NR(\lambda_0)}{\lim_{\lambda\rightarrow \lambda_0}} \rho(f)(\lambda) = f(\lambda_0), ~ \mu|_{E\cap \mathbb T}-a.a..
 \]
\smallskip

The following theorem extends Aleman-Richter-Sundberg's Theorem (Theorem III (b)).

\begin{theorem}\label{IndexForIS}
Let $\Omega$ be a connected component of $\text{abpe}(R^t(K, \mu))$. Set $\mu _ a = h\mathcal H^1 |_\Gamma$.  Suppose that there exists $n$ such that $\mu_a (\mathcal {NT} _+(\Omega, \Gamma_n) \cap \partial _1 K) > 0$ (or $\mu_a (\mathcal {NT} _-(\Omega, \Gamma_n) \cap \partial _1 K) > 0$), where $\partial _1 K$ is defined as in \eqref{BOne}. If $\mathcal M$ is a non-trivial $Rat(K)$ closed invariant subspace, then $\text{dim}(\mathcal M / ((z-\lambda)\mathcal M)) = 1$ for $\lambda \in \Omega$.   
\end{theorem}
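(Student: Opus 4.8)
The plan is to follow the Aleman--Richter--Sundberg strategy for Theorem III (b), now carried through in the $R^t(K,\mu)$ setting using the nontangential limit data established in Theorem \ref{NonTL}. Fix a non-trivial $\mathrm{Rat}(K)$-invariant closed subspace $\mathcal M \subset R^t(K,\mu)$ and fix $\lambda \in \Omega$. The quotient $\mathcal M/((z-\lambda)\mathcal M)$ is always at least one-dimensional when $\mathcal M \ne \{0\}$ and $\mathcal M \ne (z-\lambda)\mathcal M$ (this non-collapsing follows because $\lambda \in \Omega = $ a component of $\mathrm{abpe}(R^t(K,\mu))$, so point evaluation at $\lambda$ is bounded and separates $\mathcal M$ from $(z-\lambda)\mathcal M$). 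So the real content is the upper bound $\dim \le 1$, i.e. that any two functions $f_1, f_2 \in \mathcal M$ that vanish at $\lambda$ in the sense of $\rho(f_i)(\lambda)$ differ, modulo $(z-\lambda)\mathcal M$, by a scalar — equivalently, that the natural map $\mathcal M \to R^t(K,\mu)$, $f\mapsto f$, composed with the codimension count, is controlled. Concretely I would take $f_1,f_2 \in \mathcal M$ with $f_1,f_2$ not both in $(z-\lambda)\mathcal M$ and produce a scalar $c$ with $f_2 - c f_1 \in (z-\lambda)\mathcal M$.

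The key device is the quotient/boundary-values argument: one shows that if $f \in \mathcal M$ and the nontangential limit $f^*$ of $f$ vanishes $\mu_a$-a.e.\ on $\mathcal{NT}_+(\Omega,\Gamma_n)\cap \partial_1 K$ (a set of positive $\mu_a$-measure by hypothesis), then $f \in (z-\lambda)\mathcal M$. Granting this, for arbitrary $f_1, f_2 \in \mathcal M$ one uses that $\rho(f_1), \rho(f_2)$ are analytic on $\Omega$ with nontangential boundary values $f_1^*, f_2^*$ (Theorem \ref{NonTL}); on the positive-measure boundary set $E := \mathcal{NT}_+(\Omega,\Gamma_n)\cap \partial_1 K$, the inner-function/log-integrability machinery (the boundary set $E$ has positive harmonic-measure weight because $\partial_1 K \subset \mathcal F$, $\gamma$-a.a., by Proposition \ref{NFSetIsBig}(3), and $\mu_a$ is comparable to $\mathcal H^1$ there via Lemma \ref{lemmaBasic0}(7)) forces $f_1^*/f_2^*$ to be determined: pick $c$ so that $(f_2 - c f_1)^* = 0$ on a further subset of positive measure by choosing $c = f_2^*(\zeta_0)/f_1^*(\zeta_0)$ at a common Lebesgue point $\zeta_0 \in E$ where $f_1^*(\zeta_0)\ne 0$, then an F.\ and M.\ Riesz / Privalov-type uniqueness theorem on the simply connected piece of $\Omega$ adjacent to $\Gamma_n$ (using that boundary values on a positive-measure set determine a bounded analytic function's zero set structure, hence the divisor) yields $f_2 - cf_1 \in (z-\lambda)\mathcal M$. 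The role of $\partial_1 K$ is exactly to guarantee that this boundary arc carries a non-trivial piece of harmonic measure of $\Omega$ so that the uniqueness theorem bites — without $\partial_1 K$, $E$ could be "invisible" from inside $\Omega$.

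The main obstacle, and the step I would spend the most care on, is upgrading "the $\gamma$-continuous/nontangential-limit version of $\rho(f)$ vanishes on a boundary set" to the honest statement "$f \in (z-\lambda)\mathcal M$". In the classical $P^t(\mu)$ case this uses that $S_\mu$ restricted to $\mathcal M$ is a subnormal-type operator whose index is governed by the defect of the boundary values, together with Thomson's structure; here I would instead argue via the Cauchy-transform identity \eqref{BasicEq22}: for $g \perp \mathcal M$ (taken in $(R^t(K,\mu))^\perp$-compatible form) one has $\rho(f)(z)\mathcal C(g\mu)(z) = \mathcal C(fg\mu)(z)$, $\gamma$-a.a., and the vanishing of the nontangential limits of $\rho(f)$ on $E$ forces $\mathcal C(fg\mu)$ to have vanishing nontangential limits from the relevant side $\mu_a$-a.a.\ on $E$; combined with the fact (Theorem \ref{MTheorem3}/Lemma \ref{lemmaBasic10}) that $\mathcal C((f/(z-\lambda))g\mu)$ then still satisfies the required one-sided decay, one concludes $f/(z-\lambda) \in R^t(K,\mu)$ and, via a limiting/approximation argument with $\mathrm{Rat}(K)$ functions respecting $\mathcal M$ (here invariance of $\mathcal M$ under multiplication by $(z-\lambda)^{-1}$-approximants, using $\lambda \in \mathrm{abpe}$), that in fact $f \in (z-\lambda)\mathcal M$. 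Making this limiting argument rigorous — in particular controlling that the approximation stays inside $\mathcal M$ and that the index does not jump — is where the bulk of the work lies; the rest is a fairly standard adaptation of the ARS boundary-uniqueness argument to the present $\gamma$-density framework.
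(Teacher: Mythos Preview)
Your proposal has a genuine gap at the ``key device'' step, and the scalar $c$ is chosen at the wrong point.

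You claim that if $f\in\mathcal M$ has $f^*=0$ $\mu_a$-a.e.\ on the boundary set $E=\mathcal{NT}_+(\Omega,\Gamma_n)\cap\partial_1K$, then $f\in(z-\lambda)\mathcal M$. But Privalov's theorem says more: an analytic function on $\Omega$ whose nontangential limits vanish on a set of positive measure is identically zero. So your device would force $f\equiv 0$, not merely $f\in(z-\lambda)\mathcal M$. Consequently the plan ``choose $c=f_2^*(\zeta_0)/f_1^*(\zeta_0)$ at a boundary Lebesgue point so that $(f_2-cf_1)^*=0$ on a positive-measure subset'' cannot work: vanishing at a single boundary point gives nothing, and vanishing on a positive-measure set would give $f_2=cf_1$ globally, which is far too strong. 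The last paragraph compounds this: vanishing of one-sided limits of $\mathcal C(fg\mu)$ on $E$ carries no information about whether $\rho(f)(\lambda)=0$ at the \emph{interior} point $\lambda$, and even if $\rho(f)(\lambda)=0$, showing $\frac{f}{z-\lambda}\in R^t(K,\mu)$ is not the issue---the question is whether it lies in $\mathcal M$.

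The paper's argument avoids all of this by working dually. For $\phi\perp\mathcal M$ and $f_1,f_2\in\mathcal M$ one forms the analytic function
\[
H(\lambda)=\int\frac{f_1(\lambda)f_2(z)-f_2(\lambda)f_1(z)}{z-\lambda}\,\phi(z)\,d\mu(z)
= f_1(\lambda)\,\mathcal C(f_2\phi\mu)(\lambda)-f_2(\lambda)\,\mathcal C(f_1\phi\mu)(\lambda)
\]
on $\Omega$. The hypothesis $\partial_1K\subset\mathcal F$ (Proposition \ref{NFSetIsBig}(3)) forces, for $\lambda\in E$, a positive-$\gamma$-density piece of $K^c$ on the $L_{\Gamma_n}$ side, whence $v^-(f_i\phi\mu,\Gamma_n,\lambda)=0$ (since $\mathcal C(f_i\phi\mu)=0$ off $K$). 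This yields the explicit values $\mathcal C(f_i\phi\mu)(\lambda)=\tfrac12 f_i(\lambda)\phi(\lambda)h(\lambda)L(\lambda)^{-1}$ and $v^+(f_i\phi\mu,\Gamma_n,\lambda)=f_i(\lambda)\phi(\lambda)h(\lambda)L(\lambda)^{-1}$. Substituting, the two terms in $H$ cancel in the $\gamma$-density nontangential sense from $U_{\Gamma_n}$, and Lemma \ref{lemmaBasic11} plus Privalov give $H\equiv 0$. Since this holds for every $\phi\perp\mathcal M$, Hahn--Banach gives $\dfrac{f_1(\lambda)f_2-f_2(\lambda)f_1}{z-\lambda}\in\mathcal M$, i.e.\ the right scalar is $c=\rho(f_2)(\lambda)/\rho(f_1)(\lambda)$ at the interior point $\lambda$, and $\dim\mathcal M/(z-\lambda)\mathcal M\le 1$.
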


\begin{proof}
Without loss of generality, we assume that $0\in \Omega$ and $\mu_a (\mathcal {NT} _+(\Omega, \Gamma_n) \cap \partial _1 K) > 0$. Let $\phi \perp \mathcal M$ and $f_1,f_2\in \mathcal M$, define
 \[
 \ H(\lambda ) = \int \dfrac{f_1(\lambda) f_2(z) - f_2(\lambda) f_1(z)}{z - \lambda} \phi(z) d\mu(z).
 \]
Clearly, $H(\lambda)$ is analytic on $\Omega$. It suffices to show that $H$ is identically zero. We assume that $\beta_n = 0$. For $\lambda \in \mathcal {NT} _+(\Omega, \Gamma_n) \cap \partial _1 K$, since $\partial _1 K \subset \mathcal F$ by Proposition \ref{NFSetIsBig} (3), we see that there exists $j_1$ such that $v^+ (g_{j_1}\mu, \Gamma_n, \lambda) \ne 0$, $v^0 (g_{j_1}\mu, \Gamma_n, \lambda) \ne 0$, and $v^- (g_{j_1}\mu, \Gamma_n, \lambda) = 0$. Therefore, for 
 \[
 \ \epsilon < \frac{1}{2} \min (|v^+ (g_{j_1}\mu, \Gamma_n, \lambda)|, |v^0 (g_{j_1}\mu, \Gamma_n, \lambda) |), 
 \]
we have
 \[
 \ B(\lambda, \delta)\cap U_{\Gamma_n} \setminus K \subset B(\lambda, \delta)\cap U_{\Gamma_n}\cap \{|\mathcal C(g_{j_1}\mu)(z)- v^+ (g_{j_1}\mu, \Gamma_n, \lambda) | >\epsilon \}
 \]
and
 \[
 \ B(\lambda, \delta)\cap \Gamma_n \setminus K \subset B(\lambda, \delta)\cap \Gamma_n \cap \{|\mathcal C(g_{j_1}\mu)(z)- v^0 (g_{j_1}\mu, \Gamma_n, \lambda) | >\epsilon \}.
 \]
Applying Theorem \ref{GPTheorem1}, we get
 \[
 \ \lim_{\delta\rightarrow 0} \dfrac{\gamma(B(\lambda, \delta)\cap U_{\Gamma_n} \setminus K)}{\delta} = \lim_{\delta\rightarrow 0} \dfrac{\gamma(B(\lambda, \delta)\cap \Gamma_n \setminus K)}{\delta} = 0.
 \]
Because $\underset{{\delta\rightarrow 0}}{\overline \lim} \frac{\gamma(B(\lambda, \delta) \setminus K)}{\delta} > 0$,
from Theorem \ref{TTolsa} (2), one sees that
 \[
 \ \underset{{\delta\rightarrow 0}}{\overline \lim} \dfrac{\gamma(B(\lambda, \delta)\cap L_{\Gamma_n} \setminus K)}{\delta} > 0.
 \]
Now by Theorem \ref{GPTheorem1} for $f_1\phi\mu$ and $f_2\phi\mu$, 
 \[
 \ \underset{{\delta\rightarrow 0}}{\overline \lim} \dfrac{\gamma(B(\lambda, \delta)\cap L_{\Gamma_n} \cap K^c \cap \{|\mathcal C(f_i\phi\mu) - v^- (f_i\phi\mu, \Gamma_n, \lambda)| < \epsilon \})}{\delta} > 0
 \]
for $i = 1,2$. Thus, we get 
 \[
 \ v^- (f_1\phi\mu, \Gamma_n, \lambda) = v^- (f_2\phi\mu, \Gamma_n, \lambda) = 0
 \]
as  $\mathcal C(f_1\phi\mu)(z) = \mathcal C(f_2\phi\mu)(z)  = 0$ for $z\in K^c$.
Hence,
 \[
 \ \mathcal C(f_i\phi\mu) (\lambda) = \frac{1}{2} f_i(\lambda)\phi(\lambda)h(\lambda) L^{-1}(\lambda)
 \] 
and
 \[
 \ v^+ (f_i\phi\mu, \Gamma_n, \lambda) = f_i(\lambda)\phi(\lambda)h(\lambda) L^{-1}(\lambda)
 \] 
for $\lambda \in \mathcal {NT} _+(\Omega, \Gamma_n) \cap \partial _1 K,~\mu_a-a.a.$ for $i = 1,2$. Let $E \subset \mathcal {NT} _+(\Omega, \Gamma_n) \cap \partial _1 K$ be a compact subset with $\mu_a(E) > 0$. By Theorem \ref{NonTL}, we assume that  the nontangential limits of $f_1$ and $f_2$ exist and  are bounded on $E$. Let
 \[
 \ \Omega_0 = \bigcup_{\lambda\in E}UC(\lambda, \alpha_\lambda, \delta_\lambda, \beta_\lambda).
 \]
By above arguments, we may assume that $|f_1(z)| \le M$, $|f_2(z)| \le M$ for $z\in \Omega_0$, $|h(\lambda) \phi (\lambda) L^{-1}(\lambda) | \le M$ for $\lambda\in E$, and $|f_1(z)f_2(\lambda) - f_2(z)f_1(\lambda)| < \frac{\epsilon}{2M}$ for $z\in UC(\lambda, \alpha_\lambda, \delta, \beta_\lambda)$ and $\delta < \delta_\lambda$ small enough.  
Therefore,
 \[
 \ \begin{aligned}
 \ & |H(z)| \\
 \ \le & |f_1(z)(\mathcal C(f_2\phi\mu) - (f_2\phi hL^{-1})(\lambda))| + |f_2(z)(\mathcal C(f_1\phi\mu) - (f_1\phi hL^{-1})(\lambda))| \\
 \ & + |f_1(z)f_2(\lambda) - f_2(z)f_1(\lambda)||h(\lambda)  \phi (\lambda) L^{-1}(\lambda) | \\
 \ \le & M|\mathcal C(f_2\phi\mu) - (f_2\phi hL^{-1})(\lambda)| + M|\mathcal C(f_1\phi\mu) - (f_1\phi hL^{-1})(\lambda)| + \frac{\epsilon}{2}
 \ \end{aligned} 
 \]
for $\lambda\in E$ and $z\in UC(\lambda, \alpha_\lambda, \delta, \beta_\lambda)$ for $\delta < \delta_\lambda$ small enough. Hence,
 \[
 \ \begin{aligned}
 \ & \lim _{\delta\rightarrow 0} \dfrac{\gamma(UC(\lambda, \alpha_\lambda, \delta, \beta_\lambda)\cap \{|H(z)| > \epsilon\})}{\delta} \\
 \ \le & A_T\lim _{\delta\rightarrow 0} \dfrac{\gamma(B(\lambda, \delta)\cap U_{\Gamma_n}\cap \{|\mathcal C(f_2\phi\mu) - v^+ (f_2\phi\mu, \Gamma_n, \lambda)|>\frac{\epsilon}{4M}\})}{\delta} \\
 \ & + A_T \lim _{\delta\rightarrow 0} \dfrac{\gamma(B(\lambda, \delta)\cap U_{\Gamma_n}\cap \{|\mathcal C(f_1\phi\mu) - v^+ (f_1\phi\mu, \Gamma_n, \lambda)|>\frac{\epsilon}{4M}\})}{\delta} \\
 \ = &0.
 \ \end{aligned} 
 \] 

Now applying Lemma \ref{lemmaBasic11}, we get
 \[
 \ \underset{z\in UC(\lambda, \frac{\alpha_\lambda}{2}, \delta, \beta_\lambda)}{\lim_{\delta\rightarrow 0}} H(z) = 0.
 \]
By Privalov's Theorem, we get $H(z) = 0$ as $\mu_a(E) > 0$. The theorem is proved.
\end{proof}

\bigskip

\chapter{An integral estimate and functions in $R^t(K,\mu)\cap L^\infty(\mu)$}
\bigskip

In this chapter, we prove two key lemmas (Theorem \ref{MLemma1} and Theorem \ref{MLemma2}) that will be used in proving our main theorem (Theorem \ref{DecompTheorem}).

We assume that $1\le t <\infty$, $\mu$ is a finite positive Borel measure supported on a compact subset $K\subset \mathbb C$, $S_\mu$ on $R^t(K,\mu)$ is pure, and $K=\sigma(S_\mu)$. Let $\{g_n\}_{n=1}^\infty \subset R^t(K,\mu) ^\perp$ be a dense subset. Denote $\mathcal E _N = \mathcal E (g_j\mu, 1\le j \le N)$.
\bigskip

\section{An integral estimate for $\rho(f)$}
\bigskip 

The objective of this section is to show the integral estimation \eqref{MLemma1Eq}.

\begin{theorem}\label{MLemma1}
If $f\in R^t(K, \mu)\cap L^\infty(\mu)$ and $\varphi$ is a smooth function with support in $B(\lambda, \delta)$, then
 \begin{eqnarray}\label{MLemma1Eq}
 \ \left |\int \rho(f)(z) \bar \partial \varphi (z) d\mathcal L^2(z) \right | \le C_{17} \|\rho(f)\| \delta \|\bar \partial \varphi\| \gamma(B(\lambda, 2\delta) \cap \mathcal F).
 \end{eqnarray}
\end{theorem}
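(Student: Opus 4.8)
\textbf{Proof proposal for Theorem \ref{MLemma1}.}

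The plan is to reduce the estimate to an application of Theorem \ref{acTheorem} combined with the weak$(1,1)$ inequality for the maximal Cauchy transform (Theorem \ref{TTolsa} (3)). First I would observe that the left-hand side of \eqref{MLemma1Eq} can be rewritten using the distributional identity $\bar\partial \mathcal C(\cdot) = -\pi(\cdot)$ together with the defining relation $\rho(f)(z)\mathcal C(g\mu)(z) = \mathcal C(fg\mu)(z)$, $\gamma$-a.a. Since $\rho(f)$ is $\gamma$-continuous on $\mathcal R$ (Theorem \ref{MTheorem2}) and $\mathcal R\approx \mathcal N(g_j\mu,1\le j<\infty)$, $\gamma$-a.a., the integral $\int \rho(f)\bar\partial\varphi\, d\mathcal L^2$ ``sees'' only the part of $B(\lambda,\delta)$ that is not covered, up to small analytic capacity, by the set $\mathcal F$. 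More precisely, the strategy is: for any $N$, split $B(\lambda,\delta)$ into the good set $\mathcal E_N^c$ and the exceptional set $\mathcal E_N := \mathcal E(g_j\mu,1\le j\le N)$; on $\mathcal E_N^c$ at least one $|\mathcal C(g_j\mu)|$, $1\le j\le N$, exceeds $1/N$, so there $|\rho(f)| = |\mathcal C(fg_j\mu)|/|\mathcal C(g_j\mu)| \le N\max_{1\le j\le N}|\mathcal C(fg_j\mu)|$, and the integral of $|\mathcal C(fg_j\mu)|$ over a disk of radius $\delta$ is controlled by $C\delta\|f\|_\infty\|g_j\|_1$ via Fubini (as in the proof of Proposition \ref{Rhoprop} (2) or Theorem \ref{ABPETheorem} (1)). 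That, however, would only give a bound in terms of $\delta$, not in terms of $\gamma(B(\lambda,2\delta)\cap\mathcal F)$, so this crude split is not enough by itself.

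The correct route is a duality/covering argument. I would fix $\varepsilon>0$ and let $E$ be any compact subset of $B(\lambda,2\delta)\cap\mathcal F$ with $\gamma(E)$ close to $\gamma(B(\lambda,2\delta)\cap\mathcal F)$. The key point to exploit is Theorem \ref{acTheorem}: for each $N$ there is a compact $E_N\subset S\cap\mathcal E_N$ with $\gamma(E_N)\ge c_5\gamma(E)$ and $\sup_{x\in E_N}\mathrm{dist}(x,E)<\epsilon'$. Contrapositively, if $\gamma(B(\lambda,2\delta)\cap\mathcal F)$ is \emph{small} then, for all $N$, $\gamma(B(\lambda,\delta)\cap\mathcal E_N)$ is correspondingly small (by semiadditivity, Theorem \ref{TTolsa} (2), together with a covering of $B(\lambda,\delta)$ by boundedly many squares); hence on a set of nearly full capacity in $B(\lambda,\delta)$ we have a pointwise bound $|\rho(f)(z)|\le N\max_{1\le j\le N}|\mathcal C(fg_j\mu)(z)|$. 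The quantitative form of this heuristic is what gives the factor $\gamma(B(\lambda,2\delta)\cap\mathcal F)$: I would write
\[
\left|\int \rho(f)\bar\partial\varphi\, d\mathcal L^2\right| \le \|\bar\partial\varphi\|\int_{B(\lambda,\delta)\cap\mathcal E_N}|\rho(f)|\, d\mathcal L^2 + \|\bar\partial\varphi\|\int_{B(\lambda,\delta)\setminus\mathcal E_N} N\max_{1\le j\le N}|\mathcal C(fg_j\mu)|\, d\mathcal L^2,
\]
then use $\mathcal L^2(A)\le 4\pi\gamma(A)^2$ (Lemma \ref{lemmaBasic0} (2)) on the first term after establishing $|\rho(f)|\le\|\rho(f)\|_\infty$ $\mathcal L^2$-a.a. (Proposition \ref{Rhoprop} (2)), getting $\le \|\rho(f)\|\,\|\bar\partial\varphi\|\cdot 4\pi\gamma(B(\lambda,\delta)\cap\mathcal E_N)^2$, and on the second term the Fubini estimate $\int_{B(\lambda,\delta)}|\mathcal C(fg_j\mu)|\,d\mathcal L^2\le C\delta\|f\|_\infty\|g_j\|_1$. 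Letting $N\to\infty$ and invoking that $\gamma(B(\lambda,\delta)\cap\mathcal E_N)\to 0$ whenever $\gamma(B(\lambda,2\delta)\cap\mathcal F)$ is fixed but the ``error'' of approximating $\mathcal F$ by $\mathcal E_N$ shrinks — this is the delicate part — one is left with the term proportional to $\gamma(B(\lambda,2\delta)\cap\mathcal F)$. Actually, the cleanest implementation replaces $\varphi$ by the localization operator $T_\varphi$ acting on $\mathcal C(\eta)$ for measures $\eta$ supported in $B(\lambda,2\delta)\cap\mathcal F$ furnished by Lemma \ref{hFunction}, mirroring the estimate \eqref{SBEst} in the string-of-beads discussion; there the bound $|\mathcal C(\varphi w m_E)'(\infty)|\le 4\gamma(S\cap(E\setminus F))$ is precisely the prototype of \eqref{MLemma1Eq}.

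The main obstacle I anticipate is making rigorous the passage from ``$\rho(f)$ is $\gamma$-continuous on $\mathcal R$ and vanishes in the appropriate density sense near $\mathcal F$'' to the \emph{quantitative} capacity bound with the explicit constant structure $C_{17}\|\rho(f)\|\delta\|\bar\partial\varphi\|\gamma(B(\lambda,2\delta)\cap\mathcal F)$ — in particular controlling the contribution of the region where $|\mathcal C(g_j\mu)|$ is small but nonzero for every $j$, i.e. the interface between $\mathcal R$ and $\mathcal F$. This requires using Theorem \ref{acTheorem} in the sharp direction (that subsets of $\mathcal F$ of positive capacity force $\mathcal E_N$-sets of comparable capacity nearby) rather than merely qualitatively, and carefully tracking that the absolute constant $c_5$ there is independent of the approximation parameter $\epsilon$. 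A secondary technical point is justifying the interchange of $\bar\partial$ and the integral and handling the principal-value Cauchy transforms on the $\gamma$-null exceptional sets, which is routine given Corollary \ref{ZeroAC} and Lemma \ref{lemmaBasic7} but must be stated. Once these are in place, summing the two pieces and optimizing over $N$ yields \eqref{MLemma1Eq}.
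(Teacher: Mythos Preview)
Your proposal has a genuine gap at the core step. You invoke Theorem~\ref{acTheorem} and then write ``Contrapositively, if $\gamma(B(\lambda,2\delta)\cap\mathcal F)$ is small then, for all $N$, $\gamma(B(\lambda,\delta)\cap\mathcal E_N)$ is correspondingly small.'' This is not the contrapositive of Theorem~\ref{acTheorem}; it is its \emph{converse}. Theorem~\ref{acTheorem} says that large $\gamma(E)$ for $E\subset\mathcal F$ forces large $\gamma(E_N)$ with $E_N\subset\mathcal E_N$ nearby, i.e.\ $\gamma(\mathcal E_N)\gtrsim\gamma(\mathcal F)$. What you need is the opposite implication, $\limsup_N\gamma(B(\lambda,\delta)\cap\mathcal E_N)\lesssim\gamma(B(\lambda,2\delta)\cap\mathcal F)$, and this is precisely the content of Lemma~\ref{uniform} in the paper --- which is the heart of the matter and requires an entirely separate argument. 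The paper obtains it by taking measures $\eta_N\in\Sigma(\mathcal E_N)$ with $\|\eta_N\|\approx\gamma(B\cap\mathcal E_N)$, passing to a weak${}^*$ limit $\eta$, and then showing (Lemmas~\ref{zeroR}, \ref{distributionLemma}, \ref{etaProperty}) that $\eta(\mathcal R)=0$ via a distributional identity for $\bar\partial(\mathcal C(\eta)\mathcal C(g_j\mu))$; this forces $\eta$ to concentrate on $\mathcal F$, whence $\|\eta\|\lesssim\gamma(\mathrm{spt}(\eta)\cap\mathcal F)$. None of this machinery appears in your sketch.

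Even granting Lemma~\ref{uniform}, your integral split fails quantitatively: the term over $B(\lambda,\delta)\setminus\mathcal E_N$ carries a factor $N$ from $|\rho(f)|\le N\max_j|\mathcal C(fg_j\mu)|$ and the Fubini bound $\int_B|\mathcal C(fg_j\mu)|\,d\mathcal L^2\le C\delta\|fg_j\|_1$ does not decay in $N$, so that piece blows up as $N\to\infty$; and the piece over $B\cap\mathcal E_N$ via $\mathcal L^2\le 4\pi\gamma^2$ yields $\gamma(\cdot)^2$, not the linear $\gamma$ you need. The paper avoids this by a different mechanism: using Lemma~\ref{lemmaBasic7}~(3) it finds rationals $r_{n_k}$ converging \emph{uniformly} to $\rho(f)$ on $\mathcal R_{\epsilon,N}=\mathcal R\setminus(A_\epsilon\cup\mathcal E_N)$, hence $\rho(f)$ agrees there with some $f_{\epsilon,N}\in R(K_{\epsilon,N})$, $K_{\epsilon,N}=\overline{\mathcal R_{\epsilon,N}}$. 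The classical Vitushkin bound $|(T_\psi f_{\epsilon,N})'(\infty)|\le C\|f_{\epsilon,N}\|\,\delta\|\bar\partial\psi\|\,\gamma(B\setminus K_{\epsilon,N})$ then gives analytic capacity directly, with $\gamma(B\setminus K_{\epsilon,N})\le A_T(\gamma(B\cap\mathcal F)+\gamma(A_\epsilon)+\gamma(B\cap\mathcal E_N))$; sending $\epsilon\to0$ and $N\to\infty$ via Lemma~\ref{uniform} finishes. The reference to Lemma~\ref{hFunction} and \eqref{SBEst} points toward the converse Theorem~\ref{MLemma2}, not toward \eqref{MLemma1Eq}.
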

\smallskip

The proof of Theorem \ref{MLemma1} relies on the lemma below.

\begin{lemma}\label{uniform}
There exists an absolute constant $C_{18} > 0$ such that
 \begin{eqnarray}\label{MLemma1Ineq}
 \ \lim_{N\rightarrow\infty} \gamma(B(\lambda, \delta)\cap \mathcal E _N) \le C_{18}\gamma(B(\lambda, 2\delta)\cap \mathcal F).
 \end{eqnarray}
\end{lemma}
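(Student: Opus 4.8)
\textbf{Proof proposal for Lemma \ref{uniform}.}

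The plan is to show that the sets $\mathcal E_N$ shrink, in analytic capacity, towards $\mathcal F$ as $N\to\infty$, and to do so on the fixed ball $B(\lambda,\delta)$ up to passing to the slightly larger ball $B(\lambda,2\delta)$. First I would reduce to a local statement: since $\gamma$ is comparable to $\gamma_+$ (Theorem \ref{TTolsa} (1)) and enjoys the semiadditivity \eqref{Semiadditive}, it suffices to produce, for every compact $E\subset B(\lambda,\delta)\cap\bigcap_N\mathcal E_N$ (note the sets $\mathcal E_N$ are decreasing in $N$, so $\lim_N\gamma(B(\lambda,\delta)\cap\mathcal E_N)\le \gamma(B(\lambda,\delta)\cap\bigcap_N\mathcal E_N)$ up to the usual inner-regularity), a comparison $\gamma(E)\le C_{18}\gamma(B(\lambda,2\delta)\cap\mathcal F)$. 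Here the point is that $\bigcap_N\mathcal E_N = \{\lambda:\ \mathcal C(\nu_j)(\lambda)\text{ exists for all }j,\ \mathcal C(\nu_j)(\lambda)=0\ \forall j\} = \mathcal F_0$ up to a set of zero analytic capacity (this is essentially Definition \ref{FRDefinition1} together with Corollary \ref{ZeroAC}, which guarantees the principal values exist $\gamma$-a.a.). So on the level of the full intersection the inequality is immediate with $C_{18}=1$, since $\mathcal F_0\subset\mathcal F$.

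The subtlety, and the step I expect to be the main obstacle, is that $\lim_N\gamma(B(\lambda,\delta)\cap\mathcal E_N)$ is a decreasing limit of capacities of sets that need \emph{not} be compact, and analytic capacity is not continuous from above along arbitrary decreasing sequences of Borel sets. To handle this I would argue by contradiction: if \eqref{MLemma1Ineq} failed with every constant, then for each $n$ there would be a compact $E_n\subset B(\lambda,\delta)\cap\mathcal E_{N_n}$ (some $N_n\to\infty$) with $\gamma(E_n)\ge n\,\gamma(B(\lambda,2\delta)\cap\mathcal F)$, and in particular $\gamma(E_n)$ bounded below by a fixed positive number $c_0$ (since $\gamma(E_n)\ge c_0>0$; note $\gamma(B(\lambda,\delta))\le\delta$, so the right side being small forces the $E_n$ to have definite capacity while $\gamma(B(\lambda,2\delta)\cap\mathcal F)$ is tiny). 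For each $n$, apply Theorem \ref{TTolsa} (1) \eqref{GammaEq2} to get a measure $\eta_n\in\Sigma(E_n)$ with $\|\mathcal C\eta_n\|_{L^2(\eta_n)\to L^2(\eta_n)}\le 1$ and $\|\eta_n\|\gtrsim\gamma(E_n)\gtrsim c_0$; by weak-$*$ compactness (all supported in $\overline{B(\lambda,\delta)}$) pass to a subsequential limit $\eta$, with $\|\eta\|\gtrsim c_0>0$. The key is that $\eta$ is supported in $\mathcal F$ up to a $\gamma$-null, hence $\eta$-null (Lemma \ref{lemmaBasic0} (9)), set: indeed on $\mathcal E_{N}$ one has $\max_{j\le N}|\mathcal C(\nu_j)|\le\frac1N$, so any point in the support of $\eta$ which is a point of positive $\eta$-density of $E_n$ for infinitely many $n$ must satisfy $\mathcal C(\nu_j)=0$ for all $j$ where the principal value exists — i.e. lies in $\mathcal F_0\subset\mathcal F$ modulo a $\gamma$-null set. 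This would make $\eta$ a positive measure of linear growth with bounded Cauchy transform and $\eta(B(\lambda,2\delta)\cap\mathcal F)\gtrsim c_0$, forcing $\gamma(B(\lambda,2\delta)\cap\mathcal F)\gtrsim c_0$ by \eqref{GammaEq2} again, contradicting that the right side of \eqref{MLemma1Ineq} was taken arbitrarily small.

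To make the density/support argument rigorous I would invoke Lemma \ref{lemmaBasic0} (8) (the weak-$(1,1)$ bound for maximal functions against $\Sigma$-measures) to control $\mathcal M_{\nu_j}$ on large subsets, together with the fact established in the proof of Theorem \ref{acTheorem} and Lemma \ref{GLemma} that $\mathcal E_N$-type sets carry the right capacitary mass; alternatively, one can avoid the limiting-measure argument entirely and instead note that Theorem \ref{acTheorem}, run in the reverse direction, already packages exactly this comparison: it produces, \emph{from} a compact subset of $\mathcal F$, subsets $E_N\subset\mathcal E_N$ of comparable capacity with $\mathrm{dist}(E_N,E)<\epsilon$, and the desired inequality \eqref{MLemma1Ineq} is the dual/converse statement, obtained by taking $E\subset B(\lambda,\delta)\cap\mathcal E_N$, observing $\mathrm{dist}(E,\mathcal F)\to 0$ as $N\to\infty$ (since $E\subset\mathcal E_N$ and $\bigcap_N\mathcal E_N\approx\mathcal F_0$), hence $E\subset B(\lambda,2\delta)\cap(\mathcal F)_{\epsilon_N}$ with $\epsilon_N\to0$, and then comparing $\gamma(E)$ to $\gamma(B(\lambda,2\delta)\cap\mathcal F)$ via the measure $\eta$ furnished by \eqref{GammaEq2} restricted and transported. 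I would write up the contradiction version as the cleanest, with the constant $C_{18}$ emerging from the two applications of \eqref{GammaEq2} and one application of \eqref{Semiadditive}. The only genuinely delicate point, as noted, is the upper semicontinuity of $\gamma$ along the decreasing family $\{\mathcal E_N\}$, which the weak-$*$ compactness argument circumvents.
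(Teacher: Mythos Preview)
Your overall strategy---pick measures $\eta_N\in\Sigma$ supported in $B(\lambda,\delta)\cap\mathcal E_N$ with $\|\eta_N\|\gtrsim\gamma(B(\lambda,\delta)\cap\mathcal E_N)$, pass to a weak-$*$ limit $\eta$, and then argue $\eta$ is concentrated on $\mathcal F$---is exactly the route the paper takes. The gap is in the last step. Your claim that ``any point in the support of $\eta$ which is a point of positive $\eta$-density of $E_n$ for infinitely many $n$ must satisfy $\mathcal C(\nu_j)=0$'' does not follow: $\mathcal C(\nu_j)$ is not continuous, and the correct statement (Theorem~\ref{FCharacterization}) is only that $\mathcal E_N$ has zero $\gamma$-\emph{density} at each point of $\mathcal R$ for large $N$. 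Zero $\gamma$-density at $\lambda_0$ does not prevent $\lambda_0\in\overline{\mathcal E_N}$ for all $N$, nor does it prevent the weak-$*$ limit $\eta$ from putting mass on $\mathcal R$. Concretely, at points of $\mathcal R_1\subset\Gamma_n$ the functions $\mathcal C(\nu_j)$ jump across $\Gamma_n$, so nearby small values of $|\mathcal C(\nu_j)|$ give no information about $\mathcal C(\nu_j)(\lambda_0)$ itself; and even at points of $\mathcal R_0$, $\gamma$-continuity of $\mathcal C(\nu_j)$ does not upgrade to the ordinary continuity your density argument implicitly uses. Your alternative suggestion of running Theorem~\ref{acTheorem} ``in the reverse direction'' is not available either: that theorem manufactures subsets of $\mathcal E_N$ near a given piece of $\mathcal F$, not the converse.

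The paper establishes $\eta(\mathcal R)=0$ by a substantially different mechanism that exploits the $R^t(K,\mu)$-structure. First (Lemma~\ref{etaProperty}~(3)) one builds, from the $\eta_N$ via Lemma~\ref{lemmaBasic6}, functions $f_N\in L^\infty(\mu)$ converging weak-$*$ to some $f\in R^t(K,\mu)\cap L^\infty(\mu)$, and shows $\mathcal C(\eta)=\rho(f)$ on $\mathcal R$ (using Lemma~\ref{zeroR} to push the $\mathcal L^2_{\mathcal R}$-identity to a $\gamma$-a.e.\ identity and to get $\Theta_\eta=0$ on $\mathcal R$). Then (Lemma~\ref{etaProperty}~(4)) one computes $\bar\partial(\mathcal C(\eta)\mathcal C(g_j\mu))$ distributionally via a mollified Cauchy transform (Lemma~\ref{distributionLemma}) to obtain $F_1 g_j\mu + F_2\eta = f g_j\mu$ with $F_2=\mathcal C(g_j\mu)$ on $\mathcal R$; comparing with $F_1=f$ there forces $\mathcal C(g_j\mu)=0$ $\eta|_{\mathcal R}$-a.a., and since $\mathcal R\subset\bigcup_j\mathcal N(\mathcal C(g_j\mu))$ this yields $\eta(\mathcal R)=0$. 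Only then does the curvature bound (\ref{GammaEq3}) give $\|\eta\|\le C\gamma(\text{spt}(\eta)\cap\mathcal F)$. None of this is captured by a support/density argument, and I do not see how to avoid it.
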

\smallskip

We present our proof of Theorem \ref{MLemma1} assuming Lemma \ref{uniform} has been proved.

\begin{proof} (Theorem \ref{MLemma1} assuming \eqref{MLemma1Ineq} holds)
Let $\{r_{n}\}$ be as in Lemma \ref{lemmaBasic7}. From Lemma \ref{lemmaBasic7} (3), we find $A_\epsilon \subset \mathcal R$ such that $\gamma(A_\epsilon) < \epsilon$ and on $\mathcal R \setminus A_\epsilon$, $\{r_{n_k}\mathcal C(g_j\mu)\}$ converges to $\rho(f)\mathcal C(g_j\mu)$ uniformly for all $j \ge 1$.  Therefore, $\{r_{n_k}\}$ converges to $\rho(f)$ uniformly on $\mathcal R _{\epsilon, N} =  \mathcal R \setminus (A_\epsilon\cup \mathcal E_N)$. Set $K_{\epsilon, N} = \overline{\mathcal R _{\epsilon, N}} \subset K$. Hence, $\{r_{n_k}\} \subset R(K_{\epsilon, N})$ converges to $f_{\epsilon, N}\in R(K_{\epsilon, N})$ uniformly on $K _{\epsilon, N}$ and 
 \begin{eqnarray}\label{MLemma1RhoF}
 \ \rho(f) (z) = f_{\epsilon, N}(z), ~ z\in \mathcal R _{\epsilon, N}.
 \end{eqnarray}
 We extend $f_{\epsilon, N}$ as a continuous function on $\mathbb C$ with $\|f_{\epsilon, N}\|_{\mathbb C} \le 2\|\rho(f)\|_{L^\infty(\mathcal L^2|_{\mathcal R})}$. Let $\psi$ be a smooth function with $\text{spt}(\psi) \subset B(\lambda_0, \delta)$. We have the following calculation:
\[
 \ \begin{aligned}
 \  &|(T_\psi f_{\epsilon, N})'(\infty) | \\
 \ = &\dfrac{1}{\pi}\left | \int f_{\epsilon, N}(z) \bar \partial \psi (z) d \mathcal L^2(z) \right | \\
 \ \le & C_{19}\|\rho(f)\| \delta\|\bar\partial \psi\| \gamma(B(\lambda_0, \delta)\setminus K_{\epsilon, N}) \\
 \ \le &C_{19} \|\rho(f)\|\delta\|\bar\partial \psi\| \gamma(B(\lambda_0, \delta)\cap(\mathcal F\cup A_\epsilon\cup\mathcal E_N)) \\
 \ \le & C_{20} \|\rho(f)\| \delta\|\bar\partial \psi\| (\gamma(B(\lambda_0, \delta)\cap\mathcal F) + \gamma(A_\epsilon ) + \gamma(B(\lambda_0, \delta)\cap \mathcal E_N))
 \end{aligned}
 \]
where Theorem \ref{TTolsa} (2) is applied for the last step.  Notice that $\rho(f)(z) = 0, ~ \mathcal L^2|_{\mathcal R^c}-a.a.$, we get, by \eqref{MLemma1RhoF}, Lemma \ref{lemmaBasic0} (2), and Theorem \ref{TTolsa} (2),  
 \[
 \ \begin{aligned}
 \ &\left | \int \rho(f)(z) \bar \partial \psi (z) d \mathcal L^2(z) \right | \\
 \ \le & \left | \int (\rho(f)(z) - f _{\epsilon, N}(z))\bar \partial \psi (z) d \mathcal L^2(z) \right | + \left | \int f_{\epsilon, N}(z) \bar \partial \psi (z) d \mathcal L^2(z) \right | \\
 \ \le & \int _{\mathcal R _{\epsilon, N}^c}|\rho(f)(z) - f _{\epsilon, N}(z)||\bar \partial \psi (z)| d \mathcal L^2(z) + \pi |(T_\psi f_{\epsilon, N})'(\infty) | \\
 \ \le & C_{21} \|\rho(f)\| \| \bar \partial \psi\| \mathcal L^2(B(\lambda_0, \delta)\cap \mathcal R _{\epsilon, N}^c)  + \pi |(T_\psi f_{\epsilon, N})'(\infty) | \\
 \ \le & C_{22} \|\rho(f)\| \delta \| \bar \partial \psi\| \gamma (B(\lambda_0, \delta)\cap \mathcal R _{\epsilon, N}^c)  + \pi |(T_\psi f_{\epsilon, N})'(\infty) | \\
 \ \le & C_{23} \|\rho(f)\| \delta\|\bar\partial \psi\| (\gamma(B(\lambda_0, \delta)\cap\mathcal F) + \gamma(A_\epsilon ) + \gamma(B(\lambda_0, \delta)\cap \mathcal E_N)).
 \end{aligned}
 \]
Thus, using Lemma \ref{uniform} and taking $\epsilon\rightarrow 0$ and $N\rightarrow \infty$, we get
 \[
 \ \left | \int \rho(f)(z) \bar \partial \psi (z) d \mathcal L^2(z) \right | \le C_{23} \|\rho(f)\| \delta\|\bar\partial \psi\| \gamma(B(\lambda_0, 2\delta)\cap\mathcal F).
 \]
We complete the proof. 
\end{proof}
\smallskip

The remaining section is to prove Lemma \ref{uniform}.

\begin{lemma}\label{zeroR}
For $f\in R^t(K, \mu)\cap L^\infty(\mu)$, if $\eta$ is a finite positive measure with compact support, $\eta\in \Sigma(\text{spt}(\eta))$, and $\|\mathcal C_\epsilon (\eta)\| \le 1$ such that 
\[
 \ \mathcal C (\eta)(\lambda) = \rho(f)(\lambda), ~ \mathcal L^2_{\mathcal R}-a.a.,
 \]
then 
 \begin{eqnarray}\label{zeroREq1}
 \ \Theta_\eta(\lambda) = 0, ~ \gamma|_{\mathcal R}-a.a.
 \end{eqnarray}
and
 \begin{eqnarray}\label{zeroREq2}
 \ \mathcal C (\eta)(\lambda) = \rho(f)(\lambda), ~ \gamma |_{\mathcal R}-a.a..
 \end{eqnarray}
\end{lemma}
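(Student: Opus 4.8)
\textbf{Proof proposal for Lemma \ref{zeroR}.}

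The plan is to establish \eqref{zeroREq1} first, then derive \eqref{zeroREq2} from it together with Lemma \ref{CauchyTLemma}. For \eqref{zeroREq1}, I would argue by contradiction: suppose $\gamma(\{ \lambda \in \mathcal R : \Theta_\eta^*(\lambda) > 0 \}) > 0$. By Lemma \ref{lemmaBasic0} (5) (a), after intersecting with one of the sets $\mathcal{ND}(\eta, n)$, we obtain a compact subset $F \subset \mathcal R$ with $\gamma(F) > 0$ on which $\eta |_F = g \,\mathcal H^1 |_F$ for a Borel function bounded above and below by positive constants; in particular $\mathcal H^1(F) > 0$, and by Lemma \ref{lemmaBasic0} (4) we may assume (after further restriction and rotation) that $F \subset \Gamma$ for a single Lipschitz graph $\Gamma$ with $\|A'\|_\infty \le \tfrac14$, so that $\eta$ is mutually absolutely continuous with $\mathcal H^1 |_F$. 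The idea is now to test $\rho(f)$ against this rectifiable piece of $\mathcal R$ and contradict Theorem \ref{MTheorem2} (the $\gamma$-continuity of $\rho(f)$ on $\mathcal R$) or Theorem \ref{FCForR}/\eqref{FEquality}. Concretely, since $\mathcal C\eta$ is bounded on $L^2(\eta)$ and $F \subset \Gamma$, the classical Plemelj formula (Lemma \ref{PFLipschitz}) gives that the nontangential limits $v^\pm(\eta, \Gamma, \lambda)$ of $\mathcal C\eta$ from the two sides of $\Gamma$ differ by $\pm \tfrac12 (d\eta/d\mathcal H^1|_\Gamma)(\lambda) L(\lambda)^{-1} \ne 0$ at $\mathcal H^1 |_F$-a.a. $\lambda$. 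On the other hand $\mathcal C\eta = \rho(f)$ $\mathcal L^2_{\mathcal R}$-a.a., and $\rho(f)$ is $\gamma$-continuous $\gamma|_{\mathcal R}$-a.a. by Theorem \ref{MTheorem2}; since $\gamma|_\Gamma \approx \mathcal H^1|_\Gamma$ by Lemma \ref{lemmaBasic0} (7), $\rho(f)$ is $\gamma$-continuous at $\mathcal H^1|_F$-a.a. point of $F$. A $\gamma$-limit that exists is unique and, being approached along $\mathcal L^2$-generic sequences from both $U_\Gamma$ and $L_\Gamma$ (a $\gamma$-null set is $\mathcal L^2$-null by Lemma \ref{lemmaBasic0} (2)), must equal both $v^+(\eta,\Gamma,\lambda)$ and $v^-(\eta,\Gamma,\lambda)$; hence $v^+ = v^-$ at $\mathcal H^1|_F$-a.a. $\lambda$, forcing $d\eta/d\mathcal H^1|_\Gamma = 0$ on $F$, a contradiction. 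This proves \eqref{zeroREq1}.

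Once $\Theta_\eta(\lambda) = 0$ holds $\gamma|_{\mathcal R}$-a.a., equation \eqref{zeroREq2} follows from Lemma \ref{CauchyTLemma}: by Corollary \ref{ZeroAC}, $\mathcal C\eta(\lambda) = \lim_{\epsilon \to 0}\mathcal C_\epsilon(\eta)(\lambda)$ exists $\gamma$-a.a., and at $\gamma|_{\mathcal R}$-a.a. $\lambda_0$ both hypotheses (a) and (b) of Lemma \ref{CauchyTLemma} are satisfied, so $\mathcal C\eta$ is $\gamma$-continuous at $\lambda_0$. Likewise $\rho(f)$ is $\gamma$-continuous at $\gamma|_{\mathcal R}$-a.a. $\lambda_0$ by Theorem \ref{MTheorem2}. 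Two $\gamma$-continuous functions that agree $\mathcal L^2$-a.a. near $\lambda_0$ must have the same $\gamma$-value at $\lambda_0$ (again because $\gamma$-null $\Rightarrow$ $\mathcal L^2$-null, so the common $\gamma$-limit is forced to be the $\mathcal L^2$-essential value), hence $\mathcal C\eta(\lambda_0) = \rho(f)(\lambda_0)$ for $\gamma|_{\mathcal R}$-a.a. $\lambda_0$.

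The main obstacle I anticipate is the rectifiable case in the proof of \eqref{zeroREq1}: carefully arranging that the $\gamma$-limit of $\rho(f)$ at $\mathcal H^1|_F$-a.a. point of $F \subset \Gamma$ can be computed as the (one-sided) nontangential boundary value of $\mathcal C\eta$ from \emph{both} sides. One needs that the set where $\mathcal C\eta$ differs from $v^+(\eta,\Gamma,\cdot)$ on $U_\Gamma \cap B(\lambda_0,\delta)$ has $\gamma$-density zero (this is exactly Theorem \ref{GPTheorem1} (b)), combined with the fact that $\rho(f)$'s $\gamma$-limit, if it exists, is unique and agrees with $\mathcal C\eta$ off an $\mathcal L^2$-null (hence not-too-large in $\gamma$-density after intersecting with a full-density subset) set — the bookkeeping of these density statements and the passage between $\mathcal L^2$-a.e.\ equality and $\gamma$-continuity values is the delicate part. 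An alternative, possibly cleaner route avoiding Plemelj entirely: use Lemma \ref{lemmaBasic0} (9) or \eqref{GammaEq3} to directly bound $\eta(F)$ by $\gamma(F)$-type quantities and argue that if $\Theta_\eta > 0$ on a $\gamma$-positive subset of $\mathcal R$, then, writing $\rho(f) = \mathcal C\eta$, one contradicts the established $\gamma$-continuity of $\rho(f)$ on $\mathcal R$ because $\mathcal C(\eta|_F)$ cannot be $\gamma$-continuous where $\eta|_F$ has positive linear density — but I expect the Plemelj-based argument above to be the one that goes through most transparently given the tools already assembled in Section 2.3 and Chapter 3.
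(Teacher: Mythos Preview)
Your proposal is correct and follows essentially the same route as the paper: for \eqref{zeroREq1}, argue by contradiction, locate a Lipschitz graph $\Gamma$ carrying positive $\eta$-mass inside $\mathcal R$, use the generalized Plemelj formula (Theorem \ref{GPTheorem1}) to see that $\mathcal C\eta$ has distinct one-sided $\gamma$-limits $v^\pm$ at $\mathcal H^1|_F$-a.a.\ points, and contrast this with the $\gamma$-continuity of $\rho(f)$ on $\mathcal R$ (Theorem \ref{MTheorem2}) together with the $\mathcal L^2_{\mathcal R}$-a.e.\ identity $\mathcal C\eta=\rho(f)$; then \eqref{zeroREq2} follows from Lemma \ref{CauchyTLemma} and Theorem \ref{MTheorem2} by the same density comparison.

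The one ingredient you allude to but do not name explicitly is Theorem \ref{DensityCorollary}: to pass from the $\mathcal L^2_{\mathcal R}$-a.e.\ equality $\mathcal C\eta=\rho(f)$ to a sequence in $U_\Gamma$ (respectively $L_\Gamma$) along which both $\mathcal C\eta\to v^+$ and $\rho(f)\to\rho(f)(\lambda)$, you need $\gamma(B(\lambda,\delta)\cap\mathcal F)/\delta\to 0$, so that $\mathcal R\cap U_\Gamma\cap B(\lambda,\delta)$ retains positive $\mathcal L^2$-density after removing the $\gamma$-density-zero bad sets. The paper invokes Theorem \ref{DensityCorollary} at precisely this point; your phrase ``full-density subset'' is the right instinct, and with that reference made explicit the argument is complete.
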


\begin{proof}
Suppose that 
 \begin{eqnarray}\label{zeroREq3}
 \ \gamma \{\lambda \in \mathcal R:~ \Theta_\eta^*(\lambda) > 0\} > 0. 
 \end{eqnarray}
Then using Lemma \ref{lemmaBasic0} (4) and (5), we can find a Lipschitz graph $\Gamma$ such that $\eta(\Gamma \cap \mathcal R) > 0$. Without loss of generality, we assume that the rotation angle of $\Gamma$ is zero. 
Set 
\[
 \ U_{\epsilon, \eta}^\lambda  = \{z:~ |\mathcal C(\eta)(z) - v^+(\eta,\Gamma, \lambda)| > \epsilon\} \cap U_\Gamma, 
 \]
 \[
 \ L_{\epsilon, \eta}^\lambda = \{z:~ |\mathcal C(\eta)(z) - v^-(\eta,\Gamma, \lambda)| > \epsilon\} \cap L_\Gamma, 
 \]
and 
 \[
 \ A_{\epsilon, f}^\lambda = \{z:~ |\rho(f)(z) - \rho(f)(\lambda)| > \epsilon\}.
 \]
Except for a zero $\mathcal H |_\Gamma$ set $\mathbb Q$, from Theorem \ref{GPTheorem1}, 
 \[
 \ \lim_{\delta\rightarrow 0}\dfrac{\gamma(B(\lambda,\delta)\cap U_{\epsilon, \eta}^\lambda)}{\delta} = \lim_{\delta\rightarrow 0}\dfrac{\gamma(B(\lambda,\delta)\cap L_{\epsilon, \eta}^\lambda)}{\delta} = 0,
 \]
and from Theorem \ref{DensityCorollary} and Theorem \ref{MTheorem2},
 \[
 \ \lim_{\delta\rightarrow 0}\dfrac{\gamma(B(\lambda,\delta)\cap A_{\epsilon, f}^\lambda)}{\delta} = \lim_{\delta\rightarrow 0}\dfrac{\gamma(B(\lambda,\delta)\cap \mathcal F)}{\delta} = 0
 \]
for $\lambda \in \Gamma \cap \mathcal R \setminus \mathbb Q$. Hence, from Lemma \ref{lemmaBasic0} (2),
 \[
 \ \lim_{\delta\rightarrow 0}\dfrac{\mathcal L^2 (B(\lambda,\delta)\cap U_\Gamma \cap (U_{\epsilon, \eta}^\lambda\cup A_{\epsilon, f}^\lambda \cup \mathcal F))}{\delta} = 0.
 \]
Therefore, for $\lambda \in \Gamma \cap \mathcal R\setminus \mathbb Q$, 
 \[
 \ \underset{\delta \rightarrow 0}{\overline{\lim}} \dfrac{\mathcal L^2(B(\lambda, \delta) \cap U_\Gamma \cap (U_{\epsilon, \eta}^\lambda)^c\cap (A_{\epsilon, f}^\lambda)^c \cap \mathcal R \cap\{\mathcal C (\eta)(z) = \rho(f)(z)\})}{\delta^2}> 0.
 \] 
There exists a sequence of $\{\lambda_n\}\subset \mathcal R$ tending to $\lambda$ such that $\mathcal C (\eta)(\lambda_n) = \rho(f)(\lambda_n)$,  $\mathcal C (\eta)(\lambda_n) \rightarrow v^+(\eta,\Gamma, \lambda)$, and  $\rho(f)(\lambda_n) \rightarrow \rho(f)(\lambda)$. So $\rho(f)(\lambda) = v^+(\eta,\Gamma, \lambda)$. Similarly,  $\rho(f)(\lambda) = v^-(\eta,\Gamma, \lambda)$.
Hence, $\eta (\Gamma \cap \mathcal R) = 0$, which contradicts to \eqref{zeroREq3}. Thus, \eqref{zeroREq1} is proved.

By Lemma \ref{CauchyTLemma}, $\mathcal C (\eta)(\lambda)$ is $\gamma$-continuous for $\lambda\in\mathcal R, ~ \gamma-a.a.$. Set 
 \[
 \ B_{\epsilon, f}^\lambda = \{|\mathcal C (\eta)(z) - \mathcal C (\eta)(\lambda)| < \epsilon\}.
 \]
Similarly, we can prove
\[
 \ \underset{\delta \rightarrow 0}{\overline{\lim}} \dfrac{\mathcal L^2(B(\lambda, \delta) \cap B_{\epsilon, f}^\lambda \cap (A_{\epsilon, f}^\lambda)^c\cap \mathcal R \cap\{\mathcal C (\eta)(z) = \rho(f)(z)\})}{\delta^2} > 0,
 \]
which implies \eqref{zeroREq2}.  
\end{proof}
\smallskip

Let $\phi$ be a bounded non-negative function on $\mathbb R$ supported on $[0,1]$ with $0 \le \phi(z) \le 2$ and $\int \phi(|z|) d\mathcal L^2(z) = 1$. Let $\phi_\epsilon(z) = \frac{1}{\epsilon^2} \phi (\frac{|z|}{\epsilon})$. Define the kernel function
$K_\epsilon = - \frac{1}{z} * \phi_\epsilon$.
For a finite complex-valued measure $\nu$ with compact support, define
$\tilde {\mathcal C}_\epsilon \nu = K_\epsilon * \nu$.
Clearly,
 \[
 \ \tilde {\mathcal C}_\epsilon \nu = \phi_\epsilon * \mathcal C\nu = \mathcal C(\phi_\epsilon*\nu).
 \]
It is easy to show that
 \[
 \ K_\epsilon (z) = - \dfrac{1}{z}, ~ |z| \ge \epsilon
 \]
and
$\|K_\epsilon \|_\infty \le \frac{C_{26}}{\epsilon}$.
Hence,
 \begin{eqnarray}\label{CTDiff}
 \ \begin{aligned}
 \ |\tilde {\mathcal C}_\epsilon \nu (\lambda) - \mathcal C_\epsilon \nu (\lambda)| = &\left |\int_{|z-\lambda | \le \epsilon} K_\epsilon(\lambda - z) 
d\nu
(z) \right | \\
 \ \le &C_{26} \dfrac{|\nu|(B(\lambda, \epsilon))}{\epsilon}.
 \ \end{aligned}
 \end{eqnarray}
\smallskip

\begin{lemma}\label{distributionLemma}
Suppose that $\eta$ is a finite positive measure with compact support, is linear growth, $\|\mathcal C_\epsilon (\eta)\| \le 1$, and $\Theta_\eta(\lambda) = 0, ~ \lambda \in \mathcal R, ~ \gamma-a.a.$. Set $\nu = g_j\mu$ for a given $j$. If $|\tilde {\mathcal C}_\epsilon (\nu) (\lambda)|, ~ \mathcal M_{\nu} (\lambda)\le M < \infty, ~ \eta-a.a.$, then there are two functions $F_1\in L^\infty (\mu)$ and $F_2\in L^\infty (\eta)$ with $F_1(z) = \mathcal C(\eta),~ \mu |_{\mathcal R\setminus \mathbb Q_1}-a.a.$, where $\gamma(\mathbb Q_1)= 0$, and $F_2(z) = \mathcal C(\nu),~ \eta |_{\mathcal R}-a.a.$ such that in the sense of distribution,
 \[
 \ \bar\partial (\mathcal C (\eta)\mathcal C (\nu)) = - \pi(F_1\nu + F_2 \eta).
 \]
\end{lemma}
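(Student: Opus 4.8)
The identity $\bar\partial\mathcal C(\eta) = -\pi\eta$ and $\bar\partial\mathcal C(\nu) = -\pi\nu$ hold in the sense of distributions by \eqref{CTDistributionEq}, so formally the Leibniz rule gives $\bar\partial(\mathcal C(\eta)\mathcal C(\nu)) = (\bar\partial\mathcal C(\eta))\mathcal C(\nu) + \mathcal C(\eta)(\bar\partial\mathcal C(\nu)) = -\pi(\mathcal C(\nu)\eta + \mathcal C(\eta)\nu)$. The whole point of the lemma is to make this rigorous (a product of two $L^1_{loc}$ functions is not obviously differentiable) and, crucially, to replace the pointwise-a.e.-undefined products $\mathcal C(\nu)\eta$ and $\mathcal C(\eta)\nu$ by honest bounded densities $F_2\eta$ and $F_1\nu$. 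First I would fix the regularized transforms $\tilde{\mathcal C}_\epsilon$ introduced just before the statement and test the product $\tilde{\mathcal C}_\epsilon(\eta)\tilde{\mathcal C}_\epsilon(\nu)$ against a smooth compactly supported $\varphi$: since each $\tilde{\mathcal C}_\epsilon$ is smooth, $\bar\partial(\tilde{\mathcal C}_\epsilon(\eta)\tilde{\mathcal C}_\epsilon(\nu)) = (\phi_\epsilon*(-\pi\eta))\tilde{\mathcal C}_\epsilon(\nu) + \tilde{\mathcal C}_\epsilon(\eta)(\phi_\epsilon*(-\pi\nu))$ honestly, and the plan is to pass to the limit $\epsilon\to 0$ in
\[
-\int \tilde{\mathcal C}_\epsilon(\eta)\tilde{\mathcal C}_\epsilon(\nu)\,\bar\partial\varphi\, d\mathcal L^2 = -\pi\int \tilde{\mathcal C}_\epsilon(\nu)\,\varphi\, d(\phi_\epsilon*\eta) - \pi\int \tilde{\mathcal C}_\epsilon(\eta)\,\varphi\, d(\phi_\epsilon*\nu).
\]

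\textbf{Constructing $F_1$ and $F_2$ and identifying the limits.} For the first term on the right, $\tilde{\mathcal C}_\epsilon(\nu)\to \mathcal C(\nu)$ $\eta$-a.e. (using $\Theta_\eta = 0$ on $\mathcal R$ and \eqref{CTDiff}, together with Corollary \ref{ZeroAC} and Lemma \ref{lemmaBasic0} (9) to handle the exceptional $\gamma$-null set), while the hypothesis $|\tilde{\mathcal C}_\epsilon(\nu)|\le M$ $\eta$-a.e. supplies the dominating constant; thus $\int\tilde{\mathcal C}_\epsilon(\nu)\varphi\,d(\phi_\epsilon*\eta)\to \int \mathcal C(\nu)\varphi\,d\eta$, and I set $F_2 = \mathcal C(\nu)$ on $\mathcal R$ (extended by its bounded $\epsilon$-limit, which is $\eta$-a.e.\ bounded by $M$, off $\mathcal R$ where $\eta$ may still live — here I should note $\text{spt}(\eta)\subset\mathcal R,~\gamma$-a.a.\ is \emph{not} assumed, so $F_2$ is just the $\eta$-a.e.\ limit of $\tilde{\mathcal C}_\epsilon(\nu)$, which is in $L^\infty(\eta)$ with $\|F_2\|\le M$). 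Symmetrically, for the second term I want $\tilde{\mathcal C}_\epsilon(\eta)$ to converge; the maximal function bound $\mathcal M_\nu\le M$ $\eta$-a.e.\ and linear growth of $\eta$ give, via Lemma \ref{lemmaBasic0} (8) and an argument like Lemma \ref{lemmaBasic6}(3), a subsequence along which $\tilde{\mathcal C}_\epsilon(\eta)$ converges in $L^\infty(\mu)$ weak$^*$ to some $F_1$ and pointwise on $\mathcal R^c$; combined with $\mathcal C(\eta)(\lambda) = \rho(f)(\lambda)$ $\mathcal L^2_{\mathcal R}$-a.a.\ (hence $\gamma|_{\mathcal R}$-a.a.\ by Lemma \ref{zeroR} \eqref{zeroREq2}) and $\gamma$-continuity of $\mathcal C(\eta)$ on $\mathcal R$ (Lemma \ref{CauchyTLemma}), I get $F_1 = \mathcal C(\eta)$ $\mu|_{\mathcal R\setminus\mathbb Q_1}$-a.a.\ with $\gamma(\mathbb Q_1)=0$. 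Then $\int\tilde{\mathcal C}_\epsilon(\eta)\varphi\,d(\phi_\epsilon*\nu)\to\int F_1\varphi\,d\nu$ since $\phi_\epsilon*\nu\to\nu$ weak$^*$ and the integrands converge suitably (care is needed because $\nu$ is complex; I would split into the weak$^*$ convergence of $\tilde{\mathcal C}_\epsilon(\eta)\to F_1$ against the fixed measure $\varphi\nu$ and a correction term $\int\tilde{\mathcal C}_\epsilon(\eta)\varphi\,d(\phi_\epsilon*\nu - \nu)$ that vanishes by the $L^\infty$ bound and a standard mollification estimate).

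\textbf{Left side.} On the left I must show $\int\tilde{\mathcal C}_\epsilon(\eta)\tilde{\mathcal C}_\epsilon(\nu)\bar\partial\varphi\,d\mathcal L^2\to\int\mathcal C(\eta)\mathcal C(\nu)\bar\partial\varphi\,d\mathcal L^2$. Here $\mathcal C(\eta),\mathcal C(\nu)\in L^p_{loc}(\mathcal L^2)$ for every $p<2$ (both are Cauchy transforms of finite measures, hence in weak-$L^2$, and $\tilde{\mathcal C}_\epsilon$ differs from $\mathcal C_\epsilon$ by a term controlled by \eqref{CTDiff}), so the product $\mathcal C(\eta)\mathcal C(\nu)\in L^q_{loc}$ for some $q>1$ and it suffices to have $\tilde{\mathcal C}_\epsilon(\eta)\to\mathcal C(\eta)$ and $\tilde{\mathcal C}_\epsilon(\nu)\to\mathcal C(\nu)$ in $L^p_{loc}(\mathcal L^2)$ for suitable $p$, which follows from $\mathcal L^2$-a.e.\ convergence (Corollary \ref{ZeroAC} plus Lemma \ref{lemmaBasic0} (2)) together with uniform local $L^p$ bounds and Vitali's theorem. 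Putting the three limits together yields $-\int\mathcal C(\eta)\mathcal C(\nu)\bar\partial\varphi\,d\mathcal L^2 = -\pi\int F_2\varphi\,d\eta - \pi\int F_1\varphi\,d\nu$ for all test $\varphi$, which is exactly $\bar\partial(\mathcal C(\eta)\mathcal C(\nu)) = -\pi(F_1\nu + F_2\eta)$ in the distributional sense.

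\textbf{Main obstacle.} I expect the delicate point to be the convergence of the mollified terms $\int\tilde{\mathcal C}_\epsilon(\nu)\varphi\,d(\phi_\epsilon*\eta)$ and $\int\tilde{\mathcal C}_\epsilon(\eta)\varphi\,d(\phi_\epsilon*\nu)$, where both the integrand and the measure move with $\epsilon$: one must carefully decouple the two sources of $\epsilon$-dependence and exploit the a.e.\ bounds $|\tilde{\mathcal C}_\epsilon(\nu)|\le M$, $\mathcal M_\nu\le M$ that hold only $\eta$-a.e., using \eqref{CTDiff} to transfer bounds between $\tilde{\mathcal C}_\epsilon$ and $\mathcal C_\epsilon$ and the zero-linear-density of $\eta$ on $\mathcal R$ to kill the error. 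Everything else is bookkeeping with Tolsa's estimates and the identification of the bounded densities $F_1,F_2$ via the already-established $\gamma$-continuity and weak$^*$ limit arguments.
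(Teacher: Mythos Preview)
Your overall strategy---regularize via $\tilde{\mathcal C}_\epsilon$, apply the Leibniz rule to the smooth product, then pass to the limit term by term---is the same as the paper's. But there is one structural choice where you diverge, and it is exactly where your argument stalls.

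The paper regularizes \emph{only one} factor: it writes
\[
\int \bar\partial\varphi\,\mathcal C(\eta)\,\mathcal C(\nu)\,d\mathcal L^2
= \lim_{\epsilon\to 0}\int \bar\partial\varphi\,\tilde{\mathcal C}_\epsilon\eta\,\mathcal C(\nu)\,d\mathcal L^2,
\]
leaving $\mathcal C(\nu)$ untouched. This is deliberate: $\tilde{\mathcal C}_\epsilon\eta$ is uniformly bounded (linear growth of $\eta$ plus $\|\mathcal C_\epsilon\eta\|\le 1$ and \eqref{CTDiff}), and $\mathcal C(\nu)=0$ $\mathcal L^2|_{\mathcal R^c}$-a.a.\ by Corollary~\ref{acZero}, so dominated convergence on the left is immediate. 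The product rule then gives two terms $I$ and $II_\epsilon$. Term $I$ yields $F_1$ by weak$^*$ compactness exactly as you describe. Term $II_\epsilon$ is
\[
II_\epsilon = -\pi\int \varphi(z)\,\mathcal C\nu(z)\,(\phi_\epsilon*\eta)(z)\,d\mathcal L^2(z)
= -\pi\int (\varphi\,\mathcal C\nu)*\phi_\epsilon\,(w)\,d\eta(w),
\]
the second equality by Fubini, which moves the integration onto $\eta$---where the hypotheses $|\tilde{\mathcal C}_\epsilon\nu|,\,\mathcal M_\nu\le M$ actually live. This is the step your proposal is missing: your bound $|\tilde{\mathcal C}_\epsilon(\nu)|\le M$ is only $\eta$-a.e., but your integral $\int\tilde{\mathcal C}_\epsilon(\nu)\,\varphi\,d(\phi_\epsilon*\eta)$ is against $(\phi_\epsilon*\eta)\,d\mathcal L^2$, so the domination is with respect to the wrong measure. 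You flag this as the ``main obstacle'' but do not say how to cross it; the vague appeal to \eqref{CTDiff} and zero linear density does not do the job.

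The paper's resolution, after the Fubini transfer, is to compare $(\varphi\,\mathcal C\nu)*\phi_\epsilon$ with $\varphi\,\tilde{\mathcal C}_\epsilon\nu$ directly: Taylor-expand $\varphi(w)-\varphi(z)$ to second order and compute the auxiliary kernels $K_\epsilon^1=(-\tfrac1z)*\bigl(z\phi_\epsilon\bigr)$ and $K_\epsilon^2=(-\tfrac1z)*\bigl(\bar z\phi_\epsilon\bigr)$ explicitly. One finds $|K_\epsilon^1|,|K_\epsilon^2|\le 1$, and for $\lambda\in\text{spt}(\eta)$ the resulting error $\Phi_\epsilon(\lambda)$ is controlled by $|\nu|(B(\lambda,\epsilon))+\epsilon\,\mathcal M_\nu(\lambda)$, which tends to zero $\eta$-a.e.\ precisely by the hypothesis $\mathcal M_\nu\le M$. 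Dominated convergence (now legitimately against $\eta$) gives $\int\Phi_\epsilon\,d\eta\to 0$, hence $\lim II_\epsilon=-\pi\lim\int\varphi\,\tilde{\mathcal C}_\epsilon\nu\,d\eta$, and a weak$^*$ limit produces $F_2$. Your symmetric regularization of both factors would face an even more tangled version of this computation (a double mollification inside the Fubini step); the asymmetric choice is what makes the kernel calculation tractable.
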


\begin{proof}
We choose $\phi$ as a smooth non-negative function on $\mathbb R$ supported on $[0,1]$ with $0 \le \phi \le 2$ and $\int \phi(|z|) d\mathcal L^2(z) = 1.$ Then $K_\epsilon$ is a smooth function. There is a subset $\mathbb Q_1$ with $\gamma(\mathbb Q_1) = 0$ such that $\lim_{\epsilon\rightarrow }\mathcal C_\epsilon (\eta)(\lambda) = \mathcal C (\eta)(\lambda)$ exists  and $\Theta_\eta(\lambda) = 0$ for $\lambda \in \mathcal R \setminus \mathbb Q_1$. 
So, by \eqref{CTDiff}, $\tilde {\mathcal C}_\epsilon \eta (z)$ converges to $\mathcal C\eta (z)$ on $\mathcal R \setminus \mathbb Q_1$ and $\|\tilde {\mathcal C}_{\epsilon_k} \eta \| \le C_{27}$. $\mathcal C\nu(z) = 0, ~ \mathcal L^2|_{\mathcal R ^c}-a.a.$.  
For a smooth function $\varphi$ with compact support, by Lebesgue dominated convergence theorem, we have
 \[
 \ \begin{aligned}
 \ &\int \bar \partial \varphi (z)\mathcal C(\eta)(z)\mathcal C(\nu)(z) d\mathcal L^2(z) \\
 \ = & \lim_{\epsilon\rightarrow 0 }\int \bar \partial \varphi (z)\tilde {\mathcal C}_\epsilon \eta(z)\mathcal C(\nu)(z) d\mathcal L^2(z) \\
\ = & \lim_{\epsilon\rightarrow 0 }\int \bar \partial (\varphi (z)\tilde {\mathcal C}_\epsilon \eta(z))\mathcal C(\nu)(z) d\mathcal L^2(z) - \lim_{\epsilon\rightarrow 0 }\int \varphi (z) \bar \partial ( \tilde {\mathcal C}_\epsilon \eta(z))\mathcal C(\nu)(z) d\mathcal L^2(z) \\
\ = & I - \lim_{\epsilon\rightarrow 0 }II_\epsilon.
\ \end{aligned}
 \]
We can find a sequence $\{\tilde {\mathcal C}_{\epsilon_k} \eta(z)\}$ converging to $F_1$ in $L^\infty (\mu)$ weak$^*$ topology and   
 \[
 \ \tilde {\mathcal C}_{\epsilon_k} \eta(z) \rightarrow  \mathcal C \eta(z) = F_1(z), ~ \mu|_{\mathcal R \setminus \mathbb Q_1}-a.a.
\]
(see \eqref{CTDiff}).
  It is clear that
 \[
 \ I = - \lim_{\epsilon_k\rightarrow 0 }\int \varphi (z)\tilde {\mathcal C}_{\epsilon_k} \eta(z) \bar \partial\mathcal C(\nu)(z) d\mathcal L^2(z) = \pi \int \varphi (z)F_1(z) d\nu(z).
 \]
Now we estimate II:
 \[
 \ \begin{aligned}\label{CTDistributionEq1}
\ II_\epsilon = &\int \varphi (z) \bar \partial ( \mathcal C(\phi_\epsilon* \eta)(z))\mathcal C\nu(z) d\mathcal L^2(z) \\
\  = & - \pi \int \varphi (z) \mathcal C\nu(z) (\phi_\epsilon* \eta)(z)d\mathcal L^2(z). \\
\  = & - \pi \int (\varphi \mathcal C\nu)* \phi_\epsilon(z)d\eta(z). \\
\ \end{aligned}
 \]
Set
$\Phi_\epsilon (z) = (\varphi \mathcal C\nu)* \phi_\epsilon(z) - \varphi (z) \tilde {\mathcal C}_\epsilon\nu(z)$. We want to show that  
 \begin{eqnarray}\label{CTDistributionEq2}
 \ \int \Phi_\epsilon (z) d\eta(z) \rightarrow 0,\text{ as }\epsilon \rightarrow 0.
 \end{eqnarray}
In fact, 
let $\phi_\epsilon^1(z) = z\phi_\epsilon(z)$, $\phi_\epsilon^2(z) = \bar z\phi_\epsilon(z)$, and  
\[
 \ \varphi (w) - \varphi (z) = \partial \varphi (z) (w - z) + \bar  \partial \varphi (z) (\bar w - \bar z) + O(|w-z|^2),
 \]
where $|O(|w-z|^2)| \le C_{28}|w-z|^2$. Then
 \[
 \ \begin{aligned}
 \ &|\Phi_\epsilon (z) - \partial \varphi (z) \mathcal C\nu * \phi_\epsilon^1(z) - \bar  \partial \varphi (z) \mathcal C\nu * \phi_\epsilon^2(z)|  \\
\ \le & C_{28} \int |w-z|^2 |\mathcal C\nu(w)|\phi_\epsilon(z-w)d\mathcal L^2(w) \\
\ \le & C_{29}\epsilon.
\ \end{aligned}
 \]
 On the other hand,
 \[
 \ K_\epsilon^1 (z) := (-\dfrac{1}{z} * \phi_\epsilon^1)(z) = \begin{cases}0 ,& |z| \ge \epsilon,\\1 - \int_{|w|<|z|}\phi_\epsilon(w)d\mathcal L^2(w),  
&|z| < \epsilon
\end{cases}
 \]
and
 \[
 \ K_\epsilon^2 (z) := (-\dfrac{1}{z} * \phi_\epsilon^2)(z) = \begin{cases}-\dfrac{1}{z^2}\int |w|^2\phi_\epsilon(w)d\mathcal L^2(w) ,& |z| \ge \epsilon,\\-\dfrac{1}{z^2}\int_{|w|<|z|}|w|^2\phi_\epsilon(w)d\mathcal L^2(w), 
&|z| < \epsilon.
\end{cases}
 \]
Hence, $|K_\epsilon^1 (z)| \le 1$ and $|K_\epsilon^2 (z)| \le 1$. Therefore, $|\mathcal C\nu * \phi_\epsilon^1(z)| \le \|\nu\|$, $|\mathcal C\nu * \phi_\epsilon^2(z)| \le \|\nu\|$, and 
 \[
 \ |\Phi_\epsilon (z)| \le \|\partial \varphi \|_\infty\|\nu\| + \|\bar \partial \varphi \|_\infty\|\nu\| +  
C_{21}\epsilon.
 \] 
Moreover, for $\lambda\in \text{spt}(\eta)$,
$|\mathcal C\nu * \phi_\epsilon^1(\lambda)|  \le |\nu|(B(\lambda,\epsilon))$
and
 \[
 \ \begin{aligned}
\ &|\mathcal C\nu * \phi_\epsilon^2(\lambda)| \\
 \ \le & \epsilon^2 \int_{|z-\lambda| > \epsilon} \dfrac{1}{|z-\lambda|^2} d|\nu|(z) + |\nu|(B(\lambda,\epsilon)) \\
\ \le & \epsilon^2 \sum_{n=0}^\infty \int_{2^{n+1}\epsilon > |z-\lambda| \ge 2^n\epsilon}\dfrac{1}{|z-\lambda|^2} d|\nu|(z) + |\nu|(B(\lambda,\epsilon)) \\
\ \le & \epsilon \sum_{n=0}^\infty \dfrac{2}{2^n} \dfrac{|\nu|(B(\lambda,2^{n+1}\epsilon))}{2^{n+1}\epsilon} + |\nu|(B(\lambda,\epsilon)) \\
\ \le & 4\epsilon M + |\nu|(B(\lambda,\epsilon)),~\eta-a.a.. 
\ \end{aligned}
 \]
Hence,
$\Phi_\epsilon (\lambda) \rightarrow 0,~ \eta-a.a.,~\text{ as }\epsilon \rightarrow 0$.
\eqref{CTDistributionEq2} follows from Lebesgue dominated convergence theorem.
Consequently,
\[
 \ \lim_{\epsilon \rightarrow 0}II_\epsilon = -\pi \lim_{\epsilon \rightarrow 0} \int \varphi (z) \tilde {\mathcal C}_\epsilon\nu(z)d\eta(z).
 \]
Since $\|\tilde {\mathcal C}_\epsilon (\nu)\|_{L^\infty (\eta)}\le M < \infty$, we may find a subsequence $\{\tilde {\mathcal C}_{\epsilon_k}\nu\}$ converging to $F_2$ in $L^\infty(\eta)$ weak$^*$ topology.
Clearly, 
 \[
 \ \lim_{\epsilon\rightarrow 0 } \tilde {\mathcal C}_\epsilon\nu(z) = \mathcal C\nu(z) = F_2(z), ~ \eta |_{\mathcal R \cap \mathcal {ZD}(\nu)}-a.a. 
 \]
since a zero $\gamma$ set is also $\eta$ zero set by Lemma \ref{lemmaBasic0} (9). By Lemma \ref{lemmaBasic0} (5), we see that $\eta (\mathcal R \cap \mathcal {ND}(\nu)) = 0$ since $\Theta_\eta (z) = 0, ~\eta |_{\mathcal R}-a.a.$. The lemma is proved.
\end{proof}
\smallskip

\begin{lemma}\label{etaProperty}
Let $F$ be a Borel subset such that $\mathcal C_*(g_j\mu)(z) \le M_j < \infty$ and $\mathcal M_{g_j\mu}(z ) \le M_j < \infty$ for $z \in F$. Let $\eta_N$ be a finite positive measure such that $\text{spt}(\eta_N) \subset F\cap \mathcal E_N$, $\eta_N\in \Sigma(F)$, and $\|\mathcal C_\epsilon (\eta_N)\| \le 1$ for all $\epsilon > 0$. Suppose that $\eta_N$ tends to $\eta$ in $C(\overline{F})^*$ weak$^*$ topology. Then 
\newline
(1) $\text{spt}(\eta) \subset \overline{F}$, $\eta\in \Sigma(\text{spt}(\eta))$, $\|\mathcal C_\epsilon (\eta)\| \le 1$, and $\lim_{N\rightarrow \infty} \|\eta_N\| = \|\eta \|$;
\newline
(2) $\mathcal M_{g_j\mu}(z ) \le M_j$ and $| \mathcal C_\epsilon (g_j\mu)(z)|,~|\tilde {\mathcal C}_\epsilon (g_j\mu)(z)| \le C_{30}M_j$ for $z \in \overline{F}$;
\newline
(3) there exists $f\in R^t(K,\mu) \cap L^\infty(\mu)$ such that 
 \[
 \ \Theta_\eta (z) = 0, ~\mathcal  C(\eta )(z) = \rho (f)(z), ~ \gamma |_{\mathcal R}-a.a.; 
 \]
\newline
(4) $\eta (\mathcal R) = 0$; and
\newline
(5) $\|\eta\| \le C_{31} \gamma (\text{spt}(\eta) \cap \mathcal F)$. 
\end{lemma}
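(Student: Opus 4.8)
\textbf{Proof proposal for Lemma \ref{etaProperty}.}

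The plan is to prove the five items essentially in the order stated, since each later item feeds on the earlier ones. For (1), the statements are standard weak$^*$-convergence facts: $\text{spt}(\eta)\subset\overline F$ because $\text{spt}(\eta_N)\subset F\cap\mathcal E_N\subset\overline F$; the linear growth $\eta\in\Sigma(\text{spt}(\eta))$ and the bound $\|\mathcal C_\epsilon(\eta)\|\le 1$ pass to the limit because for fixed $\epsilon$ and fixed $\lambda$ the maps $\nu\mapsto \nu(B(\lambda,\delta))$ and $\nu\mapsto\mathcal C_\epsilon(\nu)(\lambda)$ are (after a routine mollification to make the kernel continuous) weak$^*$-continuous; and $\|\eta_N\|\to\|\eta\|$ follows by testing against the function $1$ on $\overline F$. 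Item (2) is likewise a limiting argument: $\mathcal M_{g_j\mu}(z)\le M_j$ on $\overline F$ by lower semicontinuity of the maximal function together with density of $F$ in $\overline F$, and then the pointwise bound $|\mathcal C_\epsilon(g_j\mu)(z)|\le C_{30}M_j$ on $\overline F$ follows by splitting the Cauchy integral into the part inside $B(z,\epsilon)$ (controlled by $\mathcal M_{g_j\mu}$) and the part outside (which is $\mathcal C_\epsilon(g_j\mu)$, $\le M_j$ on $F$, hence $\le C_{30}M_j$ on $\overline F$ after a standard comparison of truncations), and the same for $\tilde{\mathcal C}_\epsilon$ via \eqref{CTDiff}.

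For (3) I would invoke the machinery already assembled: since $\text{spt}(\eta_N)\subset\mathcal E_N$, Lemma \ref{lemmaBasic6} applied to $\eta_N$ (via a weight $w_N$ from Lemma \ref{lemmaBasic0} (1)) produces $f_N\in L^\infty(\mu)$ with $f_N=\mathcal C(w_N\eta_N)$ off $\mathcal E_N$, $\|f_N\|_{L^\infty(\mu)}\le C$, and $|\int f_N g_j\,d\mu|=|\int\mathcal C(g_j\mu)w_N\,d\eta_N|\le\|\eta_N\|/N\to 0$ for each $j$; passing to a weak$^*$ limit yields $f\in R^t(K,\mu)\cap L^\infty(\mu)$ with $f=\mathcal C(\eta_0)$ off $\text{spt}(\eta)$ for the weak$^*$ limit $\eta_0$ of $w_N\eta_N$. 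The cleanest route is actually to renormalize and arrange (as in Lemma \ref{hFunction}) that $\mathcal C(\eta)=\rho(f)$ holds $\gamma|_{\mathcal R}$-a.a.\ — more precisely $\mathcal C(\eta)(\lambda)=\rho(f)(\lambda)$ for $\lambda\in\mathcal R$ up to $\mathcal L^2$, and then Lemma \ref{zeroR} upgrades this to $\gamma|_{\mathcal R}$-a.a.\ and simultaneously gives $\Theta_\eta(\lambda)=0$ $\gamma|_{\mathcal R}$-a.a.\ — which is exactly (3). One must be slightly careful that the $\eta$ in the lemma statement is the prescribed one and not the $w_N\eta_N$ limit, but since $0\le w_N\le 1$ and $\|\eta_N\|\le 2\|w_N\eta_N\|$ one can run the argument with $\eta$ directly by choosing the $w_N$ compatibly; this bookkeeping is where I expect to spend the most care.

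Item (4), $\eta(\mathcal R)=0$, is then the crux and the main obstacle. The idea is that $\Theta_\eta(\lambda)=0$ $\gamma|_{\mathcal R}$-a.a.\ from (3) says $\eta$ has vanishing linear density $\gamma$-a.e.\ on $\mathcal R$; combined with $\eta\in\Sigma(\text{spt}(\eta))$ and $\|\mathcal C_\epsilon(\eta)\|\le 1$, Lemma \ref{lemmaBasic0} (9) tells us $\eta$ charges no set of zero analytic capacity, so $\eta(\mathcal R\cap\{\Theta_\eta>0\})$ would have to be positive if $\eta(\mathcal R)>0$; but $\{\Theta_\eta>0\}$ meets $\mathcal R$ in a $\gamma$-null set, contradiction. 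Concretely: suppose $\eta(\mathcal R)>0$; by Lemma \ref{lemmaBasic0} (4),(5) the set $\{\Theta_\eta^*>0\}$ is (up to $\gamma$-null) contained in a countable union of Lipschitz graphs on which $\eta$ is absolutely continuous with respect to $\mathcal H^1$, so $\eta|_{\mathcal R}$ is carried by $\{\Theta_\eta^*>0\}\cap\mathcal R$; but $\Theta_\eta(\lambda)=0$ $\gamma$-a.a.\ on $\mathcal R$ forces $\mathcal H^1(\{\Theta_\eta^*>0\}\cap\mathcal R)$-portion to be $\eta$-null (a density-zero point of an a.c.-to-$\mathcal H^1$ measure on a graph carries no mass), whence $\eta(\mathcal R)=0$. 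Finally (5): decompose $\text{spt}(\eta)$ into its $\mathcal F_0$, $\mathcal F_+$, $\mathcal F_-$, $\mathcal R$ pieces (Definition \ref{FRDefinition1}); the $\mathcal R$ piece is $\eta$-null by (4), and the remaining mass sits in $\text{spt}(\eta)\cap\mathcal F$, on which $\eta\in\Sigma$ with $\|\mathcal C_\epsilon(\eta)\|\le 1$ gives $\eta(\text{spt}(\eta)\cap\mathcal F)\le A_T\gamma(\text{spt}(\eta)\cap\mathcal F)$ by Tolsa's Theorem \ref{TTolsa} (1) \eqref{GammaEq1}, so $\|\eta\|=\eta(\text{spt}(\eta))\le C_{31}\gamma(\text{spt}(\eta)\cap\mathcal F)$ as claimed.
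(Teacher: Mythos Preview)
Your argument for (4) has a genuine gap. You claim that if $\eta(\mathcal R)>0$ then $\eta|_{\mathcal R}$ must be carried by $\{\Theta_\eta^*>0\}\cap\mathcal R$, and then use $\Theta_\eta=0$ $\gamma|_{\mathcal R}$-a.a.\ from (3) to reach a contradiction. But the first implication is false: a linear-growth measure with $\|\mathcal C_\epsilon(\eta)\|\le 1$ need not be concentrated on $\{\Theta_\eta^*>0\}$. A small multiple of planar Lebesgue measure on a disk satisfies $\eta\in\Sigma$ and $\|\mathcal C_\epsilon(\eta)\|\le 1$, yet $\Theta_\eta\equiv 0$ while $\eta\ne 0$. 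So knowing $\Theta_\eta=0$ $\eta|_{\mathcal R}$-a.e.\ tells you nothing about $\eta(\mathcal R)$; your density argument on Lipschitz graphs only handles the rectifiable part of $\eta|_{\mathcal R}$.

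The paper's route for (4) is entirely different and rests on Lemma~\ref{distributionLemma}, which computes $\bar\partial(\mathcal C(\eta)\mathcal C(g_j\mu))=-\pi(F_1 g_j\mu+F_2\eta)$ in the sense of distributions, with $F_2=\mathcal C(g_j\mu)$ $\eta|_{\mathcal R}$-a.e. Combining this with $\mathcal C(\eta)\mathcal C(g_j\mu)=\mathcal C(fg_j\mu)$ $\mathcal L^2$-a.a.\ (using (3) on $\mathcal R$ and Corollary~\ref{acZero} on $\mathcal F$) and the Radon--Nikodym decomposition $\mu=h\eta+\mu_s$, one extracts $\mathcal C(g_j\mu)=0$ $\eta|_{\mathcal R}$-a.e.\ for every $j$. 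Since $\mathcal R\subset\bigcup_j\mathcal N(\mathcal C(g_j\mu))$, this forces $\eta(\mathcal R)=0$. The distributional identity is the missing idea.

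Two smaller issues. In (3), getting $\mathcal C(\eta)=\rho(f)$ $\mathcal L^2_{\mathcal R}$-a.a.\ before invoking Lemma~\ref{zeroR} is not just the Lemma~\ref{hFunction} argument, because $\text{spt}(\eta)$ may meet $\mathcal R$ and one cannot simply use the identity off the support; the paper integrates \eqref{eqn(3)} against a test function and uses $\mathcal L^2(\mathcal R\cap\mathcal E_n)\to 0$, which is where the hypothesis $\text{spt}(\eta_N)\subset\mathcal E_N$ actually enters. In (5), your appeal to \eqref{GammaEq1} fails because the bound $\|\mathcal C_\epsilon(\eta)\|\le 1$ does not pass to the restriction $\eta|_{\text{spt}(\eta)\cap\mathcal F}$; the paper instead uses the curvature characterization \eqref{GammaEq3}, exploiting the monotonicity $c^2(\eta|_E)\le c^2(\eta)$.
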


\begin{proof}
(1) is trivial since $\mathcal C_\epsilon (\eta_N)$ converges to $\mathcal C_\epsilon (\eta)$ in $L^\infty (\mathbb C)$ weak$^*$ topology as $N\rightarrow \infty$.

(2) For $z \in \overline{F}$, let $\lambda_n\in F\cap B(z, \frac 1n)$, we have 
 \[
 \ \dfrac{|g_j\mu|(B(z, \delta))}{\delta}\le \dfrac{\delta+\frac 1n}{\delta}\mathcal M_{g_j\mu}(\lambda_n) \le \dfrac{\delta+\frac 1n}{\delta}M_j, 
 \]
which implies $\mathcal M_{g_j\mu}(z) \le M_j$ for $z \in \overline{F}$. 
 For $z \in F$, we have, by \eqref{CTDiff},
 \[
 \ \begin{aligned}
 \ |\tilde {\mathcal C}_\epsilon (g_j\mu)(z)|\le & |\tilde {\mathcal C}_\epsilon (g_j\mu) (z) - \mathcal C_\epsilon (g_j\mu) (z)| + |\mathcal C_\epsilon (g_j\mu) (z)| \\ 
 \ \le  & C_{32} \mathcal M_{g_j\mu} (z) + \mathcal C_* (g_j\mu) (z) \\ 
 \ \le &C_{31}M_j.
 \ \end{aligned}
 \]
Because $\tilde {\mathcal C}_\epsilon(g_j\mu) (z)$ is continuous on $\overline{F}$, using \eqref{CTDiff} again, we prove (2).

(3) As in Lemma \ref{lemmaBasic6} (3), we conclude that there exists a sequence of $\{\epsilon_k\}$ such that $\mathcal C_{\epsilon_k}(\eta_N)$ converges to $f_N$ in weak$^*$ topology in $L^\infty(\mu)$, $\|f_N\|_{L^\infty(\mu)} \le 1$,
 \begin{eqnarray}\label{eqn(3)0}
 \ \int f_Ng_jd\mu = - \int \mathcal C(g_j\mu) d\eta_N,
 \end{eqnarray}
and
 \begin{eqnarray}\label{eqn(3)}
 \ \int \dfrac{f_N(z) - \mathcal C (\eta_N)(\lambda)}{z - \lambda}g_j(z)d\mu(z) = - \int \mathcal C(g_j\mu)(z) \dfrac{d\eta_N(z)}{z - \lambda}
 \end{eqnarray}
for $\lambda \in (\text{spt}(\eta_N))^c$. We may assume that $f_N$ converges to $f$ in $L^\infty(\mu)$ weak$^*$ topology. Hence, by \eqref{eqn(3)0},
 \[
 \ \left | \int fg_jd\mu \right |= \lim_{N\rightarrow \infty}\left | \int f_Ng_jd\mu \right |\le \lim_{N\rightarrow \infty} \int |\mathcal C(g_j\mu)| d\eta_N \le \lim_{N\rightarrow \infty} \dfrac{\|\eta_N\|}{N} = 0,
 \]
which implies $f\in R^t(K,\mu)\cap L^\infty(\mu)$.

Let $ E\supset F$ satisfy $\mathcal C_*(g_j\mu)(z) \le N_j < \infty$ ($M_j < N_j$) for $z\in E $ such that $\gamma (E^c)$ is small enough (see Lemma \ref{lemmaBasic0} (10)).
Let $\psi$ be a smooth function with compact support. Then, for $n\le N$, using \eqref{eqn(3)} and the fact that $\text{spt}(\eta_N)\subset \mathcal E_N \subset \mathcal E_n$, we have 
 \begin{eqnarray}\label{eqnnN}
 \ \begin{aligned}
 \ &\left | \int_{E\cap\mathcal R} \int \dfrac{f_N(z) - \mathcal C (\eta_N)(\lambda)}{z - \lambda}g_j(z) \psi(\lambda) d\mu(z) d \mathcal L^2(\lambda) \right | \\
 \ \le &\left | \int_{E\cap\mathcal R\cap \mathcal E_n^c} \int \dfrac{f_N(z) - \mathcal C (\eta_N)(\lambda)}{z - \lambda}g_j(z) \psi(\lambda) d\mu(z) d \mathcal L^2(\lambda) \right | \\ 
\ & + C_{33} \int_{E\cap\mathcal R\cap \mathcal E_n} \int \dfrac{|g_j(z)||\psi (\lambda)|}{|z - \lambda|} d\mu(z) d \mathcal L^2(\lambda) \\
 \ \le & \int_{E\cap\mathcal R\cap \mathcal E_n^c} \int \dfrac{|\mathcal C(g_j\mu)(z)||\psi(\lambda)|}{|z - \lambda|} d\eta_N(z) d \mathcal L^2(\lambda) + C_{34} \sqrt{\mathcal L^2(E\cap\mathcal R\cap \mathcal E_n)} \\
 \ \le & C_{35} (\dfrac{1}{N} + \sqrt{\mathcal L^2(E\cap\mathcal R\cap \mathcal E_n)})
 \ \end{aligned}
 \end{eqnarray}
for $j \le N$. By Definition \ref{FRDefinition1},
 \[
 \ \mathcal R_0\cap \bigcap_{n = 1}^\infty \mathcal E_n = \emptyset,~ \gamma-a.a.
 \]
and hence,
 \[
 \ \mathcal L^2(E\cap\mathcal R\cap \mathcal E_n) \le \mathcal L^2(\mathcal R\cap \mathcal E_n) = \mathcal L^2(\mathcal R_0\cap \mathcal E_n) \rightarrow 0
 \]
as $n\rightarrow \infty$.
On the other hand,
 \[
 \ \begin{aligned}
 \ & \lim_{N\rightarrow\infty}\int_{E\cap\mathcal R} \mathcal C (f_Ng_j\mu)(\lambda) \psi (\lambda) d \mathcal L^2(\lambda) \\
 \ = & - \lim_{N\rightarrow\infty}\int f_N\mathcal C (\psi \chi_{E\cap\mathcal R}d\mathcal L^2)g_j d \mu \\
 \ = & - \int f\mathcal C (\psi \chi_{E\cap\mathcal R}d\mathcal L^2)g_j d \mu \\
 \ = & \int_{E\cap\mathcal R} \mathcal C (fg_j\mu)\psi d\mathcal L^2,
 \ \end{aligned}
 \]
and since $\mathcal C (\mathcal C (g_j\mu)\psi \chi_{E\cap\mathcal R}d\mathcal L^2)$ ($|\mathcal C (g_j\mu)(z)| \le N_j,~ z \in E$) is a continuous function,
\[
 \ \begin{aligned}
 \ & \lim_{N\rightarrow\infty}\int_{E\cap\mathcal R} \mathcal C (\eta_N)\mathcal C (g_j\mu)(\lambda) \psi (\lambda) d \mathcal L^2(\lambda) \\
 \ = & - \lim_{N\rightarrow\infty}\int \mathcal C (\mathcal C (g_j\mu)\psi \chi_{E\cap\mathcal R}d\mathcal L^2) d \eta_N \\
 \ = & - \int \mathcal C (\mathcal C (g_j\mu)\psi \chi_{E\cap\mathcal R}d\mathcal L^2) d \eta \\
 \ = &\int_{E\cap\mathcal R} \mathcal C (\eta)\mathcal C (g_j\mu)(\lambda) \psi (\lambda) d \mathcal L^2(\lambda).
 \ \end{aligned}
 \]
Hence, by first taking $N\rightarrow \infty$ and then taking $n\rightarrow \infty$ for \eqref{eqnnN}, we get
 \[
 \ \mathcal C (\eta)\mathcal C (g_j\mu)(\lambda) = \mathcal C (fg_j\mu)(\lambda) ~ \mathcal L^2_{\mathcal R} -a.a.
 \]
as $\gamma (\mathcal R \setminus E)$ is small enough (Lemma \ref{lemmaBasic0} (10)) (so is $\mathcal L^2 (\mathcal R \setminus E)$ by Lemma \ref{lemmaBasic0} (2)). 
Therefore, by \eqref{BasicEq22},
 \[
 \ \mathcal C (\eta)(\lambda) = \rho(f)(\lambda) ~ \mathcal L^2_{\mathcal R}-a.a..
 \]
Now (3) follows from  Lemma \ref{zeroR}.

(4) Since a zero $\gamma$ set is also a zero $\eta$ set (see Lemma \ref{lemmaBasic0} (9)), we get
 \[
 \ \Theta_\eta (\lambda) = 0, ~\mathcal C (\eta)(\lambda) = \rho(f)(\lambda), ~ \eta |_{\mathcal R}-a.a..
 \]
 Let $\mu = h\eta + \mu_s$ be the Radon Nikodym decomposition with respect to $\eta$, where $\mu_s\perp\eta$. From Lemma \ref{distributionLemma}, we have
 \[
 \ F_1g_j\mu + F_2 \eta = fg_j\mu.
 \]
Together with \eqref{BasicEq3}, we get
 \[
 \ F_1(z)g_j(z)h(z) = f(z)g_j(z)h(z), ~\eta|_{\mathcal R}-a.a.. 
 \]
Therefore, 
 \[
 \ F_2(z) = \mathcal C(g_j\mu)(z) = 0, ~\eta|_{\mathcal R}-a.a. 
 \]
for $j \ge 1$. Thus, $\eta (\mathcal R) = 0$ since $\mathcal R \subset \cup_{j=1}^\infty \mathcal N(\mathcal C(g_j\mu))$ by Definition \ref{FRDefinition1}.

(5)
There is an open subset $O$ such that $\text{spt}(\eta)\cap \mathcal R \subset O$ and $\eta(O) \le \frac 12 \|\eta\|$. If we define $E:= \text{spt}(\eta)\setminus O$, then $E$ is compact, $E \subset \text{spt}(\eta)\cap\mathcal F$, and
 \[ 
 \ \|\eta\| \le 2\eta(E).
 \]
Hence, by the definition of $c^2(\eta)$ and Proposition 3.3 in \cite{Tol14},
 \[
 \ c^2(\eta|_E) \le c^2(\eta) \le C_{36} \|\eta\| \le 2C_{36}\|\eta|_E\|.  
 \]
It now follows from Theorem \ref{TTolsa} (1) \eqref{GammaEq3} that 
 \[
 \ \|\eta|_E\| \le C_{31} \gamma (E) \le C_{31}\gamma(\text{spt}(\eta)\cap\mathcal F).
 \]
This completes the proof.
\end{proof}
\smallskip

We are now ready to finish the proof of Lemma \ref{uniform}.

\begin{proof} (Lemma \ref{uniform})
Set
 \[
 \ \epsilon_0 = \lim_{N\rightarrow\infty} \gamma(B(\lambda, \delta)\cap \mathcal E_N) ( = \inf_{N\ge 1} \gamma(B(\lambda, \delta)\cap \mathcal E_N)).
 \]
We assume $\epsilon_0 > 0$. The case that $\epsilon_0 = 0$ is trivial.   

From Lemma \ref{lemmaBasic0} (10), there exists a Borel subset $F\subset B(\lambda, 2\delta)$ such that
\newline
(a) $\gamma (B(\lambda, 2\delta) \setminus F) < \frac{a_T}{2A_T}\epsilon_0$, where $a_T$ and $A_T$ are as in Theorem \ref{TTolsa} (1) \eqref{GammaEq1};
\newline
(b) $\mathcal C_*(g_j\mu)(z) \le M_j < \infty$ for $z \in F$;
\newline
(c) $\mathcal M_{g_j\mu}(z ) \le M_j < \infty$ for $z \in F$.

From Theorem \ref{TTolsa} (1) \eqref{GammaEq1}, there exists a positive measure $\eta_N$ with $\eta_N\in\Sigma(\text{spt}(\eta_N))$, $\text{spt}(\eta_N) \subset B(\lambda, \delta)\cap F \cap \mathcal E_N$, and $ \| \mathcal C_\epsilon (\eta_N) \| _\infty \le 1$ such that 
 \[
 \ \begin{aligned}
 \ \|\eta_N\| \ge & \dfrac{a_T}{2} \gamma(B(\lambda, \delta)\cap F \cap \mathcal E_N) \\ 
 \ \ge &\dfrac{1}{2} ( \dfrac{a_T}{A_T} \gamma(B(\lambda, \delta)\cap \mathcal E_N) - \gamma (B(\lambda, 2\delta) \setminus F)). 
 \ \end{aligned} 
 \]
Hence, 
 \[
 \  \gamma(B(\lambda, \delta)\cap \mathcal E_N) \le \dfrac{2A_T}{a_T}\|\eta_N\| + \dfrac{A_T}{a_T}\gamma (B(\lambda, 2\delta) \setminus F) \le \dfrac{2A_T}{a_T}\|\eta_N\| + \frac 12 \epsilon_0.
 \]
Thus, $\gamma(B(\lambda, \delta)\cap \mathcal E_N) \le \dfrac{4A_T}{a_T}\|\eta_N\|$.
We may assume that $\eta_N \in C(\overline{B(\lambda, \delta)\cap F})^*\rightarrow \eta$ in weak $^*$ topology. Now the lemma follows from Lemma \ref{etaProperty}. 
\end{proof}

\bigskip

\section{Functions in $R^t(K,\mu) \cap L^\infty (\mu)$}
\bigskip

Let $\mathcal D$ be a bounded Borel subset. Define 
 \[
 \ L^\infty(\mathcal L^2_{\mathcal D}) = \{f\in L^\infty(\mathbb C):~ f(z) = 0,~ z\in \mathcal D^c\}. 
 \]
Let $GC(\mathcal D)$ consist of functions $f\in L^\infty(\mathcal L^2_{\mathcal D})$ for which $f$ is $\gamma$-continuous on $\mathcal D \setminus \mathbb Q_f$, where $\mathcal L^2(\mathbb Q_f) = 0$. We assume that $\chi_{\mathcal D}\in GC(\mathcal D)$. In this case, there exists a subset $\mathbb Q$ with $\mathcal L^2(\mathbb Q) = 0$ such that for $\lambda \in \mathcal D \setminus \mathbb Q$,
 \begin{eqnarray}\label{densityAssumption}
 \ \lim_{\delta\rightarrow 0} \dfrac{\gamma(B(\lambda, \delta)\setminus \mathcal D)}{\delta} = 0.
 \end{eqnarray}
Set $\mathcal F_{\mathcal D} = \mathcal F \setminus \mathcal D$ and let $E_0\subset \mathcal F_{\mathcal D}$ be a Borel subset. 
Let $H^\infty_{\mathcal D, E_0}(\mathcal L^2_{\mathcal D })$ denote the weak$^*$ closure in $L^\infty(\mathcal L^2_{\mathcal D})$ of functions $f$, where $f$ is bounded analytic on $\mathbb C \setminus E_f$ and $E_f$ is a compact subset of $E_0$.
In this section, we prove the following theorem.

\begin{theorem}\label{MLemma2}
Let $\mathcal D$ be a bounded Borel subset satisfying the condition \eqref{densityAssumption} and let $E_0\subset \mathcal F_{\mathcal D}$ be a Borel subset. Let $f\in GC(\mathcal D)$ be given with $\|f\|_{L^\infty (\mathcal L^2_{\mathcal D})}\le 1$. Suppose that, for $\lambda _0\in \mathbb C$, $\delta > 0$, and a smooth function $\varphi$ with support in $B(\lambda _0, \delta)$, we have 
 \begin{eqnarray}\label{MLemma2Eq}
 \ \left | \int f(z) \dfrac{\partial \varphi (z)}{\partial  \bar z} d\mathcal L^2(z) \right | \le C_{38}\delta \left \|\dfrac{\partial \varphi (z)}{\partial  \bar z} \right \|_\infty \gamma (B(\lambda _0, 2\delta) \cap E_0).
 \end{eqnarray} 
Then $f\in H^\infty_{\mathcal D, E_0}(\mathcal L^2_{\mathcal D })$ and there exists $\hat f\in R^t(K,\mu) \cap L^\infty(\mu )$ such that $\mathcal C(\hat fg\mu)(z) = f(z )\mathcal C(g\mu)(z), ~ \mathcal L^2_{\mathcal D}-a.a.$ for each $g\perp R^t(K,\mu)$ and $\|\hat f\|_{L^\infty (\mu )} \le C_{39}$. 
\end{theorem}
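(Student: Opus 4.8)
\textbf{Proof plan for Theorem \ref{MLemma2}.}
The plan is to adapt the classical Vitushkin localization scheme, in the $\mathcal L^2_{\mathcal D}$ weak${}^*$ setting, exactly in the spirit of the proof of Theorem \ref{Lemma3} but organized so that the construction outputs a genuine function in $R^t(K,\mu)\cap L^\infty(\mu)$. First I would reduce to the case where $f$ is bounded analytic off a compact subset $E_f\subset E_0$: the statement that $f\in H^\infty_{\mathcal D,E_0}(\mathcal L^2_{\mathcal D})$ is the weak${}^*$ closure of such functions, so one needs to produce, for an arbitrary $f\in GC(\mathcal D)$ satisfying \eqref{MLemma2Eq}, a net of admissible analytic functions $f_n$ converging weak${}^*$ to $f$ together with corresponding $\hat f_n\in R^t(K,\mu)\cap L^\infty(\mu)$, and then pass to a weak${}^*$ limit using $\|\hat f_n\|_{L^\infty(\mu)}\le C_{39}$ (uniform in $n$). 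So the core of the argument is: given $f$ analytic off a compact $E_f\subset E_0\subset\mathcal F_{\mathcal D}$ with $\|f\|_\infty\le 1$ and satisfying the capacity bound \eqref{MLemma2Eq}, construct $\hat f\in R^t(K,\mu)\cap L^\infty(\mu)$ with $\mathcal C(\hat f g\mu)=f\,\mathcal C(g\mu)$ on $\mathcal D$, $\mathcal L^2$-a.a., and $\|\hat f\|_{L^\infty(\mu)}\le C_{39}$.

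Next I would fix a mesh of size $l=2^{-n}$ and the associated partition of unity $\{\varphi_{ij}\}$ with $\operatorname{spt}(\varphi_{ij})\subset 2S_{ij}$, $0\le\varphi_{ij}\le 1$, $\|\bar\partial\varphi_{ij}\|\le C/l$, and write $f=\sum_{ij}T_{\varphi_{ij}}f=:\sum_{ij}f_{ij}$. Only squares $2S_{ij}$ meeting $E_f$ contribute, and for each such square, estimate \eqref{MLemma2Eq} gives $|\alpha(f_{ij})|\le C\,l\,\gamma(3S_{ij}\cap E_0)$ (this is the analogue of \eqref{SBEst}); the same estimate controls $\beta(f_{ij}-g_{ij},c_{ij})$ after the first correction. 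Then, for each square $S_{ij}$ with $3S_{ij}\cap E_0\ne\emptyset$, one needs building-block functions $g_{ij},h_{ij}\in R^t(K,\mu)\cap L^\infty(\mu)$, bounded analytic off $3S_{ij}$, satisfying the matching conditions \eqref{gProperty}, \eqref{hProperty} and the compatibility identities \eqref{ghProperty} with $\mathcal C(g\mu)$. These come from Lemma \ref{hFunction} applied to compact subsets of $3S_{ij}\cap E_0\subset 3S_{ij}\cap\mathcal F$ of comparable analytic capacity (using Theorem \ref{TTolsa} (2) to find a subsquare where $\gamma(\,\cdot\cap\mathcal F)$ is bounded below by $c\,l$), combined with the Paramonov-type device (the function $h_0=\frac{\|\eta_u\|}{l}h_d-\frac{\|\eta_d\|}{l}h_u$) to get a nonzero $\beta$ while keeping $\alpha=0$. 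Here is where the hypothesis $3S_{ij}\cap E_0\ne\emptyset$ together with the density condition \eqref{densityAssumption} is used: near points of $\mathcal D$ the set $\mathcal F$ has vanishing density, so the ``heavy'' squares needed for the scheme are precisely those meeting $E_0$, and $\gamma(3S_{ij}\cap E_0)$ both bounds the coefficients to be corrected and bounds below the capacity available for constructing the correctors — they balance.

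With $g_{ij},h_{ij}$ in hand one gets $|f_{ij}(z)-g_{ij}(z)-h_{ij}(z)|\le C\min(1,l^3/|z-c_{ij}|^3)$ on squares meeting $E_0$, and for squares not meeting $E_0$ one has $f_{ij}=0$; summing over $ij$ and invoking the standard Vitushkin bounded-overlap estimate shows $f_l:=\sum_{ij}(g_{ij}+h_{ij})$ is uniformly bounded in $L^\infty(\mu)$, lies in $R^t(K,\mu)\cap L^\infty(\mu)$, and converges (along a subsequence $l_k\to 0$) weak${}^*$ to some $\hat f\in R^t(K,\mu)\cap L^\infty(\mu)$ with $\|\hat f\|_{L^\infty(\mu)}\le C_{39}$. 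To identify $\hat f$ with $f$ in the required sense, I would use that $\frac{f_{l_k}(z)-f_{l_k}(\lambda)}{z-\lambda}\in R^t(K,\mu)$ and converges locally uniformly off $E_f$ to $\frac{\hat f(z)-\hat f(\lambda)}{z-\lambda}$, so $\mathcal C(\hat f g\mu)(z)=\hat f_0(z)\mathcal C(g\mu)(z)$ where $\hat f_0$ agrees with $f$ off $E_f$; since $f_{l_k}\to f$ pointwise off $E_f$ (standard for Vitushkin: the local pieces reproduce $f$ there) one concludes $\hat f_0=f$ off $E_f$, hence on all of $\mathcal D$, $\mathcal L^2$-a.a. (using $\mathcal L^2(E_f)=0$ as $E_f\subset\mathcal F$, and Corollary \ref{acZero}), giving $\mathcal C(\hat f g\mu)=f\,\mathcal C(g\mu)$ on $\mathcal D$, $\mathcal L^2_{\mathcal D}$-a.a., and finally $f\in H^\infty_{\mathcal D,E_0}(\mathcal L^2_{\mathcal D})$ after the initial weak${}^*$ approximation step. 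The main obstacle I anticipate is the bookkeeping needed to keep the Paramonov correctors' $\beta$-coefficients under control uniformly when a heavy square must be subdivided to find one where $\gamma(\,\cdot\cap\mathcal F)$ is bounded below — i.e., making the constant $C_{39}$ genuinely independent of $l$ and of $f$ — and, relatedly, verifying that the compatibility identities \eqref{ghProperty} survive all the linear combinations and the weak${}^*$ limit, which is the place where one really needs Lemma \ref{hFunction}'s assertion that the identity $\mathcal C(\eta)\mathcal C(g_j\mu)=\mathcal C(f g_j\mu)$ holds $\gamma|_{E^c}$-a.a.\ rather than merely $\mathcal L^2$-a.a.
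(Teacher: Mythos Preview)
Your proposal has a genuine gap at the $\beta$-correction step. You propose the standard square-by-square Vitushkin scheme as in the proof of Theorem \ref{Lemma3}, but that proof works only because Thomson's coloring scheme guarantees every relevant square is \emph{heavy}: $\gamma(S\cap\mathcal F)\ge c\,l$, so one can split $S$ into two well-separated subsquares $S_d,S_u$ each with $\gamma(\,\cdot\cap\mathcal F)\ge c\,l$, and the resulting $h_0$ satisfies $|\beta(h_0,c_{ij})|\ge c\,l^2$. Here there is no such lower bound. The hypothesis \eqref{MLemma2Eq} only gives $|\alpha(f_{ij})|\le C\,\gamma(3S_{ij}\cap E_0)$ and $|\beta(f_{ij},c_{ij})|\le C\,l\,\gamma(3S_{ij}\cap E_0)$, with $\gamma(3S_{ij}\cap E_0)$ possibly arbitrarily small relative to $l$; moreover $E_0\cap 3S_{ij}$ may be concentrated near a single point, so your ``find a subsquare where $\gamma(\,\cdot\cap\mathcal F)$ is bounded below by $c\,l$'' step simply fails, and no bounded $h_{ij}$ matching $\beta$ can be built inside one square. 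Your sentence ``they balance'' is exactly where the argument breaks.

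The paper resolves this by using Paramonov's modified scheme \cite{p95} in full, not merely the two-piece $h_0$ device. Along each column $i$ the squares are collected into groups $I_{il}$ until $\sum_{j\in I}\alpha_{ij}\ge\delta_N$; the $\beta$-corrector $h_{I_{il}}$ is then built from pairs $(h_{ij},h_{ik})$ with $j,k$ at opposite ends of the group, so the required separation comes from the group structure rather than from within a single square (see \eqref{HFunction} and (H1)--(H5)). The resulting $f_{\delta_N}$ in \eqref{FDelta} is bounded analytic off a compact subset of $E_0$ and lies in $R^t(K,\mu)\cap L^\infty(\mu)$; the key estimate is the $L^1$ bound \eqref{keyEstimate}, whose proof uses the $\gamma$-continuity of $f$ through the exhaustion sets $\mathcal D(f,m,N)$ defined by \eqref{DFMNDef1}--\eqref{DFMNDef2}. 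This single construction gives both conclusions at once; your proposed reduction to the case ``$f$ analytic off compact $E_f\subset E_0$'' is circular, since producing such approximants $f_n$ is precisely the assertion $f\in H^\infty_{\mathcal D,E_0}(\mathcal L^2_{\mathcal D})$ that is part of the conclusion.
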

\smallskip

For a positive integer $N$, $\lambda \in \mathcal D$, and $f\in GC(\mathcal D)$ that is $\gamma$-continuous at $\lambda$, define
 \[
 \ F(f, N, \lambda) = \{z: ~ |f(z) - f(\lambda )| \ge \frac{1}{N^3}\}. 
 \]
Let $\mathcal D(f, m, N)$ be the set of $\lambda \in \mathcal D$ such that
 \begin{eqnarray}\label{DFMNDef1}
 \ \gamma (F(f, N, \lambda)\cap B(\lambda, \delta )) < \dfrac{1}{N^4} \delta
 \end{eqnarray}
and
 \begin{eqnarray}\label{DFMNDef2}
 \ \gamma ( B(\lambda, \delta ) \setminus \mathcal D) < \dfrac{1}{N^2} \delta
 \end{eqnarray}
for $\delta \le \frac{1}{m}$. From the definition of $GC(\mathcal D)$ and \eqref{densityAssumption}, we conclude that
 \begin{eqnarray}\label{denseSet}
 \ \mathcal L^2\left (\mathcal D \setminus \bigcup_{m=1}^\infty \mathcal D(f, m, N) \right ) = 0.
 \end{eqnarray}

Let $\delta > 0$ and $\delta_N = \frac{1}{2N+1} \delta$. 
The proof of Theorem \ref{MLemma2} relies on Lemma \ref{hFunction} and modified Vitushkin approximation scheme by P. V. Paramonov in \cite{p95}. We provide a short description of P. V. Paramonov's ideas to process our proof.

Let $\{\varphi_{ij},S_{ij}\}$ be a smooth partition of unity, where the length of the square $S_{ij}$ is $\delta_N$, the support of $\varphi_{ij}$ is in $2S_{ij}$, $0 \le \varphi_{ij} \le 1$, 
 \[
 \ \|\bar\partial \varphi_{ij} \| \le \frac{C_{40}}{\delta_N},~ \sum \varphi_{ij} = 1,
 \]
 and 
 \[
 \ \bigcup_{i,j=1}^\infty S_{ij} = \mathbb C,~ Int(S_{ij})\cap Int(S_{i_1j_1}) = \emptyset
 \]
for $(i,j) \ne (i_1,j_1)$. 
If $f_{ij} = T_{\varphi_{ij}}f$, then
\begin{eqnarray}\label{FSum1}
 \ f = \sum_{ij} f_{ij} = \sum_{2S_{ij} \cap \mathcal D \ne \emptyset} f_{ij}
 \end{eqnarray}
since $f(z) = 0, ~ \mathcal L^2|_{\mathcal D^c}-a.a.$.

For $2S_{ij} \cap \mathcal D \ne \emptyset$, \eqref{MLemma2Eq} becomes ((2.7) in \cite{p95}) 
 \begin{eqnarray}\label{Estimate1}
 \ \begin{aligned}
 \ & |\alpha(f_{ij})| \\
 \ = & \dfrac{1}{\pi}\left | \int f(z) \dfrac{\partial \varphi _{ij}(z)}{\partial  \bar z} d\mathcal L^2(z) \right | \\
 \ \le &C_{41} \gamma (B(c_{ij}, 2\sqrt{2}\delta _N) \cap E_0)
 \ \end{aligned}
 \end{eqnarray}
and replacing $\varphi _{ij}$ by $(z - c_{ij})\varphi _{ij}$ in \eqref{MLemma2Eq}, we get ((2.8) in \cite{p95})
 \begin{eqnarray}\label{Estimate12}
 \ \begin{aligned}
 \ & |\beta(f_{ij}, c_{ij})| \\
 \ \le &C_{43} \delta _N \|(z - c_{ij})\bar \partial\varphi _{ij}\|\gamma (B(c_{ij}, 2\sqrt{2}\delta _N) \cap E_0) \\
 \ \le &C_{44} \delta _N \gamma (B(c_{ij}, 2\sqrt{2}\delta _N) \cap E_0)
 \ \end{aligned}
 \end{eqnarray}
where $c_{ij}$ denotes the center of $S_{ij}$ and $\alpha (f_{ij})$ and $\beta(f_{ij}, c_{ij})$ are defined in section 3.5.
The standard Vitushkin approximation scheme requires to construct $a_{ij}$ such that $f_{ij} - a_{ij}$ has triple zeros at $\infty$, which requires to estimate both $\alpha (a_{ij})$ and $\beta(a_{ij}, c_{ij})$ (e.g. see section 3.5). 
The main idea of P. V. Paramonov is that one does not actually need to estimate
each coefficient $\beta(a_{ij}, c_{ij})$. It suffices to do away (with appropriate estimates) with the sum of coefficients $\sum_{j\in I_{is}} \beta(a_{ij}, c_{ij})$ for a special partition $\{I_{is}\}$ into non-intersecting groups $I_{is}$.

Let $\alpha _{ij}  = \gamma (B(c_{ij}, 2\sqrt{2}\delta _N) \cap E_0)$.  Set $min_i = \min\{j:~ 2S_{ij} \cap \mathcal D \ne \emptyset \}$ and $max_i = \max\{j:~ 2S_{ij} \cap \mathcal D \ne \emptyset \}$. Let $I_i = \{min_i, min_i+1,..., max_i\}$. 
We call a subset $I$ of $I_i$ a complete group of indices if the following
conditions are satisfied: 
 \[
 \ \begin{aligned}
 \ I = &\{j_s + 1,j_s + 2,...,j_s + s_1,j_s + s_1 + 1,...,j_s + s_1 + s_2, \\
 \ &j_s + s_1 + s_2+1,...,j_s + s_1 + s_2+s_3 \} \subset I_i,
 \ \end{aligned}
 \]
where $s_2$ ($k_2$ in \cite{p95}) is an absolute constant chosen as in the proof of Lemma 2.7 of \cite{p95},
 \[
 \ \delta_N \le \sum_{j=j_s+1}^{j_s+s_1}\alpha_{ij} < \delta_N + k_1\delta_N,
 \]
and 
 \[
 \ \delta_N \le \sum_{j=j_s+s_1+s_2+1}^{j_s+s_1+s_2+s_3}\alpha_{ij} < \delta_N + k_1\delta_N,
 \]
where $k_1$ ($\ge 3$) is a fixed integer and used in \cite{p95}. For our purpose, we can actually set $k_1=3$.

We now present a detailed description of the procedure of partitioning $I_i$ into
groups. we split each $I_i$ into (finitely many) non-intersecting groups $I_{il}$, $l = 1,...,l_i$, as follows.
Starting from the lowest index $min_i$ in $I_i$ we include in $I_{i1}$ (going upwards and without jumps in $I_i$) all indices until we have collected a minimal (with respect to the number of elements) complete group $I_{i1}$. Then we repeat this
procedure for $I_i\setminus I_{i1}$, and so on. After we have constructed several complete
groups $I_{i1},...,I_{il_i-1}$ (this family can even be empty) there can remain the last
portion $I_{il_i} = I_i\setminus(I_{i1}\cup...\cup I_{il_i-1})$ of indices in $I_i$, which includes no complete
groups. We call this portion $I_{il_i}$ an incomplete group (clearly, there is at most one incomplete group for each $i$).

Let $g_{ij}^1$ be as in Lemma \ref{hFunction} satisfying $g_{ij}^1(z) = \mathcal C(\eta_{ij}^1)(z)$ for $z\in (\text{spt}(\eta_{ij}^1))^c$, where $\eta_{ij}^1$ is a finite positive measure with $\text{spt}(\eta_{ij}^1) \subset 6S_{ij} \cap E_0$, $\eta_{ij}^1\in\Sigma (6S_{ij})$, $\|g_{ij}^1\| \le C_6$, $\alpha(g_{ij}^1) = \alpha_{ij}$, and $g_{ij}^1 \in R^t(K,\mu)\cap L^\infty (\mu)$. Let $g_{ij}^0 = \frac{\alpha (f_{ij})}{\alpha_{ij}}g_{ij}^1$, $\eta_{ij}^0 = \frac{\alpha (f_{ij})}{\alpha_{ij}} \eta_{ij}^1$, and $g_{ij} = f_{ij} - g_{ij}^0$. Clearly, by \eqref{Estimate1},
 \[
\ \|g_{ij}^0\| \le C_{45}, ~|\alpha(g_{ij}^0)| \le C_{45}\alpha_{ij}, ~| \beta(g_{ij}^0, c_{ij})| \le C_{45}\delta _N \alpha_{ij}.
 \]
Hence, by \eqref{Estimate12},  
\begin{eqnarray}\label{GConditions}
 \ \|g_{ij}\| \le C_{46}, ~\alpha(g_{ij}) = 0, ~| \beta(g_{ij}, c_{ij})| \le C_{46}\delta _N \alpha_{ij}.  
 \end{eqnarray}
Let $I=I_{il}$ be a group, define
 \[
 \ g_I = \sum_{j\in I} g_{ij}, ~ \alpha(g_I) = \sum_{j\in I} \alpha(g_{ij}),~\beta (g_I) = \sum_{j\in I} \beta (g_{ij},c_{ij}),
 \]
and let $I'(z) = \{j\in I:~|z-c_{ij}| >3k_1\delta_N  \}$, 
 \[
 \ L_I' (z) = \sum_{j\in I'(z)} \left ( \dfrac{\delta_N\alpha_{ij}}{|z - c_{ij}|^2} + \dfrac{\delta_N^3}{|z - c_{ij}|^3} \right ).
 \]
Define $L_I(z) = L_I' (z)$ if $I = I'(z)$, otherwise, $L_I(z) = 1+L_I' (z)$. 
It is proved in (2.22) of \cite{p95}, by \eqref{GConditions},  that 
 \begin{eqnarray}\label{Eq2.22}
 \ |g_I(z)| \le C_{47} L_I(z), ~ |g_I(z)| \le C_{47}, ~ \alpha(g_I) = 0,~|\beta (g_I)| \le C_{47}\delta_N^2.
 \end{eqnarray}

The follow lemma is due to Lemma 2.7 in \cite{p95}.  

\begin{lemma} For each complete group $I_{il}$, there exists a function
$h_{I_{il}}$ that has the following form
 \begin{eqnarray}\label{HFunction}
 \ \begin{aligned}
 \ & h_{I_{il}}^0 = \sum_{j\in I_{il}^1}\sum_{k\in I_{il}^2} \left(H^i_{jk} : = \dfrac{\delta_N}{|c_{ik} - c_{ij}|}(\lambda_{ik}^jh_{ik} - \lambda_{ij}^kh_{ij})\right ), \\
 \ &h_{I_{il}} = \dfrac{\beta (g_{I_{il}})}{\beta (h_{I_{il}}^0)}h_{I_{il}}^0, 
\ \end{aligned}
 \end{eqnarray}
where $I_{il}^1 = (j_s+1,...,j_s+s_1)$ and $I_{il}^2 = (j_s+s_1+s_2+1,...,j_s+s_1+s_2+s_3)$,
satisfying:

(H1) $h_{ij}$ is bounded analytic off $6S_{ij}$ from Lemma \ref{hFunction} and $h_{ij}(z) = \mathcal C(\eta_{ij})(z)$ for $z\in (\text{spt}(\eta_{ij}))^c$, where $\eta_{ij}$ is a finite positive measure with $\eta_{ij} \in \Sigma(6S_{ij})$ and  $\text{spt}(\eta_{ij}) \subset 6S_{ij} \cap E_0$, satisfying 
 \begin{eqnarray}\label{HConditions}
 \ \|h_{ij}\|_\infty \le C_{48}, ~ \alpha(h_{ij}) = \alpha_{ij};
 \end{eqnarray}

(H2) $\lambda_{ij}^k, \lambda_{ik}^j \ge 0$ and 
 \[
 \ \sum_{j\in I_{il}^1}\lambda_{ik}^j \le 1, ~ \sum_{k\in I_{il}^2} \lambda_{ij}^k \le 1;
 \]  

(H3)
 \[
 \ \sum_{j\in I_{il}^1}\sum_{k\in I_{il}^2}\lambda_{ij}^k\alpha_{ij} = \delta _N, ~ \sum_{j\in I_{il}^1}\sum_{k\in I_{il}^2}\lambda_{ik}^j\alpha_{ik} = \delta _N; 
 \]

(H4) $\alpha (H^i_{jk}) = 0$, that is, $\lambda_{ik}^j\alpha_{ik} = \lambda_{ij}^k\alpha_{ij}$; and

(H5) if $|z - c_{ij}| > 3k_1\delta_N$ and $|z - c_{ik}| > 3k_1\delta_N$, then
 \[
 \ |H^i_{jk} (z)| \le C_{49}\left (\dfrac{\lambda_{ij}^k\alpha_{ij}\delta_N}{|z - c_{ij}|^2} + \dfrac{\lambda_{ik}^j\alpha_{ik}\delta_N}{|z - c_{ik}|^2}\right)
 \]
and for all $z \in \mathbb C,$
 \begin{eqnarray}\label{Eq2.25}
 \ |h_{I_{il}}(z)| \le C_{50} L_{I_{il}}(z),  ~ \alpha (h_{I_{il}}) = 0, ~ \beta (h_{I_{il}}) = \beta (g_{I_{il}}).
 \end{eqnarray}
\end{lemma}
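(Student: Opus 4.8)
\textbf{Proof proposal for the lemma (Lemma 2.7 of \cite{p95}, adapted to our setting).} The statement we must establish is that, for each complete group $I_{il}$, there is a function $h_{I_{il}}$ of the stated form satisfying (H1)--(H5), in particular the final estimates \eqref{Eq2.25}. The plan is to follow P.~V.~Paramonov's construction in \cite{p95} nearly verbatim, with the single substantive change that the elementary building blocks $h_{ij}$ are not Vitushkin-type rational functions but the functions produced by Lemma \ref{hFunction}; so the first task is to record that these blocks have exactly the analytic features Paramonov's argument uses, and the second task is to verify that the arithmetic/combinatorial half of his proof goes through unchanged because it only manipulates the scalars $\alpha_{ij}$ and the centers $c_{ij}$.

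\emph{Step 1: the building blocks.} Apply Lemma \ref{hFunction} to the square $6S_{ij}$ and the compact set $E\subset 6S_{ij}\cap E_0$ with $\gamma(E)$ comparable to $\gamma(6S_{ij}\cap E_0)$ obtained by an inner-regularity choice; this yields $h_{ij}\in R^t(K,\mu)\cap L^\infty(\mu)$ bounded analytic off $6S_{ij}$, of the form $\mathcal C(\eta_{ij})$ with $\eta_{ij}\in\Sigma(6S_{ij})$, $\mathrm{spt}(\eta_{ij})\subset 6S_{ij}\cap E_0$, $\|h_{ij}\|_\infty\le C_{48}$, $h_{ij}(\infty)=0$, and $\alpha(h_{ij})=h_{ij}'(\infty)=\gamma(E)$. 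Rescale so that $\alpha(h_{ij})=\alpha_{ij}$ exactly; this gives (H1), i.e.\ \eqref{HConditions}. The only property of $h_{ij}$ beyond \eqref{HConditions} that Paramonov needs is the decay $|h_{ij}(z)-\alpha_{ij}/(z-c_{ij})|\lesssim \alpha_{ij}\delta_N/|z-c_{ij}|^2$ for $|z-c_{ij}|>3k_1\delta_N$, which is immediate from the Laurent expansion of $\mathcal C(\eta_{ij})$ centered at $c_{ij}$ together with $\mathrm{spt}(\eta_{ij})\subset 6S_{ij}$ and $\|\eta_{ij}\|=\alpha_{ij}\le C\delta_N$ (the last bound from $\eta_{ij}\in\Sigma(6S_{ij})$). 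Note also $g_{ij}=f_{ij}-g_{ij}^0\in R^t(K,\mu)\cap L^\infty(\mu)$ satisfies \eqref{GConditions}, so $g_I=\sum_{j\in I}g_{ij}$ lies in $R^t(K,\mu)\cap L^\infty(\mu)$, and \eqref{Eq2.22} is exactly (2.22) of \cite{p95} once \eqref{GConditions} is in hand.

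\emph{Step 2: the combinatorial core.} Fix the complete group $I=I_{il}$ with its two sub-blocks $I^1=(j_s+1,\dots,j_s+s_1)$ and $I^2=(j_s+s_1+s_2+1,\dots,j_s+s_1+s_2+s_3)$ separated by the buffer of $s_2$ indices; by definition of complete group, $\delta_N\le\sum_{j\in I^1}\alpha_{ij}<(1+k_1)\delta_N$ and likewise for $I^2$. We must choose the transport coefficients $\lambda_{ij}^k,\lambda_{ik}^j\ge 0$ so that (H2), (H3), (H4) hold; this is precisely the (finite, linear) transportation problem solved on pp.\ of \cite{p95}: build a measure-preserving ``matching'' of total mass $\delta_N$ between the weights $\{\alpha_{ij}\}_{j\in I^1}$ (truncated to total mass $\delta_N$) and $\{\alpha_{ik}\}_{k\in I^2}$ (also truncated to total mass $\delta_N$), with $\lambda_{ik}^j\alpha_{ik}=\lambda_{ij}^k\alpha_{ij}$ being the transported mass on the edge $(j,k)$. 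The constant $s_2=k_2$ is chosen exactly as in \cite{p95} so that the geometric separation $|c_{ik}-c_{ij}|\gtrsim s_2\delta_N$ forces $|c_{ik}-c_{ij}|>3k_1\delta_N$ for all relevant $j\in I^1,k\in I^2$, which is what makes the dipole estimate (H5) for $H^i_{jk}$ valid. Then $\alpha(H^i_{jk})=\tfrac{\delta_N}{|c_{ik}-c_{ij}|}(\lambda_{ik}^j\alpha_{ik}-\lambda_{ij}^k\alpha_{ij})=0$ by (H4), and summing the (H5) bounds over $j,k$ and using (H2) gives $|h_{I}^0(z)|\le C\,L_I(z)$; a direct computation of the second Laurent coefficient of $h_I^0$ (a sum of dipoles) gives $\beta(h_I^0)\gtrsim\delta_N^2$ with the correct sign, so that the normalization $h_I=\bigl(\beta(g_I)/\beta(h_I^0)\bigr)h_I^0$ is legitimate; combined with $|\beta(g_I)|\le C_{47}\delta_N^2$ from \eqref{Eq2.22} this yields $\|h_I\|_\infty\le C$, and then $|h_I(z)|\le C_{50}L_{I_{il}}(z)$, $\alpha(h_{I_{il}})=0$, $\beta(h_{I_{il}})=\beta(g_{I_{il}})$, i.e.\ \eqref{Eq2.25}. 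Finally $h_{I_{il}}\in R^t(K,\mu)\cap L^\infty(\mu)$ because each $h_{ij}$ is and the expression \eqref{HFunction} is a finite linear combination.

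\emph{Main obstacle.} The genuinely delicate point is the quantitative lower bound $|\beta(h_{I_{il}}^0)|\ge c\,\delta_N^2$ (equivalently, that the dipoles do not cancel against each other), since the whole renormalization rests on it; this is the heart of Lemma 2.7 in \cite{p95} and is handled there by the careful choice of the buffer length $s_2$ together with the sign/geometry of the centers along the column $i$. I expect this to transfer with no change, because it is a statement purely about the scalars $\alpha_{ij}$, the one-dimensional arrangement of centers $c_{ij}$, and the explicit dipole kernel $\delta_N(\bar\partial(1/z))$-type terms --- none of which sees the difference between a rational function and $\mathcal C(\eta_{ij})$. The only genuinely new verification, then, is Step~1: that Lemma \ref{hFunction} delivers blocks with \emph{both} the membership $h_{ij}\in R^t(K,\mu)\cap L^\infty(\mu)$ (so the final $h_{I_{il}}$ stays in the algebra, which is what downstream use in Theorem \ref{MLemma2} requires) \emph{and} the Cauchy-transform representation $h_{ij}=\mathcal C(\eta_{ij})$ with $\mathrm{spt}(\eta_{ij})\subset 6S_{ij}\cap E_0$ and linear growth --- both of which are stated outputs of Lemma \ref{hFunction}, so there is nothing to prove beyond invoking it and rescaling.
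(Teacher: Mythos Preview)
Your proposal is correct and matches the paper's approach exactly: the paper does not give its own proof of this lemma but simply attributes it to Lemma~2.7 of \cite{p95}, with the understanding that the building blocks $h_{ij}$ are supplied by Lemma~\ref{hFunction} (just as the paper does explicitly for the parallel functions $g_{ij}^1$ a few lines earlier). You have in fact written out more detail than the paper provides, correctly identifying that the only adaptation needed is Step~1 (swapping in the $h_{ij}=\mathcal C(\eta_{ij})$ from Lemma~\ref{hFunction}), while Step~2 is Paramonov's scalar/combinatorial argument verbatim.
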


We rewrite \eqref{FSum1} as the following
 \[
 \ f = \sum_{i} \sum_{l = 1}^{l_i-1} (g_{I_{il}} - h_{I_{il}}) + \sum_{i} \sum_{j\in I_{il_i}} g_{ij} + f_{\delta_N}
 \]
where
 \begin{eqnarray}\label{FDelta}
 \ f_{\delta_N} = \sum_{i} \sum_{j\in I_{il}, 1\le l \le l_i} g_{ij}^0 + \sum_{i} \sum_{l = 1}^{l_i-1} h_{I_{il}} \in R^t(K,\mu)\cap L^\infty(\mu).
 \end{eqnarray}

Let $L_1$ be a subset of complete groups $I_{il}$. Let $L_2$ be a subset of index groups $I\subset I_{il}$ for some $l$ (at most three $l$) such that for each $i$, there are at most three $I \subset I_{il}$ in $L_2$, denoted by $I_i^1,I_i^2,I_i^3$. We can view $L_2$ as a collection of incomplete groups. Define $\Psi_{iI_{il}}(z) = g_{I_{il}}(z) - h_{I_{il}}(z)$ for $1 \le l \le l_i - 1$. 
 The following inequality is basically proved in \cite{p95} though some modifications are needed.
\begin{eqnarray}\label{FBounded1}
\ \sum_{I_{il}\in L_1} |\Psi_{iI_{il}}(z)| + \underset{1\le k \le 3}{\sum_{I_i^k\in L_2}} |g_{I_i^k}(z)| \le C_{51} \text{min} \left (1, \left (\dfrac{\delta _N}{\text{dist}(z,\partial R)} \right )^\frac14 \right) 
\end{eqnarray}
where $R$ is a square (union of some $S_{ij}$, for the proof of \eqref{keyEstimate1}, $R = (2N+1)S$ for some $S\in \{S_{ij}\}$ and we need  $R\cap S_{ij} = \emptyset$ for $j\in I$, $I \in L_1\cup L_2$). Since the proof of \eqref{FBounded1} involves many technical estimations, we will provide a short description at the end of our theorem proof.

Now assuming \eqref{FBounded1} holds, we claim that for $N > 3k_1$ (recall $\delta = \frac{1}{m}$ and $\delta_N = \frac{1}{2N+1}\delta$),
 \begin{eqnarray}\label{keyEstimate}
 \ \int _{\mathcal D(f, m, N)} |f(z) - f_{\delta_N}(z)| d\mathcal L^2(z) \le \dfrac{C_{52}}{N^\frac14}.
 \end{eqnarray}

If \eqref{keyEstimate} is proved, we can prove Theorem \ref{MLemma2} easily as the following.

\begin{proof} (Theorem \ref{MLemma2} assuming \eqref{keyEstimate} holds) 
Clearly, by \eqref{FBounded1}, $\|f_{\delta_N}\|_{L^\infty (\mathcal L^2_{\mathcal D})} \le C_{53}$ and $f_{\delta_N}$ is bounded analytic on $\mathbb C\setminus F_N$, where $F_N \subset E_0$ is a compact subset. Moreover, $\|f_{\delta_N}\|_{L^\infty (\mu)} \le C_{53}$ and $f_{\delta_N}\in R^t(K, \mu)\cap L^\infty(\mu)$. Hence, there exists $m_N$ such that, from \eqref{denseSet}, 
 \[
 \ \mathcal L^2(\mathcal D \setminus \mathcal D(f, m_N, N) ) < \dfrac{1}{N}.
 \]
Set $f_N = f_{\delta_N}$. Clearly, by \eqref{keyEstimate},
 \[
 \ \|f_N - f\|_{L^1 (\mathcal L^2_{\mathcal D})}\rightarrow 0.
 \]
Therefore, $f_N\rightarrow f,~ \mathcal L^2_{\mathcal D}-a.a.$ (by passing to a subsequence). Thus, $f_N\rightarrow f$ in $L^\infty(\mathcal L^2_{\mathcal D})$ weak$^*$ topology. This implies $f\in H^\infty_{\mathcal D, E_0}(\mathcal L^2_{\mathcal D })$ as $f_N\in H^\infty_{\mathcal D, E_0}(\mathcal L^2_{\mathcal D })$. We may also assume that $f_N\rightarrow \hat f$ in $L^\infty(\mu)$ weak$^*$ topology.  For $g\perp R^t(K,\mu)$, by Lemma \ref{hFunction}, we have 
 \[
 \ f_N(\lambda) \mathcal C (g\mu) (\lambda) = \mathcal C (f_Ng\mu) (\lambda),~ \mathcal L^2_{\mathcal D}-a.a..
 \]
For $\lambda\in \mathcal D$ with $\int \frac{1}{|z-\lambda|}|g(z)|d\mu (z) < \infty$, we conclude $\mathcal C (f_Ng\mu) (\lambda)\rightarrow \mathcal C (\hat fg\mu) (\lambda),~ \mathcal L^2_{\mathcal D}-a.a.$. Therefore, 
 \[
 \ \mathcal C (\hat fg\mu) (\lambda) = f(\lambda)\mathcal C (g\mu) (\lambda), ~ \mathcal L^2_{\mathcal D}-a.a. 
 \]
Certainly, $\|\hat f\|_{L^\infty(\mu)} \le C_{53}$.
 This proves the theorem.
\end{proof}
\smallskip

To prove \eqref{keyEstimate}, we have
\[
 \ \int _{\mathcal D(f, m, N)} |f(z) - f_{\delta_N}(z)| d\mathcal L^2(z) = \sum_{ij}  \int _{S_{ij} \cap \mathcal D(f, m, N)}|f(z) - f_{\delta_N}(z)| d\mathcal L^2(z).
 \] 
We now fix a square $S\in \{S_{ij}\}$ with $S \cap \mathcal D(f, m, N) \ne \emptyset$. It is suffice to prove 
\begin{eqnarray}\label{keyEstimate1}
 \ \int _{S \cap \mathcal D(f, m, N)} |f(z) - f_{\delta_N}(z)| d\mathcal L^2(z) \le \dfrac{C_{54}}{N^\frac14} \mathcal L^2(S).
 \end{eqnarray}
\smallskip

\begin{proof} (of \eqref{keyEstimate1}) Fix $S = S_{i_0j_0}$.
Let $J$ be the set of indices $(i,l)$ for $1 \le l \le l_i - 1$ such that there is a square $S_{ij}$ in the complete group $I_{il}$ satisfying $6S_{ij} \subset (2N+1)S$. 
Let $J_0$ be the set of indices $i$ such that there is a square $S_{ij}$ in the incomplete group $I_{il_i}$ satisfying $6S_{ij} \subset (2N+1)S$. Let $J_1$ be the subset of index $i$ such that there exists $l$ with $(i,l)\in J$. 
From \eqref{FBounded1}, for $z \in S$, we get
 \[
 \ \sum_{(i,l)\notin J} |g_{I_{il}}(z) - h_{I_{il}}(z)| + \sum_{i\notin J_0} |g_{I_{il_i}}(z)| \le \dfrac{C_{55}}{N^\frac{1}{4}}.
 \]
Therefore, for $z\in S$,  
\begin{eqnarray}\label{FBounded2}
  \ \begin{aligned}
 \ &  |f(z) - f_{\delta_N}(z)|\\
 \ \le & \dfrac{C_{56}}{N^\frac14} + \sum_{i\in J_1}\sum_{(i,l)\in J} |g_{I_{il}}(z)| + \sum_{i\in J_1}\sum_{(i,l)\in J} | h_{I_{il}}(z)| + \sum_{i\in J_0} |g_{I_{il_i}}(z)|.
 \ \end{aligned} 
 \end{eqnarray}
Set $I_{il}^u = \{j\in I_{il}:~j > j_0,~ 6S_{ij} \setminus (2N+1)S \ne \emptyset\}$ and $I_{il}^d = \{j\in I_{il}:~j < j_0,~ 6S_{ij} \setminus (2N+1)S \ne \emptyset\}$. For each $i$ with $(i,l)\in J$ or $i\in J_0$, there is at most one $I_{il}^u$ with $I_{il}^u \ne \emptyset$ and one $I_{il}^d$ with $I_{il}^d \ne \emptyset$. Then
 \[
 \ |g_{I_{il}}(z)| \le |g_{I_{il}^u}(z)| + |g_{I_{il}^d}(z)| + |g_{I_{il} \setminus (I_{il}^u\cup I_{il}^d)}(z)|.
 \]
Hence, from \eqref{FBounded1} and for $z\in S$, 
 \[
 \ \sum _{I_{il}^u \ne \emptyset}|g_{I_{il}^u}(z)| + \sum _{I_{il}^d \ne \emptyset} |g_{I_{il}^d}(z)| \le \dfrac{C_{57}}{N^\frac{1}{4}}. 
 \]
Therefore,  
\begin{eqnarray}\label{FBounded3}
  \ \begin{aligned}
 \ I(z) := & \sum_{i\in J_1}\sum_{(i,l)\in J} |g_{I_{il}}(z)| + \sum_{i\in J_0} |g_{I_{il_i}}(z)| \\
 \ \le & \dfrac{C_{58}}{N^\frac14} + \sum_{6S_{ij}\subset (2N+1)S} (|f_{ij}(z)| + |g^0_{ij}(z)|).
 \ \end{aligned} 
 \end{eqnarray} 
  
There exists $\lambda_0\in S \cap \mathcal D(f, m, N)$ such that $\gamma (B(\lambda_0, \delta) \cap F(f, N, \lambda_0)) < \frac{1}{N^4} \delta$ (see \eqref{DFMNDef1}) and $\gamma (B(\lambda_0, \delta) \cap E_0) \le \gamma ( B(\lambda_0, \delta ) \setminus \mathcal D)< \frac{1}{N^2} \delta < \frac{3}{N} \delta _N$ (see \eqref{DFMNDef2}). Using Lemma \ref{lemmaBasic0} (3), we get
 \begin{eqnarray}\label{acSum}
 \ \sum_{6S_{ij}\subset (2N+1)S} \gamma (6S_{ij} \cap E_0) \le C_{59}\gamma(B(\lambda_0, \delta) \cap E_0) < \frac{C_{60}}{N} \delta _N. 
 \end{eqnarray}
Thus, for $z, \lambda \in F(f, N, \lambda_0)^c$, we have $|f(z) - f(\lambda)| < \frac{2}{N^3}$ and 
 \begin{eqnarray}\label{FIJEstimate}
 \ \begin{aligned}
 \ &|f_{ij}(\lambda )| \\
 \ \le &\dfrac{1}{\pi} \int_{F(f, N, \lambda_0)}\dfrac{2\|f\|}{|z-\lambda|}|\bar\partial \varphi _{ij}|d\mathcal L^2(z) \\
 \ &+ \dfrac{1}{N^3\pi} \int_{F(f, N, \lambda_0)^c}\dfrac{2}{|z-\lambda|}|\bar\partial \varphi _{ij}|d\mathcal L^2(z) \\
 \ \le & C_{61}\sqrt{\mathcal L^2(B(\lambda_0, \delta) \cap F(f, N, \lambda_0))} \|\bar\partial \varphi _{ij}\| + C_{62}\dfrac{1}{N^3} \\
 \ \le &\dfrac{C_{63}}{N^3}.
 \end{aligned}
 \end{eqnarray}
From the definition of $g^0_{ij}$ (before \eqref{GConditions}) and \eqref{MLemma2Eq}, we get
 \begin{eqnarray}\label{GIJEstimate}
 \ \begin{aligned}
 \ &\int_{S \cap \mathcal D(f, m, N)} |g^0_{ij}(\lambda )| d\mathcal L^2(\lambda) \\
 \ \le & \int_{S \cap \mathcal D(f, m, N)} \int \dfrac{1}{|z-\lambda | }d|\eta^0_{ij}|d\mathcal L^2 \\
 \ \le & C_{62} \sqrt{\mathcal L^2(S)} |\alpha (f_{ij}) | \\ 
\ \le & C_{63} \sqrt{\mathcal L^2(S)} \gamma (6S_{ij}\cap E_0).
 \ \end{aligned}
 \end{eqnarray}
Combining \eqref{FIJEstimate} with \eqref{GIJEstimate} and applying \eqref{acSum}, we get the following estimate for \eqref{FBounded3},
 \begin{eqnarray}\label{IEstimate}
 \ \begin{aligned}
 \ & \int_{S \cap \mathcal D(f, m, N)} I(z) d\mathcal L^2(z) \\
\ \le & C_{64}(2N+1)^2 \left ( \dfrac{\mathcal L^2(S)}{N^3} + \int_{B(\lambda_0, \delta) \cap F(f, N, \lambda_0)} |f_{ij}(\lambda )| d\mathcal L^2(\lambda ) \right ) \\
 \ & + C_{65}\sum_{6S_{ij}\subset (2N+1)S}\gamma (6S_{ij}\cap E_0)\sqrt{\mathcal L^2(S)} \\
\ \le &C_{66}(2N+1)^2 \left ( \dfrac{\mathcal L^2(S)}{N^3} + \mathcal L^2(B(\lambda_0, \delta) \cap F(f, N, \lambda_0)) \right ) \\
 \ &+ C_{67}\gamma ((2N+1)S\cap E_0)\sqrt{\mathcal L^2(S)} \\
 \ \le &\dfrac{C_{68}}{N}\mathcal L^2(S).
 \ \end{aligned}
 \end{eqnarray}

Define 
 \[
 \ II_i(z) := \sum_{(i,l)\in J} | h_{I_{il}}(z)|,~ II(z) := \sum_{i\in J_1} II_i(z).
 \]
For $ z\in S$ and $i\in J_1$ (notice that $N > 3k_1$), if $j\in I_{il}^1\cap (I_{il}^d \cup I_{il}^u)$ and $k\in I_{il}^2\cap (I_{il}^d \cup I_{il}^u)$, then, by (H5),
 \[
 \ |H^i_{jk} (z)| \le C_{69}\dfrac{\lambda_{ij}^k\alpha_{ij} + \lambda_{ik}^j\alpha_{ik}}{N^2\delta_N};
 \]
if $j\in I_{il}^1\setminus (I_{il}^d \cup I_{il}^u)$ and $k\in I_{il}^2 \cap (I_{il}^d \cup I_{il}^u)$, then, by (H1) and (H4),
 \[
 \ |H^i_{jk} (z)| \le C_{70} \left (\dfrac{\lambda_{ik}^j \alpha_{ik}}{N\delta_N} + \lambda_{ij}^k | \mathcal C(\eta_{ij})(z)| \right ) = C_{70}\lambda_{ij}^k \left (\dfrac{\alpha_{ij}}{N\delta_N} + |\mathcal C(\eta_{ij})(z)|\right ) ;
 \]
if $j\in I_{il}^1\cap (I_{il}^d \cup I_{il}^u)$ and $k\in I_{il}^2 \setminus (I_{il}^d \cup I_{il}^u)$, then, by (H1) and (H4),
 \[
 \ |H^i_{jk} (z)| \le C_{71}\lambda_{ik}^j \left (\dfrac{\alpha_{ik}}{N\delta_N} + |\mathcal C(\eta_{ik})(z)|\right ) ;
 \]
and if $j\in I_{il}^1\setminus (I_{il}^d \cup I_{il}^u)$ and $k\in I_{il}^2 \setminus (I_{il}^d \cup I_{il}^u)$, then, by (H1),
 \[
 \ |H^i_{jk} (z)| \le C_{72} (\lambda_{ik}^j |\mathcal C(\eta_{ik})(z)| + \lambda_{ij}^k |\mathcal C(\eta_{ij})(z)|).
 \]
Therefore,
 \[
 \ \begin{aligned}
 \ & II_i(z) \\
 \ \le & \dfrac{C_{73}}{N^2} +  C_{74} \sum_{(i,l)\in J}\sum_{j\in I_{il}^1\setminus (I_{il}^d \cup I_{il}^u)} \sum_{k \in I_{il}^2\setminus (I_{il}^d \cup I_{il}^u)} (\lambda_{ik}^j |\mathcal C(\eta_{ik})(z)| + \lambda_{ij}^k |\mathcal C(\eta_{ij})(z)|)\\
 \ &  + C_{75} \sum_{(i,l)\in J}\sum_{j\in I_{il}^1\cap (I_{il}^d \cup I_{il}^u)} \sum_{k \in I_{il}^2\setminus (I_{il}^d \cup I_{il}^u)}  \lambda_{ik}^j \left (\dfrac{\alpha_{ik}}{N\delta_N} + |\mathcal C(\eta_{ik})(z)| \right ) \\
\ & + C_{76} \sum_{(i,l)\in J}\sum_{j\in I_{il}^1\setminus (I_{il}^d \cup I_{il}^u)} \sum_{k \in I_{il}^2\cap (I_{il}^d \cup I_{il}^u)}  \lambda_{ij}^k \left (\dfrac{\alpha_{ij}}{N\delta_N} + |\mathcal C(\eta_{ij})(z)|\right ). 
 \end{aligned}
 \]
Using (H2), we get
 \[
 \ II_i(z) \le \dfrac{C_{73}}{N^2} +  C_{77} \sum_{(i,l)\in J}\sum_{j\in I_{il}\setminus (I_{il}^d \cup I_{il}^u)}\left (\dfrac{\alpha_{ij}}{N\delta_N} + |\mathcal C(\eta_{ij})(z)|\right ).
 \]
Hence,
 \begin{eqnarray}\label{IIEstimate}
 \ \begin{aligned}
 \ & \int_{S \cap \mathcal D(f, m, N)} II(z) d\mathcal L^2(z) \\
 \ \le &\dfrac{3C_{73}}{N} \mathcal L^2(S) + C_{78} \sum_{i\in J_1}\sum_{6S_{ij}\subset (2N+1)S} \left (\dfrac{\alpha_{ij}}{N\delta_N}\mathcal L^2(S) + \|\eta_{ij}\|(\mathcal L^2(S))^\frac12\right )\\
 \ \le &\dfrac{3C_{73} }{N}\mathcal L^2(S) + C_{79} \gamma((2N+1)S\cap E_0)(\mathcal L^2(S))^\frac12 \\
	 \ \le &\dfrac{C_{80} }{N}\mathcal L^2(S)
 \end{aligned}
 \end{eqnarray}
where \eqref{acSum} is used again. Combining \eqref{IEstimate} with \eqref{IIEstimate}, we prove \eqref{keyEstimate}.
\end{proof}

\begin{proof} (key steps for proving \eqref{FBounded1}) 
For $\text{dist}(z, \partial R ) \le k_1 \delta_N$, the proof in \cite{p95} shows that \eqref{FBounded1} is bounded by an absolute constant. Hence, we assume that $\text{dist}(z, \partial R ) > k_1 \delta_N$ below. Let $ l(z)\delta_N + k_1\delta_N\le \text{dist}(z, \partial R) < l(z)\delta_N + (k_1+1)\delta_N$ for $z \in R$ and $\delta_N$ small enough, where $l(z)$ is a positive integer. We need to modify the estimates in \cite{p95} under the condition $|z - c_{ij}| > l(z)\delta_N$. 

Without loss of generality, we assume that $z=0$. In this case, 
 \begin{eqnarray}\label{ZConditionEq1}
 \ |z - c_{ij}| = |0 - c_{ij}| > l(0)\delta_N.
 \end{eqnarray}  

The modification of (2.34) in \cite{p95} is 
 \begin{eqnarray}\label{Eq2.34}
 \ \begin{aligned}
 \ \sum_{j\in I} \dfrac{\delta_N^3}{|z - c_{ij}|^3} |_{z = 0} \le &C_{81} \sum_{l = \sqrt{\max(0, l(0)^2 - i^2)}}^\infty \dfrac{1}{(i^2 + l^2)^{\frac{3}{2}}} \\
 \ \le &C_{81} \int_{\sqrt{\max(0, l(0)^2 - i^2)}}^\infty \dfrac{dt}{(i^2 + t^2)^{\frac{3}{2}}} \\
 \ \le & \dfrac{C_{81}}{\max(l(0)^2,i^2)}. 
 \ \end{aligned}
 \end{eqnarray} 
Also
 \[
 \ \sum_{j\in I} \dfrac{\delta_N \alpha_{ij}}{|z - c_{ij}|^2} |_{z = 0} \le \dfrac{1}{\delta_N\max(l(0)^2, i^2)}\sum_{j\in I}  \alpha_{ij}\le \dfrac{C_{82}}{\max(l(0)^2, i^2)}.  
 \]
From \eqref{Eq2.22}, we get, for $I_i^k\in L_2$, $k=1,2,3$,
 \[
 \ |g_{I_i^k}(0)| \le C_{83} \min\left \{1, \dfrac{1}{\max(l(0)^2, i^2)} \right \}
 \] 
which implies that
 \begin{eqnarray}\label{Eq2.35}
 \ \underset{1\le k \le 3}{\sum_{I_i^k\in L_2}} |g_{I_i^k}(0)| \le \dfrac{C_{84}}{l(0)}.
  \end{eqnarray}
		
Now fix a complete group $I = I_{il}$ for $1 \le l \le l_i - 1$. For all $i \le 4k_2$, we estimate from \eqref{Eq2.22} that
 \begin{eqnarray}\label{Eq2.370}
 \ \sum_{l = 1}^{l_i-1} |\Psi_{iI}(0)| \le C_{85}\min\left \{1, \sum_k \dfrac{1}{(l(0) + k)^2} \right \} \le \dfrac{C_{85}}{l(0)}.
 \end{eqnarray}
We now fix $i$ and denote
$d_{iI} = \max_{j_1, j_2\in I}|c_{ij_1} - c_{ij_2}| + 2k_1\delta_N$.
Let $c_{iI} = c_{ij_1}$, where $j_1 = \min\{j\in I\}$. We denote by $S' = \{l: ~ d_{iI_l} \le i^{\frac{1}{4}}\delta_N,~ 1  \le l \le l_i -1 \}$ and $S'' = \{l\notin S': ~ 1  \le  l \le  l_i -1 \}$. Using maximum modulus principle for $(z-c_{iI_l})^3\Psi_{iI_l}(z)$ on $|z - c_{iI_l}| \ge d_{iI_l}$ and $l\in S'$, we have,
 \[
 \ |\Psi_{iI_l}(z)| \le \dfrac{C_{86}(\delta_N |i|^\frac{1}{4})^3}{|z - c_{iI_l}|^3}.
 \]
Since $|0 - c_{iI_l}| > |i| \delta_N > 3 \delta_N |i|^\frac{1}{4}$, using \eqref{Eq2.34}, we derive that
 \begin{eqnarray}\label{Eq2.37}
\ \begin{aligned}
 \ & \sum_{|i| > 4k_2}\sum_{l\in S'}|\Psi_{iI_l}(0)|\\
 \ \le & C_{87} \sum_{|i| \ge 4k_2} |i|^\frac{3}{4}\min\left \{1, \dfrac{1}{\max(l(0)^2, i^2)} \right \} \\
 \ \le & C_{87} \sum_{l(0) \ge |i| \ge 4k_2} \dfrac{|i|^\frac{3}{4}}{l(0)^2} + C_{59} \sum_{|i| > l(0)} \dfrac{1}{|i|^\frac{5}{4}} \\
 \ \le & \dfrac{C_{87}}{l(0)^\frac{1}{4}}.
 \end{aligned}
 \end{eqnarray}

Now for $|i| > 4k_2$ and $l\in S''$, let $b_{iI}$ be the point in $\{c_{ij}, ~ j\in I\}$ with the smallest distant to zero. by \eqref{Eq2.22} and \eqref{Eq2.25}, we see that
 \begin{eqnarray}\label{Eq2.380}
 \ \begin{aligned}
 \ & \sum_{l\in S''}|\Psi_{iI_l}(0)| \\
 \ \le &\sum_{l\in S''}  \dfrac{C_{88}\delta_N^2}{|b_{iI_l}|^2} + \sum _j \dfrac{C_{88}\delta_N^3}{|c_{ij}|^3} \\
\ \le &\sum_{q=0}^\infty \dfrac{C_{88}}{\max(l(0)^2, i^2) + q^2 |i|^\frac{1}{2}} + \sum _j \dfrac{C_{88}\delta_N^3}{|c_{ij}|^3} \\
 \ \le &\dfrac{C_{88}}{\max(l(0), |i|)|i|^\frac{1}{4}}\int_0^\infty \dfrac{dt}{1 + t^2} + \sum _j \dfrac{C_{64}\delta_N^3}{|c_{ij}|^3}.
 \ \end{aligned}
 \end{eqnarray}

Thus,
 \begin{eqnarray}\label{Eq2.38}
 \ \sum_{|i| > 4k_2} \sum_{l\in S''}|\Psi_{iI_l}(0)| \le \dfrac{C_{89}}{l(0)}\sum_{i=1}^{l(0)} \dfrac{1}{i^\frac{1}{4}} + \dfrac{C_{90}}{l(0)^\frac{1}{4}}
 + \dfrac{C_{90}}{l(0)} \le \dfrac{C_{91}}{l(0)^\frac{1}{4}}.
 \end{eqnarray}
Hence, combining \eqref{Eq2.35}, \eqref{Eq2.37}, and \eqref{Eq2.38}, we prove \eqref{FBounded1}.

\end{proof}
\bigskip

\begin{corollary}\label{MLemma2Cor} 
Let $E$ be a compact subset of $\mathbb C$ and let $E_0\subset E\cap \mathcal F$ be a Borel subset. Let $R>0$ such that $K \subset B(0,R)$ and $G = B(0,R) \setminus E$. Let $f\in H^\infty (G)$ be given with $\|f\|_{H^\infty (G)}\le 1$. Suppose that, for $\lambda _0\in \mathbb C$, $\delta > 0$, and a smooth function $\varphi$ with support in $B(\lambda _0, \delta)$, we have 
 \begin{eqnarray}\label{MLemma2CorEq}
 \ \left | \int f(z) \dfrac{\partial \varphi (z)}{\partial  \bar z} d\mathcal L^2(z) \right | \le C_{92}\delta \left \|\dfrac{\partial \varphi (z)}{\partial  \bar z} \right \|_\infty \gamma (B(\lambda _0, 2\delta) \cap E_0).
 \end{eqnarray} 
Then there exists $\hat f\in R^t(K,\mu) \cap L^\infty(\mu )$ such that $\mathcal C(\hat fg\mu)(z) = f(z )\mathcal C(g\mu)(z), ~ z\in G, ~ \gamma-a.a.$ for each $g\perp R^t(K,\mu)$ and $\|\hat f\|_{L^\infty (\mu )} \le C_{93}$. 
\end{corollary}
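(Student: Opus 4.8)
The plan is to deduce Corollary~\ref{MLemma2Cor} from Theorem~\ref{MLemma2} by choosing the Borel set $\mathcal D$ appropriately and checking that all the hypotheses transfer. First I would set $\mathcal D = G = B(0,R)\setminus E$. Since $G$ is open, the density condition \eqref{densityAssumption} is automatic: for $\lambda\in G$ there is a ball $B(\lambda,\delta_0)\subset G$, so $\gamma(B(\lambda,\delta)\setminus\mathcal D) = 0$ for all $\delta<\delta_0$; in particular $\chi_{\mathcal D}\in GC(\mathcal D)$. Next, because $f\in H^\infty(G)$ with $\|f\|_{H^\infty(G)}\le 1$, extending $f$ by $0$ off $G$ gives a function in $L^\infty(\mathcal L^2_{\mathcal D})$ with norm $\le 1$, and $f$ is analytic, hence $\gamma$-continuous, at every point of $G$ (an analytic function on an open set is continuous there, and continuity trivially implies $\gamma$-continuity). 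Thus $f\in GC(\mathcal D)$. The set $E_0\subset E\cap\mathcal F = \mathcal F\setminus G = \mathcal F_{\mathcal D}$ (using $E\cap G=\emptyset$), so the inclusion $E_0\subset\mathcal F_{\mathcal D}$ required by Theorem~\ref{MLemma2} holds. The integral estimate \eqref{MLemma2Eq} is exactly \eqref{MLemma2CorEq} with $C_{38}=C_{92}$.

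Having verified the hypotheses, Theorem~\ref{MLemma2} yields $f\in H^\infty_{\mathcal D, E_0}(\mathcal L^2_{\mathcal D})$ and a function $\hat f\in R^t(K,\mu)\cap L^\infty(\mu)$ with $\|\hat f\|_{L^\infty(\mu)}\le C_{39}$ such that
\[
 \ \mathcal C(\hat fg\mu)(z) = f(z)\mathcal C(g\mu)(z),~ \mathcal L^2_{\mathcal D}-a.a.
\]
for each $g\perp R^t(K,\mu)$. Setting $C_{93} = C_{39}$ gives the norm bound. It remains to upgrade the identity from ``$\mathcal L^2_G$-a.a.'' to ``$z\in G$, $\gamma$-a.a.''. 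For this I would invoke the $\gamma$-continuity machinery already developed: by \eqref{BasicEq22}, $\rho(\hat f)(z)\mathcal C(g\mu)(z) = \mathcal C(\hat f g\mu)(z)$ $\gamma$-a.a., and by Theorem~\ref{MTheorem2}, $\rho(\hat f)$ is $\gamma$-continuous at almost every point of $\mathcal R$ with respect to $\gamma$. On the other hand $f$ is genuinely continuous (analytic) on $G$. Both $\rho(\hat f)\mathcal C(g\mu)$ and $f\,\mathcal C(g\mu)$ agree $\mathcal L^2$-a.a. on $G$; since on the open set $G$ a set of zero analytic capacity has zero area (Lemma~\ref{lemmaBasic0}(2)) but the converse direction is what we need, I would instead argue pointwise: fix $\lambda\in G\setminus\mathbb Q$ where $\rho(\hat f)$ is $\gamma$-continuous and $\mathcal C(g\mu)(\lambda)$ exists; for every $\delta$ with $B(\lambda,\delta)\subset G$ the set where $|\rho(\hat f)(z)-\rho(\hat f)(\lambda)|>\epsilon$ has $\gamma$-density zero at $\lambda$, hence $\mathcal L^2$-density zero, so there are points $z_n\to\lambda$ in $G$ with $\rho(\hat f)(z_n)\to\rho(\hat f)(\lambda)$ and $\rho(\hat f)(z_n)\mathcal C(g\mu)(z_n) = f(z_n)\mathcal C(g\mu)(z_n)$; passing to the limit along the continuity of $f$ and $\mathcal C(g\mu)$ near $\lambda$ (using Lemma~\ref{CauchyTLemma} for $\gamma$-continuity of $\mathcal C(g\mu)$ on $\mathcal R_0\cap\mathcal {ZD}$, or working where the principal value exists) gives $\rho(\hat f)(\lambda)\mathcal C(g\mu)(\lambda) = f(\lambda)\mathcal C(g\mu)(\lambda)$. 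Therefore $\mathcal C(\hat f g\mu)(z) = f(z)\mathcal C(g\mu)(z)$ for $z\in G$, $\gamma$-a.a.

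I expect the main obstacle to be precisely this last upgrade step, i.e. turning an $\mathcal L^2$-a.e.\ identity on the open set $G$ into a $\gamma$-a.e.\ identity. The subtlety is that on $G$ the function $f$ is analytic but $\rho(\hat f)$ is only known to be $\gamma$-continuous, and one must reconcile the two using the density statements of Theorem~\ref{DensityCorollary} and Theorem~\ref{MTheorem2}; one should be slightly careful that $G\subset\text{int}(K)$ is not assumed (only $K\subset B(0,R)$ and $G=B(0,R)\setminus E$), so $G$ may contain points outside $K$ where $\mathcal C(g\mu)$ vanishes identically, but there the identity $0=0$ is trivial, and on $G\cap\mathcal R$ the argument above applies since $\mathcal R\subset\mathcal N(g_j\mu,1\le j<\infty)$ $\gamma$-a.a.\ gives enough points where $\mathcal C(g\mu)\ne 0$ to pin down $\rho(\hat f)$. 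The remaining verifications (density condition, $\gamma$-continuity of $f$, the inclusion $E_0\subset\mathcal F_{\mathcal D}$) are routine and I would dispatch them quickly.
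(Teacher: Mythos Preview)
Your verification that $\mathcal D=G$ satisfies the hypotheses of Theorem~\ref{MLemma2} is correct and is exactly how the paper begins. The divergence is entirely in the upgrade from $\mathcal L^2_G$-a.a.\ to $\gamma$-a.a., and there your argument has a genuine gap.

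You cover $G\setminus K$ and $G\cap\mathcal F_0$ (both sides vanish) and $G\cap\mathcal R$ (via $\gamma$-continuity of $\rho(\hat f)$ from Theorem~\ref{MTheorem2}). But you omit $G\cap(\mathcal F_+\cup\mathcal F_-)$. This set is contained in the Lipschitz graphs $\Gamma_n$ and can have positive analytic capacity; on it $\mathcal C(g\mu)$ need not vanish (indeed $\mathcal F_+\cup\mathcal F_-\subset\mathcal F_0^c$ by Theorem~\ref{FRProperties}), while Theorem~\ref{MTheorem2} gives $\gamma$-continuity of $\rho(\hat f)$ only on $\mathcal R$, not on $\mathcal F_+\cup\mathcal F_-$. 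So your density-point argument does not apply there, and the $\mathcal L^2$-a.a.\ identity from Theorem~\ref{MLemma2} says nothing about this $\mathcal H^1$-set. One can patch this using Theorem~\ref{MTheorem3} together with Lemma~\ref{GLemma} (to see that the lower half near a point of $\mathcal F_+$ is $\gamma$-mostly in $\mathcal R$), but that is real extra work you have not done.

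The paper avoids all of this by not treating Theorem~\ref{MLemma2} as a black box. It observes that the approximants $f_N=f_{\delta_N}$ built in that proof are analytic off compact subsets of $E_0\subset E$, hence analytic on $G$; they are uniformly bounded and converge to $f$ in $L^1(\mathcal L^2_G)$, so by the mean-value property they converge to $f$ \emph{uniformly on compact subsets of $G$}. Since for every $\lambda\in G$ each building block of $f_N$ (from Lemma~\ref{hFunction}) satisfies $\frac{(\cdot)(z)-(\cdot)(\lambda)}{z-\lambda}\in R^t(K,\mu)\cap L^\infty(\mu)$, the same holds for $f_N$, and these quotients are uniformly bounded in $N$ by Cauchy estimates on a fixed disk about $\lambda$. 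Passing to a weak$^*$ limit gives $\frac{\hat f-f(\lambda)}{z-\lambda}\in R^t(K,\mu)\cap L^\infty(\mu)$ for \emph{every} $\lambda\in G$. Then for $g\perp R^t(K,\mu)$,
\[
\mathcal C_\epsilon(\hat f g\mu)(\lambda)-f(\lambda)\mathcal C_\epsilon(g\mu)(\lambda)=\int_{|z-\lambda|>\epsilon}\frac{(\hat f(z)-f(\lambda))g(z)}{z-\lambda}\,d\mu(z)\longrightarrow 0
\]
by dominated convergence, so the identity holds at every $\lambda\in G$ where both principal values exist, i.e.\ $\gamma$-a.a. This single observation about uniform convergence replaces your entire upgrade argument and handles $G\cap(\mathcal F_+\cup\mathcal F_-)$ for free.
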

\smallskip

\begin{proof}
If $\mathcal D = G$, then $H^\infty (G) \subset GC(\mathcal D)$. As in the proof of Theorem \ref{MLemma2}, the subsequence $\{f_N\}$ of $\{f_{\delta_N}\}$ uniformly converges  on any compact subset of $G$. The corollary follows.
\end{proof}
\smallskip

If $E_0 = E$ in Corollary \ref{MLemma2Cor}, then the assumption \eqref{MLemma2CorEq} is no longer necessary. In fact, For $f\in H^\infty (G)$ with $f(z) = 0$ for $z\in G^c$, $\lambda _0\in \mathbb C$, and $\delta > 0$, if $\varphi$ is a smooth function supported in $B(\lambda _0, \delta)$, then $T_\varphi f$ is bounded analytic off the compact subset $\text{spt}(\varphi) \cap E$. Therefore,  
 \[
 \ \begin{aligned}
 \ & |(T_\varphi f)'(\infty)| \\
 \ = & \left | \int f(z) \dfrac{\partial \varphi (z)}{\partial  \bar z} d\mathcal L^2(z) \right |  \\
 \ \le & \|T_\varphi f \|_\infty \gamma(\text{spt}(\varphi) \cap E) \\
 \ \le & 8\|f\|\delta \left \|\dfrac{\partial \varphi (z)}{\partial  \bar z} \right \|_\infty \gamma (B(\lambda _0, \delta) \cap E).
 \ \end{aligned}
 \]
So the assumption \eqref{MLemma2CorEq} is satisfied. Corollary \ref{MLemma2Cor} can be stated as the following. 
\smallskip

\begin{corollary}\label{MLemma2Cor2} 
Let $E\subset \mathcal F$ be a compact subset. Let $R>0$ such that $K \subset B(0,R)$ and $G = B(0,R) \setminus E$. Then for $f\in H^\infty (G)$ with $\|f\|_{H^\infty (G)}\le 1$, there exists $\hat f\in R^t(K,\mu) \cap L^\infty(\mu )$ such that $\mathcal C(\hat fg\mu)(z) = f(z )\mathcal C(g\mu)(z), ~ z\in G, ~ \gamma-a.a.$ for each $g\perp R^t(K,\mu)$ and $\|\hat f\|_{L^\infty (\mu )} \le C_{93}$. 
\end{corollary}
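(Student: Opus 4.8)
\textbf{Proof plan for Corollary \ref{MLemma2Cor2}.}

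The plan is to deduce this statement directly from Corollary \ref{MLemma2Cor} by verifying that its hypothesis \eqref{MLemma2CorEq} is automatically satisfied when $E_0 = E$, which is exactly the computation sketched in the paragraph preceding the statement. First I would fix $f\in H^\infty(G)$ with $\|f\|_{H^\infty(G)}\le 1$ and extend it by $0$ on $G^c = B(0,R)^c \cup E$, so that $f\in L^\infty(\mathbb C)$ with $\|f\|_\infty \le 1$ and $f$ is bounded analytic off the compact set $E$ (inside $B(0,R)$; outside $B(0,R)$ it is zero, hence analytic there too, so the singular support of $f$ is contained in $E$). Then for a smooth $\varphi$ supported in $B(\lambda_0,\delta)$, the localized function $T_\varphi f$ is bounded analytic off the compact set $\text{spt}(\varphi)\cap E \subset B(\lambda_0,\delta)\cap E$, with $\|T_\varphi f\|_\infty \le 4\|f\|_\infty \,\text{diam}(\text{spt}(\varphi))\,\|\bar\partial\varphi\| \le 8\|f\|_\infty\,\delta\,\|\bar\partial\varphi\|$ by the norm estimate for the localization operator stated in section 3.5. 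Since $T_\varphi f$ vanishes at $\infty$, the definition of analytic capacity gives
\[
\ \left|\int f(z)\,\frac{\partial\varphi(z)}{\partial\bar z}\,d\mathcal L^2(z)\right| = \pi\,|(T_\varphi f)'(\infty)| \le \pi\,\|T_\varphi f\|_\infty\,\gamma(\text{spt}(\varphi)\cap E),
\]
and combining the two bounds yields \eqref{MLemma2CorEq} with $E_0 = E$ and an absolute constant $C_{92}$.

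Next I would invoke Corollary \ref{MLemma2Cor} with this choice of $E_0 = E$: its conclusion is precisely that there exists $\hat f\in R^t(K,\mu)\cap L^\infty(\mu)$ with $\|\hat f\|_{L^\infty(\mu)}\le C_{93}$ and $\mathcal C(\hat fg\mu)(z) = f(z)\mathcal C(g\mu)(z)$ for $z\in G$, $\gamma$-a.a., for each $g\perp R^t(K,\mu)$. This is exactly the assertion of Corollary \ref{MLemma2Cor2}, so nothing further is needed. I would also note (as a sanity check, though it is not strictly required for the statement) that $H^\infty(G)\subset GC(\mathcal D)$ with $\mathcal D = G$: since $G$ is open, $\chi_G$ is trivially $\gamma$-continuous on $G$ and condition \eqref{densityAssumption} holds vacuously because $\gamma(B(\lambda,\delta)\setminus G) = 0$ for $\lambda\in G$ and $\delta$ small, and an $H^\infty(G)$ function is literally continuous (hence $\gamma$-continuous) at each point of $G$.

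Honestly, there is essentially no obstacle here: the corollary is a clean specialization of Corollary \ref{MLemma2Cor}, and the only content is the observation — already made verbatim in the text between the two corollaries — that when the obstruction set $E_0$ equals the full singularity set $E$ of $f$, the integral estimate \eqref{MLemma2CorEq} is a free consequence of the definition of analytic capacity together with the standard $\|T_\varphi\|$ bound. The one place to be mild careful is the extension of $f$ off $G$ and the bookkeeping that $T_\varphi f$ is analytic off $\text{spt}(\varphi)\cap E$ (not off all of $E$), so that the capacity factor in the final estimate is $\gamma(B(\lambda_0,\delta)\cap E)$ rather than $\gamma(E)$ — but this is routine localization. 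So the proof is just: extend $f$ by zero, estimate $|(T_\varphi f)'(\infty)|$ via analytic capacity, conclude \eqref{MLemma2CorEq} holds, and apply Corollary \ref{MLemma2Cor}.
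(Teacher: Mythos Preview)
Your proposal is correct and follows essentially the same approach as the paper: the paragraph immediately preceding the corollary statement \emph{is} the paper's proof, and you have reproduced it faithfully, extending $f$ by zero, observing that $T_\varphi f$ is analytic off $\text{spt}(\varphi)\cap E$, bounding $|(T_\varphi f)'(\infty)|$ by $\|T_\varphi f\|_\infty\,\gamma(\text{spt}(\varphi)\cap E)$ via the definition of analytic capacity, and then invoking Corollary~\ref{MLemma2Cor} with $E_0=E$. There is no gap.
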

\bigskip

\chapter{Irreducible operator $S_\mu$ on $R^t(K,\mu)$}
\bigskip

In this chapter, we assume that $1\le t <\infty$, $\mu$ is a finite positive Borel measure supported on a compact subset $K\subset \mathbb C$, and $K=\sigma(S_\mu)$. \smallskip

\begin{definition}\label{gammaCDefinition}
Let $E\subset \mathbb C$ be a non-trivial Borel subset ($E \ne \emptyset$). The set $E$ is called $\gamma$-connected if  the following property holds: For any two disjoint open subsets $G_1$ and $G_2$, $E \subset G_1\cup G_2, ~\gamma-a.a.$ implies either $E\subset G_1, ~\gamma-a.a.$ or $E\subset G_2, ~\gamma-a.a.$. 
\end{definition} 
\smallskip

For example, if $K$ is a string of beads set as in Theorem \ref{SOBTheorem} with $K \cap \mathbb R \supset\mathcal R_B \ne \emptyset$, the removable set $\mathcal R$ is $\gamma$-connected though $\text{abpe}(R^t(K,\mu))$ consists of two non-trivial disjoint connected open subsets $G_U$ and $G_L$.

The purpose of this chapter is to prove the following theorem. 

\begin{theorem}\label{IrreducibilityTheorem} 
If $S_\mu$ on $R^t(K, \mu)$ is pure, then  
$S_\mu$ is irreducible, that is, $R^t(K, \mu)$ contains no non-trivial characteristic functions, if and only if the removable set $\mathcal R$ is $\gamma$-connected.  
\end{theorem}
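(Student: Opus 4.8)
The plan is to prove the two implications separately, using the characteristic-function description from Lemma~\ref{characterizationF} together with the analytic-capacity density estimates of Chapter~3 and the Vitushkin-type construction of Chapter~5.

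\emph{Necessity: if $S_\mu$ is irreducible then $\mathcal R$ is $\gamma$-connected.} I would argue by contrapositive. Suppose $\mathcal R$ is not $\gamma$-connected, so there are disjoint open sets $G_1,G_2$ with $\mathcal R \subset G_1\cup G_2,~\gamma$-a.a.\ but $\gamma(\mathcal R\cap G_1)>0$ and $\gamma(\mathcal R\cap G_2)>0$. The first step is to promote this to a clean splitting of the whole plane: since $\mathcal L^2(\mathcal F_+\cup\mathcal F_-)=0$ and $\mathcal C(g\mu)=0~\mathcal L^2|_{\mathcal F}$-a.a.\ by Corollary~\ref{acZero}, while Theorem~\ref{DensityCorollary} gives $\lim_{\delta\to0}\gamma(B(\lambda_0,\delta)\setminus\mathcal R)/\delta=0$ for $\gamma|_{\mathcal R}$-a.a.\ $\lambda_0$, the set $\mathcal F$ is ``invisible to $\mathcal L^2$ near $\mathcal R$.'' I would then take $f=\chi_{G_1}$-type data and run the argument of Theorem~\ref{Lemma3}: near $\gamma|_{\mathcal R\cap\partial G_1}$-a.a.\ point one builds a barrier of heavy squares (Lemma~\ref{LightRoute} is not needed here because $\partial G_1\cap\mathcal R$ is already thin, but the Vitushkin/Paramonov machinery of Chapter~5, specifically Corollary~\ref{MLemma2Cor2}, applies) to produce $\hat f\in R^t(K,\mu)\cap L^\infty(\mu)$ with $\mathcal C(\hat f g\mu)=\chi_{G_1}\mathcal C(g\mu),~\gamma|_{(\partial G_1)^c}$-a.a.\ for all $g\perp R^t(K,\mu)$. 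By Lemma~\ref{characterizationF} (and Remark~\ref{characterizationFRemark} for disjoint unions), $\hat f=\chi_\Delta$ with $G_1\subset\Delta\subset\overline{G_1}$. Since $\gamma(\mathcal R\cap G_1)>0$ forces $\mu(\Delta)>0$ (by Proposition~\ref{NFSetIsBig}(1), $\text{spt}(\mu)\subset\overline{\mathcal R}$, and $\mathcal R\subset\cup_j\mathcal N(\mathcal C(g_j\mu))$ so $\hat f$ cannot vanish $\mu$-a.a.), and $\gamma(\mathcal R\cap G_2)>0$ forces $\mu(\Delta^c)>0$ similarly, $\chi_\Delta$ is a non-trivial characteristic function in $R^t(K,\mu)$, so $S_\mu$ is not irreducible.

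\emph{Sufficiency: if $\mathcal R$ is $\gamma$-connected then $S_\mu$ is irreducible.} Suppose $S_\mu$ is not irreducible, so there is a non-trivial $\chi_\Delta\in R^t(K,\mu)$. The strategy is to locate the two open ``sides'' of $\Delta$ inside $\mathcal R$ and contradict $\gamma$-connectedness. Write $f=\chi_\Delta$ and let $\rho(f)$ be as in Chapter~3; by Proposition~\ref{Rhoprop}(1), $\rho(f)^2=\rho(f),~\gamma$-a.a., so $\rho(f)=\chi_{\mathcal U}$ for some Borel $\mathcal U$ with $\rho(f)$ $\gamma$-continuous at $\gamma|_{\mathcal R}$-a.a.\ point (Theorem~\ref{MTheorem2}). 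Set $G_1=\{\lambda\in\mathcal R:\ \rho(f)\ \text{has}\ \gamma\text{-limit}\ 1\ \text{at}\ \lambda\}$ and $G_2=\{\lambda\in\mathcal R:\ \gamma\text{-limit}\ 0\}$; these are, up to $\gamma$-null sets, relatively ``open'' in $\mathcal R$ in the capacitary sense, and by $\gamma$-continuity $\mathcal R\subset G_1\cup G_2,~\gamma$-a.a. The key point is that one must produce honest disjoint \emph{open} subsets of $\mathbb C$; here I would use that $\rho(f)$ is $\gamma$-continuous on $\mathcal R$ and agrees with an analytic function on $\text{abpe}(R^t(K,\mu))$, hence on each component $\Omega$ of $\text{abpe}(R^t(K,\mu))$ it is identically $0$ or $1$; then $G_1,G_2$ are obtained by unioning the appropriate components together with capacitary-density neighborhoods of their boundary parts lying in $\mathcal R_B$. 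Both have positive analytic capacity because $\chi_\Delta$ is non-trivial and $\mu(\Delta)>0,\ \mu(\Delta^c)>0$ combined with $\text{spt}(\mu)\subset\overline{\mathcal R}$ and Theorem~\ref{ABPETheorem}. This contradicts $\gamma$-connectedness of $\mathcal R$.

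\emph{Main obstacle.} I expect the hard part to be the sufficiency direction, specifically passing from the capacitary ``two-valued'' behavior of $\rho(\chi_\Delta)$ on $\mathcal R$ to a genuine decomposition $\mathcal R\subset G_1\cup G_2$ with $G_1,G_2$ honestly open in $\mathbb C$ and disjoint, as required by Definition~\ref{gammaCDefinition}. The subtlety is that $\mathcal R$ need not have interior (Swiss cheese case), so ``open sides'' must be manufactured using the $\gamma$-continuity of $\rho(f)$ and Theorem~\ref{DensityCorollary} to thicken the level sets $\{\rho(f)=1\}$ and $\{\rho(f)=0\}$ into disjoint open sets that cover $\mathcal R$ up to a $\gamma$-null set; making the disjointness precise requires that the ``interface'' $\{\lambda\in\mathcal R:\ \rho(f)\ \text{has no}\ \gamma\text{-limit}\}$ has zero analytic capacity, which is exactly Theorem~\ref{MTheorem2}. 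The necessity direction is more routine once Theorem~\ref{Lemma3}, Lemma~\ref{characterizationF}, and Corollary~\ref{MLemma2Cor2} are in hand, since those already package the Vitushkin construction and the idempotent identification.
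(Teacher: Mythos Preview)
Your necessity direction ($S_\mu$ irreducible $\Rightarrow$ $\mathcal R$ $\gamma$-connected) is essentially the paper's argument: both reduce to applying Corollary~\ref{MLemma2Cor2} to $\chi_{G_1}$ (after noting $\partial G_1\subset\mathcal F,~\gamma$-a.a., since $\mathcal R\subset G_1\cup G_2$) and then invoking Lemma~\ref{characterizationF} to get a non-trivial idempotent. Your justification that $\chi_\Delta$ is non-trivial is slightly different from the paper's (the paper identifies $G_i\cap\mathcal R$ with the removable set of the summand), but both work.

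Your sufficiency direction has a genuine gap at exactly the point you flag as the main obstacle, and your proposed resolution does not close it. Knowing that $\rho(\chi_\Delta)$ is $\{0,1\}$-valued and $\gamma$-continuous on $\mathcal R$ tells you that at $\gamma|_{\mathcal R}$-a.a.\ $\lambda$ the \emph{other} level set has $\gamma$-density zero; it does \emph{not} furnish an honest open ball around $\lambda$ disjoint from the other level set. ``Thickening'' by unions of small balls will in general produce overlapping open sets, and the fact that the interface is $\gamma$-null (Theorem~\ref{MTheorem2}) does not prevent this. In the Swiss cheese regime $\text{abpe}(R^t(K,\mu))=\emptyset$, so your fallback of unioning components of $\text{abpe}$ is vacuous and the entire burden falls on the unspecified ``capacitary-density neighborhoods.''

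The paper bypasses this completely by a spectral route. It first invokes Lemma~\ref{Lemma1} to split off an \emph{irreducible} summand $R^t(K_1,\mu|_{\Delta_1})$, so that the honestly open sets $U_n$ arise as connected components of $\mathbb C\setminus K_1$. The crucial step (Lemma~\ref{Lemma4}(1), whose proof combines Lemma~\ref{Lemma2}(2), Theorem~\ref{DensityCorollary}, and above all Theorem~\ref{Lemma3}) is that $\partial U_n\subset\mathcal F,~\gamma$-a.a.: if some $\lambda\in\partial U_n$ lay in $\mathcal R'$ (the removable set of the complementary summand), then $\mathcal R'\subset\mathcal F_1$ would force $\gamma(B(\lambda,\delta)\setminus\mathcal F_1)/\delta\to 0$, contradicting Theorem~\ref{Lemma3} applied to the irreducible piece. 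One then takes $G_1=U_n$ for some $U_n$ meeting $K_2$ and $G_2=B(0,R)\setminus\overline{G_1}$; these are disjoint open by construction, and $\partial U_n\subset\mathcal F$ gives $\mathcal R\subset G_1\cup G_2,~\gamma$-a.a. So Theorem~\ref{Lemma3} --- which you cite only in passing for the other direction --- is in fact the engine of the sufficiency proof, and the reduction to an irreducible summand (Lemma~\ref{Lemma1}) is what makes it applicable.
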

\smallskip

We need several lemmas to complete the proof of Theorem \ref{IrreducibilityTheorem}.

\begin{lemma}\label{Lemma1} 
There exists a Borel partition $\{\Delta_n\}_{n\ge 0}$ of $\text{spt}(\mu )$ such that
 \[
 \ R^t(K,\mu) = L^t(\mu |_{\Delta_0})\oplus \bigoplus_{n=1}^\infty R^t(K_n, \mu |_{\Delta_n}),
 \]
where $K_n = \sigma(S_{\mu |_{\Delta_n}})$ and for $n \ge 1$, $S_{\mu |_{\Delta_n}}$ on $R^t(K_n, \mu |_{\Delta_n})$ is irreducible. 
\end{lemma}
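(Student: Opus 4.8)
\textbf{Proof proposal for Lemma~\ref{Lemma1}.}

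The plan is to produce the decomposition by a standard exhaustion argument over characteristic functions in $R^t(K,\mu)$, organized around the (now routine) observation that characteristic functions in $R^t(K,\mu)$ generate a reducing structure compatible with $M_\mu$. First I would isolate the \emph{maximal $L^t$ summand}: let $\Delta_0$ be a Borel set, maximal up to $\mu$-null sets, with the property that $\chi_{\Delta_0}\in R^t(K,\mu)$ and $R^t(K,\mu)|_{\Delta_0}=L^t(\mu|_{\Delta_0})$; equivalently $\Delta_0$ is the largest set on which $S_\mu$ has an $L^t$ part. Its existence follows by taking a countable union of such sets $\Delta$ realizing the supremum of $\mu(\Delta)$ and checking (using that $\chi_A,\chi_B\in R^t(K,\mu)$ implies $\chi_{A\cup B}=\chi_A+\chi_B-\chi_A\chi_B\in R^t(K,\mu)$, since $R^t(K,\mu)\cap L^\infty(\mu)$ is an algebra by Proposition~\ref{Rhoprop}(1)) that the union again has the defining property. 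Replacing $\mu$ by $\mu|_{\Delta_0^c}$ we may then assume $S_\mu$ has no $L^t$ summand, i.e. $S_\mu$ is pure (this is the reduction that lets us invoke all the chapter hypotheses).

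Next I would build the irreducible pieces. Consider the family $\mathcal{B}$ of Borel sets $\Delta\subset\text{spt}(\mu)$ with $\chi_\Delta\in R^t(K,\mu)$; as noted this is a Boolean algebra modulo $\mu$-null sets, and it is closed under countable unions (given $\chi_{\Delta_k}\in R^t(K,\mu)$, the partial unions increase to $\chi_{\cup\Delta_k}$ and converge in $L^t(\mu)$ by dominated convergence, hence the limit lies in $R^t(K,\mu)$), so $\mathcal{B}$ is a $\sigma$-algebra. A set $\Delta\in\mathcal{B}$ is called an \emph{atom} of $\mathcal{B}$ (modulo null sets) if $\mu(\Delta)>0$ and $\mathcal{B}$ contains no Borel subset of $\Delta$ of intermediate $\mu$-measure. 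Since $\mu$ is $\sigma$-finite, a maximality/exhaustion argument shows $\text{spt}(\mu)$ decomposes, modulo a $\mu$-null set, into countably many atoms $\{\Delta_n\}_{n\ge1}$: pick $\Delta_1$ to be an atom of maximal measure among atoms (existence of at least one atom is guaranteed because otherwise one could split indefinitely and contradict finiteness of $\mu$), then pass to $\mathcal{B}$ restricted to $\text{spt}(\mu)\setminus\Delta_1$ and repeat; the residual set after removing all $\Delta_n$ has no atom and hence, if it had positive measure, could be split indefinitely — impossible — so it is null. For each atom $\Delta_n$ set $K_n=\sigma(S_{\mu|_{\Delta_n}})$. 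The identity $R^t(K,\mu)=L^t(\mu|_{\Delta_0})\oplus\bigoplus_{n\ge1}R^t(K_n,\mu|_{\Delta_n})$ then follows from the spectral-type decomposition of $M_\mu$ along the Borel partition: multiplication by $\chi_{\Delta_n}$ is a bounded projection on $R^t(K,\mu)$ commuting with $M_\mu$, so $R^t(K,\mu)=\bigoplus_n \chi_{\Delta_n}R^t(K,\mu)$, and $\chi_{\Delta_n}R^t(K,\mu)=R^t(K_n,\mu|_{\Delta_n})$ because $\text{Rat}(K)$ is dense in both and $\mu|_{\Delta_n}$ is supported in $K_n$ with $R^t(K,\mu|_{\Delta_n})=R^t(\sigma(S_{\mu|_{\Delta_n}}),\mu|_{\Delta_n})$ by Proposition~1.1 of \cite{ce93}. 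Finally, by construction of the atoms, $R^t(K_n,\mu|_{\Delta_n})$ contains no nontrivial characteristic function, i.e. $S_{\mu|_{\Delta_n}}$ is irreducible, and it has no $L^t$ summand since $\mu$ was reduced to its pure part; thus each $S_{\mu|_{\Delta_n}}$ is pure and irreducible.

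The main obstacle I anticipate is the measure-theoretic bookkeeping showing that the atoms exhaust $\text{spt}(\mu)$ up to a null set — specifically, verifying that one cannot have a positive-measure residual set that is ``atomless'' for $\mathcal{B}$ without contradicting $\|\mu\|<\infty$. This requires care: if every positive-measure $\Delta\in\mathcal{B}$ inside the residual admits a proper splitting in $\mathcal{B}$, one produces a binary tree of sets of shrinking but positive measure whose leaves at each level partition the residual; the measures along any branch form a decreasing sequence, and a standard argument (choosing at each stage a child of maximal measure, or using an exhaustion by the largest available atoms) forces the total residual measure to zero. The other technical point worth stating carefully is that $\chi_{\Delta_n}R^t(K,\mu)$ is genuinely closed and equals $R^t(K_n,\mu|_{\Delta_n})$; this is where one uses that $\chi_{\Delta_n}$ acts as a contraction-like idempotent multiplier and that convergence in $L^t(\mu)$ restricts to convergence in $L^t(\mu|_{\Delta_n})$. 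Everything else — the algebra property of $R^t(K,\mu)\cap L^\infty(\mu)$, purity of the restricted operators, and the spectral identification $K_n=\sigma(S_{\mu|_{\Delta_n}})$ — is either already in the excerpt or immediate.
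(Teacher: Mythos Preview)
Your argument has a genuine gap at precisely the step you flagged as ``the main obstacle'': the claim that an atomless residual of positive measure is impossible. The reasoning you sketch---``split indefinitely, contradicting $\|\mu\|<\infty$''---does not work. A nonatomic sub-$\sigma$-algebra on a set of positive finite measure is a perfectly ordinary object (the Borel $\sigma$-algebra on $[0,1]$ with Lebesgue measure is one). Splitting indefinitely gives a binary tree whose leaves at each level partition the residual and whose measures along any branch decrease, but the total at each level is constant; nothing forces the residual measure to zero, and there is no ``standard argument'' that does so from finiteness of $\mu$ alone. You might hope that atomlessness of $\mathcal B$ on the residual $R$ forces $R^t(K,\mu|_R)=L^t(\mu|_R)$, contradicting maximality of $\Delta_0$; but $\mathcal B|_R$ need not generate the Borel $\sigma$-algebra, so this implication is not automatic either.

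This is exactly where the paper spends its effort, and it does \emph{not} settle it by elementary measure theory. After invoking a general decomposition result (Theorem~1.6 in \cite{C91}) that produces the irreducible summands together with a residual piece $\Delta_0$ on which $R^t(K_0,\mu|_{\Delta_0})\cap L^\infty$ is pseudosymmetric (no minimal characteristic functions---your ``atomless'' situation), the paper proves $R^t(K_0,\mu|_{\Delta_0})=L^t(\mu|_{\Delta_0})$ by exploiting the machinery of Chapters~3--4. If the residual had a nonzero pure part, its removable set $\mathcal R_{01}$ would satisfy $\mathcal L^2(\mathcal R_{01})>0$; pick $\lambda_0\in\mathcal R_{01}$ with $\mathcal C(g\mu|_{\Delta_{01}})(\lambda_0)\ne 0$ for some annihilator $g$. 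Then $f\mapsto\rho(f)(\lambda_0)$ is a \emph{multiplicative} linear functional on $R^t\cap L^\infty$ (Proposition~\ref{Rhoprop} and $\gamma$-continuity from Lemma~\ref{lemmaBasic9}). Among sets $B$ with $\chi_B\in R^t$ and $\rho(\chi_B)(\lambda_0)=1$ there is one of minimal $\mu$-measure (the family is closed under intersection and decreasing limits); but pseudosymmetry lets you split this $B$ further, and multiplicativity forces $\rho$ to equal $1$ on one of the pieces, contradicting minimality. So the step you tried to dispatch with a tree argument actually requires the existence of a multiplicative point evaluation on the residual, which is the content of the $\rho$-map theory developed earlier in the paper.
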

\smallskip

\begin{proof} This is an application of Theorem 1.6 on page 279 of \cite{C91} (also see \cite{co77}) since $R^t(K,\mu)\cap L^\infty(\mu)$ is a weak$^*$ closed subalgebra of $L^\infty(\mu)$ and $R^t(K_0,\mu |_{\Delta_0})\cap L^\infty (\mu |_{\Delta_0})$ is pseudosymmetric subalgebra of $L^\infty (\mu |_{\Delta_0})$. It remains to prove $R^t(K_0,\mu |_{\Delta_0}) =  L^t (\mu |_{\Delta_0})$. Suppose that it is not true, we must have
 \[
 \ R^t(K_0,\mu |_{\Delta_0}) = L^t(\mu |_{\Delta_{00}})\oplus R^t(K_{01}, \mu |_{\Delta_{01}}),
 \]
where $S|_{\mu |_{\Delta_{01}}}$ on $R^t(K_{01}, \mu |_{\Delta_{01}})$ is pure. If $\mathcal R_{01}$ is its removable set, then $\mathcal L^2(\mathcal R_{01}) > 0$. From Corollary \ref{acZero}, there exists $g\perp R^t(K_{01}, \mu |_{\Delta_{01}})$ and $\lambda_0 \in \mathcal R_{01}$ such that 
 \[
 \ \int \dfrac{1}{|z-\lambda_0|}|g(z)|d\mu |_{\Delta_{01}} < \infty
 \]
and $\mathcal C(g\mu |_{\Delta_{01}})(\lambda_0) \ne 0$.
From Lemma \ref{lemmaBasic9}, we conclude that 
 \[
 \ \rho(f)(\lambda_0) = \dfrac{\mathcal C(fg\mu |_{\Delta_{01}})(\lambda_0)}{\mathcal C(g\mu |_{\Delta_{01}})(\lambda_0)}
 \]
is the $\gamma$-limit of $\rho(f)$ at $\lambda_0$ for $f \in R^t(K_{01},\mu |_{\Delta_{01}}) \cap  L^\infty (\mu |_{\Delta_{01}})$.
We claim:
 \begin{eqnarray}\label{Lemma1Eq1}
 \ \begin{aligned}
 \ & \rho(f_1f_2)(\lambda_0) = \rho(f_1)(\lambda_0)\rho(f_2)(\lambda_0),\\
 \ &f_1,f_2 \in R^t(K_{01},\mu |_{\Delta_{01}}) \cap  L^\infty (\mu |_{\Delta_{01}}).
 \ \end{aligned} 
 \end{eqnarray}
In fact, by Proposition \ref{Rhoprop}, $\rho(f_1f_2)(z) = \rho(f_1)(z)\rho(f_2)(z),~\gamma-a.a.$. Therefore, the $\gamma$-limits of $\rho(f_1f_2)(z)$ and $\rho(f_1)(z)\rho(f_2)(z)$ at $\lambda_0$ coincide, which implies \eqref{Lemma1Eq1}.
   
Set
 \[
 \ \mathcal B = \{B\subset \Delta_{01}:~ \chi_B \in R^t(K_{01}, \mu |_{\Delta_{01}})  \cap  L^\infty (\mu |_{\Delta_{01}}),~\rho(\chi_B)(\lambda_0) = 1\}.
 \]
Then $1\in \mathcal B \ne \emptyset$. For $B_1, B_2\in \mathcal B$, by \eqref{Lemma1Eq1}, we see that $\rho(\chi_{B_1\cap B_2})(\lambda_0) = \rho(\chi_{B_1})(\lambda_0)\rho(\chi_{B_2})(\lambda_0) = 1$. We find $\Delta_{01}\supset B_n\supset B_{n+1}$ such that 
 \[
 \ \mu(B_n) \rightarrow b = \inf_{B\in \mathcal B}\mu(B).
 \]
Clearly $B =\cap B_n\in \mathcal B$ and $ b = \mu(B) > 0$.  From Proposition 1.4 on page 278 of \cite{C91}, we get that $\chi_B$ is not a minimal characteristic function. Hence, there exists $B_0\subset B$ with $\mu(B_0) > 0$ and $\mu(B\setminus B_0) > 0$ such that $\chi_{B_0}, ~\chi_{B\setminus B_0}  \in R^t(K_{01},\mu |_{\Delta_{01}})$. Since 
 \[
 \ \rho(\chi_{B})(\lambda_0) = \rho(\chi_{B_0})(\lambda_0) + \rho(\chi_{B\setminus B_0})(\lambda_0)
 \]
 and 
 \[ 
 \ \rho(\chi_{B_0})(\lambda_0) \rho(\chi_{B\setminus B_0})(\lambda_0) = \rho(\chi_{B_0}\chi_{B\setminus B_0})(\lambda_0) = 0,
 \]
we see that $B\setminus B_0\in \mathcal B$ or $B_0\in \mathcal B$, which contradicts to the definition of $b$. Therefore, $R^t(K_0,\mu |_{\Delta_0}) =  L^t (\mu |_{\Delta_0})$.     
\end{proof}
\smallskip

\begin{lemma}\label{Lemma2}
Let $S_\mu$ on $R^t(K,\mu)$ be pure. Suppose that there exists a Borel partition $\{\Delta_i\}_{i\ge 1}$ of $\text{spt}(\mu)$ such that
\[
 \ R^t(K, \mu ) = \bigoplus _{i = 1}^\infty R^t(K_i, \mu |_{\Delta_i}),
 \]
where $K_i = \sigma(S_{\mu |_{\Delta_i}})$, $\mathcal F_i$ is the non-removable boundary of $R^t(K_i, \mu |_{\Delta_i})$, and $\mathcal R_i = K_i \setminus \mathcal F_i$ is the removable set of $R^t(K_i, \mu |_{\Delta_i})$. Then the following hold:

(1)
 \[
 \ \mathcal F \approx \bigcap_{i\ge 1}\mathcal F_i,~ \gamma-a.a..
 \]

(2) $\rho_i(f)(z) = \rho(f)(z), ~\gamma |_{\mathcal R_i}-a.a.$ for $f\in R^t(K_i, \mu |_{\Delta_i})$, where $\rho_i$ is the $\rho$ map for $R^t(K_i, \mu |_{\Delta_i})$, and 
 \[
 \ \mathcal R_i \cap \mathcal R_j \approx \emptyset,~ \gamma-a.a.,\text{ for } i \ne j.
 \]

(3)
 \[
 \ \mathcal R \approx \bigcup_{i\ge 1}\mathcal R_i,~ \gamma-a.a..
 \]   
\end{lemma}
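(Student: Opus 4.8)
\textbf{Proof plan for Lemma \ref{Lemma2}.}

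The plan is to exploit the direct sum decomposition at the level of annihilating measures and Cauchy transforms, together with the characterization of the non-removable boundary in Theorem \ref{FCharacterization}. First I would fix dense sequences: let $\{g_n\}\subset R^t(K,\mu)^\perp$ be dense and, for each $i$, let $\{g_{i,n}\}\subset R^t(K_i,\mu|_{\Delta_i})^\perp$ be dense. Observe that $g\mu\perp R^t(K,\mu)$ if and only if $g|_{\Delta_i}\mu|_{\Delta_i}\perp R^t(K_i,\mu|_{\Delta_i})$ for every $i$, because $\mathrm{Rat}(K)\chi_{\Delta_i}\subset R^t(K,\mu)$; in particular each $g_{i,n}$ (extended by zero off $\Delta_i$) lies in $R^t(K,\mu)^\perp$, and conversely $\{g_n\chi_{\Delta_i}\}$ is a dense subset of $R^t(K_i,\mu|_{\Delta_i})^\perp$. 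Then $\mathcal C(g\mu) = \sum_i \mathcal C(g\chi_{\Delta_i}\mu)$, and more usefully the sets $\mathcal{ZD}(\mu)$, $\mathcal{ND}(\mu)$ decompose compatibly with the $\Delta_i$. Since $\mathcal F$ and the $\mathcal F_i$ are independent of the choices of dense sequences and of the Lipschitz graphs up to sets of zero analytic capacity (Corollary \ref{NRBUnique}, Theorem \ref{FCharacterization}), I may use whichever generating family is convenient for each computation, and I may use a common family $\{\Gamma_n\}$ of Lipschitz graphs adapted to $\mu$ (hence to every $\mu|_{\Delta_i}$).

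For part (1) I would use the characterization \eqref{FCEq4}: $\lambda\in\mathcal F$ iff $\limsup_{\delta\to 0}\gamma(B(\lambda,\delta)\cap\mathcal E(\nu_j,1\le j\le N))/\delta>0$ for all $N$, and similarly $\lambda\in\mathcal F_i$ with $\mathcal E(\nu_{i,j},1\le j\le N)$. The key point is that for $\lambda$ outside a fixed $\gamma$-null set, the principal values $\mathcal C(g_n\mu)(\lambda)$, $\mathcal C(g_n\chi_{\Delta_i}\mu)(\lambda)$ all exist, and $\lambda\notin\mathcal F$ iff for some $N$ the capacitary density of the set where \emph{all} of a finite block of Cauchy transforms (equivalently one-sided limits) are small vanishes. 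Using that the $g_n\chi_{\Delta_i}$ together span a dense subset of $\bigcup_i R^t(K_i,\mu|_{\Delta_i})^\perp$, and semiadditivity of $\gamma$ (Theorem \ref{TTolsa} (2)), I would show: if $\lambda$ has positive density of ``all block Cauchy transforms small'' for the $\mu$-family, it has the same for each $\mu|_{\Delta_i}$-family (one direction is immediate since each $g_{i,n}\mu$ is an $R^t(K,\mu)$-annihilator), and conversely if $\lambda$ is in $\mathcal F_i$ for every $i$ then combining the finitely-many blocks (and absorbing a $\gamma$-null error) gives $\lambda\in\mathcal F$. This yields $\mathcal F\approx\bigcap_i\mathcal F_i$ up to $\gamma$-null. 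Taking complements in $K=\bigcup_i K_i$ (recall $K=\sigma(S_\mu)$ and the $K_i=\sigma(S_{\mu|_{\Delta_i}})$) and using $\gamma$-semiadditivity, part (3) $\mathcal R\approx\bigcup_i\mathcal R_i$ follows formally from (1).

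For part (2): the identity \eqref{BasicEq22} says $\rho(f)(z)\mathcal C(g\mu)(z)=\mathcal C(fg\mu)(z)$, $\gamma$-a.a., for $f\in R^t(K,\mu)$ and $g\perp R^t(K,\mu)$; for $f\in R^t(K_i,\mu|_{\Delta_i})$ (extended by zero) this is still a member of $R^t(K,\mu)$, and I would test against $g_{i,n}\chi_{\Delta_i}\mu$, getting $\rho(f)(z)\mathcal C(g_{i,n}\chi_{\Delta_i}\mu)(z)=\mathcal C(fg_{i,n}\chi_{\Delta_i}\mu)(z)$. Since the same identity with $\rho_i$ holds $\gamma$-a.a., and on $\mathcal R_i$ one has $\mathcal R_i\subset\bigcup_n\mathcal N(\mathcal C(g_{i,n}\chi_{\Delta_i}\mu))$ up to $\gamma$-null (Definition \ref{FRDefinition1} applied to $R^t(K_i,\mu|_{\Delta_i})$, together with Theorem \ref{FCForR} to pass to all annihilators), I can divide by the nonvanishing Cauchy transform to conclude $\rho_i(f)(z)=\rho(f)(z)$ for $\gamma$-a.a.\ $z\in\mathcal R_i$. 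For the disjointness $\mathcal R_i\cap\mathcal R_j\approx\emptyset$ ($i\ne j$): on $\mathcal R_j$ every function in $R^t(K_i,\mu|_{\Delta_i})$ (extended by zero) has $\rho_j$-value $0$ $\gamma|_{\mathcal R_j}$-a.a., because $\chi_{\Delta_i}$ itself is such a function and $\rho_j(\chi_{\Delta_i}\cdot)=\chi_{\Delta_i}\cdot$ vanishes $\mu|_{\Delta_j}$-a.a.\ and hence $\gamma|_{\mathcal R_j}$-a.a.\ by \eqref{BasicEq3}; if $\lambda\in\mathcal R_i\cap\mathcal R_j$ then testing the reproducing kernel construction (the function $f$ from Lemma \ref{hFunction} for a compact subset of $\mathcal R_i\cap\mathcal F$-type set, or more directly the fact $\lambda\in\mathcal R_i$ forces some $\mathcal C(g_{i,n}\chi_{\Delta_i}\mu)(\lambda)\ne0$ while $\lambda\in\mathcal R_j$ forces, via \eqref{FCForREq1} for $R^t(K_j,\mu|_{\Delta_j})$, that $\mathcal C(g_{i,n}\chi_{\Delta_i}\mu)(\lambda)=0$ since $g_{i,n}\chi_{\Delta_i}\mu$ annihilates $R^t(K_j,\mu|_{\Delta_j})$ and $\mathcal R_j$ is in the zero set of such Cauchy transforms only on $\mathcal F_j$... ) gives a contradiction for $\gamma$-a.a.\ such $\lambda$.

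The main obstacle I anticipate is the careful bookkeeping of ``$\gamma$-a.a.'' exceptional sets across the countable family $\{\Delta_i\}$ and across all finite blocks $N$: one must produce a \emph{single} $\gamma$-null set outside which all the pointwise statements (existence of principal values, $\gamma$-continuity from Lemma \ref{CauchyTLemma}, the one-sided Plemelj limits from Theorem \ref{GPTheorem1}, and the density characterizations) hold simultaneously for $\mu$ and for every $\mu|_{\Delta_i}$. This is where semiadditivity \eqref{Semiadditive} over countably many sets (Theorem \ref{TTolsa} (2) allows $m=\infty$) is essential, but one has to check the error terms genuinely sum; the precise disjointness claim $\mathcal R_i\cap\mathcal R_j\approx\emptyset$ is the most delicate, and I would isolate it by first proving (via the vanishing-$\mathcal C$ argument through $\mathcal F_j$) that $\mathcal R_i\subset\mathcal F_j$ $\gamma$-a.a.\ for $i\ne j$, which simultaneously strengthens the statement and makes (3) transparent.
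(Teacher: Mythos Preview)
Your overall strategy for (1) and the $\rho_i=\rho$ half of (2) matches the paper's, but there are two genuine gaps.

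\textbf{Reverse inclusion in (1).} The phrase ``combining the finitely-many blocks'' hides the real issue: knowing that each $\mathcal E(\chi_{\Delta_i}g_j\mu,\,1\le j\le N)$ has positive upper density at $\lambda$ does \emph{not} by itself give positive upper density for their intersection, and that intersection is what you need for $\mathcal E(h_j\mu,\,1\le j\le N')$. The paper bypasses the density characterization for this direction and instead works directly with Definition~\ref{FRDefinition1}, using the structural fact (from Lemma~\ref{GammaExist}) that the sets $\mathcal{ND}(\mu|_{\Delta_i})\approx\mathcal N(h\chi_{\Delta_i})$ are pairwise $\gamma$-disjoint. Thus for $\lambda\in\bigcap_i\mathcal F_i$ one splits: on $\mathcal{ZD}(\mu)$ each $\lambda\in(\mathcal F_i)_0$, so all $\mathcal C(\chi_{\Delta_i}g_j\mu)(\lambda)=0$ and hence $\lambda\in\mathcal F_0$; on $\mathcal{ND}(\mu)$ there is a \emph{unique} $i_0$ with $\lambda\in\mathcal{ND}(\mu|_{\Delta_{i_0}})$, every other $i$ has $h_i(\lambda)=0$ and $\mathcal C(\chi_{\Delta_i}g_j\mu)(\lambda)=0$, so their $v^\pm$ vanish, while $\lambda\in(\mathcal F_{i_0})_\pm$ forces the remaining $v^\pm$ to vanish too. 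Summing gives $\lambda\in\mathcal F_\pm$. You should make this disjointness-of-densities argument explicit; semiadditivity alone will not close the gap.

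\textbf{Disjointness in (2).} Your argument here does not work as written. The appeal to \eqref{FCForREq1} is misdirected: that equation gives vanishing of $\mathcal C(g\mu|_{\Delta_j})$ on $(\mathcal F_j)_0$, not on $\mathcal R_j$, and the measure $g_{i,n}\chi_{\Delta_i}\mu$ restricted to $\Delta_j$ is zero anyway, so nothing is learned. The paper's device is much cleaner: set $f=\sum_n 2^{-n}\chi_{\Delta_n}\in R^t(K,\mu)\cap L^\infty(\mu)$ and apply \eqref{BasicEq22} with $g\chi_{\Delta_n}\in R^t(K,\mu)^\perp$ to get
\[
\rho(f)(z)\,\mathcal C(g\chi_{\Delta_n}\mu)(z)=\mathcal C(f g\chi_{\Delta_n}\mu)(z)=2^{-n}\,\mathcal C(g\chi_{\Delta_n}\mu)(z),\quad \gamma\text{-a.a.},
\]
so $\rho(f)=2^{-n}$ $\gamma|_{\mathcal R_n}$-a.a. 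Since the values $2^{-n}$ are distinct, $\mathcal R_n\cap\mathcal R_m\approx\emptyset$ for $n\ne m$. (Equivalently, one can argue with each $\chi_{\Delta_n}$ separately: $\rho(\chi_{\Delta_n})=1$ on $\mathcal R_n$ and $\rho(\chi_{\Delta_n})\rho(\chi_{\Delta_m})=\rho(0)=0$ by Proposition~\ref{Rhoprop}.) Your sketch was circling this idea but never landed on the computation that makes it work.
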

\smallskip

\begin{proof}
(1): Let $\{g_n\}_{n=1}^\infty$ be a dense set of $R^t(K,\mu)^\perp$. Then it is easy to verify that 
 \[
 \ \{h_n\}_{n=1}^\infty = \{ \chi_{\Delta_1}g_1, \chi_{\Delta_1}g_2,\chi_{\Delta_2}g_1,\chi_{\Delta_1}g_3, \chi_{\Delta_2}g_2,\chi_{\Delta_3}g_1,...,\}
 \]
is a dense set of $R^t(K,\mu)^\perp$. For $N$, let $i_N$ be the number of terms, from first $N$ terms, that contain $\chi_{\Delta_i}$. Then it is easy to check
 \[
 \ \mathcal E (h_j\mu, 1 \le j \le N) \subset \mathcal E (\chi_{\Delta_i}g_j\mu, 1 \le j \le i_N). 
 \]
Hence, applying Theorem \ref{FCharacterization}, we get $\mathcal F \subset \mathcal F_i$. Thus,
$\mathcal F \subset \cap_{i\ge 1}\mathcal F_i$. 

Let $\Gamma_n$, $h$ for $\mu$, and $h_i = \chi_{\Delta_i}h$ for $\mu |_{\Delta_i}$ be as in Lemma \ref{GammaExist}. It is easy to verify from Lemma \ref{GammaExist} that 
\[
\ \mathcal {ND}(\mu |_{\Delta_i}) (\approx \mathcal {N}(h_i)) \cap \mathcal {ND}(\mu |_{\Delta_m}) (\approx \mathcal {N}(h_m)) \approx \emptyset, ~\gamma-a.a.
\]
 for $i\ne m$, $\mathcal {ND}(\mu) \approx \cup_i \mathcal {ND}(\mu |_{\Delta_i}), ~\gamma-a.a.$, and $\mathcal {ZD}(\mu) \approx \cap_i \mathcal {ZD}(\mu |_{\Delta_i}), ~\gamma-a.a.$. Therefore, by Theorem \ref{FRProperties} \eqref{FCEq7}, we get
 \[
 \ \mathcal {ZD}(\mu) \cap \bigcap_{i\ge 1}\mathcal F_i \subset \mathcal F.
 \]
For $\gamma$ almost all $\lambda_0\in \mathcal {ND}(\mu) \cap \cap_{i\ge 1}\mathcal F_i$, there exist integers $i_0$ and $m$ such that 
\[
 \ \lambda_0\in \Gamma_m \cap \mathcal {ND}(\mu |_{\Delta_{i_0}}) \cap \bigcap_{i \ne i_0}\mathcal {ZD}(\mu |_{\Delta_i})
 \]
 and 
 \[
 \ h_i(\lambda_0) = \mathcal C(\chi_{\Delta_i}g_j\mu)(\lambda_0) = 0
 \]
for $i \ne i_0$ and $j\ge 1$, which implies 
 \[
 \ v^+ (\chi_{\Delta_i}g_j\mu, \Gamma_m, \lambda_0) = v^- (\chi_{\Delta_i}g_j\mu, \Gamma_m, \lambda_0) = 0
 \]
  for $i \ne i_0$ and $j\ge 1$. On the other hand, $\lambda_0\in \Gamma_m \cap ((\mathcal F_{i_0})_+\cup (\mathcal F_{i_0})_-)$. Hence, $\lambda_0\in \mathcal F$.

(3) follows from (1). 

For (2), if $f = \sum_{n=1}^\infty \frac{1}{2^n} \chi _{\Delta_n}$, then $f \in R^t(K, \mu ) \cap L^\infty(\mu)$. For $g \perp R^t(K, \mu )$, we have $g\chi _{\Delta_n} \perp R^t(K, \mu )$. Thus, from equation \eqref{BasicEq22},
 \[
 \ \rho (f)(z) \mathcal C(g\chi _{\Delta_n}\mu)(z) = \frac{1}{2^n} \mathcal C(g\chi _{\Delta_n}\mu)(z), ~ \gamma-a.a..
 \] 
Therefore, $\rho (f)(z) = \frac{1}{2^n}, ~  \gamma|_{\mathcal R_n}-a.a.$. (2) is implied.
\end{proof}
\smallskip

\begin{lemma}\label{Lemma4} 
Suppose that $S_\mu$ on $R^t(K,\mu)$ is pure. Let $\Delta_0$ be a Borel set, $\chi_{\Delta_0} \in R^t(K, \mu)$, and $K_0 = \sigma(S_{\mu |_{\Delta_0}})$. Let $\mathbb C \setminus K_0 = \cup_{n=1}^\infty U_n$, where $U_n$ is a connected component.  If $S_{\mu |_{\Delta_0}}$ on $R^t(K_0, \mu |_{\Delta_0})$ is irreducible, then the following properties hold: 

(1) $\partial U_n \subset \mathcal F$.

(2) If $K_n = \overline{K\cap U_n}$, then there exists a Borel partition $\{\Delta_n\}_{n\ge 0}$ of $\text{spt}(\mu )$ with $\Delta_n\subset K_n$ such that
 \[
 \ R^t(K,\mu) = R^t(K_0, \mu |_{\Delta_0})\oplus \bigoplus_{n=1}^\infty R^t(K_n, \mu |_{\Delta_n}).
 \]
\end{lemma}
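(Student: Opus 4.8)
The overall plan is to exploit the reducing decomposition $R^t(K,\mu)=R^t(K_0,\mu|_{\Delta_0})\oplus R^t(K_0',\mu|_{\Delta_0'})$ coming from $\chi_{\Delta_0}\in R^t(K,\mu)$, where $\Delta_0':=\mathrm{spt}(\mu)\setminus\Delta_0$ and $K_0':=\sigma(S_{\mu|_{\Delta_0'}})$; write $\widetilde{\mathcal F},\widetilde{\mathcal R}$ and $\mathcal F',\mathcal R'$ for the non-removable boundaries and removable sets of the two summands. The first thing I would establish is the identity $\rho(\chi_{\Delta_0})=\chi_{\widetilde{\mathcal R}}$ on $\mathcal R$, $\gamma$-a.a. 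Indeed, for $g\perp R^t(K,\mu)$ and $r\in\mathrm{Rat}(K_0)$ one has $r\chi_{\Delta_0}\in R^t(K,\mu)$, so $g\mu|_{\Delta_0}\perp R^t(K_0,\mu|_{\Delta_0})$ and hence $\mathcal C(g\mu|_{\Delta_0})\equiv 0$ on $\mathbb C\setminus K_0$; since $\rho(\chi_{\Delta_0})\,\mathcal C(g\mu)=\mathcal C(\chi_{\Delta_0}g\mu)=\mathcal C(g\mu|_{\Delta_0})$ $\gamma$-a.a., this gives $\rho(\chi_{\Delta_0})=0$ on $\mathcal R\setminus K_0$, $\gamma$-a.a. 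On the other hand $\chi_{\Delta_0}$ is the identity of the summand $R^t(K_0,\mu|_{\Delta_0})$, so Lemma \ref{Lemma2}(2),(3) applied to the two-term decomposition gives $\mathcal R\approx\widetilde{\mathcal R}\cup\mathcal R'$, $\widetilde{\mathcal R}\cap\mathcal R'\approx\emptyset$, $\rho(\chi_{\Delta_0})=1$ on $\widetilde{\mathcal R}$ and $\rho(\chi_{\Delta_0})=1-\rho(\chi_{\Delta_0'})=0$ on $\mathcal R'$, all $\gamma$-a.a., which is the claimed identity.

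For (1), note first that $\partial U_n\subset\partial K_0\subset\partial_1 K_0$, so Proposition \ref{NFSetIsBig}(3) for the pure operator $S_{\mu|_{\Delta_0}}$ gives $\partial U_n\subset\widetilde{\mathcal F}$ (only purity is needed). Next I would show $\partial U_n\cap\mathrm{abpe}(R^t(K,\mu))=\emptyset$: if $\lambda_0$ lay in this set, choose a disc $B(\lambda_0,\delta)\subset\mathrm{abpe}(R^t(K,\mu))$; then $\rho(\chi_{\Delta_0})$ is analytic on $B(\lambda_0,\delta)$ and, by the identity above together with Theorem \ref{ABPETheorem}, vanishes $\gamma$-a.a.\ (hence $\mathcal L^2$-a.a., by Lemma \ref{lemmaBasic0}(2)) on the nonempty open set $B(\lambda_0,\delta)\cap U_n\subset B(\lambda_0,\delta)\setminus K_0$, so $\rho(\chi_{\Delta_0})\equiv 0$ on $B(\lambda_0,\delta)$; since $\rho(\chi_{\Delta_0})=\chi_{\Delta_0}$ $\mu|_{\mathrm{abpe}}$-a.a., this forces $\mu(\Delta_0\cap B(\lambda_0,\delta))=0$, contradicting $\lambda_0\in\partial K_0\subset\mathrm{spt}(\mu|_{\Delta_0})$. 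Consequently $\partial U_n\cap\mathrm{int}(K)\cap\mathcal R\approx\partial U_n\cap\mathrm{int}(K)\cap\mathrm{abpe}(R^t(K,\mu))=\emptyset$ by Theorem \ref{ABPETheorem}, while $\partial U_n\cap\partial K\subset\partial_1 K\subset\mathcal F$ by Proposition \ref{NFSetIsBig}(3) for $R^t(K,\mu)$; as $\partial U_n\subset K$ these combine to give $\partial U_n\subset\mathcal F$, $\gamma$-a.a.

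For (2), fix $n$, put $E=\partial U_n$ (compact, and $\subset\mathcal F$ by (1)) and $G=B(0,R)\setminus E$ with $K\subset B(0,R)$. The function on $G$ equal to $1$ on the components lying in $U_n$ and to $0$ elsewhere belongs to $H^\infty(G)$, so Corollary \ref{MLemma2Cor2} yields $\hat f_n\in R^t(K,\mu)\cap L^\infty(\mu)$ with $\mathcal C(\hat f_n g\mu)=\chi_{U_n}\mathcal C(g\mu)$ off $\partial U_n$, $\gamma$-a.a., and then Lemma \ref{characterizationF} (with the connected open set $U_n$) gives a Borel set $\Delta_n$ with $U_n\subseteq\Delta_n\subseteq\overline{U_n}$ and $\hat f_n=\chi_{\Delta_n}$; moreover $\rho(\chi_{\Delta_n})=\chi_{U_n}$ on $\mathcal R$, $\gamma$-a.a. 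I would then check that $\{\Delta_n\}_{n\ge 0}$ is a partition of $\mathrm{spt}(\mu)$ modulo $\mu$-null sets. Disjointness: for $n\ne m$ (including $m=0$), Proposition \ref{Rhoprop}(1) and the first paragraph give $\rho(\chi_{\Delta_n}\chi_{\Delta_m})=\rho(\chi_{\Delta_n})\rho(\chi_{\Delta_m})=0$ on $\mathcal R$, hence $\mathcal C(\chi_{\Delta_n}\chi_{\Delta_m}g_j\mu)=\rho(\chi_{\Delta_n}\chi_{\Delta_m})\,\mathcal C(g_j\mu)=0$ on $\mathcal R$; by Theorem \ref{FCForR} this Cauchy transform also vanishes on $\mathcal F_0$, and since $\mathcal F_+\cup\mathcal F_-$ is $\mathcal L^2$-null it vanishes $\mathcal L^2$-a.a., whence $\chi_{\Delta_n}\chi_{\Delta_m}g_j=0$ $\mu$-a.a.\ for all $j$ by \eqref{CTDistributionEq}, and purity of $S_\mu$ gives $\mu(\Delta_n\cap\Delta_m)=0$. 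Completeness: put $q=\sum_{n\ge0}\chi_{\Delta_n}$; on $\mathcal R\cap\bigcup_n U_n$ one has $\rho(q)=1$, and on $\mathcal R\setminus\bigcup_n U_n=\mathcal R\cap K_0$ one has $\rho(q)=\rho(\chi_{\Delta_0})=\chi_{\widetilde{\mathcal R}}$. Here the irreducibility hypothesis enters: for $\gamma$-a.a.\ $\lambda_0\in\mathcal R'\cap K_0$ one would have $\gamma(B(\lambda_0,\delta)\setminus\mathcal R')/\delta\to0$ by Theorem \ref{DensityCorollary} for $R^t(K_0',\mu|_{\Delta_0'})$, while Theorem \ref{Lemma3} for the irreducible $S_{\mu|_{\Delta_0}}$ together with $\widetilde{\mathcal R}\cap\mathcal R'\approx\emptyset$ force $\overline{\lim}_{\delta\to 0}\gamma(B(\lambda_0,\delta)\setminus\mathcal R')/\delta>0$, a contradiction; hence $\mathcal R'\cap K_0\approx\emptyset$, $\mathcal R\cap K_0\approx\widetilde{\mathcal R}$, and $\rho(q)=1$ on $\mathcal R$, $\gamma$-a.a. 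Applying the $\mathcal L^2$-a.a./purity argument above to $1-q$ then yields $\mu(\mathrm{spt}(\mu)\setminus\bigcup_n\Delta_n)=0$. After adjusting on a $\mu$-null set we obtain an honest Borel partition $\{\Delta_n\}_{n\ge 0}$ with $\Delta_n\subset K_n$ for $n\ge1$ (using purity of $S_\mu$ to rule out atoms of $\mu$ on the ``non-accessible'' part of $\partial U_n$, so that $\mathrm{spt}(\mu|_{\Delta_n})\subset\overline{K\cap U_n}$ and hence $\sigma(S_{\mu|_{\Delta_n}})\subset K_n$), and since every $\chi_{\Delta_n}\in R^t(K,\mu)$ the displayed direct-sum decomposition follows.

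The main obstacle is the completeness step — showing $\mathcal R'\cap K_0\approx\emptyset$, which is the one place where the irreducibility of $S_{\mu|_{\Delta_0}}$ is genuinely used, through the ``full analytic capacitary density'' dichotomy of Theorems \ref{Lemma3} and \ref{DensityCorollary} — together with the measure-theoretic care needed to descend from the $\gamma$-a.a.\ statements about $\partial U_n$ and $K_0$ to the $\mu$-a.a.\ statements about the $\Delta_n$, in particular pinning down $\mathrm{spt}(\mu|_{\Delta_n})\subset K_n$ and $\sigma(S_{\mu|_{\Delta_n}})\subset K_n$. Everything else is an assembly of Corollary \ref{MLemma2Cor2}, Lemma \ref{characterizationF}, Proposition \ref{NFSetIsBig}, Theorem \ref{ABPETheorem} and the $\rho$-calculus from the earlier chapters.
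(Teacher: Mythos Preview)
Your argument for part (2) is essentially the paper's: Corollary \ref{MLemma2Cor2} and Lemma \ref{characterizationF} produce the $\chi_{\Delta_n}$, and the completeness step uses exactly the density dichotomy (Theorem \ref{DensityCorollary} for the complementary summand versus Theorem \ref{Lemma3} for the irreducible one). The paper organizes (2) a bit differently --- disjointness via Remark \ref{characterizationFRemark}, and completeness by isolating a residual summand $R^t(K_{00},\mu|_{\Delta_{00}})$ with $K_{00}\subset K_0$ whose removable set is then shown to be empty --- but the substance is the same.

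Your proof of (1), however, has a real gap. The inclusion ``$\partial U_n\cap\partial K\subset\partial_1 K$'' is unjustified and false in general: $U_n$ is a component of $\mathbb C\setminus K_0$, not of $\mathbb C\setminus K$, so points of $\partial U_n$ give you no control over $\gamma(B(\lambda,\delta)\setminus K)$. Think of $K$ a Swiss cheese with empty interior; then $\partial K=K$, hence $\partial U_n\cap\partial K=\partial U_n$, yet $\partial_1 K$ is typically a proper subset of $K$. Your abpe argument (which is correct and nice) only handles $\partial U_n\cap\mathrm{int}(K)$; on $\partial U_n\cap\partial K$ it gives nothing, and Proposition \ref{NFSetIsBig}(3) cannot close the gap. (Incidentally, the chain $\partial U_n\subset\partial K_0\subset\partial_1 K_0$ is also misstated --- the middle inclusion is wrong --- though the intended $\partial U_n\subset\partial_o K_0\subset\partial_1 K_0$ is fine.)

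The paper's proof of (1) does use irreducibility, not merely purity. It shows $\partial U_n\subset\mathcal F'$ directly: if $\lambda\in\partial U_n\cap\mathcal R'$, Theorem \ref{DensityCorollary} for $R^t(K_0',\mu|_{\Delta_0'})$ gives $\gamma(B(\lambda,\delta)\setminus\mathcal R')/\delta\to 0$; since $\widetilde{\mathcal R}\cap\mathcal R'\approx\emptyset$ (Lemma \ref{Lemma2}(2)) one has $\mathcal R'\subset\widetilde{\mathcal F}$, hence $\gamma(B(\lambda,\delta)\setminus\widetilde{\mathcal F})/\delta\to 0$, contradicting Theorem \ref{Lemma3} for the irreducible $S_{\mu|_{\Delta_0}}$ at $\lambda\in K_0$. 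Together with $\partial U_n\subset\widetilde{\mathcal F}$ and Lemma \ref{Lemma2}(1) this yields $\partial U_n\subset\widetilde{\mathcal F}\cap\mathcal F'\approx\mathcal F$. This is precisely the argument you run in your completeness step for (2) to get $\mathcal R'\cap K_0\approx\emptyset$; you should simply invoke it already for (1), applied to points of $\partial U_n\subset K_0$.
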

\smallskip

\begin{proof}
(1): From Lemma \ref{Lemma1}, we get
 \[
 \ R^t(K,\mu) = R^t(K_0,\mu |_{\Delta_0}) \oplus R^t(K',\mu |_{\Delta'}).  
 \]
Let $\mathcal F_0$ and $\mathcal F'$ be the non-removable boundaries of $R^t(K_0,\mu |_{\Delta_0})$ and $R^t(K',\mu |_{\Delta'})$, respectively. Set $\mathcal R_0 = K_0 \setminus \mathcal F_0$ and $\mathcal R' = K' \setminus \mathcal F'$. It is clear that $\partial U_n\subset \mathcal F_0$. We claim that $\partial U_n\subset \mathcal F'$. In fact, from Theorem \ref{DensityCorollary}, if there exists $\lambda \in \partial U_n \cap \mathcal R'$, then
 \[
 \ \lim_{\delta\rightarrow 0 } \dfrac{\gamma(B(\lambda, \delta) \setminus \mathcal R')}{\delta} = 0.
 \]
Using Lemma \ref{Lemma2} (2), we see that $\mathcal R_0\cap \mathcal R' = \emptyset, ~ \gamma-a.a.$, so $\mathcal R'\subset \mathcal F_0, ~ \gamma-a.a.$. Hence,
 \[
 \ \lim_{\delta\rightarrow 0 } \dfrac{\gamma(B(\lambda, \delta) \setminus \mathcal F_0)}{\delta} = 0,
 \]
which contradicts to Theorem \ref{Lemma3} as $S_{\mu | _{\Delta _0}}$ on $R^t(K_0,\mu |_{\Delta_0})$ is irreducible. Thus, by Lemma \ref{Lemma2} (1),
 \[
 \ \partial U_n \subset \mathcal F_0 \cap \mathcal F' \approx \mathcal F, ~\gamma-a.a.. 
 \]

(2): By Corollary \ref{MLemma2Cor2}, for $f_n = \chi_{U_n}$, there exists $\hat f_n \in R^t(K, \mu) \cap L^\infty (\mu)$ such that 
 \[
 \ \mathcal C(\hat f_n g\mu )(z) = f_n (z)\mathcal C( g\mu )(z),~ z \in \mathbb C \setminus \partial U_n, \gamma-a.a. 
 \]
for $g\perp R^t(K, \mu)$. Hence, there exists $\Delta _n$ ($U_n \subset \Delta _n \subset \overline{U_n}$) such that, by Lemma \ref{characterizationF}, $\hat f_n = \chi_{\Delta_n}$. Since  
\[
 \ \mathcal C((\hat f_n + \hat f_m) g\mu )(z) = \chi_{U_n\cup U_m} \mathcal C( g\mu )(z),~ z \in \mathbb C \setminus (\partial U_n\cup \partial U_m) , \gamma-a.a., 
 \]
there exists a Borel set $\Delta$ such that $\hat f_n + \hat f_m = \chi _{\Delta}$ (Remark \ref{characterizationFRemark}),
which implies $\Delta _n \cap \Delta _m = \emptyset$ for $n \ne m$. Therefore,
 \[
 \ R^t(K',\mu |_{\Delta'}) = R^t(K_{00},\mu |_{\Delta_{00}}) \oplus \bigoplus _{n=1}^\infty R^t(K_n,\mu |_{\Delta_n}) 
 \]
where $K_{00} = \sigma(S_{\mu |_{\Delta_{00}}})$ and $K_n = \sigma(S_{\mu |_{\Delta_n}})$. Clearly, $K_{00} \subset K_0$. Let $\mathcal F_{00}$ be the non-removable boundary of $R^t(K_{00},\mu |_{\Delta_{00}})$. Set $\mathcal R_{00} = \mathbb C \setminus \mathcal F_{00}$. If $\mathcal R_{00} \ne \emptyset, ~ \gamma-a.a.$, then, using Lemma \ref{Lemma2} (2), we see that $\mathcal R_{00} \subset \mathcal F_0, ~ \gamma-a.a.$. Similar to the proof of (1), we get a contradiction  to Theorem \ref{Lemma3} as $S_{\mu |_{\Delta_{0}}}$ on $R^t(K_{0},\mu |_{\Delta_{0}})$ is irreducible. Thus, $R^t(K_{00},\mu |_{\Delta_{00}}) = 0$.      
\end{proof}

\smallskip

With above lemmas, we are able to prove Theorem \ref{IrreducibilityTheorem}.

\begin{proof} 
(Theorem \ref{IrreducibilityTheorem}) $\Rightarrow$: Suppose that $\mathcal R$ is not $\gamma$-connected. Then there exist open bounded subsets $G_1$ and $G_2$ with $G_1\cap G_2 = \emptyset$, $\mathcal R \subset G_1\cup G_2$, $\mathcal R \cap G_1 \ne \emptyset$, and $\mathcal R \cap G_2 \ne \emptyset$. We may assume $G_1$ is connected.  
 Then from Corollary \ref{MLemma2Cor2}, for $f_1 = \chi_{G_1}$, there exists $\hat f_1 \in R^t(K,\mu)\cap L^\infty (\mu)$ such that 
 \[
 \ \mathcal C(\hat f_1 g \mu)(z) = f_1(z) \mathcal C( g \mu)(z),~ z\in \mathbb C \setminus \partial G_1,~\gamma -a.a.
 \]
 for $g\perp R^t(K,\mu)$. This implies, by Lemma \ref{characterizationF}, that $\hat f_1 = \chi_{\Delta_1}$ and $G_1\cap \mathcal R \approx \mathcal R_{\Delta_1}$ as $\partial G_1\cap \mathcal R \approx \emptyset$ and $\rho_1(\hat f_1)(z) = \chi_{G_1}, ~ \gamma |_{\mathcal R_{\Delta_1}}-a.a.$, where $\mathcal R_{\Delta_1}$ is the removable set for $R^t(K, \mu |_{\Delta_1})$. Similarly, $G_2\cap \mathcal R \approx \mathcal R_{\Delta_1^c}$, where $\mathcal R_{\Delta_1^c}$ is the removable set for $R^t(K, \mu |_{\Delta_1^c})$.  This is a contradiction as $S_\mu$ is irreducible.      

$\Leftarrow$: If $R^t(K, \mu)$ is not irreducible, then from Lemma \ref{Lemma1}, there exists a partition $\{\Delta _1, \Delta _2\}$ of $\text{spt}(\mu)$ such that
 \[
 \ R^t(K,\mu) = R^t(K_1, \mu |_{\Delta_1})\oplus R^t(K_2, \mu |_{\Delta_2}),
 \]
where $K_i = \sigma (S_{\mu  | _{\Delta _i}})$ for $i = 1,2$ and $S_{\mu  | _{\Delta _1}}$ on $R^t(K_1, \mu |_{\Delta_1})$ is irreducible. Using Lemma \ref{Lemma4}, there exists $U_n$ such that $U_n\cap K_2 \ne \emptyset$ ($K_1\ne K$). Let $K\subset B(0, R)$, $G_1 = U_n$, and $G_2 = B(0, R) \setminus \overline{G_1}$. Then, from Lemma \ref{Lemma4}, we see that 
 \[
 \ \mathcal R \subset G_1\cup G_2,~ \mathcal R \cap G_1 \ne \emptyset, \text{ and } \mathcal R \cap G_2 \ne \emptyset.   
 \]
This is a contradiction since $\mathcal R$ is $\gamma$-connected.    
\end{proof}

\bigskip

\chapter{Decomposition of $R^t(K,\mu)$ and the algebra $R^t(K,\mu)\cap L^\infty (\mu)$}
\bigskip

In this chapter, combining the results from previous chapters, we prove a decomposition theorem (Theorem \ref{DecompTheorem}) of $R^t(K, \mu)$ for an arbitrary compact subset $K$ and a finite positive measure $\mu$ supported on $K$, which extends the central results of $P^t(\mu)$ (Thomson's theorem, see Theorem I). We also discuss some applications of our main results.

We assume that $1\le t <\infty$, $\mu$ is a finite positive Borel measure supported on a compact subset $K\subset \mathbb C$, and $K=\sigma(S_\mu)$.

\bigskip

\section{Structure and decomposition of $R^t(K,\mu)$}
\bigskip

In this section, using Theorem \ref{MLemma1} and Theorem \ref{MLemma2}, we prove the following theorem. Recall that $H^\infty (\mathcal R) (=H^\infty_{\mathcal R, \mathcal F}(\mathcal L^2_{\mathcal R }))$ is also defined as in Definition \ref{hSpace}.

\begin{theorem}\label{algebraEq} 
If $S_\mu$ on $R^t(K, \mu)$ is pure, then $\rho(f)\in H^\infty (\mathcal R)$ for all $f\in R^t(K, \mu)\cap L^\infty(\mu )$ and  
\[
 \ \rho: ~ R^t(K, \mu)\cap L^\infty(\mu ) \rightarrow  H^\infty (\mathcal R)
 \]
is an isometric isomorphism and a weak$^*$ homeomorphism.
\end{theorem}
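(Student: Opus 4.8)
\textbf{Proof proposal for Theorem \ref{algebraEq}.}

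The plan is to establish the four assertions in turn: (i) $\rho$ maps $R^t(K,\mu)\cap L^\infty(\mu)$ into $H^\infty(\mathcal R)$; (ii) $\rho$ is isometric; (iii) $\rho$ is onto; (iv) $\rho$ is a weak$^*$ homeomorphism. For (i), fix $f\in R^t(K,\mu)\cap L^\infty(\mu)$. By Proposition \ref{Rhoprop}(2) we have $\rho(f)\in L^\infty(\mathcal L^2_{\mathcal R})$ with $\|\rho(f)\|_{L^\infty(\mathcal L^2_{\mathcal R})}\le\|f\|_{L^\infty(\mu)}$, and Theorem \ref{MLemma1} gives exactly the integral estimate \eqref{MLemma1Eq}, which is the hypothesis \eqref{MLemma2Eq} of Theorem \ref{MLemma2} with $\mathcal D=\mathcal R$ and $E_0=\mathcal F_{\mathcal R}=\mathcal F$ (note $\mathcal F\cap\mathcal R=\emptyset$ by Theorem \ref{FRProperties}, so $\mathcal F_{\mathcal R}=\mathcal F$). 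Here I must first check that $\mathcal R$ satisfies the density condition \eqref{densityAssumption} and that $\chi_{\mathcal R}\in GC(\mathcal R)$: both follow from Theorem \ref{DensityCorollary}, which says $\lim_{\delta\to 0}\gamma(B(\lambda_0,\delta)\setminus\mathcal R)/\delta=0$ for $\gamma$-a.a. $\lambda_0\in\mathcal R$; and $\rho(f)\in GC(\mathcal R)$ by Theorem \ref{MTheorem2}. Then Theorem \ref{MLemma2} yields $\rho(f)=\hat f|_{\mathcal R}$ for some $\hat f\in R^t(K,\mu)\cap L^\infty(\mu)$ and, more importantly, $\rho(f)\in H^\infty_{\mathcal R,\mathcal F}(\mathcal L^2_{\mathcal R})=H^\infty(\mathcal R)$. (One also checks $\hat f=f$ $\mu$-a.a. using \eqref{BasicEq3} and that $\mathcal C(\hat f g\mu)=\rho(f)\mathcal C(g\mu)=\mathcal C(fg\mu)$, so $\hat f-f\perp R^t(K,\mu)^\perp$, forcing $\hat f=f$ since $S_\mu$ is pure.)

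For the algebra homomorphism property and injectivity: Proposition \ref{Rhoprop}(1) already gives $\rho(f_1f_2)=\rho(f_1)\rho(f_2)$ $\gamma$-a.a., and linearity is immediate from the definition of $\rho$ via \eqref{BasicEq22}; injectivity follows because $\rho(f)=0$ $\gamma$-a.a. on $\mathcal R$ forces $\mathcal C(fg\mu)=0$ $\gamma$-a.a. for all $g\perp R^t(K,\mu)$, hence $fg\mu=0$ for all such $g$ by \eqref{CTDistributionEq}, i.e. $f\mu\perp R^t(K,\mu)^\perp$ in a sense that, combined with purity of $S_\mu$, gives $f=0$ $\mu$-a.a. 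For the isometry I would show both inequalities: $\|\rho(f)\|_{L^\infty(\mathcal L^2_{\mathcal R})}\le\|f\|_{L^\infty(\mu)}$ is Proposition \ref{Rhoprop}(2); for the reverse, use that $\rho$ is a unital algebra homomorphism between (pre-)uniform algebras, so it is automatically norm-decreasing in the other direction once one knows $\rho$ is onto a closed subalgebra — more concretely, I would argue via the spectral radius: for the operator $M_\mu|_{R^t(K,\mu)}$ the functional calculus and $\sigma(S_\mu)=K$ tie $\|f\|_{L^\infty(\mu)}$ to $\sup_{\mathcal R}|\rho(f)|$ using that $\mathcal R$ is dense in $K$ up to a $\gamma$-null set (Proposition \ref{NFSetIsBig}(1): $\mathrm{spt}(\mu)\subset\overline{\mathcal R}$) together with the $\gamma$-continuity of $\rho(f)$ from Theorem \ref{MTheorem2} and the fact that $\rho(f)=f$ $\mu|_{\mathcal N(g_j\mu,1\le j<\infty)}$-a.a. by \eqref{BasicEq3}, so $\|f\|_{L^\infty(\mu)}=\mathrm{ess\,sup}_\mu|\rho(f)|\le\|\rho(f)\|_{L^\infty(\mathcal L^2_{\mathcal R})}$ because $\mu$ is carried by $\mathcal N(g_j\mu,1\le j<\infty)\approx\mathcal R\cup(\text{density-zero part})$ up to $\gamma$-null — here I would invoke Lemma \ref{lemmaBasic0}(2) to pass between $\gamma$-null and $\mathcal L^2$-null, and the singular part $\mu|_{\mathcal F_+\cup\mathcal F_-}$ is handled by the nontangential-limit identity \eqref{FEquality} plus Theorem \ref{MTheorem3}.

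Surjectivity is the heart of the matter and is where Theorem \ref{MLemma2} does the real work. Given $u\in H^\infty(\mathcal R)$, by definition $u$ is a weak$^*$ limit in $L^\infty(\mathcal L^2_{\mathcal R})$ of functions $f_k$ each bounded analytic off a compact $E_{f_k}\subset\mathcal F$; for each such $f_k$, the elementary estimate at the very end of the excerpt (the computation preceding Corollary \ref{MLemma2Cor2}, using $\|T_\varphi f_k\|\le 4\|f_k\|\,\mathrm{diam}\,\mathrm{spt}\varphi\,\|\bar\partial\varphi\|$ and $|(T_\varphi f_k)'(\infty)|\le\|T_\varphi f_k\|_\infty\gamma(\mathrm{spt}\varphi\cap E_{f_k})$) verifies hypothesis \eqref{MLemma2Eq}, so Theorem \ref{MLemma2} produces $\hat f_k\in R^t(K,\mu)\cap L^\infty(\mu)$ with $\mathcal C(\hat f_k g\mu)=f_k\mathcal C(g\mu)$ $\mathcal L^2_{\mathcal R}$-a.a. and $\|\hat f_k\|_{L^\infty(\mu)}\le C_{39}$; that is, $\rho(\hat f_k)=f_k$ on $\mathcal R$ ($\gamma$-a.a., after upgrading $\mathcal L^2$-a.a.\ to $\gamma$-a.a.\ via Lemma \ref{zeroR}-type arguments already built into the proof of Theorem \ref{MLemma2}). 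Since $\|\hat f_k\|_{L^\infty(\mu)}$ is uniformly bounded, pass to a weak$^*$ convergent subsequence $\hat f_k\rightharpoonup\hat f\in R^t(K,\mu)\cap L^\infty(\mu)$; then for each $g\perp R^t(K,\mu)$ with $\int|z-\lambda|^{-1}|g|\,d\mu<\infty$ one has $\mathcal C(\hat f_k g\mu)(\lambda)\to\mathcal C(\hat f g\mu)(\lambda)$ pointwise $\mathcal L^2_{\mathcal R}$-a.a., while $f_k\mathcal C(g\mu)\to u\,\mathcal C(g\mu)$ weak$^*$ in $L^1(\mathcal L^2_{\mathcal R})$, giving $\rho(\hat f)=u$ on $\mathcal R$, i.e. $\rho(\hat f)=u$ in $H^\infty(\mathcal R)$. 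The main obstacle throughout is bookkeeping the three kinds of "almost everywhere" ($\gamma$, $\mathcal L^2$, $\mu$) and making sure the weak$^*$ passage to the limit commutes with the pointwise Cauchy-transform identities; this is exactly what Lemma \ref{etaProperty}, Lemma \ref{zeroR}, and Lemma \ref{distributionLemma} are designed to handle, so the argument is an assembly job rather than a new difficulty. Finally, the weak$^*$ homeomorphism claim: $\rho$ is a bijective isometric algebra isomorphism between the dual spaces $R^t(K,\mu)\cap L^\infty(\mu)$ (predual $L^t(\mu)/{}^\perp(\cdots)$, or $L^1(\mu)$-based for $t=1$) and $H^\infty(\mathcal R)\subset L^\infty(\mathcal L^2_{\mathcal R})$; by the Krein–Smulian theorem it suffices to check that $\rho$ restricted to norm-bounded sets is weak$^*$-to-weak$^*$ continuous, which follows from the bounded-convergence argument just described (a bounded net $f_\alpha\rightharpoonup f$ in $L^\infty(\mu)$ forces $\mathcal C(f_\alpha g\mu)\to\mathcal C(fg\mu)$ pointwise $\mathcal L^2_{\mathcal R}$-a.a.\ and boundedly, hence $\rho(f_\alpha)\rightharpoonup\rho(f)$ in $L^\infty(\mathcal L^2_{\mathcal R})$), and the inverse is automatically weak$^*$ continuous on bounded sets by the same principle applied on the $H^\infty(\mathcal R)$ side together with surjectivity and the uniform bound $\|\hat f\|_{L^\infty(\mu)}\le C_{39}\|u\|_{L^\infty(\mathcal L^2_{\mathcal R})}$.
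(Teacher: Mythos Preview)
Your overall structure matches the paper's proof: part (i) (mapping into $H^\infty(\mathcal R)$ via Theorems \ref{DensityCorollary}, \ref{MTheorem2}, \ref{MLemma1}, \ref{MLemma2}), part (iii) (surjectivity by approximating $u\in H^\infty(\mathcal R)$ by functions analytic off compact subsets of $\mathcal F$, applying Theorem \ref{MLemma2} to get $\hat f_k$, and passing to a weak$^*$ limit), and part (iv) (Krein--Smulian) are essentially what the paper does.

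The divergence is in the reverse isometry inequality $\|f\|_{L^\infty(\mu)}\le\|\rho(f)\|_{L^\infty(\mathcal L^2_{\mathcal R})}$. Your proposed route---arguing directly that $\mu$ is carried by a set on which $|f|=|\rho(f)|$ is controlled by its $\mathcal L^2_{\mathcal R}$-essential sup, using density of $\mathcal R$ in $\mathrm{spt}(\mu)$, $\gamma$-continuity, and nontangential limits on $\mathcal F_+\cup\mathcal F_-$---is plausible in spirit but your sketch leaves real gaps (e.g.\ you do not address $\mu|_{\mathcal F_0}$, and the passage from $\gamma$-a.a.\ statements to $\mu$-a.a.\ control on the pieces is not automatic). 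The paper's argument is much cleaner and you already have the key ingredient in hand: in part (i) you observe that the $\hat f$ produced by Theorem \ref{MLemma2} satisfies $\hat f=f$ $\mu$-a.a.\ and $\|\hat f\|_{L^\infty(\mu)}\le C_{39}\|\rho(f)\|_{L^\infty(\mathcal L^2_{\mathcal R})}$. This gives $\|f\|_{L^\infty(\mu)}\le C_{39}\|\rho(f)\|_{L^\infty(\mathcal L^2_{\mathcal R})}$ with an absolute constant; now apply this to $f^n$, use multiplicativity $\rho(f^n)=\rho(f)^n$ from Proposition \ref{Rhoprop}(1), take $n$-th roots, and let $n\to\infty$ to remove the constant. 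This ``power trick'' replaces your entire spectral-radius/density discussion.
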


\begin{proof}
For $f\in R^t(K, \mu)\cap L^\infty(\mu )$, $\rho(f)$ is $\gamma$-continuous $\mathcal L^2|_{\mathcal R }-a.a.$ on $\mathcal R$ by Theorem \ref{MTheorem2}. From Theorem \ref{DensityCorollary}, $\mathcal R$ satisfies \eqref{densityAssumption}. Set $F = \rho(f)$. Then $F\in GC(\mathcal R)$. To apply Theorem \ref{MLemma2}, we set $\mathcal D = \mathcal R$ and $E_0 = \mathcal F$. By Theorem \ref{MLemma1}, we see that $F$ satisfies the condition \eqref{MLemma2Eq}. From Theorem \ref{MLemma2}, we conclude that $F\in H^\infty (\mathcal R)$. Therefore, the image of $\rho$ is in $H^\infty (\mathcal R)$.

From Theorem \ref{MLemma2}, there exists $\hat F \in R^t(K, \mu)\cap L^\infty(\mu )$ such that 
 \[
 \ \mathcal C(\hat Fg\mu)(z) = F(z)\mathcal C(g\mu)(z), ~ \mathcal L^2_{\mathcal R }-a.a.
 \]
 for $g\perp R^t(K, \mu)$ and $\|\hat F\|_{L^\infty(\mu )} \le C_{94}  \|F\|_{L^\infty(\mathcal L^2_{\mathcal R })}$.  This implies, from \eqref{BasicEq22}, that $\mathcal C(\hat Fg\mu)(z) = \mathcal C(fg\mu)(z), ~ \mathcal L^2_{\mathcal R }-a.a.$ for $g\perp R^t(K, \mu)$. With Corollary \ref{acZero} and Theorem \ref{FCForR}, we get 
 \[
 \ \mathcal C(\hat Fg\mu)(z) = \mathcal C(fg\mu)(z), ~ \mathcal L^2-a.a., 
 \]
which implies $\hat F = f$ because $S_\mu$ on $R^t(K, \mu)$ is pure. Hence, 
 \[
 \ \|f\|_{L^\infty(\mu )} \le C_{94} \|F\|_{L^\infty(\mathcal L^2_{\mathcal R })} = C_{94} \|\rho(f)\|_{L^\infty(\mathcal L^2_{\mathcal R })}.
 \]
Using Proposition \ref{Rhoprop} (1),
 \[
 \ \|f^n\|_{L^\infty(\mu )} \le C_{94} \|(\rho(f))^n\|_{L^\infty(\mathcal L^2_{\mathcal R })}. 
 \]
Thus, $\|f\|_{L^\infty(\mu )} \le \|\rho (f)\|_{L^\infty(\mathcal L^2_{\mathcal R })}$. So, by Proposition \ref{Rhoprop} (2), we have proved that
 \[
 \ \|\rho(f)\|_{L^\infty(\mathcal L^2_{\mathcal R})} = \|f\|_{L^\infty(\mu)}, ~ f\in R^t(K, \mu)\cap L^\infty(\mu ).
 \]
Clearly the map $\rho$ is injective. For $f\in H^\infty (\mathcal R)$ with $\|f\|_{H^\infty (\mathcal R)} \le 1$, there exists a sequence of $\{f_n\}$ such that $f_n$ is bounded analytic off a compact subset $E_n\subset \mathcal F$, $\|f_n\|_{L^\infty(\mathcal L^2_{\mathcal R})} \le C_{95}$, and $f_n \rightarrow f$ in $L^\infty(\mathcal L^2_{\mathcal R})$ weak$^*$ topology. Using Theorem \ref{MLemma2}, since $f_n$ satisfies the condition \eqref{MLemma2Eq}, we find a function $\hat f_n \in R^t(K, \mu)\cap L^\infty(\mu )$ satisfying
 \[
 \ \mathcal C(\hat f_n g \mu)(z) = f_n(z) \mathcal C( g \mu)(z),~ \mathcal L^2_{\mathcal R}-a.a., ~ \|\hat f_n\|_{L^\infty(\mu)} \le C_{95} 
 \]
for $g\perp R^t(K, \mu)$ and $\rho(\hat f_n) = f_n$. Choose a subsequence $\{\hat {f}_{n_k}\}$ so that $\hat f_{n_k}\rightarrow \hat f$ in $L^\infty(\mu )$ weak$^*$ topology. Then $\mathcal C(\hat f_{n_k} g \mu)(\lambda) \rightarrow \mathcal C(\hat f g \mu)(\lambda)$ for $\lambda$ with $\int \frac{1}{|z-\lambda |}|g(z)| d \mu(z) < \infty$. Hence, 
 \[
 \ \mathcal C(\hat f g \mu)(z) = f(z) \mathcal C( g \mu)(z),~ \mathcal L^2_{\mathcal R}-a.a. 
 \]
for $g\perp R^t(K, \mu)$, which implies $\rho(\hat f) = f$ and  $\rho$ is surjective. Therefore, $\rho$ is bijective      
 isomorphism between two Banach algebras $R^t(K, \mu)\cap L^\infty(\mu )$ and $H^\infty (\mathcal R)$. Clearly $\rho$ is also a weak$^*$ sequentially continuous, an application of Krein-Smulian Theorem shows that $\rho$ is a weak$^*$ homeomorphism.
\end{proof}
\smallskip

\begin{example}\label{algEqExample} 
Let $K$ be as in Example \ref{FCExample} and let $\mu$ be the sum of the arclength measures of the unit circle and all small circles. In this case, $\mathcal F = K^c \cup \mathbb T \cup \cup_{n=1}^\infty \partial B(\lambda_n,\delta_n)$ and $\mathcal R = K\setminus \mathcal F$. Clearly, $H^\infty (\mathcal R) = R^\infty (K, \mathcal L^2_K)$, the weak$^*$ closure of $Rat(K)$ in $L^\infty (\mathcal L^2_K)$. Theorem \ref{algebraEq} shows that $\rho$ 
is an isometric isomorphism and a weak$^*$ homeomorphism from $R^t(K, \mu)\cap L^\infty(\mu )$ to $R^\infty (K, \mathcal L^2_K)$, where $K$ may not have interior such as a Swiss cheese set.   
\end{example}

Combining Theorem \ref{IrreducibilityTheorem} with Theorem \ref{algebraEq}, we get the following decomposition theorem for $R^t(K, \mu)$. 

\begin{theorem}\label{DecompTheorem} 
There exists a Borel partition $\{\Delta_n\}_{n\ge 0}$ of $\text{spt}(\mu )$ and compact subsets $\{K_n\}_{n=1}^\infty$ such that $\Delta_n \subset K_n$ for $n \ge 1$,
 \[
 \ R^t(K,\mu) = L^t(\mu |_{\Delta_0})\oplus \bigoplus_{n=1}^\infty R^t(K_n, \mu |_{\Delta_n}),
 \]
and the following statements are true: 

(1) If $n \ge 1$, then $R^t(K_n, \mu |_{\Delta_n})$ contains no non-trivial characteristic functions. 

(2) If $n \ge 1$ and $\mathcal R_n$ is the removable set for $R^t(K_n, \mu |_{\Delta_n})$, then $\mathcal R_n$ is $\gamma$-connected. 

(3) If $n \ge 1$, then $K_n = \overline{\mathcal R_n}$.

(4) If $n \ge 1$, $m \ge 1$, $n\ne m$,  and $\mathcal F_m$ is the non-removable boundary for $R^t(K_m, \mu |_{\Delta_m})$, then $K_n \subset \mathcal F_m, ~\gamma-a.a$.

(5) If $n \ge 1$, $m \ge 1$, and $n\ne m$, then $K_n \cap K_m \subset \mathcal F, ~\gamma-a.a$.

(6) If $n \ge 1$, then the map $\rho_n$ is an isometric isomorphism and a weak$^*$ homeomorphism from $R^t(K_n, \mu |_{\Delta_n}) \cap L^\infty (\mu |_{\Delta_n})$ onto $H^\infty(\mathcal R_n )$. 
\end{theorem}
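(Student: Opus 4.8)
\textbf{Proof proposal for Theorem \ref{DecompTheorem}.}

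The plan is to assemble the decomposition from the three irreducibility/structure results established earlier: Lemma \ref{Lemma1} (the initial Borel decomposition into irreducible pieces plus the proof that the ``$\Delta_0$ piece'' is genuinely $L^t$), Lemma \ref{Lemma4} (once one piece is irreducible, its complement in $R^t(K,\mu)$ splits along the connected components $U_n$ of the complement of its spectrum, and the boundaries $\partial U_n$ lie in $\mathcal F$), Theorem \ref{IrreducibilityTheorem} (irreducibility is equivalent to $\gamma$-connectedness of $\mathcal R$), and Theorem \ref{algebraEq} (the isometric isomorphism $\rho$ onto $H^\infty(\mathcal R)$ in the pure case). First I would invoke Lemma \ref{Lemma1} to write $R^t(K,\mu) = L^t(\mu|_{\Delta_0}) \oplus \bigoplus_{n\ge 1} R^t(K_n,\mu|_{\Delta_n})$ with each $S_{\mu|_{\Delta_n}}$ irreducible and $K_n = \sigma(S_{\mu|_{\Delta_n}})$; this immediately gives the direct sum statement and part (1).

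Next I would derive (2)–(6). Part (2) is immediate from Theorem \ref{IrreducibilityTheorem} applied to each irreducible summand $R^t(K_n,\mu|_{\Delta_n})$: irreducibility forces $\mathcal R_n$ to be $\gamma$-connected. For part (3), $K_n = \sigma(S_{\mu|_{\Delta_n}})$ together with Proposition \ref{NFSetIsBig}(1) (that $\text{spt}(\mu|_{\Delta_n}) \subset \overline{\mathcal R_n}$) and the fact that $\partial K_n \subset \text{spt}(\mu|_{\Delta_n})$ gives $K_n \subset \overline{\mathcal R_n}$, while $\mathcal R_n \subset K_n$ is by definition, so $K_n = \overline{\mathcal R_n}$. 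For part (4), I would apply Lemma \ref{Lemma2}(2) to the decomposition $R^t(K,\mu) = \bigoplus_i R^t(K_i,\mu|_{\Delta_i})$: it yields $\mathcal R_n \cap \mathcal R_m \approx \emptyset$ for $n\ne m$; combined with Lemma \ref{Lemma2}(1), $\mathcal F \approx \bigcap_i \mathcal F_i$, and with $K_n = \overline{\mathcal R_n}$ and Theorem \ref{DensityCorollary} (that $\mathcal R_m$ has full $\gamma$-density at $\gamma$-a.a. of its points while $\mathcal R_n$ disjoint from $\mathcal R_m$ cannot), one concludes $K_n \subset \mathcal F_m, ~\gamma-a.a.$; part (5) follows since $K_n\cap K_m \subset \mathcal F_m \cap \mathcal F_n \approx \mathcal F$. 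Finally part (6) is exactly Theorem \ref{algebraEq} applied to the pure operator $S_{\mu|_{\Delta_n}}$ on $R^t(K_n,\mu|_{\Delta_n})$.

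The main obstacle I anticipate is the careful bookkeeping in part (4): one must be sure that the non-removable boundaries $\mathcal F_n$ of the summands relate correctly to the global $\mathcal F$ and to the sets $K_m$, up to sets of zero analytic capacity, and that the ``$\gamma$-a.a.'' qualifiers compose correctly across countably many summands. This requires threading Lemma \ref{Lemma2}(1),(2),(3) together with Theorem \ref{DensityCorollary} and the semiadditivity of $\gamma$ (Theorem \ref{TTolsa}(2)) to handle the countable union, and verifying that $K_n = \overline{\mathcal R_n}$ with $\mathcal R_n \cap \mathcal R_m \approx \emptyset$ genuinely forces $K_n \subset \mathcal F_m$ rather than merely $\mathcal R_n \subset \mathcal F_m$ — the closure operation is where care is needed, since a priori $\overline{\mathcal R_n}$ could meet $\mathcal R_m$ on its boundary; here Theorem \ref{DensityCorollary} rules this out $\gamma$-a.a. because at $\gamma$-a.a.\ point of $\mathcal R_m$ the density of $\mathcal R_m$ is full, hence that point cannot be a limit point of the disjoint set $\mathcal R_n$ in the relevant density sense. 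Everything else is a direct citation of the already-proven theorems.
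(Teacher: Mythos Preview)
Your overall architecture matches the paper's proof exactly: Lemma \ref{Lemma1} for the decomposition and (1), Theorem \ref{IrreducibilityTheorem} for (2), Proposition \ref{NFSetIsBig} for (3), Theorem \ref{algebraEq} for (6), and Lemma \ref{Lemma2} together with a density argument for (4)--(5). But there is a genuine gap in your argument for (4), and a related slip in (5).

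For (4) you correctly isolate the issue: from $\mathcal R_n \cap \mathcal R_m \approx \emptyset$ you only get $\mathcal R_n \subset \mathcal F_m$, and you must upgrade this to $K_n = \overline{\mathcal R_n} \subset \mathcal F_m$. Your proposed fix, however, does not work: Theorem \ref{DensityCorollary} tells you that at $\gamma$-a.a.\ $\lambda \in \mathcal R_m$ the set $\mathcal R_m$ has full $\gamma$-density, so $\mathcal R_n$ has zero $\gamma$-density there; but zero $\gamma$-density at $\lambda$ does \emph{not} prevent $\lambda$ from being a topological limit point of $\mathcal R_n$. A thin sequence from $\mathcal R_n$ can converge to $\lambda$ while contributing zero density. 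What the paper uses (and what you are missing) is Theorem \ref{Lemma3}: since $S_{\mu|_{\Delta_n}}$ is irreducible by (1), \emph{every} point $\lambda \in K_n$ satisfies $\overline{\lim}_{\delta\to 0}\gamma(B(\lambda,\delta)\cap \mathcal R_n)/\delta > 0$. Now if $\lambda \in K_n \cap \mathcal R_m$ (with $\mathcal R_m$ having full density at $\lambda$), then since $\mathcal R_m \subset \mathcal F_n$ one gets $\lim_{\delta\to 0}\gamma(B(\lambda,\delta)\setminus \mathcal F_n)/\delta = 0$, contradicting Theorem \ref{Lemma3}. So the contradiction comes from irreducibility via Theorem \ref{Lemma3}, not from Theorem \ref{DensityCorollary} alone.

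For (5) your claim $\mathcal F_m \cap \mathcal F_n \approx \mathcal F$ is false in general: Lemma \ref{Lemma2}(1) gives $\mathcal F \approx \bigcap_{k\ge 1}\mathcal F_k$, which is typically strictly smaller than any two-fold intersection. The correct argument (which (4) makes available) is that $K_n\cap K_m \subset \mathcal F_k$ for \emph{every} $k\ge 1$: for $k\ne n$ use $K_n \subset \mathcal F_k$, and for $k=n$ use $K_m \subset \mathcal F_n$. Then intersect over all $k$ and apply Lemma \ref{Lemma2}(1). The mention of Lemma \ref{Lemma4} and of semiadditivity in your plan is harmless but unnecessary; neither is used in this proof.
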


\begin{proof} 
(1) follows from Lemma \ref{Lemma1}. (2) follows from Theorem \ref{IrreducibilityTheorem}. (3) follows from Proposition \ref{NFSetIsBig}. 

For (4), if $\mathcal R_m \cap K_n \ne \emptyset, ~ \gamma-a.a.$
for $n\ne m$ and $n,m \ge 1$, then from Theorem \ref{DensityCorollary}, we can find $\lambda \in \mathcal R_m \cap K_n$ such that
 \[
 \ \lim_{\delta \rightarrow 0} \dfrac{\gamma(B(\lambda, \delta)\setminus \mathcal R_m)}{\delta} = 0.
 \] 
From Lemma \ref{Lemma2} (2), we see that $\mathcal R_m \subset \mathcal F_n$. Hence,
 \[
 \ \lim_{\delta \rightarrow 0} \dfrac{\gamma(B(\lambda, \delta)\setminus \mathcal F_n)}{\delta} = 0. 
 \]
This contradicts to Theorem \ref{Lemma3} and (1). Therefore, $K_n \subset \mathcal F_m$ for $n\ne m$ and $n,m \ge 1$. 

(5): (4) implies that 
 \[
 \ K_n\cap K_m \subset \mathcal F_k
 \]
for $n\ne m$ and $n,m \ge 1$. Now (5) follows from Lemma \ref{Lemma2} (1). 

Theorem \ref{algebraEq} implies (6).
\end{proof}
\smallskip

Let $U$ be a connected component of $\text{abpe}(R^t(K, \mu ))$. By Theorem \ref{ABPETheorem}, there exists $n \ge 1$ such that $U\cap \mathcal R_n \ne \emptyset, ~\gamma-a.a.$. If $\partial U \subset \mathcal F$, then $ \mathcal R_n \subset  U \cup {\overline U}^c, ~\gamma-a.a.$. Hence, $ \mathcal R_n \subset  U , ~\gamma-a.a.$ since $\mathcal R_n$ is $\gamma$-connected by Theorem \ref{DecompTheorem} (2). On the other hand, if $r_m \in \text{Rat}(K) \rightarrow \chi_{\Delta_n}$, then $r_m$ uniformly converges to an analytic function on any compact subsets of $U$, which ensures $\mathcal R_m \cap U \approx \emptyset, ~\gamma-a.a.$ for $n\ne m$. So, by Theorem \ref{ABPETheorem} again, $\mathcal R_n = U, ~\gamma-a.a.$. Therefore, we have the following corollary.     
\smallskip

\begin{corollary}\label{DecompCorollary1}
Suppose that $S_\mu$ on $R^t(K, \mu)$ is pure. Let $\text{abpe}(R^t(K, \mu )) = \cup_{i\ge 1}U_i$, where $U_i$ is a connected component. If $\cup _{i\ge 1} \partial U_i \subset \mathcal F,~\gamma-a.a.$, then there is a Borel partition $\{\Delta_i\}_{i=0}^\infty$ of $\mbox{spt}(\mu)$ such that
 \[
 \ R^t(K, \mu ) = R^t(K, \mu|_{\Delta_0}) \oplus \bigoplus _{i = 1}^\infty R^t(K, \mu |_{\Delta_i})
 \]
satisfying:

(a) $S_{\mu |_{\Delta_i}}$ on $R^t(K, \mu |_{\Delta_i})$ is irreducible for $i \ge 1$;

(b) $S_{\mu |_{\Delta_0}}$ on $R^t(K, \mu |_{\Delta_0})$ is pure and $\mbox{abpe}( R^t(K, \mu |_{\Delta_0})) = \emptyset$; 

(c) $\mbox{abpe}( R^t(K, \mu |_{\Delta_i})) = U_i$, $\Delta_i\subset K_i := \overline{U_i}$, $R^t(K, \mu |_{\Delta_i}) = R^t(K_i, \mu |_{\Delta_i})$ for $i \ge 1 ;$

(d) the evaluation map $\rho_i(f) = \rho(f) | _{U_i}$ is an isometric isomorphism and a weak$^*$ homeomorphism from $R^t(K_i, \mu |_{\Delta_i}) \cap L^\infty (\mu |_{\Delta_i})$ onto $H^\infty(U_i)$ for $i \ge 1 .$  
\end{corollary}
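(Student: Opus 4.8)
\textbf{Proof proposal for Corollary \ref{DecompCorollary1}.}

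The plan is to combine the general decomposition theorem (Theorem \ref{DecompTheorem}) with the identification of the removable sets $\mathcal R_n$ with the connected components $U_i$, carried out in the paragraph immediately preceding the corollary statement. First I would invoke Theorem \ref{DecompTheorem} to obtain the Borel partition $\{\Delta_n\}_{n\ge 0}$ of $\text{spt}(\mu)$, the compact sets $K_n = \overline{\mathcal R_n}$, the orthogonal-type decomposition $R^t(K,\mu) = L^t(\mu|_{\Delta_0}) \oplus \bigoplus_{n\ge 1} R^t(K_n, \mu|_{\Delta_n})$, the irreducibility of each $S_{\mu|_{\Delta_n}}$ on $R^t(K_n, \mu|_{\Delta_n})$ for $n\ge 1$ (this is part (1) of Theorem \ref{DecompTheorem}, which says the summands contain no non-trivial characteristic functions; since $S_\mu$ is pure, each summand is pure, so irreducibility follows), and the algebra isomorphism $\rho_n$ of part (6). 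It remains to match the indexing of the $\mathcal R_n$ with that of the $U_i$ and to handle the $\Delta_0$ summand.

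The key step is the argument already sketched after Theorem \ref{DecompTheorem}: for a connected component $U = U_i$ of $\text{abpe}(R^t(K,\mu))$, Theorem \ref{ABPETheorem} gives an $n\ge 1$ with $U \cap \mathcal R_n \ne \emptyset$, $\gamma$-a.a.; the hypothesis $\partial U_i \subset \mathcal F$, $\gamma$-a.a., forces $\mathcal R_n \subset U \cup \overline{U}^c$, $\gamma$-a.a., and then $\gamma$-connectedness of $\mathcal R_n$ (Theorem \ref{DecompTheorem} (2)) yields $\mathcal R_n \subset U$, $\gamma$-a.a.; conversely, if $r_m \in \text{Rat}(K) \to \chi_{\Delta_n}$ in $L^t(\mu)$, then $r_m$ converges uniformly on compact subsets of $U$, which gives $\mathcal R_m \cap U \approx \emptyset$, $\gamma$-a.a., for $m \ne n$, so a second application of Theorem \ref{ABPETheorem} gives $\mathcal R_n = U$, $\gamma$-a.a. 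This sets up a bijection between the components $\{U_i\}_{i\ge 1}$ and a subcollection of the indices $\{n : n\ge 1\}$; after relabeling I may assume $U_i = \mathcal R_i$, $\gamma$-a.a., for $i \ge 1$, so that $K_i = \overline{\mathcal R_i} = \overline{U_i}$, and the remaining indices are absorbed into a single summand $\Delta_0$ on which $\text{abpe} = \emptyset$. Statement (b) then follows because $\text{abpe}(R^t(K, \mu|_{\Delta_0})) \subset \text{int}(K) \cap \mathcal R_{\Delta_0}$, $\gamma$-a.a.\ by Theorem \ref{ABPETheorem} applied to the $\Delta_0$ part, and $\mathcal R_{\Delta_0}$ is disjoint (up to $\gamma$-null sets) from every $U_i$; statement (c) follows since $R^t(K, \mu|_{\Delta_i}) = R^t(K_i, \mu|_{\Delta_i})$ once $\sigma(S_{\mu|_{\Delta_i}}) = K_i$, and statement (d) is Theorem \ref{DecompTheorem} (6) together with the fact that on $U_i$ the map $\rho_i$ coincides with the ordinary analytic evaluation $f \mapsto \rho(f)|_{U_i}$ and $H^\infty(\mathcal R_i) = H^\infty(U_i)$ because $\mathcal R_i = U_i$ is open, $\gamma$-a.a.

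The main obstacle I anticipate is bookkeeping rather than a deep new idea: one must be careful that the ``merging'' of the leftover summands into $\Delta_0$ is legitimate, i.e.\ that $\bigoplus_{n \in N_0} R^t(K_n, \mu|_{\Delta_n})$ with $N_0 = \{n\ge 1 : \mathcal R_n \not\approx U_i \text{ for any } i\}$ really has empty $\text{abpe}$; this requires showing each such $\mathcal R_n$ meets $\text{int}(K)$ in a $\gamma$-null set, which follows from Theorem \ref{ABPETheorem} (2) (so that $\text{int}(K_n) \cap \mathcal R_n \approx \text{abpe}(R^t(K_n,\mu|_{\Delta_n}))$) combined with the observation that any point of $\text{abpe}$ of such a summand would lie in some component $U_i$ of the global $\text{abpe}$, contradicting $\mathcal R_n \cap U_i \approx \emptyset$. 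A second, minor point is verifying that $\partial U_i \subset \mathcal F$ globally translates into $\partial U_i \subset \mathcal F_n$ for the relevant $n$; this is handled by Lemma \ref{Lemma2} (1), $\mathcal F \approx \bigcap_i \mathcal F_i$, $\gamma$-a.a. Once these identifications are in place, assertions (a)--(d) are immediate rephrasings of the corresponding parts of Theorem \ref{DecompTheorem} and Theorem \ref{ABPETheorem}.
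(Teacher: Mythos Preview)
Your proposal is correct and follows essentially the same route as the paper: the paper does not give a separate proof of this corollary, but treats it as an immediate consequence of Theorem \ref{DecompTheorem} together with the paragraph preceding the corollary (which establishes $\mathcal R_n = U_i$, $\gamma$-a.a., for each component $U_i$ via Theorem \ref{ABPETheorem} and $\gamma$-connectedness). Your write-up simply fills in the bookkeeping details (merging leftover summands into $\Delta_0$, checking $\text{abpe}=\emptyset$ there via $\text{int}(K_n)\cap\mathcal R_n\approx\emptyset$, identifying $H^\infty(\mathcal R_i)$ with $H^\infty(U_i)$) that the paper leaves implicit.
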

\smallskip

The following corollary extends Theorem I and Theorem II that require a stronger condition. That is, both Theorem I and Theorem II assume there exist constants $A>0$ and $\delta_0 > 0 $ such that for all $\lambda \in \partial K$ and $\delta < \delta_0$,
$\gamma (B(\lambda, \delta) \setminus K) \ge A \delta$.
The corollary also solves Problem 5.5 in \cite{ce93}. 
\smallskip

\begin{corollary}\label{DecompCorollary2}
If $\partial K = \partial_1 K, ~\gamma-a.a.$, where $\partial_1 K$ is defined as in \eqref{BOne}, then there is a Borel partition $\{\Delta_i\}_{i=0}^\infty$ of $\mbox{spt}(\mu)$ such that
 \[
 \ R^t(K, \mu ) = L^t(\mu|_{\Delta_0}) \oplus \bigoplus _{i = 1}^\infty R^t(K, \mu |_{\Delta_i})
 \]
satisfying:

(a) $S_{\mu |_{\Delta_i}}$ on $R^t(K, \mu |_{\Delta_i})$ is irreducible for $i \ge 1 ;$

(b) $\mbox{abpe}( R^t(K, \mu |_{\Delta_i})) = U_i$, $U_i$ is connected, $\Delta_i\subset K_i := \overline{U_i}$, $R^t(K, \mu |_{\Delta_i}) = R^t(K_i, \mu |_{\Delta_i})$ for $i \ge 1 ;$ and

(c) the evaluation map $\rho_i(f) = \rho(f) |_{U_i}$ is an isometric isomorphism and a weak$^*$ homeomorphism from $R^t(K_i, \mu |_{\Delta_i}) \cap L^\infty (\mu |_{\Delta_i})$ onto $H^\infty(U_i)$ for $i \ge 1 .$
\end{corollary}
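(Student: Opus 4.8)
\textbf{Proof plan for Corollary \ref{DecompCorollary2}.}
The plan is to deduce this corollary from the general decomposition theorem (Theorem \ref{DecompTheorem}) together with Proposition \ref{NFSetIsBig}(3) and Theorem \ref{ABPETheorem}. First I would apply Theorem \ref{DecompTheorem} to get the Borel partition $\{\Delta_n\}_{n\ge 0}$, the compact sets $K_n = \overline{\mathcal R_n}$ with $\Delta_n \subset K_n$, and the direct sum decomposition $R^t(K,\mu) = L^t(\mu|_{\Delta_0}) \oplus \bigoplus_{n\ge 1} R^t(K_n,\mu|_{\Delta_n})$, where each $R^t(K_n,\mu|_{\Delta_n})$ contains no non-trivial characteristic functions, each $\mathcal R_n$ is $\gamma$-connected, and $\rho_n$ is an isometric isomorphism and weak$^*$ homeomorphism onto $H^\infty(\mathcal R_n)$ (this last fact gives (c) once we identify $\mathcal R_n$ with $U_i$, since $H^\infty(\mathcal R_n) = H^\infty(U_i)$ when $\mathcal R_n \approx U_i$, $\gamma$-a.a., and we reindex so that $i$ runs over the nonzero summands).

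The crux is to exploit the hypothesis $\partial K = \partial_1 K$, $\gamma$-a.a. By Proposition \ref{NFSetIsBig}(3), $\partial_1 K \subset \mathcal F$, $\gamma$-a.a., so $\partial K \subset \mathcal F$, $\gamma$-a.a. Fix $n \ge 1$. Since $K_n = \overline{\mathcal R_n} \subset K$, I would argue that $\partial K_n \subset \partial K \cup \mathcal F$, $\gamma$-a.a.: a boundary point of $K_n$ lying in $\mathrm{int}(K)$ must be a boundary point of $\mathcal R_n$ inside $\mathrm{int}(K)$, and by Theorem \ref{DensityCorollary} such a point has $\gamma$-density zero complement in $\mathcal R_n$, hence cannot be a limit of points in $\mathcal R_m$ for $m\neq n$ (Lemma \ref{Lemma2}(2) gives $\mathcal R_m \subset \mathcal F_n$, $\gamma$-a.a., and then Theorem \ref{Lemma3} together with irreducibility of $S_{\mu|_{\Delta_n}}$ forces a contradiction if a boundary point of $K_n$ had positive upper $\gamma$-density in $\mathrm{int}(K)$ outside $K_n$). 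Combined with $\partial K \subset \mathcal F$, $\gamma$-a.a., this yields $\partial K_n \subset \mathcal F$, $\gamma$-a.a. Now I would invoke Theorem \ref{ABPETheorem}: since $\mathrm{int}(K) \cap \mathcal R \approx \mathrm{abpe}(R^t(K,\mu))$, $\gamma$-a.a., and since $\mathcal R_n$ is $\gamma$-connected with $\partial K_n \subset \mathcal F$, $\gamma$-a.a., the argument preceding Corollary \ref{DecompCorollary1} applies verbatim to each component: $\mathcal R_n = U_i$, $\gamma$-a.a., where $U_i$ is a connected component of $\mathrm{abpe}(R^t(K,\mu))$, $\mathrm{abpe}(R^t(K,\mu|_{\Delta_i})) = U_i$, $R^t(K,\mu|_{\Delta_i}) = R^t(K_i,\mu|_{\Delta_i})$ with $K_i = \overline{U_i}$. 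This gives (b).

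To finish, I would verify that $\Delta_0 = \emptyset$, i.e. $R^t(K,\mu|_{\Delta_0}) = L^t(\mu|_{\Delta_0})$ is truly a full $L^t$ summand, which is exactly the content of Lemma \ref{Lemma1} (that lemma already shows the $\Delta_0$-piece is all of $L^t(\mu|_{\Delta_0})$), so the statement $R^t(K,\mu) = L^t(\mu|_{\Delta_0}) \oplus \bigoplus_{i\ge 1} R^t(K,\mu|_{\Delta_i})$ follows directly. Part (a) is immediate from Theorem \ref{DecompTheorem}(1) (no non-trivial characteristic functions together with purity on each summand is irreducibility), and (c) is Theorem \ref{DecompTheorem}(6) after the identification $\mathcal R_n \approx U_i$. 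The main obstacle I anticipate is the middle step: carefully showing $\partial K_n \subset \mathcal F$, $\gamma$-a.a., which requires combining the density dichotomy of Theorem \ref{DensityCorollary}, the mutual singularity of removable sets from Lemma \ref{Lemma2}(2), and the irreducibility-forced lower density bound of Theorem \ref{Lemma3} in the right order; everything else is bookkeeping and reindexing of the summands produced by Theorem \ref{DecompTheorem}.
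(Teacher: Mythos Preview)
Your overall plan is sound and essentially mirrors the paper's: apply Theorem \ref{DecompTheorem}, show that each $\mathcal R_n$ coincides $\gamma$-a.a.\ with an open connected component of $\text{abpe}$, and then read off (a)--(c). The paper packages this slightly differently (it first groups the pure summands into a single space $R^t(K^0,\mu_0)$, shows $\partial K^0\subset\mathcal F^0$, and then invokes Corollary \ref{DecompCorollary1}), but working summand by summand as you do is equivalent.

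The gap is in your ``middle step''. You need to rule out points $\lambda\in\partial K_n\cap\text{int}(K)\cap\mathcal R_n$, and the tools you cite do not do this. Theorem \ref{DensityCorollary} only applies to points \emph{in} $\mathcal R_n$ and then says the complement of $\mathcal R_n$ has $\gamma$-density zero there; but a boundary point of $K_n$ can perfectly well have $\gamma(B(\lambda,\delta)\setminus K_n)/\delta\to 0$ (a cusp), so no contradiction arises. Theorem \ref{Lemma3} gives a \emph{lower} bound on the density of $\mathcal R_n$ at points of $K_n$, which is the wrong direction. And the case $\lambda\in\mathcal R_m$, $m\neq n$, is already dispatched trivially by Theorem \ref{DecompTheorem}(4) ($K_n\subset\mathcal F_m$), so your parenthetical machinery is aimed at a case that needs no work while leaving the real case ($\lambda\in\mathcal R_n$) untouched.

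The missing tool is Remark \ref{ABPERemark}. If $\lambda_0\in\partial K_n\cap\text{int}(K)\cap\mathcal R_n$, then $\lambda_0$ satisfies \eqref{ABPETheoremEq2} for the space $R^t(K_n,\mu|_{\Delta_n})$; since $R^t(K_n,\mu|_{\Delta_n})=R^t(K,\mu|_{\Delta_n})$ and $\lambda_0\in\text{int}(K)$, Remark \ref{ABPERemark} gives $\lambda_0\in\text{abpe}(R^t(K_n,\mu|_{\Delta_n}))\subset\text{int}(K_n)$, contradicting $\lambda_0\in\partial K_n$. This is exactly how the paper closes the gap (for the aggregate $K^0$); once you insert this in place of your Density/Lemma3 argument, the rest of your outline goes through.
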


\begin{proof}
Let
 \[
 \ R^t(K,\mu) = L^t(\mu |_{\Delta_0})\oplus \left (R^t(K^0, \mu _0) := \bigoplus_{n=1}^\infty R^t(K_n, \mu |_{\Delta_n}) \right)
 \]
be the decomposition as in Theorem \ref{DecompTheorem}. Let $\mathcal F^0$ be the non-removable boundary for $R^t(K^0, \mu _0)$ and $\mathcal R^0 = K^0 \setminus \mathcal F^0$.  
From the assumption and Proposition \ref{NFSetIsBig} (3), we see that $\partial K^0 \cap \partial K \subset \partial_1 K^0 \subset \mathcal F^0, ~\gamma-a.a.$. 
If $\gamma(\partial K^0 \cap \text{int}(K)\cap \mathcal R^0 ) > 0$, then by Remark \ref{ABPERemark}, we find $\lambda_0 \in \partial K^0 \cap \text{int}(K)\cap \text{abpe}(R^t(K^0, \mu _0))$, which implies $\lambda_0 \in \text{int}(K^0)$ as $K^0 = \sigma(S_{\mu_0})$. This is a contradiction. Hence, $\partial K^0 \subset \mathcal F^0, ~\gamma-a.a.$. By Theorem \ref{ABPETheorem}, we get
 \begin{eqnarray}\label{decompEq1}
 \ \mathcal R^0 = \text{abpe}(R^t(K^0, \mu _0)) = \cup_i U_i, ~\gamma-a.a., 
 \end{eqnarray}
where $U_i$ is a connected component of $\text{abpe}(R^t(K^0, \mu _0))$. So $\partial U_i \subset \mathcal F^0$.
Applying Corollary \ref{DecompCorollary1} to $R^t(K^0, \mu _0)$, we see that, with \eqref{decompEq1}, the summand $R^t(K, \mu |_{\Delta_0})$ as in Corollary \ref{DecompCorollary1} (b) must be zero since its removable set is zero by Lemma \ref{Lemma2} (3). This completes the proof.  
\end{proof}

\bigskip

\section{The algebra $R^t(K, \mu) \cap L^\infty(\mu)$ with non-trivial removable boundary}
\bigskip

In this section, we consider an open bounded set $\Omega$ with $\partial \Omega \cap \mathcal R \ne \emptyset,~\gamma-a.a.$ such that the relation between $H^\infty (\mathcal R)$ (or $R^t(K, \mu) \cap L^\infty(\mu)$) and some subalgebras of $H^\infty(\Omega)$ can be identified.   

Recall for $f\in R^t(K, \mu)$, by Theorem \ref{MTheorem2}, $\rho(f)$ is $\gamma$-continuous $\lambda\in \mathcal R, ~\gamma-a.a.$. We examine if  functions in $H^\infty(\Omega)$ that are $\gamma$-continuous on $\mathcal R$ belong to $H^\infty (\mathcal R)$.

We assume that there exists a Lipschitz graph $\Gamma$ such that
 \begin{eqnarray}\label{NTRemovableEq1}
 \ \mathcal R \cap \partial \Omega \subset \Gamma, ~\gamma-a.a..
\end{eqnarray}

Say $f\in H^\infty (\Omega)$ ($f(z) = 0 $ for $z \in \Omega^c$) is $\gamma$-continuous on $\mathcal R \cap \partial \Omega$ if there exists $a(z)\in L^\infty (\mathcal H^1|_{\mathcal R \cap \partial \Omega})$ such that the function
 \[
 \ f_a(z) = \begin{cases} f(z), & z \in \Omega; \\ a(z), & z \in \mathcal R \cap \partial \Omega; \\  0, & z\in (\Omega \cup (\mathcal R \cap \partial \Omega))^c, \end{cases}
\]
satisfies
\[
 \ \lim_{\delta\rightarrow 0} \dfrac{\gamma(B(\lambda, \delta)\cap \{|f_a(z)-a(\lambda)| > \epsilon \})}{\delta} = 0, ~\lambda\in \mathcal R \cap \partial \Omega,~ \gamma-a.a.
\]
for all $\epsilon > 0$.

For a bounded open set $\Omega$, we define the space $H^\infty_{\mathcal R}(\Omega)$ to be the set of $f\in H^\infty (\Omega)$ for which $f$ is $\gamma$-continuous at almost all $\lambda\in\mathcal R \cap \partial \Omega$ with respect to $\gamma$ (or $\mathcal H^1 |_{\mathcal R \cap \partial \Omega}$). Clearly, $f(z) = f_a(z),~ \mathcal L^2-a.a.$ since $\mathcal L^2(\mathcal R \cap \partial \Omega) = 0$. We will simply use $f$ (instead of $f_a$) for $f \in H^\infty_{\mathcal R}(\Omega)$. It is easy to check, for a smooth function $\varphi$ with compact support, $T_\varphi f \in H^\infty_{\mathcal R}(\Omega)$ whenever $f \in H^\infty_{\mathcal R}(\Omega)$. 

We require at least $f(z) = \chi_{\Omega \cup (\partial \Omega \cap \mathcal R)} \in H^\infty_{\mathcal R}(\Omega)$. Therefore the following condition must be met: For almost all $\lambda \in \mathcal R \cap \partial \Omega$ with respect to $\gamma$,
 \begin{eqnarray}\label{NTRemovableEq2}
 \ \lim_{\delta\rightarrow 0} \dfrac{\gamma(B(\lambda, \delta)\setminus (\Omega \cup (\partial \Omega \cap \mathcal R)))}{\delta} = 0.
\end{eqnarray}

\smallskip

\begin{theorem}\label{GSOBTheorem}
Let $\Omega$ be a bounded open subset of $\mathbb C$. Suppose $S_\mu$ on $R^t(K,\mu)$ is pure. If the conditions \eqref{NTRemovableEq1} and \eqref{NTRemovableEq2} hold, then for $f\in H^\infty_{\mathcal R}(\Omega)$, there exists $\hat f \in R^t(K, \mu) \cap L^\infty(\mu)$ such that $\mathcal C(\hat f g\mu)(z) = f(z) \mathcal C( g\mu)(z),~ z\in \Omega, ~\gamma-a.a.$ for each $g\perp R^t(K,\mu)$ and $\|\hat f\|_{L^\infty(\mu)} \le C_{96} \|f\|_{H^\infty(\Omega)}$. 
\end{theorem}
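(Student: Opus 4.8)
The plan is to reduce Theorem \ref{GSOBTheorem} to Theorem \ref{MLemma2} by taking $\mathcal D = \Omega$ and $E_0 = \mathcal R \cap \partial \Omega$ is \emph{not} the right choice (that set is in $\mathcal R$, not $\mathcal F$); instead one must take $\mathcal D = \Omega \cup (\partial \Omega \cap \mathcal R)$ and $E_0 = \mathcal F_{\mathcal D} = \mathcal F \setminus \mathcal D$. First I would verify the hypotheses of Theorem \ref{MLemma2} for this choice. The condition \eqref{densityAssumption} for $\mathcal D$, namely $\lim_{\delta \to 0}\gamma(B(\lambda,\delta)\setminus \mathcal D)/\delta = 0$ for $\mathcal L^2$-a.a. $\lambda \in \mathcal D$, splits into two parts: for $\lambda$ in the open set $\Omega$ it is immediate, and for $\lambda \in \partial \Omega \cap \mathcal R$ it is exactly the assumed condition \eqref{NTRemovableEq2}. (Since $\mathcal L^2(\partial \Omega \cap \mathcal R) = 0$ one only needs this a.e. with respect to $\mathcal L^2$ on $\mathcal D$, which is automatic on $\Omega$.) Next, that $\chi_{\mathcal D} \in GC(\mathcal D)$: this again is clear on $\Omega$ and on $\partial\Omega\cap\mathcal R$ follows from \eqref{NTRemovableEq2} together with the $\gamma$-continuity built into the definition of $H^\infty_{\mathcal R}(\Omega)$. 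Given $f \in H^\infty_{\mathcal R}(\Omega)$ with $\|f\|_{H^\infty(\Omega)} \le 1$, after normalizing we regard $f$ as $f_a \in L^\infty(\mathcal L^2_{\mathcal D})$; by the definition of $H^\infty_{\mathcal R}(\Omega)$ the function $f$ is $\gamma$-continuous at $\mathcal H^1|_{\partial\Omega\cap\mathcal R}$-a.a. point of $\partial\Omega\cap\mathcal R$, and using \eqref{NTRemovableEq1} together with Lemma \ref{lemmaBasic0} (7) this is the same as $\gamma$-a.a., hence (since $\gamma$-null implies $\mathcal L^2$-null by Lemma \ref{lemmaBasic0} (2)) $\mathcal L^2$-a.a. on $\mathcal D$; thus $f \in GC(\mathcal D)$.

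The one genuine thing to check is the integral estimate \eqref{MLemma2Eq}: for $\lambda_0 \in \mathbb C$, $\delta > 0$, and a smooth $\varphi$ supported in $B(\lambda_0,\delta)$,
\[
 \ \left| \int f(z)\,\bar\partial \varphi(z)\, d\mathcal L^2(z) \right| \le C \delta \|\bar\partial\varphi\|_\infty \gamma(B(\lambda_0,2\delta)\cap \mathcal F_{\mathcal D}).
\]
Here I would argue as at the end of section 5.2 (the remark preceding Corollary \ref{MLemma2Cor2}): since $f \in H^\infty(\Omega)$ with $f \equiv 0$ off $\Omega$, the function $T_\varphi f$ is bounded (with $\|T_\varphi f\|_\infty \le 4\|f\|\delta\|\bar\partial\varphi\|_\infty$) and analytic off the compact set $\mathrm{spt}(\varphi) \cap \partial\Omega$. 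But $\partial\Omega$ is not contained in $\mathcal F_{\mathcal D}$ — it may meet $\mathcal R$ along $\Gamma$. The key point is that $T_\varphi f$ actually extends analytically across $\partial\Omega \cap \mathcal R$: because $f$ is $\gamma$-continuous there and the trace values $f_a$ coming from inside agree (via \eqref{NTRemovableEq1}, \eqref{NTRemovableEq2} and Plemelj's formula, Theorem \ref{GPTheorem1}, applied along $\Gamma$) with the $\gamma$-limit, the jump of $T_\varphi f$ across $\Gamma$ vanishes $\mathcal H^1|_\Gamma$-a.a., so $\bar\partial(T_\varphi f)$ carries no mass on $\partial\Omega \cap \mathcal R$. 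Hence $T_\varphi f$ is analytic off the compact set $\mathrm{spt}(\varphi) \cap \partial\Omega \cap \mathcal F_{\mathcal D}$ (up to a $\gamma$-null, hence removable, set), and
\[
 \ \left| \int f(z)\,\bar\partial\varphi(z)\,d\mathcal L^2(z)\right| = \pi|(T_\varphi f)'(\infty)| \le \pi \|T_\varphi f\|_\infty \gamma(B(\lambda_0,2\delta)\cap \mathcal F_{\mathcal D}),
\]
which is \eqref{MLemma2Eq}. I expect this verification — that the analytic continuation of $T_\varphi f$ across the removable part $\Gamma$ of $\partial\Omega$ is legitimate, i.e. controlling the Cauchy transform near $\Gamma$ using the one-sided nontangential limit machinery — to be the main obstacle; everything else is bookkeeping.

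With \eqref{MLemma2Eq} in hand, Theorem \ref{MLemma2} directly produces $\hat f \in R^t(K,\mu)\cap L^\infty(\mu)$ with $\mathcal C(\hat f g\mu)(z) = f(z)\mathcal C(g\mu)(z)$ for $\mathcal L^2_{\mathcal D}$-a.a. $z$ and all $g \perp R^t(K,\mu)$, and with $\|\hat f\|_{L^\infty(\mu)} \le C_{39}$; restricting the identity to $\Omega \subset \mathcal D$ gives the stated conclusion, and tracking the normalization gives $\|\hat f\|_{L^\infty(\mu)} \le C_{96}\|f\|_{H^\infty(\Omega)}$. Finally, to get the full strength (membership of $\hat f$ in $R^t(K,\mu)$ rather than just an $L^\infty(\mu)$ bound) I would note this is already part of the conclusion of Theorem \ref{MLemma2}. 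One should also double-check the edge case $\partial\Omega \cap \mathcal R \approx \emptyset$, where $\mathcal F_{\mathcal D} \supset \partial\Omega$ $\gamma$-a.a. and the argument collapses to the situation of Corollary \ref{MLemma2Cor2}; and the case where $\Omega$ is disconnected, handled componentwise. A remark afterwards should record that when $\partial\Omega \cap \mathcal R \approx \emptyset$ the hypotheses \eqref{NTRemovableEq1}, \eqref{NTRemovableEq2} are vacuous and one recovers Corollary \ref{MLemma2Cor2}.
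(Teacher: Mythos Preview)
Your reduction to Theorem \ref{MLemma2} via $\mathcal D=\Omega\cup(\partial\Omega\cap\mathcal R)$ is the right target, and your verification that $f\in GC(\mathcal D)$ and that \eqref{densityAssumption} holds is fine. The gap is in your argument for the integral estimate \eqref{MLemma2Eq}. You assert that $T_\varphi f$ extends analytically across $\partial\Omega\cap\mathcal R$ because ``the jump of $T_\varphi f$ across $\Gamma$ vanishes'' and hence ``$\bar\partial(T_\varphi f)$ carries no mass on $\partial\Omega\cap\mathcal R$''. But to run Plemelj (Theorem \ref{GPTheorem1}) you need $T_\varphi f$ to be, at least locally near $\Gamma$, the Cauchy transform of a measure; equivalently you need $\bar\partial(T_\varphi f)=\varphi\,\bar\partial f$ to \emph{be} a measure. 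Lemma \ref{lemmaBasic0}(6) would give this if $\mathcal H^1(\mathrm{spt}(\varphi)\cap\partial\Omega)<\infty$, but nothing in the hypotheses bounds $\mathcal H^1(\partial\Omega\cap\mathcal F)$: condition \eqref{NTRemovableEq1} puts only $\partial\Omega\cap\mathcal R$ on $\Gamma$, and \eqref{NTRemovableEq2} controls $\gamma$-density, not $\mathcal H^1$. In every small square touching $\partial\Omega\cap\mathcal R$, the set $\partial\Omega\cap\mathcal F$ may also intrude (with infinite length), so you cannot localize the singularities cleanly onto $\Gamma$. Your parenthetical ``up to a $\gamma$-null, hence removable, set'' does not help: $\partial\Omega\cap\mathcal R$ has positive $\gamma$ in general, so it is not removable for arbitrary bounded analytic functions---you are claiming it is removable for this particular $T_\varphi f$, and that is exactly the unproved point.

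The paper does not attempt analytic continuation across $\partial\Omega\cap\mathcal R$. Instead it proves the estimate $|f'(\infty)|\le C\gamma(S\cap\partial\Omega\cap\mathcal F)$ directly for $f\in H^\infty_{\mathcal R}(\Omega)$ analytic off a square $S$, by a Vitushkin approximation with an $\epsilon$-parameter. Using the $\gamma$-continuity quantitatively, one isolates a large subset $B_n^\epsilon\subset S\cap\partial\Omega\cap\mathcal R$ (in the sense $\gamma(E\setminus B_n^\epsilon)<\epsilon$) on which $|f(z)-f(\lambda)|<\epsilon$ in $\gamma$-density; on small squares $S_{ij}$ meeting $B_n^\epsilon$ this forces $|(T_{\varphi_{ij}}f)'(\infty)|\le C\epsilon l$ and $|\beta(T_{\varphi_{ij}}f,c_{ij})|\le C\epsilon l^2$. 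These pieces are then matched (as in the proof of Theorem \ref{Lemma3}) by functions $g_{ij},h_{ij}$ analytic off line segments of length $\epsilon l$, producing $F_l$ with $F_l'(\infty)=f'(\infty)$, $\|F_l\|\le C$, and $F_l$ analytic off $(S\cap\partial\Omega\setminus B_n^\epsilon)\cup\bigcup L_{ij}$. Estimating the analytic capacity of this set by $C(\gamma(S\cap\partial\Omega\cap\mathcal F)+\epsilon+\epsilon\,\mathcal H^1(\mathcal R\cap\partial\Omega))$ and sending $\epsilon\to 0$ gives the bound; then Corollary \ref{MLemma2Cor} with $E=\partial\Omega$, $E_0=\partial\Omega\cap\mathcal F$ finishes. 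The $\gamma$-continuity is used to make coefficients small, not to remove singularities.
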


\begin{proof} 
We need to prove that the inequality \eqref{MLemma2CorEq} holds.
Let $S$ be a square with $S \cap \partial \Omega \ne \emptyset$. Assume that $\gamma (E:=S\cap \mathcal R \cap \partial \Omega) > 0$. So $E\subset \Gamma$.

Let $f\in H^\infty_{\mathcal R}(\Omega )$ be analytic off $S$ and $\|f\| \le 1$. For some fixed $\epsilon > 0$,
let $B_n^\epsilon$ be the set of $\lambda \in E$ satisfying:  for $0 < \delta < \frac{1}{n}$, 

(1) $\gamma (B(\lambda, \delta)\cap \{|f(z) - f(\lambda) | > \epsilon \}) < \epsilon \delta$, and 

(2) $\mathcal H^1 (B(\lambda, \delta)\cap E ) \ge \frac{1}{2}\delta$. 

Notice that if $\lambda$ is a Lebesgue point of $\mathcal H^1 |_{E}$, then
 \begin{eqnarray}\label{GSOBEq3}
 \ \lim_{\delta \rightarrow 0} \dfrac{\mathcal H^1 (B(\lambda, \delta)\cap E)}{\delta} \ge 1
 \end{eqnarray}
(see, for example, 
 Chapter 6 in \cite{M95}).  
 Therefore, together with $\gamma$-continuity of $f$ for (1), we conclude
$\mathcal H^1 (E \setminus \cup_n B_n^\epsilon) = 0$.
Let $n$ be large enough so that $\gamma (E \setminus B_n^\epsilon) < \epsilon$ ($\gamma \approx \mathcal H^1|_\Gamma$ by Lemma \ref{lemmaBasic0} (7)).
Let $\{S_{ij}, \varphi_{ij}\}$ be a partition of unity with the length $l$ of $S_{ij}$ less than $\frac{1}{2n}$. Then
 \[
 \ f = \sum_{2S_{ij} \cap B_n^\epsilon = \emptyset}T_{\varphi_{ij}}f + \sum_{2S_{ij} \cap B_n^\epsilon \ne \emptyset}T_{\varphi_{ij}}f.
 \]
We have the following estimate for $\lambda\in 2S_{ij} \cap B_n^\epsilon \ne \emptyset$,
 \[
 \ \begin{aligned}
 \ &|(T_{\varphi_{ij}}f)'(\infty) | \\
 \ = & \left | \int \bar \partial (\varphi_{ij}) (z) f(z) d \mathcal L^2(z) \right | \\
 \ = &\left | \int \bar \partial ( \varphi_{ij}) (f - f(\lambda)) d \mathcal L^2 \right | \\
 \ \le & \left | \int_{\{|f(z) - f(\lambda)| \le \epsilon \}} \bar \partial (\varphi_{ij})(z) (f(z) - f(\lambda)) d \mathcal L^2(z) \right |\\
 \ & + \left | \int_{\{|f(z) - f(\lambda)| > \epsilon \}} \bar \partial (\varphi_{ij})(z) (f(z) - f(\lambda)) d \mathcal L^2(z)\right | \\
 \ \le & \|\bar \partial (\varphi_{ij})\|_\infty (4\epsilon l^2 + 2\|f\| \mathcal L^2(B(\lambda, l)\cap \{z\in \Omega: |f(z) - f(\lambda)| > \epsilon \})) \\
 \ \le & \|\bar \partial (\varphi_{ij})\|_\infty (4\epsilon l^2 + 8\pi \gamma (B(\lambda, l)\cap \{z\in \Omega: |f(z) - f(\lambda)| > \epsilon \})^2) \\
\ \le & C_{97} \epsilon l
 \ \end{aligned}
 \]
where Lemma \ref{lemmaBasic0} (2) is used. Similarly, $|\beta(T_{\varphi_{ij}}f,c_{ij})| \le C_{97} \epsilon l^2$.

We use the standard Vitushkin scheme as in the proof of Theorem \ref{Lemma3} (or the same argument of the modified Vitushkin scheme as in \eqref{FDelta} and \eqref{FBounded1}) by choosing $g_{ij}$ and $h_{ij}$ for $2S_{ij} \cap B_n^\epsilon \ne \emptyset$ such that $g_{ij}$ and $h_{ij}$ are analytic off a segment $L_{ij} = L_1\cup L_2$, where line segments $L_1$ and $L_2$ are inside $S_{ij}$ with length $\epsilon l$, $dist(L_1, L_2)\ge \frac{l}{2}$, $g_{ij}(\infty) = h_{ij}(\infty) = 0$, $g'_{ij}(\infty) = \epsilon l$, $h'_{ij}(\infty) = 0$, and $\beta(h_{ij}, c_{ij}) = \epsilon l^2$. This constructs $f_l$ as in \eqref{SVEq} such that 
 \[
 \ F_l = \sum_{2S_{ij} \cap B_n^\epsilon = \emptyset}T_{\varphi_{ij}}f + f_l,
 \]
$\|F_l\| \le C_{97}$, and $f'(\infty) = F_l'(\infty)$. Since $F_l$ is analytic off 
 \[
 \ D:=\left (S\cap \partial \Omega \cap \bigcup_{2S_{ij} \cap B_n^\epsilon = \emptyset} \text{spt}(\varphi_{ij}) \right ) \cup \bigcup_{2S_{ij} \cap B_n^\epsilon \ne \emptyset} L_{ij},
 \]  
we have the following estimation:
 \[
 \ \begin{aligned}
 \ & |f'(\infty) | = |F_l'(\infty) | \le C_{97} \gamma (D)  \\
\ \le & C_{97} \gamma \left ((S\cap \partial \Omega \setminus B_n^\epsilon) \cup \bigcup_{2S_{ij} \cap B_n^\epsilon \ne \emptyset} L_{ij} \right )  \\
\ \le & C_{97} A_T\left (\gamma (S\cap \partial \Omega \cap \mathcal F) + \gamma (S\cap \partial \Omega \cap \mathcal R \setminus B_n^\epsilon) + \sum_{2S_{ij} \cap B_n^\epsilon \ne \emptyset} \gamma ((L_{ij}) \right ) \\
\ \le & C_{98}\left (\gamma (S\cap \partial \Omega \cap \mathcal F) + \epsilon + \epsilon \sum_{2S_{ij} \cap B_n^\epsilon \ne \emptyset} \mathcal H^1 (B(c_{ij}, 2l)\cap \mathcal R \cap \partial \Omega) \right ) \\
\ \le & C_{98}\left (\gamma (S\cap \partial \Omega \cap \mathcal F) + \epsilon + 100 \epsilon \mathcal H^1 (\mathcal R \cap \partial \Omega) \right )
 \ \end{aligned}
 \]
where \eqref{GSOBEq3} is used from last step 3 to last step 2. Therefore,
 \begin{eqnarray}\label{GSOBEq31}
 \ |f'(\infty) | \le C_{98}\gamma (S\cap \partial \Omega \cap \mathcal F). 
 \end{eqnarray}
Hence, for a smooth function $\varphi$ supported in $S$ and $f\in H^\infty_{\mathcal R}(\Omega)$ with $\|f\| \le 1$, the function $T_\varphi f\in H^\infty_{\mathcal R}(\Omega)$ is analytic off $S$. We apply \eqref{GSOBEq31} to $\frac{T_\varphi f}{\|T_\varphi f \|}$ and get 
 \[
 \ \left | \int \bar \partial \varphi f d\mathcal L^2\right | \le C_{98}\|T_\varphi f \|\gamma (S\cap \partial \Omega \cap \mathcal F) \le C_{99}\delta \|\bar \partial \varphi\|\gamma (S\cap \partial \Omega \cap \mathcal F). 
 \]
where $\delta$ is the length of $S$. 
Now applying Corollary \ref{MLemma2Cor} for $E = \partial \Omega$ and $E_0 = \partial \Omega \cap \mathcal F$, we prove the theorem. 
\end{proof}
\smallskip

\begin{corollary}\label{GSOBCorollary}
Let $\Omega$ be a bounded open subset of $\mathbb C$. Suppose $S_\mu$ on $R^t(K,\mu)$ is pure. If the condition \eqref{NTRemovableEq1} holds, $K = \overline{\Omega}$, and $\text{abpe}(R^t(K,\mu)) = \Omega$, then the evaluation map
 \[
 \ \rho:~ f\in R^t(K, \mu) \cap L^\infty(\mu) \rightarrow \rho(f)|_{\Omega} \in H^\infty(\Omega) 
 \]
is an isometric isomorphism and a weak$^*$ homeomorphism from $R^t(K, \mu) \cap L^\infty(\mu)$ onto $H^\infty_{\mathcal R}(\Omega) = H^\infty (\mathcal R)$.
\end{corollary}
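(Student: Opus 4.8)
\textbf{Proof proposal for Corollary \ref{GSOBCorollary}.}

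The plan is to deduce the corollary from Theorem \ref{GSOBTheorem} together with the structural results already established, principally Theorem \ref{algebraEq}, Theorem \ref{ABPETheorem}, Theorem \ref{MTheorem2}, and Theorem \ref{MLemma1}. The first observation is that under the hypotheses $K = \overline{\Omega}$ and $\text{abpe}(R^t(K,\mu)) = \Omega$, Theorem \ref{ABPETheorem} gives $\mathcal R \cap \text{int}(K) \approx \Omega, ~\gamma-a.a.$, so that $\mathcal R \approx \Omega \cup \mathcal R_B,~\gamma-a.a.$, where $\mathcal R_B = \mathcal R \setminus \Omega$ is the removable boundary; moreover $\mathcal R_B \subset \partial \Omega$ and, by \eqref{NTRemovableEq1}, $\mathcal R_B \subset \Gamma$ up to a set of zero analytic capacity. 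This identifies the two function spaces appearing in the statement: a function in $H^\infty (\mathcal R)$ is, after restriction, a function on $\Omega$ together with boundary values living on $\mathcal R_B \subset \Gamma$, and the claim $H^\infty (\mathcal R) = H^\infty_{\mathcal R}(\Omega)$ is the assertion that these boundary values are precisely the $\gamma$-limits of the interior values. I would prove the two inclusions separately.

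For $H^\infty(\mathcal R) \subset H^\infty_{\mathcal R}(\Omega)$: given $F \in H^\infty(\mathcal R)$, use Theorem \ref{algebraEq} to write $F = \rho(f)$ for a unique $f \in R^t(K,\mu)\cap L^\infty(\mu)$. By Theorem \ref{MTheorem2}, $\rho(f)$ is $\gamma$-continuous at $\lambda$ for $\gamma$-almost every $\lambda \in \mathcal R$, in particular at $\mathcal H^1|_{\mathcal R_B}$-almost every $\lambda \in \mathcal R_B$ (since $\gamma \approx \mathcal H^1|_\Gamma$ on subsets of $\Gamma$ by Lemma \ref{lemmaBasic0} (7)). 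Setting $a(\lambda) = \rho(f)(\lambda)$ on $\mathcal R_B$ exhibits $\rho(f)|_\Omega$ as a member of $H^\infty_{\mathcal R}(\Omega)$; the condition \eqref{NTRemovableEq2} needed to make $H^\infty_{\mathcal R}(\Omega)$ a sensible space follows because $\chi_{\mathcal R} = \rho(\chi_\Delta)$ for the appropriate $\Delta$ and Theorem \ref{DensityCorollary} forces $\gamma(B(\lambda,\delta)\setminus \mathcal R)/\delta \to 0$ for $\gamma$-a.a. $\lambda\in \mathcal R$, which is exactly \eqref{NTRemovableEq2} once $\mathcal R \approx \Omega \cup \mathcal R_B$. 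For the reverse inclusion $H^\infty_{\mathcal R}(\Omega) \subset H^\infty(\mathcal R)$: given $f \in H^\infty_{\mathcal R}(\Omega)$ with $\|f\| \le 1$, apply Theorem \ref{GSOBTheorem} to obtain $\hat f \in R^t(K,\mu)\cap L^\infty(\mu)$ with $\mathcal C(\hat f g\mu)(z) = f(z)\mathcal C(g\mu)(z)$ for $z\in\Omega$, $\gamma-a.a.$, for all $g \perp R^t(K,\mu)$; by \eqref{BasicEq22} this says $\rho(\hat f)(z) = f(z)$ for $z\in\Omega$, $\gamma-a.a.$, and by Theorem \ref{algebraEq} $\rho(\hat f) \in H^\infty(\mathcal R)$. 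Since a function in $H^\infty(\mathcal R)$ is determined $\mathcal L^2_{\mathcal R}$-a.a. by its values on $\Omega$ (as $\mathcal L^2(\mathcal R_B) = 0$), we conclude $f = \rho(\hat f)$ as elements of $L^\infty(\mathcal L^2_{\mathcal R})$, whence $f \in H^\infty(\mathcal R)$. Uniqueness of $\hat f$ (hence injectivity) follows from purity of $S_\mu$ together with Corollary \ref{acZero} and Theorem \ref{FCForR}, exactly as in the proof of Theorem \ref{algebraEq}.

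Having the algebra identity $H^\infty_{\mathcal R}(\Omega) = H^\infty(\mathcal R)$, the rest is immediate: Theorem \ref{algebraEq} already asserts that $\rho: R^t(K,\mu)\cap L^\infty(\mu) \to H^\infty(\mathcal R)$ is an isometric isomorphism and weak$^*$ homeomorphism, and the evaluation map $f \mapsto \rho(f)|_\Omega$ differs from $\rho$ only by the restriction operator, which is an isometry on these spaces because $\mathcal L^2(\mathcal R_B) = 0$ and $\|\rho(f)\|_{L^\infty(\mathcal L^2_{\mathcal R})} = \|\rho(f)\|_{L^\infty(\mathcal L^2_\Omega)}$. The main obstacle I anticipate is the careful bookkeeping in the reverse inclusion, specifically checking that the boundary-value function $a$ supplied by the definition of $H^\infty_{\mathcal R}(\Omega)$ really coincides $\mathcal H^1|_{\mathcal R_B}$-a.a. with the $\gamma$-limit of $\rho(\hat f)$ from $\Omega$ — this requires combining the one-sided nontangential control from Theorem \ref{GPTheorem1}/Theorem \ref{MTheorem3} along $\Gamma$ with the full-density $\gamma$-continuity from Theorem \ref{MTheorem2}, and making sure the two notions of boundary value agree; everything else is a matter of quoting the already-proved theorems.
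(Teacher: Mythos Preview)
Your proposal is correct and follows essentially the same route as the paper's proof: identify $\mathcal R \approx \Omega \cup (\mathcal R \cap \partial\Omega)$ via Theorem \ref{ABPETheorem}, verify condition \eqref{NTRemovableEq2} from Theorem \ref{DensityCorollary}, use Theorem \ref{GSOBTheorem} to produce $\hat f$ for the inclusion $H^\infty_{\mathcal R}(\Omega)\subset H^\infty(\mathcal R)$, and then invoke Theorem \ref{algebraEq}. The paper dismisses the inclusion $H^\infty(\mathcal R)\subset H^\infty_{\mathcal R}(\Omega)$ as ``clear'' where you spell it out via Theorem \ref{MTheorem2}; and your closing worry about matching the boundary function $a$ with the $\gamma$-limit of $\rho(\hat f)$ is unnecessary, since the identification $f=\rho(\hat f)$ is made in $L^\infty(\mathcal L^2_{\mathcal R})$ and $\mathcal L^2(\mathcal R_B)=0$, exactly as you already noted.
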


\begin{proof}
Clearly $H^\infty_{\mathcal R}(\Omega) \supset H^\infty(\mathcal R)$.
Notice that, by Theorem \ref{ABPETheorem}, $\mathcal R \approx \Omega \cup (\mathcal R\cap\partial \Omega), ~ \gamma-a.a.$.  
From Theorem \ref{DensityCorollary}, we see the condition \eqref{NTRemovableEq2} holds. By Theorem \ref{GSOBTheorem}, for $f\in H^\infty_{\mathcal R}(\Omega)$, we let $\hat f$ be as in Theorem \ref{GSOBTheorem}. Then using Theorem \ref{algebraEq} and  Theorem \ref{GSOBTheorem}, we get $\rho(\hat f) \in H^\infty (\mathcal R)$ and $\rho(\hat f)(z) = f(z), ~ \mathcal L^2_{\mathcal R}-a.a.$. Hence, $f\in H^\infty (\mathcal R)$. The corollary now follows from Theorem \ref{algebraEq}.  
\end{proof}
\smallskip

Now under additional assumptions that $\Omega$ contains certain nontangential limit regions, the $\gamma$-continuity can be replaced by certain nontangential limit conditions. Assume that the condition \eqref{NTRemovableEq1} holds and for almost all $\lambda \in \mathcal R \cap \partial \Omega \subset \Gamma$, there exists $\delta_\lambda > 0$ such that 
 \begin{eqnarray}\label{GSOBEq4}
 \ UC(\lambda, \alpha, \delta_\lambda) \cup LC(\lambda, \alpha, \delta_\lambda) \subset \Omega.
 \end{eqnarray}
Under the conditions \eqref{NTRemovableEq1} and  \eqref{GSOBEq4}, we define the upper nontangential limit:
 \begin{eqnarray}\label{GSOBEq5}
 \ f_+(\lambda ) = \lim_{z\in UC(\lambda, \alpha, \delta) \rightarrow \lambda} f(z)
 \end{eqnarray}
if the limit exists.
Similarly, we define the lower nontangential limit:
 \begin{eqnarray}\label{GSOBEq6}
 \ f_-(\lambda ) = \lim_{z\in LC(\lambda, \alpha, \delta) \rightarrow \lambda} f(z)
 \end{eqnarray}
if the limit exists.
Define 
 \[
 \ H^\infty_{\mathcal R} (\Omega, \mathcal{NT}) = \{f\in H^\infty (\Omega):~ f_+(\lambda) = f_-(\lambda),~\mathcal H^1 |_{\mathcal R \cap \partial \Omega}-a.a.\}. 
 \]  
\smallskip

\begin{lemma}\label{NTEquivalent}
If a bounded open set $\Omega$ satisfies \eqref{NTRemovableEq1} and \eqref{GSOBEq4}, then  
 \[
 \ H^\infty_{\mathcal R}(\Omega) = H^\infty_{\mathcal R} (\Omega, \mathcal{NT}).
 \]   
\end{lemma}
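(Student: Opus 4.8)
\textbf{Proof plan for Lemma \ref{NTEquivalent}.} The plan is to prove the two inclusions separately. The inclusion $H^\infty_{\mathcal R}(\Omega) \subset H^\infty_{\mathcal R}(\Omega, \mathcal{NT})$ should be the easy direction: given $f \in H^\infty_{\mathcal R}(\Omega)$ with associated boundary function $a \in L^\infty(\mathcal H^1|_{\mathcal R \cap \partial \Omega})$, I would show that at $\mathcal H^1|_{\mathcal R \cap \partial \Omega}$-almost every $\lambda$ the $\gamma$-continuity of $f_a$ at $\lambda$ forces both the upper and lower nontangential limits to equal $a(\lambda)$. The mechanism is Lemma \ref{lemmaBasic11} (or, more elementarily, Lemma \ref{lemmaARS}): since $UC(\lambda,\alpha,\delta_\lambda)$ and $LC(\lambda,\alpha,\delta_\lambda)$ lie inside $\Omega$ by \eqref{GSOBEq4}, and since on these cones $f$ is analytic, the condition
\[
\lim_{\delta \rightarrow 0} \dfrac{\gamma(B(\lambda,\delta) \cap \{|f_a(z) - a(\lambda)| > \epsilon\})}{\delta} = 0
\]
restricted to the cone gives, via Lemma \ref{lemmaBasic11}, that $\lim_{z \in UC(\lambda,\theta\alpha,\delta)\to\lambda} f(z) = a(\lambda)$ and similarly for the lower cone. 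Hence $f_+(\lambda) = f_-(\lambda) = a(\lambda)$ for a.a. $\lambda$, so $f \in H^\infty_{\mathcal R}(\Omega,\mathcal{NT})$.

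For the reverse inclusion $H^\infty_{\mathcal R}(\Omega,\mathcal{NT}) \subset H^\infty_{\mathcal R}(\Omega)$, I would take $f \in H^\infty_{\mathcal R}(\Omega,\mathcal{NT})$ and define the candidate boundary function $a(\lambda) = f_+(\lambda) = f_-(\lambda)$, which exists $\mathcal H^1|_{\mathcal R \cap \partial \Omega}$-a.a. by hypothesis. The task is to upgrade convergence along cones to $\gamma$-continuity in full analytic capacitary density of $f_a$ at a.a. $\lambda$. The natural route is to apply the generalized Plemelj formula, Theorem \ref{GPTheorem1}: since $\mathcal R \cap \partial \Omega \subset \Gamma$ for a single Lipschitz graph $\Gamma$ (condition \eqref{NTRemovableEq1}), and $\mathcal L^2(\mathcal R \cap \partial \Omega) = 0$, I can represent $f$ restricted to a neighborhood by a Cauchy transform using Lemma \ref{lemmaBasic0} (6) applied to the localized function $T_\varphi f$, which is bounded analytic off a compact subset of $S \cap \partial \Omega$. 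Writing $T_\varphi f = \mathcal C(b \mathcal H^1|_{\partial \Omega})$ with $|b| \le 1$, one reads off from Theorem \ref{GPTheorem1} that the top and bottom nontangential limits of $\mathcal C(b\mathcal H^1|_\Gamma)$ differ exactly by $b(\lambda)L(\lambda)^{-1}$ up to the rotation factor, so the equality $f_+ = f_-$ on $\mathcal R \cap \partial\Omega$ forces $b = 0$ there $\mathcal H^1|_\Gamma$-a.a.; consequently the principal value $\mathcal C(b\mathcal H^1|_\Gamma)(\lambda)$ coincides with both one-sided limits at a.a. $\lambda \in \mathcal R \cap \partial \Omega$, and Theorem \ref{GPTheorem1} (b), (c), (d) together give the full-density $\gamma$-continuity of $\mathcal C(b\mathcal H^1|_\Gamma)$ at those points. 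Patching the localizations $T_{\varphi_{ij}} f$ back together (using $\sum \varphi_{ij} = 1$ and that only finitely many are relevant near any bounded region, together with Theorem \ref{TTolsa} (2) for the capacity of the exceptional set) yields $\gamma$-continuity of $f_a$ on $\mathcal R \cap \partial \Omega$ off an $\mathcal H^1$-null (hence, on $\Gamma$, $\gamma$-null) set, which is the definition of $H^\infty_{\mathcal R}(\Omega)$.

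The main obstacle I anticipate is the bookkeeping in the reverse direction: converting the pointwise cone-limit hypothesis into the Plemelj representation cleanly requires localizing, and one must check that the decomposition $b = 0$ on $\mathcal R \cap \partial \Omega$ is not spoiled by the pieces of $\partial \Omega$ lying in $\mathcal F$ (where $b$ need not vanish) — but those contribute a set of the form $S \cap \partial \Omega \cap \mathcal F$ whose capacity is controlled and which is precisely the part excluded from the $\gamma$-continuity requirement (the exceptional sets in Definition of $GC$ and $H^\infty_{\mathcal R}(\Omega)$ are measured against $\mathcal R \cap \partial\Omega$). A second, minor subtlety is that the nontangential limit in \eqref{GSOBEq5}, \eqref{GSOBEq6} is a priori along a fixed aperture $\alpha$, while Lemma \ref{lemmaBasic11} passes between apertures $\alpha$ and $\theta\alpha$; since nontangential limits of a bounded analytic function are independent of aperture up to an $\mathcal H^1$-null set (as noted after Theorem \ref{NonTL}), this causes no real difficulty but should be remarked upon. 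I expect the proof to be short modulo these checks, as all the heavy analytic machinery is already in place.
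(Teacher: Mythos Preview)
Your forward inclusion $H^\infty_{\mathcal R}(\Omega) \subset H^\infty_{\mathcal R}(\Omega,\mathcal{NT})$ is exactly what the paper does: restrict the $\gamma$-density condition to the cone and invoke Lemma~\ref{lemmaBasic11}.

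For the reverse inclusion there is a real gap. You propose to write the localized function $T_\varphi f$ as $\mathcal C(b\,\mathcal H^1|_{\partial\Omega})$ via Lemma~\ref{lemmaBasic0}~(6). That lemma requires the singular set to have \emph{finite} $\mathcal H^1$ measure, and the hypotheses give no control on $\partial\Omega\cap\mathcal F$: only $\mathcal R\cap\partial\Omega$ is assumed to sit on the Lipschitz graph $\Gamma$ (condition \eqref{NTRemovableEq1}), while $\partial\Omega\cap\mathcal F$ may well have $\mathcal H^1(\partial\Omega\cap\mathcal F)=\infty$. So neither the Cauchy representation nor the subsequent Plemelj argument on $\partial\Omega$ is available. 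Your remark that the $\mathcal F$-part of $\partial\Omega$ ``is precisely the part excluded from the $\gamma$-continuity requirement'' misses the point: you still need a finite measure to run Theorem~\ref{GPTheorem1} at all.

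The paper gets around this by a geometric construction that you are missing. It exhausts $\mathcal R\cap\partial\Omega$ by compact sets $D_n\subset\Gamma$ on which the cone inclusion \eqref{GSOBEq4} holds with a uniform $\delta_\lambda\ge\frac{1}{n}$, covers $D_n$ by finitely many balls to build open sets $O_1\subset O_2$, and splits $f=T_\psi f+T_{1-\psi}f$ with $\psi$ supported in $O_2$ and $\psi=1$ on $O_1$. The piece $y=T_{1-\psi}f$ is analytic on $O_1$, hence trivially $\gamma$-continuous on $D_n$. The piece $u=T_\psi f$ is analytic on the auxiliary region
\[
G=(\overline{O_2})^c\cup\bigcup_{\lambda\in D_n}\bigl(UC(\lambda,\alpha,\tfrac{1}{n})\cup LC(\lambda,\alpha,\tfrac{1}{n})\bigr),
\]
whose boundary is built from finitely many arcs of circles together with a sawtooth made from cone sides over $D_n$, and therefore has $\mathcal H^1(\partial G)<\infty$ \emph{by construction}, independent of how wild $\partial\Omega$ is. Now Lemma~\ref{lemmaBasic0}~(6) legitimately yields $u=\mathcal C(w\,\mathcal H^1|_{\partial G})$ on $G$, and the Plemelj machinery (Theorem~\ref{GPTheorem1}) applied to this representation, together with $u_+=u_-$ on $D_n$, gives $v^+=v^-=v^0$ there; since $G$ itself satisfies the density condition \eqref{NTRemovableEq2} at points of $D_n$ (same argument as \eqref{GEDensity}), one concludes $\gamma$-continuity of $u$, hence of $f$, on $D_n$. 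The missing idea is precisely this replacement of $\partial\Omega$ by the rectifiable $\partial G$.
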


\begin{proof}
(Proof of $H^\infty_{\mathcal R}(\Omega) \subset H^\infty_{\mathcal R} (\Omega, \mathcal{NT})$):
For $f\in H^\infty_{\mathcal R}(\Omega)$ and $\lambda \in \mathcal R \cap \partial \Omega$, $A_{f,\epsilon} = \{|f(z) - a(\lambda)| > \epsilon\}$, we have
\[
 \ \lim_{\delta\rightarrow 0}\dfrac{\gamma(UC(\lambda, \alpha, \delta)\cap A_{f,\epsilon})}{\delta} \le \lim_{\delta\rightarrow 0}\dfrac{\gamma(B(\lambda, \delta)\cap A_{f,\epsilon})}{\delta} = 0.
 \]
From Lemma \ref{lemmaBasic11}, we conclude that $f\in H^\infty_{\mathcal R} (\Omega, \mathcal{NT})$. 

(Proof of $H^\infty_{\mathcal R} (\Omega, \mathcal{NT}) \subset H^\infty_{\mathcal R}(\Omega)$):
Let $B_n$ be the set of $\lambda \in \mathcal R \cap \partial \Omega$ such that \eqref{GSOBEq4} holds for all $\delta_{\lambda} < \frac{1}{n}$. From the assumption, we get
 \[
 \ \mathcal H^1 (\mathcal R \cap \partial \Omega \setminus \bigcup_n B_n) = 0. 
 \] 
For a given $\epsilon > 0$, choose $n$ large enough so that $\mathcal H^1 (\mathcal R \cap \partial \Omega \setminus B_n) < \frac{\epsilon}{2}$. Let $D_n \subset B_n$ be a compact subset so that $\mathcal H^1 (B_n \setminus D_n)  < \frac{\epsilon}{2}$. Let $\{B(\lambda_k, \frac{1}{4n})\}$ be an open cover of $D_n$, where $\lambda_k\in D_n$. Set $O_1 = \cup _k B(\lambda_k, \frac{1}{4n})$ and $O_2 = \cup _k B(\lambda_k, \frac{1}{2n})$. Then $D_n\subset O_1 \subset O_2$. Let $\psi$ be a smooth function supported in $O_2$, $0\le \psi\le 1$, and $\psi (z) = 1$ for $z\in O_1$. For $f\in H^\infty_{\mathcal R} (\Omega, \mathcal{NT})$, we have $f = u:= T_\psi f + y := T_{1-\psi} f$. Clearly, $y$ is analytic on $O_1$, hence, $y_+ (z) = y_-(z), ~ \mathcal H^1 |_{D_n}-a.a.$. The function $u$ is analytic off $\overline{O_2}$. Therefore, $u$ is analytic on $G = (\overline{O_2})^c \cup \cup_{\lambda \in D_n} D_n(\lambda)$, where 
$D_n(\lambda) = UC(\lambda, \alpha, \frac{1}{n}) \cup LC(\lambda, \alpha, \frac{1}{n})$.
It is clear that $\mathcal H^1 (\partial G) < \infty$. From Lemma \ref{lemmaBasic0} (6), there exists a function $w$ bounded by $\|u\|$ and supported on $\partial G$ such that $u(z) = \mathcal C(w\mathcal H^1|_{\partial G})(z)$ for $z\in G$. From Theorem \ref{GPTheorem1}, we get, for $\lambda\in D_n$,
 \[
\ \lim_{\delta\rightarrow 0}\dfrac{\gamma(UC(\lambda, \alpha, \delta)\cap \{|u(z) - v^+(w\mathcal H^1, \Gamma, \lambda ) | > \epsilon\})}{\delta}   = 0.
 \]
From Lemma \ref{lemmaBasic11}, 
 $u_+ (\lambda ) = v^+(w\mathcal H^1, \Gamma, \lambda ), ~ \mathcal H^1 |_{D_n}-a.a.$ 
Similarly,
 $u_- (\lambda ) = v^-(w\mathcal H^1, \Gamma, \lambda ), ~ \mathcal H^1 |_{D_n}-a.a.$.
Since $f_+(\lambda ) = f_-(\lambda ), ~ \mathcal H^1 |_{D_n}-a.a.$, we see that 
 \[
 \ v^+(w\mathcal H^1, \Gamma, \lambda ) = v^-(w\mathcal H^1, \Gamma, \lambda ) = \mathcal C(w\mathcal H^1|_{\partial G})(\lambda ), ~\mathcal H^1 |_{D_n}-a.a..
 \]
Certainly, $G$ satisfies the assumption \eqref{NTRemovableEq2} on $D_n$ (same proof as in \eqref{GEDensity}).  Hence, Theorem \ref{GPTheorem1} implies that $u$ is $\gamma$-continuous on $D_n, ~ \mathcal H^1 |_{D_n}-a.a.$. Thus, $f$ is $\gamma$-continuous on $\mathcal R \cap \partial \Omega, ~ \mathcal H^1 |_{\mathcal R \cap \partial \Omega}-a.a.$. Therefore, $f \in H^\infty_{\mathcal R}(\Omega )$.   

\end{proof}
\smallskip

\begin{corollary}\label{GSOBCorollary2}
Let $\Omega$ be a bounded open subset of $\mathbb C$. Suppose $S_\mu$ on $R^t(K,\mu)$ is pure.  If the conditions \eqref{NTRemovableEq1} and \eqref{GSOBEq4} hold, $K = \overline{\Omega}$, and $\text{abpe}(R^t(K,\mu)) = \Omega$, then 
 \begin{eqnarray}\label{GSOBCorollary2Eq}
 \ f_+(\lambda) = f_-(\lambda), ~ \mathcal H^1 |_{\mathcal R \cap \partial \Omega}-a.a.
 \end{eqnarray}
for every $f\in R^t(K, \mu)$ and the evaluation map
$\rho: f\in R^t(K, \mu) \cap L^\infty(\mu) \rightarrow \rho(f)|_{\Omega} \in H^\infty(\Omega)$
is an isometric isomorphism and a weak$^*$ homeomorphism from $R^t(K, \mu) \cap L^\infty(\mu)$ onto $H^\infty_{\mathcal R}(\Omega, \mathcal {NT})$.
\end{corollary}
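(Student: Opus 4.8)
\textbf{Proof proposal for Corollary \ref{GSOBCorollary2}.}

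The plan is to combine Corollary \ref{GSOBCorollary} with Lemma \ref{NTEquivalent}, using Theorem \ref{MTheorem3} (together with Lemma \ref{lemmaBasic11}) to upgrade the abstract $\gamma$-limit statements into genuine nontangential limits on the part of $\partial\Omega$ that lies in $\mathcal R$. First I would record the structural facts that are already available: since $\text{abpe}(R^t(K,\mu)) = \Omega$ and $K = \overline{\Omega}$, Theorem \ref{ABPETheorem} gives $\mathcal R \approx \Omega \cup (\mathcal R \cap \partial\Omega)$, $\gamma$-a.a., and Theorem \ref{DensityCorollary} shows the density condition \eqref{NTRemovableEq2} holds at $\mathcal H^1$-a.a.\ point of $\mathcal R \cap \partial\Omega$. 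Thus the hypotheses of both Corollary \ref{GSOBCorollary} and Lemma \ref{NTEquivalent} are in force, so $R^t(K,\mu)\cap L^\infty(\mu)$ is isometrically isomorphic (and weak$^*$ homeomorphic) via $\rho$ onto $H^\infty(\mathcal R) = H^\infty_{\mathcal R}(\Omega) = H^\infty_{\mathcal R}(\Omega,\mathcal{NT})$. This already delivers the second assertion of the corollary once we identify the image correctly; the remaining work is to prove \eqref{GSOBCorollary2Eq} for \emph{all} $f\in R^t(K,\mu)$, not merely the bounded ones, and to make sure the map is literally $f\mapsto \rho(f)|_\Omega$.

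For the nontangential limit statement \eqref{GSOBCorollary2Eq}, fix $f\in R^t(K,\mu)$ and a point $\lambda_0 \in \mathcal R \cap \partial\Omega$. By \eqref{NTRemovableEq1} we may assume $\lambda_0\in\Gamma = \Gamma_n$ for one of the Lipschitz graphs of Lemma \ref{GammaExist}, and by \eqref{GSOBEq4} there is $\delta_{\lambda_0}>0$ with $UC(\lambda_0,\alpha,\delta_{\lambda_0})\cup LC(\lambda_0,\alpha,\delta_{\lambda_0})\subset \Omega$. Since $\lambda_0\in\mathcal R$, Theorem \ref{MTheorem2} says $\rho(f)$ is $\gamma$-continuous at $\lambda_0$ (for $\mathcal H^1|_{\mathcal R\cap\partial\Omega}$-a.a.\ such $\lambda_0$), i.e.
\[
 \ \lim_{\delta\rightarrow 0}\dfrac{\gamma(B(\lambda_0,\delta)\cap\{|\rho(f)(z) - \rho(f)(\lambda_0)| > \epsilon\})}{\delta} = 0
\]
for all $\epsilon>0$. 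Restricting this to the cones $UC(\lambda_0,\alpha,\delta)$ and $LC(\lambda_0,\alpha,\delta)$ and applying Lemma \ref{lemmaBasic11} (with $a = \rho(f)(\lambda_0) = f(\lambda_0)$, where the last equality is \eqref{BasicEq3} on $\mathcal N(g_j\mu, 1\le j<\infty)\supset\mathcal R$) gives the honest limits $f_+(\lambda_0) = \lim_{z\in UC(\lambda_0,\theta\alpha,\delta)\to\lambda_0}\rho(f)(z) = f(\lambda_0)$ and likewise $f_-(\lambda_0) = f(\lambda_0)$; in particular $f_+(\lambda_0) = f_-(\lambda_0)$. Since the nontangential limit of an analytic function on $\Omega$ does not depend on the aperture $\theta\alpha$ up to an $\mathcal H^1$-null set, this establishes \eqref{GSOBCorollary2Eq}. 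For the isomorphism statement I would simply quote Corollary \ref{GSOBCorollary}, which already identifies $\rho:\,R^t(K,\mu)\cap L^\infty(\mu)\to H^\infty_{\mathcal R}(\Omega)$ as an isometric isomorphism and weak$^*$ homeomorphism, and then invoke Lemma \ref{NTEquivalent} to rewrite $H^\infty_{\mathcal R}(\Omega) = H^\infty_{\mathcal R}(\Omega,\mathcal{NT})$; the fact that the map is given by $f\mapsto\rho(f)|_\Omega$ rather than some abstract quotient map follows because $\rho(f)(\lambda) = (f,k_\lambda)$ is the analytic extension of $f$ to $\text{abpe}(R^t(K,\mu)) = \Omega$.

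The main obstacle is the bookkeeping in passing from $\gamma$-limits to classical nontangential limits uniformly in $\lambda_0$: Theorem \ref{MTheorem2} produces a single exceptional set $\mathbb Q_f$ of zero analytic capacity, and I must check that $\mathcal H^1|_{\mathcal R\cap\partial\Omega}(\mathbb Q_f) = 0$ so that the conclusion holds $\mathcal H^1|_{\mathcal R\cap\partial\Omega}$-a.a.; this uses that $\mathcal R\cap\partial\Omega\subset\Gamma$ is contained in a countable union of Lipschitz graphs together with Lemma \ref{lemmaBasic0} (7), which makes $\gamma$-null subsets of $\Gamma$ also $\mathcal H^1|_\Gamma$-null. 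A secondary, more technical point is that \eqref{GSOBEq4} only guarantees the cone inclusion for $\delta$ below a $\lambda$-dependent threshold $\delta_\lambda$, so the application of Lemma \ref{lemmaBasic11} must be done at each $\lambda_0$ individually after discarding the (null) set of $\lambda_0$ where no such threshold exists — this is exactly the exhaustion by the sets $B_n$ used in the proof of Lemma \ref{NTEquivalent}, and I would mirror that argument here rather than reprove it.
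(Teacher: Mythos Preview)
Your proposal is correct and follows essentially the same route as the paper: $\gamma$-continuity of $\rho(f)$ on $\mathcal R$ from Theorem \ref{MTheorem2}, restricted to the cones and fed into Lemma \ref{lemmaBasic11} to obtain $f_+(\lambda_0)=f_-(\lambda_0)=f(\lambda_0)$, is exactly the argument the paper invokes (it phrases this as ``the same proof of $H^\infty_{\mathcal R}(\Omega)\subset H^\infty_{\mathcal R}(\Omega,\mathcal{NT})$ in Lemma \ref{NTEquivalent}''), and the isomorphism part is likewise Corollary \ref{GSOBCorollary} combined with Lemma \ref{NTEquivalent}. One small remark: your opening plan cites Theorem \ref{MTheorem3}, but your actual argument (correctly) rests on Theorem \ref{MTheorem2}, since the points in question lie in $\mathcal R$, not in $\mathcal F_+\cup\mathcal F_-$.
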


\begin{proof}
For every $f\in R^t(K, \mu)$,
since $\rho(f)(z) = f(z)$ is analytic on $\Omega$ and $\rho(f)$ is $\gamma$- continuous on $\mathcal R ~ \gamma-a.a.$ (Theorem \ref{MTheorem2}), 
\eqref{GSOBCorollary2Eq} follows from the same proof of $H^\infty_{\mathcal R}(\Omega) \subset H^\infty_{\mathcal R} (\Omega, \mathcal{NT})$ in Lemma \ref{NTEquivalent}. Applying Corollary \ref{GSOBCorollary} and Lemma \ref{NTEquivalent}, we prove the corollary.
\end{proof}
\smallskip

Theorem \ref{SOBTheorem} directly follows from the above corollary. In fact, $\Omega = G_U\cup G_L$, $\Gamma$ is the real line,  and $\mathcal R\cap \partial \Omega$ is a subset of $E$.

\bigskip

{\bf Acknowledgments.} The authors would like to thank Professor Akeroyd for providing many useful discussions and comments, in particular, regarding examples of string of beads sets.
\bigskip

\appendix

\backmatter
\bibliographystyle{amsalpha}

\printindex

\end{document}